\documentclass[11pt]{amsart}

\usepackage{amscd,amssymb}
\usepackage{newlfont,amsmath,mathrsfs,enumerate,amssymb,array,amscd,xcolor}

\usepackage{amsmath,mathrsfs,enumerate,amssymb,array,amscd,xcolor}
\usepackage{graphicx}
\usepackage{amsmath,amssymb,enumerate}
\usepackage[all]{xy}

\theoremstyle{plain}

\pagestyle{myheadings}
\newtheorem{theorem}{Theorem}[section]
\newtheorem{thm}[equation]{Theorem}
\newtheorem{prop}[equation]{Proposition}
\newtheorem{cor}[equation]{Corollary}

\newtheorem{lemma}[equation]{Lemma}

\newtheorem{defn}[equation]{Definition}

\numberwithin{equation}{section}


\newcommand{\Q}{\mathbb Q}
\newcommand{\Z}{\mathbb Z}
\newcommand{\R}{\mathbb R}
\newcommand{\C}{\mathbb C}

\newcommand{\A}{\mathbb A}

\def\Hom{{\mathrm{Hom}}}
\def\Aut{{\mathrm{Aut}}}

\def\SL{{\mathrm{SL}}}

\def\GSp{{\mathrm{GSp}}}

\def\PGSO{\mathrm{PGSO}}
\def\Sp{{\mathrm{Sp}}}
\def\Spin{{\mathrm Spin}}
\def\GU{{\mathrm {GU}}}
\def\SU{{\mathrm {SU}}}
\def\U{{\mathrm U}}
\def\GO{{\mathrm GO}}
\def\GL{{\mathrm{GL}}}

\def\Gal{{\mathrm Gal}}
\def\SO{{\mathrm{SO}}}
\def\Spin{{\mathrm{Spin}}}
\def\Ind{{\mathrm{Ind}}}
\def\Irr{{\mathrm{Irr}}}

\def\O{{\mathrm O}}

\def\Stab{{\mathrm{Stab}}}
\def\St{{\mathrm{St}}}

\def\Res{{\mathrm{Res}}}
\def\End{{\mathrm{End}}}
\def\der{{\mathrm{der}}}
\def\Gal{\mathrm{Gal}}
\def\Ker{{\mathrm{Ker}}}

\def\A{{\mathbb A}}

\def\R{{\mathbb R}}

\def\Z{{\mathbb Z}}

\def\H{{\cal H}} 


\setlength{\oddsidemargin}{0.2in}
\setlength{\evensidemargin}{0.2in}
\setlength{\textwidth}{6.1in}

 \begin{document}
 \title[Twisted Composition Algebra and Triality]{Twisted Composition Algebras   \\and  Arthur Packets for Triality $\Spin(8)$}

 \author{Wee Teck Gan and Gordan Savin}

\address{W.T.G.:   Department of Mathematics, National University of Singapore, 10 Lower Kent Ridge Road
Singapore 119076} \email{matgwt@nus.edu.sg}
\address{G. S.: Department of Mathematics, University of Utah, Salt Lake City, UT} \email{savin@math.utah.edu}
 \subjclass[2000]{11S90, 17A75,  17C40}
\keywords{triality $Spin(8)$, minimal representation, theta correspondence} 

 \maketitle

\section{\bf Introduction}
The purpose of this paper is to construct and analyze certain square-integrable automorphic forms on the quasi-split simply-connected groups $\Spin_8$ of type $D_4$ over a number field $F$. Since the outer automorphism group of $\Spin_8$ is $S_3$, these quasi-split groups are parametrised by \'etale cubic $F$-algebras $E$ and we denote them by $\Spin_8^E$ (to indicate the dependence on $E$). 
We shall specialize to the case when $E$ is a cubic field: this gives the so-called triality $\Spin_8$. 
\vskip 5pt

The square-integrable automorphic forms we construct are associated to a family of discrete Arthur parameters $\psi$ which are quite degenerate. Indeed, apart from the A-parameters of the trivial representation and the minimal representation of $\Spin_8^E$, the A-parameters we consider here  are the most degenerate among the rest. These A-parameters are analogs of the cubic unipotent A-parameters for the exceptional group $G_2$ studied in \cite{GGJ}. In particular, the component groups associated to these A-parameters can be the non-abelian group $S_3$, leading to high multiplicities in the automorphic discrete spectrum, as in \cite{GGJ}. 
\vskip 5pt

For each such A-parameter, we shall give a construction of the local A-packets and establish the global Arthur multiplicity formula. Both the local and global constructions are achieved using exceptional theta correspondences for a family of  dual pairs $H_C \times \Spin_8^E$  in an ambient adjoint group of type $E_6$ (considered with its outer automorphisms); these dual pairs are associated to {\em $E$-twisted composition algebras} of dimension $2$ over $E$. We shall in particular determine the local and global theta lifting completely. The automorphic forms constructed via these theta correspondences, though quite degenerate, can be cuspidal and have some special properties. For example, when one considers their Fourier coefficients along the Heisenberg maximal parabolic subgroup of $\Spin_8^E$ (corresponding to the branch vertex in the Dynkin diagram), one sees that these automorphic forms support only one orbit of generic Fourier coefficients: they are  {\em distinguished} in the sense of Piatetski-Shapiro. 
The relevant Fourier coefficients are parametrised by $E$-twisted composition algebras of $E$-rank $2$, as shown in our earlier work \cite{GS2} on twisted Bhargava cubes.
Such properties allow us to determine their multiplicity in the  automorphic discrete spectrum completely. 

\vskip 5pt

Because the objects mentioned above may be unfamiliar to the typical reader, and the precise results require a substantial amount of notation and language to state,  we will leave the precise formulation of the results to the main body of the paper and content ourselves with the rather cursory overview above. 
\vskip 5pt

We would however like to emphasize the pivotal role played  by the notion of a {\em twisted composition algebra}  (of rank $2$)  and its relation to embeddings of the cubic algebra $E$ into a degree $3$ Jordan algebra (of dimension $9$). This algebraic theory was created and developed by T. Springer (see \cite[Chap. 4]{SV} and \cite[\S 36]{KMRT}). Its relation with $\Spin_8^E$ has been explored in our earlier paper \cite{GS2} and we shall apply the algebraic results of \cite{GS2} to the study of automorphic forms here.  In addition, we also need  arithmetic results about twisted composition algebras and their automorphism groups, such as local and global Tate dualities, weak approximation and Hasse principles. These arithmetic results  are supplied by the papers of Tate \cite{T}, Voskresenskii \cite{V1,V2} and Prasad-Rapinchuk \cite{PR}.
These algebraic and arithmetic results, together with the representation theoretic results from exceptional theta correspondence, combine in rather intricate and  (to these authors)  utterly amazing ways to give the elegant Arthur multiplicity formula.
\vskip 5pt

 Given the length of the paper, it will be pertinent to provide a brief summary as a roadmap for the reader:
 \vskip 5pt
 
 \begin{itemize}
 \item[-] We introduce in \S \ref{S:structure} the group $G_E = \Spin_8^E$ and  its relevant structures, and give a description of its A-parameters in \S \ref{S:arthur}, reviewing Arthur's conjecture in the process.
 \vskip 5pt
 
 \item[-] The theory of twisted composition algebras is introduced in \S \ref{S:TCFJ}. Though this theory is due to Springer, we have needed to supplement it with some observations of our own. In particular, Proposition \ref{P:isom-torsor} plays an important role in the interpretation of our results in the framework of Arthur's conjecture.
 We then recall in \S \ref{SS:relation} our results from \cite{GS2} concerning  nondegenerate twisted  Bhargava cubes and supplement the discussion with results about degenerate cubes.
 \vskip 5pt
 
 \item[-] \S \ref{S:dual} is devoted to the construction of the various dual pairs that will be studied in this paper. It is followed by a detailed description of the Levi subgroup (of type $A_5$) of  the Heisenberg parabolic subgroup of  the adjoint group of type $E_6$ in \S \ref{S:levi}.
 \vskip 5pt
 
 \item[-] The minimal representation of the adjoint group of type $E_6$ is introduced in \S \ref{S:minimal} and its Jacquet module for the Heisenberg parabolic subgroup is determined in \S \ref{S:jac-E6},
 
 \vskip 5pt
 
 \item[-] In the spirit of the tower property of classical theta correspondence, we determine the mini-theta correspondence for the Heisenberg Levi subgroup  in \S \ref{S:mini}.
 This is based on relating it to a classical similitude theta correspondence for unitary groups. It is
 needed for the  study of the theta correspondence in $E_6$ which is carried out in \S \ref{S:main-theta}, after introducing some notations for representations of $G_E$ in \S \ref{S:LQ}.
 In particular, Theorem \ref{T:111} is the main local result of this paper in the nonarchimedean case. We recall in \S \ref{S:arch-theta} the analogous result in the archimedean case, but the proofs of Theorems \ref{T:arch1}, \ref{T:arch1.5} and \ref{T:arch2} there will be deferred to a joint paper with J. Adams and A. Paul.
 \vskip 5pt
 
 \item[-] After this, we move to the global setting, starting with  \S \ref{S:global-theta} which is devoted to the study of global theta correspondence. Here, we first need to understand the space of automorphic forms of the disconnected group 
 $H_C = \Aut_E(C)$, where $C$ is a twisted composition algebra of rank $2$. Not surprisingly, the automorphic multiplicity for $H_C$ can be $1$ or $2$. In \S \ref{S:APTCA}, we relate the relevant A-parameters to the theory of twisted composition algebras. The important  ingredients here are the local-global principles in Lemma \ref{L:loc}, the consequence of local Tate-Nakayama duality in Proposition \ref{P:TN} and the global Poitou-Tate duality in Proposition \ref{P:PT}. After this preparation, we interpret the space of global theta liftings in the framework of Arthur's conjecture in \S \ref{S:AMF}. More precisely,  we construct the local A-packets as well as their  bijection with characters of the local component groups, and then establish the Arthur multiplicity formula (AMF) for the space of global theta liftings in Theorem\ref{T:final1}. Finally, we show in Theorem \ref{T:final2} that the number  provided by the AMF is in fact the true discrete multiplicity of the relevant representation in the automorphic discrete spectrum of $G_E$.   For the interest of the reader, the following are some  examples of numbers which arise as such multiplicities:
 \[  2^n , \quad   \frac{2^n + 2 (-1)^n}{3} , \quad \frac{2^n + (-1)^{n+1}}{3}  \quad \text{for $n \geq 0$.} \]
 In particular, the multiplicities in the automorphic discrete spectrum are unbounded. The main source of these high multiplicities comes from the failure of Hasse principle for twisted composition algebras of $E$-dimension $2$, or alternatively, the failure of Hasse principle for Jordan algebras of dimension $9$.
 \vskip 5pt
 
 \item[-] We have included two appendices. In Appendix A, we consider an analogous theta correspondence for a dual pair $\SL_2(E)/ \mu_2 \times G_E$ in  $E_7$, associated to a rank 4 twisted composition algebra. This theta correspondence can be used to construct another family of Arthur packets for $G_E$, but we do not pursue this here. Instead, we only determine the theta lift of the trivial representation of $\SL_2(E)/\mu_2$ in Corollary \ref{C:GS3}; this result is used in our paper \cite{GS3}.  The long Appendix B is devoted to the study of unramified degenerate pricipal series representations of $G_E$ for the various maximal parabolic subgroups and the various possibilities of $E$.  Our approach is via the Iwahori Hecke algebra, and in each case, we determine the points of reducibility and the module structure at each such point. This allows us to introduce various interesting representations of $G_E$ with nonzero Iwahori-fixed vectors which intervene in the theta correspondence studied in the paper. In particular, we shall refer to the terminology and results of Appendix B in the description of theta lifting, for example in Theorem \ref{T:111}.
 \end{itemize}

 \vskip 5pt
 
We wrap up this introduction by mentioning some recent papers which are devoted to the (automorphic) representation theory of triality $\Spin_8$:
\vskip 5pt
\begin{itemize}
\item the paper \cite{L} of C.H. Luo on determining the unitary dual of the adjoint form of $G_E$ over $p$-adic fields;
\vskip 5pt

\item the papers \cite{Se1} and \cite{Se2} of A. Segal on the structure of degenerate principal series representations (which builds upon and complements our results in Appendix B) and poles of degenerate Eisenstein series of $G_E$;
\vskip 5pt

\item the paper \cite{La} of J.F. Lau on the determination of the residual spectrum of $G_E$.
\end{itemize}
It is interesting to relate  the local and global A-packets we construct here with the results of these other papers.

\vskip 10pt

\noindent{\bf Acknowledgments:} W.T.G. is partially supported by a Singapore government MOE Tier 1 grant R-146-000-320-114. G. Savin is partially supported by 
 a  National Science Foundation grant DMS-1901745. We thank Professor A. Skorobogatov for bringing the work of  Voskresenskii  to our attention and for useful communication.
  
The work for this paper was initiated in 2015 and  we are relieved to finally complete it. It is a pleasure to dedicate this paper to Dick Gross on the occasion of his 70th birthday. 
 It is through our collaboration with him that we have developed a deeper appreciation for the beauty of exceptional groups and exceptional algebraic structures.  
We are grateful to have the opportunity to learn from  him over the years and hope that there will continue to be such opportunities in the years to come.
\vskip 10pt

\section{\bf Structure Theory of $\Spin^E_8$}  \label{S:structure}

\subsection{\bf \'{E}tale cubic algebras.}
Let $F$ be a field of characteristic $0$ and with absolute Galois group  $\Gal(\overline{F}/F)$. 
An \'{e}tale cubic algebra is an $F$-algebra $E$ such that $E \otimes_F \overline{F} \cong \overline{F}^3$. More concretely, an \'{e}tale cubic $F$-algebra is of the form:
\[ E = \begin{cases}
\text{$F \times F \times F$;} \\
\text{$F \times K$, where $K$ is a quadratic field extension of $F$;} \\
\text{ a cubic field.} \end{cases} \]
Since the split algebra $F \times F \times F$ has 
automorphism group $S_3$ (the symmetric group on 3 letters),
the isomorphism classes of \'{e}tale cubic algebras $E$ over $F$ are naturally classified by the set of conjugacy classes of  homomorphisms 
\[  \rho_E : \mathrm{Gal}(\overline{F}/F) \longrightarrow S_3. \]
\vskip 10pt

By composing the homomorphism $\rho_E$ with the sign character of $S_3$, we obtain a quadratic character (possibly trivial) of $\Gal(\overline{F}/F)$ which corresponds to an \'{e}tale quadratic algebra $K_E$. We call $K_E$ the {\em discriminant algebra} of $E$. To be concrete, 
\[  K_E = \begin{cases}
F \times F, \text{  if $E = F^3$ or a cyclic cubic field;} \\
K, \text{  if $E = F \times K$;} \\
\text{the unique quadratic subfield in the Galois closure of $E$ otherwise.} 
\end{cases} \] 
We shall let $\chi_{K_E}$ denote the quadratic idele class character associated to $K_E$.
\vskip 10pt

 The \'etale cubic $F$-algebra $E$ possesses a natural cubic form $N_{E/F}: E \rightarrow F$ known as its norm form: for $a \in E$, $N_{E/F}(a)$ is the determinant of the   multiplication-by-$a$ map on the $F$-vector space $E$. Then there is a natural quadratic map 
 \begin{equation} \label{E:sharp0}
( -)^{\#}: E \longrightarrow E 
 \end{equation}
characterized by $a \cdot a^{\#} = N_{E/F}(a)$ for all $a \in E$. 
\vskip 10pt




\subsection{\bf Twisted form of $S_3$.}
Fix an \'{e}tale cubic $F$-algebra $E$. 
Then, via the associated homomorphism $\rho_E$, $\Gal(\overline{F}/F)$ acts on $S_3$ (by inner automorphisms) and thus defines a twisted form
$S_E$ of the finite constant group scheme $S_3$.  For any commutative $F$-algebra $A$, we have
\[  S_E(A) = \Aut_A(E \otimes_F A). \]
\vskip 10pt

 \vskip 5pt

\subsection{\bf $D_4$ root system.} 

Let $\Phi$ be a root system of type $D_4$ with a set of simple roots  $\Delta=\{\alpha_0, \alpha_1,\alpha_2,\alpha_3\}$. The highest root is $\beta_0 = \alpha_1+ \alpha_2 +\alpha_3 + 2 \alpha_0$.  The 
corresponding Dynkin diagram is 

\begin{picture}(100,130)(-180,0) 

\put(19,21){$\alpha_3$} 

\put(22,30){\line(0,1){40}}

\put(18,74){$\alpha_0$}

\put(30,80){\line(2,1){34}}
\put(-20,97){\line(2,-1){34}}

\put(-35,96){$\alpha_2$}
\put(70,96){$\alpha_1$}

\end{picture}

\noindent Hence  the group $\Aut(\Delta)$ of diagram automorphisms  is identified with $S_3$ (the group of permutations of $\{1,2,3\}$).   
\vskip 5pt

\subsection{\bf Quasi-split groups of type $D_4$.}
Let $G = \Spin_8$ be a split, simply connected Chevalley group of type $D_4$. We fix a maximal torus $T$ contained in a Borel subgroup $B$ defined over $F$. The group $G$ is then generated by root groups $U_{\alpha}\cong \mathbb G_a$, where $\alpha \in\Phi$. Steinberg showed that one can pick the isomorphisms 
$x_{\alpha}: \mathbb G_a \rightarrow U_{\alpha}$ such that 
\[ 
[x_{\alpha}(u), x_{\alpha'}(u')]= x_{\alpha+\alpha'}(\pm uu')
\] 
whenever $\alpha+\alpha'$ is a root. Fixing such a system of isomorphisms for $\alpha \in \Delta$ is fixing an \'epinglage (or pinning) for $G$.  
By the discussion on page 40 in  \cite{FK}, commutators signs can be specified by choosing an 
orientation of the Dynkin diagram. There is a short exact sequence:
\[  \begin{CD}
1 @>>>  G_{ad} = \mathrm{Inn}(G)  @>>>  \Aut(G)  @>>>  \Aut(\Delta) = S_3 @>>> 1.  \end{CD} \]
As one can pick an orientation of the Dynkin diagram which is invariant under  $\Aut(\Delta)$,  one has a splitting 
$S_3 = \Aut(\Delta) \longrightarrow \Aut(G)$,
 where the action of $S_3$ permutes the root subgroups $U_{\alpha}$ and the isomorphisms $x_{\alpha}$. 
\vskip 5pt

Since $S_3$ is also the automorphism group of the split \'etale cubic $F$-algebra $F^3$, we see that
every cubic \'etale algebra $E$ defines a simply-connected quasi-split form $G_E$ of $G$, whose outer automorphism group is the finite group scheme $S_E$. It comes equipped with
a pair $B_E \supset T_E$ consisting of a Borel subgroup $B_E$  containing a maximal torus $T_E$, both defined over $F$. 
Moreover, we inherit a Chevalley-Steinberg system  of \'{e}pinglage relative to this pair and a splitting of the outer automorphism group 
\[  S_E \hookrightarrow \Aut(G_E). \]
 If $E$ is a cubic field, then $\Gal(\overline{F}/F)$ permutes the roots $\alpha_1$, $\alpha_2$ and $\alpha_3$ transitively. If $E = F \times K$ with $K$ a quadratic field, then without loss of generality, 
we assume that $\alpha_1$ is fixed, whereas $\alpha_2$ and $\alpha_3$ are exchanged by the Galois action. If $E$ is the split algebra, the Galois action on $\Phi$ is trivial.

 \vskip 5pt
 
 \subsection{\bf Center} \label{SS:center}
The center of the split group $G$ is 
\[  Z = \{  (z_1, z_2, z_3) \in \mu_2 \times \mu_2 \times \mu_2:  z_1z_2z_3 =1 \}. \]
By Galois descent, we deduce that the center of $G_E$ is
\[  Z_E = \mathrm{Res}^1_{E/F}(\mu_2)  = \mathrm {Ker} ( N_{E/F}: \mathrm{Res}_{E/F}(\mu_2) \longrightarrow \mu_2). \]
In particular, from the short exact sequence
\[ \begin{CD} 
1 @>>> Z_E @>>> G_E @>p>> G_E^{ad} @>>> 1,  \end{CD} \]
we deduce that
\begin{equation} \label{E:Gad}
  G_E^{\mathrm{ad}}(F) / p(G_E(F)) = {\Ker} ( H^1(F, Z_E) \longrightarrow H^1(F, G_E)). \end{equation}
The finite group scheme $Z_E$ will play an important role in this paper and we will see several other incarnations of it later on.
\vskip 5pt

\subsection{\bf L-group.}
The Langlands dual group of $G_E$ is the adjoint complex Lie group 
\[ G_E^{\vee} = \PGSO_8(\C).\]
It inherits a pinning from that of $G_E$.
The L-group $^LG_E$ is the semidirect product of $\PGSO_8(\C)$ with $\Gal(\overline{F}/F)$,
where the action of $\Gal(\overline{F}/F)$ on $\PGSO_8(\C)$ is via the homomorphism $\rho_E$ as pinned automorphisms.
 Thus there is a natural map
 \[  ^LG_E \longrightarrow \PGSO_8(\C) \rtimes S_3, \]
whose restriction to $\Gal(\overline{F}/F)$ is $\rho_E$.


 \vskip 10pt
 
\subsection{\bf $G_2$ root system.}
The subgroup of $G_E$ fixed pointwise by $S_E$  is isomorphic to the split exceptional group of type $G_2$. 
Observe that $B_0=G_2 \cap B_E$ is a Borel subgroup of $G_2$ and $T_0 = T_E \cap G_2$ is a maximal split torus of $G_2$.   Via the adjoint action of $T_0$ on $G_E$, we obtain the root system $\Phi_{G_2}$ of $G_2$, so that
\[  \Phi_{G_2} = \Phi|_{T_0}. \]
 We denote the short simple root of this $G_2$ root system by $\alpha$ and the long simple root by $\beta$, so that
\[  \beta = \alpha_0|_{T_0} \quad \text{and} \quad \alpha = \alpha_1|_{T_0} = \alpha_2|_{T_0} = \alpha_3|_{T_0}. \]
Thus, the short root spaces have dimension $3$, whereas the long root spaces have dimension $1$. 
For each root $\gamma \in \Phi_{G_2}$, the associated root subgroup $U_{\gamma}$ is defined over $F$ and the Chevalley-Steinberg system of \'{e}pinglage gives isomorphisms:
\[  U_{\gamma} \cong \begin{cases}
\Res_{E/F}\mathbb{G}_a, \text{   if $\gamma$ is short;} \\
\mathbb{G}_a, \text{  if $\gamma$ is long.} \end{cases} \]
When $E$ is a cubic field, $T_0$ is in fact the maximal $F$-split torus of $G_E$ and $\Phi_{G_2}$ is the relative root system of $G_E$.  
\vskip 10pt
 
For each $\gamma \in \Psi_{G_2}$, we shall also let $N_{\gamma}$ denote the root subgroup of $G_2$ 
corresponding to $\gamma$. In particular, 
\[ N_{\gamma} = U_{\gamma} \cap G_2. \]
Because  the highest root $\beta_0$ of the $D_4$-root system restricts to that of the $G_2$-root system, we shall let $\beta_0$ denote the highest root of the $G_2$-root system also.  
\vskip 15pt

\subsection{\bf Two parabolic subgroups.}
The $G_2$ root system gives rise to 2 parabolic subgroups of $G_E$. One of these is a maximal parabolic $P_E = M_E N_E$ known as the Heisenberg parabolic. Its unipotent radical $N_E$ is a Heisenberg group and its Levi subgroup $M_E$ is spanned by the 3 satellite vertices in the Dynkin diagram.
The other parabolic $Q_E = L_E U_E$ is a not-necessarily-maximal parabolic (it is not maximal over $\overline{F}$); its Levi subgroup $L_E$ is spanned by the branch vertex $\alpha_0$ and its unipotent radical $U_E$ is a 3-step unipotent group. 
We shall need to examine the structure of these 2 parabolic subgroups more carefully.
\vskip 15pt

\subsection{\bf The Heisenberg parabolic $P_E$.}  \label{SS:Heis}
Let us begin with the Heisenberg parabolic $P =M N$ of $G$. 
  Its unipotent radical $N$ is a 2-step nilpotent group with 
the center $Z=[N,N]=U_{\beta_0}$. As we explained in \cite{GS2}, The Levi factor $M$ can be identified with 
\[ 
\GL_2(F^3)^{\det}=\{ g=(g_1,g_2, g_3) ~|~ g_i\in \GL_2(F), \, \det(g_1)=\det(g_2)=\det(g_3) \}. 
\] 
We may also identify  $V=N/Z$ with $F^2 \otimes F^2 \otimes F^2$, so that the action of $M$ on $V$ corresponds to the standard action of $\GL_2(F^3)^{\det}$ twisted by $\det(g)^{-1}:=\det(g_i)^{-1}$ (for any $i$). 
Moreover, we can assume that the torus $T\subset M$ corresponds to the subgroup of 
$\GL_2(F^3)^{\det}$ consisting of $g=(g_1,g_2,g_3)$ where $g_i$ are diagonal matrices, and the standard basis elements of $F^2 \otimes F^2 \otimes F^2$ correspond to the basis of $N/Z$ given by the fixed pinning. 
\vskip 5pt

Thus, an element  $v\in V$ can be conveniently represented by a cube
 
 \begin{picture}(100,130)(-130,0) 

\put(18,16){$e_3$} 
\put(25,20){\line(1,0){50}}
\put(75,16){$f_1$}
\put(22,26){\line(0,1){46}}
\put(78,26){\line(0,1){46}}

\put(38,48){$f_2$} 
\put(45,50){\line(1,0){50}}
\put(96,48){$b$}
\put(42,56){\line(0,1){46}}
\put(98,56){\line(0,1){46}}

\put(25,25){\line(2,3){14}}
\put(82,25){\line(2,3){14}}

\put(20,74){$a$}
\put(25,76){\line(1,0){50}}
\put(76,74){$e_2$}

\put(38,106){$e_1$}
\put(45,106){\line(1,0){50}}
\put(96,106){$f_3$}

\put(25,81){\line(2,3){14}}
\put(82,81){\line(2,3){14}}

\end{picture}

\noindent 
where $a, \ldots  , b\in F$, and the vertices correspond to the standard basis in $F^2 \otimes F^2 \otimes F^2$.  We shall assume that the vertex marked by $a$ corresponds to $\alpha_0$, 
and that the vertex marked by $b$ corresponds to $\beta_0-\alpha_0$.  
The group $\Aut(\Delta)$ acts as the group of symmetries of the cube fixing these two vertices. We shall often write the cube as 
a quadruple 
\[ 
(a,e,f,b)
\] 
where $e=(e_1,e_2,e_3)$ and $f=(f_1,f_2,f_3)\in F^3$.

\vskip 10pt

 The quasi-split group $G_E$ contains a maximal parabolic $P_E=M_E N_E$ which is a form of $P$. The structure of $P_E$ can be determined  by  Galois descent. The highest root $\beta_0$ is invariant under 
$\Aut(\Delta)$, hence the center $Z_E$ is equal to the center $Z$ of $P$. The Levi factor $M_E$ can be identified with 
\[  \GL_2(E)^{\det} := \{ g\in \GL_2(E) :   \det(g)\in F^{\times} \},  \]
and 
\[ V_E :=   N_E/Z_E  \cong  U_{\beta} \times U_{\beta + \alpha} \times U_{\beta+2\alpha} \times U_{\beta+3\alpha}  
 \cong F \times E \times E \times F \]
can be identified  with the space of ``twisted cubes'' i.e. quadruples $(a,e,f,b)$ where $a,b\in F$ and $e,f\in E$. The cube 
\[ 
v_E=(1,0,0,-1) 
\] 
is called the distinguished cube. Its stabilizer in $M_E$ can be easily computed using Galois descent: 
\[ 
 \Stab_{{M}_E}(v_{E} ) \cong 
  E^1 \rtimes (\Z/2\Z)\] 
 where $E^1$ denotes the group of norm one elements in $E^{\times}$. In this isomorphism, $\alpha\in E^1$ corresponds to  
\[    \left( \begin{array}{cc}
\alpha & \\
  & \alpha^{-1} \end{array}  \right) \in \GL_2(E)^{\det} \] 
 and the nontrivial element in $\Z/2\Z$ corresponds to 
 \[ 
 w= \left( \begin{array}{cc}
0 & 1 \\
1  & 0 \end{array}  \right).
 \] 
Note that $P_E \cap G_2$ is the 
 Heisenberg maximal parabolic $P_0 = M_0N_0$ of $G_2$, with 
 \[ M_0  = G_2 \cap M_E \cong GL_2 \quad \text{ and } \quad N_0 = G_2 \cap N_E.\]
\vskip 10pt



 \vskip 15pt

\subsection{\bf  The 3-step parabolic $Q_E$.}  \label{SS:QE}
Now we come to the parabolic $Q_E$. The unipotent radical $U_E$ has a filtration
\[  \{1\} \subset U_E^{(1)} \subset U_E^{(2)} \subset U_E \]
such that 
 \[  U_E^{(1)} = U_{\beta_0} \times U_{\beta_0 - \beta} \]
is the center of $U_E$.   Further, 
\[  U_E^{(2)} = [U_E,U_E] =
 U_{\beta_0} \times U_{\beta_0 - \beta} \times U_{2\alpha+\beta} \]
is the commutator subgroup of $U_E$ and is abelian. 
In particular, $U_E$ is a 3-step unipotent group; hence we call $Q_E$ the 3-step parabolic.
Note that $Q_0= G_2 \cap Q_E = L_0\cdot U_0$ is the 3-step maximal parabolic of $G_2$, with 
\[ L_0 = G_2 \cap L_E \cong GL_2 \text{ and} \quad U_0 = G_2 \cap U_E. \] 
 One has an isomorphism
\[ L_E  \cong (\GL_2 \times \Res_{E/F} \mathbb{G}_m)^{\det} = \{ (g, e):  \det(g) \cdot N_{E/F}(e)  =1 \}.\]


\subsection{Nilpotent orbits} 
 Assume that $E$ is a field. In this subsection, we shall describe the nilpotent orbits of ${\mathrm Lie}(G_E)(F) = \mathfrak{g}_E(F)$ and the centralizers of the nilpotent elements.
 \vskip 5pt
 
 Let $t_E(F) = {\mathrm Lie}(T_E)(F)$ be the maximal  toral subalgebra in $\mathfrak g_E(F)$. 
  Let $e$ be a nilpotent element in $\mathfrak g_E(F)$ belonging to a nilpotent $G_E(\bar F)$-orbit $\Omega$. 
 By the Jacobson-Morozov theorem, the element $e$ is a member of an $sl_2$-triple $(f,h,e)$ defined over $F$, so that
  $h$ is a semi-simple element such that $[h,e]=2e$. 
We can assume that $h \in  \mathfrak t_E(\bar F)$ and lies in the positive chamber.  Then the values of the simple roots on $h$ are nonnegative integers and 
give a marking of the Dynkin diagram of type $D_4$; this marking parameterizes the orbit $\Omega$.  Note that the marking of the Dynkin diagram must necessarily be invariant under $\Aut(\Delta)$.  In fact, this condition is necessary and sufficient  (see \cite{Dj})  for a 
nilpotent orbit in $\mathfrak g_E(\bar F)$ to be defined over $F$ and to have an $F$-rational point. 
\vskip 5pt

The semisimple element  $h$ gives a $\mathbb Z$-grading $\mathfrak g_E=\bigoplus \mathfrak g_{E,i}$, with $e\in \mathfrak g_{E,2}$. 
Let $P_e=M_eN_e$ be the parabolic group such that the Lie algebra of $M_e$ is $\mathfrak g_{E,0}$.  By a result of Kostant, 
the centralizer $Z_{M_e}(e)$ of $e$ in $M_e$ is the reductive part of $Z_{G_E}(e)$. Moreover, by Galois cohomology,
the nilpotent $G_E(F)$-orbits contained in  $\Omega(F)$  are parametrized by
\[  {\Ker}(H^1(F, Z_{M_e}(e))\rightarrow H^1(F,G_E)). \]

We now list all nilpotent orbits $\Omega$ defined over $F$ and the corresponding $Z_{M_e}(e)$ (the reductive part of the centralizer $Z_{G_E}(e)$). 
First, we have  three Richardson orbits corresponding to the following diagrams:

\begin{picture}(400,110)(-10,25)

\put(81,73){\line(2,-3){14}}

\put(30,72){2}

\put(42,76){\circle{6}}
\put(45,76){\line(1,0){30}}
\put(79,76){\circle{6}}

\put(86,72){2}

\put(97,103){\circle{6}}
\put(97,49){\circle{6}}

\put(81,73){\line(2,-3){14}}
\put(81,79){\line(2,3){14}}

\put(103,99){2}
\put(103,45){2}

\put(201,73){\line(2,-3){14}}

\put(150,72){2}

\put(162,76){\circle{6}}
\put(165,76){\line(1,0){30}}
\put(199,76){\circle{6}}

\put(206,72){0}

\put(217,103){\circle{6}}
\put(217,49){\circle{6}}

\put(201,73){\line(2,-3){14}}
\put(201,79){\line(2,3){14}}

\put(223,99){2}
\put(223,45){2}

\put(321,73){\line(2,-3){14}}

\put(270,72){0}

\put(282,76){\circle{6}}
\put(285,76){\line(1,0){30}}
\put(319,76){\circle{6}}

\put(326,72){2}

\put(337,103){\circle{6}}
\put(337,49){\circle{6}}

\put(321,73){\line(2,-3){14}}
\put(321,79){\line(2,3){14}}

\put(343,99){0}
\put(343,45){0}

\end{picture}

\noindent The first two diagrams correspond to the regular and the subregular orbit respectively, and the reductive part of the centralizer is 
the center of $G_E$ in each case. The third case is the most interesting. In this case $Z_{M_e}(e)$ is generally disconnected and its identity component is a 2-dimensional torus. In fact, $Z_{M_e}(e) =\Aut_E(C)$ where $C$ is an $E$-twisted composition algebra of $E$-dimension 2 (see later for this notion). 
We also have the three orbits corresponding to the following diagrams: 

\begin{picture}(400,110)(-10,25)

\put(81,73){\line(2,-3){14}}

\put(30,72){1}

\put(42,76){\circle{6}}
\put(45,76){\line(1,0){30}}
\put(79,76){\circle{6}}

\put(86,72){0}

\put(97,103){\circle{6}}
\put(97,49){\circle{6}}

\put(81,73){\line(2,-3){14}}
\put(81,79){\line(2,3){14}}

\put(103,99){1}
\put(103,45){1}

\put(201,73){\line(2,-3){14}}

\put(150,72){0}

\put(162,76){\circle{6}}
\put(165,76){\line(1,0){30}}
\put(199,76){\circle{6}}

\put(206,72){1}

\put(217,103){\circle{6}}
\put(217,49){\circle{6}}

\put(201,73){\line(2,-3){14}}
\put(201,79){\line(2,3){14}}

\put(223,99){0}
\put(223,45){0}

\put(321,73){\line(2,-3){14}}

\put(270,72){0}

\put(282,76){\circle{6}}
\put(285,76){\line(1,0){30}}
\put(319,76){\circle{6}}

\put(326,72){0}

\put(337,103){\circle{6}}
\put(337,49){\circle{6}}

\put(321,73){\line(2,-3){14}}
\put(321,79){\line(2,3){14}}

\put(343,99){0}
\put(343,45){0}

\end{picture}

The first two orbits correspond to a short root  $\varphi: \mathfrak{sl}_2(E) \rightarrow \mathfrak g_E(F)$ embedding and a long root embedding $\varphi: \mathfrak{sl}_2(F) \rightarrow \mathfrak g_E(F)$ respectively. The reductive part of the centralizer is isomorphic to $\SL_2(F) \times Z$ and $\SL_2(E)$, 
respectively. The last diagram corresponds to the trivial orbit. 

Summarizing our findings, if $F$ is a local field, then $\Omega(F)$ consists of a single $G_E(F)$-orbit, except in one case when $G_E(F)$-orbits in $\Omega(F)$ are parameterized by $E$-isomorphism classes of $E$-twisted composition algebras $C$ of $E$-dimension 2.

\vskip 10pt

\subsection{\bf Unipotent orbits of ${^L}G_E$.}  \label{SS:uni}
We also need a description of the conjugacy classes of maps
\[  \SL_2(\C)  \longrightarrow {^L}G_E  \longrightarrow G_E^{\vee}  \rtimes S_3 \]
which are invariant under the $S_3$-action.  
These correspond to unipotent conjugacy classes of $G_E^{\vee} = \PGSO_8(\C)$ which are stable under the action of $S_3$.   
As in the previous subsection, these unipotent conjugacy classes  in turn correspond to markings of the $D_4$ Dynkin diagram which are invariant under the $S_3$-action.
In particular, such markings have been enumerated in the previous subsection.
\vskip 5pt

\vskip 15pt
\section{\bf Arthur Parameters of $\Spin_8^E$}  \label{S:arthur}
In this section, we shall enumerate the (elliptic) Arthur parameters for $G_E$ and single out a particularly interesting family of Arthur parameters.
Thus, in this section, we assume that $F$ is a number field and $E$ is a cubic field extension of $F$.

\vskip 5pt

\subsection{\bf A-parameters.}
An A-parameter for $G_E$ is a $G_E^{\vee}$-conjugacy class of homomorphism 
\[  \psi:  L_F \times \SL_2(\C) \longrightarrow   {^L}G_E = G_E^{\vee} \rtimes_{\rho_E}  {\Gal}(\overline{F}/F) \longrightarrow G_E^{\vee} \rtimes S_3,   \]
such that $pr_{S_3}  \circ \psi|_{L_F}  = \rho_E$, where $pr_{S_3}$ stands for the projection 
\[  pr_{S_3} :  G_E^{\vee}  \rtimes S_3   \longrightarrow S_3. \]
In particular, $\psi|_{\SL_2(\C)}$ is of the type  considered in Section \ref{SS:uni}.
 \vskip 5pt
 
 For each place $v$ of $F$, we have a conjugacy class of embeddings $L_{F_v}  \hookrightarrow L_F$, from which we obtain by restriction a local A-parameter
 \[  \psi_v:  L_{F_v} \times SL_2(\C) \longrightarrow G_E^{\vee}  \rtimes S_3. \]
 
 \vskip 5pt
\subsection{\bf Component groups.}
 For an A-parameter $\psi$, we set
 \[  S_{\psi}  =  \pi_0\left(  Z_{G_E^{\vee}}(\mathrm{Im}(\psi) )  \right). \]
 This is the global component group of $\psi$, and we say that $\psi$ is elliptic if $S_{\psi}$ is finite.
 Likewise, we have the local component group $S_{\psi_v}$.  There is a natural diagonal map
 \[  \Delta: S_{\psi}  \longrightarrow S_{\psi,\A} :=  \prod_v  S_{\psi_v}. \]
 Hence there is an induced pullback map
 \[  \Delta^*:   {\Irr} S_{\psi, \A}  \longrightarrow R(S_{\psi}),  \]
where $R(S_{\psi})$ denotes the (Grothendieck) representation ring of $S_{\psi}$.
\vskip 5pt

\subsection{\bf Arthur's conjecture.} \label{SS:arthur}
We briefly recall Arthur's conjecture.  Associated to each elliptic  A-parameter $\psi$, one expects to have the following:
\vskip 5pt

\begin{itemize}
\item for each place $v$ of $F$, a finite packet 
\[  \Pi_{\psi_v}  = \{  \pi_{\eta_v}:  \eta_v  \in {\Irr}  S_{\psi_v} \} \]
of unitary representations of finite length (possibly zero), indexed by the irreducible characters of the local component group $S_{\psi_v}$.

\item set 
\[  \Pi_{\psi} = \{  \pi_{\eta} = \bigotimes_v \pi_{\eta_v}:  \eta = \otimes_v \eta_v  \in \mathrm{Irr} S_{\psi,\A} \}, \]
and
\[  m_{\eta}  = \langle \Delta^* (\eta),  \epsilon_{\psi}  \rangle_{S_{\psi}} \]   
where $\epsilon_{\psi}$ is a certain quadratic character of $S_{\psi}$ (whose definition we won't recall here).
Then the automorphic discrete spectrum $L^2_{disc} $ of $G_E$ contains a submodule isomorphic to
\[  L^2_{\psi}  :=   \bigoplus_{\eta \in {\Irr} S_{\psi,\A} }  m_{\eta}  \cdot \pi_{\eta}. \]
\end{itemize}
Moreover, we have:
\[  L^2_{disc}  =  \bigoplus_{\psi}  L^2_{\psi} \]
where the sum runs over equivalence classes of elliptic A-parameters $\psi$. 
\vskip 5pt

\subsection{\bf Enumeration.}
In view of the above discussion, there are 6 families of A-parameters for $G_E$, according to the type of $\psi|_{\SL_2(\C)}$. We list them below, together with the component group $S_{\psi}$:
\vskip 5pt

\begin{itemize}
\item[(i)] $\psi|_{\SL_2(\C)}$ is the regular orbit:  $S_{\psi}$ is trivial and the resulting A-packet consists of the trivial representation (both locally and globally).

\item[(ii)]  $\psi|_{\SL_2(\C)}$ is the subregular orbit:  $S_{\psi}$ is trivial and the resulting local A-packet consists of the minimal representation.

\item[(iii)]  $\psi|_{\SL_2(\C)}$ is given by:
\[  \psi:  \SL_2(\C) \longrightarrow \SO_3(\C) \subset \SL_3(\C) \subset G_2(\C)  \subset G_E^{\vee}.  \]
This is the case of interest in this paper and we shall give a more detailed discussion in the next subsection.


\item[(iv)] $\psi|_{\SL_2(\C)}$ is given by
\[  \psi:  \SL_2(\C)  \hookrightarrow \SL_2(\C)  \times \SL_2(\C) \times \SL_2(\C)  \longrightarrow M_E^{\vee}  \subset G_E^{\vee}, \]
where the first map is the diagonal embedding.

\item[(v)]  $\psi|_{\SL_2(\C)}$ is a root $\SL_2$: we shall discuss this case briefly as well.

\item[(vi)]  $\psi|_{\SL_2(\C)}$ is the trivial map: this is the tempered case.  
\end{itemize}
\vskip 5pt

\subsection{\bf The case of interest.}   \label{SS:interest}
Now we examine the case of interest  (case (iii) above) in greater detail. The centralizer of $\psi(\SL_2(\C))$ in $G_E^{\vee}$  is isomorphic to the subgroup
\[ S \rtimes S_2  = \{ (a,b,c) \in (\C^{\times})^3: abc =1 \}  \rtimes S_2, \]
where the nontrivial element of $S_2$ acts on $S$ by inverting.  
Moreover, the group $S_3 = \Aut(\Delta)$ commutes with $\psi(\SL_2(\C))$ and $S_2$ and acts on $S$ by permuting the coordinates.  
Thus we have an embedding
\[  S \rtimes  (S_2 \times S_3)  \longrightarrow G_E^{\vee} \rtimes S_3.  \]
\vskip 15pt

To give an A-parameter $\psi$ of this type is thus equivalent to giving a map
\[  \psi:  L_F \longrightarrow S \rtimes (S_2 \times S_3). \]
 The composition of $\psi$ with the projection to $S_2 \times S_3$ gives a homomorphism $L_F \rightarrow S_2 \times S_3$ and thus determine an \'etale quadratic algebra $K$ and the fixed  \'etale cubic algebra $E$.  We shall say that $\psi$ is of type $(E, K)$. 
 \vskip 5pt

 To give an A-parameter of type $(E, K)$ amounts to giving a L-homomorphism
 \[ L_F  \longrightarrow  S \rtimes_{\rho_E \times \rho_{K} } W_F.  \]
 Now the group $S \rtimes_{\rho_E \times \rho_{K} } W_F$ is the L-group of a torus  
 \[  \tilde{T}_{E, K} =
  \{ x\in  (E \otimes_F K)^{\times}:  N_{E \otimes K / E}(x)  \in F^{\times}\} /  K^{\times}.   \]
  As shown in \cite{GS2},  this torus is the identity component of the $E$-automorphism group of any rank 2 $E$-twisted composition algebra $C$ with quadratic invariant $K_C $ satisfying 
  \[  [K_E] \cdot [K] \cdot [K_C] =1 \in F^{\times}/F^{\times 2}. \]
  By an exceptional Hilbert Theorem 90 \cite[Theorem 11.1]{GS2}, one has
  \[  \tilde{T}_{E,K}  \cong T_{E, K_C}  :=  \{ x\in  (E \otimes_F K_C)^{\times}:  N_{E \otimes K_C / E}(x)  = 1 = N_{E \otimes K_C/K_C}(x) \}. \]   
  Thus to give an A-parameter of type $(E, K)$ is to give a L-parameter for the torus $\tilde{T}_{E, K}$, taken up to conjugation by $S \rtimes S_2$.  In other words, it is to give an automorphic character of $\tilde{T}_{E,K}$ up to inverse.
\vskip 5pt

This suggests that the A-packet $\Pi_{\psi_v}$ or $\Pi_{\psi}$ can be constructed as a ``lifting" from $T_{E,K}$ to $G_E$.  The goal of this paper is to carry out such a construction, using the fact that there is a dual pair
\[  H_C \times G_E  \subset \Aut(E_6^J)   \]
where $H_C$ is the automorphism group of a rank $2$ $E$-twisted composition algebra (whose identity component is $\tilde{T}_{E,K}$) and  $E_6^J$ is an adjoint group of type $E_6$ (depending on a Freudenthal-Jordan algebra $J$ with $K_J = K$; see later).
\vskip 10pt

\subsection{\bf An example.}  \label{SS:example}
 The simplest A-parameter of type $(E, K)$ is determined by the natural map
 \[  \begin{CD}
 L_F @>\rho_{K} \times \rho_E>>  S_2 \times S_3  @>>> S \rtimes (S_2 \times S_3) @>>> G_E^{\vee}  \rtimes S_3. \end{CD} \]
 We denote this special A-parameter by $\psi_{E,K}$. 
Its global component group is thus
\[  S_{\psi_{E, K}}  = \begin{cases}
 \mu_3  \rtimes S_2  \cong S_3 \text{  if $K = F \times F$;} \\
 S_2 \text{  if $K$ is a field.} \end{cases} \]
 
 \vskip 5pt
 The local component groups $S_{\psi_{E, K, v}}$ are a bit more involved to describe, as they depend on the type of $E_v$ and $K_{v}$.  We list them  in the following table.
 \vskip 5pt
 
 \begin{center}
 \begin{tabular}{|c|c|c|}
\hline 
$E_v$ &  $K_v$ &  $S_{\psi_{E_v, K_{v}}}$     \\
& & \\
\hline 
field & field & $S_2$  \\
& & \\
\hline 
field & split & $S_3$  \\
& & \\
\hline 
$F_v \times K_{E,v}$  & $K_v$ splits  or $K_v  = K_{E,v}$ & $S_2$  \\
& & \\
\hline 
$F_v \times K_{E,v}$ & $K \ne K_{E,v}$ is a field & $\mu_2 \times S_2$ \\
& & \\
\hline 
$F_v \times F_v \times F_v$ & field  & $(\mu_2 \times \mu_2)  \times S_2$  \\
& & \\
\hline 
$F_v \times F_v \times F_v$ & split & $S_2$  \\
& & \\
\hline
\end{tabular}
\end{center} 
\vskip 5pt

Let's see what Arthur's conjecture implies for this particular A-parameter, specialising to the case when $K = F \times F$ is split:
\vskip 5pt

\begin{itemize}
\item if $E_v$ is a field, then
\[  \Pi_{\psi_{E, K,v}} = \{  \pi_{1,v}, \pi_{r,v}, \pi_{\epsilon,v} \} \]
\vskip 5pt

\item if $E_v = F_v \times K_{E,v}$ or $F_v^3$, then
\[  \Pi_{\psi_{E, K,v}} = \{  \pi_{1,v}, \pi_{\epsilon,v} \}. \] 
\end{itemize}
For  appropriate disjoint finite subsets $\Sigma_r$ and $\Sigma_{\epsilon}$ of the set of places of $F$, we thus have the representation
\[  \pi_{\Sigma_r, \Sigma_{\epsilon}}  =\left(  \bigotimes_{v \in \Sigma_r}  \pi_{r,v} \right) \otimes \left(  \bigotimes_{v \in \Sigma_{\epsilon} } \pi_{\epsilon,v} \right) \otimes \left( 
\bigotimes_{v \notin \Sigma_r \cup \Sigma_{\epsilon}}  \pi_1 \right) \]
in the global A-packet $\Pi_{\psi_{E, K}}$.  The multiplicity attached to this representation is the multiplicity of the trivial representation of $S_3$ in 
$(r^{\otimes |\Sigma_r|} )\otimes (\epsilon^{\otimes |\Sigma_{\epsilon}|})$.  A short computation using the character table of $S_3$ shows that this multiplicity is equal to
\[ \begin{cases}
 \frac{1}{6} \cdot (2^{|\Sigma_r|}  +  2 \cdot (-1)^{|\Sigma_r|}), \text{  if $\Sigma_r$ is nonempty;}\\
 \frac{1}{2} \cdot (1 + (-1)^{|\Sigma_{\epsilon}|}), \text{  if $\Sigma_r$ is empty.} \end{cases} 
  \]
 We shall see later how to construct this many automorphic realisations of $  \pi_{\Sigma_r, \Sigma_{\epsilon}}$, using exceptional theta correspondence.
\vskip 5pt

\subsection{\bf Root $\SL_2$}  \label{SS:rootSL2}
We consider briefly the case when $\psi|_{\SL_2(\C)}$ is a root $\SL_2$. 
We may assume that $\psi(\SL_2(\C))$ is the $\SL_2$ corresponding to the highest root which is $S_3$-invariant. Then the centralizer of $\psi(SL_2(\C))$ in ${^L}G_E$ is 
\[  ({^L}M_E)^{\mathrm{der}} \cong   \left( \SL_2(\C) \times \SL_2(\C) \times \SL_2(\C) \right) / \{ (a,b,c) \in \mu_2^3: abc =1 \}. \]
This is the L-group of  
\[   H  = \GL_2(E)^{\det}/  F^{\times}. \]
Hence to give such an elliptic A-parameter is to give an L-parameter
\[  \phi :  L_F \longrightarrow  {^L}H \]
which corresponds to an L-packet of $H = \GL_2(E)^{\det}/  F^{\times}$, or more simply to a cuspidal representation of $\GL_2(E)$ (with trivial restriction to $F^{\times}$).
\vskip 5pt

As we shall see in \S \ref{SS:4and8}, the group $H$ is  the $E$-automorphism group of a $E$-twisted composition algebra of $E$-rank 4. Indeed, given any $E$-twisted composition algebra $C$ of $E$-rank $4$, its automorphism group $H_C$ is an inner form of $H$ above and there is a dual pair (see \S \ref{SS:seesawdp})
\[  H_C \times G_E   \subset E_7^B, \]
where  $E_7^B$ is a group of type $E_7$ (associated to a quaternion algebra $B$). This suggests that the A-packets associated to $\psi$ as above can be constructed via exceptional theta lifting from $H_C$. We do not discuss this construction in this paper, but in Appendix A, we shall lay some algebraic and geometric groundwork to facilitate the further study of this case. In particular, we determine in Appendix A the theta lifting of the trivial representation of $H$ to $G_E$. This is needed for our paper  \cite{GS3}. 

\vskip 5pt

  \vskip 15pt

\section{\bf Twisted Composition and Freudenthal-Jordan Algebras}  \label{S:TCFJ}

As we alluded to in the introduction and \S \ref{SS:interest} above, the theory of {\em twisted composition algebras}  plays a fundamental role in this paper. In this section, we shall briefly recall this notion and its relation with Freudenthal-Jordan algebras. This theory is largely due to  Springer, though we shall need to supplement it with some results and observations of our own needed for our  application.
\vskip 5pt

\subsection{\bf Twisted composition algebra.}
For a given \'etale cubic $F$-algebra $E$, an $E$-twisted composition algebra $C$ is a vector space over $E$, equipped with a pair of tensors $(Q,\beta)$ where 
\begin{itemize}
\item $Q: C \longrightarrow E$ is a non-degenerate quadratic form on $C$, and  
\item $\beta: C \rightarrow C$ is a quadratic map
\end{itemize}
 such that 
\[ 
\beta(e \cdot x) =e^{\#} \beta(x), \quad Q(\beta(x))=Q(x)^{\#} \quad \text{and} \quad N_C(x):=b_Q(x,\beta(x)) \in F, \]
for all $e \in E$ and $x\in C$, where $b_Q(x,y)=Q(x+y)-Q(x)-Q(y)$ and $e^{\#}$ is defined in (\ref{E:sharp0}).
\vskip 5pt

Given two $E$-twisted composition algebras $(C, Q,\beta)$ and $(C', Q', \beta')$, an $E$-morphism between them is an $E$-linear map $\phi: C \longrightarrow C'$ such that
\[  Q' \circ \phi = Q \quad \text{and} \quad \beta' \circ \phi = \phi \circ \beta. \]
The automorphism group $\Aut_E(C, Q,\beta)$ of a twisted composition algebra $(C, Q,\beta)$ is an algebraic group over $F$.
 \vskip 5pt

These algebras were introduced by Springer and it is a fact that $\dim_EC  = 1$, $2$, $4$ or $8$. In this paper, we shall chiefly be concerned 
with the case where $\dim_EC =2$, though the case where $\dim_E C =  1$ or $4$ will also be considered. 
\vskip 5pt

\subsection{\bf Rank 1 case} \label{SS:rank1}
 When $\dim_E C  =1$, we may write $C = E \cdot v_0$ for a basis vector $v_0 \in C$. It is not difficult to see that the tensors  $(Q, \beta)$ are of the form
\[  Q_a( x \cdot v_0) = a^{\#} \cdot x^2 \quad \text{and} \quad \beta_a(x \cdot v_0)  =a \cdot  x^{\#}  \cdot v_0 \]
for some $a \in E^{\times}$.  We shall denote this rank $1$ $E$-twisted composition algebra by $C_a$. Its automorphism group is
\[  \Aut(C_a) = {\Res}^1_{E/F}(\mu_2) = \mathrm{Ker} ( N_{E/F}: {\Res}_{E/F}(\mu_2) \rightarrow \mu_2). \]
We have encountered this group before in \S \ref{SS:center}, as the center of the quasi-split group $G_E$, whence it was denoted by $Z_E$. 
The various interpretations of $Z_E$ account for the intricate and sometimes surprising connections between different objects we will encounter later on.
\vskip 5pt

\begin{lemma} \label{L:rank1}
The $E$-isomorphism classes of rank $1$, $E$-twisted composition algebras are parametrized by $E^{\times}/F^{\times} E^{\times 2}$ under the construction $a \mapsto C_a$.
\end{lemma}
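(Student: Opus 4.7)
I would prove the lemma in two parts: first, realize every rank $1$ $E$-twisted composition algebra as some $C_a$; second, classify when two such $C_a$, $C_{a'}$ are $E$-isomorphic.

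For the first part, fix a basis vector $v_0$ of $C$. The $E$-linearity combined with $\beta(e \cdot x) = e^\# \beta(x)$ forces $\beta(xv_0) = d\, x^\# v_0$ for some $d \in E^\times$ (determined by $\beta(v_0) = d v_0$), and nondegeneracy forces $Q(xv_0) = c\, x^2$ for some $c \in E^\times$. Substituting these expressions into the axioms $Q(\beta(x)) = Q(x)^\#$ and $N_C(x) = b_Q(x,\beta(x)) \in F$, and using the standard identities $(xy)^\# = x^\# y^\#$, $(x^\#)^\# = N(x)\, x$, and $t^\# = t^2$ for $t \in F$, I would extract the two relations $cd^2 = c^\#$ and $t := cd \in F^\times$. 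Substituting $c = t/d$ into $c^\# = cd^2 = td$ then yields $t^2/d^\# = td$, whence $d\, d^\# = t$, i.e.\ $N(d) = t$; therefore $c = N(d)/d = d^\#$. Setting $a := d$ identifies $(C,Q,\beta)$ with $C_a$. This calculation is the main (and essentially the only) substantive step of the argument.

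For the second part, an $E$-isomorphism $\phi : C_a \to C_{a'}$ is determined by $\phi(v_0) = \lambda v_0'$ for some $\lambda \in E^\times$. The condition $\beta' \circ \phi = \phi \circ \beta$ becomes $\lambda^\# a' = a \lambda$, i.e.\ $a/a' = N(\lambda)/\lambda^2$, and sharping this identity gives $a^\# = \lambda^2 (a')^\#$, which is exactly the $Q$-compatibility; so there is only one independent constraint. Thus $C_a \cong C_{a'}$ if and only if $a/a'$ lies in the subgroup $H := \{\, N(\lambda)/\lambda^2 : \lambda \in E^\times\,\}$ of $E^\times$ (it is a subgroup, being the image of a group homomorphism). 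The inclusion $H \subset F^\times E^{\times 2}$ is immediate from $N(\lambda)/\lambda^2 = N(\lambda) \cdot (\lambda^{-1})^2$. Conversely, $H$ contains $F^\times$ (take $\lambda \in F^\times$: then $N(\lambda)/\lambda^2 = \lambda$) and, for each $e \in E^\times$, the element $e^2/N(e) = N(e^{-1})/(e^{-1})^2$ lies in $H$, whence $e^2 = (e^2/N(e)) \cdot N(e) \in H$. Thus $H \supset F^\times E^{\times 2}$, so the two subgroups coincide and the lemma follows.
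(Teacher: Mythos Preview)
Your proof is correct and follows essentially the same route as the paper's: both reduce the isomorphism question to the condition $a/a' \in H := \{\lambda^{\#}/\lambda : \lambda \in E^{\times}\}$ and then identify $H$ with $F^{\times} E^{\times 2}$. The only differences are cosmetic. You additionally supply the surjectivity step (every rank~$1$ algebra is some $C_a$), which the paper asserts just before the lemma without proof; and for the inclusion $F^{\times} E^{\times 2} \subset H$, the paper exhibits the single witness $e^2 f = (e^{\#} f)^{\#}/(e^{\#} f)$, whereas you handle $F^{\times}$ and $E^{\times 2}$ separately and combine them using that $H$ is a subgroup.
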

\vskip 5pt
\begin{proof}
For $a, b \in E^{\times}$, $C_a$ is isomorphic to $C_b$ if and only if there exists $\lambda \in E^{\times}$ such that
\[  Q_b(\lambda v_0) =  Q_a(v_0) \quad \text{and} \quad \beta_b(\lambda v_0)  = \lambda \cdot \beta_a(v_0),  \]
i.e.
\[  a^{\#}/ b^{\#} =  \lambda^2 \quad \text{and} \quad  a/b = \lambda^{\#}/ \lambda. \]
In fact, the first requirement above is implied by the second (on taking $\#$ on both sides). Now observe that
\[  \lambda^{\#}/ \lambda = N_{E/F}(\lambda) / \lambda^2 \in F^{\times} \cdot E^{\times 2} \]
and conversely, for any $e \in E^{\times}$ and $f \in F^{\times}$,
\[  e^2 \cdot f =  \frac{(e^{\#}f)^{\#}}{e^{\#}f}. \]
Hence, we deduce that  
\[  F^{\times} \cdot E^{\times 2} = \{ \lambda^{\#}/\lambda: \lambda \in  E^{\times} \}, \]
 so that
\[  C_a \cong C_b  \Longleftrightarrow a/b \in F^{\times} \cdot E^{\times 2}. \]
\end{proof}
\vskip 5pt

 The lemma can also be shown via cohomological means. Namely, by considering the long exact sequence associated to the short exact sequence of algebraic groups
\[ \begin{CD}
1 @>>> Z_E= {\Res}^1_{E/F} \mu_2 @>>> {\Res}_{E/F} \mu_2 @>N_{E/F}>>\mu_2 @>>> 1, \end{CD} \]
 one sees that
 \[  H^1(F, Z_E)  = {\Ker}(N_{E/F}: E^{\times}/E^{\times 2} \longrightarrow F^{\times}/ F^{\times 2}). \]
 Then \cite[Prop. 18.34]{KMRT} shows that the map $\#$ gives an isomorphism of $E^{\times}/ F^{\times} E^{\times 2}$  with the kernel above.
 \vskip 5pt

\vskip 10pt

\subsection{\bf Rank $2$ case.}  \label{SS:rank2}
Every twisted composition algebra $(E, C,Q, \beta)$ has a cubic invariant:  the \'etale cubic algebra $E$. 
 On the other hand, when $\dim_E C = 2$, one can attach to it a quadratic invariant, i.e. an \'etale quadratic $F$-algebra $K_C$. Indeed, $K_C$ is determined by the requirement that the discriminant quadratic algebra of $Q$ is $E \otimes_F K_C$. 
 In fact, $C$ can  be realized on $L := E\otimes K_C$ with $Q$ and $\beta$  given by 
\[ 
Q(x)= e \cdot N_{E\otimes K_C/E}(x)  \quad \text{and} \quad \beta(x) =\bar{x}^{\#} \cdot e^{-1} \cdot \bar \nu  
\] 
for some $e\in E^{\times}$ and  $\nu \in K_C^{\times}$ satisfying 
\[ N_{E/F}(e)=N_{K_C/F}(\nu). \]
 Here $\bar{x}$ and $\bar \nu$ refer to the action of the non-trivial automorphism of 
$K_C$ on $x$ and $\nu$.   We shall denote this rank $2$ $E$-twisted composition algebra by $C_{e,\nu}$.  
For a more detailed discussion of this, see \cite{GS2}.

\vskip 5pt

Given an $E$-twisted composition algebra $C= C_{e, \nu}$ as above, consider its automorphism group $H_C = \Aut_E(C) \subset \GL_E(L)$. 
One has a short exact sequence
  \[  \begin{CD}
 1 @>>> (\Aut_EC)^0   @>>>\Aut_E(C)    @>>> S_2 @>>> 1 \end{CD} \]
with
  \[ \Aut_E(C)^0(F)   = T_{E, K_C}(F) :=  \{ x \in L^{\times}:  N_{L/ E}(x)  = 1   \quad \text{and} \quad  N_{L / K_C} (x) = 1\}. \] 
The identity component $H_C^0 = \Aut_E(C)^0$ is a 2-dimensional torus over $F$ depending only on $E$ and $K_C$ and  as $(e,\nu)$ varies, the algebraic subgroups $H_{C_{e,\nu}}^0 \subset \GL_E(L)$ are physically the same subgroup $T_{E,K_C}$. The conjugation action of $S_2$ on $H_C^0$ is by inversion.  In particular, the center of $H_C$ is
\begin{equation} \label{E:ZEHC}
 (H_C^0)^{S_2}  = H_C^0[2] =   {\Res}^1_{E/F} \mu_2  = Z_E. \end{equation}
 Hence, we see yet another incarnation of the finite algebraic group $Z_E$; the consequence of this incarnation will be explained in \S \ref{SS:embedding 1-2} and \S \ref{SS:cohom}.

\vskip 5pt
  The torus  $H_C^0 = \Aut_E(C)^0$  can be interpreted as the group $\Aut_L(C)$ of $L$-linear automorphisms of $C$.   
   It was observed in \cite{GS2} that  $C_{e,\nu}$ and $C_{e', \nu'}$ are $L$-linearly isomorphic if and only if there exists $x \in L^{\times}$ such that 
\begin{equation} \label{E:Lisom}
  e/e' = N_{L/ E}(x) \quad \text{and} \quad  \nu/ \nu' = N_{L/ K_C}(x), \end{equation}
in which case, multiplication-by-$x$ gives an $L$-linear isomorphism $\phi_x: C_{e,\nu} \longrightarrow C_{e',\nu'}$.  Moreover, the isomorphism $\phi_x$ induces an isomorphism
\begin{equation} \label{E:Adphi}
  \mathrm{Ad}(\phi_x): \Aut_E(C_{e,\nu}) \longrightarrow \Aut_E(C_{e',\nu'}) \end{equation}
It is easy to check that the restriction of this isomorphism to the identity components is the identity map on $T_{E, K_C}$.
In any case, we have shown:
\vskip 5pt

\begin{lemma}  \label{L:rank2}
The $L$-isomorphism classes of $E$-twisted composition algebras of rank $2$ and quadratic invariant $K_C$ are parametrized by 
\[  (E^{\times} \times K_C^{\times})^0 /  \mathrm{Im}( L^{\times} )   \]
where 
\[  (E^{\times} \times K_C^{\times})^0 = \{ (e, \nu) \in E^{\times} \times K_C^{\times}: N_{E/F}(e) = N_{K_C/F}(\nu) \} \]
and the map $L^{\times} \longrightarrow E^{\times} \times K_C^{\times}$ is given by 
\[  x \mapsto ( N_{L/ E}(x) , N_{L/ K_C}(x)).\] 
\end{lemma}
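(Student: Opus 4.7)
The plan is to carry out this classification in two steps: first, show that the set $(E^\times \times K_C^\times)^0$ surjects onto the $L$-isomorphism classes via the parametrization $(e,\nu) \mapsto C_{e,\nu}$, and second, identify precisely which pairs give rise to $L$-isomorphic algebras.

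For surjectivity, I would invoke the structural result recalled just above the lemma: any rank $2$ $E$-twisted composition algebra with quadratic invariant $K_C$ can be realized on the underlying $L$-module $L = E \otimes_F K_C$ with tensors
\[
Q(x) = e \cdot N_{L/E}(x), \qquad \beta(x) = \bar{x}^{\#} \cdot e^{-1} \cdot \bar{\nu},
\]
for some pair $(e,\nu)$ subject to the compatibility $N_{E/F}(e) = N_{K_C/F}(\nu)$, which is exactly the defining condition of $(E^\times \times K_C^\times)^0$. Thus every isomorphism class is represented by some $C_{e,\nu}$ with $(e,\nu)$ in this set.

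For the identification of the equivalence relation, let $\phi : C_{e,\nu} \to C_{e',\nu'}$ be an $L$-linear isomorphism. Since the underlying $L$-module of each $C_{e,\nu}$ is the free rank one module $L$, $\phi$ is given by multiplication by a unit $x \in L^\times$. I would then translate the two conditions $Q' \circ \phi = Q$ and $\beta' \circ \phi = \phi \circ \beta$ into explicit equations in $x$. The first condition, $e' \cdot N_{L/E}(x \cdot y) = e \cdot N_{L/E}(y)$ for all $y \in L$, immediately yields $e/e' = N_{L/E}(x)$. Substituting into the second condition $\beta'(xy) = x \beta(y)$, and using that $(xy)^{\#}$ in $L$ over $E$ relates to $N_{L/E}(x)$ via a standard identity, I would reduce it to the single relation $\nu/\nu' = N_{L/K_C}(x)$. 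Conversely, any $x \in L^\times$ satisfying these two norm equations produces a valid $L$-linear isomorphism by direct verification. This identifies the equivalence relation with the image of the map $x \mapsto (N_{L/E}(x), N_{L/K_C}(x))$.

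The only genuinely delicate point is the algebraic manipulation of $\beta$ under multiplication by $x$, because $\beta$ is only quadratic (involving the Galois conjugate over $K_C$ and the sharp operation over $E$), so one must be careful that the two norm conditions are exactly what is needed --- neither more nor less. Once this is verified the lemma follows by passing to the quotient.
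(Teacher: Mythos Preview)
Your proposal is correct and follows essentially the same approach as the paper. The paper's argument consists of two ingredients stated just before the lemma: the structural realization of every such algebra as $C_{e,\nu}$ on $L = E \otimes_F K_C$, and the observation (cited from \cite{GS2}, equation (\ref{E:Lisom})) that $C_{e,\nu}$ and $C_{e',\nu'}$ are $L$-isomorphic via multiplication by $x$ if and only if $e/e' = N_{L/E}(x)$ and $\nu/\nu' = N_{L/K_C}(x)$; you simply verify this second ingredient directly rather than citing it.
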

\vskip 5pt

This lemma can also be seen cohomologically. As was observed in \cite{GS2},  there is a short exact sequence of algebraic tori
\[  \begin{CD}
1 @>>> T_{E, K_C} @>>> {\Res}_{L/F} \mathbb{G}_m @>N_{L/E} \times N_{L/K_C}>>  ({\Res}_{E/F} \mathbb{G}_m \times {\Res}_{K_C/F} \mathbb{G}_m)^0 @>>> 1 \end{CD} \]
giving rise to an associated long exact sequence 
 \[  \begin{CD}
1@>>> T_{E, K_C}(F) @>>> L^{\times} @>>> (E^{\times} \times K_C^{\times})^0 @>>>  H^1(F, T_{E,K_C})  @>>> 1.  \\
@. @. @. @. @|  \\
 @. @. @. @. (E^{\times} \times K_C^{\times})^0 /  \mathrm{Im}(L^{\times}) 
\end{CD} \]

There is a natural action of $\Aut(K_C/F)$ (as group automorphisms) on $(E^{\times} \times K_C^{\times}) / \mathrm{Im}(L^{\times})$ with the action of the nontrivial element 
given by $(e, \nu) \mapsto (e, \bar{\nu})$. The orbits under this action parametrize the $E$-isomorphism classes of $E$-twisted composition algebras of rank $2$ with quadratic invariant $K_C$. Observe that since $N_{E/F}(e) = \nu \cdot \bar{\nu}$, 
\[  (e, \bar{\nu})  = (e^{-1}, \nu^{-1}) \in (E^{\times} \times K_C^{\times})^0/ \mathrm{Im}(L^{\times}). \]
Hence, the action of $S_2 = \Aut(K_C/F)$  on $H^1(F,  T_{E, K_C})$ is by inversion, and its fixed subgroup $H^1(F, T_{E, K_C})^{S_2}$ is the 2-torsion subgroup $H^1(F, T_{E, K_C})[2]$. 
\vskip 5pt

 Finally, note that the map
\[  H_C(F) := \Aut_E(C)(F)  \longrightarrow S_2  \]
need not be surjective.  Indeed, 
\[  H_C(F) \ne  H_C^0(F) \Longleftrightarrow [C] \in H^1(F, T_{E, K_C})[2], \]
that is, the $L$-isomorphism class of $C$  is fixed by $\Aut(K_C/F)$.
\vskip 5pt

  \subsection{\bf Freudenthal-Jordan algebras.}
Twisted composition algebras are closely related to Freudenthal-Jordan algebras; see \cite[Theorem 37.10]{KMRT} for a precise definition. 
Let $J$ be a Freudenthal-Jordan algebra;  it is a cubic Jordan algebra, so that   every element $a\in J$ satisfies a characteristic polynomial 
\[ 
X^3 - T_J(a) X^2 + S_J(a) X - N_J(a)\in F[X]. 
\] 
The maps $T_J$ and $N_J$ are called the trace and norm maps of $J$ respectively. The element 
\[ 
a^{\#}= a^2-T_J(a) a +S_J(a) 
\] 
is called the adjoint of $a$ and satisfies $a\cdot a^{\#}= N_J(a)$. The cross product of two elements $a,b\in J$ is defined by 
\[ 
a\times b =(a+b)^{\#} - a^{\#} - b^{\#}. 
\] 
The trace form $T_J$ defines a nondegenerate bilinear form $\langle x, y\rangle =T_J(xy)$ on $J$. We shall identify 
$J$ and $J^{\ast}$ using this bilinear form.    Let $(x,y,z)$ be the symmetric trilinear form associated to the norm form $N_J$, so that $(x,x,x)=6 N_J(x)$.  
For any $x,y\in J$, one has
\[   \langle x \times y, z \rangle  = (x,y,z). \]

\vskip 5pt

An etal\'e cubic algebra $E$ is an example of a Freudenthal-Jordan algebra. In general, it is a fact that $\dim_F J  = 1$, $3$, $6$, $9$, $15$ or $27$. In this paper, we shall largely be interested in the case where $\dim_F J  = 9$, though the case where $\dim_F J = 15$ will also be considered. 
\vskip 5pt

The split Freudenthal-Jordan algebra of dimension $9$ is simply the Jordan algebra $M_3^+$ of $3 \times 3$-matrices.  Its automorphism group is 
\[ \Aut(M_3^+)  = \mathrm{PGL}_3 \rtimes S_2, \]
with the nontrivial element of $S_2$ acting by $a \mapsto a^t$.  Hence, isomorphism classes of Freudenthal-Jordan algebras are classified by $H^1(F, {\Aut}(M_3^+))$. Since there is a natural homomorphism
 \[  H^1(F, {\Aut}(M_3)^+)  \longrightarrow H^1(F, S_2), \]
 one sees that to every Freudenthal-Jordan algebra $J$, one can attach an invariant which is an \'etale quadratic algebra $K_J$; this quadratic invariant determines the inner class of the group $\Aut(J)^0$ of type $A_2$.  More generally, if $J$ is a 9-dimensional Freudenthal-Jordan algebra, then $\Aut(J)$ sits in a short exact
 \[  \begin{CD}
 1 @>>> (\Aut J)^0 @>>>\Aut J  @>>> S_2  @>>> 1 \end{CD} \]
 where $\Aut (J)^0$  is an adjoint group of type $A_2$.  Note that the map
  \[  H_J =  \Aut(J)(F) \longrightarrow S_2 \]
 need not be surjective. 

\vskip 5pt

As explained in \cite[Prop. 37.6 and Theorem 37.12]{KMRT} and \cite[\S 4.2]{GS2}, a Freudenthal-Jordan algebra $J$ of dimension $9$ over $F$  is obtained from a pair $(B, \tau)$, where 
$B$ is a central simple algebra over $K = K_J$ of dimension $9$ and $\tau$ is an involution of second kind on $B$, as the subspace $B^{\tau}$ of $\tau$-symmetric elements, equipped with the Jordan product $x \circ y = (xy+yx)/2$.  For a fixed \'etale quadratic algebra $K$, this construction gives an  essentially surjective faithful functor of groupoids:
\[  \{ \text{$K$-isomorphism classes of $(B,\tau)$} \} \longrightarrow \{ \text{$F$-isomorphism classes of $J$ with $K_J = K$}  \} \]
(where $\dim_K B = 9 = \dim_F J$); it is fully faithful and thus  an equivalence if we allow $F$-linear isomorphisms on $(B,\tau)$ and not just $K$-linear ones. Thus  $\Aut(J)^0 = \Aut_K(B,\tau)$ and there is an $S_2$-action on the source given by
\[  (B, \tau) \mapsto (B^{\mathrm{op}}, \tau),  \]
so that the fibers of the map are precisely the $S_2$-orbits (and hence have size $1$ or $2$).  Further, $\Aut(J)^0(F) = \Aut(J)(F)$ if and only if the fiber of $J$ has size $2$, i.e. ($B,\tau) \ncong (B^{\mathrm{op}},\tau)$.

 \vskip 10pt
 
\subsection{\bf Springer decomposition.} \label{SS:springer}
Twisted composition algebras are  related to Freudenthal-Jordan algebras by the Springer construction. Suppose  we have an algebra embedding
\[  i: E \hookrightarrow J. \]
Then, with respect to the trace form $T_J$, we have an orthogonal decomposition
\[  J = E \oplus C \]
where $C = E^{\perp}$.  For $e \in E$ and $x \in C$, one can check that $e \times x \in C$. Thus, setting
\[  e \cdot x :=  -e \times x \]
 equips $C$ with the structure of an $E$-vector space. Moreover, for every $x\in C$, write 
 \[  x^{\#} = ( - Q(x) , \beta(x)) \in E \oplus C =J \]
 where  $Q(x) \in E$ and $\beta(x) \in C$. In this way, we obtain a quadratic form $Q$ on $C$ and a quadratic map $\beta$ on $C$. Then, by  \cite[Theorem 38.6]{KMRT},
  the triple $(C,Q,\beta)$ is an $E$-twisted composition algebra over $F$. Conversely, given an $E$-twisted composition algebra $C$ over $F$, the space $E \oplus C$ can be 
given the structure of a Freudenthal-Jordan algebra over $F$, by  \cite[Theorem 38.6]{KMRT} again. We recall in particular that for $(a,x) \in E \oplus C$,
\begin{equation} \label{E:sharp}
 (a,x)^{\#} = (a^{\#} - Q(x),  \beta(x) - a \cdot x). \end{equation}
\vskip 5pt

 This construction gives a bijection
\[  \{\text{$E$- isomorphism classes of $E$-twisted composition algebras} \} \]
\[  \updownarrow \]
\[ \{ \text{$H_J$-conjugacy classes of pairs $(J,  i: E \hookrightarrow J)$} \}  \]
\vskip 5pt
\noindent where $J$ is a Freudenthal-Jordan algebra of dimension $9$ and  $i: E \hookrightarrow J$ is an algebra embedding. 
Moreover, this bijection induces an isomorphism
\[  H_C := \Aut_E(C)  \cong   \Aut( i: E \hookrightarrow J), \]
where the latter group is the pointwise stabilizer in $\Aut(J)$ of $i(E) \subset J$. In other words, the Springer construction is an equivalence of groupoids.  
 If an $E$-twisted composition algebra $C$ corresponds to an embedding $i:  E \hookrightarrow J$ under this equivalence,  then one has:
 \begin{equation} \label{E:biquad}
  [K_E] \cdot [K_C] \cdot [K_J]  = 1 \in F^{\times}/ F^{\times 2}. \end{equation}
\vskip 5pt

One consequence of the Springer construction is that it gives us an alternative description of the torus $T_{E, K_C}$. 
 It was shown in \cite{GS2} that there is an isomorphism (an exceptional Hilbert Theorem 90), 
\[  \Aut_E(C_{e,\nu})^0 = T_{E, K_C}      \cong  \tilde{T}_{E, K_J}   = \{ x\in  (E \otimes_F K_J)^{\times}:  N_{E \otimes K_J / E}(x)  \in F^{\times}\} /  K_J^{\times}    \]
when $J = E  \oplus C_{e,\nu}$.  We will next recall how this isomorphism arises. 
\vskip 5pt

\subsection{\bf An isomorphism of tori} \label{SS:isom-tori}
 
Given an $E$-twisted composition algebra $C$ corresponding to an embedding $\iota: E \hookrightarrow J$, let us pick a pair $(B,\tau)$ over $K_J$ such that $J = B^{\tau}$. 
The embedding $\iota$ gives rise to an embedding of $K_J$-algebras compatible with involutions of second kind:
 \[  \tilde{\iota}:  E \otimes_F K_J \longrightarrow  B,  \]
where we have used the involution on $E \otimes K_J$ induced by the nontrivial automorphism of $K_J/F$.
This induces an embedding of algebraic groups
\[ \tilde{\iota}_*: (E \otimes K_J)^{\times}/  K_J^{\times} \longrightarrow  PB^{\times} = \Aut_{K_J}(B) \]
whose image is precisely the pointwise stabilizer of $\tilde{\iota}$ in $\Aut_{K_J}(B)$.
 The map $\tilde{\iota}_*$ restricts to give an isomorphism
 \[  \tilde{T}_{E, K_J} \cong  \Aut_{K_J}(B,\tau, \tilde{\iota})  \subset  \Aut_{K_J}(B, \tau).  \]
where
\[  \tilde{T}_{E, K_J} = \mathrm{Ker} \left( N_{K_J/F}:  (E \otimes K_J)^{\times}/K_J^{\times} \longrightarrow E^{\times}/F^{\times} \right). \]
Since
 \[   \Aut_{K_J}(B,\tau, \tilde{\iota})   = \Aut_F(J,\iota)^0  = \Aut_E(C)^0, \]
 we see that the choice of a $(B,\tau)$ with $J = B^{\tau}$ gives an isomorphism of algebraic groups 
 \[  \tilde{T}_{E,K_J} \longrightarrow  H_C^0 = \Aut_E(C)^0. \]
 If one had chosen $(B^{op},\tau)$ instead, the resulting isomorphism is the composite of the one for $(B,\tau)$ with the inversion map.
 If it turns out that $(B, \tau) \cong (B^{\mathrm{op}},\tau)$, then these two isomorphisms are conjugate by an element of $H_C(F) \setminus H_C^0(F)$. Thus, each $E$-twisted composition algebra $C$ with quadratic invariant $K_C$ comes equipped with a pair of isomorphisms of algebraic groups
 \[  \iota_C, \iota_C^{-1}:  H_C^0 \longrightarrow \tilde{T}_{E, K_J}, \]
 where $[K_E] \cdot [K_C] \cdot [K_J] = 1 \in F^{\times}/F^{\times 2}$. This gives a canonical isomorphism
 \[  [\iota_C]:  H_C^0(F)/H_C^0(F)^2 \cong \tilde{T}_{E, K_J}(F)/\tilde{T}_{E, K_J}(F)^2. \]
 \vskip 5pt
 
 In particular, if we consider $C =C_{e, \nu}$ and $J = E \oplus C_{e,\nu}$, then 
  we obtain   a pair of isomorphisms of algebraic tori
 \begin{equation} \label{E:enu-isom}
   \iota_{e, \nu}, \,  \iota_{e,\nu}^{-1}:  T_{E, K_C}  \cong \tilde{T}_{E, K_J}. \end{equation}
 We have:
 \vskip 5pt
 
 \begin{lemma} \label{L:indep}
 The pair of isomorphisms  in (\ref{E:enu-isom}) is independent of the choice of $(e, \nu)$.
  \end{lemma}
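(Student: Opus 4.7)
The plan is to reduce to comparing $\iota_{e,\nu}$ and $\iota_{e',\nu'}$ for two pairs $(e,\nu)$ and $(e',\nu')$ representing the same rank $2$ $E$-twisted composition algebra, and to show that they agree (the remaining freedom within the ``pair'' coming from the choice of $(B,\tau)$ versus $(B^{\mathrm{op}},\tau)$, which simply inverts the isomorphism). By Lemma \ref{L:rank2} and equation (\ref{E:Lisom}), any two such pairs are related by an element $x \in L^{\times}$ satisfying $e/e' = N_{L/E}(x)$ and $\nu/\nu' = N_{L/K_C}(x)$, and the $L$-linear map $\phi_x: C_{e,\nu} \longrightarrow C_{e',\nu'}$ given by multiplication by $x$ is an isomorphism of twisted composition algebras. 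The key observation is that $\phi_x$ extends to an isomorphism
\[ \Phi_x := \mathrm{id}_E \oplus \phi_x :  J  = E \oplus C_{e,\nu} \longrightarrow  J' = E \oplus C_{e',\nu'} \]
of Freudenthal--Jordan algebras which is the identity on the $E$-summand; this is straightforward to verify from the formula (\ref{E:sharp}) for the sharp operation.

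Now fix a pair $(B, \tau)$ over $K_J = K_{J'}$ with $J = B^{\tau}$. Using $\Phi_x^{-1}: J' \to J$, the same $(B,\tau)$ realizes $J'$ as well. Since $\Phi_x$ is the identity on $E$, the two embeddings
\[  E \hookrightarrow J \hookrightarrow B \quad \text{and} \quad E \hookrightarrow J'  \xrightarrow{\Phi_x^{-1}} J \hookrightarrow B \]
agree. Hence the induced $K_J$-algebra embeddings $\tilde\iota, \tilde\iota': E \otimes_F K_J \hookrightarrow B$ coincide, and the construction of \S\ref{SS:isom-tori} produces literally the same isomorphism of algebraic groups $\tilde{T}_{E, K_J} \longrightarrow \Aut_{K_J}(B, \tau, \tilde\iota)$ from both $(e,\nu)$ and $(e',\nu')$.

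It remains to check that, under the identification of this target group with $H_C^0 = T_{E, K_C}$ as a subgroup of $\GL_E(L)$, the answers from $(e,\nu)$ and $(e',\nu')$ agree. This is precisely where we invoke the identification between $\Aut_E(C_{e,\nu})^0$ and $\Aut_E(C_{e',\nu'})^0$ via $\mathrm{Ad}(\phi_x)$ of (\ref{E:Adphi}): as was observed immediately after that equation, this identification restricts to the identity map on $T_{E, K_C}$. Thus $\iota_{e,\nu} = \iota_{e',\nu'}$, and the pair $\{\iota_{e,\nu}, \iota_{e,\nu}^{-1}\}$ is independent of $(e,\nu)$.

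The main conceptual obstacle is keeping track of the various physical realizations of $H_C^0$ as $(e,\nu)$ varies and ensuring that the identifications are compatible; the fact that makes everything work cleanly is the $E$-linearity (indeed $L$-linearity) of $\phi_x$, which implies that $\Phi_x$ pointwise fixes the $E$-summand of the Jordan algebra, so the embedding of $E$ into $B$ is unchanged when we vary $(e,\nu)$.
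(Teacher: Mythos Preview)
Your argument correctly handles the case when $C_{e,\nu}$ and $C_{e',\nu'}$ are $L$-isomorphic, and in fact is slightly sharper than the paper's treatment of that case: by fixing a single $(B,\tau)$ and transporting via $\Phi_x$, you get $\iota_{e,\nu} = \iota_{e',\nu'}$ on the nose rather than $\iota_{e,\nu} = \iota_{e',\nu'}^{\pm 1}$.

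However, there is a genuine gap. The lemma asserts independence of $(e,\nu)$ with \emph{no} restriction: different pairs $(e,\nu)$ and $(e',\nu')$ need not represent $L$-isomorphic twisted composition algebras (they parametrize all of $H^1(F,T_{E,K_C})$, not a single class), and in that case there is no $x \in L^\times$ producing your $\phi_x$. Your opening sentence says ``the plan is to reduce to'' the $L$-isomorphic case, but no such reduction is carried out; you simply assume it. The paper's proof supplies the missing step: for arbitrary $(e,\nu)$ and $(e',\nu')$, one passes to a finite Galois extension $\tilde F/F$ over which $C_{e,\nu}$ and $C_{e',\nu'}$ become $L\otimes_F\tilde F$-isomorphic, applies the argument you gave over $\tilde F$, and then observes that two morphisms of $F$-tori that agree after base change to $\tilde F$ already agree over $F$. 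Adding this descent argument would complete your proof.
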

  \vskip 5pt
  
  \begin{proof}
  Suppose first that  $C_{e,\nu}$ and $C_{e', \nu'}$ are $L$-isomorphic,  with an $L$-isomorphism given by a multiplication-by-$x$ map $\phi_x$ as in  (\ref{E:Lisom}) and (\ref{E:Adphi}). Then it follows by the functoriality of Springer's construction that 
  \[  \iota_{e, \nu} = \iota_{e',\nu'}^{\pm 1} \circ Ad(\phi_x)|_{T_{E,K_C}}. \] 
  Here the sign $\pm$ arises because of the possibility of using a central simple algebra $B$ or $B^{op}$ in the construction of $\iota$. 
  We have observed after (\ref{E:Adphi}) that $Ad(\phi_x)$ is the identity map on $T_{E,K_C}$, so that $\iota_{E,\nu} = \iota_{e',\nu'}^{\pm 1}$. 
  
  \vskip 5pt
  
  Now given any two $C_{e,\nu}$ and $C_{e', \nu'}$, one knows that they become $L\otimes_F  \tilde{F}$-isomorphic over a finite Galois extension $\tilde{F}$ of $F$. 
  Hence the two pairs of isomorphisms $\iota_{e,\nu}^{\pm}$ and $\iota_{e',\nu'}^{\pm}$ of algebraic tori become equal after a base change to $\tilde{F}$. But then they are already equal over $F$. 
  \end{proof}
  \vskip 5pt

  Thus we have a canonical pair of isomorphisms 
  \begin{equation} \label{E:cano-isom}
   \iota, \iota^{-1}:  T_{E, K_C}  \cong \tilde{T}_{E, K_J}. \end{equation}
  This is the exceptional Hilbert 90 Theorem from \cite{GS2}. It gives a canonical isomorphism
  \[  [\iota]: T_{E, K_C}(F)/T_{E,K_C}(F)^2 \cong \tilde{T}_{E,K_J}(F)/ \tilde{T}_{E,K_J}(F)^2. \]
   \vskip 5pt

One consequence of this alternative description of $H_C^0$ is that its gives an alternative computation of $H^1(F,  H_C^0)$. In particular, it follows from 
\cite[Prop. 11.2]{GS2} that
\begin{equation} \label{E:H1T2}
  H^1(F, \tilde{T}_{E, K_J})[2]   =   E^{\times} /  F^{\times} N_{E \otimes K_J/ E}((E\otimes K_J)^{\times}). \end{equation}
This description of $H^1(F, T_{E,K_C})[2] = H^1(F, \tilde{T}_{E, K_J})[2]$   will be very helpful later on.

\vskip 5pt
\subsection{\bf Examples.}
As an example, consider the case where $E=F^3$, and $J=M_3(F)$ is  the Jordan algebra of $3\times 3$ matrices. We have a natural embedding of $F^3$ into $M_3(F)$ where 
$(a_1,a_2, a_3)\in F^3$ maps to the diagonal matrix with $a_1, a_2, a_3$ on the diagonal.  If $x\in M_3(F)$, then  $x^{\#}$ is the adjoint matrix. Thus it is easy to describe the 
structure of the twisted composition algebra $C$ in this case.  An element $x$ in $C$ is given by a matrix 
 \[ 
  x= 
  \left( \begin{array}{ccc} 
  0 & x_3 & y_2 \\
  y_3 & 0 & x_1 \\
  x_2 & y_1 & 0 
  \end{array} \right) . 
  \] 
  If we write $x=((x_1,y_1),(x_2,y_2),(x_3,y_3))$ then the structure of $F^3$-space on $C$ is given by 
  \[ 
  (a_1, a_2, a_3) \cdot ((x_1,y_1),(x_2,y_2),(x_3,y_3))= ((a_1x_1,a_1y_1),(a_2x_2,a_2y_2),(a_3x_3,a_3y_3))
  \] 
  for all $ (a_1, a_2, a_3) \in F^3$.  The structure of the twisted composition algebra on $C$ is given by 
  \[ 
  Q((x_1,y_1),(x_2,y_2),(x_3,y_3))=(x_1y_1, x_2y_2, x_3y_3) 
  \] 
  and 
  \[ 
  \beta((x_1,y_1),(x_2,y_2),(x_3,y_3))=( (y_2y_3, x_2x_3), (y_3 y_1, x_3 x_1), (y_1 y_2, x_1 x_2)). 
  \] 
This twisted composition algebra $(C, Q,\beta)$ has cubic invariant $F^3$ and quadratic invariant $F^2$. 
\vskip 10pt


Here is another example. Assume that $E$ is a cyclic cubic field extension of $F$, with Galois group 
generated by $\sigma$.  Let $D$ be a degree 3 central simple algebra over $F$ containing $E$ as a subalgebra.  Then as a vector space over $E$, $D$ has a basis $1, \varpi, \varpi^2$, for some element 
$\varpi\in D$ satisfying $\varpi x = \sigma(x) \varpi$, for all $x\in E$, and $\varpi^3=\lambda \in F^{\times}$. The corresponding $E$-twisted composition algebra is 
isomorphic to $C(\lambda)=E\oplus E$,  with 
\[ 
Q(x,y)= xy \text{ and } \beta(x,y) = (\lambda^{-1} y^{\#}, \lambda x^{\#}).
\] 
Moreover, $C(\lambda)$ has cubic invariant $E$ and quadratic invariant $F^2$ and is associated to $(e,\nu) = (1, (\lambda, \lambda^{-1}))$.
The algebra $D$ is split if and only if $\lambda$ is a norm of an element in $E^{\times}$. The group of $E$-automorphisms of $C(1)$ is 
\[ 
\Aut_E(C(1))= E^1 \rtimes (\Z/2\Z)
\]  
where $\alpha \in E^1$ acts on $C(1)$ by $(x,y) \mapsto (\alpha x, \alpha y)$, and the nontrivial element in $\Z/2\Z$ by $(x,y) \mapsto (y,x)$, for all $(x,y) \in C(1)$.

\vskip 5pt

\subsection{\bf When is $J$ division?}  \label{SS:division}
Following up on the last example above, one may consider the question: under what conditions on $(e, \nu)$ is  $J_{e,\nu} = E\oplus C_{e,\nu}$ associated to a division algebra? An answer for the general case is provided by \cite[Thm. 38.8]{KMRT}, but we provide an alternative treatment adapted to the rank 2 case here.  
\vskip 5pt

\begin{prop} \label{P:division}
Fix $(e, \nu) \in (E^{\times} \times K_C^{\times})^0$, so that $N_{E/F}(e) = N_{K_C/F}(\nu)$. Then the following are equivalent:
\vskip 5pt

\begin{itemize}
\item[(i)] $\nu \in N_{L/K_J}(L^{\times})$ (where $L = E \otimes K_C$);

\vskip 5pt

\item[(ii)]  $(e, \nu) = (e', 1) \in (E^{\times} \times K_C^{\times})^0/ \mathrm{Im}(L^{\times})$;
\vskip 5pt

\item[(iii)] $(e, \nu)  = (e', \nu')  \in (E^{\times} \times K_C^{\times})^0/ \mathrm{Im}(L^{\times})$, with $\nu' \in F^{\times}$;
\vskip 5pt

\item[(iv)] $[(e, \nu)] \in H^1(F, T_{E, K_C})[2]$;
\vskip 5pt

\item[(v)] $J= E \oplus C_{e,\nu}$ is not a division Jordan algebra.
\end{itemize} 
When these equivalent conditions  hold for $C$, $H_C(F) \cong H_C^0(F) \rtimes \Z/2\Z$. Indeed, for any $C = C_{e,\nu}$ with $\nu \in F^{\times}$, 
\[  \Aut_E(C_{e,\nu})  = T_{E, K_C} \rtimes \Aut(K_C/F)   \subset \GL_E(L). \]
In other words, these automorphism groups are physically the same subgroup of $\GL_E(L)$.

\end{prop}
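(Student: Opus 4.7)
The plan is to establish the cohomological chain (i)$\Leftrightarrow$(ii)$\Leftrightarrow$(iii)$\Leftrightarrow$(iv) directly from the description of $(E^\times\times K_C^\times)^0/\mathrm{Im}(L^\times)$ in Lemma~\ref{L:rank2}, then prove (iv)$\Leftrightarrow$(v) using the exceptional Hilbert 90 (\ref{E:cano-isom}) together with the description (\ref{E:H1T2}) of the $2$-torsion, and finally handle the structural assertion about $\Aut_E(C)$ by exhibiting an explicit involution.

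For the cohomological chain: (i)$\Leftrightarrow$(ii) is immediate, since $\nu\in N_{L/K_C}(L^\times)$ is precisely what is needed to modify $(e,\nu)$ to some $(e',1)$ in the quotient. The implication (ii)$\Rightarrow$(iii) is tautological. For (iii)$\Rightarrow$(iv), any $\nu'\in F^\times$ is fixed by the $\Aut(K_C/F)$-action on $H^1(F,T_{E,K_C})$; but as recorded in \S\ref{SS:rank2} that action is by inversion, so $\Aut(K_C/F)$-fixed classes are exactly the $2$-torsion. For (iv)$\Rightarrow$(i), given $x\in L^\times$ realising $(e^2,\nu^2)\in\mathrm{Im}(L^\times)$, a short calculation yields $N_{L/K_C}(x\nu^{-1})=\nu^2\,\nu^{-[L:K_C]}=\nu^{-1}$, placing $\nu^{-1}$ and hence $\nu$ in $N_{L/K_C}(L^\times)$.

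For (iv)$\Leftrightarrow$(v), I would transport $H^1(F,T_{E,K_C})$ to $H^1(F,\tilde T_{E,K_J})$ via (\ref{E:cano-isom}), and exploit the short exact sequence of $F$-tori
\[
1\to\tilde T_{E,K_J}\to\Res_{E\otimes K_J/F}\mathbb{G}_m/\Res_{K_J/F}\mathbb{G}_m\xrightarrow{N_{E\otimes K_J/E}}\Res_{E/F}\mathbb{G}_m/\mathbb{G}_m\to 1.
\]
Shapiro together with Hilbert 90 identify the $H^1$ of the middle term with the relative Brauer group $\mathrm{Br}(E\otimes K_J/K_J)\subset\mathrm{Br}(K_J)[3]$, and the connecting map from the $H^0$ of the right term identifies the cokernel $E^\times/F^\times N_{E\otimes K_J/E}((E\otimes K_J)^\times)$ with the kernel of the resulting Brauer map. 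By (\ref{E:H1T2}) this cokernel is $H^1(F,\tilde T_{E,K_J})[2]$. Under the Springer bijection between $H^1(F,\tilde T_{E,K_J})$ and $L$-isomorphism classes of pairs $(J,\iota\colon E\hookrightarrow J)$ with fixed $K_J$, this Brauer map sends the class of $(J,\iota)$ to $[B]\in\mathrm{Br}(K_J)[3]$ where $J=B^\tau$. Since $B$ has degree $3$, $[B]=0$ iff $B$ is split iff $J=B^\tau$ is not a division Jordan algebra, yielding (iv)$\Leftrightarrow$(v).

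The final assertion follows by introducing the $E$-linear map $\sigma\colon L\to L$, $x\mapsto\bar x$, coming from the nontrivial element of $\Aut(K_C/F)$. When $\nu\in F^\times$, a direct computation using $N_{L/E}(\bar x)=N_{L/E}(x)$, $\overline{y^\#}=\bar y^\#$, and $\bar\nu=\nu$ shows $\sigma$ preserves both $Q$ and $\beta$, so $\sigma\in\Aut_E(C_{e,\nu})\setminus H_C^0$. Combined with the known structure $\Aut_E(C)/H_C^0\hookrightarrow S_2$, this forces $\Aut_E(C_{e,\nu})=T_{E,K_C}\rtimes\langle\sigma\rangle$, whence $H_C(F)\cong H_C^0(F)\rtimes\mathbb{Z}/2\mathbb{Z}$; since $\sigma$ depends only on $L$ and not on $(e,\nu)$, all such automorphism groups coincide as subgroups of $\GL_E(L)$. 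I expect the main obstacle to lie in the middle block, namely the identification of the torus-cohomological Brauer map with the honest Brauer map $[C]\mapsto[B]$; this should follow by a functorial compatibility check between the Springer construction and the torus exact sequence above, using the setup of \S\ref{SS:isom-tori}.
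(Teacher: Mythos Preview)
Your proof of the chain (i)--(iv) is correct, and your argument for (iv)$\Rightarrow$(i) is actually more direct than the paper's: the paper first passes through (iii) via Hilbert~90 for $L/E$ and then uses the cube trick $(e,\nu)\sim(e^3,\nu^3)$, whereas your observation that $N_{L/K_C}(\nu)=\nu^{[L:K_C]}=\nu^3$ gives $\nu^{-1}=N_{L/K_C}(x\nu^{-1})$ in one line.

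For (iv)$\Leftrightarrow$(v), however, you take a genuinely different route. The paper's argument is entirely elementary: it writes the condition ``$J$ not division'' as the existence of a nonzero $(a,x)\in E\oplus C_{e,\nu}$ with $(a,x)^\#=0$, unpacks this via (\ref{E:sharp}) into the pair of equations $a^\#=e\cdot N_{L/E}(x)$ and $ax=e^{-1}\bar\nu\,\bar x^\#$, and then works by hand. For (ii)$\Rightarrow$(v) it simply exhibits $(a,x)=(e,e^\#)$ when $\nu=1$; for (v)$\Rightarrow$(i) it multiplies the two equations to obtain $x\cdot(N_{E/F}(a)-\bar\nu\,N_{L/K_C}(\bar x))=0$, cancels $x$ (after checking $x\ne0$ when $E$ is a field), and reads off $\nu\in N_{L/K_C}(L^\times)$. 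The non-field case is disposed of separately by noting both (i) and (v) are automatic there.

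Your cohomological approach via the Brauer map is conceptually appealing and would give a clean explanation of \emph{why} the $2$-torsion is exactly the non-division locus, but it is not self-contained: the compatibility you flag---that the map $H^1(F,\tilde T_{E,K_J})\to\mathrm{Br}(K_J)$ arising from the torus exact sequence agrees with $[C]\mapsto[B]$ under the Springer parametrization---is real work (it is essentially the content of \cite[\S11]{GS2}, invoked later in the paper as Lemma~\ref{L:H1} and Lemma~\ref{L:localH1}). The paper avoids this entirely by staying at the level of explicit elements, which is both shorter here and keeps Proposition~\ref{P:division} logically prior to the cohomological machinery built on top of it.

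Your treatment of the final structural assertion is correct and matches what the paper leaves implicit.
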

\vskip 5pt

\begin{proof}
We first show the equivalence of the first four statements.
The implications (i) $\Longrightarrow$ (ii)  $\Longrightarrow $ (iii)  $\Longrightarrow $ (iv) are clear.
Assume that (iv) holds, so that   $[(e, \nu)] = [(e, \bar{\nu})]$. Then there exists $x \in L^{\times}$ such that 
\[ N_{L/E}(x) = x \cdot \bar{x} =1 \quad \text{and} \quad  \nu = \bar{\nu} \cdot  x\cdot x^{\#}. \]
Now the first condition implies that $x = \bar{z} / z$ for some $z \in L^{\times}$, which when substituted into the second gives $\nu \cdot N_{L/K_C}(z) \in F^{\times}$. 
Hence, replacing $(e, \nu)$ by an equivalent pair, we may assume that $\nu \in F^{\times}$, so that $N_{E/F}(e) = \nu^2$. 
 But then
\[  (e,\nu) =  (e \cdot N_{L/E}(e), \nu \cdot N_{L/K_C}(e)) = (e^3 , \nu^3) \in  (E^{\times} \times K_C^{\times})^0/ \mathrm{Im}(L^{\times}).  \]
Since $\nu^3 = N_{L/K_C}(\nu)$, we conclude that (i) holds. 
\vskip 5pt

We note that the equivalent conditions (i)-(iv) always hold when $E$ is not a field, for then the norm map $N_{L/K_C}: L^{\times} \longrightarrow K_C^{\times}$ is surjective.
\vskip 5pt

Finally, to check the equivalence with (v), note that  $J = E \oplus C_{e,\nu}$ is not a division Jordan algebra if and only if there exists nonzero $(a,x) \in E \oplus  C_{e,\nu}$ such that  $(a,x)^{\#} = 0$. By (\ref{E:sharp}), this is equivalent to 
\begin{equation} \label{E:sharp2}
  a^{\#} =  Q(x) =  e \cdot N_{L/E}(x) \quad \text{and} \quad   a \cdot x = \beta(x) = e^{-1} \cdot \bar{\nu}  \cdot \bar{x}^{\#}. \end{equation}
When $E$ is not a field, we can always take nonzero $(a,0)$ with $a^{\#} = 0$, so that $J$ is never a division algebra in this case. 
\vskip 5pt

We may henceforth assume that $E$ is a field.  
Suppose that (ii) holds, so that  $\nu =1$   and $N_{E/F}(e) =1$. Then we may take $(a,x) = (e, e^{\#})$; one checks that this satisfies the two equations in (\ref{E:sharp2}) and hence $J$ is not division.   We have thus shown (ii) $\Longrightarrow$ (v). 
\vskip 5pt

Conversely, we shall show (v) implies (i) (when $E$ is a field).
Assume that there is a nonzero $(a,x)$ such that the two equations  in (\ref{E:sharp2}) hold. Then $x$ must be nonzero (otherwise, we deduce by the first equation that $a^{\#} = 0$ and hence $a = 0$ since $E$ is a field).  
 Multiplying the two  equations in (\ref{E:sharp2}), we obtain
 \[  N_{E/F}(a) \cdot x = \bar{\nu}  \cdot N_{L/K_C}(\bar{x}) \cdot x, \]
 so that
 \begin{equation} \label{E:xnorm}
  x \cdot (N_{E/F}(a) - \bar{\nu} \cdot N_{L/K_C}(\bar{x}) ) = 0. \end{equation}
Hence, if $K_C$ is a field, so that $L$ is a field also, then we may cancel $x$ (noting that $x \ne 0$) to deduce that
 \[  \nu = N_{E/F}(a) \cdot N_{L/K_C}(x)^{-1}  \in N_{L/K_C}(L^{\times}). \] 
 On the other hand, if $K_C = F \times F$, then let  
 \[  x = (x_1, x_2) \in E\times E = L \quad \text{and} \quad \nu = (\nu_1, \nu_2) \in F^{\times} \times F^{\times}. \] 
 The two equations in (\ref{E:sharp2}) becomes:
 \[  a^{\#} = e \cdot x_1 x_2 \quad \text{and} \quad (ax_1, ax_2) =  e^{-1} \cdot (\nu_2 \cdot  x_2^{\#}, \nu_1 \cdot  x_1^{\#}). \]
 From this, we see that $a \ne 0$ (otherwise, the second equation would give $x_1 = x_2 = 0$ also), and hence $x_1, x_2 \in E^{\times}$. Hence, we may cancel $x$ in (\ref{E:xnorm}) as before and conclude that $\nu \in N_{L/K_J}(L^{\times})$, as desired.
\end{proof}

\vskip 10pt

   \subsection{\bf Embeddings}  \label{SS:embedding 1-2}
We record here some results that we will need later, concerning embeddings of rank 1 twisted composition algebras into rank 2 ones. 
\vskip 5pt

\begin{lemma} \label{L:embedding 1-2}
Let us fix
\vskip 5pt

\begin{itemize}
\item $a \in E^{\times}$ with corresponding rank $1$ $E$-twisted composition algebra $C_a  = E$ and
\item  an $E$-twisted composition algebra $C = C_{e,\nu}$ of rank $2$, corresponding to an embedding $E \hookrightarrow J$, with resulting Springer decomposition $J = E \oplus  C$.
\end{itemize}
There are  natural equivariant bijections between the following three $\Aut_E(C)$-sets (possibly empty)
\vskip 5pt
\begin{itemize}
\item[(a)]  the set of $E$-morphisms $f: C_a \longrightarrow  C$; 
\item[(b)]  the set of rank $1$ elements $x \in J$ (i.e. $x^{\#} =0$ but $x \ne 0$) of the form $x = (a,v) \in E \oplus C = J$;
\item[(c)]   the set
\[  X_{a,C}(F) = X_{a, e,\nu}(F)  = \{  x \in   L:= E \otimes K_C:  N_{L/E}(x) =   e^{-1} a^{\#} \, \text{and} \,  N_{L/K_C}(x) =  N_{E/F}(a) \cdot \nu^{-1} \}. \]
\end{itemize}
The bijection between (a) and (b)  is given by $f \mapsto (a,  f(1))$, whereas that between (b) and (c) is given by $x = (a, v) \mapsto v$. 
\end{lemma}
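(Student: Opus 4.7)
The plan is to verify the two bijections (a)$\leftrightarrow$(b) and (b)$\leftrightarrow$(c) separately, with the first following tautologically from Springer's construction and the second coming from a short computation in the $L$-model of $C_{e,\nu}$.

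For (a)$\leftrightarrow$(b): an $E$-morphism $f\colon C_a\to C$ is determined by its value $v:=f(1)\in C$ on the distinguished generator of $C_a = E\cdot 1$. Since $Q_a(1)=a^{\#}$ and $\beta_a(1)=a\cdot 1$, compatibility of $f$ with $(Q,\beta)$ is equivalent to $Q(v)=a^{\#}$ and $\beta(v)=a\cdot v$. By the Springer formula (\ref{E:sharp}), these two equations are precisely the statement $(a,v)^{\#}=0$ in $J=E+C$; so $(a,v)$ is a nonzero rank one element (it is nonzero since $a\ne 0$). The bijection $f\mapsto (a,f(1))$ is manifestly $\Aut_E(C)$-equivariant, as $\Aut_E(C)$ acts by postcomposition on (a) and via its action on $C\subset J$ on (b).

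For (b)$\leftrightarrow$(c): realize $C=C_{e,\nu}$ on $L=E\otimes K_C$ as in \S\ref{SS:rank2}, writing $v=x\in L$, so that $Q(x)=e\cdot N_{L/E}(x)$ and $\beta(x)=\bar x^{\#}\cdot e^{-1}\cdot\bar\nu$. The first condition $Q(v)=a^{\#}$ translates directly to $N_{L/E}(x)=e^{-1}a^{\#}$, the first defining equation of $X_{a,e,\nu}$. For the second, multiply $\bar x^{\#}e^{-1}\bar\nu = ax$ on the left by $\bar x$ and use the identity $\bar x\cdot\bar x^{\#}=\overline{N_{L/K_C}(x)}$ together with the first condition:
\[
\overline{N_{L/K_C}(x)}\cdot\bar\nu \;=\; e\cdot a\cdot N_{L/E}(x) \;=\; a\cdot a^{\#} \;=\; N_{E/F}(a).
\]
Since $N_{E/F}(a)\in F$ is fixed by the bar, this rearranges to $N_{L/K_C}(x)=N_{E/F}(a)\cdot\nu^{-1}$, which is the second defining equation of $X_{a,e,\nu}$.

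Conversely, given $x\in X_{a,e,\nu}$, the condition $Q(x)=a^{\#}$ is immediate, and the same chain of identities in reverse yields
\[
\bar x\cdot\bigl(\bar x^{\#} e^{-1}\bar\nu - a\,x\bigr) \;=\; 0 \quad\text{in } L.
\]
To cancel $\bar x$ we use the hypothesis $a\in E^{\times}$, which makes $a^{\#}\in E^{\times}$; then $N_{L/E}(x)=e^{-1}a^{\#}\in E^{\times}$, which forces $\bar x$ to be a unit in $L$ (equivalently, a non-zero-divisor), so we obtain $\beta(x)=ax$. The $\Aut_E(C)$-equivariance of the bijection (b)$\leftrightarrow$(c) is automatic, since $\Aut_E(C)\subset\GL_E(L)$ preserves $N_{L/E}$ and $N_{L/K_C}$ and the identification $v\leftrightarrow x$ is $H_C$-equivariant by construction. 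The only substantive step is the bar-manipulation in the second paragraph; the only potential subtlety, namely justifying the cancellation of $\bar x$, is cleanly resolved by the standing assumption $a\in E^{\times}$.
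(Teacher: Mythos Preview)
Your proof is correct. The paper in fact states this lemma without proof, so there is no argument to compare against; your verification via the Springer formula \eqref{E:sharp} for (a)$\leftrightarrow$(b) and the direct computation in the $L$-model for (b)$\leftrightarrow$(c), including the observation that $N_{L/E}(x)=e^{-1}a^{\#}\in E^{\times}$ forces $\bar x$ to be a unit in $L$ so that it may be cancelled, is exactly the intended routine check.
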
   
  
   Note that the 3 sets are possibly all empty. For example, if $J$ is associated with a cubic division algebra, then there are no rank $1$ elements in $J$, so that the set in (b)
 is empty, and hence so are the other 2 sets. On the other hand, we note:
 
 \vskip 5pt
 
 \begin{lemma} \label{L:XaC}
 For any $a \in E^{\times}$, there exists a unique $E \otimes K_C$-isomorphism class $[C]$ such that $X_{a, C}(F)$ is nonempty. 
 This unique $E \otimes K_C$-isomorphism class is represented by $C_{a^{\#}, N_{E/F}(a)}$.  Hence we have  a group homomorphism
 \[    f:  E^{\times}/ F^{\times} E^{\times 2} \longrightarrow (E^{\times} \times K_C^{\times})^0/ \mathrm{Im}(L^{\times}) \]
 given by 
 \[  f(a)  = (a^{\#}, N_{E/F}(a))  \]
 and characterized by the requirement that  $C_a$ embeds into $C_{e, \nu}$ if and only if $(e, \nu) = f(a) \in H^1(F, T_{E, K_C})$.
The image of $f$  is equal to $H^1(F, T_{E, K_C})[2]$, i.e. consists precisely of those twisted composition algebras $C$ whose associated Jordan algebra is not division, whereas 
\[  \mathrm{Ker}(f) = \{ x^{\#}/ \bar{x}: x \in L^{\times} \,\,\text{and} \, \, N_{L_/K_C}(x) \in F^{\times} \}/ F^{\times} E^{\times 2} . \]
\end{lemma}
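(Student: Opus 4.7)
The plan is to establish the lemma in four stages: (i) check that $f(a) = (a^{\#}, N_{E/F}(a))$ is a well-defined homomorphism; (ii) use the description of $X_{a,C}(F)$ in Lemma~\ref{L:embedding 1-2}(c) to deduce the uniqueness of the $L$-isomorphism class $[C]$; (iii) identify the image of $f$ with $H^1(F, T_{E, K_C})[2]$ via Proposition~\ref{P:division}; (iv) compute $\ker(f)$ by an explicit norm manipulation.

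For (i), note that $(a^{\#}, N_{E/F}(a)) \in (E^\times \times K_C^\times)^0$ since $N_{E/F}(a^{\#}) = N_{E/F}(a)^2 = N_{K_C/F}(N_{E/F}(a))$. If $a' = c\,\lambda^2 a$ with $c\in F^\times$, $\lambda\in E^\times$, then taking $z = c\,\lambda^{\#} \in E^\times\subset L^\times$ one gets $N_{L/E}(z) = z^2 = c^2 (\lambda^{\#})^2 = (a')^{\#}/a^{\#}$ and $N_{L/K_C}(z) = N_{E/F}(z) = c^3 N_{E/F}(\lambda)^2 = N_{E/F}(a')/N_{E/F}(a)$, so $f$ is well-defined. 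Multiplicativity is immediate from $(ab)^{\#} = a^{\#} b^{\#}$ on $E^\times$. For (ii), Lemma~\ref{L:embedding 1-2}(c) says that $X_{a, e, \nu}(F)\neq\emptyset$ precisely when $(e,\nu)$ and $(a^{\#}, N_{E/F}(a))$ represent the same class in $(E^\times \times K_C^\times)^0/\mathrm{Im}(L^\times)$, giving both the uniqueness assertion and the reformulation $X_{a,C}(F)\neq\emptyset\Leftrightarrow (e,\nu) = f(a)$.

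For (iii), every $f(a)$ has second coordinate $N_{E/F}(a) \in F^\times$, so Proposition~\ref{P:division} places $f(a)$ in $H^1(F, T_{E, K_C})[2]$. Conversely, given any class in $H^1(F, T_{E, K_C})[2]$, Proposition~\ref{P:division} lets us represent it by some $(e, \nu')$ with $\nu' \in F^\times$ satisfying $(\nu')^2 = N_{E/F}(e)$; then $a := \nu'/e \in E^\times$ satisfies $N_{E/F}(a) = (\nu')^3/N_{E/F}(e) = \nu'$ and $a^{\#} = N_{E/F}(a)\, a^{-1} = \nu'\cdot e/\nu' = e$, so $f(a) = [(e, \nu')]$ as required.

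For (iv), $a\in\ker(f)$ iff there exists $z\in L^\times$ with $N_{L/E}(z) = a^{\#}$ and $N_{L/K_C}(z) = N_{E/F}(a)$, the latter forcing $N_{L/K_C}(z) \in F^\times$. Using $a^{\#} = N_{E/F}(a)\,a^{-1}$ and the identity $z^{\#} = N_{L/K_C}(z)\, z^{-1}$ (viewing $L$ as a cubic étale $K_C$-algebra), one rewrites
\[
  a \;=\; \frac{N_{E/F}(a)}{a^{\#}} \;=\; \frac{N_{L/K_C}(z)}{N_{L/E}(z)} \;=\; \frac{N_{L/K_C}(z)}{z\bar{z}} \;=\; \frac{z^{\#}}{\bar{z}},
\]
producing the asserted form. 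Conversely, for any $x \in L^\times$ with $N_{L/K_C}(x) \in F^\times$, the element $a := x^{\#}/\bar{x} = N_{L/K_C}(x)/(x\bar{x})$ is $\bar{\phantom{x}}$-fixed (since $x\bar{x}$ and $N_{L/K_C}(x)$ both lie in $E$ and $F$ respectively), so $a\in E^\times$; then $a^{\#} = N_{E/F}(a)\,a^{-1} = x\bar{x} = N_{L/E}(x)$ and $N_{E/F}(a) = N_{L/K_C}(x)$, witnessing $f(a) = 1$ via $z = x$. The main technical obstacle is precisely this back-and-forth in step (iv): one must keep track of three distinct norm maps on $L = E \otimes_F K_C$ and exploit the cubic adjoint identity on the $K_C$-algebra $L$, while separately verifying that $N_{L/K_C}(x) \in F^\times$ is both the necessary and sufficient rationality hypothesis on $x$.
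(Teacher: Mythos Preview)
Your proof is correct and follows essentially the same approach as the paper. The paper argues existence by observing $1 \in X_{a,C_{a^\#,N(a)}}(F)$ and proves uniqueness by dividing two witnesses; you obtain both simultaneously by reading $X_{a,e,\nu}(F)\neq\emptyset$ directly as equality of classes in $(E^\times\times K_C^\times)^0/\mathrm{Im}(L^\times)$, which is the same idea more compactly stated. For surjectivity onto $H^1(F,T_{E,K_C})[2]$ the paper uses the representative $(e,1)$ with $N_{E/F}(e)=1$ and checks $f(e)=[(e,1)]$, whereas you use the representative $(e,\nu')$ with $\nu'\in F^\times$ and exhibit $a=\nu'/e$; these are the equivalent normalizations (ii) and (iii) of Proposition~\ref{P:division}. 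The paper leaves the kernel computation to the reader, and your step (iv) supplies it correctly.
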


\begin{proof}
 It is clear that if $C= C_{a^{\#}, N_{E/F}(a)}$, then $1 \in X_{a,C}(F)$; this shows the existence of $C$ and that it has the desired form. 
For the uniqueness, suppose that $X_{a, e, \nu}(F)$ and $X_{a, e',\nu'}(F)$ are both nonempty. Then there exist $x, x'  \in L^{\times}$ such that
\[ N_{L/E}(x) = e^{-1}  a^\#  \quad \text{and} \quad  N_{L/K}(x) = N_{E/F}(a)\cdot  \nu^{-1} \]
and
\[ N_{L/E}(x') = {e'}^{-1}  a^\# \quad \text{and} \quad  N_{L/K}(x') = N_{E/F}(a)\cdot  {\nu'}^{-1}. \]
On dividing one equation by the other, we see that
\[   N_{L/E}(x'/x) = e/e'  \quad \text{and} \quad N_{L/K}(x'/x) = \nu/\nu'.  \]
This implies that $(e,\nu) =(e',\nu') \in H^1(F, T_{E,K_C})$, as desired.
 
\vskip 5pt

By Proposition \ref{P:division}, the image of $f$ consists of  twisted composition algebras  associated to non-division Jordan algebras $J$.   
On the other hand, to prove that any such $C$ is in the image of $f$, it suffices by Proposition \ref{P:division} to consider  $C = C_{e, 1}$, with $N_{E/F}(e)=1$. We claim that $f(e) = [(e,1)]$. Indeed,
\[  f(e) = (e^{\#}, N_{E/F}(e)) = (e^{-1}, 1) = (e,1) \in H^1(F, T_{E, K_C}). \]
We leave the statement about $\mathrm{Ker}(f)$ to the reader.
\end{proof}

\vskip 5pt

Since the image of the map $f$ in the above lemma is $H^1(F, T_{E, K_C})[2]$, we deduce from (\ref{E:H1T2}) that $f$ can be simply interpreted as the natural map
\begin{equation} \label{E:f2}
 f:  E^{\times}/F^{\times} E^{\times 2} \longrightarrow E^{\times} / F^{\times} N_{E \otimes K_J/E}((E\otimes K_J)^{\times}). \end{equation}

\vskip 5pt
 Finally, we note that  $X_{a,C} = X_{a,e,\nu}$ is an algebraic variety which is evidently a torsor for the torus $H_C^0 = T_{E, K_C}$.  If $X_{a,e,\nu}(F)$ is nonempty, then $H_C^0(F) = T_{E, K_C}(F)$ acts simply transitively on it. Thus, the action of $H_C(F)$ on $X_{a,e,\nu}(F)$ is transitive and the stabilizer of a point $x \in X_{a,e,\nu}(F)$ has order $2$, with the nontrivial element $h_x \in H_C(F) \setminus H_C^0(F)$.   For example, the stabilizer of $1 \in X_{a, C_{a^{\#}, N(a)}}(F)$ is $\Aut(K_C/F)$.
  Indeed, $h_x$ is the map on $C_{e,\nu} = E \otimes K_C$ given by
 \[  h_x : z \mapsto    \frac{x}{\overline{x}}  \cdot \overline{z}. \]
  If $x' \in X_{a,e,\nu}(F)$ is another element, then $x'  = t \cdot x$ for a unique $t \in H_C^0(F)$ and 
 \[  h_{x'}= t \cdot h_x \cdot t^{-1} = t^2 \cdot h_x. \]
 Thus the element $h_x$ gives a well-defined class in $(H_C(F) \setminus H_C^0(F)) / H_C^0(F)^2$ as $x \in X_{a,e,\nu}(F)$ varies. We record this as a lemma.
 \vskip 5pt
 
   \begin{lemma} \label{L:basept}
   Suppose that $f(a) = [C] \in H^1(F, T_{E, K_C})[2]$ so that $X_{a,C}(F)$ is nonempty. Then one obtains a class 
   \[  g_C(a) \in (H_C(F) \setminus H_C^0(F)) / H_C^0(F)^2 \]
    consisting of elements which stabilize some points in $X_{a,C}(F)$. 
   \end{lemma}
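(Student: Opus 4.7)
The plan mirrors the computation in the paragraph immediately preceding the lemma statement, which already furnishes the proof; the lemma merely records the outcome in a form convenient for later use. Since $X_{a,C}$ is a torsor for $H_C^0 = T_{E,K_C}$ and $X_{a,C}(F) \neq \emptyset$ by hypothesis, $H_C^0(F)$ acts simply transitively on $X_{a,C}(F)$. In particular, for any $x \in X_{a,C}(F)$, the stabilizer $\Stab_{H_C(F)}(x)$ meets $H_C^0(F)$ trivially, so it has order $1$ or $2$, and if it has order $2$ its nontrivial element lies in $H_C(F) \setminus H_C^0(F)$.

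First I would exhibit such a nontrivial stabilizing element explicitly. Realizing $C = C_{e,\nu}$ on $L = E \otimes K_C$, I define
\[ h_x : z \mapsto (x/\bar x) \cdot \bar z. \]
This is $E$-linear and $K_C$-antilinear, hence lies outside $H_C^0$ if it lies in $H_C$ at all. Direct computation gives $h_x^2 = 1$ and $h_x(x) = (x/\bar x) \cdot \bar x = x$, so $h_x$ is an involution fixing $x$. The nontrivial step is checking that $h_x$ preserves the tensors $Q(z) = e \cdot N_{L/E}(z)$ and $\beta(z) = \bar z^{\#} \cdot e^{-1} \cdot \bar\nu$: this is where the defining equations $N_{L/E}(x) = e^{-1} a^{\#}$ and $N_{L/K_C}(x) = N_{E/F}(a) \nu^{-1}$ of $X_{a,C}(F)$ enter, and the verification is a routine calculation with norms and the $\#$-map on $E$.

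For the well-definedness of $g_C(a)$, any other point $x' \in X_{a,C}(F)$ can be written uniquely as $x' = t \cdot x$ with $t \in H_C^0(F) = T_{E,K_C}(F)$ by simple transitivity. Uniqueness of the order-two stabilizer then forces $h_{x'} = t h_x t^{-1}$. Since the conjugation action of $H_C(F) \setminus H_C^0(F)$ on the abelian group $H_C^0(F)$ is by inversion (recorded in Section \ref{SS:rank2}), one has $h_x t^{-1} h_x^{-1} = t$, and hence $h_{x'} = t^2 \cdot h_x$. Thus all the elements $h_x$ descend to a single class in $(H_C(F) \setminus H_C^0(F))/H_C^0(F)^2$, which we take as the definition of $g_C(a)$.

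There is no serious obstacle: the whole argument is a combination of the torsor formalism, the explicit formula for $h_x$, and the fact that conjugation by $H_C \setminus H_C^0$ inverts $H_C^0$. The only step requiring genuine care is the verification that $h_x$ preserves $\beta$, for which both defining equations of $X_{a,C}(F)$ must be used simultaneously.
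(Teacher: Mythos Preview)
Your proposal is correct and follows exactly the paper's approach: the lemma has no separate proof in the paper, being merely a restatement of the paragraph immediately preceding it, which contains the explicit formula $h_x(z) = (x/\bar x)\cdot\bar z$, the computation $h_{x'} = t h_x t^{-1} = t^2 h_x$, and the conclusion that $h_x$ is well-defined modulo $H_C^0(F)^2$. You have reproduced this argument faithfully, adding only the remark that verifying $h_x$ preserves $Q$ and $\beta$ uses both defining equations of $X_{a,C}(F)$, which the paper leaves implicit.
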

  \vskip 5pt
  
  \subsection{\bf Cohomological interpretation}  \label{SS:cohom}
  The embedding problem studied in the previous subsection can be given a rather transparent cohomological treatment. The map $f$ in Lemma  \ref{L:XaC} is a surjective homomorphism $H^1(F, Z_E) \longrightarrow H^1(F, T_{E, K_C})[2]$. This map can be obtained from our observation in (\ref{E:ZEHC}) that $T_{E, K_C}[2] = Z_E$. From the Kummer exact sequence 
  \[
   \begin{CD}
  1 @>>> Z_E @>>> T_{E, K_C} @>2>> T_{E, K_C} @>>>1,  \end{CD} \]
 one deduces the following fundamental short exact sequence
  \begin{equation} \label{E:keyses}  \begin{CD}
 1 @>>> T_{E, K_C}(F)^2 \backslash T_{E, K_C}(F) @>b>>  H^1(F, Z_E) @>f>> H^1(F, T_{E, K_C})[2] @>>> 1. \end{CD} \end{equation}
 The map $f$ here is precisely the one described in Lemma \ref{L:XaC}. This cohomological discussion also gives us a more conceptual description of $\mathrm{Ker}(f)$:
 \[  {\Ker}(f) = T_{E, K_C}(F)^2 \backslash T_{E, K_C}(F). \]
 The map $b$ can be described explicitly as follows. Given $t \in T_{E, K_C}(F) \subset L^{\times}$, since $N_{L/E}(t) =1$, we can write
 \[  t = \bar{y} / y \quad \text{  with $N_{L/K_C}(y) \in F^{\times}$ (since $N_{L/K_C}(t) =1$).} \]
 Then
 \[  b(t)  = y^{\#}/ \bar{y} \in E^{\times}/F^{\times} E^{\times 2}, \]
The reader can easily verify that $b(t)$  is independent of the choice of $y$ and is trivial if $t \in T_{E, K_C}(F)^2$.

\vskip 5pt

Here is another interesting observation arising from (\ref{E:keyses}) and Lemma \ref{L:basept}. Let us  fix $[C] \in H^1(F, T_{E, K_C})[2]$ and consider the fiber $f^{-1}([C])$ which is a $T_{E, K_C}(F)/T_{E,K_C}(F)^2$-torsor. Then we have:
\vskip 5pt

\begin{prop} \label{P:isom-torsor}
The map $a \mapsto g_C(a)$ (with $g_C(a)$ defined in Lemma \ref{L:basept}) gives an isomorphism 
\[  f^{-1}([C]) \longrightarrow (H_C(F) \setminus H_C^0(F)) / T_{E, K_C}(F)^2 \]
of $T_{E,K_C}(F)/ T_{E,K_C}(F)^2$-torsor.
\end{prop}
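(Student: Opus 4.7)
My plan is to exhibit $a \mapsto g_C(a)$ as an equivariant morphism between two nonempty torsors for $T_{E,K_C}(F)/T_{E,K_C}(F)^2$; once equivariance is established, bijectivity is automatic.

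Both torsor structures are essentially already in hand. The source $f^{-1}([C])$ is a torsor for the image of $b$ which, by the exact sequence (\ref{E:keyses}), is precisely $T_{E,K_C}(F)/T_{E,K_C}(F)^2$, with action $t \cdot a = a \cdot b(t)$; nonemptiness comes from the hypothesis $[C] \in \mathrm{Im}(f) = H^1(F, T_{E, K_C})[2]$. On the target, $H_C^0(F) = T_{E,K_C}(F)$ acts on $H_C(F) \setminus H_C^0(F)$ by left multiplication; since $h \in H_C(F)\setminus H_C^0(F)$ conjugates $T_{E,K_C}(F)$ by inversion, this action descends to a simply transitive action of $T_{E,K_C}(F)/T_{E,K_C}(F)^2$ on the quotient by $T_{E,K_C}(F)^2$, and nonemptiness of the target is automatic from the existence of $h_x$ for any $x \in X_{a,C}(F)$.

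The main task is to verify equivariance, which I would do by direct computation in $L = E \otimes K_C$. Fix $a \in f^{-1}([C])$ and $x \in X_{a,C}(F)$, so $g_C(a) = [h_x]$ with $h_x(z) = (x/\bar{x})\cdot \bar{z}$. Given $t \in T_{E,K_C}(F)$, write $t = \bar{y}/y$ with $y \in L^{\times}$ and $N_{L/K_C}(y) \in F^{\times}$, so that $b(t) = y^{\#}/\bar{y}$ and $a' := a \cdot b(t)$. The natural candidate for an element of $X_{a',C}(F)$ is $x' := x \cdot y$. After verifying that $x' \in X_{a',C}(F)$ by checking the two defining norm conditions, one immediately computes
\[
h_{x'}(z) = (xy/\bar{x}\bar{y})\,\bar{z} = (y/\bar{y})\, h_x(z) = t^{-1} h_x(z),
\]
so $g_C(a') = [t^{-1}] \cdot g_C(a) = [t] \cdot g_C(a)$, the last equality using that $T_{E,K_C}(F)/T_{E,K_C}(F)^2$ has exponent two. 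This is the required equivariance.

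The main obstacle is just bookkeeping: one must interpret $y^{\#}$ as the sharp in the cubic $K_C$-algebra $L$ (so that $y \cdot y^{\#} = N_{L/K_C}(y)$) and then check the two identities
\[
(a \cdot b(t))^{\#} = a^{\#} \cdot y\bar{y}, \qquad N_{E/F}(a \cdot b(t)) = N_{E/F}(a) \cdot N_{L/K_C}(y),
\]
which are what place $x' = xy$ inside $X_{a',C}(F)$. Both reduce, after unwinding $b(t) = N_{L/K_C}(y)/(y\bar{y})$ and using $N_{L/E}(y) = y\bar{y}$ together with $N_{L/F}(y) = N_{L/K_C}(y)^2$ (the latter because $N_{L/K_C}(y) \in F^{\times}$), to routine algebra with the four norm maps on $L$. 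With these identities verified, equivariance is immediate and the proposition follows.
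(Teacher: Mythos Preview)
Your proposal is correct and follows essentially the same argument as the paper: reduce to checking equivariance of $a\mapsto g_C(a)$, write $t=\bar y/y$ with $N_{L/K_C}(y)\in F^\times$, verify that $x'=xy$ lies in $X_{a',C}(F)$ via the two norm identities, and conclude $h_{x'}=t^{-1}h_x\equiv t\,h_x$ modulo $T_{E,K_C}(F)^2$. The only cosmetic difference is that you make the ``exponent two'' step explicit, whereas the paper passes directly from $t^{-1}h_x$ to $t\cdot h_x$ in the quotient.
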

\vskip 5pt

\begin{proof}
Assume without loss of generality that $C=C_{e,\nu}$.
Since both $f^{-1}([C])$ and $(H_C(F) \setminus H_C^0(F)) / T_{E, K_C}(F)^2$ are torsors under $T_{E,K_C}(F)/ T_{E,K_C}(F)^2$, it suffices to show that if 
\[  a'  =  b(t) \cdot a  \in f^{-1}([C]), \]
then
\[  g_C(a')  = t \cdot g_C(a) \in (H_C(F) \setminus H_C^0(F)) / T_{E, K_C}(F)^2 . \]
Write 
\[  t = \bar{y}/y  \quad \text{  with $N_{L/K_C}(y) \in F^{\times}$},  \]
so that 
\[  b(t) = y^{\#}/\bar{y} \quad \text{and hence} \quad a'  = a \cdot y^{\#}/ \bar{y} . \] 
This implies in particular that 
\[ N_{E/F}(a') = N_{E/F}(a) \cdot N_{L/K_C}(y) \quad \text{and} \quad {a'}^{\#} = a^{\#} \cdot N_{L/E}(y). \]
Now suppose that $x \in X_{a,e,\nu}(F) \subset L^{\times}$, so that
\[ N_{L/E}(x) = e^{-1} a^{\#} \quad \text{and} \quad N_{L/K_C}(x) = N_{E/F}(a) \cdot \nu^{-1}. \]
Then one checks that $x' := xy \in X_{a', C}(F)$.  Hence, if $h_x$ and $h_{x'}$ are the nontrivial elements stabilizing $x$ and $x'$ respectively, then for any $z \in C$,
\[  h_{x'} (z) =  \frac{x'}{\bar{x}' } \cdot \bar{z} =  \frac{xy}{\bar{x} \bar{y}} \cdot \bar{z} = t^{-1} h_x (z). \] 
Thus we have
\[  h_{x'} = t \cdot h_x \in (H_C(F) \setminus H_C^0(F)) / T_{E, K_C}(F)^2. \]
\end{proof}

\vskip 5pt
Indeed, if $[C]$ is a nontrivial  element of $H^1(F, T_{E, K_C})[2]$, then $[C]$ generates a subgroup of order $2$ and we have a short exact sequence of abelian groups
\begin{equation} \label{E:ext1}
  \begin{CD}
1@>>>    T_{E,K_C}(F)/ T_{E, K_C}(F)^2 @>>>  f^{-1}( \langle [C] \rangle) @>>>  \langle [C] \rangle @>>>1. \end{CD} \end{equation}
On the other hand, with $C = C_{e, 1}$ (without loss of generality), one has another extension:
\begin{equation} \label{E:ext2} 
 \begin{CD} 
1@>>>    T_{E,K_C}(F)/ T_{E, K_C}(F)^2 @>>>  H_C(F) / T_{E, K_C}(F)^2 @>>> S_2 @>>> 1 \end{CD} \end{equation}
Then the following is a consequence of Proposition \ref{P:isom-torsor}:
\vskip 5pt 

\begin{prop} \label{P:isom-ext}
The two extensions (\ref{E:ext1}) and (\ref{E:ext2}) are  isomorphic via a canonical isomorphism of extensions defined as follows.  For any $a \in E^{\times}/F^{\times} E^{\times 2} = H^1(F, Z_E)$ with $f(a) = [C]$, the isomorphism sends  $a$ to $g_C(a)$.  
 \end{prop}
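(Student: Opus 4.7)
The plan is to define the map $\Phi \colon f^{-1}(\langle [C] \rangle) \to H_C(F)/T_{E,K_C}(F)^2$ on the two cosets $f^{-1}(0)$ and $f^{-1}([C])$ separately, and verify multiplicativity by case analysis, with Proposition \ref{P:isom-torsor} doing the main work. On $f^{-1}(0) = \mathrm{im}(b)$ set $\Phi(a) = b^{-1}(a)$, which is well defined by the exactness of (\ref{E:keyses}) and takes values in $T_{E,K_C}(F)/T_{E,K_C}(F)^2 \subset H_C^0(F)/T_{E,K_C}(F)^2$. On $f^{-1}([C])$ set $\Phi(a) = g_C(a)$. By Proposition \ref{P:isom-torsor}, $\Phi$ is a bijection compatible with the two extensions: the kernels match tautologically via $b^{-1}$, and the quotients match via $[C] \leftrightarrow$ (the nontrivial element of) $S_2$. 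Hence $\Phi$ will be an isomorphism of extensions once multiplicativity is established.

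Multiplicativity is immediate when $a_1 \in f^{-1}(0)$: if also $a_2 \in f^{-1}(0)$ then $b^{-1}$ is a group homomorphism, and if $a_2 \in f^{-1}([C])$ then $\Phi(a_1 a_2) = \Phi(a_1)\Phi(a_2)$ is the equivariance $g_C(b(t) \cdot a) = t \cdot g_C(a)$ of Proposition \ref{P:isom-torsor}. The substantive case is when $a_1, a_2 \in f^{-1}([C])$, so that $a_1 a_2 \in f^{-1}(0)$. Fixing $a_0 \in f^{-1}([C])$ and writing $a_i = b(t_i) a_0$, we compute in $H_C(F)/T_{E,K_C}(F)^2$:
\[
\Phi(a_1)\Phi(a_2) = \bigl( t_1\, g_C(a_0) \bigr) \bigl( t_2\, g_C(a_0) \bigr) = t_1 t_2^{-1}\, g_C(a_0)^2 \equiv t_1 t_2 \cdot g_C(a_0)^2 \pmod{T_{E,K_C}(F)^2},
\]
where we use that $g_C(a_0) \in H_C(F) \setminus H_C^0(F)$ conjugates $T_{E,K_C}$ by inversion and that $T_{E,K_C}(F)/T_{E,K_C}(F)^2$ is $2$-torsion. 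On the other hand, $a_1 a_2 = b(t_1 t_2)\, a_0^2 \in \mathrm{im}(b)$, so $\Phi(a_1 a_2) = t_1 t_2 \cdot b^{-1}(a_0^2)$. Hence multiplicativity amounts to the single identity $g_C(a_0)^2 \equiv b^{-1}(a_0^2) \pmod{T_{E,K_C}(F)^2}$ for one (equivalently, every) $a_0 \in f^{-1}([C])$.

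Both sides of this identity vanish for trivial reasons. On the right, $a_0 \in E^{\times}/F^{\times} E^{\times 2}$ already satisfies $a_0^2 = 1$, since the ambient group is $2$-torsion; so $b^{-1}(a_0^2) = 1$. On the left, by Lemma \ref{L:basept} the class $g_C(a_0)$ is represented by some $h_x$, $x \in X_{a_0, C}(F)$, which acts on $C = E \otimes K_C$ by the formula $h_x \colon z \mapsto (x/\bar x)\bar z$ of \S \ref{SS:embedding 1-2}; thus $h_x^2(z) = (x/\bar x)(\bar x/x)z = z$, so $h_x^2 = 1$ already in $T_{E,K_C}(F)$. This completes the verification of multiplicativity, and hence $\Phi$ is an isomorphism of extensions.

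The main obstacle is keeping the various identifications straight — the embedding $b \colon T_{E,K_C}(F)/T_{E,K_C}(F)^2 \hookrightarrow E^{\times}/F^{\times}E^{\times 2}$ from (\ref{E:keyses}), the correspondence between $[C] \in H^1(F,T_{E,K_C})$ and the nontrivial element of $S_2$, and the inversion action of $H_C/H_C^0$ on $T_{E,K_C}$ — but the content of the argument is really the torsor bijection supplied by Proposition \ref{P:isom-torsor} combined with two elementary vanishing observations. Canonicity is inherited from the intrinsic construction of $g_C$ (which, via Lemma \ref{L:indep}, does not depend on the presentation $C = C_{e,\nu}$).
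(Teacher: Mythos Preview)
Your proof is correct and fleshes out exactly what the paper leaves implicit: the paper states this proposition simply as ``a consequence of Proposition \ref{P:isom-torsor}'' with no further argument, and you have supplied the verification of multiplicativity that makes this consequence precise. The two key vanishing observations you isolate --- that $E^{\times}/F^{\times}E^{\times 2}$ is $2$-torsion (so $a_0^2=1$) and that $h_x$ is an involution (so $g_C(a_0)^2=1$) --- are correct and are precisely what is needed to close the only nontrivial case.
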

 \vskip 5pt

  \vskip 5pt

\vskip  5pt 
 \subsection{\bf Rank 4 and 8 cases}  \label{SS:4and8}
We conclude with a brief sketch of the rank $4$ and $8$ cases. The case $\dim_EC = 4$   corresponds to embeddings of Jordan algebras $E \longrightarrow J$ with $\dim_FJ = 15$. Examples of such $J$ are of the form $H_3(B)$, the Jordan algebra of $3 \times 3$-Hermitian matrices with entries in a quaternion algebra $B$.  This case is discussed in some detail in Appendix A below. We simply note here that the automorphism group of such a $C$ is
   \[  \Aut_E(C)  \cong  \left( \Res_{E/F} (B\otimes_F E)^{\times}  \right)^{\det} / F^{\times} \]
 where the RHS consists of elements in $(B \otimes E)^{\times}$ whose norm lies in $F^{\times}$. See \S \ref{SS:seesawdp} below.
 \vskip 5pt

 Finally, when $\dim_EC = 8$,  one has $\dim_FJ = 27$, so that $J$ is an exceptional Jordan algebra. An example is $J = H_3(\mathbb{O})$, the Jordan algebra of $3 \times 3$-Hermitian matrices with entries in an octonion algebra $\mathbb{O}$. When the octonion algebra is split, the automorphism group of such a $C$ is isomorphic to the group
 \[  G_E = \Spin_8^E.\]
 Moreover,  the action of $G_E$ on $C$ is (the Galois descent of) the sum of the 3 irreducible 8-dimensional representations of $\Spin_8$ over $\overline{F}$. 
It is no wonder that the structure of the group $G_E$ is intimately connected with the theory of twisted composition algebras. 
 
\vskip 15pt

\section{\bf Twisted Bhargava Cubes} \label{SS:relation}

 To connect the theory of twisted composition algebras with our earlier discussion on $G_E =\Spin_8^E$, let us recall the main result of \cite{GS2}. 
 
 \vskip 5pt
 
 \subsection{\bf Nondegenerate cubes}
 Recall the Heisenberg parabolic subgroup  $P_E = M_E \cdot N_E \subset G_E$ and the natural action of $M_E \cong \GL_2(E)^{\det}$ on the space $V_E = N_E/ [N_E, N_E]$ of $E$-twisted cubes. 
 Now we have \cite[Prop. 10.4]{GS2}:
 \vskip 5pt
 
 \begin{prop}  \label{P:orbits}
 The nondegenerate $M_E(F)$-orbits on $V_E(F)$ are in natural bijection with $E$-isomorphism classes of $E$-twisted composition algebras of rank $2$. More precisely,
 to every nondegenerate $E$-twisted   cube $\Sigma$, we attached in \cite{GS2} a pair $(Q_{\Sigma},\beta_{\Sigma})$  giving a structure of $E$-twisted composition algebra on $E\oplus E$, with an isomorphism
 \[  \mathrm{Stab}_{M_E(F)}(\Sigma)  \cong  \Aut_E(Q_{\Sigma},\beta_{\Sigma}). \] 
  If $g\in M_E(F) = \GL_2(E)^{\det}$ and $\Sigma'=g(\Sigma)$, then the pair $(Q_{\Sigma'},\beta_{\Sigma'})$ attached to $\Sigma'$ is obtained from $(Q_{\Sigma},\beta_{\Sigma})$ by the change of variables given by the matrix $g$, i.e.
  \[   Q_{\Sigma'} = Q_{\Sigma} \circ {^t}g \quad \text{and} \quad  \beta_{\Sigma'} = {^t}g^{-1} \circ \beta_{\Sigma} \circ {^t}g. \]
  Hence, 
  \[  g \in \mathrm{Stab}_{\GL_2(E)^{\det}}(\Sigma) \subset \GL_2(E)^{\det} \Longleftrightarrow {^t}g^{-1} \in \Aut_E(E^2, Q_{\Sigma}, \beta_{\Sigma}). \]
  \end{prop}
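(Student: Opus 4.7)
The plan is to establish this by an explicit computation, paralleling the classical Bhargava cube theory and using Galois descent. First I would use the identification $V_E \cong F \oplus E \oplus E \oplus F$ from \S\ref{SS:Heis} to write a twisted cube $\Sigma = (a,e,f,b)$ as a pair of $2\times 2$ matrices $(M_\Sigma, N_\Sigma)$ with entries in $E$, obtained by slicing the underlying cube over $\overline{F}$ in the direction of the $\Aut(\Delta)$-fixed vertex $\alpha_0$ and then descending. Then define
\[
Q_\Sigma(x_1, x_2) = -\det{}_E(x_1 M_\Sigma + x_2 N_\Sigma) \in E,
\]
and construct $\beta_\Sigma: E^2 \to E^2$ from the ``cubic" Bhargava invariant, combining the $2\times 2$ adjugate of $x_1 M_\Sigma + x_2 N_\Sigma$ with the cubic $\#$-map on $E$ in order to produce a quadratic map with the right covariance properties.

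Next I would verify the three axioms of an $E$-twisted composition algebra, namely $\beta(ex) = e^\#\beta(x)$, $Q(\beta(x)) = Q(x)^\#$, and $b_Q(x,\beta(x)) \in F$. After base change to $\overline{F}$, the cube becomes a classical cube in $\overline{F}^2 \otimes \overline{F}^2 \otimes \overline{F}^2$ and the needed identities reduce to a finite list of polynomial relations among the eight entries of a split cube, which can be checked directly. The $F$-rationality of $(Q_\Sigma, \beta_\Sigma)$ follows from the Galois-equivariance built into the slicing, and nondegeneracy of $\Sigma$ translates into nondegeneracy of $Q_\Sigma$. Together this gives a well-defined assignment $\Sigma \mapsto (E^2, Q_\Sigma, \beta_\Sigma)$ to rank $2$ $E$-twisted composition algebras.

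For the covariance, I would compute the effect of the twisted $\GL_2(E)^{\det}$-action (recall the $\det(g)^{-1}$ twist noted in \S\ref{SS:Heis}): one factor of $\GL_2$ rescales the pair $(M_\Sigma, N_\Sigma)$ by a determinant character, while the action of the ``outer" $\GL_2$ on $(x_1, x_2)$ corresponds to acting by ${}^t g$ on $E^2$. A direct substitution then yields $Q_{\Sigma'} = Q_\Sigma \circ {}^t g$ and $\beta_{\Sigma'} = {}^t g^{-1} \circ \beta_\Sigma \circ {}^t g$, from which the stabilizer identification is immediate. The bijection between nondegenerate orbits and $E$-isomorphism classes then follows: two cubes give isomorphic structures iff they are related by an element of $\GL_2(E)^{\det}$, and for surjectivity it suffices to produce, for each rank $2$ composition algebra $C_{e,\nu}$ from \S\ref{SS:rank2}, an explicit cube whose invariants recover $(e,\nu)$. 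The main obstacle I expect is finding the correct formula for $\beta_\Sigma$ and verifying the identity $Q(\beta(x)) = Q(x)^\#$, which is a degree $6$ polynomial identity in the coordinates of $x$ with coefficients in $E$ and requires a careful interplay of the $2\times 2$ matrix adjugate with the cubic $\#$-map on $E$.
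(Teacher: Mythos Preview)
The paper does not prove this proposition; it is quoted verbatim from \cite[Prop.~10.4]{GS2}, and the paper explicitly says it will \emph{not} need the general recipe $\Sigma \mapsto (Q_\Sigma,\beta_\Sigma)$, only the explicit formulas for reduced cubes $(1,0,f,b)$ recorded in Proposition~\ref{P:reduced}. So there is nothing to compare against in this paper itself.

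As for your sketch, the overall strategy (work over $\bar F$, verify the split identities, descend, then read off covariance and the orbit bijection) is the right one and is essentially what \cite{GS2} does. But your concrete description of the construction is off in a way that would actually block the computation. There is no ``$\Aut(\Delta)$-fixed slicing direction'': the three slicings of a Bhargava cube correspond to the three outer vertices $\alpha_1,\alpha_2,\alpha_3$, which are permuted transitively by $S_3$, not to the branch vertex $\alpha_0$. Consequently you cannot produce a single pair $(M_\Sigma,N_\Sigma)\in M_2(E)^2$ with $Q_\Sigma(x,y)=-\det_E(xM_\Sigma+yN_\Sigma)$. Indeed, for the reduced cube $(1,0,f,b)$ the paper's formula is $Q_\Sigma(x,y)=-fx^2-bxy+f^{\#}y^2$; the coefficient $f^{\#}$ is quadratic in $f$ via the cubic adjoint on $E$ and cannot arise as an entry of a $2\times 2$ matrix built $F$-linearly from the cube data. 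What actually happens is that the \emph{three} slicings over $\bar F$ give three $\bar F$-valued binary quadratic forms $Q_1,Q_2,Q_3$ which are permuted by $\Gal(\bar F/F)$ via $\rho_E$, and the triple descends to a single $E$-valued form $Q_\Sigma:E^2\to E$; the appearance of $f^{\#}$ comes from the fact that the off-diagonal entries of each $M_i$ involve the \emph{other} components $f_j$, $j\ne i$. The same remark applies to $\beta_\Sigma$. Once you correct this (or simply restrict to reduced cubes, as the paper does, since every nondegenerate orbit contains one by Proposition~\ref{P:reduced}(ii)), the rest of your plan goes through.
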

 \vskip 5pt
 
 In particular, if $F$ is a local field, then the $M_E(F)$-orbits of generic unitary characters of $N_E(F)$ are parametrized by $E$-twisted composition algebras (modulo $E$-isomorphisms). Likewise, when $F$ is a number field, the $M_E(F)$-orbits of (abelian) Fourier coefficients along $N_E$ are parametrised by   $E$-twisted composition algebras (modulo $E$-isomorphisms).
 \vskip 5pt
 
 We shall not need the general procedure to pass from $\Sigma$ to $(Q_{\Sigma},\beta_{\Sigma})$, but only for the so-called reduced cubes:
 
 \begin{prop} \label{P:reduced}
 (i) If  $\Sigma= (1, 0, f, b) \in V_E(F)$ (such a $\Sigma$ is called a reduced cube), then its associated pair $(Q_{\Sigma}, \beta_{\Sigma})$ is given by:
 \[  Q_{\Sigma}(x,y) = -fx^2 - bxy  + f^{\#} y^2
  \]
  and 
  \[  \beta_{\Sigma}(x,y) =(-by^{\#} - (fx) \times y , x^{\#} + fy^{\#})
  \] 
  so that $\beta_{\Sigma}(1,0)  = (0,1)$.
 
 \vskip 5pt
 
 \noindent (ii)  Conversely,  let $(C,Q,\beta)$ be 
  an $E$-twisted composition algebra of $E$-dimension 2. For  $v\in C$, set $\Delta(v):=N_C(v)^2 - 4 N_{E/F}(Q(v))\in F$.  
  Then  there exists $v\in C$ such that  $\Delta(v)\neq 0$. Moreover, the set  $\{ v, \beta(v)\}$ is an $E$-basis of $C$ if and only if $\Delta(v)\neq 0$. 
   Given such a $v \in C$ and identifying $C$ with $E\oplus E$ using the basis $\{ v,\beta(v) \}$,  
  the pair $(Q,\beta)$ corresponds to the reduced cube $(1,0,-Q(v),-N_C(v))$ under the recipe in (i). 
  \end{prop}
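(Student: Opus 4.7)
Part (i) is a direct specialization of the general cube-to-TCA recipe of \cite{GS2} to cubes of reduced form (first two entries $a=1$, $e=0$); the identity $\beta_\Sigma(1, 0) = (0, 1)$ is then immediate from the displayed formula. The plan for Part (ii) proceeds in three stages.

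First, we prove the biconditional ``$\{v, \beta(v)\}$ is an $E$-basis of $C$ iff $\Delta(v) \neq 0$''. Expanding via the TCA axioms $Q(\beta(v)) = Q(v)^\#$ and $b_Q(v, \beta(v)) = N_C(v)$ gives
\[
Q(x_1 v + x_2 \beta(v)) = x_1^2 Q(v) + x_1 x_2 N_C(v) + x_2^2 Q(v)^\#,
\]
an $E$-valued quadratic form on $E^2$ whose discriminant is $N_C(v)^2 - 4 N_{E/F}(Q(v)) = \Delta(v) \in F \subseteq E$. If $\{v, \beta(v)\}$ is a basis, this form is the pullback of the non-degenerate $Q$ along an $E$-linear isomorphism $E^2 \xrightarrow{\sim} C$, hence non-degenerate, so $\Delta(v) \neq 0$. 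Conversely, if $\beta(v) = \lambda v$ for some $\lambda \in E$ (the case $v = 0$ being trivial), the axioms force $Q(v)^\# = \lambda^2 Q(v)$ and $N_C(v) = 2\lambda Q(v)$, whence $\Delta(v) = 4\lambda^2 Q(v)^2 - 4 N_{E/F}(Q(v)) = 0$.

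Second, assuming $\Delta(v) \neq 0$, we identify $C$ with $E \oplus E$ via $v \mapsto (1, 0)$, $\beta(v) \mapsto (0, 1)$. Setting $f = -Q(v)$ and $b = -N_C(v)$, the previous calculation already shows $Q$ matches $Q_\Sigma$ of Part (i) for the reduced cube $\Sigma = (1, 0, f, b)$, and $\beta(v) = (0, 1) = \beta_\Sigma(1, 0)$ holds by construction. To match $\beta(0,1) = \beta(\beta(v))$ with $\beta_\Sigma(0,1) = (-b, f) = (N_C(v), -Q(v))$, one invokes the cubic identity
\[
\beta^2(v) = N_C(v) \cdot v - Q(v) \cdot \beta(v),
\]
which I plan to derive from the Jordan-algebra identity $(x^\#)^\# = N_J(x) \cdot x$ applied to $x = (0, v) \in J = E + C$, combined with the standard fact $N_J((0, v)) = N_C(v)$ inherent to Springer's construction. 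Since reduced cubes $(1, 0, f, b)$ are precisely those whose associated pair satisfies $\beta_\Sigma(1, 0) = (0, 1)$, and since the \cite{GS2} bijection picks out a distinguished reduced representative once such a basis is fixed, the transported $(Q, \beta)$ corresponds to the reduced cube $(1, 0, -Q(v), -N_C(v))$.

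Third, for existence of $v$ with $\Delta(v) \neq 0$: note that $\Delta \colon C \to F$ is a polynomial function, so it suffices to show it is not identically zero. Base-changing to $\bar F$, the 2-dimensional $E$-TCA becomes isomorphic to the split model, and on the split $E = F^3$ example of \S \ref{SS:springer} a direct computation yields
\[
\Delta\bigl((x_i, y_i)_i\bigr) = (x_1 x_2 x_3 - y_1 y_2 y_3)^2,
\]
visibly a non-zero polynomial; since $F$ is infinite, the complementary set $\{v \in C : \Delta(v) \neq 0\}$ is non-empty. The main obstacle in this plan is the cubic identity in the second stage: without the Jordan-algebra viewpoint it is not transparent from the raw TCA axioms, yet it is precisely what guarantees that the Part (i) recipe reproduces the full $\beta$ (and not merely its values on the $E$-basis) from the reduced cube $(1, 0, -Q(v), -N_C(v))$.
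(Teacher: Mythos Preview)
The paper itself does not prove this proposition; it is stated as a summary of results from \cite{GS2}, and when the same verification is needed later (in the proof of Corollary~\ref{C:phs}) the paper simply cites \cite[\S 3.1 and Lemma 3.2, eqn.~(3.4)]{GS2}. Your proposal therefore supplies an argument where the paper gives none, and Stages~1 and~3 are fine (the computation $\Delta(v)=(x_1x_2x_3-y_1y_2y_3)^2$ in the split model is correct, and the Zariski-density argument works since $\Delta$ is a polynomial on the $F$-vector space $C$).

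There is, however, a genuine gap in Stage~2. You verify that $Q=Q_\Sigma$ and that $\beta$ agrees with $\beta_\Sigma$ on $(1,0)$ and $(0,1)$, but $\beta$ is a \emph{quadratic} map, so agreement on an $E$-basis does not determine it: one also needs the polarization $\beta(v+\beta(v))-\beta(v)-\beta^2(v)$. Your fallback --- that reduced cubes are \emph{characterized} among all cubes by $\beta_\Sigma(1,0)=(0,1)$, so the \cite{GS2} bijection forces $\Sigma'=\Sigma$ --- is plausible but is an assertion about the general \cite{GS2} formula that you neither state nor verify. The cleanest fix stays within the Jordan-algebra framework you already introduced: just as you obtained $\beta^2(v)=N_C(v)\,v-Q(v)\,\beta(v)$ from $(x^\#)^\#=N_J(x)\,x$ at $x=(0,v)$, you can obtain the needed polarization identity by computing the Jordan cross product $(0,v)\times(0,\beta(v))$, using $(0,\beta(v))=(0,v)^\#+(Q(v),0)$ together with $(a,0)\times(0,w)=(0,-a\cdot w)$. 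This yields the $v$-coefficient of $\beta(xv+y\beta(v))$ and completes the match with the Part~(i) formula. (This is essentially the content of \cite[Lemma 3.2, eqn.~(3.4)]{GS2}, which is exactly what the paper invokes when it needs the same fact in Corollary~\ref{C:phs}.)
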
 
  \vskip 5pt
  
  We record a corollary which will be used later, concerning isomorphisms between rank 2 twisted composition algebras:
  \vskip 5pt
  
   \begin{cor} \label{C:phs}
    Let $(C,Q,\beta)$ be an $E$-twisted composition algebra of $E$-dimension 2. Let $f\in E$ and $b\in F$, such that $b^2 + 4 N_{E/F}(f)\neq 0$. 
 Then the set of
 \[  \Omega_{C,f,b} := \{ v\in C: \,  \text{$Q(v)=-f$ and $N_C(v)=-b$} \} \]
 is a principal homogeneous space for $\Aut_E(C)$, which contains an $F$-rational point    if and only if 
 $(C,Q,\beta)$ is isomorphic to the $E$-twisted composition algebra $C_{\Sigma} = (E^2, Q_{\Sigma}, \beta_{\Sigma})$ defined by the reduced cube $\Sigma=(1,0,f,b)$.  
 Indeed,   there is an $\Aut_E(C)$-equivariant isomorphism 
 \[  \mathrm{Isom}_E(C_{\Sigma}, C) \longrightarrow  \Omega_{C, f,b} \]
 defined by
 \[  \phi \mapsto  \phi(1,0). \]
 \end{cor}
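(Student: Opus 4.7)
The plan is to exhibit the assignment $\phi \mapsto \phi(1,0)$ as an $\Aut_E(C)$-equivariant bijection from $\mathrm{Isom}_E(C_\Sigma, C)$ onto $\Omega_{C,f,b}$, and then to transport the standard torsor structure on $\mathrm{Isom}_E(C_\Sigma, C)$ (via post-composition by $\Aut_E(C)$) across this bijection.  The ``iff'' clause then drops out: $\Omega_{C,f,b}$ has an $F$-rational point if and only if $\mathrm{Isom}_E(C_\Sigma, C)$ is nonempty, if and only if $C_\Sigma \cong C$ as $E$-twisted composition algebras.

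First, I would check that the map lands in $\Omega_{C,f,b}$. Using the recipe in Proposition \ref{P:reduced}(i) for the cube $\Sigma = (1,0,f,b)$, a direct computation gives $Q_\Sigma(1,0) = -f$ and $\beta_\Sigma(1,0) = (0,1)$, so
\[
N_{C_\Sigma}(1,0) \;=\; b_{Q_\Sigma}\bigl((1,0),(0,1)\bigr) \;=\; Q_\Sigma(1,1) - Q_\Sigma(1,0) - Q_\Sigma(0,1) \;=\; (-f - b + f^\#) - (-f) - f^\# \;=\; -b.
\]
Since any $\phi \in \mathrm{Isom}_E(C_\Sigma, C)$ intertwines $(Q_\Sigma,\beta_\Sigma)$ with $(Q,\beta)$ (hence $N_{C_\Sigma}$ with $N_C$), it follows that $\phi(1,0) \in \Omega_{C,f,b}$.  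Equivariance is clear from the definitions.

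For injectivity, if $\phi, \phi' \in \mathrm{Isom}_E(C_\Sigma, C)$ both send $(1,0) \mapsto v$, then they both send $(0,1) = \beta_\Sigma(1,0)$ to $\beta(v)$, so they agree on the $E$-basis $\{(1,0),(0,1)\}$ of $C_\Sigma = E^2$ and hence coincide.  For surjectivity, given $v \in \Omega_{C,f,b}$, I would note that, using $[E:F]=3$ so $N_{E/F}(-f) = -N_{E/F}(f)$,
\[
\Delta(v) \;=\; N_C(v)^2 - 4\,N_{E/F}(Q(v)) \;=\; b^2 - 4\,N_{E/F}(-f) \;=\; b^2 + 4\,N_{E/F}(f) \;\neq\; 0
\]
by hypothesis.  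By Proposition \ref{P:reduced}(ii), $\{v,\beta(v)\}$ is then an $E$-basis of $C$, and the pair $(Q,\beta)$ expressed in this basis is precisely the one attached to the reduced cube $(1,0,-Q(v),-N_C(v)) = (1,0,f,b) = \Sigma$.  Hence the $E$-linear map $\phi : C_\Sigma \to C$ defined by $(1,0)\mapsto v$, $(0,1)\mapsto \beta(v)$ is an isomorphism of $E$-twisted composition algebras with $\phi(1,0)=v$.

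The argument is essentially bookkeeping against Proposition \ref{P:reduced}, so I do not expect a serious obstacle.  The one delicate point is the sign in the computation of $\Delta(v)$: because $N_{E/F}$ is cubic, the sign flip $f \mapsto -f$ reverses the sign of the norm, and this is what makes $b^2 + 4N_{E/F}(f)$ (rather than $b^2 - 4N_{E/F}(f)$) the correct nondegeneracy hypothesis matching the $\Delta \neq 0$ condition of Proposition \ref{P:reduced}(ii).  Once the equivariant bijection is established, the fact that $\Omega_{C,f,b}$ is an $\Aut_E(C)$-torsor (possibly empty) is immediate from the corresponding fact for $\mathrm{Isom}_E(C_\Sigma, C)$.
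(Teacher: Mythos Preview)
Your proof is correct and follows essentially the same route as the paper's own argument: both identify $\phi$ with its value $v=\phi(1,0)$ (forcing $\phi(0,1)=\beta(v)$), verify that $v$ satisfies $Q(v)=-f$ and $N_C(v)=-b$, and for the converse use the reduced-cube description of $(C,Q,\beta)$ in the basis $\{v,\beta(v)\}$. The paper cites \cite[\S 3.1 and Lemma 3.2]{GS2} directly for this converse step, whereas you invoke Proposition~\ref{P:reduced}(ii) (which summarizes the same content), and you make the $\Delta(v)$ sign computation explicit; these are cosmetic differences, not substantive ones.
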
 
 
 \begin{proof}  
 An $E$-linear isomorphism $\phi: C_{\Sigma} \longrightarrow C$ is determined by $v = \phi(1,0)$ (for $\phi(0,1)$ has no choice but to be equal to $\beta(v)$) and this $v \in C$ must satisfy 
 \[  \text{  $Q(v)=- f$, and $N_C(v)= -b$.  } \]
 Conversely, when $v\in C$ satisfies these two conditions, one checks using \cite[\S 3.1 and Lemma 3.2, eqn. (3.4)]{GS2} that the map $\phi$ 
 given by $\phi(1,0) = v$ and $\phi(0,1) = \beta(v)$   is an isomorphism of twisted composition algebras.
  \end{proof} 

Observe that $\mathrm{Isom}_E(C_{\Sigma}, C)$ has an action of $\Aut_E(C_{\Sigma}) \times \Aut_E(C)$ for which it is a torsor for each of the two factors. Hence, assuming $\mathrm{Isom}_E(C_{\Sigma}, C) $ is nonempty and  after fixing a base point $\phi_0 \in \mathrm{Isom}_E(C_{\Sigma}, C)$, one obtains an isomorphism 
\[ {\mathrm Ad}(\phi_0) :  \Aut_E(C_{\Sigma}) \cong \Aut_E(C). \]
 By transport of structure, we also see that $\Omega_{C,f,b}$ carries an action of $\Aut_E(C_{\Sigma}) \times \Aut_E(C)$. Let us describe the action of  
 $\Aut_E(C_{\Sigma})\cong \mathrm{Stab}_{\GL_2(E)^{\det}}(\Sigma)$ on $\Omega_{C, f,b}$ concretely. 
 \vskip 5pt
 
 \begin{lemma}  \label{L:phs}
 Given
 \[ g  = \left( \begin{array}{cc}
 p & q \\
 r & s \end{array} \right)  \in \mathrm{Stab}_{\GL_2(E)^{\det}}(\Sigma), \]
  so that ${^t}g^{-1} \in \Aut_E(E^2, Q_{\Sigma}, \beta_{\Sigma})$, and $v \in \Omega_{C, f,b}$ associated to $\phi \in \mathrm{Isom}_E(C_{\Sigma}, C)$,
one has
 \[ g \cdot v  = \phi \left( {^t}g \cdot (1,0) \right) = \phi(p,q) = p v + q \beta(v) \in \Omega_{C, f,b}. 
 \]
 \end{lemma}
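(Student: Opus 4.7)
The plan is to unwind the transport of structure along the bijection $\mathrm{Isom}_E(C_\Sigma, C) \xrightarrow{\sim} \Omega_{C,f,b}$, $\phi \mapsto \phi(1,0)$, from Corollary \ref{C:phs}, and to track carefully the conventions (left versus right action; transpose versus inverse transpose) entering through Proposition \ref{P:orbits}.

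First, I would make explicit the canonical action of $\Aut_E(C_\Sigma)$ on $\mathrm{Isom}_E(C_\Sigma, C)$ under which the bijection above is equivariant.  Writing the action of $\psi \in \Aut_E(C_\Sigma)$ on $\phi$ as the left action $\psi \cdot \phi := \phi \circ \psi^{-1}$, the induced action on $v = \phi(1,0)$ is simply
\[
  \psi \cdot v \;=\; \phi\bigl(\psi^{-1}(1,0)\bigr).
\]
This is the action we must then transport to $\mathrm{Stab}_{\GL_2(E)^{\det}}(\Sigma)$.

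Next, I would invoke Proposition \ref{P:orbits}, which identifies $\mathrm{Stab}_{\GL_2(E)^{\det}}(\Sigma)$ with $\Aut_E(C_\Sigma)$ via $g \mapsto \psi_g := {^t}g^{-1}$, viewed as an $E$-linear automorphism of $C_\Sigma = E^2$.  The inverse $\psi_g^{-1}$ then acts on column vectors as the matrix ${^t}g$, so $\psi_g^{-1}(1,0) = {^t}g \cdot (1,0) = (p,q)$, and therefore
\[
  g \cdot v \;=\; \phi(p,q).
\]
Finally, from the construction in the proof of Corollary \ref{C:phs}, the isomorphism $\phi$ is pinned down by $\phi(1,0) = v$ together with $\phi(0,1) = \phi(\beta_\Sigma(1,0)) = \beta(\phi(1,0)) = \beta(v)$, where the middle equality uses $\beta_\Sigma(1,0) = (0,1)$ from Proposition \ref{P:reduced}(i) and the rightmost uses that $\phi$ intertwines $\beta_\Sigma$ with $\beta$.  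Expanding $\phi(p,q)$ by $E$-linearity then yields $\phi(p,q) = pv + q\beta(v)$, proving the identity.

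The only real obstacle is the bookkeeping in the middle step: one must orient the isomorphism $\mathrm{Stab}_{\GL_2(E)^{\det}}(\Sigma) \cong \Aut_E(C_\Sigma)$ compatibly with the chosen direction of the action on $\mathrm{Isom}_E(C_\Sigma, C)$, so that the two inverses combine to produce ${^t}g$ (and not ${^t}g^{-1}$) in the final formula.  Once those conventions are aligned, the lemma drops out immediately from the $E$-linearity of $\phi$.
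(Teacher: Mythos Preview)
Your proposal is correct and is precisely the natural unwinding of definitions that the paper intends; indeed, the paper states this lemma without proof, as a concrete description obtained by transport of structure from Corollary~\ref{C:phs} and Proposition~\ref{P:orbits}, and your argument supplies exactly that verification.
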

 \vskip 5pt

 \subsection{\bf Degenerate cubes}
 It will be useful to have an understanding of the degenerate $M_E(F)$-orbits on $V_E(F) = N_E(F)/Z(F)$.
  The nontrivial degenerate orbits correspond to the nilpotent $G_E$-orbits  which are denoted by $A_1$, $2A_1$ and $3A_1$ in the 
Bala-Carter classification. Accordingly, we shall say that the corresponding elements in $V_E(F)$   are of rank 1, 2 or 3. We may refer to generic elements 
(non-degenerate cubes)  as rank 4  elements. The set of elements in $V_E$ of rank $\leq k$ is a Zariski closed subset.
For example, the elements of rank 1 are precisely the highest weight vectors, and the set of elements of rank $\leq 1$ can be described by a system of equations given in Proposition \ref{P:orbit} below (see also \cite[Prop. 11.2]{GS1}). 
\vskip 5pt

 We shall now describe  the $M_E(F)$-orbits of elements of rank 2 and 3.   
\vskip 5pt

\begin{prop}   \label{P:rank2or3} 
\begin{enumerate} 
\item Every  $M_E(F)$-orbit of rank 3 elements in $V_E= F\oplus E \oplus E \oplus F$ contains an element $(0,0, e, 0)$ where $e\in E^{\times}$.  
Two rank 3 elements $(0,0, e, 0)$ and $(0,0,f,0)$ belong to the same orbit if and only if $e/f \in F^{\times} E^{\times 2}$.   
\vskip 5pt

\item  Every  $M_E(F)$-orbit of rank 2 elements in $V_E= F\oplus E \oplus E \oplus F$ contains an element $(1,0, e, 0)$ where $e\in E$ such that 
$e\neq 0$ and $e^{\#}=0$. 
Two rank 2 elements $(1,0, e, 0)$ and $(1,0,f,0)$ belong to the same orbit if and only if $e/f \in (F^{\times})^2$.   
\end{enumerate} 
\end{prop}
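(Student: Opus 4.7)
My overall strategy for each part is the same: exhibit an explicit orbit representative of the claimed rank in the indicated family using the root-space decomposition of $V_E$, reduce an arbitrary rank-$k$ element to that family by a Bruhat/parabolic argument for $M_E$, and then determine the residual equivalence among the canonical representatives by computing the stabilizer action on the relevant slot of the cube.

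For part (1), I work first in the split case $E = F^3$. The three third-slot vertices of the cube correspond to the $D_4$-roots $\alpha_0 + \alpha_j + \alpha_k$ with $\{j,k\} \subset \{1,2,3\}$, and a direct check in the standard realisation of $D_4$ shows these three roots are pairwise orthogonal. Hence $(0, 0, (1,1,1), 0)$ is a sum of three commuting orthogonal root vectors and lies in the $3A_1$ (rank $3$) nilpotent orbit; a dimension count on its centralizer shows its $M_E(\overline{F})$-orbit is dense in the rank-$3$ variety, and Galois descent then yields a representative $(0, 0, e', 0)$ with $e' \in E^{\times}$ over $F$ for every rank-$3$ element. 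For the residual equivalence, every nontrivial Weyl element of $M_E$ sends some third-slot vertex to a first-, second- or fourth-slot vertex, so the diagonal torus is the unique Bruhat stratum stabilising the third-slot subspace. A direct tensor-product computation then shows that the torus element $\mathrm{diag}(\lambda, d \lambda^{-1}) \in \GL_2(E)^{\det}$ (with $\lambda \in E^{\times}$ and $d \in F^{\times}$) scales the third slot by $f \mapsto c \lambda^{2} f$ with $c = d/N_{E/F}(\lambda) \in F^{\times}$. The image of the map $(\lambda, d) \mapsto c \lambda^{2}$ is precisely $F^{\times} \cdot E^{\times 2}$, yielding the orbit parametrisation $E^{\times}/F^{\times} E^{\times 2}$.

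For part (2), the condition $e^{\#} = 0$ and $e \neq 0$ forces $e = x \cdot p$ for a primitive idempotent $p \in E$ and some $x \in F^{\times}$; in particular, such $e$ exists only when $E$ admits an $F$-factor. In the split case, taking $p = (1, 0, 0)$, the element $(1, 0, (x, 0, 0), 0)$ equals $X_{\alpha_0} + x\, X_{\alpha_0 + \alpha_2 + \alpha_3}$, and a direct check in the standard realisation shows the two roots involved are orthogonal, placing the element in the $2A_1$ (rank $2$) orbit. Reduction of a general rank-$2$ element to this form proceeds in parallel with part (1). For the residual equivalence, write the nonzero part of $(1, 0, e, 0)$ as $v_1 \otimes W$ with $v_1 \in F^{2}$ a basis vector and $W = \mathrm{diag}(1, x) \in F^{2} \otimes F^{2}$ a $2 \times 2$ matrix in the last two tensor factors. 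Any $g = (g_1, g_2, g_3) \in M_E$ sending $(1, 0, e, 0)$ to $(1, 0, e', 0)$ must have $g_1$ stabilise the line $F v_1$, while $g_2 W g_3^{t}$ must equal a scalar multiple of $\mathrm{diag}(1, x')$. The common-determinant constraint together with a short matrix computation then forces $x'/x = (d/(a_2 a_3))^{2}$, where $a_i$ is the $(1,1)$-entry of $g_i$; this ratio lies in $(F^{\times})^{2}$, and all of $(F^{\times})^{2}$ is achieved as the parameters vary.

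The main obstacle in each part is the reduction-to-canonical-form step: verifying that every rank-$k$ element admits a representative of the claimed shape requires combining the defining equations of the rank-$k$ stratum (from the Bala--Carter description) with a suitable parabolic decomposition of $M_E$ to clear the non-canonical coordinates. The subsequent stabilizer and orbit computations are routine within the tensor/cube framework of \cite{GS2}.
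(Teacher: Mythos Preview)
Your final torus computation is correct and agrees with the paper's, but the two logical steps preceding it do not hold up.

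First, the reduction-to-canonical-form step is not justified. Saying ``the $M_E(\overline F)$-orbit is dense in the rank-$3$ variety, and Galois descent then yields a representative $(0,0,e',0)$'' is not an argument: density over $\overline F$ does not by itself show that every $F$-rational orbit meets the slice $\{(0,0,e,0):e\in E^\times\}$. You acknowledge this is ``the main obstacle'' but do not carry it out. Second, your Bruhat argument for the residual equivalence is flawed as stated: you show that no nontrivial Weyl element of $M_E$ preserves the third-slot subspace, but this only tells you that the \emph{normaliser} of that subspace lies in the Borel. An element $g\in M_E(F)$ sending $(0,0,e,0)$ to $(0,0,f,0)$ need not normalise the whole subspace, so you cannot conclude $g$ is diagonal (or Borel) from this alone.

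The paper handles both issues simultaneously by a stabiliser/cohomology argument. One computes the full stabiliser $S_{M_E}(\Sigma)$ of the base point $\Sigma=(0,0,1,0)$ in $M_E$; it turns out to be an extension of a torus-type group $S_{T_E}(\Sigma)$ (diagonal matrices in the stabiliser) by a unipotent group. Since unipotent groups have trivial $H^1$, one gets $H^1(F,S_{M_E}(\Sigma))=H^1(F,S_{T_E}(\Sigma))$, and since $H^1(F,T_E)=0$, the $T_E(F)$-orbits on the slice $\{(0,0,e,0):e\in E^\times\}$ already exhaust and separate all $M_E(F)$-orbits on rank-$3$ elements. This single cohomological identification replaces both your descent step and your Bruhat step, after which the torus computation you wrote down finishes the proof. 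Part (2) is handled by the identical scheme.
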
 
\begin{proof}  (1) Consider $\Sigma=(0,0,1,0)$. This element has rank 3 since, over $\bar F$, $1=(1,1,1) \in \bar F^3$ 
sits across three orthogonal root spaces, hence the notation 
$3A_1$.   A long but fascinating computation shows that the stabilizer $S_{M_E}(\Sigma)$ of $\Sigma$ in $M_E$ consists of all elements 
\[ 
\left( \begin{array}{cc} 
a & 0 \\
b & d \end{array} 
\right) 
\] 
where $ad\in F^{\times}$,  $d/d^{\#} =1$ and $T_{E/F}(b d^{\#}) =0$.   Let $T_E \subset M_E$ be the maximal torus of diagonal matrices in $M_E$.  The 
stabilizer $S_{T_E}(\Sigma)$ of $\Sigma$ in $T_E$ consists of matrices as above with $b=0$. Since 
\[ 
H^1(F, S_{M_E}(C)) = H^1(F, S_{T_E}(C)) 
\] 
it suffices to classify the orbits of $T_E$ on elements of the type $(0,0,e,0)$ where $e\in E^{\times}$.  On these elements, the diagonal matrices 
act by multiplication by $d/d^{\#}$. Since the set of all $d/d^{\#}$ is $F^{\times} E^{\times 2}$, (1) follows.  Statement (2) is proved in the same way, and we leave details 
to the reader.   

\end{proof}

\vskip 10pt 
\noindent 
Remark: If $E$ is a field, the set of $e\in E$ such that $e^{\#}=0$ consists only of $0$, so that  there are no rank 2 elements in $V_E$. If $E = F \times K$ with $K$ a field, the set of such $e$'s is one $F$-line, and it consists of  three $F$-lines if $E=F^3$.  This reflects the fact that $G_E(\bar F)$ has three orbits with Bala-Carter notation $3A_1$, 
permuted by the group of outer automorphisms.

 \vskip 10pt
 
\section{\bf Dual Pairs}  \label{S:dual}
In this section, we introduce the various dual pairs which we will study in this paper. In particular, we shall see that given a $E$-twisted composition algebra $C$, with corresponding embedding $i:  E \hookrightarrow J$ under the Springer decomposition, one may construct a dual pair:
\[  H_C \times  G_E = \Aut_E(C) \times \Spin_8^E \subset G_J, \]
where $G_J$ is a group we shall introduce in due course.
We shall first construct this dual pair on the level of Lie algebras. 
\vskip 10pt

\subsection{\bf Lie algebras.}
Let us begin with an arbitrary Freudenthal-Jordan algebra $J$ (not necessarily of dimension 9). 
Let $\mathfrak l_J\subset \End(J)$ be the Lie subalgebra preserving the trilinear form $(-,-,-)$ associated to the norm form $N_J$, i.e.  $a\in \End(J)$ lies in $\mathfrak l_J$ if and only if 
\[ 
(a\cdot x,y,z) + (x,a\cdot y,z) + (x,y,a\cdot z) =0 
\] 
for all $x,y,z\in J$. The trace form defines an involution $a\mapsto a^{\top}$ on $\mathfrak l_J$ by 
\[ 
\langle a\cdot x, y\rangle = \langle x, a^{\top}\cdot y\rangle 
\] 
for all $x,y\in J$. 
 \vskip 5pt 
With $\mathfrak{h} = \mathfrak{sl}(V)$ for $V$ a 3-dimensional vector space, the space 
\[ 
\mathfrak g_J= \mathfrak h \oplus \mathfrak l_J \oplus (V \otimes J) \oplus (V^{\ast} \otimes J)
\] 
has the structure of a simple Lie algebra, such that the above decomposition arises from a $\mathbb Z/3\mathbb Z$-grading. 
The brackets $[\mathfrak h \oplus \mathfrak l_J , V\otimes J]$ and $[\mathfrak h \oplus \mathfrak l_J , V^*\otimes J]$ are given by 
the natural action of $\mathfrak h \oplus \mathfrak l_J$ on  $V\otimes J$ and $V^*\otimes J$, with the action of $a\in \mathfrak l_J$ on the 
second factor of $V^*\otimes J$  is given by that of $-a^{\top}$. 
 The brackets 
 \[  \text{ $[V\otimes J, V\otimes J]\subseteq V^*\otimes J$ and  $[V^*\otimes J, V^*\otimes J]\subseteq V\otimes J$ } \]
 are defined by 
\[ 
[v\otimes x,u\otimes y]= - (v\wedge u)\otimes (x\times y) 
\] 
\[  
[v^*\otimes x,u^*\otimes y]= (v^*\wedge u^*)\otimes (x\times y) 
\] 
 respectively.
 \vskip 5pt
 
  The remaining bracket (between $V \otimes J$ and $V^{\ast} \otimes J$) is determined by the invariant Killing form. More precisely, the Killing form on $\mathfrak g_J$ is an extension of the  Killing form on  $\mathfrak h \oplus \mathfrak l_J$  (we shall specify the normalization later),  such that 
\[ 
\langle v\otimes x, u^* \otimes y\rangle = \langle v,u^*\rangle \cdot \langle x,y\rangle 
\] 
if $v\otimes x\in V\otimes J$ and $u^*\otimes y\in V^*\otimes J$,  
where $\langle v,u^*\rangle$ is the evaluation of $u^*$ on $v$ and $\langle x,y\rangle$ is the trace pairing on $J$.  Then the  bracket 
$[V\otimes J, V^*\otimes J]\subseteq \mathfrak h \oplus \mathfrak l_J$ is completely determined by:
\[ 
\langle [x,y],z\rangle = \langle[z,x], y\rangle 
\] 
for any $x,y,z\in \mathfrak g_J$.  We refer the reader to \cite{Ru} for explicit formulae in this case. However, if $ \langle v, u^*\rangle =0$, the bracket of 
$v\otimes x\in V\otimes J$ and $u^*\otimes y\in V^*\otimes J$  is contained in $\mathfrak h$, and is given by 
\[ 
[v\otimes x ,u^*\otimes y]=\langle x, y \rangle \cdot v\otimes u^* \in \mathfrak{sl}(V) 
\] 
and/or 
\[ 
[u^*\otimes y , v\otimes x ]=\langle x, y \rangle  \cdot u^*\otimes v \in \mathfrak{sl}(V^{*}) 
\] 
Explicitly, if $i\neq j$, 
\[ 
[e_i\otimes x,e_j^*\otimes y ]=\langle x, y \rangle  e_{ij}
\] 
\[
[e_j^*\otimes y ,e_i\otimes x]= \langle x, y \rangle  e^*_{ji}.  
\] 
We highlight two cases here:
\vskip 5pt

\begin{itemize}
\item[(a)]  If $J=F$, considered as a cubic algebra, so that $1\times 1= 2$ and $T_F(1)=3$,  then this construction returns  the simple split algebra $\mathfrak{g}$ of type $G_2$.
\vskip 10pt

\item[(b)]  If $J = E$ is a cubic \'etale algebra, then $\mathfrak l_E=E^0$, the subspace of trace 0 elements in $E$. The action of $x\in E^0$ on $e\in E$ is 
$x\cdot e= -2 xe$.  We fix a symmetric bilinear form on $\mathfrak l_E$ by $\langle x,x \rangle = 2 \cdot T_{E}(x^2)$. Then the Lie algebra $\mathfrak{g}_E$ is of type $D_4$; it is the Lie algebra of the group $G_E = \Spin_8^E$.
\end{itemize}

\subsection{Groups} 
 In order to explain the two appearances of 2 in (b) above, 
 let $J=E\oplus C$, where $C$ is $E$-twisted composition algebra (of arbitrary rank).  For $\alpha \in E^{\times}$, let $c_{\alpha}: J \rightarrow J$ be defined by  
\[ 
c_{\alpha} : (e,v) \mapsto (\alpha^{\#}/\alpha \cdot e, \alpha\cdot  v) 
\] 
for all $(e,v)\in E\oplus C$.  By (38.6) in  \cite{KMRT},  one has
\[ 
N_J((e,v))= N_E(e)+ N_C(v) -T_E( e \cdot Q(v)),
\]  
and it readily follows that 
\[  N_J(c_{\alpha}(e,v)) = N_E(\alpha) \cdot N_J(e,v), \]
so that $c_{\alpha}$ is a similitude map of $N_J$  with similitude  factor $N_E(\alpha)$. In particular,  if $\alpha$ has norm 1, then $c_{\alpha}$ preserves the norm $N_J$. Since $\alpha^{\#}=\alpha^{-1}$ (if $N_E(\alpha) =1$),    we can write 
 $c_{\alpha}(e,v)=(\alpha^{-2} e, \alpha v)$.   By passing to Lie algebras, we get 
 an embedding $\mathfrak l_E=E^0 \subseteq \mathfrak l_J$ where $x\in E^0$ acts on $J=E\oplus C$ by  
 \[ 
 x\cdot (e,v)= (-2xe,v) +(e,xv). 
 \] 
By setting $v=0$, we get the previously defined action of $\mathfrak l_E=E^0$ on $E$.
\vskip 5pt

On the other hand,  we fix the $\Aut(\mathfrak l_J)$-invariant form on 
$\mathfrak l_J$ so that the restriction to $\mathfrak l_E$ is $2 \cdot T_E(x^2)$. For example,  suppose that $J=M_3(F)$ and 
$E=F^3$ is diagonally embedded in $M_3(F)$. Then 
$\mathfrak l_J=\mathfrak{sl}_3 \oplus \mathfrak{sl}_3$, so that an element $(x,z) \in \mathfrak{sl}_3 \oplus \mathfrak{sl}_3$ acts on $y\in M_3(F)$ by 
$xy-yz$, and $\mathfrak l_E$ is the set of trace zero diagonal matrices $x$ embedded in $\mathfrak{sl}_3 \oplus \mathfrak{sl}_3$ as $(-x,x)$. 
\vskip 10pt

We embed $\Aut_E(C) \subset \Aut(J)$ so that it acts trivially on $E$, the first summand in $J=E\oplus C$.
\vskip 5pt

 \begin{prop} \label{P:isogeny} 
  Let $J=E\oplus C$.    
Every $F$-rational similitude map of $N_J$ commuting with the algebraic group $\Aut_E(C)$ is equal to $c_{\alpha}$ for some $\alpha\in E^{\times}$.  Likewise, every $F$-rational similitude map of $N_J$ commuting with the algebraic group $\Aut(J)$ is equal to $c_{\alpha}$ for $\alpha\in F^{\times}$. 
\end{prop}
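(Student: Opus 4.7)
\medskip

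\noindent\textbf{Proof plan.} Let $\phi:J\to J$ be an $F$-rational similitude of $N_J$, and first assume it commutes (as an algebraic group homomorphism, i.e.\ on $\bar F$-points) with $\Aut_E(C)$. The plan is to exploit the decomposition $J=E\oplus C$ and isolate $\phi$ on each factor.

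\smallskip

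First, I would argue that $\phi$ preserves the decomposition. The subspace $E\subset J$ is precisely the trivial isotypic component of $\Aut_E(C)$ acting on $J$: indeed, $\Aut_E(C)$ acts on $E$ by the identity, while the identity component $H_C^0=T_{E,K_C}$ acts on $C=L=E\otimes K_C$ by multiplication via the inclusion $T_{E,K_C}\subset L^\times$, which has no nonzero fixed vector (as an element fixed by all $t\in T_{E,K_C}$ must be annihilated by $t-1$ in the semisimple algebra $L$). Equivariance forces $\phi(E)\subseteq E$, and then the $\Aut_E(C)$-invariance of the trace form (together with $C=E^\perp$) gives $\phi(C)\subseteq C$.

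\smallskip

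Next I would pin down $\phi|_C$. Since $\phi|_C$ is an $F$-linear map on $C=L$ commuting with multiplication by all $t\in T_{E,K_C}(\bar F)$, Schur's lemma (applied to the semisimple commutative algebra generated by the torus) identifies the commutant as $L$ acting by multiplication; thus $\phi|_C$ is multiplication by some $\lambda\in L^\times$. Imposing commutation with the nontrivial elements of $\Aut_E(C)$ — which on $C=L$ are twisted conjugations of the form $h_x(z)=(x/\bar x)\bar z$ — yields $\lambda=\bar\lambda$, so $\lambda\in E^\times$. Write $\lambda=\alpha$. Now I would use the similitude condition: writing $\phi(e,v)=(f(e),\alpha v)$ and using
\[
Q(\alpha v)=\alpha^2 Q(v),\qquad N_C(\alpha v)=N_E(\alpha)\,N_C(v),
\]
together with $N_J(e,v)=N_E(e)+N_C(v)-T_E(e\,Q(v))$ from \cite[(38.6)]{KMRT}, a comparison of the three homogeneous pieces yields first $\mu=N_E(\alpha)$, and then $T_E\!\bigl((f(e)\alpha^2-N_E(\alpha)e)\cdot Q(v)\bigr)=0$ for all $v\in C$; since $Q$ is nondegenerate its image spans $E$ over $F$, forcing $f(e)=(\alpha^\#/\alpha)e$. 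Hence $\phi=c_\alpha$, proving the first assertion.

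\smallskip

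For the second assertion, since $\Aut_E(C)\subset\Aut(J)$, the first part already gives $\phi=c_\alpha$ for some $\alpha\in E^\times$. It remains to determine which $\alpha$ make $c_\alpha$ commute with all of $\Aut(J)$. A direct computation shows that for $\sigma\in\Aut(J)$ stabilizing $E$ (so that $\sigma|_E\in\Aut(E)$), one has $\sigma\,c_\alpha\,\sigma^{-1}=c_{\sigma(\alpha)}$, so the condition $\sigma c_\alpha\sigma^{-1}=c_\alpha$ becomes $\sigma(\alpha)=\alpha$. Thus I must show that the image of the stabilizer $\mathrm{Stab}_{\Aut(J)}(E)(\bar F)\to \Aut(E\otimes\bar F)=S_3$ is the full symmetric group; once this is known, $\alpha$ must lie in the $S_3$-invariants of $E\otimes\bar F$, namely $F^\times$, and conversely $c_\alpha$ for $\alpha\in F^\times$ is simply scalar multiplication by $\alpha$ on $J$, which trivially commutes with $\Aut(J)$. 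The surjectivity is the main obstacle, but is easy over $\bar F$: choosing the split model $J\otimes\bar F\cong M_3(\bar F)$ with $E\otimes\bar F$ embedded as the diagonal, the permutation matrices in $\mathrm{PGL}_3(\bar F)\subset\Aut(J\otimes\bar F)$ realize every element of $S_3$, as desired.
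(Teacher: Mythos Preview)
Your argument for the first assertion follows the same strategy as the paper: both show $\phi$ preserves $E$ and $C$, identify $\phi|_C$ as multiplication by some $\alpha\in E^\times$ via the commutant of $\Aut_E(C)$ on $C$, and then pin down $\phi|_E$ from the similitude condition using $N_J(e,v)=N_E(e)+N_C(v)-T_E(e\,Q(v))$ together with surjectivity of $Q$ over $\bar F$. The paper packages the last step as showing $c_\alpha^{-1}\circ\phi=\mathrm{id}$; you compute $f(e)=(\alpha^{\#}/\alpha)e$ directly, which is the same computation. Two small remarks: your determination of the commutant is written specifically for rank-$2$ $C$ (via $T_{E,K_C}\subset L^\times$ and the involutions $h_x$), whereas the proposition is stated for arbitrary rank---the paper simply asserts $\End_{\Aut_E(C)}(C)=E$ in general without proof, so your version is more explicit but narrower in scope. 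Also, your appeal to the trace form for $\phi(C)\subseteq C$ is not quite justified (you do not know $\phi$ is an isometry); equivariance alone suffices, since $C$ contains no trivial $\Aut_E(C)$-subrepresentation.

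For the second assertion your route is genuinely different. The paper observes that $\Aut(J)$ acts absolutely irreducibly on the trace-zero hyperplane $J^0$, so by Schur $c_\alpha|_{J^0}$ is a scalar; restricting to $C\subset J^0$ forces $\alpha\in F^\times$. You instead verify $\sigma c_\alpha\sigma^{-1}=c_{\sigma(\alpha)}$ for $\sigma\in\Aut(J)(\bar F)$ stabilizing $E$, and then exhibit permutation matrices in $\mathrm{PGL}_3(\bar F)$ realizing all of $S_3$ on the diagonal, forcing $\alpha$ to be $S_3$-invariant. Both are valid and short; the paper's argument is a touch cleaner in that it avoids choosing a split model and uses only a structural fact about $\Aut(J)$.
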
 
\begin{proof} 
Let $g$ be a $F$-rational similitude of $N_J$ commuting with $\Aut_E(C)$. Then $g$ preserves both summands $E$ and $C$ of $J$.  The algebra of $F$-rational endomorphisms  of $C$ commuting with the action of $\Aut_E(C)$ is $E$.  
Thus  $g=c_{\alpha}$ on $C$, for some $\alpha\in E^{\times}$. Let $g'=c_{\alpha^{-1}} \circ g$. Clearly, $g'$ belongs to the similitude group of $N_J$; however, since 
 $g'(0,v)=(0,v)$ for all $v\in C$,  the similitude factor is 1, i.e. $g'$ preserves $N_J$.  
 \vskip 5pt
 
 Now fix $e\in E$. Then  $g'(e,v)=(e',v)$ for all $v\in C$ and some $e'\in E$. 
 We want to show that $e=e'$. It suffices to do so over the algebraic closure $\bar F$. Since $g'$ preserves $N_J$, use $v=0$ to show first that 
$N_E(e)=N_E(e')$, and then 
$T_E(e\cdot Q(v))=T_E(e'\cdot Q(v))$ for all $v\in C$.  Since $Q$ is surjective over $\bar F$,  $T_E(ee'')=T_E(e'e'')$ for all $e''\in E\otimes \bar F$. Hence $e=e'$. 
Finally, if $g$ is a similitude that commutes with $\Aut(J)$, then it commutes with $\Aut_E(C)\subseteq \Aut(J)$, so $g=c_{\alpha}$. Since $\Aut(J)$ acts absolutely irreducibly on $J^0$, the space of trace 0 elements in $J$, $\alpha\in F^{\times}$. 
\end{proof} 

 \vskip 5pt 
 Let $G_J=\Aut(\mathfrak g_J)$. We note that $G_J$ is not necessarily connected.
  From the construction of the Lie algebra $\mathfrak g_J$, it is evident that $\Aut(J)\subseteq G_J$. 
Assume, furthermore,  that  $J=E\oplus C$ and $J\neq E$.  
The natural action of $\Aut_E(C)$ on $C$, extended trivially to $E\subset J$ gives an embedding $\Aut_E(C)\subset \Aut(J)$. 
 Hence we have a natural embeddings 
 \[ 
 \Aut_E(C)\subset \Aut(J)  \subset G_J.
 \]  
 We have also constructed inclusions of  $\mathfrak g \subseteq \mathfrak g_E \subseteq \mathfrak g_J$ of vector spaces.

 \begin{prop}  \label{P:include}
 The  inclusions  $\mathfrak g \subseteq  \mathfrak g_E \subseteq \mathfrak g_J$ are homomorphisms of Lie algebras, thus giving rise to inclusion of algebraic groups
 \[  G_2 \subset G_E = \Spin_8^E \subset G_J. \]
\end{prop}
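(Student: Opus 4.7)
My plan is to verify the Lie algebra inclusions by direct inspection of the $\mathbb{Z}/3\mathbb{Z}$-graded pieces, and then integrate to obtain the group-level statement. Both claimed inclusions $\mathfrak{g} \subseteq \mathfrak{g}_E$ and $\mathfrak{g}_E \subseteq \mathfrak{g}_J$ are instances of a single functorial principle: whenever one has an embedding of Freudenthal--Jordan algebras $J_1 \hookrightarrow J_2$ compatible with norms, traces, and adjoints, the construction $K \mapsto \mathfrak{g}_K$ of the preceding subsection produces an embedding $\mathfrak{g}_{J_1} \hookrightarrow \mathfrak{g}_{J_2}$. The two claims correspond to $F \hookrightarrow E$ and $E \hookrightarrow J = E \oplus C$ respectively, and so it suffices to verify the functorial statement for an arbitrary such $J_1 \hookrightarrow J_2$.

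The inclusion clearly respects the $\mathbb{Z}/3\mathbb{Z}$-grading, sending $\mathfrak{h}$ to itself, $\mathfrak{l}_{J_1}$ into $\mathfrak{l}_{J_2}$ via the formula $x \cdot (e,v) = (-2xe, xv)$ derived from $c_\alpha$, and $V \otimes J_1 \hookrightarrow V \otimes J_2$ (similarly for $V^{\ast}$). Bracket compatibility on mixed graded pieces is then checked case by case: the brackets $[\mathfrak{h} \oplus \mathfrak{l}_{J_1}, V \otimes J_1]$ agree with those of $\mathfrak{g}_{J_2}$ because the action of $\mathfrak{l}_{J_1}$ on $J_1$ is, by construction, the restriction of the action of $\mathfrak{l}_{J_2}$ on $J_2$; the brackets $[V \otimes J_1, V \otimes J_1]$ and $[V^{\ast} \otimes J_1, V^{\ast} \otimes J_1]$ involve the cross product $x \times y = (x+y)^{\#} - x^{\#} - y^{\#}$, which agrees on $J_1$ and $J_2$ because $J_1$ is a Jordan subalgebra, so the adjoint $x^{\#}$ of an element of $J_1$ is intrinsic.

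The subtlest case is the bracket $[V \otimes J_1, V^{\ast} \otimes J_1] \to \mathfrak{h} \oplus \mathfrak{l}_{J_2}$. The $\mathfrak{h}$-component uses the pairing $\langle x, y \rangle = T_J(xy)$, which agrees on $J_1$ and $J_2$ because, in the Springer decomposition $J = E \oplus C$, the summands are orthogonal under the trace form, so $T_J|_{E} = T_E$. The $\mathfrak{l}$-component is determined by duality via the invariant form on $\mathfrak{l}_{J_2}$, whose normalization was fixed in the excerpt precisely so that its restriction to $\mathfrak{l}_{J_1}$ equals the intrinsic form $2\, T_{J_1}(x^2)$; writing the orthogonal decomposition $\mathfrak{l}_{J_2} = \mathfrak{l}_{J_1} \oplus \mathfrak{l}_{J_1}^{\perp}$, one must verify that the $\mathfrak{l}_{J_1}^{\perp}$-component of the bracket vanishes. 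This amounts to the identity $\langle z \cdot x, y \rangle_{J_2} = 0$ for all $z \in \mathfrak{l}_{J_1}^{\perp}$ and $x,y \in J_1$, i.e.\ that any such $z$ sends $J_1$ into $J_1^{\perp} = C$. I expect this to be the main obstacle: it follows from the fact that the $\mathfrak{l}$-module structure on $J$ and the invariant form on $\mathfrak{l}$ are both determined by the cubic norm $N_J$, so that the annihilator of $\mathfrak{l}_{J_1}$ under the form coincides with the subspace of $\mathfrak{l}_{J_2}$ that fails to preserve $J_1$, and a direct check in each of the two cases ($J_1 = F \subset E$ and $J_1 = E \subset J$) using the explicit description of $\mathfrak{l}_{J_2}$ (e.g.\ $\mathfrak{l}_{M_3(F)} = \mathfrak{sl}_3 \oplus \mathfrak{sl}_3$ acting on $M_3(F)$ as $(x,z) \cdot y = xy - yz$ for the split case) confirms the assertion.

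Having established the Lie algebra inclusion $\mathfrak{g}_E \hookrightarrow \mathfrak{g}_J$, the group-level statement follows by integration: since $\Spin_8^E$ is simply connected, its adjoint action on $\mathfrak{g}_E$ extends uniquely to a rational action on $\mathfrak{g}_J$ (all weights of $\mathfrak{g}_J$ as an $\mathfrak{g}_E$-module lie in the weight lattice of $\Spin_8^E$), producing a homomorphism $\Spin_8^E \to \Aut(\mathfrak{g}_J) = G_J$; this homomorphism has central kernel and hence an image of the stated form. The same argument inside $\mathfrak{g}_E$ produces the embedding $G_2 \subset G_E$, completing the proof.
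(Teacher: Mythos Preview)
Your Lie-algebra argument is correct in outline and differs from the paper's only in the handling of the delicate bracket $[V\otimes E, V^{\ast}\otimes E]$.  You decompose $\mathfrak l_J = \mathfrak l_E \oplus \mathfrak l_E^{\perp}$ and check by hand that $\mathfrak l_E^{\perp}$ maps $E$ into $C$; the paper instead observes that this bracket, computed in $\mathfrak g_J$, is fixed by $\Aut_E(C)(\bar F)$, and invokes Proposition~\ref{P:isogeny} (which identifies the centralizer of $\Aut_E(C)$ in the similitude group of $N_J$ with $\{c_\alpha : \alpha\in E^\times\}$) to conclude immediately that the bracket lies in $\mathfrak h\oplus\mathfrak l_E$.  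Your approach is more computational but self-contained; the paper's is slicker but depends on the preceding proposition.  Both then use the compatibility of the Killing forms (which was arranged by the normalization $\langle x,x\rangle = 2T_E(x^2)$ on $\mathfrak l_E$) to conclude that the brackets agree.

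There is, however, a genuine gap in your group-level argument.  You correctly produce a homomorphism $\Spin_8^E \to G_J = \Aut(\mathfrak g_J)$ and note that its kernel is central, but ``central kernel'' does not by itself give an embedding: a priori the image could be $\mathrm{PGSO}_8^E$ or $\SO_8^E$.  What is missing is the verification that the center $Z_E \cong \mu_2\times\mu_2$ of $\Spin_8^E$ acts \emph{faithfully} on $\mathfrak g_J$.  The paper supplies exactly this: over $\bar F$, the $\mathfrak g_E$-module $\mathfrak g_J$ contains each of the three $8$-dimensional fundamental representations of $\Spin_8$ (with multiplicity $\dim_E C$), and since the three nontrivial characters of $Z_E$ are precisely the central characters of these three representations, no nontrivial central element acts trivially on $\mathfrak g_J$.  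Your parenthetical remark that ``all weights of $\mathfrak g_J$ lie in the weight lattice of $\Spin_8^E$'' is a tautology (every weight lies in the weight lattice of the simply-connected form) and does not address this point; what you need is that some weight lies outside every proper sublattice, equivalently that all three $8$-dimensional representations actually occur.
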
 
\begin{proof} 
Let $x,y\in E$. The cross product $x\times y$, computed in $J$,  is the same as the one computed in $E$. Hence 
 the bracket $[V\otimes E , V\otimes E]$ in $\mathfrak g_J$ coincides with the one in $\mathfrak g_E$. The bracket 
   $[V\otimes E , V^*\otimes E]$,  computed in $\mathfrak g_J$,  is fixed by $\Aut_E(C)(\bar F)$ hence  it is  contained in $\mathfrak h\oplus \mathfrak l_E$. 
   Since the Killing form on $\mathfrak h\oplus \mathfrak l_E$ is the restriction of the Killing form on $\mathfrak h\oplus \mathfrak l_J$ it follows, from the definition of the Lie brackets, that the two Lie brackets coincide. This shows that the inclusion $ \mathfrak g_E \subseteq \mathfrak g_J$ is a homomorphism. A similar 
   argument shows that the inclusion $\mathfrak g_E \subseteq \mathfrak g_J$. Indeed, the bracket 
   $[V\otimes F , V^*\otimes F]$,  computed in $\mathfrak g_J$,  is fixed by $\Aut(J)$ hence  it is  contained in $\mathfrak h$. 
   \vskip 5pt
   
  The inclusion of Lie algebras induce a corresponding inclusion of the corresponding connected algebraic subgroups of $G_J$, and we know  what these algebraic subgroups are up to isogeny.  It is clear that the algebraic subgroup associated to $\mathfrak{g}$ is $G_2$. 
   Over $\bar F$, under the adjoint action of $\mathfrak{g}_E$,   the algebra $\mathfrak g_J$ contains the three 8-dimensional fundamental representations of $\Spin_8$,  each 
occurring with multiplicity $\dim_E(C)$. This shows that the connected algebraic subgroup corresponding to $\mathfrak{g}_E$ is simply-connected and is thus isomorphic to $G_E = \Spin_8^E$. 
 \end{proof} 

 \subsection{Relative root system} 
We fix a basis $e_1, e_2, e_3$ of $V$ and let $\mathfrak t\subset \mathfrak h$ be the Cartan subalgebra consisting of diagonal matrices, with respect to this basis of $V$. Under the adjoint action of $\mathfrak t$, 
\[ 
\mathfrak g_J = \mathfrak g_{J,0} \oplus ( \bigoplus_{\alpha\in \Phi} \mathfrak g_{J,\alpha}) 
\] 
where $\Phi \subset \mathfrak t^*$ is a root system of type $G_2$. Note that 
\[  \mathfrak g_{J,0}=\mathfrak t \oplus \mathfrak l_J.\]
 The short root spaces are $Fe_i\otimes J$ or $Fe_i^*\otimes J$, so we have canonical identifications  with $J$ given by $x\mapsto e_2\otimes x$ and 
$x \mapsto e_2^* \otimes x$ respectively. 
 The long root spaces are one-dimensional and contained in $\mathfrak h$. In particular,  there are two choices for the basis vector : $e_{ij}$ or $e^*_{ji}$ ( $=-e_{ij}$  under the identifications
 $\mathfrak h=\mathfrak{sl}(V)=\mathfrak{sl}(V^*)$).  

\vskip 5pt
 In particular, when $J = E$,  $\mathfrak g_{E,0} =\mathfrak t \oplus \mathfrak l_E$ is 
a torus, and by choosing a set of positive roots in $\Phi$, we have constructed a Borel subalgebra in $\mathfrak g_E$, so that $\mathfrak g_E$ is quasi-split. 
Indeed, we have mentioned before that $\mathfrak{g}_E$ is the Lie algebra of $\Spin_8^E$. What we have done here is to give a direct construction of this Lie algebra, recover  some of the structure theory described in \S \ref{S:structure} from this construction and show that this Lie algebra fits into a family of such Lie algebras which is associated to a Freudenthal-Jordan algebra $J$.

\subsection{Two step parabolic subalgebra} \label{ss_parabolic} 
Let $s\in \mathfrak{sl}(V)$ be the diagonal matrix $(1,0,-1)$. The adjoint action of $s$ on $\mathfrak g_J$ gives a $\mathbb Z$-grading 
\[ 
\mathfrak g_J = \oplus_{n\in \mathbb Z } ~\mathfrak g_{J}(n). 
\] 
Then $\mathfrak g_J(n)\neq 0$ only for $n=-2,-1,0,1,2$. Let 
\[  \mathfrak m=\mathfrak g_J(0) \quad \text{and} \quad  \mathfrak n=\mathfrak g_J(1) \oplus \mathfrak g_J(2). \] 
Then $\mathfrak p=\mathfrak m \oplus \mathfrak n$ is a maximal parabolic subalgebra, with Levi subalgebra $\mathfrak{m}$ and nilpotent radical $\mathfrak n$. 
Let us examine the structure of each of these parts in turn.
\vskip 5pt

The Levi subalgebra $\mathfrak{m}$ has a decomposition
\[  \mathfrak{m}  = \mathfrak{t} \oplus \mathfrak{l}_J \oplus e_2 \otimes J \oplus e_2^* \otimes J. \]
The derived algebra 
\[ [\mathfrak m, \mathfrak m] = \mathfrak{l}_J \oplus e_2 \otimes J \oplus e_2^* \otimes J \]
 is generated by short root spaces  $e_2 \otimes J$ and $e^*_2\otimes J$.
The above decomposition also exhibits a (Siegel-type) parabolic subalgebra 
\[  \mathfrak{s} = ( \mathfrak{t} \oplus \mathfrak{l}_J) \oplus e_2 \otimes J  \subset \mathfrak{m} \]
with abelian nilpotent radical $e_2 \otimes J$. 
\vskip 5pt 
Considering now the nilradical $\mathfrak{n}$,  the center of 
$\mathfrak n$ is $[\mathfrak n,\mathfrak n]=\mathfrak g_J(2) = F e_{13}$. As an $\mathfrak m$-module, the quotient $\mathfrak n/[\mathfrak n,\mathfrak n]$ 
is isomorphic to 
\[ 
\mathfrak g_J(1)= Fe^*_{21} \oplus Fe_1\otimes J \oplus Fe^*_3 \otimes J \oplus F e_{23} = F\oplus J \oplus J \oplus F.
\] 
Henceforth, an element in $\mathfrak g_J(1)$ is a quadruple $(a,y,z,d)$ where $a,d\in F$ and  $y,z\in J$. 
  Using our formulae, we can describe this $\mathfrak{m}$-module. One sees that the Lie bracket of  $e_2 \otimes x \in e_2 \otimes J$ and $(a,y,z,d)$ is
\[ 
[e_2\otimes x,  (a,y,z,d)]= (0,ax, x\times y, \langle x, z\rangle) 
\] 
and the Lie bracket of  $e^*_2 \otimes x \in e_2^* \otimes J$ and  $(a,y,z,d)$ is
\[ 
[e^*_2\otimes x, (a,y,z,d)]= ( \langle x, y\rangle, x\times z,  dx, 0 ). 
\] 

If $J=E$, a cubic etal\'e algebra, then $\mathfrak g_E(1)$ is the space of $E$-twisted Bhargava cubes and  
$[\mathfrak m,\mathfrak m]$ is identified with $\mathfrak{sl}_2(E)$  by 
\[ 
\left(\begin{array}{cc} 
0 & x \\
0 & 0 \end{array} \right) \mapsto e_2 \otimes x 
\text { and } 
\left(\begin{array}{cc} 
0 & 0 \\
x & 0 \end{array} \right) \mapsto e^*_2 \otimes x
\] 
Let $P_J = M_J N_J$ be the parabolic subgroup associated to $\mathfrak{p}_J$. If we fix an embedding $E \hookrightarrow J$ of Jordan algebras, then we have a corresponding embedding
$\mathfrak{p}_E \hookrightarrow \mathfrak{p}_J$ of parabolic subalgebras such that   
\[  G_E \cap P_J = P_E \]
on the level of groups.
\vskip 10pt

\subsection{\bf 3-step parabolic subalgebra}  \label{SS:3step}
Now let $s\in \mathfrak{sl}(V)$ be the diagonal matrix $(1,1,-2)$. As above, the adjoint action of $s$ on $\mathfrak g_J$ gives a $\mathbb Z$-grading 
\[ 
\mathfrak g_J = \oplus_{n\in \mathbb Z } ~\mathfrak g_{J}(n). 
\] 
Then $\mathfrak g_J(n)\neq 0$ only for $n=-3,\ldots , 3$. Let 
\[ \mathfrak l=\mathfrak g_J(0) \quad \text{and} \quad  \mathfrak u=\mathfrak g_J(1) \oplus \mathfrak g_J(2)\oplus \mathfrak g_J(3). \] 
Then $\mathfrak q=\mathfrak l \oplus \mathfrak u$ is a parabolic subalgebra whose nilradical  $\mathfrak u$ is 3-step nilpotent. Note that 
\[ 
\mathfrak g_J(1)= F e_1\otimes J \oplus F e_2 \otimes J , \quad ~\mathfrak g_J(2)= F e_3^*\otimes J \quad \text{ and } \quad \mathfrak g_J(3)= Fe_{13} \oplus F e_{23}. 
\] 
 Let $Q_J=L_JU_J$ be the corresponding  parabolic subgroup in $G_J$. Thus, the unipotent radical $U_J$ has a filtration 
 \[ U=U_1\supset U_2\supset U_3 \quad \text{such that  $U_i/U_{i+1} \cong \mathfrak g_J(i)$ for all $i$.}\]
If we fix an embedding $E \hookrightarrow J$, then we have a corresponding embedding $\mathfrak{q}_E \hookrightarrow \mathfrak{q}_J$ of parabolic subalgebras such that 
\[   G_E \cap Q_J  = Q_E. \]
on the level of groups.

\vskip 10pt

\subsection{\bf See-saw dual pairs.}  \label{SS:seesawdp}
 To summarise the discussion in this section,  relative to an embedding $E \hookrightarrow J$,  we 
have constructed the following see-saw of dual pairs in $G_J$:

 \begin{picture}(100,82)(-130,10) 

\put(24,24){$G_2$} 

\put(75,24){$H_C=\Aut_E(C)$}

\put(37,36){\line(1,1){35}}

\put(24,74){$G_E$}
\put(37,71){\line(1,-1){35}}

\put(75,74){$H_J = \Aut(J)$}

\end{picture}
\vskip 15pt

We highlight two cases:
\vskip 5pt

\begin{itemize}
\item The particular case of interest in this paper is the case when $\dim_E C = 2$ or equivalently $\dim_F J = 9$.
 In this case, $G_J$ and $\Aut_E(C)$ are disconnected and we have a short exact sequence
 \[  \begin{CD}
  1 @>>>  \underline{G}_J^0  @>>> \underline{G}_J @>>>  S_2 @>>>1  \end{CD} \]
  where the identity component $G_J^0$ is an adjoint group of type  $E_6$ and whose inner class correspond to the quadratic algebra $K_J$. 
Note that on taking $F$-points, we have a map
 \[  G_J = \underline{G}_J(F) \longrightarrow S_2 \]
 which need not be surjective.   
 \vskip 10pt
 
 \item When $\dim_E C  =4$ (i.e. $\dim_F J  =15$),  then $G_J$ is an adjoint group of type $E_7$ associated to a quaternion $F$-algebra $B$. In this case, 
 \[  \Aut_E(C)  \cong  \left( {\Res}_{E/F} (B\otimes_F E)^{\times}  \right)^{\det} / F^{\times} \]
 where the RHS consists of elements in $(B \otimes E)^{\times}$ whose norm lies in $F^{\times}$. 
 \end{itemize}
\vskip 15pt

\section{\bf Levi Factor}  \label{S:levi}

In this section, we investigate some further properties of the dual pair  $H_C \times G_E$ in $G_J$, with $J = E \oplus C$ and $\dim_EC =2$.
The group $G_J$ has a (Heisenberg) maximal parabolic subgroup $P_J = M_J N_J \supset P_J^0  = M_J^0 \cdot N_J$,  whose Levi factor $M_J^0$ is of type $A_5$. Moreover,
\[  (H_C \times G_E) \cap P_J  =   H_C \times P_E,  \]
so that
\[  H_C \times M_E \longrightarrow M_J \]
is itself a dual pair in $M_J$. Indeed, if we intersect the seesaw diagram in \S \ref{SS:seesawdp} with $M_J$, we obtain the following seesaw diagram in $M_J$:

 \begin{picture}(100,82)(-130,10) 

\put(12,24){$\GL_2(F)$} 

\put(75,24){$H_C=\Aut_E(C)$}

\put(37,36){\line(1,1){35}}

\put(12,74){$\GL_2(E)^{\det}$}
\put(37,71){\line(1,-1){35}}

\put(75,74){$H_J = \Aut(J)$}

\end{picture}

\noindent  For our purposes, when $J$ is not a division algebra, we need to describe the Levi subgroup $M_J$ and the above embedding concretely. 
This is because of the need to relate the theta correspondence associated to $H_C \times M_E$ to a classical similitude theta correspondence. We treat the various cases in turn.

\vskip 5pt

\subsection{\bf Split case.}
 Suppose first that $J = M_3(F)$, so that $G^0_J$ is split.  In this case,  
\[  M^0_J= (\GL_1 \times \SL_6)/\mu_6 \]
 where $\mu_6$ is viewed as a subgroup of  $\GL_1 \times \SL_6$ by the map 
 \[  x\mapsto (x^3,x). \] 
A more convenient description is:
\[ 
M^0_J \cong (\GL_1 \times  \GL_6)/ \GL_1
\] 
 where  $\GL_1$  is viewed as a subgroup of   $\GL_1 \times \GL_6$ by the map $x\mapsto (x^3,x)$.   The character 
\[  \chi (x, g) = \det(g)/x^2 \]
 of $\GL_1 \times  \GL_6$ descends to $M^0_J$ and is a generator of $\Hom(M^0_J, \mathbb G_m)$. The character $\chi$ arises naturally when 
$M^0_J$ acts by conjugation on the center of $N_J$. 
\vskip 5pt

 If we identify $F^6=E^2$ (by choosing an $F$-basis of $E$), then 
 $M_E = \GL_2^{\det}(E)$ is naturally a subgroup of $\GL_6$. We define an embedding  $\GL_2(E)^{\det} \longrightarrow M_J$  by the map 
 \[  g\mapsto (\det(g), g). \]
 Note that $\chi(\det(g), g) =\det(g)$ since the determinant of $g$, viewed as an element in $\GL_6$ is $\det(g)^3$. 
On the other hand, since $K_J = F \times F$, one has $H_C^0 \cong E^{\times}/ F^{\times}$. 
The right-multiplication action of  $e\in E^{\times}$ on $E^2$  gives an embedding $E^{\times} \longrightarrow \GL_6$, so that any element  $e \in E^{\times}$ can be viewed as an element of $\GL_6$ denoted by the same letter. Thus we have a map 
$E^{\times}  \rightarrow \GL_1 \times \GL_6$ given by
\[  e\mapsto (N_{E/F}(e), e). \]  
 If $e\in F^{\times}$,  then the image is $(e^3,e)$. The map thus descends to an inclusion of 
$E^{\times}/F^{\times} \longrightarrow M_J$ and we have defined an embedding
\[   H^0_C \times M_E =  E^{\times}/ F^{\times}  \times \GL_2(E)^{\det} \hookrightarrow M_J^0 \]
when $J = M_3(F)$. Note that the character $\chi$ of $M^0_J$ is trivial on $E^{\times}/F^{\times}$. 

\vskip 5pt

\subsection{\bf Quasi-split case.}  \label{SS:qs}
Consider now the case when  $J=J_3(K)$, so that $G_J$ is quasi-split but not split. In this case,   
\[ \underline{M}_J^0 \cong  (\GL_1 \times \SU^K_6)/ {\Res}^1 \mu_{6,K} \]
where ${\Res}^1 \mu_{6,K} = {\Ker}( N_{K/F}:  {\Res}_{K/F} \mu_6 \rightarrow \mu_6)$  is viewed as a subgroup of   $\GL_1 \times \SU^K_6$ by the map $x\mapsto (x^3,x)$. 
 \vskip 5pt
 
   Fix an involution  $g\mapsto g^*$ 
 of $\GL_6(K)$ that defines the quasi-split form $\U^K_6$.  In particular, 
 $\det(g^*)=\det(g)^{-1}$ and $x^*=x^{-1}$ for any scalar matrix $x\in \GL_6$. 
 Consider the involution 
 \[ 
 \tau:  ( x, g) \mapsto (x\det(g)^{-1}, g^*)
 \] 
of $\GL_1 \times \GL_6$.  Since $\tau(x^3,x)=( x^{-3},x^{-1})$, for every $x\in\GL_1$, the involution $\tau$ descends to the quotient $ (\GL_1 \times \GL_6)/\GL_1$. 

\vskip 5pt

 Now $\underline{M}^0_J$ is the subgroup of 
 \[ {\Res}_{K/F} (\underline{M}^0_J \times_F K) \cong {\Res}_{K/F} ( \GL_1 \times \GL_6 / \GL_1) \] 
 fixed under the Galois action twisted by $\tau$. 
From our knowledge in the split case, we deduce  an exact sequence of algebraic groups, 
\[ 
1 \rightarrow \U_1^K \rightarrow (\Res_{K/F} \mathbb G_m \times\U_6^K )^{\dagger}\rightarrow M^0_J \rightarrow 1 
\] 
where $(\Res_{K/F} \mathbb G_m \times\U_6^K )^{\dagger}$ is the subgroup consisting of pairs $(x,g)$ such that 
\[ 
x/\sigma(x) = \det(g) \quad \text{with $1 \ne \sigma \in {\Aut}(K/F)$}.  
\] 
On the level of $F$-points, one has
\[ \begin{CD}
1 @>>> K^1 \rightarrow (K^{\times}  \times\U_6^K(F) )^{\dagger} @>>>  M^0_J(F)  @>>>  H^1(F, U^K_1) \cong F^{\times}/N_{K/F}(K^{\times}).
\end{CD} \] 
 Let 
\[ 
M^0_{J,K} = (K^{\times} \times \U_6^K(F))^{\dagger}/ K^1. 
\] 
so that $M^0_J(F)/M^0_{J,K} \subseteq F^{\times}/N_{K/F}(K^{\times})$.  We claim  that this is an isomorphism. 
The condition $x/\sigma(x) = \det(g)$ implies that $\chi(x,g)\in N_{K/F}(K^{\times})$, for all $(x,g) \in M^0_{J,K}$. On the other hand, the character 
$\chi: M^0_J(F) \rightarrow F^{\times}$ is surjective, and the claim follows.  Thus, we have an exact sequence of topological groups
\[  \begin{CD} 
1 @>>>  M^0_{J,K}  @>>> M^0_J(F) @>>> F^{\times}/N_{K/F}(K^{\times})@>>> 1. \end{CD} \]

\vskip 5pt

We would now like to describe the embedding of $\Aut_E(C) \times \GL_2(E)^{\det}$  into $M_J$. While this can be done by writing down some explicit formulas, we would like to view this embedding through the lens of a see-saw pair in the classical similitude theta correspondence.
For this, let us set up the relevant notation and recall the relevant background.
\vskip 5pt

\subsection{\bf Similitude dual pairs}  \label{SS:simi}
Here is the general setup.  For $a \in E^{\times}$, let
\[  W_a = E e_1\oplus E e_2 \]
be a 2-dimensional symplectic vector space over $E$ equipped with the alternating form
\[ 
 \langle e_1, e_2 \rangle_a = -  \langle e_2, e_1 \rangle_a = a. 
 \] 
 With respect to the basis $\{e_1, e_2\}$, we have an identification of the symplectic similitude group $\GSp(W_a)$ with $\GL_2(E)$.  The subgroup $\GSp(W_a)^{\det}$ of elements  
whose similitude factor lies in $F^{\times}$ is then identified with $M_E = \GL_2(E)^{\det}$. For $g \in \GL_2(E)^{\det}$, the corresponding similitude factor is
 \[ \lambda(g)= {\det}_E(g), \]
   where  $\det_E(g)$ refers to the determinant of $g$ considered an element of $\GL_2(E)$.
We write $\GL_2(E)^{\det}_K$  for the index 2  subgroup of elements whose similitudes lie in $N_{K/F}(K^{\times})$. Hence, we set
  \[ M_{E,K}  = \GL_2(E)^{\det}_K = \{ g \in  M_E = \GL_2(E)^{\det}: {\det}_E(g) \in N_{K/F}(K^{\times})\}. \]
 \vskip 5pt

 \noindent From this symplectic space $W_a$, we deduce the following 3 other spaces and groups:
 \vskip 5pt
 
 \begin{itemize}
 \item[(a)]  By restriction of scalars from $E$ to $F$, we obtain  a 6-dimensional symplectic space ${\Res}_{E/F}(W_a)$ with alternating form $\mathrm{Tr}_{E/F} \circ \langle-, -\rangle_a$.  One has a natural inclusion of similitude groups:
 \[ M_E=  \GL_2(E)^{\det} = \GSp(W_a)^{\det}  \hookrightarrow \GSp({\Res}_{E/F}(W_a)) \cong \GSp_6(F). \]
 We write  $\GSp(\Res_{E/F} W_a)_K$ for the index 2  subgroup of elements whose similitudes lie in $N_{K/F}(K^{\times})$.  
 
    \vskip 5pt
 
  \item[(b)] With  $L=E\otimes K$,  the 2-dimensional $L$-vector space
 \[  V_a = W_a \otimes_E L \]
 is naturally equipped with a skew-Hermitian form induced by the alternating form on $W_a$, with $\langle-, -\rangle_a$ given by the same formula as above on the basis $\{e_1, e_2\}$.  
 Then we have
 \[ \GL_2(E)^{\det} =  \GSp(W_a)^{\det} \hookrightarrow   \GU(V_a)^{\det}  \]
 where the superscript $\det$ refers to those elements whose similitude (which a priori lies in $E^{\times}$) belongs to $F^{\times}$.
 \vskip 5pt
 
 \item[(c)]   As above, by considering restriction of scalars from $L$ to $K$, we see that ${\Res}_{L/K}(V_a)$ is a 6-dimensional $K$-vector space equipped with the skew-Hermitian form ${\mathrm Tr}_{L/K} \circ \langle-, -\rangle_a$. This 6-dimensional skew-Hermitian space over $K$ is also the one naturally induced from the symplectic space ${\Res}_{E/F}(W_a)$ over $F$, in the same way as $V_a$ is obtained from $W_a$.   One has a natural inclusion of unitary similitude groups:
  \[   \GU(V_a)^{\det} \hookrightarrow \GU({\Res}_{L/K}(V_a)),\]
  In fact, both
  similitude maps here   have image equal to $F^{\times}$, but  we shall consider the index $\leq 2$ topological subgroups of elements whose similitude lies in $N_{K/F}(K^{\times})$, denoted by:
  \[     \GU(V_a)^{\det}_K \hookrightarrow \GU({\Res}_{L/K}(V_a))_K. \]
  Observe that
  \[  \GU({\Res}_{L/K}(V_a))_K =  ( K^{\times} \times  \U({\Res}_{L/K}(V_a)) ) / \nabla K^1 \]
  with $\nabla K^1 = \{ (z,z^{-1}): z\in K^1 \}$.
  \end{itemize}
  \vskip 5pt
  
  Summarizing, starting with $W_a$, we have the following containment diagram for the 4 groups we introduced:
     \begin{equation} \label{E:seesaw-right}
 \xymatrix{
&   \GU({\Res}_{L/K}(V_a))_K  &  \\
\GU(V_a)^{\det}_K  \ar[ur] &   & \GSp({\Res}_{E/F}(W_a))_K \ar[ul] \\
&    \GSp(W_a)_K^{\det}=  \GL_2(E)_K^{\det} \ar[ul] \ar[ur] &  }  
\end{equation} 
These groups appear in the classical similitude theta correspondence, and we proceed next to describe the other member of the relevant dual pairs, namely those lying on the other side of a seesaw diagram. 
  \vskip 10pt

 Regard $K$ as a rank 1 Hermitian space (relative to $K/F$) with the form $(x, y) \mapsto x \cdot \sigma(y)$. Then $\GU(K)  = K^{\times}$ and $ \GU({\Res}_{L/K}(V_a))_K$ form a similitude dual pair. Here it is necessary to consider the index $\leq 2$ subgroup $\GU({\Res}_{L/K}(V_a))_K$ as opposed to $\GU({\Res}_{L/K}(V_a))$, because the similitude map on $\GU(K)$ has image $N_{K/F}(K^{\times})$. Starting from this rank 1 Hermitian space, one deduces the following 3 spaces and groups:
 \vskip 5pt
 
 \begin{itemize}
 \item[(a')] By restriction of  scalars from $K$ to $F$, we regard $K$ as a 2-dimensional $F$-vector space with quadratic form $N_{K/F}$, with similitude group
 \[  \mathrm{GO}(K, N_{K/F}) \cong  K^{\times} \rtimes \langle \tau\rangle,\]
 with $\tau$ acting on $K^{\times}$ as the unique nontrivial automorphism $\sigma$ of $K/F$. Then $\mathrm{GO}(K, N_{K/F}) \times \GSp({\Res}_{E/F}(W_a))_K$ is a similitude dual pair.
 \vskip 5pt
 
 \item[(b')]  By base change from $F$ to $E$, we obtained a rank $1$ Hermitian space (relative to $L/E$) over $L$, so that $\GU(L)^{\det} \times \GU(V_a)_K^{\det}$ forms a similitude dual pair.
 \vskip 5pt
 
 \item[(c')] By restriction of scalars ${\Res}_{E/F}$  on the space in  (b') or the base change from $F$ to $E$ of the space $(K, N_{K/F})$ in (a'), we obtain the quadratic space 
 $(L, N_{L/E})$ of dimension $2$ over $E$, with similitude group
 \[ \mathrm{GO}(L, N_{L/E})^{\det} :=  \mathrm{GSO}(L, N_{L/E})^{\det} \rtimes \langle \tau \rangle \cong (L^{\times})^{\det}  \rtimes \langle \tau \rangle. \]
 This group form a similitude dual pair with  $M_{E,K} = \GL_2(E)^{\det}_K = \GSp(W_a)^{\det}_K$. 
  \end{itemize}
 Summarizing, starting from a rank $1$ Hermitian space (relative to $K/F$), one have the following diagram
   \begin{equation} \label{E:seesaw-left}
 \xymatrix{
&   \mathrm{GO}(L, N_{L/E})^{\det} = (L^{\times})^{\det} \rtimes \langle \tau \rangle &  \\
 \GU(L) = L^{\times} \ar[ur] &   & \mathrm{GO}(K, N_{K/F}) = K^{\times} \rtimes \langle \tau \rangle  \ar[ul] \\
&    \GU(K) = K^{\times}  \ar[ul] \ar[ur] &  }  
\end{equation}
  As mentioned above, the groups in (\ref{E:seesaw-left})  form a seesaw diagram of dual pairs with the corresponding group in (\ref{E:seesaw-right}). We shall only make use of the 
 groups at the top and bottom of the diagrams, so that we have a similitude seesaw pair:
 \begin{equation} \label{E:seesaw-top-bottom} 
  \xymatrix{
  \mathrm{GO}(L, N_{L/E})^{\det} = (L^{\times})^{\det} \rtimes \langle \tau \rangle  \ar@{-}[dr] &
   \mathrm{GU}({\Res}_{L/K}(V_a))_K =  ( K^{\times} \times  \U({\Res}_{L/K}(V_a)) ) / \nabla K^1  \ar@{-}[dl]  \\
    \GU(K) = K^{\times}  & M_{E,K} = \GL_2(E)^{\det}_K }
\end{equation}
 
   \vskip 5pt
   
   \subsection{\bf Embedding}
  We can now describe the embedding 
  \[  \Aut_E(C)^0 \times \GL_2(E)^{\det} \hookrightarrow M^0_{J}. \]
  Recall that we are considering 
  \[  [C] \in H^1(F, \tilde{T}_{E, K})[2] \cong E^{\times}/ F^{\times} N_{L/E}(L^{\times}) \quad  \quad \text{(by (\ref{E:H1T2})).} \]
  Take any $a \in E^{\times}$ representing the class of $[C]$, so that we have the above constructions of similitude dual pairs using $a \in E^{\times}$.
 Recall further that one has a natural isomorphism of algebraic groups
  \[  
  M_J^0 \cong ({\Res}_{K/F} \mathbb{G}_m   \times  \U ({\Res}_{L/K}(V_a)) )^{\dagger}/  \U_1^K. \]
 Now there is a natural map (with finite kernel) of algebraic groups
  \begin{equation} \label{E:f} 
   f: \GU({\Res}_{L/K}(V_a)) =  ( K^{\times} \times  \U ({\Res}_{L/K}(V_a))_K ) / \nabla \U_1^K  \longrightarrow  (K^{\times} \times  \U ({\Res}_{L/K}(V_a))_K)/  \U_1^K, \end{equation}
  given by
  \[  (z,g) \mapsto (z^{-3}, g). \]
  The restriction of this map to the subgroup $M_E$  (see (\ref{E:seesaw-top-bottom})) gives the embedding of algebraic groups
 \[  \GL_2(E)^{\det} \hookrightarrow M^0_{J}.  \]
 When restricted to the topological subgroup $M_{E,K} = \GL_2(E)^{\det}_K$, the map $f$ is given by the formula
 \[ g\mapsto (z^{-3}, gz^{-1}), \]
 where  ${\det}_E(g)=N_{K/F}(z)$.  Observe that this is clearly well defined, as $z$ is unique up to $K^1$.
 
 \vskip 5pt
 
   On the other hand, we have the natural isomorphism of algebraic groups 
 \[ \Aut_E(C) \cong  (L^{\times})^{\det} \rtimes \langle \tau \rangle / K^{\times} \cong  \mathrm{GO}(L, N_{L/F})^{\det} /  \GU(K), \]
 which is a quotient of the two algebraic groups appearing on the LHS of the seesaw diagram in (\ref{E:seesaw-top-bottom}). Hence
 \begin{equation} \label{E:ULUK}
   \Aut_E(C)^0 \cong    \GU(L)^{\det} / \GU(K) \cong \U(L)/ \U(K). \end{equation}
The embedding
\[  \Aut_E(C)^0  \cong \U(L)/\U(K) \hookrightarrow M_J^0 \]
is given by 
\[   e \mapsto ( N_{L/K}(e), e), \]
where $e \in \U(L)$ acts on ${\Res}_{E/F}(V_a)$ through its scalar multiplication action on $V_a = Le_1 \oplus Le_2$. 
\vskip 5pt

It is  useful to note the following lemma which says that the last isomorphism in (\ref{E:ULUK}) continues to hold on the level of $F$-rational points.
 
  \begin{lemma}  \label{L:L1K1}
  The inclusion  $L^1 \subset (L^{\times})^{\det}$ gives an isomorphism $L^1/K^1 \cong (L^{\times})^{\det}/K^{\times}$. 
 \end{lemma}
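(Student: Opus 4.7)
The plan is to establish injectivity and surjectivity of the natural map $L^1/K^1 \longrightarrow (L^{\times})^{\det}/K^{\times}$ separately. Let $\sigma$ denote the nontrivial automorphism of $L = E \otimes_F K$ over $E$; as recalled in \S\ref{SS:isom-tori}, it restricts to the nontrivial automorphism $\sigma_K$ of $K/F$ on the subring $K \subset L$. For injectivity, it suffices to show that $L^1 \cap K^{\times} = K^1$: for $z \in K^{\times}$, one has $N_{L/E}(z) = z \sigma(z) = z \sigma_K(z) = N_{K/F}(z)$, so $z \in L^1$ forces $z \in K^1$. The same computation shows $K^{\times} \subseteq (L^{\times})^{\det}$, confirming that the map is well-defined.

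The crux of the argument is surjectivity, and this is where I expect the real content to lie. Given $x \in (L^{\times})^{\det}$ with similitude $t = N_{L/E}(x) \in F^{\times}$, the idea is to find $z \in K^{\times}$ with $N_{K/F}(z) = t$; then $y = x/z$ automatically lies in $L^1$ and $x = yz$ gives the required decomposition. Thus the problem reduces to showing $t \in N_{K/F}(K^{\times})$, and this is the main obstacle. The tool I would use is to apply $N_{L/K}$ to the relation $t = x\sigma(x)$, exploiting the fact that $[L:K] = [E:F] = 3$. Since $\sigma$ is $\sigma_K$-semilinear on $L$ as a $K$-module (because $\sigma = \mathrm{id}_E \otimes \sigma_K$), the standard determinant-commutes-with-Galois argument gives $N_{L/K}(\sigma(x)) = \sigma_K(N_{L/K}(x))$. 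Combining this with $N_{L/K}(t) = t^3$ (as $t \in F \subset K$), I obtain
\[ t^3 = N_{L/K}(x) \cdot \sigma_K(N_{L/K}(x)) = N_{K/F}(N_{L/K}(x)) \in N_{K/F}(K^{\times}). \]

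To extract $t$ itself from $t^3$, I invoke the fact that $(F^{\times})^2 \subseteq N_{K/F}(K^{\times})$: this is trivial when $K = F \times F$, while when $K$ is a quadratic field, any $a \in F^{\times}$ satisfies $a^2 = a \cdot \sigma_K(a) = N_{K/F}(a)$. Hence $F^{\times}/N_{K/F}(K^{\times})$ has exponent dividing $2$, so $t^3 \equiv t \pmod{N_{K/F}(K^{\times})}$, and $t^3 \in N_{K/F}(K^{\times})$ forces $t \in N_{K/F}(K^{\times})$ as required. A choice of $z \in K^{\times}$ with $N_{K/F}(z) = t$ then completes the factorization $x = (x/z) \cdot z$ with $x/z \in L^1$. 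The whole argument pivots on the elegant numerical coincidence that the cubic degree of $L/K$ becomes trivial modulo norms from the quadratic extension $K/F$.
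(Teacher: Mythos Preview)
Your proof is correct. It is, however, a genuinely more elementary route than the paper's. The paper argues cohomologically: from the short exact sequence of algebraic tori $1 \to \U(K) \to \Res_{E/F}\U(L) \to \Res_{E/F}\U(L)/\U(K) \to 1$ one gets
\[
1 \to K^1 \to L^1 \to (L^{\times})^{\det}/K^{\times} \to H^1(F,\U(K)) \to H^1(F,\Res_{E/F}\U(L)),
\]
and the point is that the last arrow is injective because the norm $N_{L/K}:\Res_{E/F}\U(L)\to\U(K)$ makes the composite $H^1(F,\U(K))\to H^1(F,\Res_{E/F}\U(L))\to H^1(F,\U(K))$ equal to multiplication by $3$, which is the identity on the $2$-group $H^1(F,\U(K))$. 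Your argument unpacks exactly this: you show directly that the obstruction class $t=N_{L/E}(x)\in F^{\times}/N_{K/F}(K^{\times})\cong H^1(F,\U(K))$ vanishes by computing $t^3=N_{K/F}(N_{L/K}(x))$ and invoking $2$-torsion. Both proofs therefore pivot on the same ``$3$ is odd'' coincidence; yours avoids the cohomological machinery and works explicitly on points, while the paper's version makes the structural reason (a retraction on $H^1$) more visible.
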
 
 \begin{proof} 
 We have a long exact sequence 
 \[ 
 1 \rightarrow K^1 \rightarrow L^1 \rightarrow   (L^{\times})^{\det}/K^{\times}\rightarrow H^1(F, \U(K)) \rightarrow H^1(F, \Res_{E/F} \U(L)) 
 \] 
 so we need to show that the last arrow is injective. To that end, the map 
 \[ 
 N_{L/K} : \Res_{E/F} \U(L)\rightarrow \U(K)
 \] 
 gives 
 \[ 
 H^1(F, \U(K)) \rightarrow H^1(F, \Res_{E/F}  \U(L)) \rightarrow H^1(F, \U(K)) 
 \] 
 such that the composite is multiplication by $3$. Since $H^1(F, \U(K))$ is a $2$-group, the composite is the identity. This proves the lemma. 
 \end{proof} 
 \vskip 5pt

 The lemma implies that, for any $x \in (L^{\times})^{\det}$, $N_{L/E}(x) \in N_{K/F}(K^{\times})$. Thus, the embedding
 \[  (L^{\times})^{\det}  / K^{\times}   \hookrightarrow M_{J}^0(F) \]
 takes value in the index $\leq 2$ subgroup $M_{J,K}^0$ and 
 is given by the formula
 \[  x \mapsto ( N_{L/K}(x/z) , x/z), \quad \text{where $N_{L/E}(x) = N_{K/F}(z)$.} \]
 Again this is well-defined as $z$ is determined up to an element of $K^1$.

  \vskip 5pt
  
We have thus described the embedding of algebraic groups
\[  H^0_C \times M_E \hookrightarrow M^0_J. \] 
This embedding depends only on $a \in E^{\times}/ F^{\times} N_{L/E}(L^{\times})  = H^1(F, \tilde{T}_{E,K})[2]$.
On the level of points, it gives the embedding
  \[  H^0_C(F) \times M_{E,K}  =   (L^{\times})^{\det}/ K^{\times} \times \GL_2(E)^{\det}_K     \hookrightarrow  M^0_{J,K}. \]  
  Though  the embedding could have been written down via formulas, without mention of the framework of similitude dual pairs, this framework will help us in \S \ref{S:mini} to relate the  mini-theta correspondence associated to this commuting pair of groups   by reducing it to the classical similitude theta correspondence. So we shall have occasion to return to the material in \S \ref{SS:simi} later on.
  \vskip 5pt

\subsection{\bf Siegel parabolic}
Recall that the Lie algebra $\mathfrak{m}$ has a Siegel parabolic subalgebra $\mathfrak{s}$. This gives rise to a Siegel parabolic subgroup 
\[  S_J  \subset M_J \]
whose Levi factor is of type $A_2 \times A_2$ and whose unipotent radical can be identified with $J$. 
Moreover, $H_C \subset S_J$ and the intersection of $M_E$ with $S_J$ is a Borel subgroup of $M_E$.  If we identify $M_E$ with $\GL_2(E)^{\det}$, we may assume that   
$S_J \cap M_E$ is the Borel subgroup of upper triangular matrices. 
\vskip 10pt

\section{\bf Minimal Representation}  \label{S:minimal}
In this section, we assume that $F$ is a non archimedean local field. 
Let $\Pi$ be the minimal representation of $G_J(F)$ (see \cite{GS1}).  
In this section, we recall the relevant properties of $\Pi$ that we need.
 We first note that  the algebraic group $G_J$ is not connected, but the minimal representation $\Pi$ in \cite{GS1} is  a 
 representation of the subgroup $G_J^0(F)$ of $G_J(F)$. Thus there are two ways of extending $\Pi$ to $G_J(F)$ and we shall first need to specify the extension we use below.

\vskip 5pt

\subsection{\bf Extending the minimal representation}
Recall  the Heisenberg parabolic subgroup  $P_J=M_JN_J$  of $G_J$, with $Z$ the center of $N_J$ and and let 
\[  \chi : M_J \rightarrow F^{\times} \]
be the character  of $M_J$ given by the action of $M_J$ on $Z$. By composition with  $\chi$, we may regard any character $\mu$  of $F^{\times}$ as a character of $M_J(F)$. Henceforth, we shall write $\mu$ in place of $\mu \circ \chi$ for a character of $M_J(F)$.
\vskip 5pt

 Now we consider the degenerate principal series representation of $G_J(F)$:
\[   I_J(s_0):= {\Ind}_{P_J}^{G_J} \chi_J = {\Ind}_{P^0_J}^{G^0_J}  \chi_J     \quad \text{(unnormalized induction)} \]
where
\[  \chi_J =   \omega_{K/F} \cdot |-|^{s_J}  \]
  with $\omega_{K/F}$  the quadratic character associated to $K = K_J$  by local class field theory and $s_J$   given by the following table: 
 \[ 
 \begin{array}{|c|c|c|c|} 
 \hline 
 G_J & E_6 & E_7 & E_8 \\
 \hline 
 s_J & 2 & 3 & 5 \\
 \hline 
 \end{array} 
 \] 
 \vskip 15pt
 
 \noindent   The minimal representation $\Pi$ of $G_J^0(F)$ is the unique irreducible subrepresentation of  $I_J(s_0)$, regarded as a representation of $G_J^0(F)$. 
 This unique irreducible submodule is thus stable under the action of $G_J(F)$ and this defines the extension of $\Pi$ to $G_J(F)$.   When we regard $I_J(s_0)$ as a space of functions on $G_J^0(F)$ transforming under $(P_J^0(F), \chi_J)$ on the left, the action of $G_J^0(F)$ is by right translation whereas the action of $p \in P_J(F)$ is given by:
 \[  (p \cdot f) (h) = \chi_J(p) \cdot f(p^{-1}h p) \quad \text{  for  $h \in G_J^0(F)$ and $f \in I_J(s_0)$.} \]
 This describes the action of $G_J(F) = P_J(F) \cdot G_J^0(F)$.
\vskip 10pt

\subsection{\bf Restriction of $\Pi$ to $\bar P_J$.} \label{SS:minimal-restrict}
 The restriction of $\Pi$ to $\bar P_J$ sits in a short exact sequence
 \[ 
 0 \rightarrow C_c^{\infty}(\Omega) \rightarrow \Pi_{\bar Z}  \rightarrow \Pi_{\bar N_J} \rightarrow 0,
 \] 
where  $\Omega \subset  N_J/Z$ is the minimal nontrivial (highest weight) $M_J$-orbit. 
\vskip 5pt

To describe the action of $\bar P_J$ on  $C_c^{\infty}(\Omega)$, 
 let $\langle \bar n, n\rangle $ be the natural pairing of  $\bar N_J/\bar Z$ and $N_J/Z$  and fix a non-trivial additive character $\psi$ of $F$. Then the action is given as follows. For $f\in C_c^{\infty}(\Omega)$,
\vskip 5pt

\begin{itemize}
\item $\bar n \in \bar N_J/Z$ acts   by 
 \[ 
 \Pi(\bar n)f(n)= \psi(\langle \bar n, n\rangle) \cdot f(n).
 \] 
 \item  $m\in M_J$ acts    by
  \[ 
 \Pi(m)f(n)= \chi_J(m)  \cdot f(m^{-1}nm).
 \] 
   \end{itemize}
 
\vskip 5pt
\subsection{\bf The minimal orbit $\Omega$.}  \label{SS:minimal orbit}
  Recall from \ref{ss_parabolic} that we have an identification  
\[ 
W_J:= N_J/Z_J=F \oplus J \oplus J \oplus F. 
\]

 By  \cite[Proposition 11.2]{GS1}, we have the following description of $\Omega$:
  
 \begin{prop}  \label{P:orbit} 
 A non-zero element $\omega= (a, x,y,d)\in N_J/Z_J$ is in the minimal $M_J$-orbit  $\Omega$ if and only if 
 \[ 
 x^{\#}= ay, ~ y^{\#}=dx \text{ and } l(x) \cdot l^{\ast}(y)= ad \text{ for all $l\in L_J$} 
 \] 
 where $x\cdot y$ is the product in $J$, $L_J$ the group of linear transformations of $J$ preserving the norm form, 
 and $l^{\ast}$ the dual action of $L_J$ on $J^*\cong J$, with the identification given by the trace pairing. 
  In particular,  if $a=1$, then $\omega=(1,x,x^{\#},N_J(x))$.
 \end{prop} 
 \vskip 10pt

\noindent{\bf Erratum:}  
 In fact,  \cite[Proposition 11.2]{GS1} asserts that it suffices to use $x\cdot y=ad$ in place of the family of equations obtained by the $L_J$-action. 
This is false. Writing $W_J = N_J/Z_J$, the  $M_J$-module $S^2(W^*_J)$  is a direct sum of an irreducible module whose highest weight is equal to twice the highest 
weight of $W_J^*$, and the adjoint representation of $M_J$.  
The quadratic equations given here span the latter summand and hence give a complete set of generators.  
\vskip 5pt

Note however that in \cite[Proposition 11.2]{GS1}, only the proof of  the ``only if" statement  was given, as the  other direction was not used in \cite{GS1}. Hence this error does not affect any result in \cite{GS1}.

\vskip 5pt

\subsection{\bf The $M_J$-module $\Pi_{\bar N_J}$.}  \label{SS:levi}
 A complete description of the Jacquet module $\Pi_{\bar N_J}$ is given in \cite{GS1}. We have
 \[  \Pi_{\bar N_J}  \cong  \omega_{K/F} \cdot |-|^{-2} \oplus  |-|^{-3/2} \cdot \Pi_{M_J}  \]
 for an $M_J$-module $\Pi_{M_J}$ which is $0$ if $J$ is a division algebra  and is a unitary minimal representation of $M_J$ otherwise. We will assume that $J$ is not division henceforth and  describe the $M_J$-module $\Pi_{M_J}$ in some detail.

 \vskip 5pt

  \vskip 5pt

 Recall that $M^0_J(F)$ contains a subgroup $M^0_{J,K}$ of index $\leq 2$. 
  We first describe a representation of $M^0_{J,K}$, using the classical  theta correspondence for the pair 
  \[ \U(K)  \times \U({\Res}_{K/F}(V_a))   = \U_1(F) \times \U_6(F)  \]
  constructed in \S \ref{SS:simi}. 
   \vskip 5pt
   
   To give a Weil representation for this dual pair, we need to choose a character $\mu$ of $K^{\times}$ whose restriction to $F^{\times}$ is  the quadratic character $\omega_{K/F}$, which gives a splitting of the metaplectic cover over $\U_6(F)$.  Then we may consider the Weil representation $\omega_{\mu,\psi}$ for $\U_1  \times \U_6$ associated to the pair of splitting characters $(1, \mu)$ and a nontrivial additive character $\psi$ of $F$. 
     With respect to the choice of $(1,\mu)$ and $\psi$, the associated Weil representation $\omega_{\mu,\psi}$ can be realised on  $C_c^{\infty}(L)$, where $L=Le_2$ is a polarization of $V_a = L e_1 \oplus Le_2$.   The action of $\U(K) = K^1$ and the Siegel parabolic subgroup of $\U({\Res}_{K/F}(V_a))$ stabilizing $Le_1$ is given by the usual formulas in the Schrodinger model:
     \vskip 5pt
     
     \begin{itemize}
     \item The group $U_1=K^1$ acts geometrically on $C_c^{\infty}(Le_2)$: for $z \in K^1$,
     \[   (z \cdot f) (v)  = f(z^{-1} v). \]
     
     \item If $\GL_K(Le_2)$ is the Levi subgroup  that preserves the decomposition $V_a= Le_1 \oplus  Le_2$, the action of $g \in \GL_K(Le_2)$ is given by
   \[ 
  (g \cdot f)(v) = \mu(\det(g))\cdot  |N_{K/F}\det(g)|^{-\frac{1}{2}} \cdot f(g^{-1} v).   
  \]
 \item An element $u$ in the unipotent radical of the Siegel parabolic subgroup stabilizing $Le_2$ acts by:
 \[  (n \cdot f)(v)  = \psi( \langle n  , v \rangle_a) \cdot f(v). \]
 \end{itemize}
 In particular, we see the dependence on $a \in E^{\times}$ in the last formula above.
  If we replace $\mu$ by $\mu\cdot \beta$, where $\beta$ is a 
  character of $K^{\times}/F^{\times}$, then  the splitting of $\U_6(F)$ changes by $\alpha \circ \det$, where $\alpha$ is a character of $K^1$,determined by   $\beta$  via:  $\alpha(z/\sigma(z))=\beta(z)$. Moreover, for a fixed $\mu$, the Weil representation depends only on the orbit of $\psi$ under $N_{K/F}(K^{\times})$. 
    \vskip 5pt
    
   We can now consider the classical theta lift $\theta_{\mu}(1)$ of the trivial representation of $\U_1$, which is an irreducible representation of $\U_6(F)$ realized on the subspace
  \[ 
    C_c^{\infty}(Le_2)^{K^1} \subset  C_c^{\infty}(Le_2).
  \] 
Consider the representation of   $K^{\times} \times \U^K_6(F)$ on $C_c^{\infty}(Le_2)^{K^1}$ given by 
\[  \Pi_{M_{J,K}} := \mu^{-1} \boxtimes \theta_{\mu}(1). \]
It is a simple check that the restriction of  $\Pi_{M_{J,K}}$  to the subgroup 
  \[  \{ (x,g) \in K^{\times} \times \U^K_6(F): x/\sigma(x)=\det(g) \} \]
   is independent of $\mu$ and that it descends to a representation 
  of  $M^0_{J,K}$.  We extend this representation to $M_{J,K}$ by letting  $\tau$ act on  $f \in C_c^{\infty}(L)^{K^1}$ via
  \[  (\tau \cdot f)(v) = f(\sigma(v)). \]
  Thus we have a representation $\Pi_{M_{J,K}}$ of 
  $M_{J,K} = M^0_{J,K} \rtimes \langle\tau\rangle$ on $C_c^{\infty}(Le_2)^{K^1}$, which depends on the orbit of $\psi$ under $N_{K/F}(K^{\times})$. Now we have:
  \begin{equation} \label{E:PiMJ}
  \Pi_{M_J} \cong  \Ind_{M_{J,K}}^{M_J}  \Pi_{M_{J,K} }= \Ind_{M_{J,K}}^{M_J}  \mu^{-1} \boxtimes \theta_{\mu}(1). \end{equation}
  This representation is now independent of $\psi$ and  $\mu$.

\vskip 5pt
\subsection{\bf  Similitude theta lifting}  \label{SS:simi2}
It is in fact better to think of the representation $\Pi_{M_{J,K}}$ from the viewpoint of the similitude theta correspondence for the pair
  \[ \GU(K)  \times \GU({\Res}_{K/F}(V_a))_K   = K^{\times}  \times \GU_6(F) _K. \]
In particular, we may consider the similitude theta lift $\tilde{\theta}_{\mu}(1)$ of the trivial representation of $K^{\times}$; this representation is also realized on $C_c^{\infty}(Le_2)^{K^1}$, and is merely an extension of $\theta_{\mu}(1)$ to $\GU_6(F)_K$ with the center $K^{\times}$ acting by the central character  $\mu^3$. 
Recall from (\ref{E:f}) the isogeny
\[  f:  \GU( {\Res}_{L/K}(V_a))_K   = (K^{\times} \times \U_6(F))/  \nabla K^1 \longrightarrow   (K^{\times} \times \U_6(F)) / K^1 \]
defined by
\[  f(z,g) = (z^{-3}, g). \]
Then we have;
\[   \tilde{\theta}_{\mu}(1)  = (\mu^{-1} \boxtimes \theta_{\mu}(1))\circ f = \Pi_{M_{J,K}} \circ f . \]
In other words, $\tilde{\theta}_{\mu}(1)$ factors through $f$ and when restricted to $(K^{\times} \times \U_6(F))^{\dagger}$ is independent of $\mu$. 
\vskip 5pt

From this viewpoint, the restriction of the $M_{J,K}$-module $\Pi_{M_{J,K}}$  to the commuting pair $H_C(F) \times \GL_2(E)^{\det}$ can be transparently described using the seesaw diagram (\ref{E:seesaw-top-bottom}). More precisely, we pick $a \in E^{\times}$ so that its class in $E^{\times}/ F^{\times} N_{L/E}(L^{\times}) =  H^1(F, T_{E, K_C})[2]$ (see (\ref{E:H1T2}) and (\ref{E:f2})) corresponds to $[C]$.
From the seesaw identity arising from  (\ref{E:seesaw-top-bottom}), the representation $\tilde{\theta}_{\mu}(1)$ is naturally a representation of   
\[  ((L^{\times})^{\det} \rtimes \langle \tau \rangle) / K^{\times} \times \GSp(W_a)^{\det} = \Aut_E(C)  \times   \GL_2(E)^{\det}. \]
This representation is precisely the restriction of $\Pi_{M_{J,K}}$ to  $\Aut_E(C)  \times   \GL_2(E)^{\det}$.

 \vskip 5pt
   
\subsection{\bf Some formulas.}  \label{SS:formulas}
  We write down some formulas for $\Pi_{M_{J,K}}$ which are relevant to us.
  \vskip 5pt
  
  \begin{itemize}
  \item An element $e\in L^1/K^1=\Aut_E(C)^{\circ}$ acts on 
  $f \in C_c^{\infty}(Le_2)^{K^1}$  by 
  \[  (e\cdot f)(v)=
  f(e^{-1} v).   
  \] 
  \vskip 5pt
  
  \item The element 
  \[ 
t(x) =   \left( \begin{array}{cc} 
  x & 0 \\
  0 & 1 
  \end{array} \right) \in \GL_2(E)^{\det}, 
  \] 
  with $x= N_{K/F}(z)$ for some $z\in K^{\times}$,  acts on $f \in C_c^{\infty}(L e_2)^{K^1}$  by 
  \[  (t(x) \cdot f) (v) = 
   |x|^{-\frac{3}{2}}  \cdot f(z^{-1} v).    
  \] 
  
  \item The element 
  \[  u(b) =  \left( \begin{array}{cc} 
  1 & b \\
  0 & 1 
  \end{array} \right) \in \GL_2(E)^{\det}, 
  \] 
acts by
\[  (u(b)f)( v)  =  \psi(   {\mathrm Tr}_{E/F}(a \cdot N_{L/E}(v) \cdot b))  \cdot f(v). \]
  \end{itemize}
 \vskip 5pt
 
 \noindent   The dependence of  the $H_C(F) \times \GL_2(E)_K^{\det}$-module $\Pi_{M_{J,K}}$ on $a \in E^{\times}$ is thus evident from the action of the unipotent radical of the upper triangular matrices in $\GL_2(E)^{\det}$. In particular,  one sees that the Whittaker support (relative to $\psi$) of $\Pi_{M_{J,K}}$ as an $\GL_2(E)^{\det}_K$-module is on the coset $a \cdot N_{L/E}(L^{\times}) \subset E^{\times}$.  Thus, the Whittaker support of the $\GL_2(E)^{\det}$-module
 \[  \Pi_{M_J}  = {\Ind}_{\GL_2(E)_K^{\det}}^{\GL_2(E)^{\det}} \Pi_{M_{J,K}} \]
 is on the coset $a \cdot F^{\times} N_{L/E}(L^{\times})$. This is the coset corresponding to $[C] \in H^1(F, T_{E, K_C})[2]$, by our choice of $a$.
        \vskip 5pt
    
   \subsection{\bf Split case.}
    If $K=F^2$.  Then    $K^1= \{ (x,y) \in F^2 ~ | ~xy=1\} \cong F^{\times}$,  $L=E^2$ and $L^1 \cong E^{\times}$.
    In this case, we can simplify the description of $\Pi_{M_J}$.
    \vskip 5pt
    
     If we apply a partial Fourier transform to $C_c^{\infty}(L)= C_c^{\infty}(E^2)$ with respect to the second factor $E$ of $L$, the action of $K^1\cong F^{\times}$  on 
    $C_c^{\infty}(L)$ becomes the action by homotheties. The representation $\Pi_{M_J}\cong \Pi^{\vee}_{M_J}$ is the maximal $F^{\times}$-invariant quotient of $C_c^{\infty}(L)$, 
    and is  isomorphic to the space of smooth functions $f$ on $L\setminus\{ 0\}$ such that 
   \[ f(x v)=|x|_F^{-3} f(v) \quad\text{   for all $v\in L\setminus \{0\}$ and $x\in F^{\times}$.} \]
   The restriction of $\Pi_{M_J}$ to 
 $ M_E\times \Aut_E(C)$ is given as follows. If  $ g\in \GL_2(E)^{\det}$ then
 \[ 
 \Pi_{M_J}(g) f(v) = |\det(g)|^{-\frac{3}{2}}\cdot  f(g^{-1}v), 
 \] 
where   $g^{-1}v$ is the natural action of $g^{-1} \in \GL_2(E)$ on $v\in E^2=L$. If $e\in E^{\times}$ then 
 \[ 
 \Pi_{M_J}(e) f(v) = |N_{E/F}(e)|^{-1} \cdot f(e^{-1}v), 
 \] 
where $e^{-1}v$ is the product of the scalar $e^{-1} \in E^{\times}$ and the vector $v\in E^2$. 
The involution  $\tau$  acts by the Fourier transform, viewing $f$ as a distribution on $C_c^{\infty}(L)$.
\vskip 10pt

\subsection{\bf Schr\"odinger model of $\Pi_{M_J}$} \label{SS:Schro}
The description we have given above for $\Pi_{M_J}$  allows one to relate the theta correspondence arising from  its restriction to the dual pair $H_C \times M_E$ in $M_J$ to the classical theta correspondence. As a minimal representation, $\Pi_{M_J}$ also has a Schr\"odinger model adapted to the Siegel parabolic subgroup $S_J \subset M_J$, which we will describe next. 
\vskip 5pt

As a representation of $S_J$, $\Pi_{M_J}$ sits in a short exact sequence
\[  \begin{CD}
0 @>>> C^{\infty}_c(J_{rk=1})    @>>> \Pi_{M_J} @>>> r_{S_J}(\Pi_{M_J}) @>>> 0 \end{CD} \]
where $J_{rk=1}$ denotes the set of rank 1 elements in $J$ and $r_{S_J}(-)$ denotes the (normalized) Jacquet module with respect to $S_J$.
The action of some elements of $H_C \times B_E  = (H_C \times M_C) \cap S_J$ on $C^{\infty}_c(J_{rk=1})$ can be described as follows:
\vskip 5pt

\begin{itemize}
\item For $b \in E$,  the upper triangular unipotent element $u(b) \in M_E(F) = \GL_2(E)^{\det}$ acts by
\[  (u(b) \cdot f) (x)  = \psi(\mathrm{Tr}_J(b x)) \cdot f(x)  = \psi(\mathrm{Tr}_{E/F}( b \cdot e)) \cdot f(x) \]
where $x = (e, v) \in E \oplus  C = J$ has rank $1$ and $\psi$ is a fixed nontrivial additive character of $F$.
 \vskip 5pt

\item   For $h \in H_C(F)$, $h$ acts by
\[  (h \cdot f) (x) = f( h^{-1}x)  \]
where we have identified $H_C$ with the pointwise stabilizer of $E \subset J$, so that $H_C \subset \Aut(J)$.
\end{itemize}
\vskip 5pt

Observe that by Lemma \ref{L:embedding 1-2}, and Lemma \ref{L:XaC},  $x = (e,v) \in E \oplus C $ has rank $1$ if and only if the map $f$ in Lemma \ref{L:XaC} sends $e$ to $[C] \in H^1(F, T_{E, K_C})[2] = E^{\times}/F^{\times} N_{L/E}(L^{\times})$. In view of (\ref{E:f2}), this is equivalent to the coset $e \cdot F^{\times} N_{L/K}(L^{\times})$ being equal to that of $[C]$.  So the Whittaker support of $\Pi_{M_J}$ as a  $\GL_2(E)^{\det}$-module is as we had determined in \S \ref{SS:formulas} via  the classical theta correspondence.

\vskip 5pt

The description of the minimal representation $\Pi_{M_J}$ given here will be used for the study of the theta correspondence for $H_C \times M_E$ in \S \ref{S:mini}. 
This is necessary for the study of the theta correspondence for $H_C \times G_E$, which  will be carried out in \S \ref{S:main-theta}.

 \vskip 10pt
 
  \section{\bf Jacquet functors for $E_6$}  \label{S:jac-E6}
 
 In this section, we continue to assume that $F$ is a nonarchimedean local field. The goal of this section is to describe the (un-normalized) Jacquet module $\Pi_{\bar{N}_E}$ as a representation of $M_E \times \Aut_E(C)$. Here, recall that
  $P_E=M_E N_E = P_J \cap G_E$ is the Heisenberg parabolic subgroup in $G_E$  and  $N_J$ and $N_E$ share the center $Z$. 
 Let 
 \[ \Omega^{\perp} = \{ x \in \Omega: \text{$x$ is perpendicular to $\bar N_E/\bar Z$} \}. \]
 Then  we have an exact sequence 
\[ 
 0 \rightarrow C_c^{\infty}(\Omega^{\perp}) \rightarrow \Pi_{\bar N_E}  \rightarrow \Pi_{\bar N_J} \rightarrow 0 
 \]  
 Thus, we need to:
 
 \vskip 5pt
 
 \begin{itemize}
 \item  determine the set $\Omega^{\perp}$ and  describe $C_c^{\infty}(\Omega^{\perp})$ as a module for $M_E \times \Aut_E(C)$; we shall do this in this section.
 \vskip 5pt
 
 \item study the theta correspondence for $M_E \times \Aut_E(C)$ with respect to  $\Pi_{M_J}$: we shall study this in the next section.
 \end{itemize}
 \vskip 5pt
 
 Now as a $\GL_2^{\det}(E)$-module, the orthogonal complement of $\bar N_E/\bar Z$  in $N/Z$ is  given by the natural action of  $\GL_2^{\det}(E)$
 on  $C\oplus C= E^2 \otimes _E C$ via its action on $E^2$. 
 Thus, an  $\omega\in \Omega^{\perp}$ is of the form $(0,x,y,0)$ where $x$, $y\in C$ such that 
 \[  x^{\#}= (-Q(x), \beta(x) )= 0 = y^{\#},\quad  \text{and} \quad  x\cdot y=0 \in J. \] 
  Now we note the following proposition, which uses the structure theory of twisted composition algebras:
  
  \begin{prop} \label{P:vanishing} 
 If $x\in C$ is such that $Q(x)=0$ and $N_C(x)= b_Q(x,\beta(x))=0$, then $x=0$ except when  
\begin{enumerate} 
\item $E=F^3$ and $J=M_3(F)$. 
\item $E=F\times K$, where $K$ is a field, and $J=M_3(F)$. 
\item $E=F\times K$, where $K$ is a field, and $J=J_3(K)$. 
\end{enumerate} 
Hence $\Omega^{\perp}$ is empty unless we are in the three cases above.
\end{prop}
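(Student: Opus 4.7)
The plan is to translate the hypotheses via the Springer realization $C = C_{e,\nu}$ on $L := E \otimes_F K_C$ (see \S\ref{SS:rank2}) and carry out an explicit analysis. Under the inclusion $x \mapsto (0,x) \in E \oplus C = J$, the adjoint formula \eqref{E:sharp} gives $(0,x)^{\#} = (-Q(x), \beta(x))$, and one computes $N_J((0,x)) = N_C(x)$ from the formula for $N_J$ recalled in \S\ref{ss_parabolic}. So the hypotheses $Q(x) = 0$ and $N_C(x) = 0$ amount to saying: viewed in $J$, the element $x$ has $x^{\#} \in C$ and $N_J(x) = 0$. In particular, if $J$ is a division algebra then $N_J(x) = 0$ forces $x = 0$, so we must assume $J$ is not division.

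With $J$ not division, Proposition \ref{P:division} lets us choose a representative of $[C]$ with $\nu = 1$ and $N_{E/F}(e) = 1$. In this normalization $Q(y) = e \cdot y\bar y$ where the bar denotes the $K_C/F$-conjugation on $L$, so $Q(x) = 0$ becomes the isotropy condition $x\bar x = 0$ in $L$; this already forces $L$ to have zero divisors. A direct computation in this model also provides a useful expression for $N_C(x)$: when $L$ decomposes as a product of $F$-algebras indexed by the primitive idempotents of $E \otimes K_C$, $N_C(x)$ is a sum of norms of the component projections of $x$, so the two hypotheses together constrain both the vanishing pattern of the components of $x$ and the norms of the surviving ones.

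The main step is a case analysis on the isomorphism type of $L = E \otimes K_C$. If $E$ is a cubic field and $K_C$ is a field, then $L$ is a degree-$6$ field and $x\bar x = 0$ immediately gives $x = 0$. If $E$ is a cubic field and $K_C = F \times F$, then $L = E \oplus E$ with bar swapping factors; $x\bar x = 0$ forces one component to vanish, and then the $N_C$-formula forces the norm over the cubic field $E$ of the surviving component to vanish, yielding $x = 0$. Hence $E$ cannot be a field. When $E = F \times K_E$ with $K_E$ a field, if $K_C$ is a field with $K_C \ne K_E$ then $L = K_C \oplus (K_E \otimes K_C)$ is a product of two fields on which bar acts nontrivially, again giving $x = 0$; the remaining possibilities $K_C \in \{F \times F,\ K_E\}$, combined with $[K_E][K_C][K_J] = 1$ in $F^{\times}/F^{\times 2}$ and the non-division assumption on $J$, yield cases (3) and (2) respectively. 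When $E = F^3$ one similarly shows $K_C = F \times F$, giving case (1). In each of the three surviving cases one exhibits a concrete nonzero $x$; for example in case (1) the element $x = ((1,0,0),(0,1,1)) \in F^3 \oplus F^3$ satisfies both conditions.

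The main obstacle is not any single step but the bookkeeping across the numerous sub-cases of $(E, K_C)$, and in particular verifying that the hypothesis $N_C(x) = 0$ (which is strictly weaker than $\beta(x) = 0$, the condition one would see in the description of rank-$1$ elements of $J$) eliminates solutions in all non-exceptional configurations while still allowing them in the three listed ones. A secondary subtlety is ensuring that within each quadratic invariant $K_J$, the non-division assumption on $J$ pins it down precisely to $M_3(F)$ or $J_3(K)$ as claimed, rather than to other $9$-dimensional Freudenthal--Jordan algebras with the same $K_J$.
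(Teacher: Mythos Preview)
Your proof is correct and takes essentially the same approach as the paper: a case analysis on the structure of $L = E \otimes_F K_C$ to determine when $Q$ can be isotropic, with the extra constraint $N_C(x)=0$ used to dispose of the case where $E$ is a cubic field and $K_C = F\times F$. Your preliminary division-algebra reduction via $N_J((0,x)) = N_C(x)$ and the normalization $\nu=1$ from Proposition~\ref{P:division} are valid but not used in the paper's more direct argument, which simply classifies when $Q$ is isotropic and then does the one explicit computation; the division step is in fact redundant, since your own case analysis on $L$ already covers every configuration (the division cases only occur when $E$ is a cubic field, and you handle those directly).
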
 
\vskip 5pt

\begin{proof} It suffices to look at the cases when $Q$ is isotropic. If $K_C$ is a field, then the norm $N_{E\otimes K_C/E}$ is isotropic only when 
$E=F\times K$ and $K_C=K$. Since $K_E=K$, it follows that $K_J=F^2$. Hence we are in the second case. If $K_C=F^{2}$, then $Q$ is always isotropic. 
The cases $E=F^3$ and $E=F\times K$  correspond to the first and third cases, respectively, in the statement of the proposition.
\vskip 5pt

 If $E$ is a field, then 
$C=E\otimes F^2=E\oplus E$ and, up to an invertible scalar, $Q(y,z)= yz$ and $\beta(y,z) = (z^{\#}, y^{\#})$, for $(y,z)\in C= E^2$.  Here $Q(y,z)=0$ implies $y=0$ or $z=0$. Assuming $z=0$, we see that  $b_Q((y,0), (0, y^{\#}) = yy^{\#} = N_{E/F}(y)=0$, which implies that $y=0$.  
\end{proof}

  \vskip 5pt
  
 Hence, to explicate $C^{\infty}_0(\Omega^{\perp})$, we need to treat the 3 cases highlighted in the proposition, and we shall deal with them in turn.

  \vskip 5pt 
 \subsection{\bf Case 1: $E=F^3$ and $J=M_3(F)$.}  \label{SS:jac-split}.
  In this case, $C$ is a split twisted composition algebra. Write 
  \[ 
  x=((x_1,y_1), (x_2,y_2), (x_3,y_3)),  \qquad \, y=((x'_1,y'_1), (x'_2,y'_2), (x'_3,y'_3))
  \] 
  and suppose that $(x,y) \in \Omega^{\perp}$.
   Let $X_i$, respectively $Y_i$, be the 2-dimensional 
  $F$-subspace of $C\oplus C$  consisting of  all pairs $(x,y)$ such that all coordinates except  $x_i$ and $x'_i$ are trivial,  respectively,  all coordinates except $y_i$ and $y_i'$ are 
  trivial. On each $X_i$ and $Y_i$, two of  the three $\SL_2(F)\subset M_E$ act trivially, and the quotient group, isomorphic to $\GL_2(F)$, acts via the standard representation. 
  
  \vskip 5pt
  
  The condition $x^{\#}=0$ holds if and only if there exists a pair of indices $i\neq j$ such that all coordinates of  $x$ are 0 except possibly for $x_i$ and $y_j$. An analogous statement holds for $y$: all coordinates are 0 except possibly for $x'_{a}$ and $y'_{b}$ for some $a \ne b$. The last condition, $x\cdot y=0$, implies that $i=a$ and $j=b$.  This can be easily seen by 
  writing $x$ and $y$ as matrices, say
  \[ 
  x= 
  \left( \begin{array}{ccc} 
  0 & x_3 & y_2 \\
  y_3 & 0 & x_1 \\
  x_2 & y_1 & 0 
  \end{array} \right)  
  \text{ and } 
  y=
  \left( \begin{array}{ccc} 
  0 & x'_3 & y'_2 \\
  y'_3 & 0 & x'_1 \\
  x'_2 & y'_1 & 0 
  \end{array} \right). 
  \] 
  Hence, if $(0, x,y, 0) \in \Omega^{\perp}$, then $(x,y)\in X_i \oplus Y_j$ for some $i \ne j$, and we have: 
  \[ 
  \Omega^{\perp} \cup \{ 0\} = \bigcup_{i\neq j} X_i \oplus Y_j. 
  \] 
 Let $X_i^*$ and $Y_i^*$ denote the corresponding punctured planes.  As $M_E$-module,  the space  $C_c^{\infty}(\Omega^{\perp})$  has a 2-step filtration with 
 submodule 
 \[ 
 \bigoplus_{i\neq j} C_c^{\infty} (X_i^* \times Y_j^*) 
 \] 
 and  quotient (via restriction)
 \[ 
 \bigoplus_ i C_c^{\infty} (X_i^*) \oplus  \bigoplus_j C_c^{\infty}(Y_j^*). 
 \] 
 The action of $M_E$ is geometric, with the same twist $\chi_J$ as the one-dimensional summand of $\Pi_{\bar{N}_J}$.  
 
 \vskip 5pt 
 \subsection{Case 2: $E=F\times K$ and $J=M_3(F)$.}
  In this case $K_C=K$, so $C=E\otimes K=K^3$. The structure of $E$-module on $C$ is given by 

\[  (f,e) \cdot (z_1, z_2, z_3)= (fz_1, e z_2, \bar e z_3) \]
where $(f,e)\in F\times K$ and $z=(z_1,z_2,z_3)\in K^3$. The composition algebra structure is given by 
\[ 
Q(z)= (N_K(z_1), z_2 \bar z_3) 
\] 
and 
\[ 
\beta(z_1,z_2,z_3)=(\bar z_2\bar z_3, \bar z_1 \bar z_2, \bar z_3 \bar z_1). 
\] 
This algebra $C$ can be obtained from the split algebra $C_s$ by Galois descent from $C_s\otimes K$ where the usual action of the Galois group of $K$ over $F$ is 
twisted by 
\[ 
\sigma((x_1,y_1), (x_2,y_2), (x_3,y_3))= ((y_1,x_1), (y_3,x_3), (y_2,x_2)). 
\] 
Note that $Q(z)=0$ implies that $z_1 = z_2 =0$ or $z_1 = z_3=0$.  For $i =2$ or $3$, let $Z_i$  be the two-dimensional $K$-plane in $C\oplus C$ consisting of 
pairs  $(z,z')$ such that $z_j=z'_j=0$ for all $j\neq i$. Now $\Omega^{\perp}$ is the union of the punctured planes $Z_2^*$ and $Z_3^*$.  This claim can be easily 
verified form the split case using Galois descent. 
The group $\GL_2(E)^{\det}$ acts on each plane via projection onto $\GL_2(K)^{\det}$, with $\SL_2(F)$ as the kernel. 
As $M_E$-module, the space  $C_c^{\infty}(\Omega^{\perp})$  is a direct sum 
 \[ 
 C_c^{\infty} (Z_2^*) \oplus C_c^{\infty}(Z_3^*). 
 \] 
The action of $M_E$ is geometric, with the same twist $\chi_J$ as the one-dimensional summand of $\Pi_{\bar{N}_J}$.

 \vskip 5pt 
 \subsection{\bf Case 3:  $E=F\times K$ and $J=J_3(K)$.}
   In this case $K_C=F^2$, so $E\otimes C=F^2 \times K^2$. If $z=((x_1,y_1), (x_2,y_2)) \in C$, then
 \[ Q(z)=(x_1x_2, y_1 y_2) \quad \text{and} \quad  
 \beta( z)= ((N_K(y_2), N_K(y_1)), (y_1\bar y_2 , x_1 \bar x_2)). 
 \] 
 This algebra $C$ can be obtained from the split algebra $C_s$ by Galois descent from $C_s\otimes K$ where the usual action of the Galois group of $K$ over $F$ is 
twisted by 
\[ 
\sigma((x_1,y_1), (x_2,y_2), (x_3,y_3))= ((x_1,y_1), (x_3,y_3), (x_2,y_2)). 
\] 
  In this case $Q(z)=0$  and $\beta(z)=0$ imply that $x_2=y_2=0$ and $x_1=0$ to $x_2=0$. Let $X_1$ (respectively $Y_1$)  be the plane in $C\oplus C$ consisting of 
 all elements  $(z, z')$ such that all coordinates of $z$ and $z'$ are 0 except $x_1$ and $x_1'$ (respectively,  except $y_1$ and $y'_1$).  Then $\Omega^{\perp}$ is the union of the punctured planes $X_1^*$ and $Y_1^*$.  Again, this claim can be easily 
verified form the split case using Galois descent. 
The group $\GL_2(E)^{\det}$ acts on each plane via projection onto $\GL_2(F)$, with $\SL_2(K)$ as the kernel. 
As $M_E$-module, the space  $C_c^{\infty}(\Omega^{\perp})$  is a direct sum 
 \[ 
 C_c^{\infty} (X_1^*) \oplus C_c^{\infty}(Y_1^*). 
 \] 
The action of $M_E$ is geometric, with the same twist $\chi_J$ as the one-dimensional summand of $\Pi_{\bar{N}_J}$.
 \vskip 15pt

 \section{\bf Mini Theta Correspondence}  \label{S:mini}
 In this section, we shall determine the local theta correspondence given by the $M_E \times \Aut_E(C)$-module $\Pi_{M_J}$ when $F$ is a nonarchimedean local field.  This is only relevant when $J = E \oplus  C$ is not a division algebra.  Understanding this mini-theta correspondence is necessary for our main goal of understanding the theta correspondence for $G_E \times \Aut_E(C)\subset G_J$. We begin by introducing notation for the irreducible representations of $H_C(F)$ and $M_E(F) = \GL_2(E)^{\det}$.
 
 \vskip 5pt
\subsection{\bf Representations of $\Aut_E(C)$.}
 Since $J = E \oplus C$ is not a division algebra, we see by Proposition \ref{P:division} that
 \[  H_C(F)  \cong  H_C^0(F) \rtimes \Z/2\Z \]
 where the action of $\Z/2\Z$ on $\Aut_E(C)^0$ is by inverting. Note however that the above isomorphism is not canonical and amounts to choosing an element (necessarily of order $2$) in $H_C(F) \smallsetminus H_C^0(F)$.
 \vskip 5pt
 
  The irreducible representations of $H_C(F)$ are not hard to classify:
  \vskip 5pt
  
  \begin{itemize}
  \item[(a)]   For every character $\chi$ of the torus $H_C^0(F)$ such that $\chi^2\neq 1$, we have a two dimensional representation 
 \[  \rho(\chi) = \mathrm{Ind}_{H_C^0(F)}^{H_C(F)} \chi  = \rho(\chi^{-1}). \] 
 Note that  $\rho(\chi) \cong \rho(\chi')$ if and only if $\chi^{\pm 1}  = \chi'$.  
 \vskip 5pt
 
 \item[(b)] For each  character $\chi$  such that $\chi^2=1$,  there are two extensions of $\chi$ to $H_C(F)$. If $\chi=1$,  these two representations are easily distinguishable from each other: one is trivial whereas the other is not. We denote them by  $1$ and $\epsilon = \epsilon_C$ (the sign character of $H_C
 (F)$) respectively.  
 \vskip 5pt
 
 \item[(c)]  When $\chi^2=1$ but $\chi\ne 1$, we can use the fixed isomorphism $ H_C(F)  \cong H_C^0(F) \rtimes \Z/2\Z $ to distinguish the two extensions. 
Namely,  we may denote the two extensions by $\rho(\chi)^+$ and $\rho(\chi)^-$,  where the sign denotes  the action of the nontrivial element of $\Z/2\Z$.  
\end{itemize} 

Note however that the labelling in (c) above is not really canonical. We shall see much later that one has a better parametrization. 
 This is based on the following canonical bijection of 2-element sets deduced  from Proposition \ref{P:isom-torsor}:
  \[  f^{-1}([C])/ b({\Ker}(\chi))  \longleftrightarrow (H_C(F) \smallsetminus H_C^0(F))/ {\Ker}(\chi). \]
 and the observation that any extension of $\chi$  is a nonconstant $\pm 1$-valued function on the RHS. 
 For this section, the labelling provided by (c) above is sufficient.

   \vskip 5pt

 \subsection{\bf Induced representations of $\GL_2(E)^{\det}$.}  \label{SS:induced-rep}
 Writing $E$ as a product  $\prod_i E_i$ of fields $E_i$,  we have a similar product $L = E \otimes K = \prod_i L_i$ with $L_i= E_i \otimes K$.
 Let $\omega_{L/E}$ be the quadratic character of $E^{\times}$ such that the restriction to each $E_i$ is the quadrate character corresponding 
 to the extension $L_i$.
 \vskip 5pt

  Now let $\chi$ be a unitary character of $E^{\times}$ and consider the induced representation  $\chi\times \omega_{L/E}$  of $\GL_2(E)$ in the notation of 
 Bernstein and Zelevinski. We shall need some simple results on  the restriction of $\chi\times \omega_{L/E}\cong \chi^{-1}\times \omega_{L/E}$  to $\GL_2(E)^{\det}$. 
 \vskip 10pt

\begin{prop} \label{P:restME}
 Let $\chi$ be a unitary character of $E^{\times}/F^{\times}$. In the following, `` the restriction" refers 
to the restriction of $\chi\times \omega_{L/E}$  to $\GL_2(E)^{\det}$. 
\begin{enumerate} 
\item Assume that $K=F^2$, and $\chi$ is a character of $E^{\times}$ trivial on $F^{\times}$. 
The restriction is irreducible unless $\chi^2=1$ and $\chi\neq 1$, in which case 
 it is a direct sum of 2 non-isomorphic irreducible representations. 
 \vskip 5pt
 
 \item Assume that $K$ is a field and $E=F\times K$.  Let $\chi$ is a character of $F^{\times}\times K^{\times}$ trivial on $F^{\times} \times K^1$.
  The restriction is  irreducible unless $\chi^2=1$ and $\chi\neq 1$, in which case 
 it is a direct sum of 2 non-isomorphic irreducible representations. 
 \vskip 5pt
 
 \item Assume that $K$ is a field, but $E\neq F\times K$. Let $\chi=1$. The restriction of $1\times \omega_{\omega_{L/E}}$  is a direct 
 sum of $2^{n-1}$ non-isomorphic irreducible representations where $n$ is the number of factors of $E$. 
 \end{enumerate} 
\end{prop}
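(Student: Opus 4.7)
The plan is to apply Clifford theory to the normal inclusion $\GL_2(E)^{\det} \triangleleft \GL_2(E)$, whose quotient is the abelian group $A := E^{\times}/F^{\times}$ via the determinant. For any irreducible $\pi$ of $\GL_2(E)$, the restriction $\pi|_{\GL_2(E)^{\det}}$ has finite length and is multiplicity-free: this follows by further restricting to $\SL_2(E) = \prod_i \SL_2(E_i)$ and invoking the classical multiplicity-freeness of principal series restrictions from $\GL_2$ to $\SL_2$. The length then equals
\[
|X(\pi)| \;:=\; |\{\eta \in \widehat{A} : \pi \otimes (\eta \circ {\det}) \cong \pi\}|,
\]
by the standard Frobenius-reciprocity computation. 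For $\pi = \chi \times \omega_{L/E}$, one has $\pi \otimes \eta \cong \pi$ iff $\{\chi\eta,\, \omega_{L/E}\eta\} = \{\chi,\, \omega_{L/E}\}$ as unordered pairs.

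I would then decompose $E = \prod_i E_i$ into its field factors, so that $\GL_2(E) = \prod_i \GL_2(E_i)$ and $\pi = \bigotimes_i \pi_i$ with $\pi_i = \chi_i \times \omega_{L_i/E_i}$, $L_i := E_i\otimes_F K$. The condition on $\eta = (\eta_i)$ factors componentwise: $\eta_i \in \{1,\, \chi_i\omega_{L_i/E_i}\}$, with the nontrivial option available precisely when $\chi_i^2 = 1$ and $\chi_i \neq \omega_{L_i/E_i}$. Enumerating $X(\pi)$ then reduces to tracking the diagonal-triviality constraint $\prod_i \eta_i|_{F^{\times}} = 1$, which via the base-change identity $\omega_{L_i/E_i} = \omega_{K/F}\circ N_{E_i/F}$ gives $\omega_{L_i/E_i}|_{F^{\times}} = \omega_{K/F}^{[E_i:F]}$.

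The rest is direct enumeration in each case. Cases (1) and (2) each reduce to a short case analysis, yielding $|X(\pi)| \leq 2$ with equality iff $\chi^2 = 1$ and $\chi \neq 1$; in case (2), the hypothesis that $\chi$ is trivial on $F^{\times} \times K^1$ together with the diagonal triviality automatically forces $\chi^2 = 1$, and the constraint eliminates the $\GL_2(F)$-component of $\eta$ because $\omega_{K/F}|_{F^{\times}}$ is nontrivial while $\chi_K|_{F^{\times}}$ is trivial, leaving two choices for $\eta_K$ precisely when $\chi_K \neq 1$. Case (3) is the main obstacle but admits the cleanest resolution: with $\chi = 1$, each $\eta_i$ has two choices $\{1,\, \omega_{L_i/E_i}\}$, and the diagonal-triviality constraint becomes the $\mathbb{F}_2$-linear equation $\sum_i s_i [E_i:F] \equiv 0 \pmod{2}$ for $s_i \in \{0,1\}$. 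Since $\sum_i [E_i:F] = 3$ is odd, at least one $[E_i:F]$ is odd, making this functional surjective onto $\mathbb{F}_2$; its kernel thus has cardinality $2^{n-1}$, matching the claim across the three realizations $E$ cubic field ($n=1$), $E = F \times K'$ with $K' \neq K$ ($n=2$), and $E = F^3$ ($n=3$).
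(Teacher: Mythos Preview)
Your argument is correct. Both you and the paper appeal to the same underlying Clifford-theoretic mechanism for the restriction from $\GL_2(E)$ to an intermediate normal subgroup, but the emphases differ. The paper's proof is deliberately terse: it treats only the case $E$ a field and $K=F^2$, descends to $\SL_2(E)$ where the reducibility of $\chi\times 1$ is classical, and then observes that the two $\SL_2(E)$-summands stay separated on $\GL_2(E)^{\det}$ because the relevant intermediate group (defined by $\det(g)\in\ker\chi$) already contains $\GL_2(E)^{\det}$ when $\chi|_{F^{\times}}=1$. The remaining cases are declared ``similar.''

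Your route is the dual formulation: you count summands as $|X(\pi)|$, the twist-stabilizer in $\widehat{E^{\times}/F^{\times}}$, and then factor $X(\pi)$ componentwise over the field factors of $E$ with the single global constraint $\prod_i\eta_i|_{F^{\times}}=1$. This is equivalent to the paper's intermediate-group argument (the inertia subgroup and the twist-stabilizer are Pontryagin dual), but it has the advantage of handling all cases uniformly without further hand-waving. In particular, your treatment of case~(3) --- reducing to the $\mathbb F_2$-linear functional $\sum_i s_i[E_i:F]\bmod 2$ and using $\sum_i[E_i:F]=3$ odd to get a kernel of size $2^{n-1}$ --- is cleaner than what a case-by-case descent to $\SL_2$ would produce. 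Your observation in case~(2) that triviality on $F^{\times}\times K^1$ together with diagonal triviality forces $\chi_K$ to factor through $K^{\times}/F^{\times}K^1\hookrightarrow F^{\times}/(F^{\times})^2$, hence $\chi^2=1$, is also a point the paper leaves implicit.

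One small remark: your multiplicity-freeness step (restricting further to $\SL_2(E)$) is exactly what the paper uses as its primary tool, so the two proofs are not independent --- they simply organize the same input differently.
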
 

\begin{proof} 
 These statements can be deduced from the well known facts about representations of $\GL_2(E)$ and $\SL_2(E)$.  
We provide the details in the case when $E$ is a field and $K=F^2$; 
 the general case is treated by a similar argument.  
 \vskip 5pt
 
 The representation 
$\chi\times 1$ is irreducible when restricted to $\SL_2(E)$ (and hence to $\GL_2(E)^{\det}$) unless $\chi^2=1$ and $\chi\neq 1$. If $\chi^2=1$ and $\chi\neq 1$, then
$\chi\times 1$ reduces to two  non-isomorphic summands on $\SL_2(E)$ and also on the intermediate group consisting of elements $g\in \GL_2(E)$ such that $\det(g)$ is in the kernel of $\chi$. Since, by our assumption, $\chi$ is trivial on $F^{\times}$, the character $\chi$ is trivial on $\det(g)$ for $g\in \GL_2(E)^{\det}$.  
Thus $\chi\times 1$ is a sum of two non-isomorphic irreducible representations. 
\end{proof} 

\vskip 5pt

 \vskip 5pt 
 \subsection{\bf Theta lifting.}
  For every irreducible representation 
 $\rho$ of $H_C(F)$,  let $\Theta_M(\rho)$ be a representation of $M_E$ such that $\Theta_M(\rho) \otimes \rho$ is the  maximal $\rho$-isotypic quotient of $\Pi_{M_J}$. 
 We shall now give a description of $\Theta_M(\rho)$ for {\em unitary} representations $\rho$. The results are essentially a reformulation of the 
classical  similitude theta correspondence for the dual pair $\GO_2(E) \times \GL_2(E)$, together with an understanding of the restriction of representations from $\GL_2(E)$ to $\GL_2(E)^{\det}$ (as we did in the previous proposition). 
\vskip 5pt

 Recall from (\ref{E:PiMJ})  that 
 \[  \Pi_{M_J} = {\Ind}_{M_{J,K}}^{M_J} \Pi_{M_{J,K}},  \]
 with $\Pi_{M_{J,K}}$ equal to the restriction of the similitude theta lift of the trivial representation of $\GU(K) = K^{\times}$.
  From the seesaw diagram in (\ref{E:seesaw-top-bottom}), $\Pi_{M_{J,K}}$ is naturally a module for
\[  \mathrm{GO}(L, N_{L/E})^{\det} \times \GSp(W)^{\det}  = ((L^{\times})^{\det} \rtimes \langle\tau \rangle) \times \GL_2(E)^{\det}_K \]
which factors through to the quotient   
 \[  H_C(F) \times  \GL_2(E)^{\det}_K = ((L^{\times})^{\det} \rtimes \langle\tau \rangle)/ K^{\times} \times \GL_2(E)^{\det}_K.\]
Here we recall that (see Lemma  \ref{L:L1K1}) that 
  \[  H_C^0(F)= (L^{\times})^{\det}/K^{\times} =  L^1/ K^1 \]
and
\[ \mathrm{GL}_2(E)^{\det}_K = \{ g \in \GL_2(E)^{\det}:  \det(g) \in N_{K/F}(K^{\times}) \}. \]  
Thus, we need to understand the theta correspondence for $H_C(F) \times \GL_2(E)^{\det}_K$ arising from  $M_{J,K}$. 
Indeed, if we  let $\Theta_{M_K}$ denote this theta correspondence, then for any $\rho \in \mathrm{Irr}(H_C(F))$, 
\[  \Theta_M(\rho)  = {\Ind}_{\GL_2(E)^{\det}_K}^{\GL_2(E)^{\det}} \Theta_{M_K}(\rho). \]
 We have thus explained the reduction of the determination of the mini-theta correspondence to the similitude theta correspondence for
\[   \mathrm{GO}(L, N_{L/E})  \times \GSp(W)^+  \]
together with the understanding of the restriction of the theta lifts to the subgroup $\GSp(W)^{\det}$.
With our knowledge of the theta correspondence for $\mathrm{GO}_2 \times \GL_2^+$, this interpretation  immediately gives us the following:
 \vskip 5pt
 
 \begin{lemma}  \label{L:theta_M}
(i)  For any $\rho \ne \epsilon$ (the sign character of $H_C(F)$), $\Theta_M(\rho)$ is nonzero, whereas $\Theta_M(\epsilon) =0$.
 \vskip 5pt
 
 (ii) For an irreducible representation $\rho(\chi)$ of $H_C(F)$,where $\chi$ is a character of $H_C^0(F) = (L^{\times})^{\det}/K^{\times}$, $\Theta_M(\rho(\chi))$ is noncuspidal if and only if  $\chi|_{L^1}$ is trivial on all the anisotropic factors of $L^1 = \prod_i L_i^1$.
 \end{lemma}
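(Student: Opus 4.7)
The plan is to reduce the computation of $\Theta_M(\rho)$ to the classical similitude theta correspondence for $\GO_2 \times \GL_2$ using the seesaw (\ref{E:seesaw-top-bottom}), and then to factorize over the components of $E$ and invoke well-known explicit results. First, Frobenius reciprocity applied to $\Pi_M = \Ind_{M_K}^{M} \Pi_{M_K}$ gives
\[
\Theta_M(\rho) = \Ind_{M_{E,K}}^{M_E} \Theta_{M_K}(\rho),
\]
so it suffices to compute $\Theta_{M_K}(\rho)$. The seesaw, together with the description of $\Pi_{M_K}$ as the (descended) similitude theta lift of the trivial character of $\GU(K) = K^{\times}$, identifies $\Theta_{M_K}(\rho)$ with the classical similitude theta lift of an inflation of $\rho$ from $\GO(L, N_{L/E})^{\det}$ to $\GSp(W_a)^{\det} = \GL_2(E)^{\det}_K$, the inflation being through the quotient map $\GO(L, N_{L/E})^{\det} \twoheadrightarrow H_C$.

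The quadratic space $(L, N_{L/E})$ factors as $\prod_i (L_i, N_{L_i/E_i})$ over $E = \prod_i E_i$, so the Weil representation, and hence the similitude theta lift, decomposes as a tensor product of classical $\GO_2 \times \GL_2$ lifts over each $E_i$. Given a character $\tilde{\chi}_i$ of $\GSO(L_i) = L_i^{\times}$, the standard classification of the $\GO_2 \times \GL_2$ theta correspondence (equivalent to automorphic induction from $L_i^{\times}$ to $\GL_2(E_i)$) gives: (a) a reducible principal series on $\GL_2(E_i)$ when $L_i/E_i$ is split; (b) a reducible principal series $\eta \times \eta \omega_{L_i/E_i}$ when $L_i/E_i$ is a field and $\tilde{\chi}_i = \eta \circ N_{L_i/E_i}$ (equivalently $\tilde{\chi}_i|_{L_i^1} = 1$); and (c) a supercuspidal dihedral representation otherwise. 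Moreover, the classical sign character of $\GO(L_i)$ (trivial on $\GSO$, $-1$ on $\tau$) lifts to $0$ in each case.

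For part (i), the sign character $\epsilon$ of $H_C$ inflates to a character of $\GO(L, N_{L/E})^{\det}$ whose restriction to each factor is the classical sign character; its lift vanishes factor-wise, so $\Theta_{M_K}(\epsilon) = 0$, and hence $\Theta_M(\epsilon) = 0$. Any other $\rho$ inflates to a product of non-sign representations, each of which has non-zero classical lift, giving $\Theta_{M_K}(\rho) \ne 0$ and consequently $\Theta_M(\rho) \ne 0$.

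For part (ii), induction from the finite-index subgroup $M_{E,K}$ to $M_E$ preserves cuspidality of irreducible constituents, so $\Theta_M(\rho(\chi))$ is (entirely) noncuspidal iff $\Theta_{M_K}(\rho(\chi)) = \bigotimes_i \theta_i(\tilde{\chi}_i)$ has all constituents noncuspidal, iff each $\theta_i(\tilde{\chi}_i)$ is noncuspidal on $\GL_2(E_i)$. By the trichotomy above, this happens exactly when for every anisotropic factor $L_i$ (i.e.\ each $i$ with $L_i/E_i$ a field), $\tilde{\chi}_i|_{L_i^1} = 1$, which is precisely the condition that $\chi|_{L^1}$ be trivial on every anisotropic factor. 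The main technical obstacle lies in reconciling the $^{\det}$ similitude coupling across the factors $E_i$ and the quotient by $K^{\times}$ with this factor-wise analysis; this is addressed using the explicit description of the embedding $H_C \times M_E \hookrightarrow M_J^0$ from Section~\ref{SS:qs} together with Proposition~\ref{P:restME}.
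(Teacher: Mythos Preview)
Your proposal is correct and follows essentially the same approach as the paper: the paper simply states that the reduction to the similitude theta correspondence for $\mathrm{GO}(L,N_{L/E}) \times \GSp(W)^+$ (via the seesaw \eqref{E:seesaw-top-bottom} and the identity $\Theta_M(\rho) = \Ind_{\GL_2(E)^{\det}_K}^{\GL_2(E)^{\det}} \Theta_{M_K}(\rho)$), together with the known $\GO_2 \times \GL_2$ correspondence, ``immediately gives'' the lemma. Your write-up makes this factor-wise analysis explicit, which is exactly what the paper then carries out in detail in the proof of the subsequent Proposition~\ref{P:theta_M}.
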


\vskip 5pt

In the context of (ii) of the Lemma, we note:
\vskip 5pt
\begin{itemize}
\item if $K=F^2$, then $H_C^0(F) = E^{\times}/ F^{\times}$ and there are no anisotropic factors of $L^1$, so that $\Theta_M(\rho)$ is noncuspidal (as long as  $\rho \ne \epsilon$).
\vskip 5pt

\item if  $K$ is a field and $E=F\times K$, then $H_C^0(F) \cong K^1 \times K^{\times} /K^1\cong K^{\times}$, 
 and a character  $\chi$ trivial on anisotropic factors can be  identified with a character of $K^{\times}/K^1$. 
 \vskip 5pt
 
 \item if $K$ is a field and $ E \ne F \times K$, only $\Theta_M(1)$ is noncuspidal.
 
 \end{itemize}
 It will turn out that the theta lifts in these cases are contained in  the principal series representations we considered in Proposition \ref{P:restME}.
 \vskip 5pt 
 
 The following proposition continues our study of the mini-theta correspondence by refining Lemma \ref{L:theta_M}:
\vskip 5pt

 \begin{prop}  \label{P:theta_M} 
 For every irreducible unitary representation 
 $\rho\neq\epsilon$ of $H_C(F)$,   $\Theta_M(\rho)$ is an irreducible nonzero representation of $M_E$, whereas $\Theta_M(\epsilon) =0$. Moreover, if $\Theta_M(\rho) \cong \Theta_M(\rho')\ne 0$, then $\rho \cong \rho'$. More precisely:
 \vskip 5pt
 
 \begin{enumerate} 
 \item $\Theta_M(1)$ is an irreducible  summand of $1\times \omega_{L/E}$. 
 \vskip 5pt
 
 \item Let $K=F^2$ and $\chi$ be a character  of $H_C^0(F) \cong E^{\times}/F^{\times}$.  Then
 \[    \chi^2\neq 1 \Longrightarrow  \Theta_M(\rho(\chi))\cong \chi\times 1, \] 
 whereas
 \[  \text{ $\chi^2=1$ but $\chi\neq 1$} \Longrightarrow \Theta_M(\rho(\chi)^+)\oplus \Theta_M(\rho(\chi)^-)\cong \chi\times 1.\] 
 \vskip 5pt
 
 \item  Let $K$ be a field,  $E=F\times K$ and $\chi$  a character  of $H_C^0(F) \cong K^{\times}$ trivial on $K^1$.   Extend  $\chi$ to a character $\tilde \chi$ of 
 $F^{\times} \times K^{\times}$, so that it is trivial on the first factor.   Then:
 \[  \chi^2\neq 1 \Longrightarrow \Theta_M(\rho(\chi))\cong \tilde \chi\times \omega_{L/E}, \] 
 whereas
 \[ 
 \text{ $\chi^2=1$ but $\chi\neq 1$} \Longrightarrow   \Theta_M(\rho(\chi)^+)\oplus \Theta_M(\rho(\chi)^-)\cong \tilde\chi\times \omega_{L/E}.\] 
 
 \vskip 5pt
 
 \item For all other cases of the triple $(E, K, \chi)$ not covered above, $\Theta_M(\rho(\chi))$ is cuspidal.
 \end{enumerate} 
 \end{prop}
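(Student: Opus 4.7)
\medskip

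My plan is to reduce the entire computation to the well-understood similitude theta correspondence for $\mathrm{GO}(L,N_{L/E}) \times \GSp(W_a)$, i.e.\ the classical $\mathrm{GO}_2 \times \GL_2$ correspondence over the \'etale algebra $E$, and then track the restriction/induction steps needed to pass to the index-$2$ subgroups. The starting point is the identity
\[
\Theta_M(\rho) \;=\; \mathrm{Ind}_{\GL_2(E)^{\det}_K}^{\GL_2(E)^{\det}} \Theta_{M_K}(\rho),
\]
where $\Pi_{M_K}$ is, by the seesaw in (\ref{E:seesaw-top-bottom}) and \S \ref{SS:simi2}, realized on $C_c^\infty(Le_2)^{K^1}$ and is the restriction to $H_C(F) \times \GL_2(E)^{\det}_K$ of the similitude theta lift $\widetilde\theta_\mu(1)$ of the trivial character of $\GU(K)=K^\times$.

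\medskip

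The core step is therefore to understand the mini-correspondence for the pair
\[
\bigl((L^\times)^{\det}\rtimes\langle\tau\rangle\bigr)/K^\times \;\times\; \GL_2(E)^{\det}_K \;\hookrightarrow\; \mathrm{GO}(L,N_{L/E})^{\det}\times \GSp(W_a)^{\det}.
\]
Here I would first compute on the big groups $\mathrm{GO}(L,N_{L/E}) \times \GL_2(E)$, where the theta correspondence is the classical one for a $2$-dimensional quadratic space over $E$. The standard Waldspurger/Roberts-type results give: (i) the sign character $\epsilon$ of $\mathrm{O}(L,N_{L/E})$ has zero theta lift, while (ii) any other character $\chi$ of $\mathrm{O}(L,N_{L/E})(E)=L^1\rtimes\Z/2$ lifts to an irreducible representation of $\GL_2(E)$ which, by the explicit Jacquet-module computation in the Schr\"odinger model of \S \ref{SS:Schro}, is a constituent of $\mathrm{Ind}^{\GL_2(E)}_B(\tilde\chi\boxtimes\omega_{L/E}\tilde\chi^{-1})$, i.e.\ of $\tilde\chi\times\omega_{L/E}$ in Bernstein--Zelevinsky notation. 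Item~(1) of the proposition, which concerns the trivial character, follows directly. The non-vanishing/vanishing statement $\Theta_M(\epsilon)=0$ descends to $M$ because induction is exact and commutes with taking $\rho$-isotypic quotients.

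\medskip

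Next I would restrict these lifts to the subgroup $\GL_2(E)^{\det}_K$ and then induce to $\GL_2(E)^{\det}$. The key input here is Proposition~\ref{P:restME}: the principal series $\tilde\chi\times\omega_{L/E}$ either stays irreducible on $\GL_2(E)^{\det}$, or it splits into two non-isomorphic irreducible summands precisely when $\chi^2=1$ and $\chi\neq 1$. Combining this with the observation that the two extensions $\rho(\chi)^\pm$ of $\chi$ from $H_C^0(F)$ to $H_C(F)$ index the two summands (by the equivariance of the theta kernel under the outer $\langle\tau\rangle$-action, which matches the splitting of the principal series), we deduce items (2) and (3). The uniqueness statement $\Theta_M(\rho)\cong \Theta_M(\rho') \Rightarrow \rho\cong\rho'$ then reduces to central character considerations plus the classical Howe duality for $\mathrm{GO}_2\times \GL_2$.

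\medskip

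Finally, item~(4) -- the cuspidality of all remaining lifts -- is obtained from Lemma~\ref{L:theta_M}(ii). The criterion there says that $\Theta_M(\rho(\chi))$ is non-cuspidal iff $\chi|_{L^1}$ is trivial on every anisotropic factor $L_i^1$; the cases excluded from (1)--(3) are exactly those in which some $L_i^1$ is anisotropic and $\chi|_{L_i^1}$ is non-trivial, so there is no Jacquet module on $N_E$. The main technical obstacle I anticipate is the careful bookkeeping in passing between the four different groups $H_C^0$, $(L^\times)^{\det}/K^\times$, $L^1/K^1$, and between $\GL_2(E)^{\det}_K$ and $\GL_2(E)^{\det}$: in particular, verifying that the involution $\tau$ on $H_C$ exchanges the two summands of a reducible restriction $\tilde\chi\times\omega_{L/E}|_{\GL_2(E)^{\det}}$ requires a direct computation with the explicit formulas in \S \ref{SS:formulas}, and ensuring that the induced/restricted parameters match across the seesaw is where the Hilbert-90 identifications of \S \ref{SS:isom-tori} enter in an essential way.
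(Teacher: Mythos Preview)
Your overall framework is correct and matches the paper's: reduce $\Theta_M$ to the classical similitude theta correspondence for $\mathrm{GO}_2\times\GL_2$ via the seesaw (\ref{E:seesaw-top-bottom}), then track restriction from $\GL_2(E)$ down to $\GL_2(E)^{\det}$. Your treatment of items~(1)--(3) via Proposition~\ref{P:restME}, and of the vanishing $\Theta_M(\epsilon)=0$ and the cuspidality in~(4) via Lemma~\ref{L:theta_M}, is fine.

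The genuine gap is that you never establish the \emph{irreducibility} of $\Theta_M(\rho)$ in the cuspidal cases covered by~(4). You only argue cuspidality; but the proposition asserts that $\Theta_M(\rho)$ is irreducible for \emph{every} unitary $\rho\neq\epsilon$. Proposition~\ref{P:restME} is stated only for the principal series $\tilde\chi\times\omega_{L/E}$ and says nothing about how a supercuspidal lift to $\GL_2(E)$ behaves on $\GL_2(E)^{\det}$. When $E$ is not a field this restriction problem is genuinely delicate: writing $E=\prod_iE_i$, the lift is a tensor product $\otimes_i\,\theta(\chi_i)$, each factor of which may split into two pieces on $\SL_2(E_i)$, and one must show that the diagonal similitude torus in $\GL_2(E)^{\det}_K$ mixes these pieces in just the right way so that at most two irreducible summands survive (matched with $\rho(\chi)^\pm$). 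This is exactly the content of the paper's proof: it fixes a representative case ($E=F^3$, $K$ a field), records the precise $\SL_2(F)$-decomposition of each $\theta(\chi_i)$ according to whether $\chi_i|_{K^1}$ is trivial, nontrivial quadratic, or non-quadratic, and then runs through the four possible combinations to check irreducibility of $\theta(\chi_1,\chi_2,\chi_3)|_{\GL_2(F^3)^{\det}_K}$ by hand. Your sketch would need an analogous analysis; the appeal to ``classical Howe duality for $\mathrm{GO}_2\times\GL_2$'' over $E$ does not suffice, because what is at stake is the branching from $\GL_2(E)$ to the codimension-$(\#\{i\}-1)$ subgroup $\GL_2(E)^{\det}$.

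A smaller related point: your phrase ``$\mathrm{O}(L,N_{L/E})(E)=L^1\rtimes\Z/2$'' is only literally correct when $E$ is a field; in general the orthogonal group over $E=\prod_iE_i$ has component group $(\Z/2)^n$, whereas $H_C(F)$ has a single $\Z/2$. The paper handles this by working factor-by-factor over $F$ rather than globally over $E$, which is also what makes the irreducibility argument go through.
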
 
 
 \begin{proof}   
 In view of Lemma \ref{L:theta_M},
 the main issue here is  the irreducibility of $\Theta_M(\rho)$ for $\rho \in \mathrm{Irr}(H_C(F))$. 
  We shall illustrate the argument in the case where $K$ is a field and $E = F^3$; the other cases are similar and sometimes easier.
 \vskip 5pt
 
 For the case under consideration, we have
\[  \Aut_E(C)^0(F) = (K^{\times} \times K^{\times} \times K^{\times})^{\det}/ K^{\times} \rtimes \langle \sigma\rangle, \]
where the superscript $\det$ refers to the subgroup of elements $(x,y,z)$ with $N_{K/F}(x) = N_{K/F}(y) = N_{K/F}(z)$.
Ignoring the element $\sigma$ for the moment, we are thus considering a triple similitude theta correspondence for $\mathrm{GSO}_2^K(F) \times \GL_2(F)_K$.
We record the following known results concerning this similitude theta correspondence:
\vskip 5pt

\begin{itemize}
\item[(a)]  If $\chi$ is a unitary character of  $\mathrm{GSO}_2^K(F) = K^{\times}$ such that $\chi|_{K^1}$ is not quadratic,  or equivalently $\chi^{\sigma}/\chi$ does not factor through $N_{K/F})$, 
then 
\[  \theta(\chi) \cong \theta(\chi^{\sigma})   \in {\Irr}(\GL_2(E)^{\det}) \]
is supercuspidal. Indeed,
\[  \tilde{\theta}(\chi) := {\Ind}_{\GL_2(F)_K}^{\GL_2(F)} \theta(\chi) \]
is an irreducible supercuspidal representation which is dihedral with respect to $K/F$ and no other quadratic fields, so that $\theta(\chi)$ remains irreducible when restricted to $\SL_2(F)$.
\vskip 5pt

 \vskip 5pt

\item[(b)]  if $\chi|_{K^1}$ is quadratic but nontrivial, or equivalently $\chi^{\sigma}/\chi$ is nontrivial but factors through $N_{K/F}$, then $\theta(\chi) = \theta(\chi^{\sigma})$ is an irreducible supercuspidal representation of $\GL_2(E)^{\det}$. Indeed,  
\[  \tilde{\theta}(\chi) := {\Ind}_{\GL_2(F)_K}^{\GL_2(F)} \theta(\chi) \]
is an irreducible supercuspidal representation which is dihedral with respect to $K/F$ and two other quadratic fields. Hence, 
 $\theta(\chi)$ decomposes as  the sum of two irreducible supercuspidal representations when restricted to $\SL_2(F)$:
\[  \theta(\chi) |_{\SL_2} =  \theta(\chi|_{K^1}^+) \oplus \theta(\chi|_{K^1}^-), \] 
where the two summands are the theta lifts (to $\SL_2(F)$) of the two extensions of $\chi|_{K^1}$ to $O_2^K(F)$.   Indeed, if we consider the index $2$ subgroup
\[  \GL_2(F)_K^{\chi} = \{  g \in \GL_2(F)_K: \det(g) = N_{K/F}(z), \,  \chi(z/ \sigma(z)) =1 \}, \]
then each of the two summands $\theta(\chi|_{K^1}^{\pm})$ is an irreducible $\GL_2(F)_K^{\chi}$-module.
\vskip 5pt

\item[(c)]  If $\chi|_{K^1} = 1$, or equivalently $\chi =\chi^{\sigma}$, then $\chi = \mu \circ N_{K/F}$ for some $\mu$ (well-determined up to multiplication by $\omega_{K/F}$) and 
$\theta(\chi)$ is one of the two irreducible summands of  the restriction of $\mu \times \mu \cdot \omega_{K/F}$ to $\GL_2(F)_K$. Moreover, these two summands remain irreducible when restricted to $\SL_2(F)$.
\vskip 5pt

\item[(d)]  $\theta(\chi) \cong \theta(\chi')$ if and only if $\chi' = \chi$ or $\chi^{\sigma}$.
\end{itemize}

\smallskip 

Now we are ready to analyze the triple similitude theta correspondence.
Let $\chi=(\chi_1,\chi_2,\chi_3)$ be a character of $(K^{\times})^3$ such 
that $\chi_1 \cdot \chi_2 \cdot \chi_3=1$.   We need to study the reduciblity of 
\[ \theta(\chi_1,\chi_2,\chi_3):=  \theta(\chi_1) \otimes \theta(\chi_2) \otimes \theta(\chi_3)  \]
 when restricted to $\GL_2(E)^{\det}_K$. We shall consider several cases in turn:

\vskip 5pt

\begin{itemize}
\item[(i)]  If  $\chi_i|_{K^1}$ is not quadratic nontrivial for all $i$, then  by (a) and (c) above, $\theta(\chi_i)$ remains irreducible when restricted to $\SL_2(F)$. Hence $\theta(\chi_1, \chi_2,\chi_3)$ is irreducible when restricted to $\GL_2(E)^{\det}_K$.
\vskip 5pt

\item[(ii)]  Assume now that exactly one of the $\chi_i|_{K^1}$ is quadratic nontrivial. Without loss of generality, suppose that
$\chi_3|_{K^1}$ is quadratic nontrivial but the other two restrictions are not. Then $\theta_K(\chi_1)$ and $\theta_K(\chi_2)$ are irreducible as $\mathrm{SL}_2(F)$-representations, while $\theta_K(\chi_3)$ is irreducible as $\mathrm{GL}_2(F)_K$-representation. It follows readily that $\theta(\chi_1,\chi_2,\chi_3)$ irreducible as an $\mathrm{GL}_2(E)_K$-representation. 
\vskip 5pt

\item[(iii)]  Assume next that exactly two of the $\chi_i|_{K^1}$ is quadratic nontrivial.  Without loss of generality, we may suppose
\[  \chi_1 |_{K^1} = \chi_2|_{K^1}= \mu \quad \text{and} \quad  \chi^3|_{K^1}  =1  \]
 for some quadratic character $\mu$ of $K^1$. In this case, by (b) above, we have
 \[  \theta(\chi_1)|_{\SL_2}=  \theta(\chi_2)|_{\SL_2}  = \theta(\mu^+) \oplus \theta(\mu^-) \]
 as $\SL_2(F)$-modules. Now   it is easy to check that 
\[ 
[ \theta(\mu^+) \otimes \theta(\mu^+) \oplus \theta(\mu^-) \otimes \theta(\mu^-)]  \otimes \theta(1) 
\] 
and 
\[ 
[ \theta(\mu^+) \otimes \theta(\mu^-) \oplus \theta(\mu^-) \otimes \theta(\mu^+)]  \otimes \theta(1) 
\] 
are  irreducible representations of $\mathrm{GL}_2(E)_K$.  In particular, $\theta(\chi_1,\chi_2,\chi_3)$ is the sum of two irreducible representations as $\GL_2(E)^{\det}_K$-modules.
\vskip 5pt

\item[(iv)]  Finally, we consider the case when $\mu_i:= \chi_i|{K^1}$ is quadratic nontrivial for all $i$; this case can only occur when the residue characteristic of $F$ is $2$. In this case,
\[  \theta(\chi_1,\chi_2,\chi_3) = 
  [\theta(\mu_1^+) \oplus \theta(\mu_1^-)] \otimes  [\theta(\mu_2^+) \oplus \theta(\mu_2^-)] \otimes  [\theta(\mu_3^+) \oplus \theta(\mu_3^-)] 
\] 
as $\SL_2(F)^3$-modules.  The key observation here is  that  each $\mathrm{GL}_2(F)_K^{\chi_i}$ acts irreducibly on $\theta(\chi_j)  = \theta(\mu_j^+) \oplus \theta(\mu_j^-)$ if $i\neq j$, and preserves each summand if $i=j$. 
Now it is easy to check that for every $\epsilon =\pm 1$, 
\[ 
 \oplus_{\epsilon_1\epsilon_2\epsilon_3=\epsilon} \theta(\mu_1^{\epsilon_1}) \otimes  \theta(\mu_2^{\epsilon_2}) \otimes  \theta(\mu_3^{\epsilon_3})  . 
\] 
is an irreducible representation of $\mathrm{GL}_2(F^3)_K$.  In particular, $\theta(\chi_1,\chi_2,\chi_3)$ decomposes as the sum of two irreducible $\GL_2(E)_K^{\det}$-modules. 
\end{itemize}

\vskip 5pt

With the above results, we can now 
complete the proof of the proposition when $E = F^3$ and $K$ is a field.  
Note that we are only concerned with the restriction of $\chi_1\times \chi_2 \times \chi_3$ to the subgroup:
\[ H_C^0(F)=  ((K^{\times})^3)^{\det} / \Delta K^{\times}  = (K^1)^3/ \Delta K^1. \]
So for example, we have:
\vskip 5pt

\begin{itemize}
\item $\chi_1 \times\chi_2 \times \chi_3$ restricts to the trivial character if and only if  $\chi_i|_{K^1} =1$ for each $i$. 
\vskip 5pt

\item The restriction $\chi$ of $\chi_1\times \chi_2 \times \chi_3$ is a nontrivial quadratic character if and only if $\chi_i|_{K^1}$ is quadratic for all $i$ and is nontrivial for some $i$. 
\end{itemize}
In particular, we see that the latter case corresponds precisely to the cases (iii) and (iv) analyzed above. In this case, there are thus two extensions $\chi^{\pm}$ of $\chi$ to $H_C(F)$ 
and (in view of Lemma \ref{L:theta_M}) $\Theta_{M_K}(\chi^{\pm})$ are both nonzero and hence are precisely the two irreducible summands of $\theta(\chi_1,\chi_2,\chi_3)|_{\GL_2(E)_K}$ described in (iii) and (iv) above.  
\vskip 5pt

Finally, from the properties of the similitude theta correspondence, we deduce that
\[   \chi' = \text{$\chi^{\pm 1}$ on $(K^1)^3$} \Longleftrightarrow \text{$\theta(\chi) = \theta(\chi')$ on $\GL_2(E)^{\det}_K$.} \] 
This concludes the proof of the proposition, at least in the case when $E = F^3$ and $K$ is a field. 

\end{proof}

 \vskip 10pt
 
 \subsection{\bf Whittaker models}
  For a fixed $C$, with associated Springer decomposition $J= E \oplus C$, we have obtained a subset
 \[ {\Irr}_C(M_E(F)) :=  \{ \Theta_{M,C}(\rho) \in {\Irr}(M_E(F)): \epsilon \ne \rho \in \mathrm{Irr}_{unit}(H_C(F)) \}  \subset {\Irr}(M_E(F)). \]
 Moreover, the representations in ${\Irr}_C(M_E(F))$ are infinite-dimensional and hence generic. 
  In this subsection, we investigate the Whittaker models supported by the representations in ${\Irr}_C(M_E(F))$. 
  This serves to complete our analysis of the mini-theta correspondence by specifying precisely the irreducible representations $\Theta_{M,C}(\rho)$.
    \vskip 5pt

 We had briefly alluded to the Whittaker support of $\Pi_M$ as an $\GL_2(E)^{\det}$-module in \S \ref{SS:formulas} and \S \ref{SS:Schro}, but let us be more precise here.
 Fix a nontrivial additive character $\psi$ of $F$. Then a generic character  for the unipotent radical of the upper triangular Borel subgroup $B_E$ of  $M_E(F) = \GL_2(E)^{\det}$ is of the form
 \[  u(b) \mapsto  \psi( \mathrm{Tr}_{E/F}(ab))   \quad \text{for some $a \in E^{\times}$.} \]
 We denote this generic character by $\psi_a$. Two such generic characters  $\psi_a$ and $\psi_{a'}$ are equivalent if they are conjugate by the action of the diagonal torus and we call an equivalence class a Whittaker datum for $M_E(F)$. 
 A short computation shows that the set of Whittaker data is parametrized by $E^{\times}/ F^{\times} E^{\times 2}=H^1(F, Z_E)$. Hence we have yet another interpretation of $H^1(F, Z_E)$:
 \[  
 \begin{CD}
 H^1(F, Z_E)  @=   \{ \text{Whittaker datum for $\GL_2(E)^{\det}$} \}    \\
 @|  @|   \\
  G_E^{ad}(F)/ \mathrm{Im}(G(E)) @= \{ \text{rank $1$ $E$-twisted composition algebras}\} \end{CD} \]

    \vskip 5pt
 
 We are interested in computing the twisted Jacquet module
 \[  (\Pi_{M_J})_{U_E, \psi_a}  \quad \text{  as a $H_C(F)$-module,}  \]
   For this purpose, we shall make use of the Schrodinger model of $\Pi_{M_J}$ introduced in \S \ref{SS:Schro} and the results of \S \ref{SS:embedding 1-2}.
To formulate the result, let us recall from Lemma \ref{L:embedding 1-2} the $H_C(F)$-set
\[  X_{a,C}(F) = \{  x \in C:  Q(x) = a^{\#} \, \text{and} \, \beta(x) = ax \} \]
which is in bijection with the set of embeddings $C_a \hookrightarrow C$, where $C_a$ is a rank 1 $E$-twisted composition algebra defined in \S \ref{SS:rank1}. 
Moreover, if $X_{a,C}(F)$ is nonempty, then it is a principal homogeneous space for $H_C^0(F)$, so that the stabilizer in $H_C(F)$ of any point in $X_{a,C}(F)$ has order $2$.  Now we have:
 \vskip 5pt

 \begin{lemma}
 Fix an $E$-twisted composition algebra $C$ of rank $2$, with associated Springer decomposition $J = E \oplus C$. For each $a \in E^{\times}$,  one has
 \[  (\Pi_{M_J})_{U_E, \psi_a}  \ne 0 \Longleftrightarrow  X_{a,C}(F) \ne \emptyset, \]
 in which case
 \[  (\Pi_{M_J})_{U_E, \psi_a} \cong {\Ind}_{\mathrm H_{C, x_a}(F)}^{H_C(F)}  1  \]
 where $x_a \in X_{a,C}(F)$  and $H_{C,x_a}(F) \cong \Z/2\Z$  is the stabilizer of $x_a$ in $H_C(F)$. 
 \end{lemma}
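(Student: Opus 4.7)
The plan is to work entirely in the Schr\"odinger model of $\Pi_{M_J}$ introduced in \S \ref{SS:Schro}, exploiting the short exact sequence of $S_J$-modules
\[  0 \longrightarrow C^{\infty}_c(J_{rk=1}) \longrightarrow \Pi_{M_J} \longrightarrow r_{S_J}(\Pi_{M_J}) \longrightarrow 0, \]
where $N_{S_J}$ is the abelian nilpotent radical of the Siegel parabolic $S_J$, naturally identified with $J$. The first step is to verify that $U_E$ (the upper-triangular unipotent of $M_E = \GL_2(E)^{\det}$) sits inside $N_{S_J}$. Using the Lie-algebra description in \S \ref{ss_parabolic}, the element $u(b)$ corresponds to $e_2\otimes b \in e_2\otimes E \subset e_2\otimes J = \mathfrak n_{S_J}$ under the embedding $E \hookrightarrow J$. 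Consequently $U_E$ acts trivially on the Jacquet quotient $r_{S_J}(\Pi_{M_J})$, so $(r_{S_J}(\Pi_{M_J}))_{U_E,\psi_a}=0$ for the nontrivial character $\psi_a$, reducing the problem to the computation of $(C^{\infty}_c(J_{rk=1}))_{U_E,\psi_a}$.

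The second step is a localization computation in the Schr\"odinger model. From the formula in \S \ref{SS:Schro},
\[  (u(b)\cdot f)(e,v) = \psi\bigl(\mathrm{Tr}_{E/F}(be)\bigr)\, f(e,v)  \quad \text{for $(e,v)\in E+C = J$ of rank $1$.}  \]
Taking the $(U_E,\psi_a)$-coinvariants picks out the fiber where $\mathrm{Tr}_{E/F}(b(e-a))=0$ for all $b\in E$; by nondegeneracy of the trace form on $E$ this forces $e=a$. Hence $(\Pi_{M_J})_{U_E,\psi_a}$ is identified, as an $H_C(F)$-module, with the space of locally constant compactly supported functions on the set $\{(a,v) \in J_{rk=1}\}$. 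By Lemma \ref{L:embedding 1-2}, the map $(a,v)\mapsto v$ is an $H_C(F)$-equivariant bijection of this set onto $X_{a,C}(F)$, so
\[  (\Pi_{M_J})_{U_E,\psi_a} \cong C^{\infty}_c\bigl(X_{a,C}(F)\bigr). \]
In particular, this vanishes precisely when $X_{a,C}(F)=\emptyset$.

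The final step, under the assumption that $X_{a,C}(F)\neq\emptyset$, is to apply the orbit–stabilizer analysis recorded just before Lemma \ref{L:basept}: $X_{a,C}(F)$ is a principal homogeneous space for $H_C^0(F)$, so $H_C(F)$ acts transitively with the stabilizer of any $x_a$ of order $2$. Standard Mackey decomposition for a transitive $G$-set with open stabilizer then gives
\[  C^{\infty}_c\bigl(X_{a,C}(F)\bigr) \;\cong\; \Ind_{H_{C,x_a}(F)}^{H_C(F)} 1, \]
completing the proof. The only genuinely nontrivial step is the first one—the identification $U_E\subset N_{S_J}$—since the remaining manipulations are a routine twisted-Jacquet computation in the Schr\"odinger model combined with the geometric input of Lemma \ref{L:embedding 1-2}.
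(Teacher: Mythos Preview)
Your proof is correct and follows essentially the same approach as the paper's own argument, which also works in the Schr\"odinger model of \S\ref{SS:Schro} and invokes Lemma~\ref{L:embedding 1-2} to identify the rank-one locus with $X_{a,C}(F)$. You have simply spelled out in more detail the step that the Jacquet quotient $r_{S_J}(\Pi_{M_J})$ contributes nothing to the $(U_E,\psi_a)$-coinvariants (via the inclusion $U_E\subset N_{S_J}$, already implicit in the observation in \S7.7 that $S_J\cap M_E$ is the upper-triangular Borel), whereas the paper leaves this implicit.
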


\vskip 5pt

\begin{proof}
From the Schr\"odinger model of $\Pi_{M_J}$ discussed in \S \ref{SS:Schro} and the results of Lemma \ref{L:embedding 1-2}, we see that
\[  (\Pi_{M_J})_{U_E, \psi_a} = C^{\infty}_c(J_{rk=1})_{U_E,\psi_a} = C^{\infty}_c(X_{a,C}(F)), \]
as $H_C(F)$-module. Since $X_{a,C}(F)  = H_C(F) \cdot x_a  \cong H_C(F)/ H_{C,x_a}(F)$, the result follows.
 \end{proof}
\vskip 5pt

Recall the map
\[  f:  H^1(F, Z_E) = E^{\times}/F^{\times} E^{\times 2} \longrightarrow H^1(F, T_{E, K_C})[2]. \]
For each $[C] \in H^1(F, T_{E, K_C})[2]$, we have
\[  f^{-1}([C])  = \{ a \in E^{\times} : X_{a,C}(F) \ne \emptyset \}. \]
Then the above lemma gives the following corollary:
\vskip 5pt

\begin{cor}  \label{C:Whit}
For any $\rho = \rho(\chi)  \in \mathrm{Irr}(H_C(F))$, $\Theta_{M,C}(\rho)_{U_E, \psi_a} = 0$ if  $f(a) \ne [C]$. On the other hand, if $f(a) = [C]$, then we have:
\vskip 5pt

\begin{itemize}
\item If $\chi^2 \ne 1$, $\dim \Theta(\rho(\chi))_{U_E, \psi_a}  =1$.
\vskip 5pt

\item If $\chi^2=1$, so that $\chi$ has two extensions $\tilde{\chi}$ to $H_C(F)$, then  
\[  \dim \Theta_{M,C}(\tilde{\chi})_{U_E,\psi_a}  = \begin{cases} 
1 \text{  if $\tilde{\chi}(g_C(a)) = 1$;} \\
0, \text{  if $\tilde{\chi}(g_C(a)) = -1$.} \end{cases} \]
where $g_C(a)$ is the nontrivial element in $H_{C,x_a}(F)$ for some $x_a \in X_{a,C}(F)$ (see Lemma \ref{L:basept}).
\end{itemize}
\end{cor}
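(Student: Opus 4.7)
The approach is to deduce the corollary directly from the preceding lemma by Frobenius reciprocity. Since $\Theta_{M,C}(\rho) \boxtimes \rho$ is, by definition, the maximal $\rho$-isotypic quotient of $\Pi_{M_J}$ as an $M_E(F) \times H_C(F)$-module, and since the twisted Jacquet functor $(\cdot)_{U_E,\psi_a}$ on the $M_E(F)$-side is exact and commutes with the $H_C(F)$-action (as $U_E$ and $H_C$ commute inside $M_J$), we obtain the identification
\[
\dim \Theta_{M,C}(\rho)_{U_E,\psi_a}
\;=\; \dim \Hom_{H_C(F)}\bigl((\Pi_{M_J})_{U_E,\psi_a},\, \rho\bigr).
\]
Thus the entire problem is reduced to understanding $(\Pi_{M_J})_{U_E,\psi_a}$ as an $H_C(F)$-module, which is precisely the content of the lemma just proved.

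The first assertion is immediate: if $f(a) \neq [C]$, then $X_{a,C}(F) = \emptyset$ by Lemma \ref{L:XaC}, so $(\Pi_{M_J})_{U_E,\psi_a} = 0$ and every Hom-space vanishes. Suppose therefore that $f(a) = [C]$. The lemma identifies $(\Pi_{M_J})_{U_E,\psi_a}$ with the compactly induced module $\Ind_{H_{C,x_a}(F)}^{H_C(F)} 1$, so Frobenius reciprocity rewrites the Hom-space as
\[
\Hom_{H_{C,x_a}(F)}(1,\, \rho|_{H_{C,x_a}(F)}) \;=\; \rho^{H_{C,x_a}(F)}.
\]
The task is thus to compute the invariants of $\rho$ under the order-$2$ subgroup $H_{C,x_a}(F) = \langle g_C(a) \rangle$.

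This last step is elementary character theory. Since $g_C(a) \in H_C(F) \setminus H_C^0(F)$ has order $2$, the subgroup $H_{C,x_a}(F)$ is a complement to $H_C^0(F)$ inside $H_C(F)$. When $\chi^2 \neq 1$, the two-dimensional representation $\rho(\chi) = \Ind_{H_C^0(F)}^{H_C(F)} \chi$ has character vanishing on $H_C(F)\setminus H_C^0(F)$ by the standard formula for induced characters, so $\rho(\chi)|_{H_{C,x_a}(F)}$ is the regular representation of $\Z/2\Z$ and $\dim \rho(\chi)^{H_{C,x_a}(F)} = 1$. When $\chi^2 = 1$, the extension $\tilde\chi$ is itself a character of $H_C(F)$, and the dimension of its invariants under $\langle g_C(a)\rangle$ is $1$ or $0$ according as $\tilde\chi(g_C(a))$ equals $+1$ or $-1$. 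This exhausts all cases of the statement. No real obstacle is expected; the argument is a clean two-step Frobenius reciprocity, and the only notational bookkeeping required is to carry along the identifications from Lemma \ref{L:basept} and Proposition \ref{P:isom-torsor}.
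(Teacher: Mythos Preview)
Your proposal is correct and is precisely the argument the paper has in mind; the paper simply records the corollary as an immediate consequence of the lemma without writing out the Frobenius reciprocity step, and your fill-in is the natural one. The only minor remark is that Proposition \ref{P:isom-torsor} is not actually needed here (it enters later, in \S\ref{SS:bijj}); the definition of $g_C(a)$ from Lemma \ref{L:basept} suffices.
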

\vskip 5pt

\subsection{\bf As $C$ varies}
In this final subsection,  we allow $[C]$ to vary over $H^1(F, T_{E, K_C})[2]$.  Then  by Lemma \ref{L:XaC}, we have a disjoint union
\[  E^{\times}/F^{\times}E^{\times 2}=  \bigsqcup_{[C]}  f^{-1}([C])  \]
where each $f^{-1}([C])$ is nonempty and is a  $T_{E,K_C}(F)/T_{E,K_C}(F)^2$-torsor.
 We deduce:
\vskip 5pt

\begin{cor}  \label{C:Whit-disjoint}
The union
\[  \bigcup_{[C] \in H^1(F, T_{E, K_C})[2]}   {\Irr}_C(M_E(F))   \subset {\Irr}(M_E(F)) \]
 is disjoint, since  the representations in different subsets have different Whittaker support. 
\end{cor}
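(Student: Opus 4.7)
\textbf{Proof plan for Corollary \ref{C:Whit-disjoint}.}

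The plan is to attach to each $\pi \in \mathrm{Irr}_C(M_E(F))$ a \emph{Whittaker support}
\[
\mathrm{WS}(\pi) := \{ a \in E^{\times}/F^{\times} E^{\times 2} : \pi_{U_E,\psi_a} \ne 0\} \subset H^1(F, Z_E),
\]
and to show two things: (i) $\mathrm{WS}(\pi) \subseteq f^{-1}([C])$, and (ii) $\mathrm{WS}(\pi) \ne \emptyset$. Since the fibers $f^{-1}([C])$ are disjoint as $[C]$ ranges over $H^1(F, T_{E,K_C})[2]$ (this is the content of Lemma \ref{L:XaC}), these two facts together force the sets $\mathrm{Irr}_C(M_E(F))$ to be pairwise disjoint in $\mathrm{Irr}(M_E(F))$.

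The first containment is immediate from Corollary \ref{C:Whit}: for any $\rho \ne \epsilon$ in $\mathrm{Irr}(H_C(F))$ and any $a \in E^{\times}$ with $f(a) \ne [C]$, the twisted Jacquet module $\Theta_{M,C}(\rho)_{U_E,\psi_a}$ vanishes. So $\mathrm{WS}(\Theta_{M,C}(\rho)) \subseteq f^{-1}([C])$.

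For the non-vanishing, fix $\rho = \rho(\chi)$ or its extension, and proceed by cases. If $\chi^2 \ne 1$, Corollary \ref{C:Whit} gives $\dim \Theta_{M,C}(\rho(\chi))_{U_E,\psi_a} = 1$ for every $a \in f^{-1}([C])$, so $\mathrm{WS}(\Theta_{M,C}(\rho))$ equals the whole fiber $f^{-1}([C])$. The trivial case $\chi = 1$ (for which only the extension $\rho(1) = 1$ survives theta lifting) is immediate from the same corollary, since $\tilde{\chi}(g_C(a)) = 1$ automatically. The genuine content is the case $\chi^2 = 1$ but $\chi \ne 1$, where each of the two extensions $\tilde{\chi}^{\pm}$ must be shown to have some $a \in f^{-1}([C])$ with $\tilde{\chi}^{\pm}(g_C(a)) = 1$. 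By Proposition \ref{P:isom-torsor}, the map $a \mapsto g_C(a)$ is an isomorphism of $T_{E,K_C}(F)/T_{E,K_C}(F)^2$-torsors between $f^{-1}([C])$ and $(H_C(F) \setminus H_C^0(F))/T_{E,K_C}(F)^2$. Since $\chi$ is nontrivial of order $2$ on $T_{E,K_C}(F)$ and any two elements in $H_C(F) \setminus H_C^0(F)$ differ by an element of $T_{E,K_C}(F)$, the function $g \mapsto \tilde{\chi}^\pm(g)$ on $H_C(F) \setminus H_C^0(F)$ takes both values $\pm 1$ and descends to a nonconstant function on $(H_C(F) \setminus H_C^0(F))/T_{E,K_C}(F)^2$. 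Transporting via the isomorphism, both $\tilde{\chi}^+(g_C(\cdot))$ and $\tilde{\chi}^-(g_C(\cdot))$ are non-constant on $f^{-1}([C])$, yielding at least one $a$ in each case with $\tilde{\chi}^{\pm}(g_C(a)) = 1$. Thus $\mathrm{WS}(\Theta_{M,C}(\tilde{\chi}^{\pm})) \ne \emptyset$.

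The only delicate point, and therefore the main obstacle, is precisely this last step: verifying that both extensions in the case $\chi^2 = 1$, $\chi \ne 1$ actually carry some nontrivial Whittaker model. Everything hinges on Proposition \ref{P:isom-torsor}, which identifies the fiber $f^{-1}([C])$ with the coset space $(H_C(F) \setminus H_C^0(F))/T_{E,K_C}(F)^2$ in a manner compatible with the torsor action, ensuring that no single extension is ``missed.'' Once this is in hand, combining (i) and (ii) with Lemma \ref{L:XaC} concludes the proof.
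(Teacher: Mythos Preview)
Your proposal is correct and follows essentially the same approach as the paper: show that the Whittaker support of any $\pi\in\mathrm{Irr}_C(M_E(F))$ is a nonempty subset of $f^{-1}([C])$, and then invoke the disjointness of the fibers $f^{-1}([C])$ from Lemma~\ref{L:XaC}.

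The only difference is in how non-emptiness is obtained. The paper does not do your case analysis: just before introducing Whittaker models in \S9.4 it observes that the representations in $\mathrm{Irr}_C(M_E(F))$ are infinite-dimensional and hence generic, which immediately gives $\mathrm{WS}(\pi)\ne\emptyset$. Your argument via Corollary~\ref{C:Whit} and Proposition~\ref{P:isom-torsor} (especially the torsor argument in the case $\chi^2=1$, $\chi\ne 1$) is correct and more self-contained, but it is more than what is strictly needed---genericity of infinite-dimensional representations of $\GL_2(E)^{\det}$ (inherited from $\GL_2(E)$) already does the job.
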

\vskip 5pt

We can in fact refine this corollary.
A character $\chi$ of $T_{E,K_C}(F)$ or $\tilde{T}_{E, K_J}(F)$ gives rise to a character $\chi_C$ of each $H_C^0(F)$.
We then consider the $M_E(F)$-module
\[  \Pi_{M_E}[\chi] :=  \bigoplus_{[C] \in H^1(F, T_{E, K_C})[2]}   \Theta_{M,C}(\rho(\chi_C) )  \quad \text{with $\rho(\chi_C) = {\Ind}_{H_C^0(F)}^{H_C(F)} \chi_C$.} \]
 Then we have:
 \vskip 5pt
 
 \begin{cor}  \label{C:Whit-disjoint2}
 For each $a \in E^{\times}$,
 \[  \dim \Pi_{M_E}[\chi]_{U_E,\psi_a} = 1. \]
 In particular,
 \[  \bigoplus_{[C] \in H^1(F, T_{E, K_C})[2]}   \Theta_{M,C}(1)  =   1 \otimes \omega_{L/E}. \]
  \end{cor}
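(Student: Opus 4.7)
The strategy is to reduce the claim about the Whittaker dimension to the explicit formulas recorded in Corollary \ref{C:Whit}, and then to upgrade the ``in particular'' statement by combining this Whittaker computation with the irreducibility result of Proposition \ref{P:theta_M}(1) and the disjointness of Corollary \ref{C:Whit-disjoint}.

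The first step is to fix $a \in E^{\times}$ and analyze the Whittaker space of each summand $\Theta_{M,C}(\rho(\chi_C))_{U_E,\psi_a}$ separately. By Lemma \ref{L:XaC}, there is a unique $[C_0] \in H^1(F, T_{E, K_C})[2]$ with $f(a) = [C_0]$; by Corollary \ref{C:Whit}, the Whittaker space vanishes for every $[C] \ne [C_0]$. For $[C] = [C_0]$, I would distinguish the two cases:  when $\chi_{C_0}^2 \ne 1$, the induced representation $\rho(\chi_{C_0})$ is irreducible and Corollary \ref{C:Whit} directly gives $\dim \Theta_{M,C_0}(\rho(\chi_{C_0}))_{U_E,\psi_a} = 1$; when $\chi_{C_0}^2 = 1$, the induction splits as $\tilde{\chi}^+ \oplus \tilde{\chi}^-$, and the two values $\tilde{\chi}^{\pm}(g_{C_0}(a))$ are $+1$ and $-1$ (since they are extensions of $\chi_{C_0}$ differing by the sign character), so exactly one of $\Theta_{M,C_0}(\tilde{\chi}^{\pm})$ has Whittaker dimension $1$ at $\psi_a$ and the other is zero. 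In either case the total contribution at $[C_0]$ is one, proving the dimension formula.

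For the ``in particular'' assertion, specialize to $\chi = 1$, so that $\chi_C$ is the trivial character of $H_C^0(F)$ for every $[C]$ (and always satisfies $\chi_C^2 = 1$). The two extensions of $\chi_C = 1$ are the trivial character $1$ and the sign character $\epsilon_C$; by Lemma \ref{L:theta_M}(i), $\Theta_{M,C}(\epsilon_C) = 0$, so the only surviving summand is $\Theta_{M,C}(1)$. Thus $\Pi_{M_E}[1] = \bigoplus_{[C]} \Theta_{M,C}(1)$ and by the first step this has $\psi_a$-Whittaker dimension exactly $1$ for every $a$.

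To identify the resulting module with the principal series $1 \times \omega_{L/E}$ restricted to $M_E$, I would proceed as follows: by Proposition \ref{P:theta_M}(1), each $\Theta_{M,C}(1)$ is an irreducible summand of $1 \times \omega_{L/E}|_{M_E}$, and by Corollary \ref{C:Whit-disjoint} the summands for distinct $[C]$ are pairwise non-isomorphic, so $\bigoplus_{[C]} \Theta_{M,C}(1)$ embeds as a genuine subrepresentation of $1 \times \omega_{L/E}|_{M_E}$. Since the principal series $1 \times \omega_{L/E}$ of $\GL_2(E)$ is generic with one-dimensional $\psi_a$-Whittaker model for each $a$, and $U_E$ coincides with the upper triangular unipotent of $\GL_2(E)$, its restriction to $M_E$ has the same Whittaker dimension $1$ at each $\psi_a$. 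Hence the putative quotient of $1 \times \omega_{L/E}|_{M_E}$ by the direct sum must have vanishing Whittaker dimension at every $\psi_a$; since $1 \times \omega_{L/E}|_{M_E}$ is completely reducible and each irreducible constituent is generic, this quotient must be zero, proving equality.

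The main technical obstacle I anticipate is the last ``no missing constituent'' step: it uses in an essential way that every irreducible piece of $1 \times \omega_{L/E}|_{M_E}$ is generic, together with a uniform matching of Whittaker data between $\GL_2(E)$ and $M_E = \GL_2(E)^{\det}$. All other steps are bookkeeping: sorting Whittaker data into the fibers of $f$, and tracking the $\pm 1$ sign of extensions on the element $g_C(a)$ produced by Lemma \ref{L:basept}.
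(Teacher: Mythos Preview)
Your proposal is correct and fills in details that the paper omits. The paper states this corollary without proof, followed only by the remark that $\Pi_{M_E}[\chi]$ is in fact the restriction to $\GL_2(E)^{\det}$ of an irreducible generic representation of $\GL_2(E)$; your argument via Corollary~\ref{C:Whit} (to get dimension $1$ at each $\psi_a$) together with Proposition~\ref{P:theta_M}(1), Corollary~\ref{C:Whit-disjoint}, and the Whittaker-exhaustion step is exactly the kind of justification the paper leaves implicit, and the concern you flag about the ``no missing constituent'' step is handled by the standard fact that every infinite-dimensional constituent of a $\GL_2(E)$ principal series restricted to $\SL_2(E)$ (hence to $\GL_2(E)^{\det}$) is generic.
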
 
 Indeed, one can show in general that $\Pi_{M_E}[\chi]$ is the restriction to $M_E(F) = \GL_2(E)^{\det}$ of an irreducible generic representation of $\GL_2(E)$.
 Together with our knowledge of the Whittaker support of the mini-theta lifts. 
 this has the following nice consequence. If $M_E^{ad}$ denotes the Levi subgroup of the Heisenberg parabolic subgroup in the adjoint quotient $G_E^{ad}$,  recall that
 \[  M^{ad}_E(F)/ \mathrm{Im}(M_E(F) )\cong G^{ad}_E(F) /  \mathrm{Im}(G_E(F) ) =  H^1(F, Z_E) = E^{\times}/F^{\times}E^{\times 2}. \]
 Hence, $H^1(F, Z_E)$ acts naturally on ${\Irr}(M_E(F))$ and also on $H^1(F, T_{E, K_C})[2]$ (via the projection $H^1(F, Z_E) \twoheadrightarrow H^1(F, T_{E,K_C})$).
 For an element $\alpha \in H^1(F, Z_E)$ and a character $\chi$ of $T_{E,K_C}(F)$, we then have
 \[  \Theta_{M,C}(\rho(\chi_C))^{\alpha}  \cong \Theta_{M, C^{\alpha}} (\rho(\chi_{C^{\alpha}})), \]
 where the superscript $\alpha$ denotes the two actions of $\alpha$ on the relevant objects mentioned above.  
 \vskip 10pt
 
\section{\bf Langlands quotients of $D_4$}  \label{S:LQ}

The purpose of this section is to write down some representations of $G_E$ that will appear in the  theta lifting from $H_C = \Aut_E(C)$ in terms of their Langlands data, and to 
give explicit realizations  of these representations in some cases. It thus provides the language needed to express the answer for the theta correspondence treated in the next section.
In fact, in Appendix B below, we consider the decomposition of unramified degenerate principal series representations of $G_E$ and introduce notations for many irreducible representations with nonzero Iwahori-fixed vectors, constructed via Hecke algebra considerations. These representations will also appear in this section and the next one.
\vskip 5pt 

\subsection{\bf Langlands quotient from $P_E$.}
As previously,  let $P_E=M_E N_E$ be the Heisenberg maximal parabolic subgroup. The modular character $\rho_{N_E}$ of $M_E$ is 
\[ 
\rho_{N_E}=|\det|^5. 
\] 
Let $\pi$ be a tempered representation of $M_E$. Using the normalized parabolic induction, 
we induce $\pi \otimes |\det|^s$ from $P_E$ to $G_E$, giving  a standard module if $s >0$. 
Let $J_2(\pi, s)$ be the corresponding Langlands quotient when $s>0$. The representation $J_2(\pi, s)$ is also the unique submodule of the representation obtained by 
inducing $\pi \otimes |\det|^s$ from the opposite parabolic $\bar P_E=M_E\bar N_E$. This point of view is more useful to us.   

\vskip 5pt 

\subsection{\bf Langlands quotient from $Q_E$.}
We shall also need some Langlands quotients attached to the 3-step parabolic subgroup $Q_E=L_EU_E$ corresponding to the middle vertex of the Dynkin diagram. Then 
\[ 
L_E\cong (\GL_2(F) \times E^{\times})^{\det}=\{ (g,e) ~|~ \det(g)=N_{E/F}(e)\}. 
\] 
Let $N_{E/F}$ also denote the character of $L_E$ obtained by projecting $L_E$ to $E^{\times}$ followed by the norm on $E$. The modular character  $\rho_{U_E}$ of $L_E$ is 
\[ 
\rho_{U_E}=|N_{E/F}|^3.
\]  
For   a tempered irreducible representation $\pi$ of $L_E$,
 consider the normalized parabolic induction of  $\pi \otimes |N_{E/F}|^s$ from $Q_E$ to $G_E$.  If $s>0$, this is a standard module and we let 
$J_1(\pi, s)$ be the corresponding Langlands quotient. 

\vskip 5pt

We shall need this parabolically induced representation when $\pi$ is one of the following representations:

\vskip 5pt

\begin{itemize}
\item $\pi =\St_E$ is the Steinberg representation of $L_E$  obtained by projecting $L_E$ to $\GL_2(F)$ and pulling back the Steinberg representation of $\GL_2(F)$. 

\vskip 5pt

\item If $E=F\times K$, then we  define a character of $E^{\times}$ equal to $\chi_K$ on the first factor $F^{\times}$ and trivial on the second factor $K^{\times}$. 
We can pull this character back to $L_E$, and  abusing notation,  denote it by $\chi_K$. Note that $\chi_K$ is of course a nontempered representation of $L_E$.
\end{itemize}
\vskip 5pt 

\subsection{\bf Degenerate principal series}
We shall also need the structure and constituents of various unramfied degenerate principal series representations induced from maximal parabolic subgroups. The  necessary results are provided in Appendix B below. We provide here a roadmap for where the various results are located there:
\vskip 5pt

\begin{itemize}
\item when $E$ is a field, the only maximal parabolic subgroups are $P_E$ and $Q_E$. The degenerate principal series associated to $P_E$ is denoted by 
\[  I(s)  =  {\Ind}_{P_E}^{G_E}  |\det|^s  \quad \text{(normalized induction).} \]
The points of reducibility and the module structure at those points are given in Theorem \ref{T:degen_E}. On the other hand, the degenerate principal series associated to $Q_E$ is denoted by
\[  J(s) = {\Ind}_{Q_E}^{G_E}  |N_{E/F}|^s \quad \text{(normalized induction).} \]
Its reducibility points and module structure is described in Theorem \ref{T:degen_E'}.
\vskip 5pt

\item when $E = F \times K$ where $K$ is a field, there are 3 families of degenerate principal series:  $B(s)$ (associated with the $B_2$-maximal parabolic), $A(s)$ (associated to the $A_2$-maximal parabolic) and $I(s)$ (associated to the Heisenberg parabolic, which is the $A_1 \times A_1$-parabolic). The points of reducibility for these are given in Theorem \ref{T:degen_KB}, Proposition \ref{P:degen_KA} and Proposition \ref{P:degen_KA'} respectively.
\vskip 5pt

\item when $E = F^3$ is split, the degenerate principal series has been studied to some extent in the literature, such as \cite{BJ} and \cite{WeRT}. We only need the results concerning $I(s)$ (associated to Heisenberg parabolic) summarized in Proposition \ref{P:degen}.
\end{itemize}
\vskip 5pt

\subsection{\bf $A_2$-parabolic.}
We shall need an explicit description of the quotients $J_2(\pi,s)$ in certain cases. Assume now that $E=F^3$. When writing  $M_E^{\der}=\SL_2 \times \SL_2 \times \SL_2$, we shall 
assume that the three $\SL_2$ correspond, respectively, to simple roots $\alpha_1$, $\alpha_2$ and $\alpha_3$. Let $\pi(\chi)$ (or $\pi(\chi)^{\pm}$)
 be a representation (or two) of $M_E$ 
corresponding to $\chi=(\chi_1, \chi_2, \chi_3)$, a character of $E^{\times}$, as \S \ref{SS:induced-rep}. In particular,  $\chi_1 \cdot \chi_2 \cdot \chi_3=1$. We shall assume that 
$\chi$ is unitary, so that $\pi(\chi)$ is tempered.  Consider the parabolic subgroup in  standard position corresponding to the $A_2$ diagram, containing the vertex 
corresponding to $\alpha_1$.  The character $\chi$ defines a unitary character $\mu_{\chi}$  (temporary notation) of the Levi subgroup given by
\[ 
\mu_{\chi}(\alpha_2^{\vee}(t))= \chi_3(t) \text{ and } \mu_{\chi}(\alpha_3^{\vee}(t))= \chi_2(t). 
\] 
Let $D(\chi)$ be the unitary representation of $G_E$ obtained by inducing (unitary induction) the character $\mu_{\chi}$. Since $D(\chi)$ is unitary, it is completely reducible. 
We now consider three cases:
\vskip 5pt
\begin{itemize}
\item Suppose that $\chi^2 \ne 1$.  By working out exponents (there are 32 of these), one sees that  $D(\chi)$ has a unique irreducible subrepresentation and hence is irreducible. Using exponents again, one may determine the Langlands parameter of $D(\chi)$. It turns out that 
 \[ 
 D(\chi)\cong J_2(\pi(\chi),1).
 \] 
 \vskip 5pt
 
 \item  Suppose that $\chi^2=1$ but $\chi \ne 1$. Then $D(\chi)$ has two irreducible summands:
 \[  D(\chi) =  J_2(\pi(\chi)^+,1) \oplus J_2(\pi(\chi)^-,1). \]
\vskip 5pt

\item Suppose that $\chi=1$. Then $D(1)$ has two irreducible summands.
 The unique spherical summand is isomorphic to $J_2(\pi(1),1)$.  
The exponents of the non-spherical summand can be determined. Indeed, the
spherical summand of $D(1)$ is also the quotient of $I(1/2)$, and the exponents of this quotient are known by Prop \ref{P:degen}. Then, using the exponents, one can determine the
Langlands parameter of the non-sperhcial summand. It turns out that the non-spherical summand is 
 isomorphic to $J_1(\St_E,1/2)$.  Hence
 \[  D(1) =  J_2(\pi(1),1) \oplus J_1(\St_E,1/2). \] 
\end{itemize}

\vskip 5pt 
\noindent 
\underline{Remark:} Despite the fact that $D(\chi)$ is defined by an arbitrary choice of the $A_2$ parabolic,  the Langlands parameter of $D(\chi)$ is independent of this 
choice. Hence the isomorphism class of $D(\chi)$ is, remarkably, independent of the choice, i.e. the isomorphism class of  $D(\chi)$ is invariant by the triality automorphism.

\vskip 5pt 
We need a similar discussion in the case $E=F\times K$. Let $\chi=(\chi_F , \chi_K)$ be a character of $E^{\times}$ trivial on the diagonally embedded $F^{\times}$. 
Consider the (unique) parabolic subgroup in standard position corresponding to the $A_2$ diagram. 
Now $\chi$ defines a character $\mu_{\chi}$ of  the Levi subgroup given by 
\[ 
\mu_{\chi}(\alpha_2^{\vee}(t))= \chi_K(t) \text{ for all  } t\in K^{\times}. 
\] 
Let $D(\chi)$ be the unitary representation of $G_E$ obtained by parabolically inducing  the character $\mu_{\chi}$  (unitary induction). The structure of $D(\chi)$  is similar to that in the split case discussed above. The only difference is that the non-spherical summand of $D(1)$ is the representation $V'_1$ (introduced in \S \ref{SS:EFK-1d} and \S \ref{SS:EFK-1dram} of Appendix B below) with a one-dimensional space of Iwahori-fixed vectors.  It is a Langlands quotient of a standard module induced from $B_2$-parabolic.
 
\vskip 5pt

 We summarize both cases in the following proposition. 
\vskip 5pt

\begin{prop} \label{P:relevant_representations} 
Assume that $E$ is not a field.  Let $\chi$ be a unitary character of $E^{\times}$ trivial on $F^{\times}$ and consider  the representation $D(\chi)$ induced from a parabolic subgroup 
of type $A_2$  as defined above. Then 
\begin{enumerate} 
\item If $\chi^2\neq 1$, then $D(\chi) \cong J_2(\pi(\chi),1)$. 
\item If $\chi^2=1$ but $\chi\neq 1$, then $D(\chi) \cong J_2(\pi(\chi)^+,1)\oplus J_2(\pi(\chi)^-,1)$. 
\item If $E=F^3$, then $D(1)\cong J_2(\pi(1), 1) \oplus J_1(\St_E,1/2)$. 
\item If $E=F\times K$, then $D(1)\cong J_2(\pi(1), 1) \oplus V'_1$ (where $V_1'$ is introduced in \S \ref{SS:EFK-1d} and \S \ref{SS:EFK-1dram}).
\end{enumerate} 
\end{prop}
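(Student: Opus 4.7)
The plan is to compute the Jacquet module of $D(\chi)$ along the Heisenberg parabolic $P_E = M_E N_E$ (and when useful along $Q_E$ as well), read off the exponents, and match them against the Langlands data of the quotients $J_2(\pi(\chi),1)$, $J_1(\St_E,1/2)$ and $V_1'$. Since $D(\chi)$ is unitarily induced from a unitary character $\mu_\chi$ of an $A_2$-Levi, it is a unitary representation and hence completely reducible; thus the problem reduces to identifying its irreducible constituents.

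First I would compute $r_{P_E}(D(\chi))$ by the usual double-coset/geometric-lemma recipe: the $(P_E, A_2\text{-parabolic})$-double cosets in $W$ produce a filtration whose successive quotients are (Jacquet modules of) parabolically induced representations of $M_E \cong \GL_2(E)^{\det}$ twisted by appropriate Weyl translates of $\mu_\chi$. In the split case $E = F^3$ there are $32$ exponents, and when $\chi^2 \ne 1$ one checks that exactly one of them lies strictly in the positive Weyl chamber along the $P_E$-direction, namely the exponent $|\det|^1 \otimes \pi(\chi)$; this shows $D(\chi)$ has a unique Langlands quotient equal to $J_2(\pi(\chi),1)$, and by unitarity $D(\chi)$ is irreducible, proving (1). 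The case $E = F \times K$ is handled identically using the appropriate Levi descent from $\GL_2(E)^{\det}$.

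For (2), when $\chi^2 = 1$ but $\chi \ne 1$, the representation $\pi(\chi)$ splits as $\pi(\chi)^+ \oplus \pi(\chi)^-$ on $M_E$ (cf.\ Proposition \ref{P:restME}), so the leading exponent decomposes accordingly and $D(\chi)$ contains each of $J_2(\pi(\chi)^\pm, 1)$ as a summand. Complete reducibility together with a dimension count of the Jacquet module along $P_E$ forces equality. This is one place where the key check is that no further summand appears; this is ensured because the remaining exponents of $D(\chi)$ are Weyl conjugates and do not yield new Langlands quotients.

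The subtlest case is (3)--(4), i.e.\ $\chi = 1$. Here the spherical Langlands quotient $J_2(\pi(1),1)$ is pinned down because it is also the spherical constituent of the degenerate principal series $I(1/2)$, whose exponents are recorded in Proposition \ref{P:degen} (and in the $F \times K$ case in the relevant statement of Appendix B). I would subtract off these spherical exponents from the full exponent list of $D(1)$; the leftover exponent(s) determine the Langlands data of the non-spherical summand. In the split case the leftover data corresponds precisely to a standard module $\Ind_{Q_E}^{G_E} \St_E \otimes |N_{E/F}|^{1/2}$, whose Langlands quotient is $J_1(\St_E, 1/2)$, giving (3). In the $F \times K$ case the leftover data is that of a standard module induced from the $B_2$-parabolic whose Langlands quotient is the Iwahori-spherical representation $V_1'$ introduced in \S\ref{SS:EFK-1d} and \S\ref{SS:EFK-1dram} of Appendix B, giving (4). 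The main obstacle is thus the bookkeeping of exponents at $\chi = 1$: one has to verify carefully that exactly two irreducible constituents appear and that the non-spherical one has the asserted Langlands datum, which is done by cross-referencing the Hecke-algebra description of $I(1/2)$ (resp.\ the $B_2$-degenerate principal series) in Appendix B.
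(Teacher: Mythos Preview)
Your proposal is correct and follows essentially the same approach as the paper: use unitarity of $D(\chi)$ to get complete reducibility, compute the exponents (the paper also notes there are $32$ in the split case), and identify the Langlands data; for $\chi=1$ both you and the paper subtract off the exponents of the spherical quotient of $I(1/2)$ (known from Appendix~B) to pin down the non-spherical summand as $J_1(\St_E,1/2)$ or $V_1'$ respectively. The paper's argument is in fact the discussion immediately preceding the proposition, and your sketch matches it closely.
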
 

\vskip 5pt

As we see from the above proposition, we shall need to refer to representations of $G_E(F)$ which are constructed in Appendix B below, where we study the decomposition of unramified degenerate principal series representations of $G_E$. Some of these representations will appear in the theta lifting from $H_C$ which we shall consider next.

\vskip 10pt

 \section{\bf Theta correspondence for $E_6$}   \label{S:main-theta}
 In this section, we will study the theta correspondence for $H_C \times  G_E \subset G_J = \Aut(J)$, where $J = E \oplus C$ is a  Freudenthal Jordan algebra of dimension 9. 
 The main goal is the following theorem, whose proof will occupy the rest of this section.  
 
 \begin{thm}  \label{T:111}
 For every unitary irreducible representation $\rho$ of $H_C(F)$, $\Theta(\rho)$ is non-zero and irreducible.  If $\Theta(\rho)\cong \Theta(\rho')$, for 
 two irreducible representations $\rho$ and $\rho'$  of $H_C(F)$,  then $\rho\cong\rho'$.    More precisely:
\vskip 5pt
\begin{enumerate} 
\item If $J=D$ (a cubic division algebra), so that $E$ is a field, then $\Theta(1)=V'_1=J_1(\St_E, 1/2)$ (see \S \ref{SS:E-1d} and \S \ref{SS:E-1dram} for the definition of $V_1'$, as well as Theorem \ref{T:degen_E}) and $\Theta(\rho)$ is supercuspidal for all $\rho\neq 1$. 
\vskip 5pt
\item If $J\neq D$ and $\rho\neq \epsilon$, then $\Theta(\rho)=J_2(\Theta_M(\rho),1/2)$. 
\vskip 5pt
\item If  $J\neq D$ and $H_C^0$ is anisotropic, then $\Theta(\epsilon)$ is supercuspidal. Otherwise: 
\vskip 5pt
\begin{itemize} \item If $E=F^3$ and $J=M_3(F)$, then $\Theta(\epsilon)=J_1(\St_E, 1/2) $. 
\item If $E=F\times K$ and $J=M_3(F)$, then $\Theta(\epsilon)=V'_1$ (see \S \ref{SS:EFK-1d} and \S \ref{SS:EFK-1dram} for the definition of $V'_1$).
\item If $E=F\times K$ and $J=J_3(K)$, then $\Theta(\epsilon)=J_1(\St_E\otimes \chi_K, 1/2)$.
\end{itemize} 
\end{enumerate} 

\end{thm}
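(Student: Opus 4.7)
The strategy is to compute the Jacquet module of $\Theta(\rho)$ along the opposite Heisenberg parabolic $\bar P_E$ of $G_E$ via the two-step filtration
$$0 \to C_0^\infty(\Omega^\perp) \to \Pi_{\bar N_E} \to \Pi_{\bar N_J} \to 0$$
from \S\ref{S:jac-E6}, with $\Pi_{\bar N_J} = \omega_{K/F}|-|^{-2} \oplus |-|^{-3/2}\Pi_{M_J}$, and then match the result against the Langlands classification. Since $\Theta(\rho)_{\bar N_E}$ equals the maximal $\rho$-isotypic quotient of $\Pi_{\bar N_E}$ viewed as an $M_E \times H_C$-module, the mini-theta result of Proposition \ref{P:theta_M} pins down the contribution of $\Pi_{M_J}$ on the $M_E$-side.

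For $\rho \neq \epsilon$ and $J \neq D$, a case-by-case inspection through the three cases of Proposition \ref{P:vanishing} shows that the $H_C$-action on $\Omega^\perp$ factors through the sign quotient $H_C \to S_2$, so $(C_0^\infty(\Omega^\perp))_\rho = 0$. Combined with Proposition \ref{P:theta_M}, this yields $\Theta(\rho)_{\bar N_E} = |-|^{-3/2}\Theta_M(\rho)$, whence $\Theta(\rho)$ embeds into the opposite standard module inducing $\Theta_M(\rho)|\det|^{1/2}$; its unique irreducible submodule is exactly $J_2(\Theta_M(\rho), 1/2)$, proving part~(2). Injectivity $\rho \mapsto \Theta(\rho)$ then follows from the corresponding injectivity in Proposition \ref{P:theta_M}. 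When $\rho = \epsilon$ and $J \neq D$, since $\Theta_M(\epsilon) = 0$ the entire contribution comes from $(C_0^\infty(\Omega^\perp))_\epsilon$, which in each of the three cases of Proposition \ref{P:vanishing} we compute as a specific degenerate $M_E$-module, matching (via Proposition \ref{P:relevant_representations} and Appendix B) the Jacquet data of the three advertised Langlands quotients $J_1(\St_E, 1/2)$, $V_1'$, and $J_1(\St_E \otimes \chi_K, 1/2)$. Finally, when $J = D$ we have $\Omega^\perp = \emptyset$ and $\Pi_{M_J} = 0$, so $\Pi_{\bar N_E} = \omega_{K/F}|-|^{-2}$; its $\rho$-coinvariants vanish for $\rho \neq 1$ and equal the one-dimensional character for $\rho = 1$, forcing $\Theta(1) = V_1' = J_1(\St_E, 1/2)$ as the unique subrepresentation of the relevant degenerate principal series via Theorem \ref{T:degen_E}.

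The principal obstacles are twofold. The \emph{irreducibility} of $\Theta(\rho)$, as opposed to mere identification of its unique irreducible submodule, requires ruling out extraneous supercuspidal summands in the big theta lift; I expect to accomplish this by a Howe-duality argument combined with the total Jacquet bookkeeping on the $H_C$-side (Corollary~\ref{C:Whit-disjoint} ensures that distinct $C$'s and distinct $\rho$'s contribute to disjoint Whittaker spectra, which rigidifies the possible constituents). The \emph{supercuspidality} assertions in parts~(1) and~(3) also demand a parallel vanishing of $\Theta(\rho)_{\bar U_E}$ along the opposite 3-step parabolic; this is the main technical burden, handled by an analogous analysis of $\Pi_{\bar U_E}$ via the parabolic $Q_J \supset Q_E$ introduced in \S\ref{SS:3step}, whose three-step unipotent filtration makes the combinatorics noticeably more intricate than the $\bar N_E$ case but is controlled by the same structural results on $\Omega^\perp$ and rank-one elements in $J$.
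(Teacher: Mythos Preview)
Your claim that the $H_C$-action on $\Omega^\perp$ factors through the sign quotient $H_C \to S_2$ is incorrect, and this undermines the core of your Jacquet-module computation. In each of the three cases of Proposition~\ref{P:vanishing} (precisely the cases where $H_C^0$ is \emph{isotropic}), the torus $H_C^0$ acts on the planes $X_i$, $Y_j$, $Z_i$ comprising $\Omega^\perp$ by nontrivial characters. For instance, in Case~1 ($E=F^3$, $J=M_3(F)$) one has $H_C^0 \cong (F^\times)^3/\Delta F^\times$ acting on $C$ by conjugation by diagonal matrices, so on each plane $X_i$ it acts through a nontrivial root character. Consequently $(C_0^\infty(\Omega^\perp))_\rho$ is typically nonzero for $\rho \neq 1,\epsilon$, and your identification $\Theta(\rho)_{\bar N_E} = |{-}|^{-3/2}\Theta_M(\rho)$ fails. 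The filtration only gives a \emph{surjection} $\Theta(\rho)_{\bar N_E} \twoheadrightarrow |{-}|^{-3/2}\Theta_M(\rho)$, hence $J_2(\Theta_M(\rho),1/2)$ as a \emph{subquotient} of $\Theta(\rho)$---which is what the paper establishes---but not irreducibility; likewise your injectivity argument via Proposition~\ref{P:theta_M} collapses.

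The paper's route to irreducibility is entirely different from your Howe-duality/Whittaker sketch and splits into two regimes. When $H_C^0$ is \emph{isotropic}, one chooses a nontrivial cocharacter $\lambda: F^\times \to H_C^0$; its centralizer in $G_J$ is a Levi of type $D_5$, and the known restriction of the minimal representation there (from \cite{MS}) computes $\Theta(\chi)$ directly as the unitary degenerate principal series $D(\chi)$ of Proposition~\ref{P:relevant_representations}, whose structure yields both irreducibility and the identification of $\Theta(\epsilon)$. When $H_C^0$ is \emph{anisotropic} (so $\Omega^\perp=\emptyset$ and your Jacquet computation would actually be valid), irreducibility is proved by a wave-front set argument: one eliminates the regular, subregular and $2A_1$-Richardson orbits as leading terms using Lemma~\ref{L:subegular} and the finite-dimensionality results of Proposition~\ref{P:twisted_II} and Lemma~\ref{L:N_spectrum}, then excludes orbits below $A_2$ via the Fourier--Jacobi functor; irreducibility follows because $\Theta(\rho)_{\bar N_E,\psi_\Sigma}$ is an irreducible $\Stab_{M_E}(\Sigma)$-module for the single relevant nondegenerate orbit (Proposition~\ref{P:twisted}). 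Nonvanishing and injectivity come from this same \emph{twisted} Jacquet computation (Proposition~\ref{P:one}), not from the untwisted one. Cuspidality is deduced not by a direct $\bar U_E$-filtration analysis but from a criterion (the proposition in \S\ref{SS:local-cuspidal}) reducing it to the vanishing of $\Theta(\rho)_{\bar N_E}$ together with the degenerate $(\bar N_E,\psi_\Sigma)$-coinvariants already controlled by Lemma~\ref{L:N_spectrum}.
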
 

\vskip 5pt

\subsection{\bf $E$-twisted cubes.}
Recall from \S \ref{SS:relation} that if $P_E = M_E N_E$ is the Heisenberg parabolic subgroup of $G_E$, then the representation of $M_E = \GL_2(E)^{\det}$ on 
\[  N_E /Z_E = F \oplus E \oplus E \oplus F \]
is the space of $E$-twisted Bhargava cubes. As we summarized in Proposition \ref{P:orbits},
 the $M_E$-orbits of nondegenerate cubes are parametrized by $E$-isomorphism classes of $E$-twisted composition algebra of dimension $2$ over $E$.   
Indeed, for any nondegenerate  cube $\Sigma$,  one attaches a twisted composition algebra structure $(Q_{\Sigma}, \beta_{\Sigma})$ on $C_{\Sigma} = E^2$, so that there is a natural isomorphism

\begin{equation} \label{E:stabilizer}
  \mathrm{Stab}_{M_E}(\Sigma) \cong  \Aut_E(C_{\Sigma}) \quad \text{given by $g \mapsto {^t}g^{-1}$.} 
  \end{equation}
 
\vskip 5pt
 
If we fix a nontrivial additive character $\psi$ of $F$, then the natural pairing between $N_E / Z_E$ and $\bar{N_E}/ \bar{Z_E}$ allows us to identify the unitary characters of $\bar{N}_E$ with elements of $N_E/Z_E$. In particular,   an $E$-twisted cube $\Sigma$ determines  a corresponding character $\psi_{\Sigma}$  of $\bar N_E$.

\subsection{\bf Twisted Jacquet module}
Let $\Pi = \Pi_J$ be the minimal representation of $G_J$.  
We have computed the Jacquet module $\Pi_{\bar N_E}$ in \S \ref{S:jac-E6}. In this subsection, we
determine  the twisted Jacquet module $\Pi_{\bar N_E, \psi_{\Sigma}}$ for the character $\psi_{\Sigma}$ of $\bar N_E$ attached to a nondegenerate $E$-twisted cube $\Sigma$.  Note that $\Pi_{\bar N_E, \psi_{\Sigma}}$ is naturally a representation of $\mathrm{Stab}_{M_E}(\psi_{\Sigma}) \times \Aut_E(C)$, and thus of $\Aut_E(C) \times \Aut_E(C)$ in view of (\ref{E:stabilizer}).
\vskip 5pt

 In \S \ref{SS:minimal-restrict}, we have seen that 
\[   C^{\infty}_c(\Omega)  \subset \Pi_{\bar Z_E} \subset  C^{\infty}(\Omega) \]
where $\Omega$ is the minimal $M_J$-orbit on $N_J/Z_E$, which can be identified with a set of unitary characters of $\bar N_J$. It follows from  the description of $\Pi_{\bar Z_E}$  given in (\ref{SS:minimal-restrict})  that
  \[ 
  \Pi_{\bar N_E, \psi_{\Sigma}}  \cong C_c^{\infty}(\Omega_{\Sigma})  
 \] 
where $\Omega_{\Sigma}$ is the set of elements $\omega\in \Omega$ such that  $\psi_{\omega}$ restricted to $N_E$ is $\psi_{\Sigma}$.   Based on our description of $\Omega$ in \S \ref{SS:minimal orbit}, the following proposition determines the set $ \Omega_{\Sigma}$ concretely.
\vskip 5pt

\begin{prop} \label{P:twisted} 
Let $J=E\oplus C$ be a Freudenthal Jordan algebra of dimension 9. Let $\Sigma$ be a nondegenerate $E$-twisted cube. Then $\Pi_{\bar N_E, \psi_{\Sigma}}=0$ unless $\Sigma$ belongs to the $M_E$-orbit corresponding to $C$ (i.e. $C_{\Sigma} \cong C$). If $C_{\Sigma} \cong C$, then
 \[ \Pi_{\bar N_E, \psi_{\Sigma}} \cong  \mu_K \otimes C^{\infty}_c({\mathrm Isom}(C_{\Sigma}, C)) \]
 where
 \begin{itemize}
 \item[-]  $\mu_K$ is  the restriction of $\chi_J$ to $\mathrm{Stab}_{M_E}(\Sigma)$;  in particular, $\mu_K$ is either trivial or the sign character of $\mathrm{Stab}_{M_E}(\Sigma) \cong H_C(F)$ depending on whether $\omega_{K/F}(-1) = +1$ or $-1$;
 \vskip 5pt
 \item[-] the action of $\mathrm{Stab}_{M_E}(\Sigma) \times \Aut_E(C)$  on $C^{\infty}_c(\mathrm{Isom}(C_{\Sigma}, C))$ is the regular representation (via (\ref{E:stabilizer})). 
 \end{itemize}
\end{prop}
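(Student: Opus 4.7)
My plan is to run the $(\bar N_E, \psi_\Sigma)$-twisted Jacquet functor through the filtration of $\Pi$ recalled in \S\ref{SS:minimal-restrict}, translate the resulting geometric description into the twisted composition algebra language via Corollary \ref{C:phs}, and then identify the twisting character $\mu_K$ by tracking $\chi_J$.

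First, apply the $(\bar N_E, \psi_\Sigma)$-twisted Jacquet functor to the short exact sequence
$$0 \longrightarrow C_0^\infty(\Omega) \longrightarrow \Pi_{\bar Z} \longrightarrow \Pi_{\bar N_J} \longrightarrow 0.$$
Since $\bar N_E \subset \bar N_J$ acts trivially on $\Pi_{\bar N_J}$ while $\psi_\Sigma$ is nontrivial on $\bar N_E/\bar Z$ (as $\Sigma$ is nondegenerate), the contribution of $\Pi_{\bar N_J}$ vanishes. Using the formula $(\bar n \cdot f)(\omega) = \psi(\langle \bar n, \omega\rangle) f(\omega)$ from \S\ref{SS:minimal-restrict}, the remaining coinvariant is
$$\Pi_{\bar N_E,\psi_\Sigma} \;\cong\; C_0^\infty(\Omega_\Sigma), \qquad \Omega_\Sigma = \{\omega \in \Omega: \langle \bar n, \omega\rangle = \psi_\Sigma(\bar n) \text{ for all } \bar n \in \bar N_E/\bar Z\}.$$

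Next, I would identify $\Omega_\Sigma$ through the Springer decomposition $J = E \oplus C$. Since $\bar N_E/\bar Z$ embeds into $\bar N_J/\bar Z = F \oplus J \oplus J \oplus F$ via $E \hookrightarrow J$, the pairing constraints fix the $F$-components and the $E$-parts of $\omega$, leaving the $C$-components $(v, w) \in C \times C$ as free parameters. After reducing to a reduced cube $\Sigma = (1, 0, f, b)$ via Proposition \ref{P:reduced} and imposing the minimal-orbit relations of Proposition \ref{P:orbit}, application of the sharp formula \eqref{E:sharp} in $E \oplus C$ collapses the system to the single condition $Q(v) = -f$, $N_C(v) = -b$ (with $w$ then determined by $v$), which is exactly the defining condition of $\Omega_{C, f, b}$ in Corollary \ref{C:phs}. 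That corollary supplies an $\Aut_E(C_\Sigma)\times \Aut_E(C)$-equivariant bijection
$$\mathrm{Isom}_E(C_\Sigma, C) \;\longrightarrow\; \Omega_{C,f,b}, \qquad \phi \mapsto \phi(1,0),$$
which is nonempty precisely when $C_\Sigma \cong C$. Via \eqref{E:stabilizer}, the $\mathrm{Stab}_{M_E}(\Sigma)$-action coming from $M_E$ becomes pre-composition by $\Aut_E(C_\Sigma)$, while $\Aut_E(C) \subset \Aut(J) \subset G_J$ acts by post-composition, so $C_0^\infty(\Omega_\Sigma)$ becomes the regular representation on $C_0^\infty(\mathrm{Isom}_E(C_\Sigma, C))$.

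Finally, I would track the twist $\chi_J = \omega_{K/F} \cdot |\cdot|^{s_J}$ built into the action of $\mathrm{Stab}_{M_E}(\Sigma) \subset M_E = \GL_2(E)^{\det}$ on $C_0^\infty(\Omega)$. The character $\chi: M_J \to \mathbb G_m$ restricts to $\det_E$ on $M_E$, which equals $N_{L/E}(x) = 1$ on the identity component $H_C^0 = T_{E,K_C}$ (acting by multiplication by norm-$1$ elements of $L^\times$) and takes value $-1$ on the Galois-involution component. Thus $|\det_E(-)| = 1$ throughout, the $|\cdot|^{s_J}$ factor drops out, and $\mu_K = \omega_{K/F} \circ \det_E$ is trivial on $H_C^0$ and equal to $\omega_{K/F}(-1)$ elsewhere, as claimed. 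The main obstacle is the verification in the second step that the minimal-orbit conditions defining $\Omega_\Sigma$ match precisely the data $(Q_\Sigma, \beta_\Sigma)$ from Proposition \ref{P:reduced}: one must pin down the Killing pairing between $\bar N_E/\bar Z$ and $N_J/Z$ compatibly with the Chevalley-Steinberg pinning and track signs through \eqref{E:sharp}, so that the constraints on $(v,w)$ align exactly with the normalizations of Corollary \ref{C:phs}. The rest is formal.
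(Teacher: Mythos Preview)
Your proposal is correct and follows essentially the same route as the paper: reduce to a reduced cube $\Sigma=(1,0,f,b)$, use the minimal-orbit description of $\Omega$ together with the Springer decomposition to identify $\Omega_\Sigma$ with $\{v\in C: Q(v)=-f,\ N_C(v)=-b\}$, and then invoke Corollary~\ref{C:phs} to recognize this as $\mathrm{Isom}_E(C_\Sigma,C)$, with the $\chi_J$-twist giving $\mu_K$. The only point the paper makes more explicit is the projection $N_J/Z_J\to N_E/Z_E$, $(a,x,y,d)\mapsto (a,-e_x,e_y,-d)$, which pins down the signs you flagged as the main obstacle.
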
 
\begin{proof} 
Since every nondegenerate $M_E$-orbit contains reduced cubes, we may assume
without loss of generality  that $\Sigma$ is reduced, i.e. 
\[ \Sigma=(1,0,f,b), \]
The associated twisted composition algebra $C_{\Sigma}$ is then described in Proposition \ref{P:reduced}. 
\vskip 5pt

 Now the projection map
 \[   N_J/Z_J  = F \oplus J \oplus  J \oplus F \longrightarrow N_E/ Z_E  = F \oplus E \oplus E \oplus F \]
 induced by the restriction of characters is given by
 \[  (a, x,y,d) \mapsto (a  , - e_x, e_y, -d) \]
 where we have writtem
 \[  x = (e_x, c_x) \quad \text{and} \quad y= ( e_y, c_y) \in E \oplus C = J. \]
Hence,   if  $\omega=(a,x,y,d)\in \Omega_{\Sigma}$, so that $\psi_{\omega}$ restricts to $\psi_{\Sigma}$, then $a=1$, so that 
\[ \omega=(1,x,x^{\#}, N_J(x))  \quad \text{(by  Proposition \ref{P:orbit}). } \]
 Writing $x=(e, v) \in E \oplus C = J$ and noting that $(0,v)^{\#}=(-Q(v), \beta(v))$, we then deduce that 
  \[ e=0  \quad \text{and} \quad  Q(v)=-f. \]
  Finally, since $N_J(x)=  N_C(v)$, we also have
 \[  N_C(v)=- b. \]
 Hence, we have a natural ${\mathrm Stab}_{M_E}(\Sigma) \times \Aut_E(C)$-equivariant  identification
 \[  \Omega_{\Sigma} = \{ (v, \beta(v))  \in C^2 : Q(v) = -f  \text{ and }  N_C(v)  = -b \} \subset C^2 = E^2 \otimes_E C, \]
 where the action of $\Aut_E(C)$ is componentwise, whereas that of  $\mathrm{Stab}_{M_E}(\Sigma) \subset \GL_2(E)^{\det}$ is via the standard representation on $E^2$. 
 Thus, the $\mathrm{Stab}_{M_E}(\Sigma) \times \Aut_E(C)$-set $\Omega_{\Sigma}$ is nothing but the $\mathrm{Stab}_{M_E}(\Sigma) \times \Aut_E(C)$-set $\Omega_{C, f, b}$ studied in  Corollary \ref{C:phs} and Lemma \ref{L:phs}.  We thus deduce that $\Omega_{\Sigma}=\emptyset$, unless $C$ is isomorphic to $C_{\Sigma}$, in which case $\Omega_{\Sigma}$ is  identified with $\mathrm{Isom}(C_{\Sigma}, C)$ and  $\Pi_{\bar N_E, \psi_{\Sigma}} = C^{\infty}_c(\mathrm{Isom}(C_{\Sigma}, C))$ is the regular representation of $\mathrm{Stab}_{M_E}(\Sigma) \times \Aut_E(C)$ twisted by the quadratic character $\mu_K$. 
\end{proof}
\vskip 5pt 

If we fix a base point $\phi_0 \in \mathrm{Isom}(C_{\Sigma}, C)$, we get an isomorphism $\mathrm{Stab}_{M_E}(\Sigma)  \cong \Aut_E(C)$ and with respect to this, $\Pi_{\bar N_E, \psi_{\Sigma}}$ is the regular representation of $\Aut_E(C) \times \Aut_E(C)$. We assume that this isomorphism has been fixed henceforth. We remark also that the quadratic character 
$\mu_K$ is trivial when $K$ is not a field.  In any case, this extra twist will be quite innocuous for our purpose.

\vskip 5pt 

For later use, we shall now compute the twisted co-invariants for some degenerate cubes in the case when $\Aut_E(C)$ is anisotropic. Consider 
\[  \Sigma=(1,0, f,0) \quad \text{ with  $f^{\#}=0$.} \] 
We have:
\begin{itemize}
\item If $f=0$,  this cube belongs to the minimal  $G_E$-orbit ($A_1$). 
\item If $f\neq 0$ and $f^{\#}=0$, then $E$ is not a field. We consider the two cases:
\begin{itemize}
\item If $E=F+ K$ with  $K$  a field, then $f=(a, 0)$  and $\Sigma$ belongs to a $G_E$-orbit denoted by $2A_1$. 
\item If $E=F^3$ then  $f=(a,0,0)$, $(0,a,0)$ or $(0,0,a)$, reflecting the fact that $G_E$ has three orbits of type $2A_1$ over the algebraic closure, permuted by 
the outer automorphism group $S_3$. 
\end{itemize}
 The rational orbits of these types are parameterized by classes of squares, and $\Sigma$ belongs to the class of $a$. 
\end{itemize}
\vskip 5pt

\begin{prop} \label{P:twisted_II} 
Let $J=E\oplus C$ be a Freudenthal Jordan algebra of dimension 9. Assume that $\Aut_E(C)$ is anisotropic. Let $\Sigma=(1,0,f,0)$ be an $E$-twisted cube such that $f^{\#}=0$.  
Then 

\vskip 5pt

(i) $\Pi_{\bar N_E, \psi_{\Sigma}} \cong C_c^{\infty}(\Omega_{\Sigma})$, with
\[  \Omega_{\Sigma}= \{ v\in C ~|~ Q(v)=-f \text{ and } b_Q(v, \beta(v))=0\}. \]

\vskip 5pt

(ii) If $f=0$,  then $\Omega_{\Sigma}=\{0\}$.

\vskip 5pt

(iii) If $f\neq 0$,  then $\Omega_{\Sigma}$  is compact (possibly empty) 
and $\Aut_E(C)^{0}$ acts transitively on it.  
\end{prop}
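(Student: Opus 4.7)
The plan is to adapt the proof of Proposition \ref{P:twisted} for part (i), invoke Proposition \ref{P:vanishing} for (ii), and analyze the action of a norm-one torus in part (iii).

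For (i), since $\Sigma=(1,0,f,0)$ has $a=1\neq 0$, the character $\psi_\Sigma$ restricts nontrivially to $\bar{Z}=\bar{Z}_E$. In the exact sequence
\[
0 \to C_0^\infty(\Omega) \to \Pi_{\bar{Z}} \to \Pi_{\bar{N}_J} \to 0
\]
from \S\ref{SS:minimal-restrict}, the further $(\bar{N}_E,\psi_\Sigma)$-coinvariants annihilate $\Pi_{\bar{N}_J}$, which is trivial on $\bar{Z}$, giving $\Pi_{\bar{N}_E,\psi_\Sigma}\cong C_0^\infty(\Omega_\Sigma)$. Using the parametrization $\omega=(1,x,x^\#,N_J(x))$ of $\Omega$ from Proposition \ref{P:orbit}, the formula (\ref{E:sharp}) for the adjoint of $(e,v)\in E\oplus C$, and the norm formula from \S\ref{SS:springer}, one reads off that $\omega$ restricts to $\psi_\Sigma$ on $\bar{N}_E$ exactly when $x=(0,v)$ with $Q(v)=-f$ and $N_C(v)=0$.

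For (ii), the conditions $Q(v)=0$ and $N_C(v)=0$ force $v=0$ by Proposition \ref{P:vanishing}, except in three ``split'' configurations of $(E,K_C,J)$; in each of these, $T_{E,K_C}$ is either fully split (case 1) or contains a nontrivial split subtorus (cases 2 and 3), contradicting the anisotropy hypothesis.

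For (iii), compactness will follow from transitivity together with the anisotropy of $T_{E,K_C}$ (which makes $T_{E,K_C}(F)$ compact). For transitivity, realize $C=L=E\otimes_F K_C$ as in \S\ref{SS:rank2}, so that $Q(v)=e\cdot N_{L/E}(v)$ and $T_{E,K_C}\subset L^\times$ acts by multiplication. Since $f\neq 0$ and $f^\#=0$, the element $f$ must be supported in a single coordinate $E_{i_0}$ of $E=\prod_i E_i$, and the anisotropy assumption together with Proposition \ref{P:vanishing} forces each $L_i=E_i\otimes K_C$ to be a field. The equation $Q(v)=-f$ then confines $v$ to $L_{i_0}$ with $N_{L_{i_0}/E_{i_0}}(v_{i_0})=-f_{i_0}/e_{i_0}$, and the condition $N_C(v)=0$ holds automatically. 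Given two such $v,v'$, the ratio $v'_{i_0}/v_{i_0}$ lies in $L_{i_0}^1$, and one extends it to a tuple $(t_i)\in T_{E,K_C}(F)$ by choosing the remaining $t_i\in L_i^1$ compatibly with the single constraint $\prod_i N_{L_i/K_C}(t_i)=1$.

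The main obstacle will be this last compatibility step, i.e.\ surjectivity of the projection $T_{E,K_C}(F)\to L_{i_0}^1$. For $E=F^3$ it is immediate (set $t_j=1$ for all but one $j\neq i_0$), but for $E=F\times K$ with $L_2=K\otimes K_C$ a biquadratic field extension of $F$, one must invoke Hilbert 90 in the Klein four Galois group to show that $N_{L_2/K_C}\colon L_2^1\to K_C^1$ has the required image.
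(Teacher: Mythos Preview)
Your approach closely tracks the paper's own proof. Parts (i) and (ii) are handled exactly as the paper does: (i) by repeating the computation from the proof of Proposition~\ref{P:twisted} for the degenerate reduced cube, and (ii) by invoking Proposition~\ref{P:vanishing} together with the observation that the three exceptional configurations listed there all give an isotropic $\Aut_E(C)^0$. For (iii), the paper likewise proceeds by explicit case analysis but only writes out the case $E=F^3$, leaving $E=F\times K$ to the reader.

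Your attempt to carry out the $E=F\times K$ case is where there is a genuine gap. You correctly reduce the question to the surjectivity of $N_{L_2/K_C}\colon L_2^1\to K_C^1$, where $L_2=K\otimes_F K_C$ is a biquadratic field. However, your proposed ``Hilbert 90 in the Klein four group'' does not establish this: the map has cokernel of order $2$ in general. Writing $\Gal(L_2/F)=\langle\sigma,\tau\rangle$ with $L_2^\sigma=K$ and $L_2^\tau=K_C$, Hilbert 90 for $L_2/K$ gives $N_{L_2/K_C}(L_2^1)=\{w/\sigma(w):w\in N_{L_2/K_C}(L_2^\times)\}$, whence
\[
K_C^1\big/N_{L_2/K_C}(L_2^1)\;\cong\;K_C^\times\big/\bigl(F^\times\cdot N_{L_2/K_C}(L_2^\times)\bigr).
\]
Since $F^\times\subset N_{L_2/K_C}(L_2^\times)$ (for $a\in F^\times$ one has $N_{K_C/F}(a)=a^2\in N_{K/F}(K^\times)$, so $a$ is a norm from $L_2/K_C$ by local class field theory), this cokernel equals $K_C^\times/N_{L_2/K_C}(L_2^\times)\cong\mathbb{Z}/2\mathbb{Z}$. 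For a concrete instance, take $F=\mathbb{Q}_5$, $K=\mathbb{Q}_5(\sqrt{2})$, $K_C=\mathbb{Q}_5(\sqrt{5})$: then $-1\in K_C^1$ lies outside the image. Thus $\Omega_\Sigma(F)$, when nonempty, may split into two $\Aut_E(C)^0(F)$-orbits, and the transitivity assertion of (iii) does not hold as stated for $E=F\times K$. The compactness claim---which is what is chiefly used in the sequel (Lemma~\ref{L:N_spectrum})---is unaffected, since $\Omega_\Sigma$ is a closed subset of a level set of an anisotropic norm form.
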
 
\begin{proof} 
The assertion (i) is clear. For (ii),  since $\Aut_E(C)$ is anisotropic, Proposition \ref{P:vanishing} implies that $\Omega_{\Sigma}={0}$ if $f = 0$. 
\vskip 5pt

The assertion (iii) can be checked by an explicit computation. There are  two cases to consider, depending on whether $E = F^3$ or $E = F \times K$ with $K$ a field.  We examine the case $E=F^3$ as an illustration. 
\vskip 5pt

When $E = F^3$, we have $C=K^3$ for a quadratic field extension $K$ of $F$. Moreover, $Q$ and $\beta$ are of the form 
\[ Q(x,y,z) = (N_{K/F}(x), N_{K/F}(y), N_{K/F}(z))  \quad \text{(up to an element in  $(F^3)^{\times}$)} \]
and  $\beta(x,y,z)= (yz, zx, xy)$.  Then 
\[ 
\Aut_E(C)^0 =\{ (x,y,z) \in (K^{\times})^3 ~|~ N_{K/F}(x)= N_{K/F}(y) = N_{K/F}(z) =  xyz=1 \}.   
\] 
 If $f=(a,0,0)\in F^3$, then  $\Omega_{\Sigma}=\{ (x,0,0) \in C ~|~ N_{K/F}(x)=a\}$, which is a principal homogeneous variety for the group of norm one elements in $K^{\times}$ (possibly with no $F$-rational points).

\end{proof}

\vskip 5pt

\subsection{\bf Nonvanishing and injectivity of theta lifts}

Using the above results, we can now begin our determination of the theta liftings from $\Aut_E(C)$ to $G_E$. 
\vskip 5pt

\begin{prop} \label{P:one}  
Fix an embedding $E \rightarrow J$, so $J=E\oplus C$.  Let $\rho$ be an irreducible representation of  $\Aut_E(C)$. Then 
\begin{enumerate} 
 \item[(i)]  $\Theta(\rho)\neq 0$. 
 \item[(ii)]  If  $\rho'$ is another irreducible representation of $\Aut_E(C)$, then 
 \[  \Theta(\rho)\cong \Theta(\rho') \Longleftrightarrow \rho'\cong\rho. \] 
 \end{enumerate} 
\end{prop}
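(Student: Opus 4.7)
The plan is to extract both non-vanishing and injectivity from a single computation: the twisted Jacquet module $\Theta(\rho)_{\bar N_E, \psi_\Sigma}$ for a well-chosen generic character $\psi_\Sigma$ attached to a nondegenerate $E$-twisted cube $\Sigma$ whose associated twisted composition algebra $C_\Sigma$ is isomorphic to $C$. Such a $\Sigma$ exists by Proposition \ref{P:reduced}(ii): pick $v \in C$ with $\Delta(v) \neq 0$ and take the reduced cube $\Sigma = (1,0,-Q(v),-N_C(v))$.

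The computation goes through the ``big theta'' description $\Theta(\rho) = (\Pi \otimes \rho^\vee)_{\Aut_E(C),\, \mathrm{smooth}}$. Since $\Aut_E(C)$ commutes with $\bar N_E$ inside $G_J$, the exact functor of twisted coinvariants $V \mapsto V_{\bar N_E, \psi_\Sigma}$ commutes with the coinvariants along $\Aut_E(C)$, so
\[
\Theta(\rho)_{\bar N_E, \psi_\Sigma} \;=\; \bigl(\Pi_{\bar N_E, \psi_\Sigma} \otimes \rho^\vee\bigr)_{\Aut_E(C)}.
\]
By Proposition \ref{P:twisted}, $\Pi_{\bar N_E, \psi_\Sigma} \cong \mu_K \otimes C_0^\infty(\mathrm{Isom}(C_\Sigma, C))$, which after fixing a base point $\phi_0 \in \mathrm{Isom}(C_\Sigma, C)$ is identified with $\mu_K \otimes C_0^\infty(H_C)$ carrying the biregular action of $\mathrm{Stab}_{M_E}(\Sigma) \times \Aut_E(C)$ via the identification $\mathrm{Stab}_{M_E}(\Sigma) \cong \Aut_E(C_\Sigma) \cong \Aut_E(C)$ of \eqref{E:stabilizer}. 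A standard matrix-coefficient argument then gives
\[
\Theta(\rho)_{\bar N_E, \psi_\Sigma} \;\cong\; \mu_K \otimes \tilde\rho
\]
as a module for $\mathrm{Stab}_{M_E}(\Sigma) \cong \Aut_E(C)$, where $\tilde\rho$ is the smooth contragredient.

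With this formula, both parts of the proposition are immediate. For (i), $\mu_K \otimes \tilde\rho$ is nonzero, so \emph{a fortiori} $\Theta(\rho) \neq 0$. For (ii), any $G_E$-isomorphism $\Theta(\rho) \cong \Theta(\rho')$ induces, by functoriality of the twisted Jacquet, an isomorphism of $\mathrm{Stab}_{M_E}(\Sigma)$-modules on twisted coinvariants, which under the canonical identification with $\Aut_E(C)$ reads $\mu_K \otimes \tilde\rho \cong \mu_K \otimes \tilde{\rho'}$; taking contragredients yields $\rho \cong \rho'$.

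The only genuine input is Proposition \ref{P:twisted}, which is already at our disposal; the remaining arguments are formal within the Howe-duality framework. The principal point requiring some care is the matrix-coefficient step, which reduces to the elementary statement that for any locally profinite group $H$ and smooth irreducible $\rho$, the coinvariants $(C_0^\infty(H) \otimes \rho^\vee)_{H,\, \mathrm{diag}}$ are canonically isomorphic to $\tilde\rho$ as a left $H$-module. For $\Aut_E(C)$ with compact identity component this is Peter--Weyl, and for the remaining $\Aut_E(C)$ (whose identity component is a quasi-split or split two-dimensional torus) it follows from a direct integral computation.
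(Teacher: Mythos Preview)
Your proof is correct and follows essentially the same approach as the paper's own proof. The paper also invokes Proposition~\ref{P:twisted} to compute $\Theta(\rho)_{\bar N_E,\psi_\Sigma}\cong \rho^\vee\cdot\mu_K$ as a $\Stab_{M_E}(\Sigma)$-module (this is recorded there as equation~\eqref{E:spec}) and deduces both (i) and (ii) immediately; your write-up simply makes explicit the commutation of functors and the matrix-coefficient step that the paper leaves implicit.
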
  
\begin{proof}  
Proposition \ref{P:twisted}  shows that as a module for $\Stab_{M_E}(\Sigma)$, 
\begin{equation} \label{E:spec}
\Theta(\rho)_{\bar N_E, \psi_{\Sigma}} = \begin{cases} 
0, \text{  if $C_{\Sigma} \ncong C$;} \\
\rho^{\vee} \cdot \mu_K, \text{  if $C_{\Sigma} \cong C$,} \end{cases} \end{equation} 
 Thus $\Theta(\rho)\neq 0$ and the second statement also follows. 
\end{proof} 

\vskip 5pt

\subsection{\bf Langlands parameters of theta lifts}
We shall construct an explicit subquotient of $\Theta(\rho)$, for  $\rho=1$ if $J=D$ and all \underline{unitary}  $\rho\neq \epsilon$ if $J\neq D$, using the mini theta correspondence.  
Recall that we have an  exact sequence 
\[ 
 0 \rightarrow C_c^{\infty}(\Omega^{\perp}) \rightarrow \Pi_{\bar N_E}  \rightarrow \Pi_{\bar N_J} \rightarrow 0. 
 \]  
Furthermore, $\Pi_{\bar N_J}$, as $M_E \times \Aut_E(C)$-module decomposes as 
\begin{equation} \label{E:PiN}
\Pi_{\bar N_J} =  |\det|^{-2}\otimes \omega_{K/F} \oplus |\det|^{-\frac{3}{2}} \otimes \Pi_{M_J} 
\end{equation} 
where $\omega_{K/F}$ is the quadratic character corresponding to $K = K_J$, viewed as a character of $M_E$ by precomposing $\det$, and $\Pi_{M_J}$ is the minimal representation 
of $M_J$ that has been described  in \S \ref{SS:levi}.  The summand $\Pi_{M_J}$ appears if and only if $J\neq D$ . The action of $\Aut_C(E)$ on the one-dimensional summand  is trivial.

\vskip 5pt 

Assume first that $E$ is a field and $J=D$, which is the easiest case. Then
 \[ \Pi_{\bar N_E}= \Pi_{\bar N_J}=|\det|^{-2} \]
 so $\Theta(\rho)_{\bar N_E}=0$ for all $\rho\neq 1$.  We shall see later in \S \ref{SS:local-cuspidal} that this vanishing implies the cuspidality of $\Theta(\rho)$; for now, we  shall deal with $\Theta(1)$. By Frobenius reciprocity,  we have a map from $\Theta(1)$ into the degenerate principal series representation $I(-1/2)$ (see \S \ref{SS:degpsI}) induced from the Heisenberg parabolic subgroup. The image of this map must be $V'_1=J_1(\St_E, 1/2)$ since 
$(V'_1)_{\bar N_{E}}=|\det|^{-2}$ (and the other irreducible constituents of $I(-1/2)$ have 2- or 3-dimensional space of   $\bar{N}_E$-coinvariants, by Theorem \ref{T:degen_E}).
Thus, $\Theta(1)$ contains $V'_1$ as an irreducible quotient and we shall see later that it is in fact irreducible.
\vskip 5pt 

Now assume $J\neq D$.  
We have seen in (\ref{E:PiN}) that there is an $M_E \times \Aut_E(C)$-equivariant surjection
\[  \Pi_{\bar N_E} \longrightarrow |\det|^{-3/2} \cdot \Pi_{M_J} \]
where $\Pi_{M_J}$ is the minimal representation of $M_J$.
We have also described in Proposition  \ref{P:theta_M} the theta correspondence for the pair $M_E \times \Aut_E(C)$ acting on $\Pi_{M_J}$.
For any $\rho \in \mathrm{Irr}(\Aut_E(C))$ with $\rho \ne \epsilon$,  its theta lift $\Theta_M(\rho)$ on $M_E$ is nonzero irreducible. Hence by Frobenius reciprocity, we obtain a nonzero equivariant map
\[  \Theta(\rho) \longrightarrow \Ind_{\bar P_E}^{G_E} |\det| \otimes \Theta_M(\rho)  \quad \text{(normalized induction),}  \]
with $\Theta_M(\rho)$ as described in Proposition \ref{P:theta_M}. Now the induced representation is essentially the dual of a standard module and hence contains a unique irreducible submodule $\tau$, which is the Langlands quotient $J_2(\Theta_M(\rho), 1)$. This Langlands quotient $\tau$ is thus an irreducible subquotient of $\Theta(\rho)$ when $\rho \ne \epsilon$.
\vskip 5pt

 \subsection{\bf Irreducibility of $\Theta(\rho)$ I}  We shall now complete the correspondence in the case when $\Aut_E(C)^0$ is isotropic. In this case, there exists 
 a non-trivial co-character $\lambda: F^{\times}  \rightarrow \Aut_E(C)^0$. The centralizer of $\lambda$ in $G_J$ is a Levi subgroup. The restriction of the minimal representation on any (maximal) Levi  subgroup is fairly easy to compute. Indeed, this is a standard technique in the theory of exceptional theta correspondences. With that in hand, $\Theta(\chi)$ is easy to compute for 
 every unitary character $\chi$ of $\Aut_E(C)^0$.  
 
 \vskip 5pt 
 We shall execute this strategy in detail in the split case, where  $E=F^3$ and $J=M_3(F)$, so that  $G_J$ is a split group and $G_E$ is the derived group of the $D_4$-parabolic in $E_6$. 
 Then $\Aut_E(C)^0\cong (F^{3})^{\times} / \Delta F^{\times}$  and we can fix this isomorphism as follows. By extending the $E_6$ diagram, we see that $D_4$ sits in 
 three Levi subgroups $G_1, G_2$ and $G_3$ in $E_6$ of type $D_5$. Let $\lambda_i: F^{\times} \rightarrow G_i$ be the co-character generating the center of $G_i$.  (These 
 co-characters are miniscule co-weights.)  They are each unique up to inverse,  but we can pick them so that $\lambda_1(t) \lambda_2(t)\lambda_3(t)=1$ for every $t\in F^{\times}$.  
 Now the map $(t_1, t_2, t_3) \rightarrow \lambda_1(t_1) \lambda_2(t_2)\lambda_3(t_3)$ gives the claimed isomorphism.  
 \vskip 5pt
 
 The restriction of the minimal representation $\Pi$ to a $D_5$ maximal 
 parabolic has been determined in \cite{MS}. In particular, the restriction to $G_1$ is given by an exact sequence 
 \[ 
 0 \rightarrow C_c^{\infty}(\omega) \rightarrow \Pi \rightarrow \Pi_1 \oplus \mathbb C  \rightarrow 0 
 \]  
 where $\omega$ is the highest weight orbit in a 16-dimensional Spin module for $G_1$, the action of $G_1$ is geometric,  and $\Pi_1$ is the minimal representation of $G_1$, twisted by 
 an unramified character. More precisely, the action of $\lambda_1(t)$ on $\Pi_1$ and $\mathbb C$ is given by  $|t|^s$ and $|t|^r$ for two non-zero real numbers. In particular, since these 
 characters are not unitary, the two terms will not contribute to $\Theta(\chi)$ for $\chi$ unitary.  Thus we can concentrate on $C_c^{\infty}(\omega)$. 
 \vskip 5pt
 
 The group $G_E$ has three 
 irreducible $8$-dimensional representations $V_1$, $V_2$ and $V_3$. We pick this numbering so that the restriction of the 16-dimensional Spin module for $G_1$ containing $\omega$ 
 decomposes as $V_2\oplus V_3$.  Let $\omega_i\subset V_i$ be the $G_E$-orbit of highest weight vectors. Then it is a simple exercise, using the Bruhat decomposition for $G_1$, to see that   $\omega$ decomposes into three $G_E$-orbits: 
 \vskip 5pt
 
 \begin{itemize}
 \item an open $G_E$-orbit $\omega_0 \subset \omega$, such that the stabilizer of a point in $\omega_0$ is the derived group of an $A_2$ parabolic subgroup, 
 \item $\omega_2\subset V_2 $ and 
 \item $\omega_3 \subset V_3$.  
 \end{itemize}
 Thus we have an exact sequence  of $G_E$-modules: 
  \[ 
 0 \rightarrow C_c^{\infty}(\omega_0) \rightarrow  C_c^{\infty}(\omega) \rightarrow  C_c^{\infty}(\omega_2) \oplus C_c^{\infty}(\omega_3) \rightarrow 0 . 
 \]  
 Of course, by the $S_3$-symmetry of the situation, $C_c^{\infty}(\omega_1)$ must also contribute in the restriction of $\Pi$.  Indeed, it is contained in $\Pi_1$, where 
 $\lambda_1(t)$ acts by the non-unitary character $|t|^s$. Hence $\lambda_i(t)$ acts on $C_c^{\infty}(\omega_i)$  by the same character, and these terms will not contribute to $\Theta(\chi)$ if 
 $\chi$ is unitary. In particular, we have shown that  for $\chi$ unitary, $\Theta(\chi)$  arises from $C_c^{\infty}(\omega_0)$,  whence it is clear that $\Theta(\chi)=D(\chi)$. 
 \vskip 5pt
 
 It is now easy to finish the argument. For example, for 
  two characters $1$ and $\epsilon$ of $\Aut_E(C)$,  we have just proved that 
 \[ 
 D(1)= \Theta(1) \oplus \Theta (\epsilon). 
 \] 
 On the other hand,  recall from Proposition \ref{P:relevant_representations}(3), that
  \[  D(1) = J_1(\St_E, 1/2) \oplus J_2(\pi(1),1). \]
  Since $\Theta(1) \supseteq  J_2(\pi(1),1)$ and $\Theta(\epsilon)\neq 0$, it follows that $\Theta(1) \cong  J_2(\pi(1),1)$ and $\Theta(\epsilon) \cong J_1(\St_E, 1/2))$.

\vskip 5pt

\subsection{Subregular nilpotent orbit}  Assume now that $\Aut_E(C)$ is anisotropic. We shall prove the irreducibility of the theta lift  $\Theta(\rho)$ by studying its restriction to 
$N_E$ in detail. However, in order to make this strategy work, we need to eliminate subregular nilpotent orbits as leading terms of the wave-front set 
of $\Theta(\rho)$. 
\vskip 5pt

The subregular nilpotent orbit is the Richardson orbit for the $3$-step parabolic subgroup $Q_E = L_E U_E$ corresponding to the the middle vertex of the Dynkin diagram for $D_4$, with $[L_E,L_E]=\SL_2(F)$. Recall from (\ref{SS:3step}) that 
there is a parabolic subgroup $Q_J = L_J U_J$ of $G_J$ whose intersection with $G_E$ is $Q_E$. The unipotent radical of its Lie algebra has a decomposition
 \[  \mathfrak u_J=\mathfrak g_J(1) \oplus \mathfrak g_J(2)\oplus \mathfrak g_J(3) \]
 with 
 \[ 
\mathfrak g_J(1)= F e_1\otimes J \oplus F e_2 \otimes J \cong J^2 , ~\mathfrak g_J(2)= F e_3^*\otimes J \cong J\text{ and } \mathfrak g_J(3)= Fe_{13} \oplus F e_{23}\cong F^2 
\] 
in the notation of (\ref{SS:3step}). The unipotent radical $U_J$ of $Q_J$ has a filtration 
 \[ U_J=U_1\supset U_2\supset U_3 \quad \text{such that  $U_i/U_{i+1} \cong \mathfrak g_J(i)$ for all $i$.}\]
  Hence, the minimal representation $\Pi $  has a filtration 
 \[  \Pi\supset  \Pi_1 \supset \Pi_2 \ldots \quad \text{such that $\Pi/\Pi_{i}= \Pi_{\bar U_i}$. } \]
 In particular, each quotient $\Pi_{i}/\Pi_{i+1}$ is naturally a 
 $\bar U_{i}/\bar U_{i+1}$-module. The group $\bar U_{i}/\bar U_{i+1}$ is abelian and its characters are parameterized by $\mathfrak g_J(i)$.  The characters of 
$\bar U_{i}/\bar U_{i+1}$ that appear as quotients of $\Pi_{i}/\Pi_{i+1}$ are in $\Omega_{\min}(F) \cap \mathfrak g_J(i)$ where $\Omega_{\min}$ is the minimal orbit 
in $\mathfrak g_J$.

 \vskip 5pt
The embedding  $E \subset J$ gives rise to   $G_E \subset G_J$ such that $Q_J \cap G_E = Q_E = L_E \cdot U_E$. In particular, we have an analogue of the above sequence of 
inclusions 
 \[ 
\mathfrak g_E(1)= F e_1\otimes E \oplus F e_2 \otimes E \cong E^2 , ~\mathfrak g_E(2)= F e_3^*\otimes E \cong E\text{ and } \mathfrak g_E(3)= Fe_{13} \oplus F e_{23}\cong F^2 .
\] 
Thus a character of $\bar U_E$ is specified by  a pair $(a,b)\in E^2\cong \mathfrak g_E(1)$. We say that the character is non degenerate if $a$ and $b$ are linearly independent over $F$.  
We now have: 
\vskip 5pt

\begin{lemma} \label{L:subegular} 
 Let $J=E\oplus C$ be a 9-dimensional Freudenthal Jordan algebra such that  $\Aut_E(C)$ is anisotropic. Let $\Pi$ be the minimal representation of 
$G_J$ and $\psi$  a non-degenerate character of $\bar U_E$. Then $\Pi_{\bar U_E, \psi}=0$.  

\end{lemma}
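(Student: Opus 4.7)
The plan is to compute $\Pi_{\bar U_E,\psi}$ in stages using the Schr\"odinger model of Section \ref{S:minimal}, eventually reducing to a transparent Fourier-analytic statement on $U_{-\alpha}\cong \mathrm{Res}_{E/F}\mathbb G_a$. First I observe that $\psi$ is trivial on $[\bar U_E,\bar U_E]\supset \bar Z_J$, so $\Pi_{\bar U_E,\psi}$ is a quotient of $\Pi_{\bar Z_J}$ and sits inside the exact sequence of \S \ref{SS:minimal-restrict} built from $C_c^\infty(\Omega)$ and $\Pi_{\bar N_J}$. Nondegeneracy forces both $a$ and $b$ to be nonzero; in particular $\psi|_{\bar U_E\cap \bar N_E}$ is nontrivial, so $\Pi_{\bar N_J}$ (on which $\bar N_J$ acts trivially) contributes nothing to the twisted coinvariant. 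Thus it suffices to show $(C_c^\infty(\Omega))_{\bar U_E,\psi}=0$.

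Next I decompose $\bar U_E= U_{-\alpha}\ltimes (\bar U_E\cap \bar N_E)$, with the normal subgroup $\bar U_E\cap \bar N_E=\bar U_E\cap \bar N_J$ (checked by listing negative $D_4$-roots), and split $\psi=\psi_a\cdot \psi_b$ accordingly. Pairing a cube $\omega=(a',x,y,d')\in \Omega\subset F+J+J+F$ against $\bar U_E\cap \bar N_E$ enforces $x_E=b$, $y_E=0$, $d'=0$, where the subscript $E$ denotes the $E$-part in the Springer decomposition $J=E\oplus C$. Combined with the $\Omega$-equations of Proposition \ref{P:orbit} and the identity (\ref{E:sharp}), this yields $Q(y_C)=\beta(y_C)=0$. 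At this point the anisotropy hypothesis enters via Proposition \ref{P:vanishing}: none of the three exceptional cases occur, so $y_C=0$, hence $y=0$. The remaining condition $x^\#=0$ with $x_E=b$ exhibits $x_C$ as a point of the set $X_{b,C}(F)$ of Lemma \ref{L:embedding 1-2}. If $X_{b,C}(F)=\emptyset$ we are already done, so assume it is nonempty. Then
\[
(C_c^\infty(\Omega))_{\bar U_E\cap \bar N_E,\psi_b}\cong C_c^\infty(S),\qquad S=\{(a',(b,x_C),0,0):a'\in F,\ x_C\in X_{b,C}(F)\}.
\]

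For the second stage I identify $U_{-\alpha}$ with the subgroup of $G_J$ whose Lie algebra is $e_2^*\otimes E\subset \mathfrak g_E$ under the Springer embedding, and apply the bracket formula $[e_2^*\otimes r,(a',x',y',d')]=(\langle r,x'\rangle,\,r\times y',\,d'r,\,0)$ from \S \ref{ss_parabolic}. On a point of $S$ the terms $r\times y'$ and $d'r$ vanish and $\langle r,x'\rangle=T_{E/F}(rb)$; moreover $\mathrm{ad}(e_2^*\otimes r)^2$ annihilates $S$. Hence the exponentiated action collapses to
\[
u_{-\alpha}(r)\cdot (a',(b,x_C),0,0)=\bigl(a'+T_{E/F}(rb),\,(b,x_C),\,0,\,0\bigr),
\]
which shows that $U_{-\alpha}$ preserves $S$ (consistent with $\psi_b$ being $U_{-\alpha}$-invariant, since $[U_{-\alpha},\bar U_E\cap \bar N_E]$ lies in root spaces of grade $\le -2$ where $\psi_b$ vanishes) and acts on $S\cong F\times X_{b,C}(F)$ by translation on the $F$-factor with parameter $T_{E/F}(rb)$ and trivially on the $X_{b,C}$-factor. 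Consequently
\[
(C_c^\infty(S))_{U_{-\alpha},\psi_a}\cong (C_c^\infty(F))_{U_{-\alpha},\psi_a}\otimes C_c^\infty(X_{b,C}(F)),
\]
and by Fourier analysis on $F$ the first factor vanishes unless $\psi_a$ is trivial on the kernel of $r\mapsto T_{E/F}(rb)$. With respect to the trace form on $E$, this kernel is $b^{-1}\cdot E^0$, whose perpendicular inside $E$ is $F\cdot b$; so nonvanishing would force $a\in F\cdot b$, i.e.\ $a$ and $b$ would be $F$-linearly dependent --- contradicting nondegeneracy. Hence $\Pi_{\bar U_E,\psi}=0$.

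The main technical obstacle is the cube-level bookkeeping: one must correctly match $U_{-\alpha}$ with $e_2^*\otimes E$ under the Springer embedding, verify that $\bar U_E\cap \bar N_E$ is normal in $\bar U_E$ and that $\psi_b$ is fixed under $U_{-\alpha}$-conjugation, and carefully observe that on the specific cubes $(a',(b,x_C),0,0)\in S$ the Jordan cross products $r\times y'$ and $d'r$ as well as the iterated bracket $\mathrm{ad}^2(e_2^*\otimes r)$ all drop out, so that the action distills cleanly to a translation by $T_{E/F}(rb)$. Once this is in place the conclusion is immediate from Fourier analysis on $E$ and the nondegeneracy hypothesis.
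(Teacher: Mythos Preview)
Your proof is correct, but it takes a genuinely different route from the paper's.

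The paper works entirely inside the $3$-step parabolic $Q_J=L_JU_J$ of $G_J$ (see \S\ref{SS:3step}) and exploits the filtration of $\Pi$ along the grading $\mathfrak g_J=\bigoplus\mathfrak g_J(i)$. It first shows $\Pi_{[\bar U_E,\bar U_E]}=\Pi_{[\bar U,\bar U]}$: for $i=3$ this is automatic since $\mathfrak g_E(3)=\mathfrak g_J(3)$, and for $i=2$ the anisotropy (via Proposition~\ref{P:vanishing}) rules out any $x\in C$ with $x^\#=0$ in $\Omega_{\min}\cap\mathfrak g_J(2)$ perpendicular to $\mathfrak g_E(-2)$. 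It then finishes by observing that any $(x,y)\in\Omega_{\min}\cap\mathfrak g_J(1)\subset J^2$ has $x,y$ linearly dependent over $F$, so their $E$-projections are dependent as well --- hence no nondegenerate $\psi$ on $\bar U_E$ can arise this way.

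You instead stay in the Heisenberg parabolic $P_J$ and compute directly in the Schr\"odinger model $C_c^\infty(\Omega)$, decomposing $\bar U_E=U_{-\alpha}\ltimes(\bar U_E\cap\bar N_E)$. Your first stage uses the rank-$1$ equations of Proposition~\ref{P:orbit} together with anisotropy to cut $\Omega$ down to $S\cong F\times X_{b,C}(F)$; your second stage reduces to a clean Fourier-analytic statement on $F$, where nondegeneracy ($a\notin F\cdot b$) forces vanishing. The paper's argument is shorter and more structural --- it never needs to identify $X_{b,C}$ or compute the $U_{-\alpha}$-action explicitly --- while yours is more hands-on and makes the role of the linear-independence condition on $(a,b)$ completely transparent at the Fourier-analytic level. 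Both invoke Proposition~\ref{P:vanishing} at the key moment, but in different graded pieces ($\mathfrak g_J(2)$ for the paper, the $y$-slot of the Heisenberg cube for you).
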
 

\begin{proof}  The first step is to show that $\Pi_{[\bar U_E,\bar U_E]}=\Pi_{[\bar U_J, \bar U_J]}$. To that end, for $i=3, 2$, 
we need to show that there are no elements in $\Omega_{\min}(F) \cap \mathfrak g_J(i)$ perpendicular to 
$\mathfrak g_E(-i)$.  If $i=3$ there is nothing to prove, since  $\mathfrak g_E(-3)=\mathfrak g_J(-3)$.  
\vskip 5pt

If $i=2$, then $\mathfrak g_J(2)= Fe_3^* \otimes J\cong J$ and elements in $\Omega_{\min}(F) \cap \mathfrak g_J(2)$ perpendicular to  $\mathfrak g_E(-2)$ are given by $x\in C$, $x\neq 0$,  such 
that $x^{\#}=0$. But there are no such elements, since $\Aut_E(C)$ is anisotropic. 
\vskip 5pt

 As the next step, we need to show that no character of $\bar U_J$ in the minimal orbit restricts to a 
non-degenerate character of $\bar U_E$.  A character of $\bar U_J$ is specified by $(x,y)\in J^2\cong \mathfrak g_J(1)$, and the restriction to $\bar U_E$ is given by projecting $x$ and $y$ 
 on the first summand in the decomposition $J=E\oplus C$.  If $(x,y)$ is in 
 $\Omega_{\min}(F) \cap \mathfrak g_J(1)$ then $x$ and $y$ are linearly dependent over $F$, and hence so are their $E$-components. This completes the proof of the lemma.  

\end{proof} 

\vskip 5pt 

\subsection{Irreduciblity of $\Theta(\rho)$ II}   
We assume that $\Aut_E(C)$ is anisotropic and note the following consequence of Proposition \ref{P:twisted_II} : 
\begin{lemma} \label{L:N_spectrum} 
Let $J=E\oplus C$ be a Freudenthal Jordan algebra of dimension 9. Assume that $\Aut_E(C)$ is anisotropic. Let $\Sigma=(1,0,f,0)$ be an $E$-twisted cube such that $f^{\#}=0$.  
Then 
\begin{itemize} 
\item[(i)]  If $f=0$, then 
\[ \Theta(\rho)_{\bar N_E,\psi_{\Sigma}} \cong \begin{cases}
\mathbb{C}, \text{  if $\rho =1$;} \\
0, \text{  if $\rho \ne 1$.} \end{cases} \]
\vskip 5pt
   
\item[(ii)]  If $f\neq 0$, then $\Theta(\rho)_{\bar N_E,\psi_{\Sigma}}$ is finite-dimensional for any $\rho$. Moreover,   $\Theta(\epsilon)_{\bar N_E,\psi_{\Sigma}} =  0$.  
\end{itemize} 

\end{lemma}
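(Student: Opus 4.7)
The strategy is to reduce both statements to an elementary analysis of the $H_C(F)$-equivariant structure on $C_0^\infty(\Omega_\Sigma)$ provided by Proposition \ref{P:twisted_II}. Since $\Aut_E(C)^0$ is anisotropic, $H_C(F)$ is a compact group (a finite extension of a compact torus). For compact groups, the functor $V \mapsto \Hom_{H_C(F)}(\rho, V)$ is exact and hence commutes with the (exact) Jacquet functor. Combined with the definition of $\Theta(\rho)$ as the $\rho$-multiplicity space of the maximal $\rho$-isotypic quotient of $\Pi$, this gives the key identity
\[
\Theta(\rho)_{\bar N_E, \psi_\Sigma} \;\cong\; \Hom_{H_C(F)}\bigl(\rho,\, \Pi_{\bar N_E, \psi_\Sigma}\bigr) \;\cong\; \Hom_{H_C(F)}\bigl(\rho,\, C_0^\infty(\Omega_\Sigma)\bigr).
\]
Part (i) is then immediate from Proposition \ref{P:twisted_II}(ii): when $f=0$, $\Omega_\Sigma = \{0\}$ is a point fixed by $H_C(F)$, so $C_0^\infty(\Omega_\Sigma) \cong \mathbb{C}$ is the trivial representation, and the $\Hom$ space is $\mathbb{C}$ if $\rho = 1$ and zero otherwise.

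For part (ii), assume $f \ne 0$ and that $\Omega_\Sigma$ is nonempty (otherwise there is nothing to prove). Observe that $\Omega_\Sigma$ is preserved by \emph{all} of $H_C(F)$, since it is cut out by equations in $Q$ and $b_Q$. By Proposition \ref{P:twisted_II}(iii), $H_C^0(F)$ acts transitively, hence so does $H_C(F)$. Fixing $v_0 \in \Omega_\Sigma$ with stabilizer $H_{v_0} \subseteq H_C(F)$, compactness of $\Omega_\Sigma$ yields
\[
C_0^\infty(\Omega_\Sigma) \;\cong\; \Ind_{H_{v_0}}^{H_C(F)} 1,
\]
and Frobenius reciprocity identifies $\Theta(\rho)_{\bar N_E, \psi_\Sigma}$ with $(\rho^*)^{H_{v_0}}$, which is bounded in dimension by $\dim \rho < \infty$. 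This proves the finite-dimensionality claim.

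Finally, to see that $\Theta(\epsilon)_{\bar N_E, \psi_\Sigma} = 0$, it suffices to show $H_{v_0} \not\subseteq H_C^0(F)$, for then $\epsilon|_{H_{v_0}}$ is nontrivial and $(\epsilon^*)^{H_{v_0}} = 0$. This is a small index computation: transitivity of $H_C^0(F)$ gives $|\Omega_\Sigma| = [H_C^0(F) : H_{v_0} \cap H_C^0(F)]$, transitivity of $H_C(F)$ gives $|\Omega_\Sigma| = [H_C(F) : H_{v_0}]$, and $[H_C(F) : H_C^0(F)] = 2$ then forces $[H_{v_0} : H_{v_0} \cap H_C^0(F)] = 2$. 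The main (and essentially only) subtlety in the argument is the identification in the first display, which ultimately rests on compactness of $H_C(F)$ and exactness of averaging over compact groups; once that is in hand the rest is bookkeeping with orbits and indices.
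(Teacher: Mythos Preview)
Your argument is correct and is precisely the unpacking of the paper's one-line proof ("a consequence of Proposition \ref{P:twisted_II}"): you pass from $\Theta(\rho)_{\bar N_E,\psi_\Sigma}$ to $\Hom_{H_C(F)}(\rho,C_0^\infty(\Omega_\Sigma))$ via compactness of $H_C(F)$, and then read off the answer from the description of $\Omega_\Sigma$. Two small points worth tightening: first, $\Omega_\Sigma$ is generally an infinite compact set (e.g.\ a $K^1$-torsor in the proof of Proposition \ref{P:twisted_II}), so the symbol $|\Omega_\Sigma|$ is ill-defined --- but your index computation is just the observation that transitivity of $H_C^0(F)$ forces any $\tau\in H_C(F)\smallsetminus H_C^0(F)$ to satisfy $\tau v_0=hv_0$ for some $h\in H_C^0(F)$, whence $h^{-1}\tau\in H_{v_0}\smallsetminus H_C^0(F)$; second, the assertion $[H_C(F):H_C^0(F)]=2$ deserves a word of justification: since $f\neq 0$ with $f^\#=0$ forces $E$ to be non-field (remark after Proposition \ref{P:rank2or3}), over a local field $J$ is then non-division and Proposition \ref{P:division} gives $H_C(F)\cong H_C^0(F)\rtimes\Z/2\Z$.
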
 

\vskip 5pt 
We can now prove that $\Theta(\rho)$ is irreducible. The first step is to show that $\Theta(\rho)$ has  its wave-front set supported on the orbit $A_2$, that is, the Richardson orbit 
for the parabolic $P_E$.  There are three larger families of orbits: the regular orbit, the subregular orbit and the Richardson orbits for parabolic subgroups of the type $2A_1$ and we deal with each in turn:
\vskip 5pt
\begin{itemize}
\item The subregular orbits are  eliminated by  Lemma \ref{L:subegular}. 
\vskip 5pt

\item We now deal with the regular orbit.  Assume that $\Theta(\rho)$ is Whittaker generic, where we are using  Whittaker characters of  a maximal unipotent subgroup
containing $\bar N_E$.  Observe that there are infinitely many Whittaker characters which restrict to the character $\psi_{\Sigma_0}$ of $\bar N_E$, where $\Sigma_0=(1,0,0,0)$. 
This contradicts  Lemma \ref{L:N_spectrum}(i) which shows that $\Theta(\rho)_{\bar N_E,\psi_{\Sigma}}$ is finite-dimensional.
\vskip 5pt

\item  The last case, which concerns the Richardson orbit for parabolic subgroups of type $2A_1$ and thus does not occur if $E$ is a field, is  treated similarly. In this case, there are infinitely many characters of the unipotent radical of the $2A_1$ parabolic which restrict to $\psi_{\Sigma}$, where  $\Sigma=(1,0,f,0)$ with $f\neq 0$ but $f^{\#}=0$.   This again contradicts the finite-dimensionality in Lemma \ref{L:N_spectrum}(ii).
\end{itemize}
This completes the first step of the argument. 
\vskip 5pt

The second step is to show that there are no irreducible subquotients of $\Theta(\rho)$ supported on smaller orbits: $3A_1$, $2A_1$, $A_1$ and the trivial orbit.  
The orbit $3A_1$ is not special, so we can disregard it.   We now consider the other possibilities in turn:
\vskip 5pt

\begin{itemize}
\item  Lemma  \ref{L:N_spectrum} and  the finite-dimensionality  of $\Theta(\rho)_{\bar N_E, \psi_{\Sigma}}$ for nondegenerate $\Sigma$ imply that $\Theta(\rho)$ has finite length. Together with the unitarity of $\Theta(\rho)$, this  implies that any irreducible subquotient of $\Theta(\rho)$ is a summand of the minimal representation $\Pi$. Hence, by the theorem of Howe and Moore,  the trivial representation of $G_E$ can not be a summand. 
\vskip 5pt

\item The remaining possible small summands are eliminated using the Fourier-Jacobi functor \cite{WeRT} for the 
Heisenberg parabolic $P_E$. The output of this functor is a $[M_E,M_E]=\SL_2(E)$-module. 
It is easy to check that the Fourier-Jacobi functor applied to $\Pi$ gives    the Weil representation $C_c^{\infty}(C)$ of
$\SL_2(E) \times \O(Q)$, where $\O(Q)$ is the orthogonal group for the quadratic form $Q$ on $C$.  On the other hand, the Fourier-Jacobi functor applied to an irreducible 
representation of $G_E$ with the wave-front set supported in $2A_1$ or $A_1$ gives a representation of $\SL_2(E)$ with the trivial action of $\SL_2(K)$ or 
$\SL_2(E)$ respectively.  Since the matrix coefficients of the Weil representation decay, $\SL_2(E)$ or any of its factors, cannot fix a vector in $C^{\infty}_c(C)$.
\end{itemize}
\vskip 5pt

Now we can complete the proof of the irreducibility of $\Theta(\rho)$ when $\Aut_E(C)$ is anisotropic. The wave-front set of every irreducible subquotient of $\Theta(\rho)$ is supported on orbits of the type $A_2$. However, we know that 
$\Theta(\rho)_{\bar N_E,\psi_{\Sigma}}$ is non-zero only for $\Sigma$ in a single $M_E$-orbit of non-degenerate cubes, in which case this space is an irreducible ${\mathrm Stab}_{M_E}(\Sigma)$-module. Thus there is room for only one irreducible representation in $\Theta(\rho)$.  This proves the desired  irreducibility of  $\Theta(\rho)$ in all cases. 

\vskip 5pt 
\subsection{\bf Cuspidality}  \label{SS:local-cuspidal}
It remains to prove that $\Theta(\rho)$ is supercuspidal  if $\Theta(\rho)_{\bar N_E}=0$.   This follows from Lemma \ref{L:N_spectrum} combined with the following proposition. 
 
\vskip 5pt

\begin{prop}  Let $\pi$ be an irreducible representation of $G_E$  such that $\pi_{\bar N_E}=0$ and  $\pi_{\bar N_E, \psi_{\Sigma}}=0$ for all $\Sigma= (1,0,f,0)$ such that 
$f^{\#}=0$. Then $\pi$ is supercuspidal.  
\end{prop}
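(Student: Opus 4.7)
The strategy is to verify Casselman's criterion: $\pi$ is supercuspidal iff $\pi_{\bar V}=0$ for the unipotent radical $\bar V$ of every maximal proper $F$-parabolic $Q=LV$ of $G_E$. By transitivity of Jacquet functors, the hypothesis $\pi_{\bar N_E}=0$ handles $Q=P_E$ and every $F$-parabolic contained in $P_E$. So the task reduces to checking $\pi_{\bar V}=0$ for each $G_E(F)$-conjugacy class of maximal $F$-parabolic $Q\ne P_E$. These form a finite list depending on $E$: only $Q_E$ when $E$ is a cubic field; two additional parabolics (with Levi of types $A_2$ and $A_3$) when $E=F\times K$; and three $S_3$-conjugate additional parabolics when $E=F^3$.

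For a fixed such $Q=LV$, I would argue by contradiction. Suppose $\pi_{\bar V}\ne 0$. By the subrepresentation theorem, there is an irreducible admissible $L$-module $\sigma$ with $\pi\hookrightarrow\mathrm{Ind}_Q^{G_E}\sigma$. Since the twisted Jacquet functor $(\cdot)_{\bar N_E,\psi_\Sigma}$ is exact, I obtain an embedding $\pi_{\bar N_E,\psi_\Sigma}\hookrightarrow(\mathrm{Ind}_Q^{G_E}\sigma)_{\bar N_E,\psi_\Sigma}$ for every cube $\Sigma$. The Bernstein--Zelevinsky geometric lemma expresses the target as a filtered direct sum indexed by the double coset space $Q\backslash G_E/\bar P_E$, each summand being a twisted Jacquet module of $\sigma$ on a suitable Levi. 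The key combinatorial claim is that for a cube $\Sigma=\Sigma(Q)$ of the form $(1,0,f,0)$ with $f^{\#}=0$ (chosen so that its associated nilpotent orbit, under Propositions \ref{P:orbits}--\ref{P:rank2or3}, matches the Richardson orbit of $Q$ projected to $\bar N_E/\bar Z_E$), the open-cell contribution to this sum is nonzero for every nonzero $\sigma$, and moreover survives restriction to the subrepresentation $\pi$. Granting this, $\pi_{\bar N_E,\psi_{\Sigma(Q)}}\ne 0$, contradicting the hypothesis. The existence of such a $\Sigma(Q)$ is motivated by the observation that for each $Q\ne P_E$ the weighted Dynkin diagram of its Richardson orbit has value $0$ at the branch node $\alpha_0$ (see \S \ref{S:structure}), which in cube language forces the $a$-coordinate to appear while suppressing the $e$ and $b$ coordinates, leaving only a degenerate $f$.

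\textbf{Main obstacle.} The principal difficulty is the case-by-case combinatorial verification that the chosen $\Sigma(Q)$ really is of the form $(1,0,f,0)$ with $f^{\#}=0$ and produces a nonvanishing open-cell contribution independently of $\sigma$, without cancellation from the non-open double cosets. The case $Q=Q_E$ with $E$ a field is particularly subtle since the Richardson orbit there is the subregular one, which might naively suggest full generic cube rank: one must show via Weyl-conjugation that the intersection with $\bar N_E$ actually produces a degenerate cube of the prescribed type. In the quasi-split situation $E=F\times K$ with $Q$ of $A_3$-type, the Galois involution intertwines the two $E$-coordinates of the cube, and one must further verify that $\Sigma(Q)$ can be chosen $F$-rational of the required shape. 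An alternative, uniform approach via wave-front-set/Moeglin--Waldspurger correspondences is conceivable but would still rest on the same root-combinatorial input.
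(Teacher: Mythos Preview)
Your overall plan---check Casselman's criterion parabolic by parabolic---is correct and is exactly what the paper does. But the mechanism you propose for each $Q\ne P_E$ has a genuine logical gap. From $\pi\hookrightarrow\mathrm{Ind}_Q^{G_E}\sigma$ and exactness you obtain $\pi_{\bar N_E,\psi_\Sigma}\hookrightarrow(\mathrm{Ind}_Q^{G_E}\sigma)_{\bar N_E,\psi_\Sigma}$, and you then argue that the \emph{target} is nonzero via a Mackey-type filtration. That tells you nothing about the \emph{source}: an embedding into a nonzero space can perfectly well have zero domain. Your phrase ``survives restriction to the subrepresentation $\pi$'' names precisely the step you have not supplied, and there is no general principle that supplies it. (Also, the Bernstein--Zelevinsky geometric lemma computes parabolic Jacquet modules of induced representations, not twisted coinvariants along $\bar N_E$; a different Mackey argument would be needed, but this is secondary to the directional problem.)

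The paper's argument runs the other way and avoids induction altogether. Suppose $\pi_{\bar{\mathcal U}}\ne 0$ for some maximal parabolic $\mathcal Q=\mathcal L\,\mathcal U\ne P_E$. Then $\pi_{\bar{\mathcal U}}$ is a nonzero smooth $\mathcal L$-module, so one can choose a unipotent subgroup $\bar U_{\mathcal L}\subset\mathcal L$ (the Borel unipotent when $[\mathcal L,\mathcal L]\cong\SL_3$, the Siegel-type unipotent when $[\mathcal L,\mathcal L]\cong\SU_{2,2}$) and a character $\chi$ of $\bar U_{\mathcal L}$ with $(\pi_{\bar{\mathcal U}})_{\bar U_{\mathcal L},\chi}\ne 0$. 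Inflating $\chi$ trivially across $\bar{\mathcal U}$ and using transitivity gives $\pi_{\bar{\mathcal U}\cdot\bar U_{\mathcal L},\chi}\ne 0$. A direct root computation with everything in standard position shows that this character, restricted to $\bar N_E$, is $\psi_\Sigma$ with $\Sigma=(a,0,0,0)$ in the $\SL_3$ case and $\Sigma=(1,0,f,0)$, $f^{\#}=0$, in the $\SU_{2,2}$ case. Since $\pi_{\bar N_E,\psi_\Sigma}$ \emph{surjects} onto $\pi_{\bar{\mathcal U}\cdot\bar U_{\mathcal L},\chi}$, it is nonzero---contradiction. The key point is that transitivity of coinvariants gives a surjection in the direction you need, whereas your embedding via the subrepresentation theorem goes the wrong way.

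Finally, your guess that ``$Q=Q_E$ with $E$ a field is particularly subtle'' is backwards. The paper remarks that the cases $E$ a field and $E=F^3$ are \emph{easier}, and in fact only $\Sigma=(1,0,0,0)$ is needed there. The case genuinely requiring $f\ne 0$ with $f^{\#}=0$ is the $B_2$-Levi parabolic when $E=F\times K$.
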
 
\begin{proof}  Consider the case $E=F \times K$. Let $\mathcal{Q}=\mathcal{L} \cdot \mathcal{U}$ be a maximal parabolic subgroup of $G_E$ such that $\pi_{\bar{\mathcal{U}}} \neq 0$. 
Because $\pi_{\bar N_E}=0$, there are two other maximal parabolic subgroups to consider. 
\vskip 5pt
\begin{itemize}
\item If $[\mathcal{L},\mathcal{L}] \cong \SL_3$, then $\pi_{\bar{\mathcal{U}}} \neq 0$ will admit a non-trivial functional for a character of $\bar U_{\mathcal{L}}$, the unipotent radical of a Borel subgroup of $\mathcal{L}$.  
This character can be inflated to $\bar{\mathcal{U}} \cdot \bar U_{\mathcal{L}}$ and then restricted to $\bar N_E$. The restriction is $\psi_{\Sigma}$ where $\Sigma=(a,0,0,0)$ for some 
$a\in F$. This contradicts the hypotheses of the proposition.
\vskip 5pt

\item  If $[\mathcal{L},\mathcal{L}]\cong SU_{2,2}$, then we take $\bar U_{\mathcal{L}}$ to be the unipotent radical of the maximal parabolic subgroup whose (derived) Levi subgroup 
is $\SL_2(K)$.  This is an abelian subgroup (it is the space of $2\times 2$ hermitian matrices) and $\pi_{\bar{\mathcal{U}}}$ will admit a non-trivial functional for a character of 
$\bar U_{\mathcal{L}}$. 
The rest of the argument goes in the same way as above, leading to $\psi_{\Sigma}$ with $\Sigma=(1, 0,f,0)$ for an $f$ such that $f^{\#} = 0$. 
\end{itemize}
We have thus dealt with the case $E = F \times K$.  The cases when $E$ a field  or $F^3$ are similar and easier. Indeed, for these cases, 
it suffices to assume  that $\pi_{\bar N_E}=0$ and  $\pi_{\bar N_E, \psi_{\Sigma}}=0$  for $\Sigma=(1,0,0,0)$ to conclude the desired cuspidality. 
\end{proof} 

\vskip 5pt

We have now completed the proof of Theorem \ref{T:111}. The following corollary gives an alternative description of $\Theta(1)$ and will be used in \cite{GS3}.
\vskip 5pt 
\begin{cor} Let $\chi$ be a quadratic character of $F^{\times}$. Let $I(\chi, s)$ be the degenerate principal series representation for $G_E$ associated to the Heisenberg 
parabolic subgroup $P_E=M_E N_E$. Then the co-socle of $I(\chi, 1/2)$ is a direct sum of the theta lifts $\Theta_C(1)$ over all isomorphism classes of twisted composition algebras $C$ of $E$-dimension $2$ with associated embedding 
$E\rightarrow J$ such that $K_J$ corresponds to $\chi$ by local class field theory.   
\end{cor}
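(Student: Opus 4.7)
The proof combines Theorem~\ref{T:111} with the key mini-theta identity from Corollary~\ref{C:Whit-disjoint2}. Fix a quadratic character $\chi$ of $F^{\times}$, corresponding to an \'etale quadratic algebra $K$ via local class field theory, and let $L = E \otimes_F K$. Note that by functoriality, $\omega_{L/E} = \chi \circ N_{E/F}$, so $\omega_{L/E}|_{F^{\times}} = \chi$. By Theorem~\ref{T:111}(2), each $\Theta_C(1) = J_2(\Theta_{M,C}(1), 1/2)$ is the Langlands quotient of the standard module $\Ind_{P_E}^{G_E}(\Theta_{M,C}(1) \otimes |\det|^{1/2})$. Furthermore, Corollary~\ref{C:Whit-disjoint2} provides the crucial identity
\[
\bigoplus_{[C]} \Theta_{M,C}(1) \;\cong\; (1 \times \omega_{L/E})|_{M_E}
\]
as $M_E$-modules, where $[C]$ ranges over isomorphism classes of rank-2 $E$-twisted composition algebras whose associated Springer embedding $E \hookrightarrow J$ satisfies $K_J = K$ (equivalently, $[K_E][K_C][K] = 1$).

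The first main step is to show each $\Theta_C(1)$ appears as an irreducible quotient of $I(\chi, 1/2)$. I would proceed via Bernstein's second adjointness:
\[
\Hom_{G_E}(I(\chi,1/2), \Theta_C(1)) \;=\; \Hom_{M_E}(\chi|\det|^{1/2},\; r_{\bar P_E}(\Theta_C(1))).
\]
To produce a nonzero element, one computes the Jacquet module $r_{\bar P_E}(\Theta_C(1))$ as the $H_C$-trivial coinvariants of $\Pi_{\bar N_E}$, exploiting the exactness of the Jacquet functor. The structure of $\Pi_{\bar N_E}$ described in Section~\ref{S:jac-E6} presents it as an extension of $\Pi_{\bar N_J}$ by $C_0^{\infty}(\Omega^{\perp})$, and the decomposition $\Pi_{\bar N_J} \cong \omega_{K/F}|\det|^{-2} \oplus |\det|^{-3/2} \cdot \Pi_M$ of \S\ref{SS:levi} exhibits a 1-dimensional summand on which $H_C$ acts trivially. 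After the normalization shift by $\delta_{P_E}^{1/2} = |\det|^{5/2}$ (converting between the un-normalized Jacquet module appearing in the above formula and the normalized one used in Frobenius reciprocity), this summand contributes the character $\chi|\det|^{1/2}$ as a constituent of $r_{\bar P_E}(\Theta_C(1))$, yielding the desired non-vanishing Hom and hence a surjection $I(\chi,1/2) \twoheadrightarrow \Theta_C(1)$.

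The second main step is to check that these exhaust the irreducible quotients of $I(\chi, 1/2)$, giving the full co-socle. For this I would invoke the detailed composition series analysis of $I(\chi, 1/2)$ carried out in Appendix B (via the Iwahori Hecke algebra), which determines all irreducible constituents and their module-theoretic positions. The constituents in the co-socle are then identified with the $\Theta_C(1)$'s by matching Langlands parameters: the data $(\Theta_{M,C}(1), 1/2)$ arising from Theorem~\ref{T:111} and the mini-theta decomposition of $(1 \times \omega_{L/E})|_{M_E}$ in Corollary~\ref{C:Whit-disjoint2} precisely accounts for all factors appearing in the Appendix~B computation. The decomposition of the (possibly reducible) Langlands quotient into summands indexed by $[C]$ reflects the nontrivial R-group coming from the self-Weyl-symmetry of the quadratic inducing datum $\chi \circ \det$.

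The main obstacle is the careful bookkeeping: on the one hand, correctly tracking the normalization shift $\delta_{P_E}^{1/2}$ in the Jacquet module computation; on the other, matching the Langlands parameters of the composition factors identified in Appendix~B with those of $J_2(\Theta_{M,C}(1), 1/2)$, so as to verify bijectively that the co-socle constituents are exactly the $\Theta_C(1)$ for $C$ with $K_J = K$ and no others.
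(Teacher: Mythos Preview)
Your first step (producing a map $I(\chi,1/2)\to\Theta_C(1)$ from the one-dimensional summand $\omega_{K/F}|\det|^{-2}$ of $\Pi_{\bar N_J}$) is essentially the paper's argument, just dualized. But the exhaustion step has two genuine gaps.

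First, Appendix~B treats only \emph{unramified} degenerate principal series, i.e.\ $\chi=1$; it says nothing about $I(\chi,1/2)$ for nontrivial quadratic $\chi$. The paper does not use Appendix~B here; it cites Segal's external result \cite[Thm.~4.1]{Se2} for the size of the socle of $I(\chi,-1/2)$ for all quadratic $\chi$.

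Second, and more seriously, your opening identification $\Theta_C(1)=J_2(\Theta_{M,C}(1),1/2)$ via Theorem~\ref{T:111}(2) applies only when $J=E\oplus C$ is \emph{not} a division algebra. When $E$ is a field and $\chi=1$, one of the relevant $C$'s has $J=D$ a cubic division algebra, and then Theorem~\ref{T:111}(1) gives $\Theta_C(1)=V_1'=J_1(\St_E,1/2)$, a Langlands quotient coming from the $3$-step parabolic $Q_E$, not from $P_E$. Likewise, Corollary~\ref{C:Whit-disjoint2} sums only over $[C]\in H^1(F,T_{E,K_C})[2]$, which by Proposition~\ref{P:division} is exactly the non-division locus. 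So your Langlands-parameter matching via the mini-theta decomposition will systematically miss this $V_1'$, which by Theorem~\ref{T:degen_E}(2) is genuinely part of the co-socle of $I(1/2)$.

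The paper sidesteps both issues by a pure counting argument. The map $\Theta_C(1)\to I(\chi,-1/2)$ comes from the one-dimensional summand of $\Pi_{\bar N_J}$, which is present whether or not $J$ is division; the embeddings for distinct $C$ are seen to be linearly independent because, by Proposition~\ref{P:twisted}, the $\Theta_C(1)$ have disjoint $(\bar N_E,\psi_\Sigma)$-support. One then simply checks that the number of such $C$'s (from \cite[Prop.~12.1]{GS2}) equals the number of constituents in the socle of $I(\chi,-1/2)$ (from \cite[Thm.~4.1]{Se2}), forcing the direct sum $\bigoplus_C\Theta_C(1)$ to exhaust the socle.
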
 
\vskip 5pt

\begin{proof}  
Consider any embedding $E \hookrightarrow J$ such that $\chi$ corresponds to $K_J$ by  local class field theory and write  $J=E+C$. Then we have the dual pair 
$G_E \times \Aut_E(C) \longrightarrow G_J$, and we may consider the big theta lift $\Theta_C(1)$ of the trivial representation of $\Aut_E(C)$.  By Theorem \ref{T:111}, we know that $\Theta_C(1)$ is irreducible. On the other hand, observe that $\Theta_C(1)$ maps nontrivially to $I(\chi, -1/2)$ (by using the one dimensional summand of $\Pi_{\bar N_J}$), and thus it is an irreducible submodule of $I(\chi, -1/2)$. Since $\bar N_E$ spectra of $\Theta_C(1)$ for 
non-conjugate embeddings $E \rightarrow J$ are different, we thus have a submodule
\[  \bigoplus_C  \Theta_C(1)  \hookrightarrow I(\chi, -1/2), \]
with the sum running over isomorphism classes of $C$'s considered here.
\vskip 5pt
 
 Now the corollary follows by counting: \, the number of classes of embeddings with $E$ and $K_J$ fixed, given by \cite[Prop. 12.1]{GS2}, 
is equal to the number of representations in  the socle of $I(\chi, -1/2)$, which is given by  \cite[Thm 4.1]{Se2}.  For example, if $\chi\neq 1$, and $E=F+K$, where $K$ is a field, then we have one class of  embeddings if $K\cong K_J$ 
and two otherwise. These two cases can be characterized by $\chi\circ N_{K/F}=1$ and   $\chi\circ N_{K/F}\neq 1$ respectively, and correspond to the cases (6) and (7)
in  \cite[Thm. 4.1]{Se2}. However, the conditions were mistakenly stated there as $\chi\circ N_{E/F}=1$ and   $\chi\circ N_{E/F}\neq 1$, when in fact it was what we wrote here.  
\end{proof}

\vskip 15pt

\section{\bf Archimedean Theta Correspondence}  \label{S:arch-theta}
 
In this section, we consider the theta correspondence for $H_C \times G_E$ over archimedean local fields and formulate the analog of Theorem \ref{T:111}. The main theorems here are
Theorems \ref{T:arch1} and \ref{T:arch2}. The proofs of these theorems will appear in a separate paper, joint with Jeff Adams and Annegret Paul. 
\vskip 5pt

\subsection{\bf Real Freudenthal-Jordan algebras}
Assume first that $F=\mathbb R$; the case $F=\mathbb C$ will be dealt with at the end of this section.  Firstly, we enumerate the 
real Freudenthal-Jordan algebra $J$ of dimension 9:
\vskip 5pt

\begin{itemize}
\item For $K_J = \R^2$, we have $J = M_3(\mathbb R)$;
\item For $K_J = \C$, $J$ is given as the set of fixed points of involutions of the second kind on $M_3(\C)$. Involutions of the second kind  on $M_3(\mathbb C)$ arise from  nondegenerate Hermitian forms $h$ on $\mathbb C^3$, which we may assume to be given by: 
 \[  h=\epsilon_1 z_1 \bar z_1 + \epsilon_2 z_2 \bar z_2 + \epsilon_3 z_3 \bar z_3, \quad  \text{with $\epsilon_i=\pm 1$.} \] 
 There are 8 choices for signs, but we get only 4 different involutions, since $h$ and $-h$ give the same involution. In this way, we get 4 Jordan algebras $J_{\epsilon_1, \epsilon_2,\epsilon_3}$, but the 3 of them  
corresponding to $\{ \epsilon_1, \epsilon_2, \epsilon_3 \} = \{ + , -, -\}$ are isomorphic.  Hence, up to isomorphism, there are two such $J$'s:
\vskip 5pt
\begin{itemize}
\item $J = J_{3,0}(\C) = J_{+++}$;
\item $J = J_{1,2}(\C) = J_{-+-}$ 
 \end{itemize}
 \end{itemize}
We shall sometimes denote the last two cases of $J$ collectively as $J_3(\C)$.   
The group $G_J$ depends only on $K_J$. It is the split group if $K_J=\mathbb R^2$, and quasi-split if $K_J=\mathbb C$ \cite{LS15} .

 \vskip 5pt 
 
 \subsection{Embeddings of cubic algebras} 
 We shall next enumerate the $E$-twisted composition algebra of rank $2$ over $\R$
 by describing embeddings of cubic etal\'e algebras $E$ into $J$.  Note that there are 2 cubic etal\'e $\R$-algebras: 
 \[  E=\mathbb R^3  \quad \text{or} \quad   E=\mathbb R \times \mathbb C. \]
 We consider the various cases in turn:
 \vskip 5pt
 \begin{itemize}
 \item[(a)]  $J = M_3(\mathbb R)$:   in this case,  both $\R^3$ and $\R \times \C$ embeds into $M_3(\mathbb R)$ and these embeddings are unique up to conjugation. 
 \vskip 5pt
 
 \item[(b)]  $J = J_3(\C)$ and $E = \R^3$:  in this case, we may work with the 4 Jordan algebras  $J = J_{\epsilon_1, \epsilon_2, \epsilon_3}$ as described above. 
  For each of these $J$'s, there is  an embedding of $\mathbb R^3$ into $J$  as diagonal matrices. 
 Though 3 of these Jordan algebras are isomorphic (to $J_{1,2}(\C)$),  the three embeddings are not isomorphic.
 To conclude,  we get 4 classes of embeddings in all. 
  \vskip 5pt
  
  \item[(c)]  $J_3(\C)$ and $E = \R \times \C$: in this case,  $E$ does not embed into $J_{3,0}(\C)$
  and there is a unique embedding of   $E$ into $J_{1,2}(\C)$. 
  \end{itemize}
  We take this opportunity to correct a typo at the very end of \cite{GS2}, where it was incorrectly asserted in \cite[Pg. 1956]{GS2} that in the context (b), there are only 2 embeddings of $\R^3$ into $J_3(\C)$, even though the table on \cite[Pg 1954]{GS2} clearly shows that this set of embeddings have 4 elements. 
  \vskip 5pt

\subsection{\bf The torus $\Aut_E(C)^0$}  
 For each embedding $E \hookrightarrow J$, we have a decomposition $J=E\oplus C$. The 
 corresponding $H_C = \Aut_E(C)$ is always a semi-direct product $\Aut_E(C)^{0} \rtimes \mathbb Z/2\mathbb Z$ such that the conjugation action of the non-trivial element in 
 $\mathbb Z/2\mathbb Z$ on $\Aut_E(C)^{0}$ is the inverse involution. 
  The possible cases of the two-dimensional torus $\Aut_E(C)^{0}$  are tabulated in the following table, where $\mathbb T$ is the group of complex numbers of norm one. 
\[
\begin{array}{c||c|c} 
& E=\mathbb R^3 & E=\mathbb R \times \mathbb C \\
\hline 
K=\mathbb R^2 & (\mathbb R^{\times })^3/\Delta(\mathbb R^{\times}) & (\mathbb R^{\times } \times \mathbb C^{\times}) /\Delta(\mathbb R^{\times}) \\ 
\hline 
K=\mathbb C &  (\mathbb T)^3/\Delta(\mathbb T) & (\mathbb T \times \mathbb C^{\times}) /\Delta(\mathbb T)
\end{array} 
\]

\vskip 5pt 
\subsection{\bf Characters of $\Aut_E(C)^0$}
We introduce a refined notation for characters of these tori. 
\vskip 5pt
\begin{itemize}
\item A character $\chi$ of  $(\mathbb R^{\times })^3/\Delta \mathbb R^{\times}$ is a triple of 
characters $(\chi_1, \chi_2, \chi_3)$ of $\mathbb R^{\times}$ such that $\chi_1\cdot \chi_2\cdot \chi_3=1$. 
\vskip 5pt

\item A character of $\mathbb T$ is represented by an integer. Thus 
a character $\chi$ of  $(\mathbb T)^3/\Delta \mathbb T$ is represented by a triple of integers $(n_1, n_2, n_3)$ such that $n_1 + n_2 + n_3=0$. 
\vskip 5pt

\item In the remaining two cases 
a character of the torus is identified with a pair of characters $(\chi_{\mathbb R}, \chi_{\mathbb C})$, such that  $\chi_{\mathbb R}\cdot \chi_{\mathbb C}=1$ on  
$\Delta \mathbb R^{\times}$, and with a pair $(m, \chi_{\mathbb C})$, where $m \in \mathbb Z$, such that the restriction of $\chi_{\mathbb C}$ to $\mathbb T$  is given 
by $z\mapsto z^{-m}$. 
\end{itemize}

\vskip 5pt

\subsection{Representations of $\Aut_E(C)$}  
Let $\chi$ be a character of $\Aut_E(C)$. If $\chi\neq \chi^{-1}$, let $\rho(\chi)\cong \rho(\chi^{-1})$ be the unique irreducible representation of 
$\Aut_E(C)$ such that the restriction to 
$\Aut_E(C)^0$ is $\chi\oplus \chi^{-1}$. If $\chi=\chi^{-1}$, then $\chi$ extends to a character of $\Aut_E(C)$  in two ways, denoted by $\rho(\chi)^{\pm}$. 
These two representations are indistinguishable unless $\chi=1$, in which case  one extension is the trivial representation, denoted by $\rho(1)$, and the other the sign representation $\epsilon$.   Note that non-trivial quadratic characters $\chi$ appear only in the split case (where $E = \R^3$ and $K_J = \R^2$), since $\Aut_E(C)^0(\R)$ is connected as a real Lie group otherwise. 

\vskip 5pt

\subsection{Some tempered representations of $M_E$}  \label{packets} 
 To every unitary character $\chi$ of $\Aut_E(C)^0$, we shall attach 
a packet $P(E,K_J,\chi)=P(E,K_J,\chi^{-1})$ of tempered representations of $M_E\cong \GL_2(E)^{\det}$, 
obtained by restricting an irreducible representation of $\GL_2(E)$. 
We need additional notation. 
\vskip 5pt

\begin{itemize}
\item For a local field $F$ and a pair of characters  $(\mu_1,\mu_2)$  of  $F^{\times}$, 
let $\mu_1\times \mu_2$ be the unique infinite-dimensional subquotient of the principal series representation of $\GL_2(F)$
obtained by normalized parabolic induction from the pair of characters. 
\vskip 5pt
\item Let $\omega : \mathbb R^{\times} \rightarrow \{\pm 1\}$ be the sign character. It is the unique non-trivial quadratic character of $\mathbb R^{\times}$. 
\vskip 5pt
\item Let $\nu : \mathbb R^{\times} \rightarrow \mathbb R^{\times}$ be the identity character $\nu(x)=x$, for all $x\in \mathbb R^{\times}$. 
\vskip 5pt

\item For $n\in \mathbb Z$, the principal series representation $\nu^n \times \omega$, when restricted to $\SL_2(\mathbb R)$, contains a sum of two (limits of) 
 discrete series representations with the lowest $\SO_2$-types $\pm(|n|+1)$. 
 \end{itemize}
We can now describe the packet $P(E,K_J,\chi)=P(E,K_J,\chi^{-1})$ of tempered representations of $M_E\cong \GL_2(E)^{\det}$.
\smallskip 

\underline{Case $E=\mathbb R^3$ and $K_J=\mathbb R^2$.} Let $\chi=(\chi_1,\chi_2,\chi_3)$ be a unitary character of $(\mathbb R^{\times })^3/\Delta \mathbb R^{\times}$.  
The packet $P(E,K_J,\chi)$ consists of representations appearing in the restriction to $\GL_2(\mathbb R^3)^{\det}$ of
\[
(\chi_1\times 1) \otimes (\chi_2\times 1)\otimes (\chi_3\times 1). 
\]
 This representation is irreducible when restricted to $\SL_2(\mathbb R^3)$ unless $\chi_i=\omega$ for at least one $i$. The group 
$\GL_2(\mathbb R^3)^{\det}$ is large enough so that the restriction is still irreducible if precisely one $\chi_i$ is $\omega$. In view of the relation 
$\chi_1\cdot \chi_2\cdot \chi_3=1$, at most two $\chi_i$ can be $\omega$, and this is precisely when $\chi$ is a non-trivial quadratic character. Then and only then 
the packet consists of two elements. The standard intertwining operator provides an identification of $P(E,K_J,\chi)$ and $P(E,K_J,\chi^{-1})$. 

\smallskip 
\underline{Case $E=\mathbb R^3$ and $K_J=\mathbb C$.}
Let $\chi=(n_1,n_2,n_3)$ be a character of $\mathbb T^3/\Delta \mathbb T$.  
The packet $P(E,K_J,\chi)$ consists of representations appearing in the restriction to $\GL_2(\mathbb R^3)^{\det}$ of
\[
(\nu^{n_1}\times \omega) \otimes (\nu^{n_2}\times \omega)\otimes (\nu^{n_3} \times \omega). 
\]
The restriction to $\SL_2(\mathbb R^3)$ consists of 8 summands, hence the packet $P(E,K_J,\chi)$ consists of 4 representations.

\smallskip 
\underline{Case $E=\mathbb R\times \mathbb C$ and $K_J=\mathbb R^2$.}
 The restriction from $\GL_2(\mathbb R\times \mathbb C)$ to $\GL_2(\mathbb R\times \mathbb C)^{\det}$ is always irreducible, hence the packets are singletons. 
Let $\chi=(\chi_{\mathbb R}, \chi_{\mathbb C})$ be a unitary character of $(\mathbb R^{\times}\times \mathbb C^{\times} )/\Delta(\mathbb R^{\times})$.  
The packet $P(E,K_J,\chi)$ consists of the restriction to $\GL_2(\mathbb R \times \mathbb C)^{\det}$ of 
\[
(\chi_{\mathbb R}\times 1) \otimes (\chi_{\mathbb C}\times 1). 
\]

\smallskip 
\underline{Case $E=\mathbb R\times \mathbb C$ and $K_J=\mathbb C$.} 
We are again restricting from $\GL_2(\mathbb R\times \mathbb C)$ to $\GL_2(\mathbb R\times \mathbb C)^{\det}$ hence the packets are singletons. 
Let $\chi=(m, \chi_{\mathbb C})$ be a unitary character of $(\mathbb T\times \mathbb C^{\times} )/\Delta \mathbb T$.  
The packet $P(E,K_J,\chi)$ consists of the restriction to $\GL_2(\mathbb R \times \mathbb C)^{\det}$ of 
\[
(\nu^m \times \omega) \otimes (\chi_{\mathbb C}\times 1). 
\]

\smallskip 
Summarizing, we have 4 families of tempered packets 
$P(E,K_J,\chi)=P(E,K_J,\chi^{-1})$ of $\GL_2(E)^{\det}$, parameterized by unitary characters $\chi$ of $\Aut_E(C)^0$. If $E=\mathbb R^3$ and $K_J=\mathbb C$, then 
$|P(E,K_J,\chi)|=4$. As a part of our correspondence result, we will see that the 4 members of this packet are naturally parameterized by the 4 embeddings 
$\mathbb R^3 \rightarrow J_3(\mathbb C)$. 
If $\chi$ is a non-trivial quadratic character (this happens only  if $E=\mathbb R^3$ and $K_J=\mathbb R^2$) then $|P(E,K_J,\chi)|=2$. Let $\pi^{+}(\chi), \pi^{-}(\chi)$ be its constituents. 
Otherwise $|P(E,K_J,\chi)|=1$ and its unique element will be denoted by $\pi(\chi)$. 
\vskip 5pt

\subsection{Main result}  Let $V$ be the Harish-Chandra module of the minimal representation of $G_J$. Consider the dual pair 
$G_E \times \Aut_E(C)$ corresponding to an embedding $E\rightarrow J$.  For every irreducible representation $\rho$ of $\Aut_E(C)$ let 
\[ 
\Theta(\rho) = V /\cap_{\varphi \in \Hom(V, \rho)} {\Ker}(\varphi) 
\] 
where $\varphi$ are homomorphisms in the sense of Harish-Chandra modules. We note that $\Theta(\rho)$ is naturally a $(\mathfrak g_E, K_E)$-module, where 
$K_E$ is the maximal compact subgroup of $G_E$. 
The following will be proved in a joint paper with Jeff Adams and Annegret Paul, though we note that the second bullet, when $\Aut_E(C)$ is compact, is contained in Loke's thesis \cite{Lo}.

\smallskip

\begin{thm}  \label{T:arch1}
 Let $G_E\times \Aut_E(C)$ be the dual pair arising from an embedding $E\rightarrow J$.  
Let $\chi$ be a unitary character of $ \Aut_E(C)^{0}$. 
\begin{itemize} 
\item  If $E\rightarrow J$ is not one of the 4 embeddings $\mathbb R^3 \rightarrow J_3(\mathbb C)$, then 
 $\Theta(\rho(\chi))\cong J_2(\pi(\chi), 1)$, unless  $\chi$ is quadratic and non-trivial, in which case we have $\Theta(\rho^{\pm}(\chi))\cong J_2(\pi^{\pm}(\chi),1)$. 
 \vskip 5pt
 
\item If $E\rightarrow J$ is one of the 4 embeddings $\mathbb R^3 \rightarrow J_3(\mathbb C)$, then 
 $\Theta(\rho(\chi))\cong J_2(\pi, 1)$, where $\pi\in P(E,K_J,\chi)$. As we run through all 4 embeddings $\mathbb R^3 \rightarrow J_3(\mathbb C)$, 
 $\pi$ runs through the 4 representations in $P(E,K_J,\chi)$.
 \end{itemize} 

\end{thm}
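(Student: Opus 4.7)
The plan is to follow the template established in the nonarchimedean proof of Theorem \ref{T:111}, adapted to the Harish-Chandra module setting. The three main ingredients will be: (i) a description of the minimal representation $\Pi$ of $G_J$ restricted to the parabolic $\bar{P}_E$ and its twisted $\bar{N}_E$-Jacquet modules, (ii) the archimedean mini-theta correspondence for the Levi pair $H_C \times M_E$, which reduces to a classical similitude theta correspondence for $\mathrm{GO}_2 \times \GL_2$, and (iii) an irreducibility argument that pins down $\Theta(\rho)$ uniquely inside a standard module. The case when $\Aut_E(C)^0$ is compact, which covers most of the configurations with $K_J = \mathbb{C}$, is already handled by Loke's thesis, so the bulk of the work concerns the noncompact cases, and in particular the four embeddings $\mathbb{R}^3 \hookrightarrow J_3(\mathbb{C})$.

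First I would set up the exact analog of Proposition \ref{P:twisted} in the archimedean setting: the computation of $\Pi_{\bar{N}_E, \psi_\Sigma}$ via the minimal orbit $\Omega \subset N_J/Z$ is entirely geometric and transfers to $\mathbb{R}$ without change. This shows that $\Theta(\rho)_{\bar{N}_E, \psi_\Sigma}$ is nonzero exactly on the single $M_E$-orbit whose associated twisted composition algebra is isomorphic to $C$, and on that orbit it realizes the contragredient of $\rho$ twisted by the appropriate quadratic character of $\mathrm{Stab}_{M_E}(\Sigma)$. Next I would run the mini-theta computation of $\Theta_M(\rho(\chi))$: this is now a restriction-of-scalars similitude theta correspondence over products of $\mathbb{R}$ and $\mathbb{C}$, and the packets $P(E, K_J, \chi)$ defined in the excerpt have been tailored precisely so that $\Theta_M(\rho(\chi))$ lands in such a packet. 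Frobenius reciprocity applied to the one-dimensional and $\Pi_M$-summands of $\Pi_{\bar{N}_E}$ then exhibits $J_2(\pi, 1)$ as an irreducible subquotient of $\Theta(\rho)$ for a definite $\pi \in P(E, K_J, \chi)$.

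The hard part will be the irreducibility of $\Theta(\rho)$. In the nonarchimedean case this was proved by a combination of finiteness of length from twisted Jacquet modules, an analysis of wave-front sets supported on $A_2$ after eliminating subregular and smaller orbits, and the Fourier-Jacobi functor for $P_E$ applied to small summands. For the archimedean analog I would replace wave-front set language by associated varieties of annihilators, and replace the cuspidality analysis of small orbits by $K_E$-type counting combined with the Fourier-Jacobi descent to the dual pair $\SL_2(E) \times \O(Q)$ acting on $C_c^\infty(C)$, where the Weil representation has no vectors invariant under the subgroups that would correspond to irreducible constituents with overly small associated variety. The subregular-orbit obstruction requires the archimedean version of Lemma \ref{L:subegular}, which again follows from the geometric description of $\Omega_{\min} \cap \mathfrak{g}_J(i)$ being independent of the field.

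Finally, for the distinguished case of the four embeddings $\mathbb{R}^3 \hookrightarrow J_3(\mathbb{C})$, I would show that the assignment of the embedding to the member $\pi$ of $P(\mathbb{R}^3, \mathbb{C}, \chi)$ matches via the signature data. Concretely, each embedding is indexed (up to sign) by the signs $(\epsilon_1, \epsilon_2, \epsilon_3)$ of the Hermitian form defining $J$, and these signs govern which (limit of) discrete series constituent of $\nu^{n_i} \times \omega$ is produced by the similitude theta lift on each of the three $\SL_2$-factors. Tracking these signs through the see-saw diagram of \S\ref{SS:seesawdp} and combining with the uniqueness in Theorem \ref{T:arch2} should close the identification, giving the claimed four-to-four matching. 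The most delicate bookkeeping will be ensuring consistency of the splitting characters across the three factors so that the overall central character constraint $\chi_1 \cdot \chi_2 \cdot \chi_3 = 1$ (respectively $n_1 + n_2 + n_3 = 0$) is respected.
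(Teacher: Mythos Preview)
The paper does not contain a proof of this theorem. Immediately preceding the statement, the authors write that ``the following will be proved in a joint paper with Jeff Adams and Annegret Paul, though we note that the second bullet, when $\Aut_E(C)$ is compact, is contained in Loke's thesis \cite{Lo}.'' The introduction to \S\ref{S:arch-theta} likewise states that the proofs of Theorems \ref{T:arch1}, \ref{T:arch1.5} and \ref{T:arch2} are deferred to that separate paper. So there is no proof here against which to compare your proposal.

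Your outline is a reasonable adaptation of the nonarchimedean argument of Theorem \ref{T:111} to the Harish-Chandra module setting, and the structural pieces you identify (twisted Jacquet modules via the minimal orbit, the mini-theta reduction to $\mathrm{GO}_2 \times \GL_2$ over $\mathbb{R}$ and $\mathbb{C}$, associated-variety control in place of wave-front sets, and Fourier--Jacobi descent) are the natural archimedean substitutes. But since the paper deliberately leaves this result unproved, your proposal is neither confirmed nor contradicted by anything in the text; whether the actual proof in the Adams--Paul--Gan--Savin paper follows this route or uses more direct archimedean methods (e.g.\ explicit $K$-type computations in the spirit of Loke's thesis for the compact cases) cannot be determined from this source.
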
 

The representation $\Theta(\epsilon)$ is always irreducible, and can be described as it sits in a degenerate principal series representations, along with 
$\Theta(\rho(1))$. 
Let $I_E(s)$ denote the (normalized) degenerate principal series for $G_E$ where we induce $|\det|^s$ from $P_E$. Let $I_E(\omega, s)$ be the quadratic twist 
of this series, i.e. we induce $\omega(\det) \cdot |\det|^s$. (Recall that $\omega$ is the sign character of $\mathbb R^{\times}$.)  The following result is due to 
Avner Segal \cite[Appendix A]{Se2}, but formulated  with our interpretation in terms of theta lifts.  

\begin{thm} \label{T:arch1.5}
 Let $\Theta_{E\rightarrow J}(\rho)$ denote the theta lift of $\rho$ in the correspondence arising from the embedding $E\rightarrow J$. 

\begin{itemize} 
\item For every $E$, we have an exact sequence 
\[ 
0 \rightarrow \oplus \Theta_{E\rightarrow J_3(\mathbb C)} (\epsilon) \rightarrow I_E(1/2) \rightarrow \Theta_{E\rightarrow M_3(\mathbb R)} (\rho(1)) \rightarrow 0 .
\] 
\item For every $E$, we have an exact sequence 
\[ 
0 \rightarrow  \Theta_{E\rightarrow M_3(\mathbb R)} (\epsilon) \rightarrow I_E(\omega, 1/2) \rightarrow \oplus \Theta_{E\rightarrow J_3(\mathbb C)} (\rho(1)) \rightarrow 0 .
\] 
\end{itemize} 
Here $J_3(\mathbb C) = J_{3,0}(\C)$ or $J_{1,2}(\C)$ is any Jordan algebra with $K_J = \C$, and   the sum in both sequences is over the isomorphism 
classes of embeddings of $E$ into $J_{3,0}(\C)$ or $J_{1,2}(\C)$ (recall that there is one class if $E=\mathbb R\times \mathbb C$, and four if $E=\mathbb R^3$). 
 
  \end{thm}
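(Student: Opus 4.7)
The proof goes via Segal's determination in \cite[Appendix A]{Se2} of the submodule structure of the archimedean degenerate principal series $I_E(1/2)$ and $I_E(\omega, 1/2)$. At these points of reducibility, each of these modules has length two, with explicitly described socle and cosocle; the content of the theorem is then the identification of those constituents as direct sums of theta lifts via the dual pairs $G_E \times H_C \subset G_J$, as $J = E \oplus C$ ranges over the nine-dimensional Freudenthal--Jordan $\R$-algebras containing $E$.

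For the cosocle identifications I would apply the Frobenius reciprocity argument used in the final corollary of Section \ref{S:main-theta}. For each embedding $E \hookrightarrow J$ with $J = E \oplus C$, the Jacquet module $\Pi_{J, \bar{N}_J}$ contains the one-dimensional $M_E$-summand $\omega_{K_J/F} \cdot |-|^{-2}$ on which $H_C$ acts trivially. This yields a nontrivial $G_E$-equivariant map $\Theta_{E \to J}(\rho(1)) \to I_E(\omega_{K_J}, -1/2)$, or equivalently an irreducible quotient of $I_E(\omega_{K_J}, 1/2)$. Theorem \ref{T:arch1} then identifies $\Theta_{E\to J}(\rho(1))$ with $J_2(\pi(1),1)$ for $\pi(1) \in P(E, K_J, 1)$; moreover, inequivalent embeddings $E \to J$ produce pairwise inequivalent packet members and hence pairwise distinct quotients. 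Summing over all embeddings with fixed $K_J$ gives the claimed cosocle $\Theta_{E \to M_3(\R)}(\rho(1))$ of $I_E(1/2)$ (corresponding to $K_J = \R^2$) and $\bigoplus \Theta_{E \to J_3(\C)}(\rho(1))$ of $I_E(\omega, 1/2)$ (corresponding to $K_J = \C$).

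For the socle identifications I would use the analogous construction with the sign character $\epsilon$ of $H_C$ in place of the trivial character $\rho(1)$. Since $\epsilon|_{H_C^0}$ is still trivial, the same one-dimensional $M_E$-summand of $\Pi_{J,\bar N_J}$ sees $\Theta_{E\to J}(\epsilon)$, but the sign on $H_C/H_C^0 \cong \Z/2\Z$ forces the corresponding map to land in $I_E(\omega_{K_J} \cdot \omega, 1/2)$ rather than $I_E(\omega_{K_J}, 1/2)$. This produces $\bigoplus \Theta_{E \to J_3(\C)}(\epsilon)$ inside $I_E(1/2)$ and $\Theta_{E \to M_3(\R)}(\epsilon)$ inside $I_E(\omega, 1/2)$; combined with the length-two conclusion from Segal and the nonvanishing and irreducibility of $\Theta(\epsilon)$ (the archimedean analogue of Theorem \ref{T:111}, the second bullet of Theorem \ref{T:arch1} following from Loke's thesis \cite{Lo}), this forces the exact sequences in the statement.

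The main obstacle will be rigorously justifying the twisted matching $\rho(1) \leftrightarrow \epsilon \iff 1 \leftrightarrow \omega$ between the two sides of the correspondence. While the appearance of the quadratic character $\omega$ can be anticipated from the disconnectedness of $H_C$ (mirroring the split versus nonsplit dichotomy for $\mathrm{O}_2$ in the classical similitude theta correspondence from Section \ref{S:mini}), making this precise at the archimedean place requires a genuine computation, most cleanly executed either by adapting the archimedean analogue of Proposition \ref{P:twisted} to track Whittaker support (which also yields the distinctness of theta lifts from non-conjugate embeddings) or by direct computation in the Schr\"odinger model of $\Pi_{M_J}$ as described in \S \ref{SS:Schro}. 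Once this twist is pinned down, the two exact sequences of the theorem follow by matching Jordan--H\"older constituents against the lengths and composition series from Segal's analysis.
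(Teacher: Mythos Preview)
The paper itself does not prove this theorem: it attributes the structure of $I_E(1/2)$ and $I_E(\omega,1/2)$ to Segal \cite[Appendix~A]{Se2} and defers the identification of constituents as theta lifts to the separate paper with Adams and Paul announced at the start of \S\ref{S:arch-theta}. Your overall strategy---take Segal's structure result and then identify the pieces via Frobenius reciprocity, mirroring the non-archimedean corollary at the end of \S\ref{S:main-theta}---is the natural one, and your cosocle argument using the one-dimensional summand of $\Pi_{\bar N_J}$ is sound.

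Your socle argument, however, has a genuine gap. You claim that the same one-dimensional summand $\omega_{K_J/F}\cdot|-|^{-2}$ of $\Pi_{\bar N_J}$ ``sees'' $\Theta_{E\to J}(\epsilon)$, with the sign on $H_C/H_C^0$ producing an extra twist by $\omega$. But the paper records explicitly (just after \eqref{E:PiN}) that the action of $\Aut_E(C)$ on this one-dimensional summand is \emph{trivial}---all of $H_C$, not just $H_C^0$. Hence its $\epsilon$-coinvariants vanish and you get no map from $\Theta(\epsilon)$ into any degenerate principal series via this piece; the twist by $\omega$ cannot arise this way. In the non-archimedean setting, Theorem~\ref{T:111}(3) handles $\Theta(\epsilon)$ by completely different means (restriction to a $D_5$ Levi in the isotropic case, or wave-front/cuspidality arguments when $H_C$ is anisotropic), never through the one-dimensional piece. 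A working archimedean argument would more plausibly compute the theta lift of the trivial character of $H_C^0$ (yielding $\Theta(1)\oplus\Theta(\epsilon)$ together, as in the split $p$-adic case where $D(1)=\Theta(1)\oplus\Theta(\epsilon)$), or pin down $\Theta(\epsilon)$ directly by $K$-types or Langlands parameters---presumably this is what the Adams--Paul paper does.

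A smaller point: ``length two'' is only correct when $E=\mathbb{R}\times\mathbb{C}$. For $E=\mathbb{R}^3$ the direct sums in both exact sequences range over the four embedding classes $\mathbb{R}^3\hookrightarrow J_3(\mathbb{C})$, so the Jordan--H\"older length is five. (If you meant Loewy length two, say so; but then you cannot simply ``count constituents'' as you propose in your last paragraph.)
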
 

\vskip 10pt  
\subsection{\bf Complex case}
Assume now that $F=\mathbb C$. In this case $E=\mathbb C^3$ is the only possible case. We have:

\begin{thm} \label{T:arch2}
 Let $\chi=(\chi_1,\chi_2,\chi_3)$ be a unitary character of $(\mathbb C^{\times})^3/\Delta \mathbb C^{\times}$. 
Let $\pi(\chi)$ be the tempered representation of  $M_E=\GL_2(\mathbb C^3)^{\det}$ defined as in the real  split case. Then 
$\Theta(\rho(\chi))= J_2(\pi(\chi))$ if $\chi \ne 1$ and $\Theta(1) \oplus \Theta(\epsilon)\cong D(1)$ is the degenerate principal series for an $A_2$ parabolic subgroup,.  
\end{thm}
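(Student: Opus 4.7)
The plan is to adapt the strategy used in the split nonarchimedean case of Theorem~\ref{T:111} (where $E = F^3$ and $J = M_3(F)$), exploiting the simplifications available over $\mathbb C$. First I would observe that when $F = \mathbb C$, necessarily $E = \mathbb C^3$ and $J = M_3(\mathbb C)$ is the unique $9$-dimensional Freudenthal--Jordan algebra, since $K_J$ is forced to be $\mathbb C^2$. Consequently $G_J$ is a split group of type $E_6$, the torus $\Aut_E(C)^0 = (\mathbb C^{\times})^3/\Delta\mathbb C^{\times}$ is connected and isotropic, and $H_C = H_C^0 \rtimes \mathbb Z/2\mathbb Z$ canonically. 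In particular every unitary character $\chi$ is nonquadratic unless $\chi = 1$, so the packet $P(\mathbb C^3, \mathbb C^2, \chi)$ is always a singleton $\{\pi(\chi)\}$, and the statement has the same shape as the split nonarchimedean case.

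Next I would embed $D_4 \subset E_6$ via the three $D_5$ Levi subgroups $G_1, G_2, G_3$ coming from the extended Dynkin diagram, with miniscule cocharacters $\lambda_i : \mathbb C^{\times} \to G_i$ normalized so that $\lambda_1(t)\lambda_2(t)\lambda_3(t) = 1$. The map $(t_1,t_2,t_3) \mapsto \lambda_1(t_1)\lambda_2(t_2)\lambda_3(t_3)$ identifies $\Aut_E(C)^0$ with $(\mathbb C^{\times})^3/\Delta\mathbb C^{\times}$. One then invokes the archimedean analog of the $D_5$-restriction formula for the minimal representation from \cite{MS}: on the level of Harish-Chandra modules, $\Pi|_{G_1}$ should fit into an exact sequence
\[
0 \to C^\infty_c(\omega)^{K_1\text{-fin}} \to \Pi \to \Pi_1 \oplus \mathbb C \to 0,
\]
where $\omega$ is the highest-weight orbit in the $16$-dimensional Spin module of $G_1$, and where $\Pi_1$ and $\mathbb C$ are twisted by non-unitary characters $|\,\cdot\,|^s$ and $|\,\cdot\,|^r$ of the center.

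The key geometric step, which goes through verbatim from the nonarchimedean discussion, is the decomposition of $\omega$ into $G_E$-orbits: an open orbit $\omega_0$ whose stabilizer is the derived group of an $A_2$-parabolic of $G_E$, and two closed orbits $\omega_2 \subset V_2$, $\omega_3 \subset V_3$ sitting inside the remaining two $8$-dimensional $G_E$-representations appearing in the $D_5$-branching. By $S_3$-symmetry, the third orbit $\omega_1 \subset V_1$ lives in the $\Pi_1$-contribution. Because the cocharacters $\lambda_i$ act by non-unitary scalars on the pieces built from $\omega_1, \omega_2, \omega_3$, only the piece $C^\infty_c(\omega_0)$ survives when we take coinvariants against a unitary character $\chi$ of $\Aut_E(C)^0$. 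This geometrically identifies the Harish-Chandra module
\[
\Theta(\rho(\chi)) \;\cong\; D(\chi),
\]
where $D(\chi)$ is the unitary induction from the $A_2$-parabolic of the character $\mu_\chi$ attached to $\chi$ exactly as in \S\ref{S:LQ}.

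To finish, I would compute the Langlands data of $D(\chi)$ via its exponents. When $\chi \ne 1$, the character $\mu_\chi$ is regular enough that the exponent computation (which is cleaner over $\mathbb C$ than over $\mathbb R$, since no sign characters intervene) yields $D(\chi) \cong J_2(\pi(\chi), 1)$, reproducing the split nonarchimedean formula. When $\chi = 1$, both $1$ and $\epsilon$ occur in the theta correspondence and are distinguished by the action of the nontrivial element of $H_C/H_C^0$ on the two natural summands of $D(1)$, yielding $D(1) \cong \Theta(1) \oplus \Theta(\epsilon)$. The main obstacle is the first step: establishing the archimedean $D_5$-restriction of the minimal representation at the level of $(\mathfrak g, K)$-modules, since the nonarchimedean argument uses the Jacquet functor, which must here be replaced by an analysis of $K$-types and central characters. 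This analytical input is exactly what is handled in the forthcoming joint paper with Adams and Paul.
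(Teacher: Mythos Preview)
The paper does not contain a proof of Theorem~\ref{T:arch2}. Both in the introduction and at the start of \S\ref{S:arch-theta}, the authors state explicitly that the proofs of Theorems~\ref{T:arch1}, \ref{T:arch1.5} and \ref{T:arch2} are deferred to a separate joint paper with J.~Adams and A.~Paul. So there is no proof in this paper to compare your proposal against.

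Your sketch is a reasonable heuristic, in that it transports the split nonarchimedean argument (the $D_5$-restriction of the minimal representation and the $G_E$-orbit decomposition of the highest-weight cone $\omega$) to the complex setting, and you yourself identify the genuine obstacle: the short exact sequence from \cite{MS} and the subsequent orbit filtration are Jacquet-module statements, and replacing them by the correct $(\mathfrak g,K)$-module analysis is precisely the content that the authors defer. In other words, what you have written is not a proof but a plausibility argument whose missing analytic core coincides with what the paper declines to supply here.
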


\vskip 15pt

\section{\bf Global Theta Lifting}  \label{S:global-theta}
In this section, let $E/F$ be a cubic field extension of number fields, so that $G_E$ is a so-called triality $\Spin_8$.
We shall consider the global theta correspondence for the dual pair 
\[ H_C  \times G_E = \Aut_E(C) \times \Spin_8^E  \longrightarrow G_J \]
 associated to a twisted composition algebra $C$ over $F$ with $\dim_E C =2$, corresponding to an embedding of Jordan algebras $E \hookrightarrow J$, for some Freudenthal-Jordan algebra $J$ of dimension $9$ over $F$.
\vskip 5pt

\subsection{\bf Hecke characters of $\tilde{T}_{E,K}$.}
Recall from \S \ref{SS:isom-tori} that $H_C^0$ is isomorphic to the 2-dimensional torus
\[  \tilde{T}_{E,K} \cong   {\Ker} \left( N_{K/F}:  ({\Res}_{E \otimes K / F} \mathbb{G}_m)  /  ({\Res}_{K/F} \mathbb{G}_m) \longrightarrow 
({\Res}_{E/F} \mathbb{G}_m) / \mathbb{G}_m \right), \]
so that
\[  \tilde{T}_{E,K}(F)  = {\Ker} \left(  N_{K/F} : (E \otimes K)^{\times}/ K^{\times} \longrightarrow E^{\times}/ F^{\times} \right). \]
Before describing the automorphic representation theory of $H_C = \Aut_E(C)$, let us record some relevant facts about automorphic characters of $\tilde{T}_{E,K}$. 
\vskip 5pt

\begin{prop} \label{P:auto-char}
(i) The torus $\tilde{T}_{E,K}$ satisifies the weak approximation property. As such, any two Hecke characters $\chi$ and $\chi'$ of $\tilde{T}_{E,K}$ such that $\chi_v = \chi'_v$ for almost all $v$ are equal.  
\vskip 5pt

(ii) Let $\chi$ and $\chi'$ be two unitary Hecke characters of $\tilde{T}_{E,K}$ such that for almost all $v$, either $\chi'_v = \chi_v$ or $\chi'_v = \chi_v^{-1}$. Then $\chi' = \chi$ or $\chi' = \chi^{-1}$. 
\end{prop}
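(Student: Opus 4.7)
The plan splits along the two parts of the proposition.

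For (i), the approach is to invoke weak approximation for the two-dimensional torus $\tilde T_{E,K}$, which is a norm-one subtorus of a restriction of scalars. Weak approximation for tori of this type is classical and, in the context of this paper, is covered by the Voskresenskii papers \cite{V1,V2} already cited in the introduction. Given weak approximation, the proof is standard: $\eta := \chi/\chi'$ is a continuous character of $\tilde T_{E,K}(\mathbb A)/\tilde T_{E,K}(F)$ whose local components are trivial outside a finite set $S$, so it descends to a character of $\prod_{v \in S} \tilde T_{E,K}(F_v)$ modulo the (dense) image of $\tilde T_{E,K}(F)$, and hence must be trivial.

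For (ii), my plan is to translate the assertion into a statement about L-parameters. Let $\phi, \phi' : \Gamma \to \hat{\tilde T}_{E,K}$ be the L-parameters of $\chi$ and $\chi'$, with $\Gamma = W_F^{\mathrm{ab}}$. At each unramified place $v$, the local condition $\chi'_v = \chi_v^{\pm 1}$ (as characters of $\tilde T_{E,K}(F_v)$) translates into the relation $\phi'(\mathrm{Frob}_v) = \phi(\mathrm{Frob}_v)^{\pm 1}$ in $\hat{\tilde T}_{E,K}$. By Chebotarev density the Frobenius elements are dense in $\Gamma$, and continuity of $\phi, \phi'$ then promotes the local condition to the assertion that the closed subset
\begin{equation*}
\{\, g \in \Gamma : \phi'(g) \in \{\phi(g), \phi(g)^{-1}\} \,\}
\end{equation*}
is all of $\Gamma$.

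The argument concludes by the elementary lemma that in any abelian group $G$, two subgroups $A$ and $B$ with $A \cup B = G$ must satisfy $A = G$ or $B = G$: for if $a \notin A$ and $b \notin B$ then $a \in B$ and $b \in A$, while the product $ab$ can lie in neither $A$ (else $a = (ab) b^{-1} \in A$) nor $B$ (else $b = a^{-1}(ab) \in B$). Applied to $A = \ker(\phi' \phi^{-1})$ and $B = \ker(\phi' \phi)$ inside $\Gamma$, this yields $\phi' = \phi$ or $\phi' = \phi^{-1}$, hence $\chi' = \chi$ or $\chi' = \chi^{-1}$. The step requiring the most care, I expect, will be the class-field-theoretic translation between Hecke characters of the non-split torus $\tilde T_{E,K}$ and their L-parameters, ensuring that the condition at almost all unramified places really does promote to a global condition on $\Gamma$; once this bookkeeping is in place, the abstract lemma finishes the proof.
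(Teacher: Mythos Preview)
For part (i), your approach matches the paper's: both invoke Voskresenskii's theorem that every two-dimensional torus over a number field satisfies weak approximation, from which the strong-multiplicity-one statement follows immediately.

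For part (ii), your approach is genuinely different from the paper's, and the step you flag as ``bookkeeping'' is in fact where the real content lies. Your setup takes $\Gamma = W_F^{\mathrm{ab}}$ and treats $\phi, \phi'$ as homomorphisms $\Gamma \to \hat{\tilde T}_{E,K}$. But $\tilde T_{E,K}$ is \emph{not} split, so its L-parameters are continuous $1$-cocycles $W_F \to \hat T$ for the nontrivial Galois action, not homomorphisms from $W_F^{\mathrm{ab}}$. The local condition $\chi'_v = \chi_v$ then says the restricted cocycle class is trivial in $H^1(W_{F_v},\hat T)$, i.e.\ that $\phi'\phi^{-1}$ is a \emph{coboundary} on $W_{F_v}$, which is not the same as $\phi'(\mathrm{Frob}_v)=\phi(\mathrm{Frob}_v)$ for any fixed cocycle representatives. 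Even if one first reduces to a split torus (so that cocycles become genuine homomorphisms), a second obstacle remains: the Frobenius elements $\pi_v$ are \emph{not} dense in $W_F^{\mathrm{ab}} \cong \mathbb{A}_F^\times/F^\times$ because of the archimedean connected component, so ``$\mathrm{Frob}_v \in A\cup B$ for almost all $v$'' together with $A\cup B$ closed does not yield $A\cup B=\Gamma$. Your two-subgroups lemma is perfectly valid, but you never reach its hypothesis.

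The paper proceeds quite differently. When $K$ is split one has $\tilde T_{E,K}(F)=E^\times/F^\times$, so $\chi,\chi'$ may be viewed as Hecke characters of $\mathbb{A}_E^\times$; the paper forms the principal series $\pi_\chi=\pi(\chi,\chi^{-1})$ of $\mathrm{PGL}_2(\mathbb{A}_E)$, notes that the hypothesis makes $\pi_\chi$ and $\pi_{\chi'}$ nearly equivalent, and compares poles at $s=1$ in the identity
\[
\zeta_E^S(s)^2\, L^S(s,\chi^2)\, L^S(s,\chi^{-2}) \;=\; L^S(s,\chi'\chi)\, L^S(s,\chi'\chi^{-1})\, L^S(s,\chi'^{-1}\chi)\, L^S(s,\chi'^{-1}\chi^{-1})
\]
of partial Rankin--Selberg $L$-functions to force one of $\chi'\chi^{\pm 1}$ to be trivial. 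When $K$ is a field, the paper reduces to this split case by composing with the norm $\tilde T_{E,K}(\mathbb{A}_K)\to\tilde T_{E,K}(\mathbb{A}_F)$ and proving this norm map is \emph{surjective} --- a genuine computation using that $[E:F]=3$ is coprime to $2$. So what you anticipated as routine class-field-theoretic translation is, in the paper's treatment, a base-change reduction plus an $L$-function pole argument; the elegant group-theoretic endgame you propose does not survive the passage to the non-split, infinite-order setting without comparable analytic input.
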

\vskip 5pt

\begin{proof}
(i) By a result of Voskresenskii \cite{V2}, any tori of dimension $2$ over $F$ satisfies the weak approximation property. 
\vskip 5pt

(ii) Assume first that $K = F \times F$ is split. Then $\tilde{T}_{E,K} =( {\Res}_{E/F} (\mathbb{G}_m) / \mathbb{G}_m$, so that
$\tilde{T}(F) = E^{\times}/ F^{\times}$. We may thus regard $\chi$ and $\chi'$ as Hecke characters of $E^{\times}$.  
Consider now the principal series  representations 
\[    \text{$\pi_{\chi} := \pi(\chi, \chi^{-1})$ and $\pi_{\chi'} := \pi(\chi', \chi'^{-1})$ of $\mathrm{PGL}_2(\mathbb{A}_E)$. } \]   
These are irreducible automorphic representations which are nearly equivalent to each other under our hypothesis. 
If these two principal series representations are locally equivalent for places of $E$ outside a finite set $S$, then
we have an equality of partial Rankin-Selberg L-functions:
\[ 
L^S( s , \pi_{\chi} \times \pi_{\chi})  = L^S(s, \pi_{\chi'} \times \pi_{\chi}), \]
which is more explicitly written as:
\[  \zeta^S(s)^2 \cdot  L^S(s, \chi^2) \cdot L^S(s, \chi^{-2})  = L^S(s, \chi' \chi)\cdot L^S(\chi' \chi^{-1}) \cdot L^S(s, \chi'^{-1} \chi) \cdot L^S(s, \chi'^{-1} \chi^{-1}). \]
Now the LHS has a pole at $s= 1$ and hence so must the RHS. This implies that $\chi' = \chi$ or $\chi^{-1}$, as desired.

\vskip 5pt

Assume now that $K$ is a field. We shall invoke the base change from $F$ to $K$. We claim that the norm maps 
\[  \tilde{T}_{E,K}(K_v) \longrightarrow   \tilde{T}_{E,K}(F_v) \quad \text{and} \quad  \tilde{T}_{E,K}(\A_K) \longrightarrow   \tilde{T}_{E,K}(\A_F) \]
are surjective. Since 
\[  \tilde{T}_{E,K} \times_F K  \cong (E \otimes K)^{\times}/ K^{\times}, \]
this surjectivity claim allows one to reduce to the case of split K treated above, by composing $\chi$ and $\chi'$ with the norm map. 
\vskip 5pt

To show the surjectivity of the local norm map, we shall treat the most nondegenerate case where $L_v := E_v \otimes K_v$ is a field; the other cases are easier. 
Then the norm map
\[  \tilde{T}_{E,K}(K_v) = L_v^{\times} / K_v^{\times} \longrightarrow \tilde{T}_{E,K}(F_v) = {\Ker}  \left(  N_{L_v/E_v} : L_v^{\times}/ K_v^{\times} \longrightarrow E_v^{\times}/F_v^{\times}\right)  \]
is given by
\[  x \mapsto x/ \sigma(x)  \quad \text{where $\sigma \in \Aut(L_v/E_v) = \Aut(K_v/F_v)$. } \]
We thus need to show that
\[  \{ y \in L_v^{\times}: N_{L_v/E_v}(y) \in F_v^{\times} \} = K_v^{\times} \cdot  \{ z \in L_v^{\times}: N_{L_v/E_v}(z) =1 \}. \]
For this, we need to observe that if $y \in L_v^{\times}$ satisfies $N_{L_v/E_v}(y) \in F_v^{\times}$, then in fact $N_{L_v/E_v}(y) \in N_{K_v/F_v}(K_v^{\times})$. This in turn follows from the fact that the natural map
\[  F_v^{\times}/ N_{K_v/F_v}(K_v^{\times})  \longrightarrow  E_v^{\times}/ N_{L_v / E_v}(L_v^{\times}) \]
is an isomorphism (using the fact that $E_v$ is an odd degree extension of $F_v$).

\vskip 5pt

To deduce the surjectivity of the adelic norm map from the local ones, it suffices to note that at places $v$ of $F$ unramified over $L$, the local norm map remains surjective when all the local fields are replaced by their ring of units. 
\end{proof}
\vskip 5pt

\subsection{\bf Automorphic representations of $\Aut_E(C)$.}  \label{SS:auto-HC}
Recall that  one has a short exact sequence of algebraic groups
\[ \begin{CD}
1 @>>>  H^0_C   @>>>  H_C  @>>>   \mu_2@>>> 1  \end{CD} \]
From this, one obtains:
\[ \begin{CD}
 1 @>>>  H^0_C(F) @>>>  H_C(F) @>>> \mu_2(F)  \\
 @. @VVV  @VVV  @VVV  \\
1 @>>>  H^0_C(\A)  @>>>   H_C(\A)  @>>>   \mu_2(\A).    \end{CD} \]
 Because $E$ is a field, the torus $H^0_C$ is anisotropic so that 
\[  [H_C^0] := H^0_C(F) \backslash H^0_C(\A) \quad \text{ and }   [H_C] := H_C(F) \backslash H_C(\A) \]
 are compact.  The automorphic representations of $H^0_C$ are unitary automorphic characters which are classified by global class field theory. We will need to discuss the  automorphic representations of the disconnected algebraic group $H_C$.
\vskip 5pt

Let $\mathcal{A}(H^0_C)$ denote the space of automorphic forms on $H^0_C$. Since $H_C(F)$ acts naturally on $H^0_C(\A)$ by conjugation (preserving $H^0_C(F)$), we have a natural action of $H_C(F)$ on $\mathcal{A}(H^0_C)$ by
\[  (\gamma \cdot f)(t)  = f(\gamma^{-1} t \gamma) \quad \text{  for $\gamma \in H_C(F)$, $t \in H^0_C(\A)$ and $f \in \mathcal{A}(H^0_C)$.} \]
Since $H_C^0$ is abelian,  this action factors through the quotient $H_C(F)/ H_C^0(F) \hookrightarrow \mu_2(F)$.
We now consider two cases, depending on whether this last injection is surjective or not.
\vskip 5pt

 \vskip 5pt

\begin{itemize}
\item[(a)]  $H_C^0(F) = H_C(F)$.   In this case,  $C$ corresponds to an embedding $E \hookrightarrow J$ with $J$ a division algebra. At the nonempty finite set $\Sigma_C$ of places $v$ where $J \otimes_F F_v$ is division,  we have $H_C^0(F_v) = H_C(F_v)$.
\vskip 5pt

Let $\chi = \otimes_v \chi_v$ be a unitary  automorphic character of the torus $H^0_C$, so that 
\[ \chi  : [H^0_C] = H_C(F) \backslash H_C(F) \cdot H^0_C(\A)  \longrightarrow S^1, \]
and hence $\C \cdot \chi \subset \mathcal{A}(H^0_C)$.
 Consider the induced representation
\[  V_C(\chi) := \mathrm{ind}_{ H_C(F) \cdot H^0_C(\A)}^{H_C(\A)}   \chi = \mathrm{ind}_{H^0_C(\A)}^{H_C(\A)} \chi. \]
Then an element in $V(\chi)$ is a smooth function
\[  f :  H_C(F) \backslash H_C(\A)  \longrightarrow \C  \]
such that 
\[  f(tg)  = \chi(t)  \cdot f(g)  \quad \text{  for any $t \in H^0_C(\A)$ and $g \in H_C(\A)$.} \]
Hence we have:
\[  V_C(\chi)  \hookrightarrow \mathcal{A}(H_C).  \]
 As an abstract representation, $V_C(\chi)$ is the multiplicity-free direct sum of all irreducible representations of $H_C(\A)$ whose abstract restriction to $H^0_C(\A)$ contains $\chi$. 
 Indeed,  if one considers the restrictions of functions from $H_C(\A)$ to $H_C^0(\A)$,  the submodule $V_C(\chi)$ is characterizted as the subspace of functions whose restrictions are contained in $\C \cdot \chi \subset \mathcal{A}(H_C^0)$. 
\vskip 5pt

Thus one has the following description of $\mathcal{A}(H_C)$:
\[  \mathcal{A}(H_C) = \bigoplus_{\chi} V_C(\chi), \]
which is an orthogonal direct sum with $\chi$ running over the automorphic characters of $H_C^0$. 

\vskip 5pt
We note that $\mathcal{A}(H_C)$ is not multiplicity-free as a representation of $\H_C(\A)$. Indeed, if $\chi$ and $\chi'$ are two distinct automorphic characters of $H_C^0$, then $V_C(\chi) \cong V_C(\chi')$ as abstract representations if and only if the following two conditions hold:
\vskip 5pt

\begin{itemize}
\item for all $v \notin \Sigma_C$, $\chi'_v = \chi_v^{\pm 1}$,
\item for all $v \in \Sigma_C$,  $\chi'_v = \chi_v$. 
\end{itemize}
\vskip 5pt

By Proposition \ref{P:auto-char}(ii), the first condition implies that $\chi' = \chi^{\pm 1}$ and hence $\chi' = \chi^{-1}$ (since we are assuming that $\chi$ and $\chi'$ are distinct); this then implies by the second condition that $\chi_v^2 =1$ for all $v \in \Sigma_C$.
Thus, if $\chi$ is an automorphic character of $H_C^0 = T_{E,K}$, with the property that $\chi_v^2=1$ for all $v \in \Sigma_C$, but $\chi^2 \ne 1$, then $V_C(\chi) \cong V_C(\chi^{-1})$ as abstract representations, but $V_C(\chi)$ and $V_C(\chi^{-1})$ are orthogonal as subspaces of $\mathcal{A}(H_C)$; alternatively, one distinguishes them by their restriction as functions to $H_C^0$. Thus, $\mathcal{A}(H_C)$ has multiplicity-at-most $2$, but fails to have multiplicity one. What is interesting, however, is that even if the multiplicity of an irreducible representation $\rho$ in $\mathcal{A}(H_C)$ is $2$, there is a canonical decomposition of the $\rho$-isotypic submodule of $\mathcal{A}(H_C)$ into two irreducible summands. These summands are characterized by their restriction (as functions) to $H_C^0$ belonging to $\C \cdot \chi$ or $\C \cdot \chi^{-1}$ for a special $\chi$ as above.

\vskip 10pt

\item[(b)] $H_C(F)/H_C^0(F) \cong \mu_2(F)$. Then for every place $v$, $H_C(F_v)/ H_C^0(F_v) = \mu_2(F_v)$. In this case, the action of $H_C(F)/H_C^0(F) = \mu_2(F)$ on
 $\mathcal{A}(H_C^0)$ needs to be taken into account. 
\vskip 5pt

As before, let $\chi = \otimes_v \chi_v$ be a unitary  automorphic character of the torus $H^0_C$.  The action of $H_C(F)/ H_C^0(F)$ sends $\chi$ to its inverse $\chi^{-1}$. 
Hence, we consider the equivalence relation on automorphic characters of $H_C^0$ given by this action, i.e. modulo inversion. Denote the equivalence class of $\chi$ by $[\chi]$. 
\vskip 5pt

There are now two subcases to consider:

\vskip 5pt

\begin{itemize}
\item[(i)]   $\chi^2 = 1$, so that $\chi$ is fixed by $H_C(F)$ as an abstract representation and the equivalence class $[\chi]$ is a singleton. In this case,  $\chi$ is fixed by $H_C(F)$ as a function on $H^0_C(\A)$ and $\C \cdot \chi \subset \mathcal{A}(H^0_C)$ affords a representation $\chi^*$ of 
$H_C(F) \cdot H^0_C(\A)$ extending $\chi$, characterized by the requirement that $\chi^*$ is trivial on $H_C(F)$. 

 Consider the induced representation
\[  V_C[\chi] := \mathrm{ind}_{ H_C(F) \cdot H^0_C(\A)}^{H_C(\A)}   \chi^*. \]
Then an element in $V_C[\chi]$ is a smooth function
\[  f :  H_C(F) \backslash H_C(\A)  \longrightarrow \C  \]
such that 
\[  f(tg)  = \chi(t)  \cdot f(g)  \quad \text{  for any $t \in H^0_C(\A)$ and $g \in H_C(\A)$.} \]
Hence we have:
\[  V_C[\chi]  \hookrightarrow \mathcal{A}(H_C).  \]
 As an abstract representation, $V[\chi]$ is the multiplicity-free direct sum of all irreducible representations of $H_C(\A)$ whose abstract restriction to $H_C(F) \cdot H^0_C(\A)$ contains $\chi^*$. 
\vskip 5pt

\item[(ii)]  $\chi^2 \ne 1$, so that $\chi$ is not fixed by $H_C(F)$ as an abstract representation and $[\chi] = \{ \chi, \chi^{-1} \}$.   In this case,  the span of $\gamma \cdot \chi$, for all $\gamma \in H_C(F)$, is the 2-dimensional subspace
\[  W_{[\chi]} = \C \cdot \chi \oplus \C \cdot \chi^{-1} \subset \mathcal{A}(H^0_C) \]
 such that
\[  W_{[\chi]}  \cong \mathrm{ind}_{H^0_C(\A)}^{H_C(F) \cdot H^0_C(\A)}   \chi   \]
 as  $H_C(F) \cdot H^0_C(\A)$-module. 
 Consider the induced representation
\[  V_C[\chi]  = \mathrm{ind}_{ H_C(F) \cdot H^0_C(\A)}^{H_C(\A)}  W_{[\chi]}  \cong \mathrm{ind}_{H^0_C(\A)}^{H_C(\A)} \chi. \]
An element of $V_C[\chi]$ is thus  a function 
\[  \phi:  H_C(\A)  \longrightarrow W_{[\chi]}= \C \cdot\chi +  \C \cdot \chi^{-1}  \subset \mathcal{A}(H_C). \]
Setting
\[  f_{\phi}(h)  = \phi(h)(1),    \]
so that $f_{\phi}$ is the composition of $\phi$ with evaluation at $1 \in H_C(\A)$, we see that the map $\phi \mapsto f_{\phi}$ defines an embedding
\[ V_C[\chi] \hookrightarrow \mathcal{A}(H_C). \]
In this way, we shall regard $V_C[\chi]$ as a submodule of $\mathcal{A}(H_C)$ henceforth. 
 As an abstract representation, $V_C[\chi]$ is the multiplicity-free direct sum of all irreducible representations of $H_C(\A)$ whose restriction to $H^0_C(\A)$ contains $\chi$ and $\chi^{-1}$. 
\end{itemize}
\vskip 5pt

Now we have:
\[  \mathcal{A}(H_C) =  \bigoplus_{[\chi]}  V_C[\chi] \]
as $[\chi]$ runs over equivalence classes of automorphic characters $\chi$ of $H_C^0$.   The subspace $V_C[\chi]$ is characterized as the subspace of functions whose restriction to $H_C^0$ is contained in $W_{[\chi]} = \C \cdot \chi + \C \cdot \chi^{-1}$. 
We observe that in this case, the representation $\mathcal{A}(H_C)$ is multiplicity-free.

\end{itemize}
 
\vskip 5pt

\subsection{\bf Global minimal representation.}
To carry out the global theta correspondence, we need another ingredient: the global minimal representation of $G_J(\A)$. For each place $v$ of $F$, we have a local minimal representation $\Pi_v$ of $G_{J}(F_v)$ which is unramified for almost all $v$, so that we may set $\Pi = \otimes_v \Pi_v$. Using residues of Eisenstein series, it has been shown that there is an ($G_J^0(\A)$-equivariant) automorphic realisation 
\[  \theta:   \Pi \hookrightarrow   \mathcal{A}(G_J^0). \]
As before, the group $G_J(F)$ acts on $\mathcal{A}(G_J^0)$ via
\[  (\gamma \cdot \phi)(g)  = \phi(\gamma^{-1} g \gamma)  \quad \text{for $\gamma \in G_J(F)$ and $g \in G_J^0(\A)$.} \]
The embedding $\theta$ is easily checked to be $G_J(F) \cdot G_J^0(\A)$-equivariant. 
\vskip 5pt

We now recall  the main properties of the global minimal representation we shall use. 
Recall the Heisenberg parabolic subgroup $P_J = M_J \cdot N_J$ of $G_J$ with
\[   V_J := N_J^{ab}   =  F + J + J + F.  \]
Using a fixed character $\psi$ of $F \backslash \A$ and the natural pairing between $N_J$ and its opposite $\overline{N}_J$,  the elements of $V_J$ parametrizes automorphic characters of $\overline{N}_J(\A)$ (trivial on $\overline{N}_J(F)$).  
Let $\Omega \subset V_J$ be the minimal nonzero $M_J$-orbit in $V_J$.  
For $\phi \in \Pi$, one has the Fourier expansion
\[  \theta(\phi)_{\overline{Z}_J}(g)  =    \theta(\phi)_{\overline{N}_J} (g)  +  \sum_{x \in \Omega}  \theta(\phi)_{\overline{N}_J,  \psi_x} (g),\] 
where $\bar Z_J$ is the 1-dimensional center of $\overline{N}_J$.  If $M_{J,x}$ denotes the stabilizer of $x \in\Omega$ in the Levi subgroup $M_J$, then
the Fourier coefficient $\theta(\phi)_{\overline{N}_J, \psi_x}$ is left-invariant under $M^{der}_{J,x}(\A) := M_{J,x}(\A) \cap M_J^{der}(\A)$.  On the other hand, when restricted to $M_J(\A)$,  the constant term $\theta(\phi)_{\overline{N}_J}$ is an automorphic form on $M_J$. One has
\[  \theta(\phi)_{\overline{N}_J}  \in    \omega_{K_J/F} |-|^{-2}  \oplus |-|^{-3/2} \cdot \Pi_{M_J},  \]
where   $\Pi_{M_J} = 0$ unless $G_J$ (or equivalently $M_J$) is quasi split, in which case $\Pi_{M_J}$ is the global minimal representation of $M_J$.   
\vskip 5pt

\subsection{\bf Global theta lifts.}
For any automorphic form $f$ on $H_C$, and $\phi \in  \Pi$, we consider the associated global theta lift:
\[  \theta(\phi,f) (g)  = \int_{[H_C]}   \theta(h \cdot \phi)(g)  \cdot \overline{f(h)} \, dh,  \quad \text{with $g \in G_E(\A)$.} \]
Note that we have written $\theta(h \cdot \phi)(g)$ instead of $\theta(\phi)(gh)$ in the integral because $\theta(\phi)$ is only defined as a function of $G_J^0(\A)$. 
Observe however that for $\gamma \in H_C(F)$, 
\[ \theta(\gamma h\cdot \phi)(g)  = \theta(h \cdot \phi)( \gamma^{-1}g\gamma)  = \theta(h \cdot \phi)(g)  \quad \text{for $g \in G_E(\A)$.} \] 
In any case,  $\theta(\phi, f) \in \mathcal{A}(G_E)$. For
any irreducible summand $\rho \subset V(\chi)$, the global theta lift $\Theta(\rho)$ of $\pi$ is defined as the span of all $\theta(\phi, f)$ with $\phi \in \Pi$ and $f \in \rho$, so that
\[  \Theta(\rho)  \subset \mathcal{A}(G_E). \]

\vskip 5pt

\subsection{\bf Cuspidality.}
We first show the following analog of the tower property in classical theta correspondence.
\vskip 5pt

\begin{prop}  \label{P:global-cuspidality}
The global theta lift $\Theta(\rho)$ is contained in the space $\mathcal{A}_2(G_E)$ of square-integrable automorphic forms of $G_E$.  
Moreover, it is cuspidal if and only if the (mini-)theta lift  (via $\Pi_{M_J}$) of $\pi$ to $M_E$  is zero. 
\end{prop}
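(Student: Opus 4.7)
The plan is to analyze the constant terms of $\theta(\phi,f)$ along the two $F$-rational maximal parabolic subgroups of $G_E$, namely the Heisenberg parabolic $\bar P_E = M_E \bar N_E$ and the three-step parabolic $\bar Q_E = L_E \bar U_E$. Since $E$ is a cubic field, these exhaust the maximal parabolics up to conjugacy, so controlling both constant terms suffices: cuspidality amounts to their vanishing, and square-integrability follows from Langlands' criterion once their exponents are shown to lie strictly in the negative Weyl chamber.

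First I would compute $\theta(\phi,f)_{\bar N_E}$. After interchanging the $[H_C]$- and $[N_E]$-integrals, this reduces to describing $\theta(\phi)_{\bar N_E}$ as an automorphic form on $M_E(\A)$. Expanding $\theta(\phi)_{\bar Z}$ into Fourier coefficients along the abelianization $V_J \supset V_E$ of $\bar N_J$, only the $\xi \in \Omega$ orthogonal to $V_E$, i.e.\ $\xi \in \Omega^\perp$, survive after integration over $[V_E]$. Since $E$ is a cubic field, Proposition \ref{P:vanishing} gives $\Omega^\perp = \emptyset$, so $\theta(\phi)_{\bar N_E}$ agrees (up to a volume factor) with $\theta(\phi)_{\bar N_J}$. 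The global analog of the $M_J$-decomposition $\Pi_{\bar N_J} \cong \omega_{K_J/F} \cdot |\det|^{-2} \oplus |\det|^{-3/2} \cdot \Pi_{M_J}$ then identifies $\theta(\phi,f)_{\bar N_E}$, restricted to $M_E(\A)$, as the sum of a one-dimensional piece (with trivial $H_C$-action, hence contributing only when $\rho$ is the trivial representation) and the global mini-theta lift of $\rho$ from $H_C$ to $M_E$ via $\Pi_{M_J}$. The $\rho$-component of this constant term therefore vanishes precisely when the mini-theta lift does.

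Next I would treat the constant term along $\bar U_E$; this is the main technical obstacle. Using the three-step filtration from \S\ref{SS:3step} and successive Fourier expansions, the computation reduces to controlling (possibly twisted) Jacquet coefficients of the minimal representation of $G_J$ along pieces of $\bar U_J$. Lemma \ref{L:subegular} eliminates nondegenerate characters in the top graded piece, while the remaining degenerate coefficients are attached to elements of $\Omega$ lying in prescribed subspaces of $\mathfrak{g}_J(1)$; an orbit analysis parallel to the local argument for Theorem \ref{T:111} shows that any such contribution to $\theta(\phi,f)_{\bar U_E}$ vanishes, so cuspidality is governed entirely by the Heisenberg direction.

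Finally, the exponents appearing in the Heisenberg constant term, $|\det|^{-2}$ and $|\det|^{-3/2}$ on $M_E$, are both strictly smaller than the half-modulus $|\det|^{5/2}$ coming from $\rho_{N_E}$, and the exponents along $\bar U_E$ admit an analogous bound inherited from the Jacquet module of $\Pi$ along $\bar U_J$; Langlands' criterion then yields $\theta(\phi,f) \in \mathcal{A}_2(G_E)$. The hard part is the rigorous vanishing of the $\bar U_E$-coefficients, which requires careful tracking of which $G_J$-nilpotent orbits meet the relevant subspaces of $\mathfrak{g}_E$ and a uniform globalization of the local Jacquet-module vanishing in Lemma \ref{L:subegular}.
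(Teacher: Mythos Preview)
Your treatment of the Heisenberg constant term is essentially correct and matches the paper: since $E$ is a field, $\Omega^{\perp}=\emptyset$ by Proposition~\ref{P:vanishing}, so $\theta(\phi,f)_{\bar N_E}$ reduces to the $H_C$-period of $\theta(\phi)_{\bar N_J}$, and the two pieces of $\Pi_{\bar N_J}$ give exactly the cuspidality criterion you state.

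The gap is in the $\bar U_E$ direction. Your plan to attack it via the three-step filtration and a globalized version of Lemma~\ref{L:subegular} is problematic: that lemma is local and, crucially, assumes $\Aut_E(C)$ is \emph{anisotropic}, which fails at many places (e.g.\ wherever $E_v$ splits and $K_{J,v}$ is split). Even granting some local input, you have not explained how the $H_C$-average interacts with the successive Fourier expansions along the filtration of $\bar U_J$, and ``an orbit analysis parallel to Theorem~\ref{T:111}'' is not a substitute for that computation.

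The paper avoids all of this by a single clean trick: it computes the constant term along the \emph{intersection} $\bar N_E\cap \bar U_E$ in one stroke. The only surviving Fourier terms beyond $\Pi_{\bar N_J}$ are indexed by $\Omega_2=\{(t,0,0,0):t\in F^{\times}\}\subset\Omega$; these lie on the minimal orbit, and the key property of the automorphic minimal representation is that each Fourier coefficient $\theta(\phi)_{\bar N_J,\psi_t}$ is left-invariant under $M_{J,t}^{\der}(\A)$. Since $H_C\subset M_{J,t}^{\der}$, the integrand in the $H_C$-period is \emph{constant} in $h$, so this term vanishes unless $f$ is constant. Thus vanishing of the mini-theta lift (which subsumes the trivial-$\rho$ case) forces the constant term along $\bar N_E\cap\bar U_E$ to vanish, and hence both maximal constant terms vanish. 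The same invariance observation is reused to show there are no cuspidal exponents along $\bar Q_E$ in the square-integrability argument. This is the missing idea in your proposal; once you have it, the $\bar U_E$ analysis becomes a one-line consequence rather than a hard technical obstacle.
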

\vskip 5pt
\begin{proof}
To detect if $\Theta(\rho)$ is square-integrable or cuspidal, we need to compute the constant terms of a global theta lift $\theta(\phi, f)$ along the two maximal parabolic subgroups $\overline{P}_E = M_E \cdot \overline{N}_E$ and $\overline{Q}_E = L_E \cdot \overline{U}_E$ of $G_E$. Hence, we first compute the constant term $\theta(\phi, f)_{\overline{N}_E \cap \overline{U}_E}$ along the unipotent subgroup $\overline{N}_E \cap \overline{U}_E$.  We note that
\[   N_E / Z_E   =  F  \oplus  E  \oplus E \oplus F  \supset  (N_E \cap U_E ) / Z_E  =  0 \oplus E \oplus E \oplus F. \]
Recall that the Heisenberg parabolic subgroup $P_J = M_J \cdot N_J$ of $G_J$ satisfies $P_J \cap G_E =  P_E$, with $N_E \subset N_J$ such that
\[ V_E:= N_E /Z_E  \subset V_J := N_J / Z_E  =  F \oplus J \oplus J \oplus F,  \]
where the embedding $E \hookrightarrow J$ is such that $E^{\perp}  = C$. There is a natural projection map 
\[  pr: V_J  \longrightarrow V_E. \]  
which corresponds to the restriction of (automorphic) characters from $\overline{N}_J(\A)$ to $\overline{N}_E(\A)$. 

\vskip 5pt

For $\Omega \subset V_J$ the minimal $M_J$-orbit, let
\[  \Omega_0  = \{  x \in \Omega:   pr(x)  = (\ast, 0, 0, 0)  \in V_E \}. \]
Then one has
\begin{equation} \label{E:UN}
  \theta(\phi, f)_{\overline{N}_E \cap \overline{U}_E}(g)  = \int_{[H_C]} \overline{f(h)}  \cdot \left(  \theta(\phi)_{\overline{N}_J}(hg)  +  \sum_{x \in \Omega_0}   \theta(\phi)_{  \overline{N}_J, \psi_x}(hg)   \right) \, dh. \end{equation}
 To proceed further, we need to understand the set $\Omega_0$. Clearly, we have $\Omega_0 = \Omega_1 \cup \Omega_2$ where 
 \[  \Omega_1  = \{  x \in \Omega:   pr(x)  = (0, 0, 0, 0)  \in V_E \}   \]
 and
 \[ 
  \Omega_2  = \{  x \in \Omega:   pr(x)  = (t, 0, 0, 0) ,  t \ne 0 \}. \]
 By Proposition 8.1, and using the fact that $E$ is a field, we see that $\Omega_1$ is empty whereas $\Omega_2 = \{ (t, 0 , 0 , 0):  t \in F^{\times} \}$. 
 \vskip 5pt
 
 Hence, we see that 
 \[  \theta(\phi,f)_{\overline{N}_E}(g)  =  \int_{[H_C]} \overline{f(h)}  \cdot  \theta(\phi)_{\overline{N}_J}(hg)  \, dh.  \]
 Since 
 \[  \Pi_{\overline{N}_E}  = \omega_{K_J/F} \cdot |-|^{-2}  \oplus |-|^{-3/2} \cdot  \Pi_{M_J}, \]
with   $\Pi_{M_J}$ only present when $J$ is not division,  we deduce that  the constant term of $\Theta(\rho)$ along $\overline{N}_E$ vanishes unless $\rho$ is the trivial representation or if the (mini-)theta lift of $\rho$ to 
$M_E$ (via $\Pi_{M_J}$) is nonzero. One may check that if $\rho$ is trivial, then it does have  nonzero (mini-)theta lift to $M_E$, so that we may subsume the condition that $\rho$ is trivial into the second condition.    
\vskip 5pt

On the other hand, if $\psi_t$ is the automorphic character of $\overline{N}_J(\A)$ corresponding to $(t,0,0,0) \in \Omega_2(F)$ with $t \ne 0$, then $H_C(F)$ stabilizes $\psi_t$. This implies that in (\ref{E:UN}), 
\begin{equation} \label{E:WF}
 \theta(\phi)_{\overline{N}_J, \psi_t}(hg) = \theta(\phi)_{\overline{N}_J, \psi_t}(g), \end{equation}
so that the contribution of $\Omega_2$ to (\ref{E:UN}) vanishes  if  $f$ is not a constant function.
We have thus shown that if the mini-theta lift of $\rho$ to $M_E$ vanishes (so that $\rho$ is nontrivial in particular), then 
the constant term of $\Theta(\rho)$ along $\overline{N}_E \cap \overline{U}_E$ given in (\ref{E:UN}) vanishes, so that $\Theta(\rho)$ is cuspidal. 
\vskip 5pt

Conversely, it is clear from (\ref{E:UN}) and the above discussion that if the mini-theta lift of $\rho$ to $M_E$ is nonzero, then the constant term of $\Theta(\rho)$ along $\overline{N}_E$ is nonzero and hence $\Theta(\rho)$ is noncuspidal. To summarise, we have shown that $\Theta(\rho)$ is cuspidal if and only if the mini-theta lift of $\pi$ to $M_E$ (via $\Pi_{M_J}$) vanishes.  It remains to examine the case when $\Theta(\rho)$ is noncuspidal and show that $\Theta(\rho)$ is square-integrable nonetheless. 

\vskip 5pt

Suppose then that $\Theta(\rho)$ is not cuspidal, so that $\rho$ has nonzero (mini-)theta lift to $M_E$. For each parabolic subgroup $\overline{R}= \overline{P}_E$, $\overline{Q}_E$ or $\overline{B}_E = \overline{P}_E \cap \overline{Q}_E$, we consider the {\em normalized} constant term of $\Theta(\rho)$ along $\overline{R}$.  Since the Levi subgroup of $R$ is a product of groups of $\GL$-type, the strong multiplicity one theorem for $\GL_n$ implies that each of these normalized constant terms  is a direct sum of a cuspidal component and a noncuspidal component such that the two components are spectrally disjoint (i.e.  the system of spherical Hecke eigenvalues supported by the two parts are different). By the standard square-integrability criterion, we need to show that the (real parts of the) central characters appearing in the cuspidal component lie in the interior of the cone spanned by the positive simple roots which occur in the unipotent radical of $\overline R$.
\vskip 5pt

For the case $\overline{R} = \overline{P}_E$, the cuspidal component of the normalized constant term is contained in the mini-theta lift $\Theta_{M_J}(\rho)$ of $\rho$ to $M_E$. Since the center of $M_E$ is equal to the center of $M_J$, and the central character of $\Pi_{\overline{N}_J}$ is of the form $z \mapsto |z|^2$, this gives the desired positivity for the cuspidal component of $\Theta_{M_J}(\rho)$. By the results of \S 9.3 and Proposition 9.2, $\Theta_{M_J}(\rho) \otimes |\det|^{-1}$ is a summand of a tempered principal series representation of $M_E$.   Thus,  the noncuspidal component of $\Theta_{M_J}(\rho) \otimes |\det|^{-1}$ has normalised constant term consisting of unitary characters.  Since $|\det|$ corresponds to the highest root $3 \alpha + 2 \beta$, we have the positivity of cuspidal exponents along the Borel subgroup $\overline{P}_E \cap \overline{Q}_E$. 
\vskip 5pt

Finally, for the constant term along $\overline{Q}_E$, we claim that there are no cuspidal exponents. For if $\theta(\phi, f)_{\overline{U}_E}$ has nonzero projection to the space of cusp forms of $L_E$, then $\theta(\phi,f)_{\overline{U}_E}$ is in fact cuspidal and so has nonzero Whittaker-Fourier coefficients.  However, it follows from (\ref{E:WF})  that such Whittaker-Fourier coefficients all vanish, unless $f$ is a constant function. If $f$ is constant, then $\theta(\phi, f)$ has nonzero constant term along $\overline{B}_E$ (via our computation of the constant term along $\overline{P}_E$) and so $\theta(\phi,f)_{\overline{U}_E}$ cannot be nonzero cuspidal on $L_E$. 
    
    \vskip 5pt

Hence, we have shown that $\theta(\phi, f)$ is square-integrable. This completes the proof of Proposition \ref{P:global-cuspidality}.
 \end{proof}

\vskip 10pt
\subsection{\bf  Nonvanishing and Disjointness.}
We now consider the question of nonvanishing of the global theta lifting. We shall do this by computing the generic Fourier coefficients of $\theta(\phi, f)$ along the unipotent radical $\overline{N}_E$ of the Heisenberg parabolic subgroup  $\overline{P}_E$. 
These Fourier coefficients  are parametrised by generic cubes in $V_E(F)  = N_E(F)^{ab}$.
Recall that the $M_E(F)$-orbits of generic elements in $V_E(F)$ are parametrised by $E$-isomorphism classes of $E$-twisted composition algebras $A$.  For each such $A$, we let $\psi_A$ denote a character of $\overline{N}_E(\A)$ trivial on $\overline{N}_E(F)$ in the corresponding orbit; there is no loss of generality in assuming that $\psi_A$ corresponds to a reduced cube in $V_E(F)$, and note that the stabilizer of $\psi_A$ in $M_E$ is  isomorphic to $H_A =  \Aut_E(A)$. 
\vskip 5pt

Recall that if $N_J$ denotes the unipotent radical of the Heisenberg parabolic subgroup of $G_J$, then there is a natural projection map $pr: V_J = N_J^{ab}  \longrightarrow V_E$.  
This projection map corresponds to the restriction of characters from $\overline{N}_J(\A)$ to $\overline{N}_E(\A)$. 
Let $\Omega \subset V_J$ be the minimal nonzero $M_J$-orbit in $V_J$.  
Set
\[   \Omega_A  = pr^{-1}  (\psi_A )  \cap \Omega. \]
Then Corollary \ref{C:phs} says that $\Omega_A(F)$ is empty unless $A$ is $E$-isomorphic to  $C$, in which case, $\Omega_A(F)$ is a principal homogeneous space of $H_C(F)$. 
Thus, when $A \cong C$, we may fix an element $\tilde{\psi}_C \in \Omega_C(F)$, so that $\tilde{\psi}_C$ restricts to $\psi_C$ on $\overline{N}_E(\A)$. 
\vskip 5pt
 
Now we have:    

\vskip 5pt

\begin{prop}  \label{P:global-FC}
For $\phi \in \Pi$ and $f \in \rho \subset \mathcal{A}(H_C)$, $\theta(\phi,f)_{\overline{N}_E, \psi_A}$ vanishes (as a function on $G_E(\A)$) unless $A \cong C$, in which case
\[  \theta(\phi,f)_{\overline{N}_E, \psi_C} (g)  = \int_{H_C(\A)}  \theta(h \cdot \phi)_{\overline{N}_J,  \tilde{\psi}_C}( g)  \cdot \overline{f(h)} \, dh. \]  
 Moreover, there exist $\phi$ and $f$ such that $\theta(\phi,f)_{\overline{N}_E,\psi_C}(1) \ne 0$.  
\end{prop}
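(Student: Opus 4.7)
The plan is to compute the Fourier coefficient $\theta(\phi,f)_{\overline{N}_E,\psi_A}(g)$ by interchanging the order of integration and then invoking the Fourier expansion of $\theta(\phi)$ along $\overline{N}_J$, whose characters are controlled by the minimal orbit $\Omega \subset V_J$. First, since $\overline{Z}_E = \overline{Z}_J$ is the common center and $\psi_A$ is trivial on it, I would write
\[
\theta(\phi,f)_{\overline{N}_E,\psi_A}(g) = \int_{[H_C]} \overline{f(h)} \int_{V_E(F)\backslash V_E(\A)} \theta(\phi)_{\overline{Z}_J}(ugh)\,\overline{\psi_A(u)}\,du\,dh,
\]
and then substitute the Fourier expansion
\[
\theta(\phi)_{\overline{Z}_J}(ugh) = \theta(\phi)_{\overline{N}_J}(gh) + \sum_{x \in \Omega(F)} \theta(\phi)_{\overline{N}_J,\psi_x}(gh)\,\psi_x(u).
\]
The inner integral over $V_E(F)\backslash V_E(\A)$ against $\overline{\psi_A}$ selects precisely those $x \in \Omega(F)$ whose image under $pr\colon V_J \to V_E$ equals $\psi_A$, i.e.\ $x \in \Omega_A(F)$; the constant term contributes $0$ since $\psi_A$ is nontrivial.

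Next I would invoke Corollary \ref{C:phs} to conclude that $\Omega_A(F) = \emptyset$ unless $A \cong C$, giving the first assertion. When $A \cong C$, the set $\Omega_C(F)$ is a principal homogeneous space for $H_C(F)$, so $\Omega_C(F) = \{\gamma \cdot \tilde{\psi}_C : \gamma \in H_C(F)\}$. Since $H_C \subset M_J$ normalizes $\overline{N}_J$ and commutes with $G_E$, a direct change of variables combined with the left $G_J(F)$-invariance of $\theta(\phi)$ gives
\[
\theta(\phi)_{\overline{N}_J, \gamma\cdot\tilde{\psi}_C}(gh) = \theta(\phi)_{\overline{N}_J,\tilde{\psi}_C}(g\gamma^{-1} h).
\]
Plugging this in and unfolding the sum over $\gamma \in H_C(F)$ against the $H_C(F)$-invariant integrand $\overline{f(h)}$ collapses the integral over $[H_C]$ to an integral over $H_C(\A)$, yielding the stated formula (after rewriting $\theta(\phi)_{\overline{N}_J,\tilde{\psi}_C}(gh) = \theta(h\phi)_{\overline{N}_J,\tilde{\psi}_C}(g)$).

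For the nonvanishing at $g = 1$, I would view $\phi \mapsto W_\phi$, where $W_\phi(h) := \theta(h\phi)_{\overline{N}_J,\tilde{\psi}_C}(1)$, as a functional on $\Pi$ whose image is a space of functions on $H_C(\A)$. The quantity $\theta(\phi,f)_{\overline{N}_E,\psi_C}(1) = \int_{H_C(\A)} W_\phi(h)\,\overline{f(h)}\,dh$ is then a pairing of $W_\phi$ against $f$. Factorizing $\phi = \otimes_v \phi_v$ and $f = \otimes_v f_v$, the key point is that by Proposition \ref{P:twisted}, the local Jacquet module $(\Pi_v)_{\overline{N}_E,\psi_C}$ is isomorphic (up to the harmless twist $\mu_K$) to $C_c^\infty(H_C(F_v))$ as an $H_C(F_v) \times H_C(F_v)$-module, which contains every irreducible representation $\rho_v$ in its spectral decomposition. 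Thus for each $v$ one can choose $\phi_v$ so that the local pairing with $f_v$ is nonzero, and the resulting global pairing is nonzero.

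The main obstacle is making the nonvanishing argument rigorous: the pairing $\int_{H_C(\A)} W_\phi(h)\,\overline{f(h)}\,dh$ does not literally factorize as an Euler product because the automorphic realization $f$ of $\rho$ is only abstractly isomorphic to $\otimes_v \rho_v$. One must therefore argue at the level of functionals: the global functional $\Pi \otimes \rho \to \mathbb{C}$ given by $(\phi,f) \mapsto \theta(\phi,f)_{\overline{N}_E,\psi_C}(1)$ factors through the tensor product of local Jacquet quotient functionals $(\Pi_v)_{\overline{N}_E,\psi_C} \otimes \rho_v \to \mathbb{C}$, each of which is nonzero by the local regular-representation description, and so the global composite is nonzero on some factorizable vector. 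The convergence of the unfolding in the earlier step is controlled by the moderate growth of $\theta(\phi)$ and the compactness of $[H_C]$.
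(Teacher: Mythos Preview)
Your derivation of the Fourier coefficient formula (the first two paragraphs) is correct and matches the paper's argument essentially verbatim: interchange the order of integration, insert the Fourier expansion of $\theta(\phi)_{\overline{Z}_J}$, pick off the fiber $\Omega_A(F)$, appeal to Corollary~\ref{C:phs}, and unfold. One small point of care: as the paper notes, $\theta(\phi)$ is only defined on $G_J^0(\A)$, and $H_C(\A)$ need not sit inside $G_J^0(\A)$ (the component groups of $H_C$ and $G_J$ match up). So expressions like $\theta(\phi)(ugh)$ are not literally defined; you should work throughout with $\theta(h\cdot\phi)(ug)$ and use the $G_J(F)$-equivariance of the embedding $\theta$ for the unfolding step. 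You eventually switch to this convention, so this is cosmetic.

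For the nonvanishing, your approach is genuinely different from the paper's, and there is a gap. You argue by local--global factorization: the pairing $(\phi,f)\mapsto \theta(\phi,f)_{\overline N_E,\psi_C}(1)$ should factor through $\bigotimes_v (\Pi_v)_{\overline N_E,\psi_C}\otimes\rho_v$, and Proposition~\ref{P:twisted} identifies the local Jacquet module with the regular representation of $H_C(F_v)$, so every $\rho_v$ occurs. The problem is that Proposition~\ref{P:twisted} is stated and proved only for nonarchimedean $v$; the analogous archimedean statement (and indeed the one-dimensionality of $\Hom_{\overline N_J(F_v)}(\Pi_v,\tilde\psi_{C,v})$ for real or complex $v$, which you would need for the factorization itself) is deferred in this paper to joint work with Adams and Paul. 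So within the confines of this paper, your argument is incomplete at the archimedean places.

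The paper sidesteps this entirely with a more hands-on trick. First it shows, using the explicit unramified model $C_c^\infty(\Omega(F_v))\subset (\Pi_v)_{\overline Z}$, that for $v\notin S$ the unramified vector restricts to $\Omega_C(F_v)$ as the characteristic function of $H_C(\mathcal O_v)$; this collapses the integral to one over $H_C(F_S)$. Then, starting from any $(\phi,f)$ for which the integrand is nonzero as a function of $h\in H_C(F_S)$, it replaces $\phi$ by a convolution $\varphi\ast\phi$ with $\varphi\in C_c^\infty(\overline N_J(F_S))$. This multiplies the integrand by $\widehat{\varphi_{\overline Z}}(h^{-1}\cdot\tilde\psi_C)$, and since $\widehat{\varphi_{\overline Z}}$ can be an arbitrary Schwartz function on $V_J(F_S)$ while $H_C(F_S)\cdot\tilde\psi_C=\Omega_C(F_S)$ is closed, one localizes near a point where the original integrand is nonzero to force the integral to be nonzero. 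This argument uses no local structural results at archimedean places --- only the nonvanishing of the global $\tilde\psi_C$-Fourier coefficient for some $\phi$, which is a known property of the automorphic minimal representation. Your approach would work once the archimedean results are in hand, but the paper's convolution trick is what makes the proof self-contained here.
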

\vskip 5pt
\begin{proof}
We  have:
\begin{align}
   \theta(\phi,f)_{\overline{N}_E, \psi_A} (g)  &= \int_{[V_E]}  \overline{\psi_A(n)} \cdot   \left( \int_{[H_C]}   \theta(h \cdot \phi)_{\overline{Z}_J}( ng)  \cdot \overline{f(h)} \, dh \right) \, dn \notag \\  
&= \int_{[H_C]} \left(  \int_{[V_E]} \, \overline{\psi_A(n)} \cdot \sum_{\tilde{\psi} \in \Omega(F)}  \theta(h \cdot \phi)_{\overline{N}_J, \tilde{\psi}}(ng)   \, dn \right) \cdot \overline{f(h)} \, dh \notag \\
&= \int_{[H_C]} \,\left(  \sum_{\tilde{\psi} \in \Omega_A(F)}      \theta(h \cdot \phi)_{\overline{N}_J, \tilde{\psi}}(g) \right)  \cdot \overline{f(h)} \, dh.  \notag 
\end{align}
This gives the vanishing of $  \theta(\phi,f)_{\overline{N}_E, \psi_A}$ when $A \ncong C$ since $\Omega_A(F)$ is empty in that case. When $A = C$ and  $\tilde{\psi}_C \in \Omega_C(F)$, then we have an identification $H_C(F) \cdot \tilde{\psi}_C \cong \Omega_C(F)$, in which case
\begin{align}
  \theta(\phi,f)_{\overline{N}_E, \psi_C} (g) &=
\int_{[H_C]} \, \sum_{\gamma \in H_C(F)}      \theta( \gamma h \cdot \phi)_{\overline{N}_J, \tilde{\psi}_C}(g)  \cdot \overline{f(h)} \, dh \notag \\
&= \int_{H_C(\A)} \theta(h \cdot \phi)_{\overline{N}_J, \tilde{\psi}_C}(g) \cdot  \overline{f(h)} \, dh, \notag 
\end{align}
as desired. This proves the first statement.
\vskip 10pt

 To show the second statement, we need to understand the function $h \mapsto \theta(h \cdot \phi)_{\overline{N}_E, \tilde{\psi}_C}(1)$ as a function on $H_C(\A)$. 
 For a nonarchimedean place $v$ of $F$, a  property of the local minimal representation $\Pi_v$ is that
 \[ \dim \Hom_{\overline{N}_J(F_v)}(\Pi_v,  \tilde{\psi}_{C,v})  = 1.  \]
Moreover, a nonzero element of this 1-dimensional space can be constructed as follows. Recall that, at a nonarchimedean place $v$,  one has  \cite[Thm. 6.1.1]{KP}
\[  C^{\infty}_c(\Omega(F_v)) \hookrightarrow \Pi_{\overline{Z}_E(F_v)} \hookrightarrow C^{\infty}(\Omega(F_v)). \]
Thus elements of $\Pi_v$ gives rise to functions on the cone $\Omega(F_v)$. 
Then the evaluation map at $\tilde{\psi}_C \in \Omega(F_v)$ defines a nonzero element of  $\Hom_{\overline{N}_J(F_v)}(\Pi_v,  \tilde{\psi}_{C,v})$.
For $v$ outside some sufficiently large set $S$ of places of $F$, $\phi_v$
 is the unramified vector in $\Pi_v$, in which case the corresponding function $f_{0,v}$ on the cone $\Omega(F_v)$ has the following properties.
 The function $f_{0,v}$ is supported on the subset
 \[  \bigcup_{n \geq 0}  \varpi_v^n  \cdot \Omega(\mathcal{O}_v), \]
 is constant on each annulus $\varpi^n \cdot \Omega(\mathcal{O}_v)$, and takes value $1$ on $ \Omega(\mathcal{O}_v)$. Indeed, \cite{KP} gives an explicit formula for the value taken by $f_{0,v}$ on each annulus, but we won't need this here. 

\vskip 5pt 
 We need to understand the restriction of $f_{0,v}$ to the subset $\Omega_C(F_v)$. 
  Since $\Omega_C \subset \Omega \subset V_J$ is a Zariski closed subset of $V_J$, we see that for $v \notin S$ (with $S$ containing all archimedean places and enlarged if necessary), 
  \[    
     \left( \bigcup_{n \geq 0} \varpi_v^n \cdot \Omega(\mathcal{O}_v) \right) \cap \Omega_C(F_v)  = \Omega_C(\mathcal{O}_v) \subset \Omega(\mathcal{O}_v).  \] 
 Hence, for $v \notin S$, the restriction of $f_{0,v}$ to $\Omega_C(F_v) = H_C(F_v) \cdot \tilde{\psi}_{C,v}$ is the characteristic function of $H_c(\mathcal{O}_v)$.
 \vskip 5pt
 
 By the above discussion, we deduce that for $S$ sufficiently large and  with $F_S :=  \prod_{v \in S} F_v$, 
\[ \theta(\phi,f)_{\overline{N}_E,\psi_C}(1)  = \int_{H_C(F_S)} \theta(h \cdot \phi)_{\overline{N}_J,  \tilde{\psi}_C}( 1)  \cdot \overline{f(h)} \, dh. \]
 We need to show that  we can find some $f$ and $\phi$ such that the above integral is nonzero.  
 \vskip 5pt
 
To this end, we start with a fixed pair of $f$ and $\phi$ such that the integrand in the above integral is nonzero as a function of $h$. 
Now we consider an arbitrary Schwarz function $\varphi$ on $\overline{N}_J(F_S)$ and replace $\phi$ by the convolution $\varphi \ast \phi$ in the above formula. 
This gives:
\[   \theta(\varphi \ast \phi,f)_{\overline{N}_E,\psi_C}(1)  = \int_{H_C(F_S)} 
\widehat{\varphi_{\overline{Z}}}( h^{-1} \cdot \tilde{\psi}_C) \cdot  \theta(h \cdot \phi)_{\overline{N}_J,  \tilde{\psi}_C}( 1)  \cdot \overline{f(h)} \, dh, \]
where $\varphi_{\overline{Z}}$ is the constant term of $\varphi$ along $\overline{Z} \subset \overline{N}_J$ (which is a Schwarz function on $V_J(F_S) = N_J(F_S)/ Z(F_S)$) and 
$\widehat{\varphi_{\overline{Z}}}$ is its Fourier transform. Since $H_C(F_S) \cdot \tilde{\psi}_C = \Omega_C(F_S) \subset V_J(F_S)$ is a Zariski-closed subset, and 
$\widehat{\varphi_{\overline{Z}}}$ can be an arbitrary Schwarz function (as $\varphi$ varies), we see that the above  integral is nonzero for some choice of $\varphi$. 
 
 \vskip 5pt
 
 This completes the proof of the second statement. 
\end{proof}
\vskip 5pt

\begin{cor}  \label{C:global-injective}
\noindent (i) If $\rho \subset \mathcal{A}(H_C)$, then $\Theta(\rho) \subset \mathcal{A}_2(G_E)$ is a nonzero irreducible square-integrable automorphic representation of $G_E$. Moreover, $\Theta(\rho) \cong \Theta^{abs}(\rho) := \otimes_v \theta(\rho_v)$, where $\theta(\rho_v)$ denotes the local theta lift of $\rho_v$ to $G_E(F_v)$ (which is nonzero irreducible). 
\vskip 5pt

\noindent (ii) For an abstract irreducible representation $\rho$ of $H_C(\A)$, we have
\[ \dim \Hom_{H_C}(\rho, \mathcal{A}_2(H_C))  = \dim \Hom_{G_E}(\Theta^{abs}(\rho), \Theta(\mathcal{A}(H_C))) \]
where 
\[  \Theta(\mathcal{A}(H_C))  = \langle \theta(\phi, f): \phi \in \Pi_J, f \in \mathcal{A}(H_C) \rangle \subset \mathcal{A}_2(G_E). \]
\vskip 5pt

\noindent (iii)  If $\rho \subset \mathcal{A}(H_C)$ and $\rho' \subset \mathcal{A}(H_{C'})$ satisfy $\Theta(\rho) = \Theta(\rho')$ as submodules of $\mathcal{A}_2(G_E)$, then 
$C$ is $E$-isomorphic to $C'$ (so that $H_C \cong H_{C'}$) and $\rho =  \rho'$ as subspaces of $\mathcal{A}(H_C)$.
\end{cor}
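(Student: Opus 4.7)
For part (i), square-integrability of $\Theta(\rho)$ is immediate from Proposition \ref{P:global-cuspidality}, and the nonvanishing statement (together with the nonvanishing of $\theta(\phi,f)_{\overline{N}_E,\psi_C}(1)$ for some $\phi,f$) is exactly what Proposition \ref{P:global-FC} provides. To establish the abstract isomorphism $\Theta(\rho)\cong \Theta^{\mathrm{abs}}(\rho)$ and its irreducibility, I would view the theta map
\[
\Pi\otimes \rho \longrightarrow \mathcal{A}_2(G_E), \qquad \phi\otimes f\mapsto \theta(\phi,f),
\]
which is $G_E(\A)$-equivariant and $H_C(\A)$-invariant, so it factors through the maximal $\rho^\vee$-quotient at each place. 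By the local results (Theorem \ref{T:111} at nonarchimedean places and Theorems \ref{T:arch1}, \ref{T:arch1.5}, \ref{T:arch2} at archimedean places), each local factor $\theta(\rho_v)$ is nonzero irreducible, so $\Theta^{\mathrm{abs}}(\rho)=\bigotimes_v \theta(\rho_v)$ is irreducible and admits a surjection onto $\Theta(\rho)$. Since $\Theta(\rho)\ne 0$, this surjection is an isomorphism.

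For part (iii), suppose $\Theta(\rho)=\Theta(\rho')$ as submodules of $\mathcal{A}_2(G_E)$. Then their Fourier coefficients along $(\overline{N}_E,\psi_A)$ agree for every generic $A$. Proposition \ref{P:global-FC} says these Fourier coefficients vanish unless $A\cong C$ (resp.\ $A\cong C'$) and are nonvanishing in that case. Hence $C\cong C'$ as $E$-twisted composition algebras, and after identifying $H_C=H_{C'}$ we must show $\rho=\rho'$ as subspaces of $\mathcal{A}(H_C)$. For this, the formula
\[
\theta(\phi,f)_{\overline{N}_E,\psi_C}(g)=\int_{H_C(\A)} \theta(h\cdot \phi)_{\overline{N}_J,\tilde\psi_C}(g)\cdot \overline{f(h)}\,dh
\]
shows that the Fourier coefficient is an $H_C(\A)$-equivariant integral transform of $f$ against a kernel on $G_E(\A)\times H_C(\A)$ attached to $\phi$. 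By the convolution argument in the proof of Proposition \ref{P:global-FC}, for varying $\phi$ the kernel $\theta(h\cdot \phi)_{\overline{N}_J,\tilde\psi_C}(g)$ spans a rich enough space of functions so that the map $f\mapsto \theta(\phi,f)_{\overline{N}_E,\psi_C}$ is injective on $\rho$. Since the image in $\mathcal{A}(G_E)$ coincides for $\rho$ and $\rho'$, unfolding this injectivity and using Proposition \ref{P:auto-char} to rule out spurious overlaps forces $\rho=\rho'$ inside $\mathcal{A}(H_C)$.

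Part (ii) is where the main obstacle lies. One direction is easy: every realization $\iota:\rho\hookrightarrow \mathcal{A}_2(H_C)$ produces a $G_E(\A)$-equivariant map $\theta_\iota:\Theta^{\mathrm{abs}}(\rho)\to \Theta(\mathcal{A}(H_C))$ via $\phi\otimes f\mapsto \theta(\phi,\iota(f))$, and the argument of part (iii) shows $\iota\mapsto \theta_\iota$ is injective and takes image in $\Hom_{G_E}(\Theta^{\mathrm{abs}}(\rho),\Theta(\mathcal{A}(H_C)))$. The harder direction is surjectivity: given an embedding $j:\Theta^{\mathrm{abs}}(\rho)\hookrightarrow \Theta(\mathcal{A}(H_C))$, I would attach to each $\phi\in\Pi$ the linear functional on $\rho$ defined by composing $j$ with the Fourier coefficient at $(\overline{N}_E,\psi_C)$ evaluated at $1$. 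By Proposition \ref{P:global-FC} this functional must be of the form $f\mapsto \int_{H_C(\A)}\kappa_\phi(h)\overline{f(h)}\,dh$ for some $\kappa_\phi\in \mathcal{A}(H_C)$, and the $H_C(\A)$-equivariance of the Fourier coefficient together with the irreducibility of $\rho$ force $\kappa_\phi\in \iota(\rho)$ for a unique embedding $\iota$; varying $\phi$ pins down $\iota$. The delicate point is verifying that $\kappa_\phi$ really lands in an automorphic (and not merely smooth) model of $\rho$ and that this $\iota$ is independent of $\phi$; these follow from the left-$G_E(F)$-invariance of $j(\Theta^{\mathrm{abs}}(\rho))$ and the conjugation action of $H_C(F)$ on the theta kernel, which matches the action used in \S\ref{SS:auto-HC} to define $\mathcal{A}(H_C)$ as an $H_C(\A)$-module.
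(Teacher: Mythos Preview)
Your treatment of (i) is correct and matches the paper's approach exactly.

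For (iii), your argument that $C\cong C'$ via the support of generic Fourier coefficients is the same as the paper's. However, your direct argument that $\rho=\rho'$ as subspaces is incomplete: knowing that $f\mapsto\{\theta(\phi,f)_{\overline{N}_E,\psi_C}\}_\phi$ is injective on each of $\rho$ and $\rho'$ separately does not, by itself, force the two subspaces to coincide when their global theta images agree. The paper instead deduces $\rho=\rho'$ from (ii), since the proof of (ii) actually establishes that the assignment $\iota\mapsto\theta_\iota$ is a bijection between the two Hom-spaces.

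The substantive divergence is in (ii). Your injectivity direction is fine and agrees with the paper. But your surjectivity argument has a genuine gap. Given an embedding $j:\Theta^{\mathrm{abs}}(\rho)\hookrightarrow\Theta(\mathcal{A}(H_C))$, your proposed ``linear functional on $\rho$ attached to $\phi$'' is not well-defined: $j$ is a map out of $\Theta^{\mathrm{abs}}(\rho)$, not out of $\rho$, and there is no a priori reason the resulting functional on $\Theta^{\mathrm{abs}}(\rho)$ should unfold as an integral against an automorphic $\kappa_\phi\in\mathcal{A}(H_C)$. Proposition~\ref{P:global-FC} gives that formula only for theta lifts coming from a \emph{given} automorphic realization, not for an arbitrary $j$. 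The paper sidesteps this entirely: it defines a bilinear pairing
\[
\mathcal{B}(f,\iota)(\phi,v,w)=\int_{[G_E]}\theta(\phi,f(v))(g)\,\overline{\iota(w)(g)}\,dg
\]
between $\Hom_{H_C}(\rho,\mathcal{A}_2(H_C))$ and $\Hom_{G_E}(\Theta^{\mathrm{abs}}(\rho),\Theta(\mathcal{A}(H_C)))$, valued in $\Hom_{H_C\times G_E}(\Pi_J\otimes\overline{\rho}\otimes\overline{\Theta^{\mathrm{abs}}(\rho)},\C)$. The local Howe duality statements $\dim\Hom_{H_C\times G_E}(\Pi_J,\rho\otimes\Theta^{\mathrm{abs}}(\rho))\le 1$ and $\dim\Hom_{G_E}(\Theta^{\mathrm{abs}}(\rho),\Theta^{\mathrm{abs}}(\rho'))\le\dim\Hom_{H_C}(\rho,\rho')$ make the target one-dimensional, and part (i) together with these Howe duality facts make $\mathcal{B}$ nondegenerate. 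Since both Hom-spaces are finite-dimensional, their dimensions agree. This argument requires no explicit inversion of the theta map and is what you should use here.
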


\begin{proof}
(i)  This follows from Proposition \ref{P:global-cuspidality} and Proposition \ref{P:global-FC}. 
\vskip 5pt
 
(ii) This statement is often called the multiplicity-preservation of theta correspondence and in fact follows from  (i) and the local Howe duality theorem we established in our local study, which says that:
\[  \dim \Hom_{H_C \times G_E}(\Pi_J , \rho \otimes \Theta^{abs}(\rho)) \leq 1 \]
and
\[ \dim \Hom_{G_E}(\Theta^{abs}(\rho), \Theta^{abs}(\rho')) \leq \dim \Hom_{H_C}(\rho, \rho') \leq 1. \]
  In view of (i) and the local Howe duality theorem, the statement here is only interesting when $\mathcal{A}(H_C)$ is not multiplicity-free. To prove (ii), we define a pairing of finite-dimensional vector spaces:
\[   \mathcal{B} : \Hom_{H_C}(\rho, \mathcal{A}_2(H_C)) \times \Hom_{G_E}(\Theta^{abs}(\rho), \Theta(\mathcal{A}(H_C))) \longrightarrow \Hom_{H_C \times G_E}(\Pi_J \otimes \overline{\rho} \otimes \overline{\Theta^{abs}(\rho)}, \C)  \]
by
\[  \mathcal{B} (f , \iota) (\phi, v, w) =  \int_{[G_E]}  \theta(\phi, f(v))(g) \cdot \overline{\iota(w)(g)} \, dg  \]
for $\phi \in \Pi_J$, $v \in \rho$ and $w \in \Theta^{abs}(\rho)$.
The local Howe duality theorem says that the target space is $1$-dimensional (so we may identify it with $\C$).  
Now (i) and the local Howe duality theorem  imply that this $\C$-valued pairing is nondegenerate, giving us the desired equality of dimensions of the two Hom spaces on the left.

\vskip 5pt

(iii) It follows from Proposition \ref{P:global-FC} that for $\rho \subset \mathcal{A}(H_C)$, $\Theta(\rho)$ supports only one orbit of generic Fourier coefficients along $\bar N_E$, namely the orbit associated to $C$. Thus, if $\Theta(\rho) = \Theta(\rho')$, then we must have $C \cong C'$. The equality of $\rho$ and $\rho'$ now follows by (ii).
\end{proof}

  \vskip 5pt

\subsection{\bf Canonical decomposition}
 To finish this section, let us examine  the case when $H_C^0(F) = H_C(F)$: this is the case when $\mathcal{A}(H_C)$ has multiplicity 2. In this case, we have an orthogonal decomposition
\[  \mathcal{A}(H_C) = \bigoplus_{\chi} V_C(\chi) \]
as $\chi$ runs over automorphic characters of $H_C^0 = T_{E,K}$ and $V_C(\chi)$ is characterised as the subspace of functions whose restriction to $H_C^0$ is contained in $\C \cdot \chi$.  Each $V_C(\chi)$ is multiplicity-free and the occurrence of multiplicity $2$ is due to isomorphisms $V_C(\chi) \cong V_C(\chi^{-1})$ for those $\chi$ satisfying
\vskip 5pt

\begin{itemize}
\item $\chi^2 \ne 1$ but 
\item $\chi_v^2=1$ for the finitely many places $v$ where $H_C(F_v) = H_C^0(F_v)$.
\end{itemize}
\vskip 5pt

 For $\chi$ satisfying these two conditions,  and $\rho$ an abstract irreducible representation of $H_C(\A)$ which occurs in $V_C(\chi)$ and $V_C(\chi^{-1})$ and write $\rho_{\chi}$ for the corresponding submodule $\rho_{\chi} \subset V_C(\chi)$. Then the $\rho$-isotypic summand of $\mathcal{A}(H_C)$ has the canonical decomposition:
 \[  \mathcal{A}(H_C)(\rho) = \rho_{\chi} \oplus \rho_{\chi^{-1}}. \]
 On considering the global theta lifting, Corollary \ref{C:global-injective} gives a direct sum
 \[  \Theta(\rho_{\chi}) \oplus \Theta(\rho_{\chi^{-1}}) \subset \mathcal{A}_2(G_E) \]
 of two irreducible summands.  This gives a canonical decomposition of the $\Theta^{abs}(\rho)$-isotypic summand $\Theta(\mathcal{A}(H_C))[\Theta^{abs}(\rho)]$.
 One may ask how decomposition can be characterized directly on the side of $G_E$, i.e. without reference to $H_C$.  We shall address this question in the remainder of this section.
 \vskip 5pt

We have seen in Proposition \ref{P:global-FC} the Fourier coefficient formula  
\[  \theta(\phi,f)_{\overline{N}_E, \psi_C} (g)  = \int_{H_C(\A)}  \theta(h \cdot \phi)_{\overline{N}_J,  \tilde{\psi}_C}( g)  \cdot \overline{f(h)} \, dh. \] 
for $\phi \in \Pi_J$ and $f \in \rho_{\chi}$, where we recall that $\tilde{\psi}_C \in \Omega_{\psi_C}$.  
Let 
\[  S_{\psi_C}  = {\mathrm Stab}_{M_E}(\psi_C) \]
be the stabilizer of $\psi_C$ in $M_E$. Then we have an action of $S_{\psi_C} \times H_C$ on $\Omega_{\psi_C}$ for which $\Omega_{\psi_C}$ is a torsor for each of the two factors.
This gives an isomorphism
\[  \iota: S_{\psi_C} \cong H_C, \]
characterized by
\[  \iota(t) \cdot \tilde{\psi_C}  = t^{-1} \cdot \tilde{\psi}_C. \]
Now we may regard $ \theta(\phi,f)_{\overline{N}_E, \psi_C}$ as a function on $S^0_{\psi_C}(F) \backslash S^0_{\psi_C}(\A)$.  The following proposition, which strengthens Proposition \ref{P:global-FC} and is the global analog of ( \ref{E:spec}), describes this function explicitly.

\vskip 5pt

\begin{prop}
For $t \in S_{\psi_C}^0(\A) \cong H_C^0(\A)$ and  $f \in \rho_{\chi}$, we have
\[  \theta(\phi,f)_{\overline{N}_E, \psi_C}(t)  = \chi(\iota(t))^{-1}   \cdot \theta(\phi,f)_{\overline{N}_E, \psi_C}(1).\]
  In other words,
\[     \theta(\phi, f)_{\overline{N}_E. \psi_C} \circ \iota^{-1}  \in \C \cdot \chi^{-1} \subset \mathcal{A}(H_C). \]
\end{prop}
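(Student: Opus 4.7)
The plan is to combine the Fourier coefficient formula of Proposition \ref{P:global-FC} with the transformation law of $f$ under $H_C^0(\A)$, using a ``diagonal triviality'' identity that comes from the local structure of the twisted Jacquet module.

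Starting from Proposition \ref{P:global-FC} specialized at $g = t$ and applying the $G_J^0(\A)$-equivariance $\theta(h\phi)(g) = \theta(\phi)(gh)$ together with the commutation $th = ht$ (valid because $(H_C, G_E)$ is a dual pair and $t \in G_E$), one obtains
\begin{equation*}
\theta(\phi, f)_{\overline N_E, \psi_C}(t) = \int_{H_C(\A)} \theta(\phi)_{\overline N_J, \tilde\psi_C}(th) \, \overline{f(h)} \, dh.
\end{equation*}
The key identity I would next establish is that for every $h \in H_C(\A)$, one has $\theta(\phi)_{\overline N_J, \tilde\psi_C}(th) = \theta(\phi)_{\overline N_J, \tilde\psi_C}(\iota(t)^{-1} h)$. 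This hinges on the observation that $t\iota(t) \in M_J(\A)$ stabilizes $\tilde\psi_C$ as a character of $\overline N_J(\A)$, since by construction of $\iota$ one has $\iota(t) \cdot \tilde\psi_C = t^{-1} \cdot \tilde\psi_C$ in $\Omega_{\psi_C}$. By Proposition \ref{P:twisted}, each local twisted Jacquet module $\Pi_{v, \overline N_J, \tilde\psi_{C,v}}$ is identified with $\mu_{K,v} \otimes C_c^\infty(\mathrm{Isom}(C_{\psi_C}, C_v))$, and under this identification the natural actions of $S^0_{\psi_C}(F_v)$ and $H_C^0(F_v)$ on the torsor $\mathrm{Isom}$, matched via $\iota$, cancel on the diagonal subgroup $\{(s, \iota(s))\}$. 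Passing to the restricted tensor product $\Pi = \bigotimes_v' \Pi_v$, this diagonal triviality forces $(t\iota(t))\phi - \phi$ to lie in the kernel of every global $(\overline N_J(\A), \tilde\psi_C)$-Whittaker functional; applying this to the Fourier coefficient and invoking $G_J^0(\A)$-equivariance once more yields the required identity.

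Substituting this identity back into the integral and changing variables $h \mapsto \iota(t) h$ in the $H_C(\A)$-integral gives
\begin{equation*}
\theta(\phi, f)_{\overline N_E, \psi_C}(t) = \int_{H_C(\A)} \theta(\phi)_{\overline N_J, \tilde\psi_C}(h) \, \overline{f(\iota(t) h)} \, dh = \chi(\iota(t))^{-1} \theta(\phi, f)_{\overline N_E, \psi_C}(1),
\end{equation*}
where the last equality uses $f(\iota(t) h) = \chi(\iota(t)) f(h)$, the transformation law of $f \in V_C(\chi)$ under $\iota(t) \in H_C^0(\A)$, together with the unitarity of $\chi$. The main obstacle is the rigorous justification of the diagonal triviality and its lift to a global identity: one must verify that the $(\overline N_J(\A), \tilde\psi_C)$-Whittaker functional defining the Fourier coefficient factors through each local twisted Jacquet module, so that the local triviality implied by Proposition \ref{P:twisted} can be integrated to the global statement. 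This is a standard consequence of the restricted tensor product structure of the global minimal representation and the explicit local model, but some care is needed because $t \in S^0_{\psi_C}(\A)$ is not $F$-rational, so one cannot directly invoke left-invariance of automorphic forms under $t\iota(t)$.
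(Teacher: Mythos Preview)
Your final substitution and change of variables is exactly what the paper does, and the statement you isolate as the ``key identity'' --- that for $t\in S^0_{\psi_C}(\A)$ one has $\theta(\phi)_{\overline N_J,\tilde\psi_C}(th)=\theta(\iota(t)^{-1}h\cdot\phi)_{\overline N_J,\tilde\psi_C}(1)$ --- is indeed the heart of the matter. However, your justification of this identity has two defects.

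First, your invocation of Proposition~\ref{P:twisted} is misplaced: that proposition computes the $(\overline N_E,\psi_\Sigma)$-twisted Jacquet module, which is $C_c^\infty(\mathrm{Isom}(C_\Sigma,C))$, not the $(\overline N_J,\tilde\psi_C)$-twisted Jacquet module. The latter is one-dimensional (as noted in the proof of Proposition~\ref{P:global-FC}), and what you actually need is that $t\iota(t)\in M_{J,\tilde\psi_C}$ acts trivially on this line. Second, and more seriously, you correctly flag but do not resolve the archimedean difficulty: Jacquet modules in the smooth sense are unavailable at infinite places, so your ``diagonal triviality'' argument, even once repaired at finite places, does not directly globalize.

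The paper circumvents both issues cleanly. It works only at the finite adeles, using the explicit $\bar P_J(\A^\infty)$-equivariant realization $q:\Pi^\infty\hookrightarrow C^\infty(\Omega_{\A^\infty})$ from \S\ref{SS:minimal-restrict}, under which the $(\overline N_J,\tilde\psi_C)$-Fourier coefficient is simply evaluation at the point $\tilde\psi_C\in\Omega$. Then for $t\in S^0_{\psi_C}(\A^\infty)$ one computes directly
\[
\theta(\phi)_{\overline N_J,\tilde\psi_C}(t)=\lambda(\phi_\infty)\cdot q(\phi^\infty)(t^{-1}\cdot\tilde\psi_C)=\lambda(\phi_\infty)\cdot q(\phi^\infty)(\iota(t)\cdot\tilde\psi_C)=\theta(\iota(t)^{-1}\phi)_{\overline N_J,\tilde\psi_C}(1),
\]
using only the defining relation $\iota(t)\cdot\tilde\psi_C=t^{-1}\cdot\tilde\psi_C$ and the geometric $M_J$-action on $C^\infty(\Omega)$. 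This yields the desired formula for $t\in S^0_{\psi_C}(\A^\infty)$. The extension to all of $S^0_{\psi_C}(\A)$ then follows because both sides are automorphic functions on $S^0_{\psi_C}\cong\tilde T_{E,K}$, and the weak approximation property of this two-dimensional torus (Proposition~\ref{P:auto-char}(i)) forces two automorphic characters agreeing at almost all places to be equal. This last step is the essential idea you are missing.
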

\begin{proof}
Write $\phi = \phi_{\infty} \otimes \phi^{\infty} \in \Pi_{J,\infty} \otimes \Pi_J^{\infty}$. With $\phi_{\infty}$ fixed, we consider the Fourier coefficient map
\[  \Pi_J^{\infty} \longrightarrow \C \]
given by
\[  \phi^{\infty} \mapsto   \theta(\phi_{\infty} \otimes \phi^{\infty})_{\overline{N}_J, \tilde{\psi}_C}(1). \]
As we have noted in the proof of Proposition \ref{P:global-FC},  there is a $P_J(\A^{\infty})$-equivariant map
\[  q:  \Pi_J^{\infty} \longrightarrow C^{\infty}(\Omega_{\A^{\infty}})  \]
so that 
\[     \theta(\phi)_{\overline{N}_J, \tilde{\psi}_C}(1) = \lambda(\phi_{\infty})  \cdot q(\phi^{\infty})(\tilde{\psi}_C). \]
for some $\lambda(\phi_{\infty}) \in \C$. Then  for $t \in S^0_{\psi_C}(\A^{\infty})$, we have:
\[  \theta(\phi)_{\overline{N}_J, \tilde{\psi}_C}(t) = \lambda(\phi_{\infty}) \cdot q(\phi^{\infty})( t^{-1} \cdot \tilde{\psi}_C)  = \lambda(\phi_{\infty}) \cdot q(\phi^{\infty})(\iota(t) \cdot \tilde{\psi}_C) =  \theta(\iota(t)^{-1} \cdot \phi)_{\overline{N}_J, \tilde{\psi}_C}(1).  \]
Hence,
\begin{align}
 \theta(\phi,f)_{\overline{N}_E, \psi_C} (t)  &= \int_{H_C(\A)}  \theta(h \cdot \phi)_{\overline{N}_J,  \tilde{\psi}_C}( t)  \cdot \overline{f(h)} \, dh \notag \\
 &= \int_{H_C(\A)}  \theta(\iota(t)^{-1} h \cdot \phi)_{\overline{N}_J,  \tilde{\psi}_C}( 1)  \cdot \overline{f(h)} \, dh \notag \\
&=  \int_{H_C(\A)}  \theta( h \cdot \phi)_{\overline{N}_J,  \tilde{\psi}_C}( 1)  \cdot \overline{f( \iota(t) h)} \, dh \notag \\ 
&=  \chi(\iota(t))^{-1} \cdot \int_{H_C(\A)}  \theta( h \cdot \phi)_{\overline{N}_J,  \tilde{\psi}_C}( 1)  \cdot \overline{f( h)} \, dh. \notag 
\end{align}
This proves the desired identity for $t \in S^0_{\psi_C}(\A^{\infty})$. However, both sides of the desired identity are automorphic functions of $S^0_{\psi_C} \cong H_C^0 
\cong \tilde{T}_{E,K}$.  The desired identity then follows by the weak approximation theorem (Proposition \ref{P:auto-char}(i)) for $\tilde{T}_{E,K}$. 
\end{proof}
\vskip 5pt

What the lemma says is that the consideration of the $\psi_C$-Fourier coefficient gives an $(\bar{N}_E ,\psi_C) \times S^0_{\psi_C}$-equivariant map 
\[  \Theta(\mathcal{A}(H_C))[\Theta^{abs}(\rho)] \longrightarrow \C \cdot \chi \oplus \C \cdot \chi^{-1} \subset  \mathcal{A}(S^0_{\psi_C}) \]
 The canonical decomposition of the codomain is given by the irreducible summands whose image is contained in $\C \cdot \chi$ or $\C \cdot \chi^{-1}$.
\vskip 15pt

\section{\bf A-parameters and Twisted Composition Algebras}  \label{S:APTCA}

In the next two sections, we relate the square-integrable automorphic representations constructed in the previous section to Arthur's conjecture for $G_E$. 
We begin by explicating the connections between twisted composition algebras and the relevant  class of A-parameters in this section.
\vskip 5pt

\subsection{\bf A-parameters.} 
We shall consider A-parameters
\[  \psi:  W_F \times \SL_2(\C)  \longrightarrow   \PGSO_8(\C) \rtimes S_3.  \]
 such that the centralizer of $\psi(\SL_2(\C))$  is isomorphic to the group 
\[  S \rtimes (S_2 \times S_3) =  (\C^{\times} \times \C^{\times} \times \C^{\times})^1 \rtimes  (S_2 \times S_3).  \]
We fix the isomorphism
\[  Z_{\PGSO_8 \rtimes S_3}(\psi(\SL_2(\C))) \cong S \rtimes (S_2 \times S_3) \]
throughout. Associated to such a $\psi$ is thus a map
\[  \rho = \rho_E \times \rho_K : W_F \longrightarrow S_2 \times S_3, \]
i.e. a pair $(E,K)$ consisting of an \'etale cubic $F$-algebra $E$ and an \'etale quadratic algebra $K$; we shall say that $\psi$ is of type $(E,K)$.
With the \'etale cubic algebra $E$ fixed,  $\psi$ is an  A-parameter for the group $G_E$.  
\vskip 5pt

If we let $W_F$ act on $S$ through the map $\rho$, then $S \rtimes W_F$ is the L-group of the torus  
\[  \tilde{T}_{E, K}  = \{ x\in  (E \otimes_F K)^{\times}:  N_{E \otimes K / E}(x)  \in F^{\times}\} /  K^{\times}.    \]
Hence, to  give  an A-parameter of type  $(E,K)$ is equivalent to giving an L-parameter
\[  \phi:  W_F \longrightarrow {^L}\tilde{T}_{E,K}^{\vee} \twoheadrightarrow S \rtimes (S_2 \times S_3)\]
modulo conjugacy by $S \rtimes S_2$, or equivalently  an automorphic character  of the torus $\tilde{T}_{E,K}$
 up to inverse, i.e. a pair of automorphic characters $[\chi] = \{\chi, \chi^{-1} \}$.
\vskip 5pt

To summarize, the A-parameters we are considering are determined by the triple $(E, K, [\chi])$.
We had already highlighted and discussed these A-parameters in \S \ref{SS:interest}. 
\vskip 5pt

\subsection{\bf Component groups}
An important structure associated to an A-parameter $\psi = \psi_{E,K, [\chi]}$ as above is its global and local component groups. The global component group is
\[  S_{\psi} = \pi_0(Z_{\PGSO_8}(\psi)) = \pi_0(Z_{S \rtimes S_2}(\phi)). \] 
On the other hand, for each place $v$ of $F$, one has the restriction $\psi_v$  of $\psi$ to $W_{F_v} \times \SL_2(\C)$ (the associated local A-parameter), and one has likewise the local component group
\[  S_{\psi_v} = \pi_0(Z_{\PGSO_8}(\psi_v)) = \pi_0(Z_{S \rtimes S_2}(\phi_v)). \] 
There is a natural diagonal map
\[  \Delta:  S_{\psi} \longrightarrow \prod_v S_{\psi_v} =: S_{\psi, \A}. \]
The following lemma gives a description of these component groups.
\vskip 5pt

 \vskip 5pt
 \begin{lemma} \label{L:component}
 Fix an A-parameter $\psi = \psi_{E,K, [\chi]}$ as above, with associated $\phi$.  For each place $v$ of $F$, one has an exact sequence
 \[  \begin{CD}
 1 @>>>  Z_S(\phi_v)  @>>>   Z_{S \rtimes S_2}(\phi_v)  @>>>   S_2  \end{CD} \]
 and this sequence is exact at the right if and only if the character $\chi_v$ associated to $\phi_v$ satisfies $\chi_v^2 =1$.
  Moreover, the abelian group  $Z_S(\phi_v)$ depends only on $(E_v, K_v)$ (i.e. is independent of $[\chi_v]$) and is given by
 \[  Z_S(\phi_v)  =  S^{W_{F_v}}  = (\tilde{T}_{E,K}^{\vee})^{W_{F_v}}. \]
 where the action of $W_{F_v}$ on $S = \tilde{T}_{E,K}^{\vee}$ is via the map $\rho: W_{F_v} \longrightarrow S_2 \times S_3$.
 Hence, one has
 \[  \begin{CD}
 1 @>>>  \pi_0(S^{W_{F_v}})  @>>>  S_{\psi_v}  = \pi_0(Z_{S \rtimes S_2}(\phi_v)) @>>>  S_2  \end{CD} \]
 with exactness on the right if and only if $\chi_v^2 =1$, in which case
 \[  S_{\psi_v} \cong   \pi_0(S^{W_{F_v}})  \rtimes S_2. \]
   \end{lemma}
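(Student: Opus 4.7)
The plan is to compute $Z_{S \rtimes S_2}(\phi_v)$ directly in the semidirect product. Write $\phi_v(w) = (t(w), \rho(w))$ with $\rho(w) = (\rho_K(w), \rho_E(w)) \in S_2 \times S_3$ determined by $(E_v, K_v)$ alone, and $t : W_{F_v} \to S$ a continuous $1$-cocycle (for the action of $W_{F_v}$ on $S$ through $\rho$). First I would show, by comparing $(s, 1) \cdot \phi_v(w)$ with $\phi_v(w) \cdot (s, 1)$ and using abelianness of $S$, that $(s,1)$ centralizes every $\phi_v(w)$ if and only if $\rho(w)(s) = s$ for all $w$; hence $Z_S(\phi_v) = S^{W_{F_v}}$. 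This expression depends only on the action $\rho$, which is determined by $(E_v, K_v)$, giving the claimed independence from $[\chi_v]$.

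Next I would analyze when $(s_0, \sigma) \in S \rtimes S_2$ with $\sigma$ the non-trivial involution can centralize all $\phi_v(w)$. Because $\sigma \in S_2$ and $\rho(w) \in S_2 \times S_3$ commute, and $\sigma$ acts on $S$ by inversion, the centralizer identity reduces to
\[ t(w)^2 \;=\; s_0 \cdot \rho(w)(s_0)^{-1} \qquad \text{for all } w \in W_{F_v}. \]
Such an $s_0$ exists precisely when the doubled cocycle $w \mapsto t(w)^2$ is a coboundary, i.e.\ when $2[t] = 0$ in $H^1(W_{F_v}, S)$. Under local Langlands duality for the torus $\tilde T_{E,K}$, the class $[t] \in H^1(W_{F_v}, \tilde T_{E,K}^{\vee}) = H^1(W_{F_v}, S)$ corresponds to the character $\chi_v$ of $\tilde T_{E,K}(F_v)$, and doubling on the cohomological side matches squaring of characters. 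Hence $2[t] = 0$ if and only if $\chi_v^2 = 1$, which is the advertised right-exactness criterion.

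For the final isomorphism when $\chi_v^2 = 1$, I would observe that any lift $(s_0, \sigma)$ produced above automatically satisfies
\[ (s_0, \sigma)^2 \;=\; (s_0 \cdot \sigma(s_0),\, \sigma^2) \;=\; (s_0 \cdot s_0^{-1},\, 1) \;=\; (1,1), \]
since $\sigma$ acts on $S$ by inversion. Moreover $(s_0, \sigma) \notin S$ cannot lie in the identity component $(S^{W_{F_v}})^0$, so its image in $S_{\psi_v} = \pi_0(Z_{S \rtimes S_2}(\phi_v))$ is a non-trivial involution. Taking $\pi_0$ of the short exact sequence from the first part therefore yields a splitting $S_2 \hookrightarrow S_{\psi_v}$ and the stated isomorphism $S_{\psi_v} \cong \pi_0(S^{W_{F_v}}) \rtimes S_2$, with $S_2$ still acting by inversion on the finite abelian group $\pi_0(S^{W_{F_v}})$.

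The principal technical point I expect will be the invocation of local Langlands for tori, specifically the compatibility of the abelian-group structure on continuous characters of $\tilde T_{E,K}(F_v)$ with that on $H^1(W_{F_v}, S)$ carrying its natural $W_{F_v}$-action through $\rho = \rho_K \times \rho_E$. This is standard, so once it is in hand the remaining content of the lemma amounts to the routine semidirect-product bookkeeping sketched above.
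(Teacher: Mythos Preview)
Your proof is correct and complete. The paper states this lemma without proof, presumably regarding it as a routine computation in the semidirect product $S \rtimes (S_2 \times S_3)$; your argument supplies exactly that computation, and the identification of the right-exactness condition with $\chi_v^2 = 1$ via the local Langlands correspondence for tori is the natural (and intended) way to finish.
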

 \vskip 5pt
The analogous result holds for the global parameter $\phi$. In \S \ref{SS:example}, we had considered an example of a family of such  $\psi$'s and tabulated the corresponding groups $S_{\psi_v}$. To simplify notations, we will henceforth set
\[  S_{\psi}^0 :=  \pi_0(S^{W_F}) \quad \text{and} \quad S_{\psi_v}^0 :=  \pi_0(S^{W_{F_v}}). \] 
\vskip 5pt

 \vskip 5pt

\vskip 5pt
 \subsection{\bf From A-parameters to twisted composition algebras.}
As we observed in \S \ref{SS:isom-tori}, the group $\tilde{T}_{E,K}$ is (canonically up to inverse) isomorphic to the identity component of the automorphism group of any $E$-twisted 
composition algebra $C$ with $\dim_E(C)  =2$ and quadratic invariant $K_C$ such that $[K_E] \cdot [K_C] \cdot [K] =1$. This motivates the following definition:

\vskip 5pt
\begin{defn} 
(i) Let $\Sigma_{E,K}$ denote the set of $E$-isomorphism classes of rank $2$ $E$-twisted composition algebras with quadratic invariant $K_C = [K_E] \cdot [K]$. 
\vskip 5pt

(ii) Let $\tilde \Sigma_{E,K}$ denote the set of $E \otimes_F K_C$-isomorphism classes of rank $2$ $E$-twisted composition algebras with quadratic invariant $[K_C]= [K_E] \cdot [K]$.
\end{defn}
\vskip 5pt

\noindent  Then any $C \in \Sigma_{E,K}$ corresponds under the Springer decomposition to an algebra embedding $E \hookrightarrow J$ for some $9$-dimensional Freudenthal-Jordan algebra $J$ with $K_J  =K$.     
\vskip 5pt

  The following long lemma summarizes  the discussion in \S \ref{S:TCFJ}, especially \S \ref{SS:rank2}, \S \ref{SS:springer}, \S \ref{SS:isom-tori} and \S \ref{SS:division} (see also  \cite[\S 11.5 and \S 11.6]{GS2}). 
   \vskip 5pt
   
   \begin{lemma}  \label{L:H1}
(i) There is a natural commutative diagram
\[  \begin{CD}
H^1(F, \tilde{T}_{E,K}) @=  \tilde \Sigma_{E,K} @=   \{ \text{isomorphism classes of triples $(B, \tau, \iota)$} \} \\
@VVV  @VVV @VVV  \\
  H^1(F, \tilde{T}_{E,K}) / S_2  @=\Sigma_{E,K} @=  \{ \text{equivalence classes of $\iota: E \hookrightarrow J$} \} 
  \end{CD} \]
  where the horizontal arrows are natural bijections (and hence written as equal signs). Moreover, 
  \vskip 5pt
  
  \begin{itemize}
  \item in the first row, for the triple $(B, \tau,\iota)$,
  \begin{itemize}
  \item $B$ is a central simple $K$-algebra of degree $3$;
  \item $\tau$ is an involution of second kind on $B$ (relative to $K/F$)
  \item $\iota : E \longrightarrow B^{\tau}$ is a Jordan algebra embedding.
    \end{itemize}
Two such triples $(B_1, \tau_1, \iota_1)$ and $(B_2, \tau_2, \iota_2)$ are equivalent if there is a $K$-algebra isomorphism $f: B_1 \cong B_2$ such that $\tau_2 \circ f = f \circ \tau_1$ 
and $f \circ \iota_1 = \iota_2$.
  \vskip 5pt
  
  \item  the group $S_2$ acts on $H^1(F, \tilde{T}_{E,K})$  by inverting; this action is described in terms of the other two sets in the row by 
 \[    C \mapsto C \otimes_{K_C, \sigma} K_C \quad \text{ on $\tilde{\Sigma}_{E,K}$} \]
 where $\sigma$ is the nontrivial element in $\Aut(K_C/F)$,  and
 \[   (B, \tau, \iota) \mapsto (B^{op}, \tau, \iota)  \quad \text{  on the last set.} \]
  \vskip 5pt

  \item in the second row, the second bijection   is via the Springer decomposition, so $\iota: E \hookrightarrow J$ refers to an embedding of Jordan algebras;
 \vskip 5pt
 
 \item the first two vertical arrows are the natural ones whereas the last vertical arrow is the forgetful map given by 
   \[  (B, \tau, \iota)  \mapsto  \iota.   \]
\end{itemize}
\vskip 5pt

(ii)  For any $C \in \Sigma_{E,K}$, its preimage in $\tilde{\Sigma}_{E,K}$ is an $S_2$-orbit and thus has 1 or 2 elements. Moreover, one has:
\[  \text{ Fiber over $C$ has 2 elements} \Longleftrightarrow  H_C(F) = H_C^0(F). \]
Thus, the restriction of the first vertical arrow gives a bijection from $H^1(F, \tilde{T}_{E,K})[2]$ onto its image.

\vskip 5pt

 (iii) If we pick any triple $(B, \tau, \iota)$ in the preimage of $C$, we obtain an isomorphism of algebraic tori over $F$:
  \[  \iota_{B,\tau} :  H_C^0 \longrightarrow     \tilde{T}_{E,K}. \]
  Hence, we have the following canonical bijection which gives another interpretation of $\tilde{\Sigma}_{E,K}$:
  \[  \tilde{\Sigma}_{E,K}  \longleftrightarrow  \{ \text{equivalence classes of  $(C, i)$} \} \]
where
\begin{itemize}
\item $C$ is an $E$-twisted composition algebra with quadratic invariant $K_C = [K_E] \cdot [K]$ and automorphism group $H_C$;
\item $i: H_C^0 \longrightarrow \tilde{T}_{E,K}$ is an isomorphism of $F$-tori, arising in the manner above;
\item  two pairs $(C,i)$ and $(C', i')$ are equivalent if and only if there is an isomorphism $j : C \longrightarrow C'$ of $E$-twisted composition algebras, inducing an isomorphism 
${\mathrm Ad}(j): H_C^0 \longrightarrow  H_{C'}^0$, so that $i' \circ {\mathrm Ad}(j)  = i$. 
\end{itemize}
       \end{lemma}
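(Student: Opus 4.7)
\medskip

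\noindent\textbf{Proof plan for Lemma \ref{L:H1}.}

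My plan for part (i) is to assemble three bijections already present in the paper and check that their $S_2$-actions match up. First, combining Lemma \ref{L:rank2} with the exceptional Hilbert 90 isomorphism \eqref{E:cano-isom} identifies $H^1(F, \tilde{T}_{E,K})$ with $H^1(F, T_{E,K_C})$, where $K_C$ is determined by the biquadratic relation $[K_E]\cdot[K_C]\cdot[K]=1$; and Lemma \ref{L:rank2} identifies this latter $H^1$ with $\tilde{\Sigma}_{E,K}$. Second, for the right-hand bijection, I appeal to the Springer construction (\S \ref{SS:springer}) which identifies $\tilde{\Sigma}_{E,K}$ with pairs $(J,\iota)$ up to $K$-linear isomorphism, and to \cite[Prop.~37.6 and Thm.~37.12]{KMRT} which identifies the groupoid of Freudenthal-Jordan algebras $J$ of dimension $9$ with $K_J=K$ (under $K$-linear isomorphism) with the groupoid of pairs $(B,\tau)$; the extra datum $\iota$ is the same on both sides. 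For the $S_2$-actions I need only verify compatibility: on $H^1(F,\tilde{T}_{E,K})$ the action is inversion (as shown after (\ref{E:H1T2}) in \S \ref{SS:rank2}); on $\tilde{\Sigma}_{E,K}$ this corresponds to the Galois twist $C \mapsto C\otimes_{K_C,\sigma}K_C$; and on triples this corresponds to $(B,\tau,\iota)\mapsto(B^{\mathrm{op}},\tau,\iota)$, which is consistent with the description of Jordan algebras before (\ref{SS:springer}). Passing to $S_2$-quotients in the first row gives the second row: the quotient of $\tilde{\Sigma}_{E,K}$ is $\Sigma_{E,K}$ by definition (or equivalently, by Prop.~\ref{P:orbits}), and on triples the quotient forgets $(B,\tau)$ to retain just the embedding $\iota: E\hookrightarrow J$ up to $F$-linear equivalence.

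For part (ii), since the $S_2$-action on $H^1(F,\tilde{T}_{E,K})$ is by inversion, the fixed points are precisely the $2$-torsion subgroup $H^1(F,\tilde{T}_{E,K})[2]$, and an $S_2$-orbit has size $1$ if $[C]$ is $2$-torsion and size $2$ otherwise. The equivalence with $H_C(F)=H_C^0(F)$ then follows from Proposition \ref{P:division}: the fiber over $C$ has size $1$ iff $[C]\in H^1(F,\tilde{T}_{E,K})[2]$ iff the Jordan algebra $J=E\oplus C$ is not a division algebra iff $H_C(F)\supsetneq H_C^0(F)$. Consequently the fiber has size $2$ precisely when $H_C(F)=H_C^0(F)$, and the restriction of the quotient map to $H^1(F,\tilde{T}_{E,K})[2]$ is clearly injective onto its image.

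For part (iii), given a triple $(B,\tau,\iota)$ in the fiber over $C\in \Sigma_{E,K}$, the discussion in \S \ref{SS:isom-tori} canonically produces the isomorphism $\iota_{B,\tau}: H_C^0 \to \tilde{T}_{E,K_J} \cong \tilde{T}_{E,K}$. Lemma \ref{L:indep} shows this isomorphism depends only on the equivalence class of the triple, and replacing $(B,\tau)$ by $(B^{\mathrm{op}},\tau)$ (the $S_2$-action) inverts the isomorphism. Thus assigning $(B,\tau,\iota)\mapsto (C,\iota_{B,\tau})$ gives a well-defined map from $\tilde{\Sigma}_{E,K}$ to equivalence classes of pairs $(C,i)$; its inverse sends a pair $(C,i)$ to any triple realizing $i$ (which exists, and is determined up to the equivalence on triples, by the constructions of \S \ref{SS:isom-tori}).

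The entire lemma is essentially a bookkeeping exercise gathering previously-established results; the only subtle point is to track the $S_2$-actions consistently across the three parameterizations, and in particular to recognize that on pairs $(C,i)$ the action $i \mapsto i^{-1}$ corresponds precisely to Galois twist on $\tilde{\Sigma}_{E,K}$ and to $(B,\tau)\mapsto(B^{\mathrm{op}},\tau)$ on triples. This compatibility, while elementary, is the one place where a careful check is required.
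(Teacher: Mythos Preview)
Your proposal is correct and follows essentially the same approach as the paper: the paper does not give a separate proof of this lemma, but instead presents it as a summary of the discussion in \S\ref{S:TCFJ} (specifically \S\ref{SS:rank2}, \S\ref{SS:springer}, \S\ref{SS:isom-tori}, and \S\ref{SS:division}, together with \cite[\S 11.5--11.6]{GS2}), and you have correctly identified and organized exactly those ingredients. Your explicit tracking of the $S_2$-actions across the three parametrizations, and your use of Proposition~\ref{P:division} for part (ii), are precisely what is needed to make the summary rigorous.
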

   \vskip 5pt
   
   \subsection{\bf Local fields}
  In particular, the above results apply to the case where $F$ is a number field, as well as the local completions $F_v$.  
  In \cite[\S 12]{GS2}, we have examined the case of a local field $F_v$ as an explicit example. Summarizing the results there, we note:
  \vskip 5pt
  
  \begin{lemma} \label{L:localH1}  
   Assume that $F_v$ is a local field.  We have two cases:
   \vskip 5pt
   
   \begin{itemize}
   \item[(i)]  If $(E_v, K_v) \ne (\text{field, split} )$,   then $H^1(F_v, \tilde{T}_{E_v,K_v})$ is an elementary abelian 2-group and  the action of $S_2$ on $H^1(F_v, \tilde{T}_{E_v,K_v})$ is trivial, so that
   \[  \Sigma_{E_v, K_v}  \longleftrightarrow \tilde{\Sigma}_{E_v, K_v}   \longleftrightarrow  H^1(F_v, \tilde{T}_{E_v,K_v}).  \]
  Hence, for any $C \in \Sigma_{E_v, K_v}$, its fiber in $\tilde{\Sigma}_{E_v, K_v}$ has 1 element and $H_C(F_v) \cong H_C^0(F_v) \rtimes \Z/2\Z$.
   \vskip 5pt
  
  \item[(ii)]  If $E_v$ is a field and $K_v$ is split (so that $F_v$ is nonarchimedean), one has isomorphisms
   \[  \tilde{\Sigma}_{E_v, K_v}  \cong  H^1(F_v, \tilde{T}_{E_v, K_v}) \cong \mathrm{Ker}(H^2(F_v, \mathbb{G}_m) \rightarrow H^2(E_v, \mathbb{G}_m))  \cong \Z/3\Z \]
   via
   \[  (B, \tau, \iota) \mapsto inv(B) \quad \text{(the invariant of $B$)} \]
   and the action of $S_2$ on $\Z/3\Z$ is by inverting.  Hence $\Sigma_{E_v, K_v}$ has 2 elements, corresponding to 
   \[ C_v^+ = ( E_v \hookrightarrow M_3(F_v)) \quad \text{and} \quad C_v^- = (E_v \hookrightarrow D_v^+) \]
   where $D_v^+$ denotes the Jordan algebra attached to a cubic division algebra  $D_v$ over $F_v$. The preimage of $C_v^-$ in $\tilde{\Sigma}_{E_v, K_v}$ has two elements (associated to $D_v$ and $D_v^{op}$) and in this case, $H_{C_v^-}(F_v) = H_{C_v^-}^0(F_v)$.  However, the choice of $D_v$ gives an isomorphism
   \[ \iota_{D_v}:  H_{C_v^-} \longrightarrow  \tilde{T}_{E_v, K_v}, \]
    with $\iota_{D^{op}_v} (-)  = \iota_{D_v} (-)^{-1}$.
    \end{itemize}
    
    Hence,  we have:
    \[  H^1(F_v, \tilde{T}_{E_v, K_v})[3] =\text{$1$ or $\Z/3\Z$} \]
and $H^1(F_v, \tilde{T}_{E_v, K_v}) /H^1(F_v, \tilde{T}_{E_v, K_v})[3] $ is an elementary abelian 2-group.
  \end{lemma}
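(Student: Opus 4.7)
Via the identifications of Lemma \ref{L:H1}(i), $H^1(F_v, \tilde{T}_{E_v,K_v}) \cong \tilde{\Sigma}_{E_v,K_v}$ is parametrized by triples $(B,\tau,\iota)$, and the $S_2$-action on $H^1$ (which is inversion on the torus) is realised as $(B,\tau,\iota) \mapsto (B^{\mathrm{op}},\tau,\iota)$, with $\Sigma_{E_v,K_v} = \tilde{\Sigma}_{E_v,K_v}/S_2$. The plan is to compute $H^1$ case by case. First I observe that the statements ``$H^1$ is an elementary abelian $2$-group'' and ``$S_2$ acts trivially on $H^1$'' are logically equivalent (since $S_2$ acts by inversion), and that either is implied by ``$J = E_v \oplus C \cong B^{\tau}$ is never a division algebra as $C$ runs over $\tilde{\Sigma}_{E_v,K_v}$'', by Proposition \ref{P:division}(v); via the Springer decomposition this in turn reduces to ``$B$ is a split central simple $K_v$-algebra.''

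For case (ii), I directly compute. When $K_v = F_v \times F_v$, a short check from the definition shows $\tilde{T}_{E_v,K_v} \cong \mathrm{Res}_{E_v/F_v}\mathbb{G}_m/\mathbb{G}_m$. The Galois cohomology of $1 \to \mathbb{G}_m \to \mathrm{Res}_{E_v/F_v}\mathbb{G}_m \to \mathrm{Res}_{E_v/F_v}\mathbb{G}_m/\mathbb{G}_m \to 1$ together with Hilbert $90$ then gives
\[
H^1(F_v, \tilde{T}_{E_v,K_v}) \;\cong\; \ker\bigl(H^2(F_v, \mathbb{G}_m) \to H^2(E_v, \mathbb{G}_m)\bigr).
\]
For $E_v$ a cubic field the restriction $\mathrm{Br}(F_v) \to \mathrm{Br}(E_v)$ is multiplication by $3$ on $\mathbb{Q}/\mathbb{Z}$, so the kernel is $\tfrac{1}{3}\mathbb{Z}/\mathbb{Z} \cong \mathbb{Z}/3\mathbb{Z}$, and the class $(B,\tau,\iota)$ is sent to $\mathrm{inv}(B)$. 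The $S_2$-action $B \mapsto B^{\mathrm{op}}$ translates to inversion on $\mathbb{Z}/3\mathbb{Z}$, so the orbits $\{0\}$ and $\{\pm\tfrac13\}$ give the two classes $C_v^{+}$ and $C_v^{-}$, with fibre structure as asserted; the isomorphisms $\iota_{D_v}, \iota_{D_v^{\mathrm{op}}}$ are those from the exceptional Hilbert $90$ of \S \ref{SS:isom-tori}.

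The main obstacle is to show that $B$ is split in case (i). If $K_v$ is a field, the involution of second kind $\tau$ yields a $K_v$-algebra isomorphism $B \cong {}^{\sigma}(B^{\mathrm{op}})$ (with $\sigma$ the nontrivial element of $\mathrm{Gal}(K_v/F_v)$); passing to $\mathrm{Br}(K_v) = \mathbb{Q}/\mathbb{Z}$, and using that the local Brauer invariant is Galois-equivariant for the trivial $\mathrm{Gal}(K_v/F_v)$-action on $\mathbb{Q}/\mathbb{Z}$, this forces $2[B] = 0$, which combined with $3[B] = 0$ (degree $3$) gives $[B] = 0$. If instead $K_v$ is split, then necessarily (within case (i)) $E_v$ is not a field; here $B \cong B_1 \times B_1^{\mathrm{op}}$ with $\iota: E_v \hookrightarrow B^{\tau} \cong B_1$, exhibiting $E_v$ as a splitting étale subalgebra, so $B_1 \otimes_{F_v} E_v \cong M_3(E_v)$. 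Since $E_v$ has an $F_v$-factor (being either $F_v^3$ or $F_v \times K_{E,v}$), projecting to that factor gives $B_1 \cong M_3(F_v)$. In either situation $B$ is split, so $J$ is non-division, so $H^1(F_v, \tilde{T}_{E_v,K_v}) = H^1(F_v, \tilde{T}_{E_v,K_v})[2]$ by Proposition \ref{P:division}, which is an elementary abelian $2$-group on which $S_2$ acts trivially.

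The remaining assertions follow immediately. In case (i) the trivial $S_2$-action gives $\tilde{\Sigma}_{E_v,K_v} = \Sigma_{E_v,K_v}$ with singleton fibres, so by Lemma \ref{L:H1}(ii), $H_C(F_v) \cong H_C^0(F_v) \rtimes \mathbb{Z}/2\mathbb{Z}$. In case (ii), the orbit $\{[D_v],[D_v^{\mathrm{op}}]\}$ has size $2$, so by Lemma \ref{L:H1}(ii) $H_{C_v^{-}}(F_v) = H_{C_v^{-}}^{0}(F_v)$, while the singleton orbit $\{0\}$ yields $H_{C_v^{+}}(F_v) \cong H_{C_v^{+}}^{0}(F_v) \rtimes \mathbb{Z}/2\mathbb{Z}$. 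The concluding claim is then trivial: $H^1(F_v, \tilde{T}_{E_v,K_v})[3]$ is $1$ in case (i) and is all of $\mathbb{Z}/3\mathbb{Z}$ in case (ii), and $H^1/H^1[3]$ is an elementary abelian $2$-group in both cases.
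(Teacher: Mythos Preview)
Your argument is correct. The paper does not actually prove this lemma in the text: it simply states that the local case was worked out in \cite[\S 12]{GS2} and records the result, so there is no in-paper proof to compare against. What you have written is a clean, self-contained reconstruction using the tools already developed in the paper, namely Lemma~\ref{L:H1}, Proposition~\ref{P:division}, and the exceptional Hilbert~90 of \S\ref{SS:isom-tori}, together with standard local class field theory.

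Two small remarks. First, in case~(i) with $K_v$ split and $E_v$ not a field, your route through ``$E_v$ is a splitting \'etale subalgebra, so $B_1\otimes_{F_v}E_v\cong M_3(E_v)$'' is fine, but it is quicker to observe directly that the nontrivial idempotent in $E_v$ lands in $B_1$, forcing $B_1$ to be non-division and hence (degree $3$ being prime) $B_1\cong M_3(F_v)$. Second, in case~(ii) it would be worth saying explicitly that $B$ decomposes as $B_1\times B_1^{\mathrm{op}}$ over $K_v=F_v\times F_v$ and that ``$\mathrm{inv}(B)$'' means $\mathrm{inv}(B_1)$; you use this implicitly when matching the $S_2$-action $B\mapsto B^{\mathrm{op}}$ with inversion on $\Z/3\Z$. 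Neither of these affects the validity of your proof.
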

   
   \subsection{\bf Local-global principles}
   When $F$ is a number field, there is a  commutative diagram of localisation maps
   \[  \begin{CD}
\tilde{\Sigma}_{E,K} @>{\tilde{\mathrm loc}}>>   \prod'_v \tilde{\Sigma}_{E_v, K_v} \\
@VVV  @VVV \\
   \Sigma_{E,K} @>{\mathrm loc}>>   \prod'_v  \Sigma_{E_v, K_v}. \end{CD} \]
\vskip 5pt

\noindent It will be necessary to explicate the image of ${\mathrm loc}$ and to determine the size of its fibers.  
\vskip 5pt

\begin{lemma}  \label{L:loc}
(1) Assume that $K = F \times F$ is split.
\vskip 5pt

\begin{itemize}
\item[(i)] One has a short exact sequence of abelian groups
\[  \begin{CD}
0 @>>> \tilde{\Sigma}_{E,K} @>\tilde{\mathrm loc}>> \bigoplus_v \tilde{\Sigma}_{E,v, K_v} @>\oplus_v {\mathrm inv}_v>>  \Z/3\Z  @>>> 0 \end{CD} \]

\vskip 5pt
\item[(ii)] Let $\mathfrak{C} = \{ C_v \}$ be  a collection of local twisted composition algebras, with $C_v = (E_v \hookrightarrow B^+_v)$, where $B_v$ is a central simple algebra of degree $3$ over $F_v$ which is split for almost all $v$, and let $S_{\mathfrak{C}}$ denote the set of places where $B_v$ is a cubic division algebra.  Then  we have: 
\[  \# {\mathrm loc}^{-1}(\mathfrak{C})  = \begin{cases} 
1 \text{  if $S_{\mathfrak{C}}$ is empty;} \\
 \left( 2^{\# S_{\mathfrak{C}}} + 2 \cdot (-1)^{\# S_{\mathfrak{C}}}\right) /6, \text{  if $S_{\mathfrak{C}}$ is nonempty.} \end{cases}  \]
In particular, $\mathfrak{C}$ lies in the image of ${\mathrm loc}$ if and only if $\# S_{\mathfrak{C}} \ne 1$. 
\end{itemize}

\vskip 5pt
(2) Assume that $K$ is a field. 
\vskip 5pt

\begin{itemize}
\item[(i)]   The map $\tilde{\mathrm loc}$ is bijective and the map ${\mathrm loc}$ is surjective. 
   \vskip 5pt
   
\item[(ii)]    Given a collection of local twisted composition algebras $ \mathfrak{C} = \{ C_v \}$, 
   let $S_{\mathfrak{C}}$ denote the finite set of places of $F$ where $E_v$ is a field, $K_v$ is split and $C_v = ( E_v \hookrightarrow D_v^+)$ with $D_v$ a division algebra of degree $3$ over $F_v$. Then we have:
   \[  \# {\mathrm loc}^{-1} (\mathfrak{C})  =  \begin{cases}
   1, \text{  if $S_{\mathfrak{C}}$ is empty;} \\
   2^{\# S_{\mathfrak{C}}-1}, \text{  if $S_{\mathfrak{C}}$ is nonempty.} \end{cases}   
    \] 
 \end{itemize}
 
 \vskip 5pt
 
 In both cases, the restriction of $\tilde{\mathrm loc}$ gives an isomorphism
 \[  H^1(F, \tilde{T}_{E,K})[2]  \cong  {\prod}_v' H^1(F_v, \tilde{T}_{E,K})[2].  \]

 \end{lemma}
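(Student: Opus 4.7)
\textbf{Proof plan for Lemma \ref{L:loc}.}
The strategy is to pass entirely to Galois cohomology via the identification $\tilde{\Sigma}_{E,K} = H^1(F, \tilde{T}_{E,K})$ from Lemma \ref{L:H1}, and to describe $\Sigma_{E,K}$ as the quotient of this set by the inversion action of $S_2$. Once everything is phrased cohomologically, the arguments for the two cases run in parallel: in each case, prove a Hasse-type result for $H^1$ (giving the exact sequence in Case 1 and the bijection in Case 2), and then extract the fiber counts for the quotient map $\tilde{\Sigma} \twoheadrightarrow \Sigma$ via a combinatorial argument involving the $S_2$-action.

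For Case 1, when $K = F \times F$ is split, $\tilde{T}_{E,K} \cong \mathrm{Res}_{E/F}(\mathbb{G}_m)/\mathbb{G}_m$. Applying cohomology to the short exact sequence $1 \to \mathbb{G}_m \to \mathrm{Res}_{E/F}(\mathbb{G}_m) \to \tilde{T}_{E,K} \to 1$ and invoking Hilbert 90 identifies $H^1(F, \tilde{T}_{E,K})$ with $\Ker(\mathrm{Br}(F) \to \mathrm{Br}(E))$, which is $3$-torsion (and similarly at each place). The exact sequence (1)(i) is then exactly the Albert-Brauer-Hasse-Noether theorem restricted to this kernel. For the count (1)(ii), I would use the Hasse invariant to identify each local lift over $v \in S_{\mathfrak{C}}$ with an element of $\{\pm 1\} \subset \Z/3\Z$ (giving $2^n$ total local lifts for $n = |S_{\mathfrak{C}}|$), and then observe that the exact sequence forces $\sum_v \varepsilon_v \equiv 0 \pmod 3$. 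A short character sum with the cube roots of unity, using $\omega + \omega^{-1} = -1$, shows that the number of such sign sequences is $(2^n + 2(-1)^n)/3$. The $S_2$-action inverts all signs simultaneously, so it is free when $n \geq 1$ and trivial when $n = 0$; dividing gives exactly the stated formula $(2^n + 2(-1)^n)/6$ for $n \geq 1$ and $1$ for $n = 0$. The image condition $n \neq 1$ is equivalent to the positivity of $(2^n + 2(-1)^n)/3$.

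For Case 2, the main task is to prove that $\tilde{\mathrm{loc}}$ is bijective, i.e.\ that $\mathrm{Sha}^1(F, \tilde{T}_{E,K}) = 0$. This is the main obstacle of the argument and the step where arithmetic input beyond Hilbert 90 and Albert-Brauer-Hasse-Noether is needed. The plan is to invoke the Poitou-Tate nine-term exact sequence for the $2$-dimensional torus $\tilde{T}_{E,K}$, coupled with an explicit computation of the character module: the $W_F$-module $X^*(\tilde{T}_{E,K})$ sits in a natural sequence involving the permutation modules for $E$ and $K$, and using the behaviour of Tate-Shafarevich groups of tori (as worked out by Tate and further developed by Voskresenskii and Prasad-Rapinchuk as cited in the introduction) one computes that both $\mathrm{Sha}^1$ and $\mathrm{Sha}^2$ vanish. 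Alternatively, one may pass through the CSA-with-involution-of-second-kind interpretation of $\tilde{\Sigma}_{E,K}$ from Lemma \ref{L:H1} and appeal directly to the classical Hasse principle for such algebras. Either route yields the bijectivity of $\tilde{\mathrm{loc}}$, hence the surjectivity of $\mathrm{loc}$. The fiber count (2)(ii) then follows: there are $2^n$ local lifts of $\mathfrak{C}$ to $\prod' \tilde{\Sigma}_{E_v, K_v}$, each uniquely coming from a global class by bijectivity, and the $S_2$-action on these global classes inverts signs simultaneously, hence acts freely when $n \geq 1$ and trivially when $n = 0$, giving $2^{n-1}$ or $1$ orbits.

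The final assertion about $2$-torsion is a direct corollary in both cases. In Case 1, intersecting the exact sequence (1)(i) with $2$-torsion yields an isomorphism $H^1(F, \tilde{T}_{E,K})[2] \cong \bigoplus_v H^1(F_v, \tilde{T}_{E_v, K_v})[2]$ because the right-hand quotient $\Z/3\Z$ is $2$-torsion free; this automatically upgrades to the restricted product formulation since the $2$-torsion at almost all places is trivial. In Case 2, the isomorphism on $2$-torsion is immediate from the bijectivity of $\tilde{\mathrm{loc}}$ established above.
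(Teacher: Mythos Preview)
Your proposal is correct and follows essentially the same route as the paper. For Case~1 both you and the paper identify $H^1(F,\tilde{T}_{E,K})$ with $\Ker(\mathrm{Br}(F)\to\mathrm{Br}(E))$ via Hilbert~90 and invoke global class field theory for the exact sequence; your character-sum computation of $(2^n+2(-1)^n)/3$ is the ``interesting exercise'' the paper leaves to the reader, and the $S_2$-quotient argument is identical.

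The only place where the paper is sharper than your sketch is Case~2(i). Rather than attempting to extract both injectivity and surjectivity of $\tilde{\mathrm{loc}}$ from a single Poitou--Tate computation (which would require some care with the cokernel term), the paper separates the two directions cleanly: injectivity is Voskresenskii's Hasse principle for $2$-dimensional tori \cite{V1}, while surjectivity is obtained via the moduli interpretation $(B,\tau,\iota)$ from Lemma~\ref{L:H1} by first invoking the local--global principle for $H^1(F,\mathrm{PU}_3^K)$ to globalize $(B_v,\tau_v)$, and then the Prasad--Rapinchuk embedding theorem \cite{PR} to globalize the $\iota_v$. Your ``alternative'' CSA-with-involution route is exactly this, but note that the classical Hasse principle for such algebras gives you the pair $(B,\tau)$; the embedding $\iota$ requires the additional input from \cite{PR}. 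The fiber count (2)(ii) and the final $2$-torsion statement are handled the same way in both.
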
  
\vskip 5pt

\begin{proof}
(1i) Recalling that 
\[  \tilde{\Sigma}_{E,K} = H^1(F, \tilde{T}_{E,K}) = {\Ker}(H^2(F, \mathbb{G}_m) \longrightarrow  H^2(E, \mathbb{G}_m)), \]
the short exact sequence in (1i) is a consequence of global class field theory. 

\vskip 5pt
 (1ii) Given a set $S$ of places of $F$, there are 
 \[  \frac{2^{\# S} + 2 \cdot (-1)^{\# S} }{3} \]
 central simple $F$-algebras of degree $3$ which are ramified precisely at $S$; this is an interesting exercise which we leave to the reader. 
This number is thus the cardinality of the fiber of $\tilde{\mathrm loc}$ over a collection $\mathfrak{C}$ with $S_{\mathfrak{C}} = S$.  The action of $S_2$ on $\tilde{\Sigma}_{E,K}$ preserves this fiber and its action there is free, unless $S$ is empty (in which case the fiber is a singleton set and $S_2$ acts trivially). This proves (1ii).

\vskip 5pt

(2i) The map $\tilde{\mathrm loc}$ is injective by the Hasse principle for 2-dimensional tori, proved by Voskresenskii \cite{V1}. To show the surjectivity,  we make use of the moduli interpretation of $\tilde{\Sigma}_{E,K}$ as the set of tuples $(B, \tau, \iota)$ provided by Lemma \ref{L:H1}.
One has the local-global principle for odd degree division algebras equipped with involutions of second kind, which says that any collection $\{ (B_v, \tau_v) \}$ of local pairs comes from a unique global pair $(B, \tau)$. Equivalently, the natural map
\[  H^1(F, {\mathrm PU}_3^K) \longrightarrow \bigoplus_v H^1(F_v,   {\mathrm PU}_3^{K_v}) \]
is an isomorphism. In addition, for a fixed $(B,\tau)$ and a collection of local embeddings 
\[   \iota_v:  (E_v \otimes K_v, \sigma_v)  \longrightarrow ( B_v, \tau_v),  \quad \text{with $1 \ne \sigma_v  \in \Aut(K_v/F_v)$, } \]
  a local-global principle of Prasad-Rapinchuk \cite{PR} shows that there exists 
 \[   \iota:  (E \otimes K, \tau)  \longrightarrow ( B, \tau),  \]
 which localizes to $\iota_v$ for all $v$.  This shows the surjectivity of $\tilde{\mathrm loc}$.
 \vskip 5pt

The surjectivity of ${\mathrm loc}$ follows by that of $\tilde{\mathrm loc}$  and the surjectivity of the two vertical arrows. 
\vskip 5pt

(2ii) Given a finite set $S$ of finite places of $F$ which split over $K$, there are $2^{\# S}$ pairs $(B, \tau)$ of central simple $K$-algebras with an involution $\tau$ of the second kind,  with $B$ ramified precisely at places of $K$ lying over $S$. The $S_2$ action on these is free unless $S$ is empty (in which case the action is trivial). This proves (ii). 
 
\end{proof}
\vskip 5pt

In particular,  the map ${\mathrm loc}$ is not injective: this is the failure of the Hasse principle for twisted composition algebras which is ultimately responsible for the high multiplicities in the automorphic discrete spectrum of $G_E$.     
\vskip 10pt

\subsection{\bf Local Tate dualities} 
The connection between our A-parameters $\psi$ and twisted composition algebras is provided by the local and global Tate duality theorems. 
 We first note the local Tate-Nakayama duality theorem  (see \cite[\S 2]{K1} and \cite[Cor. 2.4]{Mi}).
 \vskip 5pt

\begin{lemma}  \label{L:local-tate}
Let $T$ be a torus over a local field $F_v$ with character group $X(T)  = \Hom(T, \mathbb{G}_m)$. Then one has a  
commutative diagram:
\vskip 5pt
\[ \begin{CD}
H^1(F_v, T) @>>> {\Irr} (H^1(F_v, X(T)) ) \\
@Ainj.AA   @AAinj.A \\
H^1(F_v, T)[2] @>>> {\Irr} (H^1(F_v, X(T))/ 2 H^1(F_v, X(T)) ) \\
@Asurj.AfA @AAsurj.A \\
H^1(F_v, T[2]) @>>> {\Irr}( H^1(F_v, X(T)/2 X(T)))  \\
@Ainj.AbA @AAinj.A \\
T(F_v)/T(F_v)^2  @>>> {\Irr}( H^2(F_v, X(T))[2]),
\end{CD} \]
\vskip 5pt

\noindent whose horizontal arrows are isomorphisms.  Here, in the left column, the maps $f$ and $b$  form a short exact sequence 
\[  \begin{CD}
1 @>>>  T(F_v)/T(F_v)^2 @>b>> H^1(F_v, T[2]) @>f>> H^1(F_v, ,T)[2] @>>> 1  \end{CD} \]
arising from the Kummer sequence 
\[  \begin{CD} 
  1 @>>> T[2] @>>> T @>2>> T @>>> 1, \end{CD} \]
  and the corresponding terms in the right column arises from the dual  short exact sequence
  \[  \begin{CD}
  1 @>>> X(T) @>2>> X(T) @>>> X(T)/2X(T) @>>> 1. \end{CD} \]
\end{lemma}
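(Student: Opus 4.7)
The plan is to assemble the diagram from three ingredients: (a) the classical local Tate duality theorems for tori and for finite commutative group schemes, (b) Cartier duality relating $T[2]$ to $X(T)/2X(T)$, and (c) the fact that the classical duality pairings are compatible with connecting homomorphisms in long exact sequences.

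First I would identify each horizontal arrow. The top arrow is the standard local Tate duality isomorphism
\[ H^1(F_v,T) \xrightarrow{\sim} \mathrm{Hom}_{\mathrm{cont}}(H^1(F_v,X(T)),\mathbb{Q}/\mathbb{Z}), \]
coming from the cup product with values in $H^2(F_v,\mathbb{G}_m)=\mathbb{Q}/\mathbb{Z}$. The bottom arrow is the $(i=0)$-case of the same duality: $H^0(F_v,T)$ is Pontryagin dual to $H^2(F_v,X(T))$, and restricting to $2$-torsion on the right, one gets the isomorphism $T(F_v)/T(F_v)^2 \cong \mathrm{Irr}(H^2(F_v,X(T))[2])$ (using the elementary fact that if $A$ is dual to $B$ then $A/nA$ is dual to $B[n]$). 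For the third row I would invoke Cartier duality: the finite commutative group scheme $T[2]$ has Cartier dual equal to $X(T)/2X(T)$, so Tate duality for finite modules gives $H^1(F_v,T[2]) \cong \mathrm{Irr}(H^1(F_v,X(T)/2X(T)))$. The second row is then deduced either by restricting the top isomorphism to the $2$-torsion subgroup (noting $\mathrm{Irr}(A[2])=\mathrm{Irr}(A/2A)$ for any finite abelian group $A$) or, equivalently, as the top of the Kummer-derived short exact sequence.

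Next I would derive the vertical short exact sequences. On the left, the Kummer sequence $1 \to T[2] \to T \xrightarrow{2} T \to 1$ gives a long exact sequence in Galois cohomology; extracting the relevant piece produces
\[ 1 \to T(F_v)/T(F_v)^2 \xrightarrow{b} H^1(F_v,T[2]) \xrightarrow{f} H^1(F_v,T)[2] \to 1, \]
with $b$ and $f$ as in the statement. On the right, the dual sequence $0 \to X(T) \xrightarrow{2} X(T) \to X(T)/2X(T) \to 0$ yields
\[ H^1(F_v,X(T))/2 \hookrightarrow H^1(F_v,X(T)/2X(T)) \twoheadrightarrow H^2(F_v,X(T))[2], \]
which after applying $\mathrm{Irr}(-)$ reverses to the displayed right column. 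These two short exact sequences are formally dual in an evident way.

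Finally I would verify commutativity. The compatibility of the connecting homomorphisms $b,f$ on the left with their duals on the right is exactly the statement that the Tate pairings are functorial with respect to the cup product in short exact sequences; more precisely, for a short exact sequence $0\to A \to B \to C \to 0$ of finite (or fppf) $W_{F_v}$-modules with Cartier duals forming the dual short exact sequence $0 \to C^\vee \to B^\vee \to A^\vee \to 0$, the connecting map $\delta: H^i(C) \to H^{i+1}(A)$ is $(-1)^i$ times the transpose (under Tate duality) of the connecting map $\delta^\vee: H^{1-i}(A^\vee) \to H^{2-i}(C^\vee)$. This is a standard consequence of the associativity of cup product and vanishes at the level of diagram squares; see, e.g., \cite[Ch.\ I, \S 2]{Mi}. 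The main technical obstacle in a fully detailed write-up is precisely this functoriality step, since one must carefully track the Cartier dual of the Kummer sequence (the identification of the Cartier dual of $T[2]$ with $X(T)/2X(T)$ as a Galois module) and check that the two connecting maps thus line up. Once these squares commute, and the two squares involving only the obvious inclusions $H^1(F_v,T)[2]\hookrightarrow H^1(F_v,T)$ and $\mathrm{Irr}(H^1(X(T))/2)\hookrightarrow \mathrm{Irr}(H^1(X(T)))$ are seen to commute by naturality, the full diagram is established.
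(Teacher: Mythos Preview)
The paper does not actually prove this lemma; it simply records it with citations to \cite[\S 2]{K1} and \cite[Cor.\ 2.4]{Mi}. Your proposal correctly unpacks what those references contain: Tate--Nakayama duality for tori gives the first, second, and fourth horizontal isomorphisms; Cartier duality identifies $T[2]^{\vee}$ with $X(T)/2X(T)$, whence Tate duality for finite Galois modules gives the third; and the vertical short exact sequences on the two sides are the long exact sequences attached to the Kummer sequence and its Cartier dual, with commutativity following from the standard compatibility of the cup-product pairings with connecting homomorphisms. One small point worth flagging in a careful write-up: the degree-zero duality is between the profinite completion $\widehat{T(F_v)}$ and $H^2(F_v,X(T))$, not $T(F_v)$ itself, but since $T(F_v)/T(F_v)^2$ is already a finite quotient this distinction is harmless here.
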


\vskip 10pt

We apply the above to our particular situation at hand. Fix an A-parameter $\psi = \psi_{E,K, [\chi]}$ as above and let $T = \tilde{T}_{E,K}$ for ease of notation. Then for each place $v$, we have the following canonical isomorphism \cite[\S 1]{K2}:
\[   H^1(F_v, X(T))  \cong  \pi_0((T^{\vee})^{W_{F_v}})  = S^0_{\psi_v}, \]
where $T^{\vee}$ is the complex dual torus of $T$.
Hence, by Lemma \ref{L:localH1}, $S^0_{\psi_v}[3] =1$ or $\mu_3$. Let us set
\[     \bar{S}_{\psi_v}^0   = S_{\psi_v}^0 / S_{\psi_v}^0[3] \quad \text{and} \quad \bar{S}_{\psi_v} =  S_{\psi_v}/ S_{\psi_v}[3].  \]
These are elementary abelian 2-groups, and we have
 \[    H^1(F_v, X(T)) / 2 H^1(F_v, X(T))  \cong     \bar{S}_{\psi_v}^0.  \]
Further,
\[  T[2] = Z_E, \quad \text{and} \quad  H^1(F_v, X(T)/2 X(T)) = H^1(F_v, Z({G_E^{\vee}}^{sc})),   \]
where $Z({G_E^{\vee}}^{sc})$ is the center of ${G^{\vee}_E}^{sc} = \Spin_8(\C)$.  Replacing these terms, the diagram in Lemma \ref{L:local-tate} now becomes:
\vskip 5pt

\begin{equation} \label{E:local-tate}
 \begin{CD}
H^1(F_v, T) @= {\Irr} (S^0_{\psi_v} ) \\
@Ainj.AA   @AAinj.A \\
H^1(F_v, T)[2] @= {\Irr} (\bar{S}^0_{\psi_v} ) \\
@Asurj.AfA @AAsurj.A \\
H^1(F_v, Z_E) @= {\Irr}(  H^1(F_v, Z({G_E^{\vee}}^{sc}))) \\
@Ainj.AbA @AAinj.A \\
T(F_v)/T(F_v)^2  @= {\Irr}( H^2(F_v, X(T))[2]),
\end{CD} \end{equation}

\vskip 5pt

Now, if $\chi_v^2 \ne 1$,  then $S_{\psi_v} = S_{\psi_v}^0$ and the first row of (\ref{E:local-tate}) already gives a bijection
\[  \mathrm{Irr}(S_{\psi_v}) \longleftrightarrow H^1(F_v, \tilde{T}_{E_v,K_v}). \]
Assume now that $\chi_v^2 =1$. In this case, $S_{\psi_v} \cong S_{\psi_v}^0 \rtimes S_2$ and we shall try to understand $\mathrm{Irr}(S_{\psi_v})$, or rather the subset $\mathrm{Irr}(\bar{S}_{\psi_v})$,  in terms of Lemma \ref{L:local-tate} and (\ref{E:local-tate}).
 
 \vskip 5pt
 
 To bring the component group $S_{\psi_v}$ into the picture, consider the projection
 \[  p: {G^{\vee}_E}^{sc} = \Spin_8(\C)  \longrightarrow {G^{\vee}_E} = \PGSO_8(\C) \]
 Taking the preimage of $S \rtimes S_2 \subset \PGSO_8(\C)$, we obtain
  the following commutative diagram of short exact sequences of $W_{F_v}$-modules:
\[  \begin{CD}
1 @>>>  Z({G^{\vee}_E}^{sc}) @>>> p^{-1}(S)  @>>> S  @>>> 1  \\
  @.  @| @VVV @VVV  \\
1@>>>  Z({G^{\vee}_E}^{sc}) @>>> p^{-1}(S \rtimes S_2) @>p>>  S \rtimes S_2 @>>> 1 \end{CD} \]
 where the action of $W_{F_v}$  is by conjugation via the map $\phi_v: W_{F_v} \longrightarrow S \rtimes (S_2 \times S_3)$ associated to $\psi_v$.  The  coboundary map in the long exact sequence then gives :
 \[ \begin{CD}
 S^0_{\psi_v} @>>>   H^1(F_v, Z({G^{\vee}_E}^{sc})) \\
 @VVV   @|  \\
 S_{\psi_v} @>\delta_v>> H^1(F_v, Z({G^{\vee}_E}^{sc}))  
 \end{CD} \]
 Because the target of the map $\delta_v$ is an elementary abelian 2-group (since $H^1(F_v, Z_E)$ is so), the map $\delta_v$ factors through 
the quotient $\bar{S}_{\psi_v}$ of $S_{\psi_v}$. Moreover, $\delta_v$ is injective on the index 2 subgroup $\bar{S}^0_{\psi_v}$; indeed, the map $\delta_v: \bar{S}^0_{\psi_v} \longrightarrow 
H^1(F_v, Z({G^{\vee}_E}^{sc}))$ is dual to the surjective map in the right column of (\ref{E:local-tate}). Hence $\mathrm{Ker}(\delta_v) \subset \bar{S}_{\psi_v}$ is either trivial or has order $2$ and we would like to determine precisely what it is. 
 \vskip 5pt
 
Together with (\ref{E:local-tate}),  the above gives rise to a group homomorphism
\begin{equation} \label{E:delta}
 \delta_v^*: H^1(F_v, Z_E) \cong  \mathrm{Irr}(  H^1(F_v, Z({G^{\vee}_E}^{sc}))) \longrightarrow \mathrm{Irr}(\bar{S}_{\psi_v}) \subset \mathrm{Irr}(S_{\psi_v}). \end{equation}
Thus, the diagram (\ref{E:local-tate}) can now be enhanced to:
\begin{equation} \label{E:local-tate2}
 \begin{CD}
H^1(F_v, T) @= \mathrm{Irr} (S^0_{\psi_v} )  @.  {} \\
@Ainj.AA   @AAinj.A    @. \\
H^1(F_v, T)[2] @= \mathrm{Irr} (\bar{S}^0_{\psi_v} )  @=  \mathrm{Irr}(\bar{S}^0_{\psi_v}) \\
@Asurj.AfA @AAsurj.A   @AAsurj.A \\
H^1(F_v, Z_E) @= \mathrm{Irr}(  H^1(F_v, Z({G^{\vee}_E}^{sc}))) @>\delta_v^*>>   \mathrm{Irr}(\bar{S}_{\psi_v})  \\
@Ainj.AbA @AAinj.A  @. \\
T(F_v)/T(F_v)^2  @= \mathrm{Irr}( H^2(F_v, X(T))[2]) @.  {} ,
\end{CD} \end{equation}
\vskip 5pt

What is the kernel of $\delta_v^*$?  Consider the fundamental  short exact sequence in the left column of (\ref{E:local-tate2}):
\begin{equation} \label{E:fundses}  \begin{CD}
1 @>>> T(F_v) /  T(F_v)^2 @>b>>  H^1(F_v, Z_E) @>f>> H^1(F_v, T)[2] @>>> 1. \end{CD} \end{equation}
We had first encountered this sequence in (\ref{E:keyses}).
Now $\chi_v$ is a character of the first term in the short exact sequence. Pushing out this sequence by $\chi_v$, one obtains:
\begin{equation} \label{E:fundses-chi} 
\begin{CD}
 1 @>>> \mu_2 @>>>  H^1(F_v, Z_E) / b(\mathrm{Ker}(\chi_v))  @>f_{\chi_v}>> H^1(F_v, T)[2]   @>>> 1 \end{CD} \end{equation}
 when $\chi_v \ne 1$.  Now we have:
\vskip 5pt

   \begin{prop}  \label{P:TN}
Fix a local A-parameter $\psi_v = \psi_{E_v, K_v, [\chi_v]}$. 
\vskip 5pt

\begin{itemize}
\item[(i)] There is a natural bijection
\[  \mathrm{Irr} \left(  S_{\psi_v}^0 \right) \longleftrightarrow  H^1(F_v, \tilde{T}_{E,K}). \]
\vskip 5pt

\item[(ii)]  Assume that $\chi_v^2 =1$, but $\chi_v \ne 1$. The natural map
\[  \delta_v: \bar{S}_{\psi_v} \longrightarrow  H^1(F_v, Z({G_E^{\vee}}^{sc})) \] 
is injective  and the dual map $\delta_v^*$ in (\ref{E:delta}) is surjective with kernel $b(\mathrm{Ker}(\chi_v))$, so that it  induces an isomorphism
  \[   H^1(F_v, Z_E) / b(\mathrm{Ker}(\chi_v))  \cong  \mathrm{Irr}(\bar{S}_{\psi_v}).
  \]
  Moreover, one has a commutative diagram of short exact sequence:
  \[  \begin{CD}
  1 @>>> \mu_2 @>>>  H^1(F_v, Z_E) / b(\mathrm{Ker}(\chi_v))  @>f_{\chi_v}>> H^1(F_v, \tilde{T}_{E,K})[2]   @>>> 1 \\
  @. @| @V\delta_v^*VV @| \\
  1 @>>>  \mu_2 @>>> \mathrm{Irr}(\bar{S}_{\psi_v}) @>{\mathrm rest}>>   \mathrm{Irr}(\bar{S}_{\psi_v}^0) @>>> 1,
  \end{CD} \]
where  the third vertical arrow is that given by (i).
\vskip 5pt

\item[(iii)] If $\chi_v =1$, then $\mathrm{Ker}(\delta_v) = \langle s_0 \rangle$ has order $2$ and hence one has a canonical element $s_0 \in \bar{S}_{\psi_v} \setminus \bar{S}^0_{\psi_v}$. In this case, $\delta_v^*$ induces an injection
\[  \delta_v^*:  H^1(F_v, Z_E)/ b(\tilde{T}_{E, K}(F_v)) = H^1(F_v, \tilde{T}_{E,K})[2] = \mathrm{Irr}(\bar{S}^0_{\psi_v}) \longrightarrow \mathrm{Irr}(\bar{S}_{\psi_v}) \]
which is a section to the restriction map $\mathrm{Irr}(\bar{S}_{\psi_v}) \rightarrow \mathrm{Irr}(\bar{S}^0_{\psi_v})$ and whose image consists of those characters of $\bar{S}_{\psi_v}$ which are trivial on $s_0$. 
\end{itemize}
  \end{prop}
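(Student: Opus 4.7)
Part (i) is essentially local Tate-Nakayama duality for the torus $\tilde T_{E,K}$: the top row of the diagram (\ref{E:local-tate}) gives $H^1(F_v, \tilde T_{E,K}) \cong \Irr(H^1(F_v, X(\tilde T_{E,K})))$, and Kottwitz's identification $H^1(F_v, X(\tilde T_{E,K})) \cong \pi_0((\tilde T_{E,K}^\vee)^{W_{F_v}}) = S^0_{\psi_v}$ finishes the job.

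For parts (ii) and (iii), the plan is to chase the commutative diagram (\ref{E:local-tate2}) once one key input is in hand: the composition
\[  T(F_v)/T(F_v)^2 \xrightarrow{b} H^1(F_v, Z_E) \xrightarrow{\delta_v^*} \Irr(\bar S_{\psi_v}) \xrightarrow{\mathrm{rest}} \Irr(\langle s_0 \rangle) = \mu_2 \]
equals the character $\chi_v|_{T(F_v)/T(F_v)^2}$, where $s_0$ is any lift of the generator of $\bar S_{\psi_v}/\bar S^0_{\psi_v} = S_2$ (such a lift exists precisely because $\chi_v^2 = 1$). Granting this identification: in case (ii) the composed character has kernel $\Ker(\chi_v)/T(F_v)^2$, so the restriction of $\delta_v^*$ to $b(T(F_v)/T(F_v)^2)$ has kernel $b(\Ker(\chi_v))$; combining with part (i), which identifies the quotient $H^1(F_v, Z_E)/b(T(F_v)/T(F_v)^2) = H^1(F_v, \tilde T_{E,K})[2]$ with $\Irr(\bar S^0_{\psi_v})$, a direct count shows $\delta_v^*$ is surjective with $\Ker(\delta_v^*) = b(\Ker(\chi_v))$, and the asserted diagram of short exact sequences then follows. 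In case (iii) with $\chi_v = 1$, the composition is identically trivial, so $b(T(F_v)/T(F_v)^2) \subseteq \Ker(\delta_v^*)$ and the image of $\delta_v^*$ is contained in characters of $\bar S_{\psi_v}$ trivial on $s_0$; part (i) then forces this containment to be an equality, giving the asserted injection $H^1(F_v, \tilde T_{E,K})[2] \hookrightarrow \Irr(\bar S_{\psi_v})$, and dually $\Ker(\delta_v) = \langle s_0 \rangle$.

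The main obstacle is the key identification of the composition with $\chi_v$, which is essentially Kottwitz's formula relating $\delta_v^*$ to local Tate duality. To prove it directly, I would pick a lift $\tilde s_0 \in p^{-1}(s_0) \subset \Spin_8(\C) \rtimes S_3$ and write $\delta_v(s_0) \in H^1(F_v, Z_E^\vee)$ as the class of the $1$-cocycle $w \mapsto \tilde s_0 \, \phi_v(w) \, \tilde s_0^{-1} \, \phi_v(w)^{-1}$. Because $s_0$ acts on $S = \tilde T_{E,K}^\vee$ by inversion, this cocycle is cohomologous to $\phi_v|_S^{-2}$ modulo coboundaries valued in $Z_E^\vee$; under local Langlands for tori, $\phi_v|_S$ corresponds to the character $\chi_v$ of $\tilde T_{E,K}(F_v)$, and under local Tate duality the pairing $\langle b(t), \phi_v|_S^{-2}\rangle = \chi_v(t)$ (the exponent $-2$ being absorbed by the $2$-torsion restriction). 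The technically delicate step is the verification of the $W_{F_v}$-equivariant identification $Z(\Spin_8) \cong Z_E^\vee$ (which involves the triality twist by $\rho_{E_v}$) and the matching of the various Tate pairings; once this is settled, the remainder is a routine diagram chase.
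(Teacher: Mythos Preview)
The paper does not give an explicit proof of this proposition; it is stated immediately after the setup culminating in diagram (\ref{E:local-tate2}) and the pushout sequence (\ref{E:fundses-chi}), with the words ``Now we have:'' signaling that the result is meant to follow from that discussion. Your proposal is therefore not competing with a written argument but rather supplying the details the paper leaves implicit, and it does so along exactly the lines the paper's setup suggests: part (i) is the top row of (\ref{E:local-tate}), and parts (ii)--(iii) are a diagram chase in (\ref{E:local-tate2}) once one knows how $\delta_v^*$ interacts with the Kummer map $b$.

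Your identification of the key input---that $\mathrm{rest}\circ\delta_v^*\circ b$ equals $\chi_v$ on $T(F_v)/T(F_v)^2$---is exactly right, and the deduction of (ii) and (iii) from it is clean. The one place to tighten is the cocycle computation for $\delta_v(s_0)$. As written, ``cohomologous to $\phi_v|_S^{-2}$'' is imprecise because $\phi_v|_S$ takes values in $S$, not in $Z({G_E^\vee}^{sc})$; what you really want is that under the identification $Z({G_E^\vee}^{sc})\cong X(T)/2X(T)$ (noted just before (\ref{E:local-tate2})) and the dual identification $S[2]\cong\Hom(X(T)/2X(T),\mu_2)$, the cocycle $w\mapsto \phi_v(w)\tilde s_0\phi_v(w)^{-1}\tilde s_0^{-1}$ lands in $S[2]$ (since $s_0$ inverts $S$ and $\chi_v^2=1$ forces $\phi_v|_S$ to take values with square central), and then the Tate pairing with $b(t)$ unwinds to the Langlands pairing $\langle t,\phi_v|_S\rangle=\chi_v(t)$. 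Once phrased this way the ``technically delicate'' step is a standard compatibility between the Kummer boundary, Tate duality for $T[2]$, and the local Langlands correspondence for $T$; you have correctly located it as the only nontrivial point.
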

\vskip 10pt

\subsection{\bf Global Tate duality}
We now consider the global analog of the above discussion.  We shall fix a global A-parameter $\psi  = \psi_{E,K, [\chi]}$ with global component group $S_{\psi}$ containing 
$S^0_{\psi} = \pi_0(S^{W_F})$ of index $\leq 2$.  Because $E$ is a field, we have
\[  S^0_{\psi} = \begin{cases} 
\mu_3, \text{  if $K = F \times F$;} \\
1, \text {  if $K$ is a field.} \end{cases} \]
So $S_{\psi}[3] = S_{\psi}^0[3] = S^0_{\psi} = 1$ or $\mu_3$, and as in the local case, we set
\[  \bar{S}_{\psi} = S_{\psi}/ S_{\psi}[3] \]
which is an elementary abelian $2$-group.
\vskip 5pt

Our discussion of local Tate duality allows us to reformulate the results of Lemma \ref{L:loc} in terms of characters of $S^0_{\psi}$:
\vskip 5pt

\begin{lemma} \label{L:loc2}
Writing $T = \tilde{T}_{E,K}$ for ease of notation, we have  the short exact sequence:
\[  \begin{CD}
1 @>>>  H^1(F, T) @>>>  \prod_v' H^1(F_v, T) @>>>  \mathrm{Irr}(\pi_0(S^{W_F}))  @>>> 1.  \\
   @.   @|       @|    @|   \\
   @. \tilde{\Sigma}_{E,K}  @. \mathrm{Irr}(S^0_{\psi,\A}) @.  \mathrm{Irr}(S^0_{\psi})
    \end{CD} \]
 In particular,  
\[  H^1(F, T)[2] \cong {\prod_v}' H^1(F_v, T)[2]   \cong  \mathrm{Irr}(\bar{S}^0_{\psi,\A}). \]
\end{lemma}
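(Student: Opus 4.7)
The plan is to obtain the asserted short exact sequence as an instance of the Poitou--Tate nine-term exact sequence for the two-dimensional $F$-torus $T = \tilde{T}_{E,K}$. For a general $F$-torus $T$ this sequence contains the four-term segment
\[
H^1(F,T) \longrightarrow \prod{}_{v}' H^1(F_v,T) \longrightarrow H^1(F,X(T))^{\vee} \longrightarrow H^2(F,T),
\]
where $X(T) = \Hom(T_{\bar{F}}, \mathbb{G}_m)$ and the penultimate term is the Pontryagin dual of $H^1(F,X(T))$. First I would identify the cokernel term: by the standard computation of the Langlands dual torus one has $H^1(F, X(T)) \cong \pi_0((T^\vee)^{W_F})$, and with $T^\vee = S$ this is exactly $S^0_\psi$, so its Pontryagin dual is $\mathrm{Irr}(S^0_\psi) = \mathrm{Irr}(\pi_0(S^{W_F}))$. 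This gives us the three middle terms of the desired sequence with exactly the correct identifications.

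Next I would check exactness on the left, i.e. the Hasse principle for $T$, by invoking Lemma \ref{L:loc}: when $K$ is a field, part~(2)(i) asserts that $\tilde{\mathrm{loc}}$ is bijective, so injectivity is automatic (and in fact $S^0_\psi$ is trivial, so the right-hand cokernel vanishes and the sequence degenerates to an isomorphism, consistent with the Poitou--Tate output); when $K = F \times F$ is split, part~(1)(i) already records an explicit short exact sequence
\[
0 \to \tilde{\Sigma}_{E,K} \to {\bigoplus}_v \tilde{\Sigma}_{E_v,K_v} \to \mathbb{Z}/3\mathbb{Z} \to 0,
\]
and since $S^0_\psi = \mu_3$ in this case, $\mathrm{Irr}(S^0_\psi) \cong \mathbb{Z}/3\mathbb{Z}$ and the Poitou--Tate sequence agrees with it. In particular, exactness on the right amounts to showing that $H^2(F,T)$ injects into $\bigoplus_v H^2(F_v,T)$, which follows from the same Lemma \ref{L:loc}(1)(i) after shifting via the Kummer/coboundary machinery (equivalently, from the corresponding Hasse principle for the degree-three Brauer classes that index $\tilde{\Sigma}_{E,K}$).

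For the second assertion, I would pass to $2$-torsion. Because $S^0_\psi$ is either trivial or of order $3$, the group $\mathrm{Irr}(S^0_\psi)$ has odd order and so $\mathrm{Irr}(S^0_\psi)[2] = 1$. Applying the snake lemma (or just restricting to $2$-torsion in the short exact sequence) gives
\[
H^1(F,T)[2] \xrightarrow{\sim} \prod{}_{v}' H^1(F_v,T)[2].
\]
Finally, the local identification $H^1(F_v,T)[2] \cong \mathrm{Irr}(\bar{S}^0_{\psi_v})$ is exactly the middle row of the diagram (\ref{E:local-tate2}) (i.e.\ local Tate--Nakayama), and assembling these across all $v$ yields the restricted direct product $\mathrm{Irr}(\bar{S}^0_{\psi,\A})$.

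The main potential obstacle is a compatibility check rather than a substantive difficulty: one must verify that the boundary map in Poitou--Tate, after the dual-torus identification of $H^1(F,X(T))^\vee$ with $\mathrm{Irr}(S^0_\psi)$, really coincides (up to sign) with the explicit Hasse/invariant map written down in Lemma \ref{L:loc}(1)(i). This is standard but tedious, because one has to trace through the identifications $H^1(F, \tilde{T}_{E,K}) = \Ker(H^2(F,\mathbb{G}_m) \to H^2(E,\mathbb{G}_m))$ and match the local invariant maps to the pairing with characters of $\pi_0((T^\vee)^{W_F})$; once this matching is in place, every other step is purely formal.
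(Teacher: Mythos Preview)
Your proposal is correct and matches the paper's treatment in substance, though the paper is much terser: it presents Lemma~\ref{L:loc2} without a formal proof, simply as a direct reformulation of Lemma~\ref{L:loc} via the local Tate--Nakayama identifications $H^1(F_v,T)\cong\mathrm{Irr}(S^0_{\psi_v})$ already recorded in~(\ref{E:local-tate}), together with the explicit computation $S^0_\psi=\mu_3$ or $1$ according as $K$ is split or a field. In other words, the paper starts from the already-established short exact sequences of Lemma~\ref{L:loc} (with cokernel $\Z/3\Z$ or trivial) and relabels the terms, whereas you derive the sequence from Poitou--Tate and then cross-check against Lemma~\ref{L:loc}. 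The compatibility verification you flag as the ``main potential obstacle'' is precisely what the paper absorbs into the word ``reformulate''; both routes land in the same place, and your framing has the virtue of making clear why the cokernel is $\mathrm{Irr}(S^0_\psi)$ in a uniform way rather than case by case.
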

\vskip 5pt

After this recollection, 
we consider the following commutative diagram of short exact sequences.

\[  \begin{CD}
1 @>>> T(\A) / T(\A)^2 @>b>> \prod'_v H^1(F_v, Z_E)  @>f>>  \prod'_v H^1(F_v, T)[2]  @>>>1   \\
@. @AAA   @AsAA  @|  \\
1 @>>> T(F) / T(F)^2 @>>>    H^1(F, Z_E) @>>> H^1(F, T)[2]  @>>> 1    
\end{CD} \]
\vskip 10pt
 
 \noindent This diagram gives rise to the short exact sequence:
\[  \begin{CD}
1 @>>> T(F) \backslash T(\A) / T(\A)^2 @>b>>  b(T(F)) \backslash  \prod'_v H^1(F_v, Z_E)  @>f>>  H^1(F, T)[2] @>>> 1.  
\end{CD} \]
This is the global analog of the fundamental short exact sequence (\ref{E:fundses}) in the local setting. 
Moreover, it is equipped with a canonical section:  the map $s$ descends to give a section to $f$
\[   s:  H^1(F,T)[2] \longrightarrow  b(T(F)) \backslash  {\prod}'_v H^1(F_v, Z_E) . \]

\vskip 5pt

Now suppose we have a global A-parameter  $\psi = \psi_{E,K, [\chi]}$  as above.  We shall assume that $\chi^2 =1$ but $\chi \ne 1$, so that $\chi$ is a quadratic character of 
$T(F) \backslash T(\A) / T(\A)^2 $. Pushing out the last short exact sequence by $\chi$, we get a short exact sequence
\begin{equation} \label{E:globalSES}
 \begin{CD}
1 @>>> \mu_2 @>>> b(\mathrm{Ker}(\chi)) \backslash  \prod'_v H^1(F_v, Z_E) @>f_{\chi}>>  H^1(F, T)[2] @>>> 1. 
 \end{CD} \end{equation}
 Moreover, the above short exact sequence is equipped with a section $s_{\chi}$ of $f_{\chi}$.  
 \vskip 5pt
 
 We can also arrive at the above short exact sequence by using our local discussion in the previous subsection.
 We have:
 \[  \begin{CD}
 1 @>>>   \oplus_v \mu_2  @>>>  \prod'_v b_v(\mathrm{Ker}(\chi_v))\backslash H^1(F_v, Z_E) @>>> H^1(F, T)[2] @>>> 1. 
 \end{CD} \]
 Pushing this out by the sum map $\oplus_v \mu_2 \rightarrow \mu_2$  and denoting its kernel by $(\oplus_v \mu_2)^1$, we obtain
 \[  \begin{CD}
 1 @>>> \mu_2 @>>> \left( \prod'_v b_v(\mathrm{Ker}(\chi_v))\backslash H^1(F_v, Z_E) \right) / (\oplus_v \mu_2)^1 @>>> H^1(F, T)[2] @>>> 1, 
 \end{CD} \]
 which is the exact sequence in (\ref{E:globalSES}).
 \vskip 5pt
 
 To reformulate the above discussion in the language of characters of component groups, let us introduce the following notions.
 \vskip 5pt
 
 \begin{defn}
 Fix a global A-parameter $\psi = \psi_{E,K, [\chi]}$ with $\chi^2 =1$. 
 \vskip 5pt
 
 (i) For each place $v$,  the sign character of $S_{\psi_v}$ is the nontrivial character $\epsilon_v$ of $S_{\psi_v}/ S_{\psi_v}^0$. 
 \vskip 5pt
 
 (ii)  For any finite subset $\Sigma$ of places of $F$, we set
  \[  \epsilon_{\Sigma} = \prod_{v \in \Sigma} \epsilon_v \times \prod_{v \notin \Sigma} 1_v \] 
 and call $\epsilon_{\Sigma}$ a global sign character of $S_{\psi,\A}$. We say that $\epsilon_{\Sigma}$ is automorphic if it is trivial on $S_{\psi}$. This holds if and only if $|\Sigma|$ is even. The set of automorphic sign characters is a subgroup of $\mathrm{Irr}(\bar{S}_{\psi,\A})$.
 
 (iii) Set 
 \[  [\mathrm{Irr}(\bar{S}_{\psi,\A})]  =  \mathrm{Irr}(\bar{S}_{\psi,\A}) / \{ \text{automorphic sign characters} \}. \]
 \end{defn}
 \vskip 5pt
 
Summarizing the above discussion and applying global  Poitou-Tate duality \cite[Thm. 4.10]{Mi}, we obtain:

 \vskip 5pt
\begin{prop}  \label{P:PT}
Fix a global A-parameter $\psi = \psi_{E,K, [\chi]}$ as above with $\chi^2 =1$.

\vskip 5pt

\noindent (i) If $\chi \ne 1$, one has the following commutative diagram of short exact sequences:
\vskip 5pt
\[ \begin{CD}
1 @>>> \mu_2 @>>> b(\mathrm{Ker}(\chi)) \backslash  \prod'_v H^1(F_v, Z_E) @>f_{\chi}>>  H^1(F, T)[2] @>>> 1   \\
@.   @|  @VV{\delta^* = \prod_v \delta_v^*}V  @| @. \\
1 @>>>  \mu_2   @>>>    [\mathrm{Irr}(\bar{S}_{\psi,\A})] @>>> \mathrm{Irr}(\bar{S}^0_{\psi, \A}) @>>> 1 \\
@. @.  @VVrestV   @. \\
 @.   @.  \mathrm{Irr}(\bar{S}_{\psi})  @.
\end{CD} \]
which are equipped with a canonical section $s_{\chi}$ for $f_{\chi}$ given by the image of $H^1(F, Z_E)$.
 Finally, 
 \[  \mathrm{Ker}( {\mathrm rest}\circ \delta^* )= \mathrm{Im}(s) = \text{the image of $H^1(F, Z_E)$}. \]
 Equivalently, 
 \[   \mathrm{Ker}( {\mathrm rest}) =   \mathrm{Im}(\delta^* \circ s).\]
 
 \vskip 10pt
 
\noindent  (ii)  If $\chi=1$, the map $\delta^* = \prod_v \delta_v^*$ descends to give a section 
\[  \delta^*: H^1(F, T)[2] \longrightarrow \mathrm{Irr}(\bar{S}_{\psi,\A}) \longrightarrow [ \mathrm{Irr}(\bar{S}_{\psi,\A}) ] \]
  Then 
  \[   \mathrm{Ker}(rest)  = \mathrm{Im}(\delta^*)  \]
  where $rest: [ \mathrm{Irr}(\bar{S}_{\psi,\A})] \longrightarrow \mathrm{Irr}(\bar{S}_{\psi})$. 

  \end{prop}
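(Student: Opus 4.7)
The plan is to build both rows of the diagram placewise out of the local Tate–Nakayama data supplied by Proposition \ref{P:TN}, and then invoke global Poitou–Tate duality to identify the kernel of the restriction map on characters of the component group with the image of the canonical section. First I would observe that the top row of the diagram in (i) has already been constructed in the discussion immediately preceding the proposition: it is the pushout by the quadratic character $\chi$ of the short exact sequence
\[
1 \to T(F)\backslash T(\A)/T(\A)^2 \xrightarrow{b} b(T(F))\backslash {\prod}'_v H^1(F_v,Z_E) \xrightarrow{f} H^1(F,T)[2] \to 1,
\]
and the canonical section $s_\chi$ is the descent of the section $s$ that was already observed to exist globally (coming from the diagonal image of $H^1(F,Z_E)$ into the restricted product). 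For the bottom row, Proposition \ref{P:TN}(ii) applied at each place $v$ gives a canonical isomorphism $H^1(F_v,Z_E)/b_v(\mathrm{Ker}(\chi_v))\cong \mathrm{Irr}(\bar S_{\psi_v})$, and the restricted product of these isomorphisms, followed by the quotient by the subgroup of automorphic sign characters $\epsilon_\Sigma$ (which correspond precisely to the finitely supported elements of $\oplus_v \mu_2$ with even total parity), yields $[\mathrm{Irr}(\bar S_{\psi,\A})]$. Commutativity of the squares is a placewise statement that follows from the commutativity of the local diagram (\ref{E:local-tate2}).

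The heart of the proof is the kernel identification $\mathrm{Ker}(\mathrm{rest}\circ\delta^*)=\mathrm{Im}(s)$. Under the local duality identifications of Proposition \ref{P:TN}(ii), the map $\delta^*$ is, up to the sign quotient, exactly the map dual to the diagonal inclusion $S_\psi \hookrightarrow S_{\psi,\A}$. So a class $\eta = (\eta_v)$ in the restricted product satisfies $\mathrm{rest}(\delta^*(\eta))=1$ if and only if, viewed as a character of $\prod_v \bar S_{\psi_v}$, it is trivial on the diagonal $\bar S_\psi$. I would invoke the Poitou–Tate nine-term exact sequence for the finite Galois module $Z_E=T[2]$ (Cartier self-dual up to the outer Galois twist already encoded in the definitions; this is where the identification $T[2]=Z_E$ with its dual being $H^1(F,Z({G_E^\vee}^{sc}))$-valued becomes crucial). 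Poitou–Tate asserts that the image of $H^1(F,Z_E)$ in $\prod'_v H^1(F_v,Z_E)$ is the exact annihilator of the image of $H^1(F,Z({G_E^\vee}^{sc}))$ under the local pairings. Translating through the identifications of Proposition \ref{P:TN}(ii) shows that this annihilator is exactly the set of adelic characters factoring through $\bar S_\psi$, giving the desired equality.

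For (ii), when $\chi=1$, no pushout is necessary: at each place, $\mathrm{Ker}(\chi_v)=T(F_v)$, so Proposition \ref{P:TN}(iii) shows that $\delta_v^*$ already kills $b_v(T(F_v))$ and descends to a section $H^1(F_v,T)[2]\hookrightarrow \mathrm{Irr}(\bar S_{\psi_v})$. Taking the restricted product, then using the $2$-torsion Hasse principle of Lemma \ref{L:loc2} to embed $H^1(F,T)[2]$ into $\prod'_v H^1(F_v,T)[2]$ diagonally, and finally projecting to $[\mathrm{Irr}(\bar S_{\psi,\A})]$, produces the map $\delta^*$ of the statement. The same Poitou–Tate duality argument as in paragraph two then gives $\mathrm{Ker}(\mathrm{rest})=\mathrm{Im}(\delta^*)$: both sides are characterised as the annihilator of the image of the global class determined by $\psi$ under Tate's pairing.

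The main obstacle is the bookkeeping in the Poitou–Tate step: one must correctly match the local pairings underlying Proposition \ref{P:TN} with the global duality so that annihilators on one side correspond to images on the other, and in particular must verify that the $\mu_3$-part of the component groups (which is invisible to the 2-torsion analysis) contributes nothing to the restriction map on $\bar S$'s. Once this matching is pinned down carefully, both (i) and (ii) reduce to a diagram chase combining the placewise data of Proposition \ref{P:TN} with the self-duality of $Z_E$ in the Poitou–Tate sequence.
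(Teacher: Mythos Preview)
Your proposal is correct and follows the same route as the paper: the paper does not give a separate proof of this proposition but presents it as a summary of the preceding local discussion (the diagram (\ref{E:local-tate2}) and Proposition~\ref{P:TN}) together with an appeal to global Poitou--Tate duality \cite[Thm.~4.10]{Mi}, exactly as you outline. One small point worth flagging: in part (i) you write that Proposition~\ref{P:TN}(ii) applies at each place $v$, but at the finitely many places where $\chi_v=1$ it is Proposition~\ref{P:TN}(iii) that applies and $\delta_v^*$ is only injective rather than surjective there; the paper addresses this bookkeeping issue in the paragraph immediately following the proposition, showing that after passing to the quotient by automorphic sign characters the discrepancy disappears.
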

\vskip 10pt

It is interesting to observe the following subtlety. When $\chi \ne 1$ in the above lemma. it is of course possible that $\chi_v =1$ for some places $v$. Let $\Sigma_{\chi}$ be the set of places where $\chi_v =1$. Then for places  $v \in \Sigma_{\chi}$, recall by Proposition \ref{P:TN}(iii) that the map
\[  \delta_v:  H^1(F_v, Z_E)/ b(\mathrm{Ker}(\chi_v)) = H^1(F_v, T)[2]  \longrightarrow \mathrm{Irr}(\bar{S}_{\psi_v}) \]
is only injective but not surjective: its image is a subgroup of index $2$. Hence,   we only have an injection
\[ \prod_v \delta_v^*:  \prod'_v H^1(F_v, Z_E)/ b_v(\mathrm{Ker}(\chi_v))  \hookrightarrow   \mathrm{Irr}(\bar{S}_{\psi,\A}). \]
However, the composite of this injection with the projection to $[ \mathrm{Irr}(\bar{S}_{\psi,\A})]$ is  surjective. This amounts to seeing that given any $\eta \in \mathrm{Irr}(\bar{S}_{\psi,\A})$, one can twist $\eta$ by an automorphic sign character to ensure that  at all places $v \in \Sigma_{\chi}$, $\eta_v$ belongs to the image of $\delta_v$.
\vskip 10pt

 \vskip 10pt

\section{\bf A-packets and Multiplicity Formula}  \label{S:AMF}
After this long preparation,  we are finally ready to define local and global Arthur packets and establish the Arthur multiplicity formula for the A-parameters 
$\psi = \psi_{E,K, [\chi]}$ considered above.
\vskip 5pt

\subsection{\bf Near equivalence classes and Arthur's conjectures}
A global A-parameter $\psi = \psi_{E,K, [\chi]}$ as above (with $E$ fixed) gives rise to a near equivalence class of representations of $G_E(\A)$. 
Namely,  for almost all places, $\psi_v$ is unramified and
\[  \psi_v\left( \mathrm{Frob}_v, \left( \begin{array}{cc}
 q_v^{1/2}   &  \\
 & q_v^{-1/2} \end{array} \right)\right) \in \PGSO_8(\C) \rtimes_E W_F  \]
gives a semisimple conjugacy class in $\PGSO_8(\C) \cdot {\mathrm Frob_v}$, which in turns determines an unramified representation of $G_E(F_v)$. 
We denote the associated near equivalence class in $\mathcal{A}_2(G_E)$ by $\mathcal{A}_{2,\psi}(G_E)$. 

\vskip 5pt

To a first approximation, Arthur's conjectures describe the structure of this submodule $\mathcal{A}_{2,\psi}(G_E)$. Though we have already discussed these conjectures in \S \ref{SS:arthur}, we highlight the two key points here for the convenience of the reader: 
\vskip 5pt

\begin{itemize}
\item (Local)  One expects to have a local A-packet $\Pi_{\psi_v}$, which is a finite multi-set over $\mathrm{Irr}(G_E(F_v))$
equipped with a map
\[  \Pi_{\psi_v} \longrightarrow \mathrm{Irr}(S_{\psi_v}). \]
We may thus view $\Pi_{\psi_v}$ as a finite length representation of $S_{\psi_v} \times G_E(F_v)$:
\[  \Pi_{\psi_v} = \bigoplus_{\eta_v \in \mathrm{Irr}(S_{\psi_v})}  \eta_v \otimes  \pi_{\eta_v}. \]
\vskip 5pt

\item (Global)  One has:
\[  \mathcal{A}_{2,\psi}(G_E)  \cong \bigoplus_{\eta \in \mathrm{Irr} S_{\psi, \A}}  \dim \Hom_{S_{\psi}}(\eta \circ \Delta, \C) \cdot  \pi_{\eta} \]
where
\[  \eta = \otimes_v\eta_v  \in \mathrm{Irr}(S_{\psi,\A}) = {\prod_v}' \mathrm{Irr}(S_{\psi_v})  \quad \text{and} \quad \pi_{\eta} := \otimes'_v \pi_{\eta_v}. \]
\end{itemize}
\vskip 5pt
We shall see that the square-integrable automorphic representations we have constructed by theta lifting in \S \ref{S:global-theta} verify  the above conjectures of Arthur.

\vskip 5pt

\subsection{\bf Theta lifts and near equivalence class}
Given a global  A-parameter $\psi = \psi_{E,K, [\chi]}$, we have
the pair  $\{ \chi,  \chi^{-1} \}$ of  automorphic characters of $ \tilde{T}_{E,K}$.
For any $C \in \Sigma_{E,K}$,    we have noted in \S \ref{SS:isom-tori} that there is a pair of isomorphisms
\begin{equation} \label{E:isom-tori-global}
  \iota_C, \, \iota_C^{-1} : H_C^0 \cong \tilde{T}_{E,K} \end{equation}
 of algebraic tori over $F$ (associated to the two choices of $(B,\tau, \iota)$ with $C$ corresponding to $E \hookrightarrow B^{\tau}$). 
 Pulling back $\chi$ and $\chi^{-1}$ via $\iota_C$, we  obtain a pair of
 automorphic characters $\chi^{\pm 1} \circ \iota_C$ of  $H^0_C = \Aut_E(C)^0$. Set 
 \[  V_C[\chi]  \subset \mathcal{A}(H_C) \]
 to be the submodule spanned by all irreducible summands whose restriction to $H_C^0$ contains $\chi \circ \iota_C$ or $\chi^{-1}\circ \iota_C$; this submodule is thus independent of the isomorphism $\iota_C$. 
In earlier sections, we have studied the theta lifting from $\mathcal{A}(H_C)$ to $\mathcal{A}_2(G_E)$. From our local results, one sees that the theta lift of the submodule $V_C[\chi]$  is contained in the near equivalence class $\mathcal{A}_{2, \psi}(G_E)$.  More precisely, Corollary  \ref{C:global-injective} gives
\vskip 5pt

\begin{prop}
Given $\psi = \psi_{E,K,[\chi]}$,
\[ V[\psi] :=  \bigoplus_{C \in \Sigma_{E,K}}    \Theta( V_C[\chi])   \subset \mathcal{A}_{2,\psi}(G_E). \]
Moreover, if $V_C([\chi]) = \oplus_\rho m_C(\rho) \cdot \rho$, then
\[  \Theta(V_C[\chi])  \cong \bigoplus_{\rho} m_C(\rho) \cdot \Theta^{abs}(\rho). \]
 \end{prop}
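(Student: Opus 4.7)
The plan is to assemble the statement from three ingredients already in hand: the local Howe-type rigidity of Theorem \ref{T:111} (and its archimedean counterparts \ref{T:arch1}--\ref{T:arch2}), the global nonvanishing, square-integrability, and multiplicity-preservation of Proposition \ref{P:global-cuspidality} and Corollary \ref{C:global-injective}, and a Satake-parameter matching at unramified places that places each theta lift inside the near equivalence class $\mathcal{A}_{2,\psi}(G_E)$.

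First I would verify the inclusion $V[\psi] \subset \mathcal{A}_{2,\psi}(G_E)$. Fix $C \in \Sigma_{E,K}$ and an irreducible summand $\rho = \otimes_v \rho_v$ of $V_C[\chi]$. Corollary \ref{C:global-injective}(i) yields that $\Theta(\rho)$ is a nonzero irreducible square-integrable automorphic representation, abstractly isomorphic to $\bigotimes_v \theta(\rho_v)$. What remains is the Satake-parameter check: at almost all $v$, the character $\chi \circ \iota_C$ restricts to an unramified character of $H_C^0(F_v) \cong \tilde{T}_{E_v, K_v}(F_v)$, and $\rho_v$ is the corresponding unramified constituent. Using the explicit mini theta lift of Proposition \ref{P:theta_M} combined with the identity $\Theta(\rho) \cong J_2(\Theta_M(\rho),1/2)$ of Theorem \ref{T:111}, one computes $\theta(\rho_v)$ as an unramified Langlands quotient whose Satake parameter matches $\psi_v$ under the embedding $S \rtimes (S_2 \times S_3) \hookrightarrow \PGSO_8(\C) \rtimes S_3$, with the $\SL_2(\C)$-component given by the subregular unipotent. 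This places $\Theta(\rho)$ in $\mathcal{A}_{2,\psi}(G_E)$.

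Next, to see that the sum indexed by $C$ is direct inside $\mathcal{A}_{2,\psi}(G_E)$, I invoke Corollary \ref{C:global-injective}(iii): if $\Theta(\rho) = \Theta(\rho')$ for $\rho \subset V_C[\chi]$ and $\rho' \subset V_{C'}[\chi]$, then $C \cong C'$ as $E$-twisted composition algebras and $\rho = \rho'$ as subspaces of $\mathcal{A}(H_C)$. Hence the submodules $\Theta(V_C[\chi])$ for distinct $C \in \Sigma_{E,K}$ are linearly independent in $\mathcal{A}_2(G_E)$, and the sum is direct. The multiplicity identity $\Theta(V_C[\chi]) \cong \bigoplus_\rho m_C(\rho) \cdot \Theta^{abs}(\rho)$ is then Corollary \ref{C:global-injective}(ii) applied isotype-by-isotype: the $\rho$-isotypic part of $V_C[\chi]$ maps onto the $\Theta^{abs}(\rho)$-isotypic part of $\Theta(\mathcal{A}(H_C))$ with matching dimensions, and summing over $\rho$ gives the stated decomposition.

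The main obstacle will be the unramified Satake matching in the first step. This requires tracing the canonical identification $H_C^0 \cong \tilde{T}_{E,K}$ of \S \ref{SS:isom-tori} through the seesaw of Section \ref{S:levi}, matching the principal series data of $\Theta_M(\rho_v)$ against the Jordan-block structure of $\psi_v$, and keeping track of the $|\det|^{1/2}$ and $|-|^{-3/2}$ shifts arising from the Jacquet module of the minimal representation of $E_6$ on the one hand, and the subregular $\SL_2$ in $\psi$ on the other. Once this compatibility is pinned down at a single unramified place, the identical calculation handles all other unramified places uniformly, and the remainder of the argument is pure assembly from the cited results.
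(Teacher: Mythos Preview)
Your approach is essentially the paper's own: the paper states this proposition as an immediate consequence of Corollary \ref{C:global-injective} together with the remark that ``from our local results, one sees that the theta lift of $V_C[\chi]$ is contained in the near equivalence class $\mathcal{A}_{2,\psi}(G_E)$.'' Your expansion of the Satake-parameter check via Theorem \ref{T:111} and Proposition \ref{P:theta_M} is exactly what the paper leaves implicit.

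One small slip: in your unramified computation you refer to ``the subregular unipotent,'' but the $\SL_2(\C)$-component of $\psi$ here is the orbit of case (iii) in \S\ref{SS:interest} (the map $\SL_2(\C)\to\SO_3(\C)\subset\SL_3(\C)\subset G_2(\C)$), not the subregular orbit of case (ii). This does not affect the structure of your argument, but the shift accounting you describe should be matched against that orbit rather than the subregular one.
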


\vskip 5pt

 \vskip 5pt

\subsection{\bf Local A-packets.}
Our goal in the remainder of this section is to show that the submodule $V[\psi]$ in the above proposition can be described in the form dictated by Arthur's conjectures. 
Let us first collect together all the local components of the constituents of $V(\psi_{E,K, [\chi]})$.  
\vskip 5pt

\begin{defn}
Given $\psi = \psi_{E ,K, [\chi]}$, set
 \[  \Sigma_{E_v, K_v, [\chi_v]} = \{  (C_v, \rho_v)  \in \Sigma_{E_v, K_v} \times \mathrm{Irr}(H_{C_v}(F_v)): \text{$\rho_v|_{H_{C_v}^0(F_v)} \supset \chi_v \circ \iota_{C_v}$ or $\chi_v^{-1} \circ \iota_{C_v}$} \} \]  
 and
 \[ \Pi_{\psi_v} =  \{  \theta_{C_v}(\rho_v): (C_v , \rho_v) \in \Sigma_{E_v, K_v, [\chi_v]} \} \subset \mathrm{Irr}(G_{E_v}(F_v)). \]
   \end{defn}
 \vskip 5pt

 We have shown in Theorems \ref{T:111}, \ref{T:arch1}, \ref{T:arch1.5} and \ref{T:arch2}   that  for $(C_v, \rho_v) \in \Sigma_{E_v, K_v, [\chi_v]}$, 
  the theta lift $\theta_{C_v}(\rho_v)$ is nonzero irreducible.  Moreover, $\Pi_{\psi_v}$ is a set (rather than a multiset). It is clear that the set $\Pi_{\psi_v}$ contains all possible local component at $v$ of the constituents of $V(\psi_{E,K, [\chi]})$; this will be our definition of the local A-packet associated to $\psi_v$. Observe that, by definition,  there is a natural bijection
 \[  \Pi_{\psi_v} \longleftrightarrow \Sigma_{E_v, K_v, [\chi_v]}. \]

  \vskip 5pt

  \subsection{\bf The bijection $j_{\psi_v}$}  \label{SS:bijj}
  Our next task is to construct a natural bijection
   \[ \Pi_{\psi_v}  \longrightarrow \mathrm{Irr}(S_{\psi_v})\]
or equivalently  a bijection
 \[  j_{\psi_v}:   \mathrm{Irr} (S_{\psi_v}) \longleftrightarrow \Sigma_{E_v, K_v, [\chi_v]},  \]
 which then induces the deisred bijection with $\Pi_{\psi_v}$. To do this, we shall exploit
Lemma \ref{L:component},   Lemma \ref{L:localH1}, Proposition \ref{P:TN} as well as Proposition \ref{P:isom-torsor}. 
\vskip 5pt

Let us begin with some general observations:
\vskip 5pt

\begin{itemize}
\item[(a)] By restriction, one obtains (by Lemma \ref{L:component} and Proposition \ref{P:TN}(i)) a natural map 
\[
 \mathrm{Irr}(S_{\psi_v}) \longrightarrow (\mathrm{Irr}(S^0_{\psi_v}))/S_2  \cong H^1(F_v, \tilde{T}_{E,K}) /S_2 = \Sigma_{E_v, K_v}. \]
Hence, each $\eta_v \in  \mathrm{Irr}(S_{\psi_v})$ gives rise to a $C_{\eta_v} \in \Sigma_{E_v, K_v}$.
 \vskip 5pt
 
 \item[(b)] Suppose that $\chi_v^2 =1$ but $\chi_v \ne 1$. Then by Proposition \ref{P:TN}(ii), we have:
 \[  \begin{CD} 
 \mathrm{Irr}(\bar{S}_{\psi_v})  @=   H^1(F_v, Z_E)/ b(\mathrm{Ker}(\chi_v)). \\
 @VVV  @VVf_{\chi_v}V \\
  \mathrm{Irr}(\bar{S}^0_{\psi_v}) @=  H^1(F_v, \tilde{T}_{E_v, K_v})[2] \end{CD} \] 
 For any given $[C_v] \in H^1(F_v, \tilde{T}_{E_v, K_v})[2]$,  write
 \[  \mathrm{Irr}_{C_v}(\bar{S}_{\psi_v})  \longleftrightarrow f_{\chi_v}^{-1} ([C_v]). \]
 These are sets of size $2$.
 \vskip 5pt
 
 Now  Proposition \ref{P:isom-torsor} gives  a natural isomorphism of $\tilde{T}_{E_v, K_v}(F_v)/ \tilde{T}_{E_v, K_v}(F_v)^2$-torsors   
 \[  g_{C_v}:  f^{-1}([C_v])   \longrightarrow  (H_{C_v}(F_v) \smallsetminus H^0_{C_v}(F_v))/ \tilde{T}_{E_v, K_v}(F_v)^2, \]
 which induces a bijection
\[   g_{C_v,\chi_v}:   f^{-1}([C_v] )/ b(\mathrm{Ker}(\chi_v)) \longrightarrow  (H_{C_v}(F_v) \smallsetminus H^0_{C_v}(F_v))/ \mathrm{Ker}(\chi_v). \]
Taken together, we thus have a canonical bijection
\[  \mathrm{Irr}_{C_v}(\bar{S}_{\psi_v})  \longleftrightarrow (H_{C_v}(F_v) \smallsetminus H^0_{C_v}(F_v))/ \mathrm{Ker}(\chi_v). \]
\vskip 5pt

Hence, given $\eta_v \in \mathrm{Irr}_{C_v}(\bar{S}_{\psi_v})$ (so that $C_{\eta_v} = C_v$), $\eta_v$ corresponds to an element 
\[  a_{\eta_v} \in  f^{-1}([C_v] )/ b(\mathrm{Ker}(\chi_v)) \]
and then an element
\[  g_{C_v,\chi_v}(a_{\eta_v}) \in  (H_{C_v}(F_v) \smallsetminus H^0_{C_v}(F_v))/ \mathrm{Ker}(\chi_v). \]
On the other hand, the character $\chi_v \circ \iota_{C_v}$ of $H^0_{C_v}(F_v)$ has two extensions to $H_{C_v}(F_v)$, which are distinguished by the value $\pm 1$ they take on $g_{C_v}(a_{\eta_v})$.  We define
\[   \rho_{\eta_v} = \text{the extension of $\chi_v \circ \iota_{C_v}$ which takes value $+1$ on $g_{C_v}(a_{\eta_v})$} \]
and set
\[  j_{\psi_v}(\eta_v) = (C_{\eta_v},  \rho_{\eta_v}) \in \Sigma_{E_v, K_v, [\chi_v]}. \]
By Corollary \ref{C:Whit}, $\rho_{\eta_v}$ is also characterized as the unique extension of $\chi_v \circ \iota_{C_{\eta_v}}$ whose mini-theta lift to $\GL_2(E_v)^{\det}$ is supported on the Whittaker data in $a_{\eta_v} \cdot b(\mathrm{Ker}(\chi_v))$.
\vskip 5pt

\item[(c)] If $\chi_v =1$, then by Proposition \ref{P:TN}(iii), there is a canonical section
\[ \delta_v^*:  H^1(F_v, \tilde{T}_{E_v, K_v})[2] = \mathrm{Irr}(\bar{S}^0_{\psi_v}) \longrightarrow \mathrm{Irr}(\bar{S}_{\psi_v}). \]
So for the two extensions of a character $\eta_v^0$ of $\bar{S}^0_{\psi_v}$, there is a distinguished one contained in the image of $\delta_v^*$.
On the other hand, for any $[C_v]  \in H^1(F_v, \tilde{T}_{E_v, K_v})[2]$,  there is a distinguished extension of the trivial character $\chi_v\circ \iota_{C_v}$ from $H^0_{C_v}(F_v)$ to $H_{C_v}(F_v)$, namely the trivial character. Hence if  $\eta_v  = \delta_v^*(C_{\eta_v})$, we set
\[  \rho_{\eta_v}  = 1_{C_{\eta_v}} \quad \text{and}\quad \rho_{\eta_v \cdot \epsilon_v}  = \epsilon_{C_{\eta_v}} \] 
where $\epsilon_v$ is the sign character of $S_{\psi_v}$ and $\epsilon_{C_v}$ is the nontrivial (sign) character of $H_{C_v}(F_v)/H^0_{C_v}(F_v)$. 
\end{itemize}
\vskip 5pt

Hence, when $\chi_v^2=1$, we have defined in (b) and (c) above a canonical bijection
\begin{equation} \label{E:j-bar}
  \mathrm{Irr}(\bar{S}_{\psi_v}) \longleftrightarrow  \bar{\Sigma}_{E_v, K_v, [\chi_v]} = \{ (C_v,\rho_v)\in \Sigma_{E_v, K_v,[\chi_v]}: [C_v]^2 =1\}. \end{equation}
To complete  the construction of $j_{\psi_v}$, it will now    be convenient to consider different cases, depending on whether $(E_v, K_v) = (\text{field}, \text{split})$ or not, and whether $\chi_v^2 =1$ or not.
\vskip 5pt
 
 \begin{itemize}
 \item[(1)] Suppose first that $(E_v, K_v) \ne (\text{field}, \text{split})$. Then  $S_{\psi_v} = \bar{S}_{\psi_v}$ is an elementary abelian 2-group. If $\chi_v^2 =1$, the (\ref{E:j-bar}) already gives the construction of $j_{\psi_v}$. On the other hand, when $\chi_v^2 \ne 1$, then $S_{\psi_v} = S^0_{\psi_v}$. For $\eta_v \in \mathrm{Irr}(S_{\psi_v})$, we set
 \[  \rho_{\eta_v}= \mathrm{Ind}_{H^0_{C_{\eta_v}(F_v)}}^{H_{C_{\eta_v}}(F_v)}  \chi^{\pm 1} \circ \iota_{C_{\eta_v}}, \]
 recalling that $H_{C_v}(F_v) \ne H_{C_v}^0(F_v)$ for any $[C_v] \in \Sigma_{E_v, K_v}$.

 \vskip 5pt
 
 \item[(2)] Suppose now that $(E_v, K_v) = (\text{field}, \text{split})$, so that $v$ is necessarily a non-archimedean place of $F$. 
 We fix the map $\psi_v$ (as opposed to considering it as a conjugacy class of maps)  and suppose that 
$\psi_v|_{W_{F_v}}$ corresponds to the character $\chi_v$ (as opposed to $\chi_v^{-1}$) of $\tilde{T}_{E_v, K_v}$. 
 Then Proposition \ref{P:TN} and  Lemma \ref{L:localH1} give
  \[    \mathrm{Irr} (S^0_{\psi_v})  =  H^1(F_v,  \tilde{T}_{E_v, K_v}) = \tilde{\Sigma}_{E_v, K_v} =\mathrm{Br}_3(F_v) =  \Z/3\Z. \]
  Thus, an element $\eta_v \in  \mathrm{Irr} (S^0_{\psi_v}) $ gives rise to an $E_v$-twisted composition algebra $C_{\eta_v}$ and then a central simple algebra $D_{\eta_v} \in {\mathrm Br}_3(F_v)$ with an isomorphism 
  \[  i_{\eta_v}  = i_{D_{\eta_v}} : H_{C_{\eta_v}}(F_v) \longrightarrow \tilde{T}_{E_v, K_v}.\]
    Explicitly, we have two possible twisted composition algebras
 \[  C_v^+ = (E_v \hookrightarrow M_3(F_v)) \quad \text{and} \quad C_v^-  = (E_v \hookrightarrow D_v^+), \]
 where $D_v$ is any of the two cubic division $F$-algebras.    Moreover, the two isomorphisms $i_{D_v}$ and $i_{D_v^{op}}$ differ from each other by composition with inversion.
 We recall also that 
 \[  \text{$[H_{C^+_v}(F_v) : H_{C^+_v}^0(F_v)] =2$,  but  $H_{C_v^-}(F_v) = H_{C_v^-}^0(F_v)$. } \]

\vskip 5pt

 We now consider two cases:
 \vskip 5pt
 \begin{itemize}
 \item[(a)] $\chi_v^2 \ne 1$. In this case, one has  $S_{\psi_v}  = S^0_{\psi_v} = \mu_3$, so (\ref{E:j-bar}) tells us nothing in this case. 
 To specifiy the bijection 
\[ j_{\psi_v}:  \mathrm{Irr} (S_{\psi_v}) = \Z/3\Z  \longleftrightarrow \Sigma_{E_v, K_v, [\chi_v]}, \]
the trivial character of $S_{\psi_v}$ is sent to the element $(C_v^+,  \rho_{\eta_v}[\chi_v] ) \in \Sigma_{E_v, K_v,[\chi_v]}$, where $\rho_{\eta_v}[\chi_v]$ is defined as in case (1a) above. For a nontrivial character $\eta_v$ of $S_{\psi_v}$,  we set
 \[  j_{\psi_v}(\eta_v) = (C_v^-,   \chi_v \circ i_{\eta_v}). \]
 \vskip 5pt
 
 We note that the above recipe is independent of the choice of the representative $\psi_v$ in its conjugacy class.
 Indeed, if we had used the map $ \psi_v^{-1}$ (which corresponds to $\chi_v^{-1}$), then one has an equality of the component groups $S_{\psi_v} = S_{\psi_v^{-1}}$ as subsets of $S \rtimes (S_2 \times S_3)$. However, an element of the latter which conjugates $\psi_v$ to $\psi_v^{-1}$ induces not the identity automorphism of $S_{\psi_v}$ but the inverse automorphism. This implies that
 \[  j_{\psi_v}(\eta_v) = j_{\psi_v^{-1}}(\eta_v^{-1}), \]
  so that  the above recipe is independent of the choice of the representative map $\psi_v$ in its conjugacy class. A better language to express this is to work with the projective systems of $[\psi_v]$ and $[S_{\psi_v}]$, as we did in \cite[Prop. 3.2]{GS4}, where a similar situation arises.

 \vskip 10pt
 
 \item[(b)]  $\chi_v^2 =1$.  In this case, we have the short exact sequence
 \[  \begin{CD}
  1 @>>>  S_{\psi_v}^0= \mu_3 @>>>  S_{\psi_v} = S_3 @>>>  S_2  @>>> 1, \end{CD} \]
  so that $S_{\psi_v}$ is the nonabelian group $S_3$. Let us denote the irreducible representations of $S_3$ by $1$, $\epsilon$ (the sign character) and $r$ (the unique 2-dimensional irreducible representation).  Because $\bar{S}_{\psi_v}  = S_{\psi_v}/ S^0_{\psi_v} = S_2$, (\ref{E:j-bar}) already determines for us $j_{\psi_v}(1)$ and $j_{\psi_v}(\epsilon)$.  Hence we have no choice for $j_{\psi_v}(r)$:
   \[  j_{\psi_v}(r) = (C_v^-, \chi_v \circ \iota_{C_v^-}). \] 
    \end{itemize}

 \end{itemize}
 \vskip 10pt
 
   This completes our construction of a canonical bijection
 \[  j_{\psi_v} : \mathrm{Irr}(S_{\psi_v}) \longleftrightarrow \Sigma_{E_v, K_v,  [\chi_v]}, \]
 For any $\eta_v \in \mathrm{Irr}(S_{\psi_v})$, if $j_{\psi_v}(\eta_v) = (C_{\eta_v}, \rho_{\eta_v})$, 
we write
\[  \pi_{\eta_v} :=  \theta_{C_{\eta_v}}(  \rho_{\eta_v}) \in \mathrm{Irr}(G_E(F_v)). \]
    \vskip 10pt

\subsection{\bf Global A-packets}
We come now to the global setting.  
Without loss of generality, fix  a global A-parameter, or more precisely a map  
\[ \psi = \psi_{E,K, [\chi]} : W_F \longrightarrow S \rtimes (S_2 \times S_3) \subset \PGSO_8(\C) \rtimes S_3 \] 
and suppose that its restriction to $W_F$  corresponds to the Hecke character $\chi$ of the torus $\tilde{T}_{E,K}$. 
The $\PGSO_8(\C)$-conjugacy class of $\psi$ then corresponds to the pair $[\chi] = \{\chi, \chi^{-1} \}$ of Hecke characters of the torus $\tilde{T}_{E,K}$. 

\vskip 5pt

  As we explained in \S \ref{SS:bijj},  the local A-packets $\Pi_{\psi_v}$  are equipped  with canonical bijections 
\[  j_{\psi_v} : \mathrm{Irr}(S_{\psi_v}) \longleftrightarrow   \Sigma_{E_v, K_v. [\chi_v]} \longleftrightarrow \Pi_{\psi_v} \]
The global A-packet $\Pi_{\psi}$ associated to $\psi$ is simply the restricted tensor product of the local ones, so that
\[  \Pi_{\psi} = \{  \pi_{\eta} = \otimes_v'\pi_{\eta_v}:  \eta= \otimes_v \eta_v \in \mathrm{Irr}(  S_{\psi, \A}) \}. \]
  The irreducible summands of $V[\psi] \subset \mathcal{A}_{2,\psi}(G_E)$ are  isomorphic to elements of $\Pi_{\psi}$.

  \vskip 10pt

\subsection{\bf Multiplicity formula.}
Our remaining task is to verify that the Arthur multiplicity formula holds for $V[\psi]$. In other words, for each $\eta = \otimes_v \eta_v$, we need to determine the multiplicity of $\pi_{\eta}$ in $V[\psi]$.
Now 
\[  \pi_{\eta} \cong \otimes_v \theta _{C_{\eta_v}}(\rho_{\eta_v})  \quad \text{where $j_{\psi_v}(\eta_v) = (C_{\eta_v}, \rho_{\eta_v})$ for each $v$.} \]
 To determine the multiplicity of $\pi_{\eta}$ in $V[\psi]$, we consider the subset
 \[  \Sigma_{E,K, [\chi], \eta} \subset \Sigma_{E,K} \]
 consisting of those $C$'s satisfying:
 \vskip 5pt
 
 \begin{itemize}
 \item for each place $v$ of $F$, there is an isomorphism
 \[ \iota_v:  C_v := C \otimes_F F_v \cong   C_{\eta_v}.   \]
 Note that the isomorphism $\iota_v$ is unique up to composition by elements of $H_C(F_v)$, and so induces an isomorphism
 \[  \iota_v:  H_{C_v} \cong H_{C_{\eta_v}} \]
 which is well-determined up to conjugation. Hence,  $\rho_{\eta_v} \circ \iota_v$ is a well-defined element of $\mathrm{Irr}(H_{C_v}(F_v))$.
 In particular, we have a well-defined abstract irreducible representation 
 \[  \rho_{\eta,C} :=  \otimes_v'  \rho_{\eta_v} \circ \iota_v \quad \text{ of $H_C(\A)$}  \]
 such that 
 \[  \Theta^{abs}(\rho_{\eta,C}) \cong \pi_{\eta} \quad \text{as abstract representations.} \]
 \vskip 5pt
 
 \item the representation $\rho_{\eta,C}$ is automorphic and hence occurs in $V_C[\chi]$. 
   \end{itemize}
 \vskip 5pt
 
 To decide if $\rho_{\eta,C}$ is automorphic, an important role is played by the following diagram:

\[
\xymatrix@R=2pt{
H^0_{C_v}  \ar[dd]^{\iota_v}\ar[drr]^{\iota_{C,v}}& & \\
   & &   \tilde{T}_{E_v, K_v}  \\
 H^0_{C_{\eta_v}}  \ar[urr]_{\iota_{\eta_v}} & & 
 }
\]
Here $\iota_{C,v}$ is the localization of $\iota_C$ at the place $v$ and we recall that $\iota_C$ is well-determined up to conjugacy by $H_C(F)$, and likewise $\iota_v$ is well-determined up to conjugacy by $H_{C_v}(F_v)$. It is natural to ask if this diagram is commutative, or can be rendered such.
We have:
\vskip 5pt

\begin{lemma} \label{L:commu}
The above diagram commutes up to inverting, i.e. 
\[ 
\iota_{\eta_v} \circ \iota_v =  \iota_{C,v} \quad \text{or} \quad \iota_{C,v}^{-1}. \]
Hence, if $H_{C}(F_v) \ne H_C^0(F_v)$, then the above diagram is commutative by replacing $\iota_v$ by $\iota_v^{-1}$ if necessary.
In particular, if $H_C(F) \ne H_C^0(F)$, then the above diagram can be made commutative at all places $v$ (by appropriate choices of $\iota_v$ at each $v$).  
\end{lemma}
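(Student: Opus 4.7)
The plan is to deduce the lemma from the functoriality of Springer's construction together with the uniqueness-up-to-inversion of the canonical isomorphism $\iota_C$ established in Lemma~\ref{L:indep}. Recall that $\iota_C$ (respectively $\iota_{\eta_v}$) is built from a choice of central simple algebra with involution $(B,\tau)$ with $J = B^\tau$ (respectively $(B_{\eta_v},\tau_{\eta_v})$ with $J_{\eta_v} = B_{\eta_v}^{\tau_{\eta_v}}$), and the two possible choices $(B,\tau)$ versus $(B^{\mathrm{op}},\tau)$ are precisely what gives rise to the pair $\{\iota_C,\iota_C^{-1}\}$. Thus the statement to prove can be rephrased as saying that $\iota_{\eta_v}\circ \iota_v$ arises from one of the two allowable choices of $(B,\tau)$ localized at $v$.

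First, I would use functoriality of the Springer decomposition: the $F_v$-linear twisted composition algebra isomorphism $\iota_v : C_v \to C_{\eta_v}$ extends uniquely to a Jordan isomorphism $J_v \to J_{\eta_v}$ fixing $E_v$ pointwise, which in turn lifts to a $K_v$-algebra isomorphism $(B_v,\tau_v) \to (B_{\eta_v},\tau_{\eta_v})$ or to $(B_v,\tau_v) \to (B_{\eta_v}^{\mathrm{op}},\tau_{\eta_v})$ (the two possibilities differ by passing to the opposite algebra, which is the ambiguity inherent in recovering $(B,\tau)$ from $J$). Under either lift, the induced map on the stabilizer of $\tilde\iota$ in $\mathrm{PU}(B,\tau)$ is exactly the map on identity components induced by $\iota_v$, so $\iota_{\eta_v}\circ\iota_v$ coincides with the isomorphism $H^0_{C_v} \to \tilde T_{E_v,K_v}$ built directly from $(B_v,\tau_v)$ or from $(B_v^{\mathrm{op}},\tau_v)$. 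But those two isomorphisms are precisely $\iota_{C,v}$ and $\iota_{C,v}^{-1}$, by Lemma~\ref{L:indep} applied locally at $v$. This gives the first assertion.

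For the second assertion, observe that the choice of $\iota_v$ as an isomorphism of twisted composition algebras is not canonical: any other isomorphism has the form $\iota_v\circ h$ for some $h\in H_{C_v}(F_v)$, and the induced map on identity components changes by conjugation by $h$, which acts trivially on $H^0_{C_v}$ when $h\in H^0_{C_v}(F_v)$ and by inversion when $h\in H_{C_v}(F_v)\smallsetminus H^0_{C_v}(F_v)$ (by the description of $H_C$ in \S\ref{SS:rank2}). Hence when $H_C(F_v)\neq H^0_C(F_v)$ we have access to both $\iota_v$ and its inverse on identity components, and we simply pick the one that makes the diagram commute.

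Finally, the global statement follows because an element $h\in H_C(F)\smallsetminus H^0_C(F)$ has nontrivial image in $H_C(F_v)/H^0_C(F_v)$ at every place $v$ (the quotient map $H_C\to\mu_2$ is defined over $F$). Hence the hypothesis $H_C(F)\neq H^0_C(F)$ forces $H_C(F_v)\neq H^0_C(F_v)$ for every $v$, and the second assertion then provides a compatible choice of $\iota_v$ at each place. The argument is largely bookkeeping once the key input—Lemma~\ref{L:indep} and its local analogue—is in hand; the only mildly delicate point is checking that the lift of $\iota_v$ to a map of pairs $(B,\tau)$ is available, which is the content of the Springer equivalence of groupoids recalled in \S\ref{SS:springer}.
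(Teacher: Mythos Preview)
The paper states Lemma~\ref{L:commu} without proof, so there is no argument to compare against directly. Your proof is correct and supplies the omitted details: the first assertion follows from Lemma~\ref{L:indep} together with functoriality of the Springer construction, and the remaining two assertions are the straightforward consequences you describe.

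A marginally more direct route for the first assertion avoids the lift to $(B,\tau)$. Realize both $C_v$ and $C_{\eta_v}$ as $C_{e,\nu}$ and $C_{e',\nu'}$ on the same $E_v$-module $L_v=E_v\otimes K_{C_v}$, so that $H^0_{C_v}=H^0_{C_{\eta_v}}=T_{E_v,K_{C_v}}$ as subgroups of $\GL_{E_v}(L_v)$ (cf.\ \S\ref{SS:rank2}). Lemma~\ref{L:indep} then places both $\iota_{C,v}$ and $\iota_{\eta_v}$ in the same canonical pair $\{\iota,\iota^{-1}\}$. Finally, any $E_v$-isomorphism $C_{e,\nu}\to C_{e',\nu'}$ is in particular a similitude of $N_{L_v/E_v}$, hence lies in $L_v^\times\rtimes\langle\sigma\rangle$ (cf.\ \S\ref{SS:simi}(c$'$)), so the induced map on $T_{E_v,K_{C_v}}$ is either the identity or inversion. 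Your route through the $(B,\tau)$-description is equally valid and is closer to how the maps $\iota_C$ were originally introduced in \S\ref{SS:isom-tori}.
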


 \vskip 5pt
 
 For $C \in \Sigma_{E,K, [\chi], \eta}$, the  multiplicity $m_C(\rho_{\eta,C})$ of $\rho_{\eta,C}$ in $V_C[\chi]$ is in fact independent of $C$, by our discussion in \S \ref{SS:auto-HC}. We thus denote this multiplicity by $m(\rho_{\eta}) >0$.  Given this, we see that
 \[  \text{Multiplicity of $\pi_{\eta}$ in $V[\psi]$} =  m(\rho_{\eta}) \cdot \# \Sigma_{E,K,[\chi], \eta}. \]
 To establish the multiplicity formula,  we need to show that the above number is equal to
 \[ m_{\eta}: =  \langle \eta \circ \Delta , 1 \rangle_{S_{\psi}} = \frac{1}{\# S_{\psi}} \cdot \sum_{S \in S_{\psi}} {\mathrm tr} \left( \eta(\Delta(s)) \right). \]
 We consider the different cases of $\psi = \psi_{E,K, [ \chi]}$ in turn in the subsequent subsections.

  \vskip 5pt

 \subsection{\bf $K$ is a field and $\chi^2 \ne 1$}
 This is in some sense the most nondegenerate case, as all possible local scenarios we discussed in \S \ref{SS:bijj} can occur. However, it is also the least subtle case because
 \[  S_{\psi}  = \{ 1\} \quad \text{so that} \quad m_{\eta}  = \dim \eta.  \]
 Let $S_{\eta}$ denote the finite set of places $v$ of $F$ where $C_{\eta_v}$ is associated with a cubic division algebra; at these places, we have $(E_v, K_v) = (\text{field, split})$.   
  We have a decomposition
 \[  S_{\eta} =  S_{\eta}'  \sqcup S_{\eta}'' \]
 where $S_{\eta}'$ consists of those places $v$ where $\chi_v^2 =1$. Then
 \[  \dim \eta_v =\begin{cases} 
  1, \text{ if $v \notin S_{\eta}'$} \\
  2, \text{  if $v \in S_{\eta}'$} \end{cases} \]
  so that
  \[  m_{\eta} = \dim \eta = 2^{\# S_{\eta}'}. \]
 \vskip 5pt
 
 We now need to determine the size of $\Sigma_{E,K, [\chi], \eta}$. For $C \in \Sigma_{E,K, [\chi], \eta}$  corresponding to $E \hookrightarrow B^{\tau}$ 
 (for a central simple algebra $B$ over $K$ of degree $3$, equipped with an involution $\tau$ of the second kind), $B$ is ramified precisely at $v \in S_{\eta}$.  
 The number of possible $C$'s is, at this point,  
 \[  \begin{cases}
  2^{\# S_{\eta} -1} \text{  if $S_{\eta}$ is nonempty;} \\
   1, \text{  if $S_{\eta}$ is empty.} \end{cases} \]
 However, we also need to impose the condition that $\rho_{\eta,C}$ is  automorphic.   
  \vskip 5pt
  
  Assume first that $S_{\eta}$ is nonempty. For any $C \in \Sigma_{E,K, [\chi],\eta}$, we have $H_C(F) = H_C^0(F)$. From our discussion in \S \ref{SS:auto-HC}, the abstract representation   $\rho_{\eta,C}$ is automorphic if and only if its abstract restriction  to $H_C^0(\A)$ contains $\chi \circ \iota_C$ or $\chi^{-1} \circ \iota_C = \chi \circ \iota_C^{-1}$. 
  In other words, we need 
  \[    \rho_{\eta_v} |_{H^0_{C_{\eta_v}} } \circ \iota_v   \supset  \chi_v \circ \iota_{C,v}  \quad \text{for all places $v$:} \] 
  for one of the two choices of $\iota_C$.
  \vskip 5pt
  
  Now
  \[  \rho_{\eta_v}  |_{H^0_{C_{\eta_v}} }  = \begin{cases}  
  \chi_v \circ \iota_{\eta_v}  + \chi_v^{-1} \circ \iota_{\eta_v}  \text{  if $v  \notin S_{\eta}$ and $\chi_v^2 \ne 1$;} \\
  \chi_v \circ \iota_{\eta_v} , \text{  otherwise.} \end{cases} \]
  From this and Lemma \ref{L:commu}, we  see that the desired containment holds for any $v \notin S_{\eta}''$ for both choices of $\iota_C$. 
  \vskip 5pt
  
   It remains to consider the places in $S_{\eta}''$, where we need the following to hold:
   \[  \chi_v \circ \iota_{\eta_v} \circ \iota_v  = \chi_v \circ \iota_{C,v}. \]
    This identity fixes $\iota_{C,v}$ for every $v \in S_{\eta}''$. In other words, if $\iota_C$ is associated to $E \hookrightarrow B^{\tau}$ for a pair $(B,\tau)$, then the invariant of $B_v$ for every $v \in S_{\eta}''$ is fixed, and we only have the freedom to dictate the invariant of $B_v$  at $v \in S_{\eta}'$. 
    \vskip 5pt

    Hence, the number of possible $(B,\tau)$'s is $2^{\# S_{\eta}'}$ and
  \[   \# \Sigma_{E,K, [\chi], \eta} = \begin{cases} 
  2^{\# S_{\eta}'} , \text{  if $S_{\eta}''$ is nonempty;} \\
  2^{\# S_{\eta}' -1},  \text{  if $S_{\eta}''$ is empty.} \end{cases} \]
  On the other hand, by our discussion in \S \ref{SS:auto-HC},
  \[  m(\rho_{\eta}) = m_C(\rho_{\eta,C}) = \begin{cases}
  1,  \text{  if $S_{\eta}''$ is nonempty;} \\
  2,  \text{  if $S_{\eta}''$ is empty.} \end{cases} \]
  Taken together, we see that
  \[  m(\rho_{\eta}) \cdot  \# \Sigma_{E,K, [\chi], \eta}  = 2^{\# S_{\eta}'} = m_{\eta}, \]
  as desired.
\vskip 5pt

The case when $S_{\eta}$ is empty is dealt with similarly, with both quantities equal to $1$; we omit the details.

 \vskip 10pt

\subsection{\bf $K$ is a field and $\chi^2 =1$}
In this case 
\[  S_{\psi} = S_2. \]
 Given $\eta = \otimes_v \eta_v \in \mathrm{Irr}(S_{\psi, \A})$,  let $S_{\eta}$ be the finite set of places $v$ of $F$ where $C_{\eta_v}$ is associated with a cubic division algebra. 
Then  $\eta_v$ is the 2-dimensional representation $r$ of $S_{\psi_v} = S_3$  if $v \in S_{\eta}$, and $\eta_v$ is 1-dimensional otherwise. 
   Then
\[  m_{\eta} = \dim \Hom_{S_2} ( r^{ \otimes \# S_{\eta}} , \otimes_{v \notin S_{\eta}} \eta_v )  = 2^{\# S_{\eta} -1} \]
if $S_{\eta}$ is nonempty. On the other hand, if $S_{\eta}$ is empty, then 
\[  m_{\eta} = \frac{1}{2} ( 1 + (-1)^b)  = \begin{cases}
1 \text{  if $b$ is even;} \\
0 \text{  if $b$ is odd.} \end{cases} 
 \]
where $b$ is the finite number of places $v$ of $F$ where $\eta_v$ is nontrivial on $S_{\psi}$.  

\vskip 10pt

Assume first that $S_{\eta}$ is nonempty.
For any $C \in \Sigma_{E,K, [\chi], \eta}$, $H_C(F) = H_C^0(F)$, and if $C$ is associated with $E \hookrightarrow B^{\tau}$, then $B$ is ramified precisely at places in $S_{\eta}$.  Further, for  $\rho_{\eta,C}$ to be automorphic, we need to verify that, for one of the two choices of $\iota_C$, one has
\[  \rho_{\eta_v}   \circ \iota_v|_{H_C(F_v)}  = \chi_v \circ \iota_{C,v} \quad \text{for all places $v$.} \]
In fact, since $\chi^2=1$, it is immaterial which of the two $\iota_C$'s we use.  Now
 \[  \rho_{\eta_v}|_{H_{C_{\eta_v}}^0(F_v)} = \chi_v \circ \iota_{\eta_v}  \quad \text{for all $v$.} \]
 Hence the desired equality follows from Lemma \ref{L:commu} and the hypothesis that $\chi^2 =1$. In other words, $\rho_{\eta,C}$ is necessarily automorphic for any $C \in \Sigma_{E,K, [\chi], \eta}$, with $m_C(\rho_{\eta,C}) =1$. Hence,
 \[  \# \Sigma_{E,K, [\chi], \eta} = 2^{\# S_{\eta} -1} \]
 so that
 \[  m(\rho_{\eta}) \cdot \# \Sigma_{E,K, [\chi], \eta} = 2^{\# S_{\eta} -1} = m_{\eta} \]
 as desired. 
 \vskip 5pt
 
 Consider now the case when  $S_{\eta}$ is empty, so that $\eta$ is a character of $\bar{S}_{\psi,\A}$. In this case, $C_{\eta_v} \in H^1(F_v, \tilde{T}_{E_v, K_v})[2]$ for all $v$, and so by Lemma \ref{L:loc}, there is a unique $C_{\eta} \in H^1(F, \tilde{T}_{E,K})[2]$ so that $C_{\eta,v}   \cong C_{\eta_v}$ for all $v$, and we need to determine if $\rho_{\eta}$ is automorphic for $H_{C_{\eta}}$.
 For this, we shall appeal to Proposition \ref{P:PT} and Proposition \ref{P:isom-torsor}.
 \vskip 5pt
 
 By Proposition \ref{P:PT}, we see that $[\mathrm{Irr}(\bar{S}_{\psi,\A})]$ is divided into two equivalence classes, depending on whether the restriction to $\bar{S}_{\psi} = \mu_2$ is trivial or not. The distinguished class, with trivial restriction to $\bar{S}_{\psi}$,  is thus the one for which $m_{\eta} = 1$ (instead of $0$).
 Proposition \ref{P:PT} says that this distinguished class is 
  precisely the one which contains the image of a section $H^1(F, \tilde{T}_{E,K}) \rightarrow \mathrm{Irr}(\bar{S}_{\psi,\A})$. Equivalently, it is  the image of the natural map $H^1(F, Z_E) \rightarrow {\prod}'_v  H^1(F_v, Z_E) \rightarrow  [\mathrm{Irr}(\bar{S}_{\psi,\A})]$.
 
 \vskip 5pt
 
 For $\eta$ with $m_{\eta} =1$, there is thus an element $a_{\eta} \in H^1(F, Z_E) $ and an automorphic sign character $\epsilon_{\Sigma}$ such that for all places $v$, $\eta_v \cdot \epsilon_{\Sigma,v}$ corresponds to $a_{\eta}$ under the bijection 
 \[  H^1(F_v, Z_E)/ b(\mathrm{Ker}(\chi_v)) \supset f_{\chi_v}^{-1}([C_{\eta_v}]) \longleftrightarrow \mathrm{Irr}(\bar{S}_{\psi_v})  \]
 in Proposition \ref{P:TN}.  Observe that $m_{\eta \cdot \epsilon_{\Sigma}} = 1$ as well,  and $\rho_{\eta \cdot \epsilon_{\Sigma}} = \rho_{\eta} \cdot \epsilon_{C_{\eta}, \Sigma}$, where $\epsilon_{C_{\eta}, \Sigma}$ is the automorphic sign character of $H_{C_{\eta}}$ nontrivial precisely at places in $\Sigma$.
Hence in deciding the automorphy of $\rho_{\eta}$, there is no harm in assuming that $\Sigma$ is empty, by  replacing $\eta$ by $\eta \cdot \epsilon_{\Sigma}$ if necessary.
 \vskip 5pt

 By Proposition \ref{P:isom-torsor}, the element $a_{\eta} \in H^1(F, Z_E)$ gives rise to an element 
 \[  g(a_{\eta}) \in H_{C_{\eta}}(F_v) \smallsetminus H_{C_{\eta}}^0(F_v)  \quad \text{for each place $v$.}  \]
 Now $\rho_{\eta}$ is automorphic if and only if $\rho_{\eta}(g(a_{\eta}))  =1$. But its local component  $\rho_{\eta_v}$ is characterized by the property that 
 \[  \chi_v( g(a_{\eta}) ) =1 \quad \text{  for all $v$.} \] 
 In particular, $\rho_{\eta}$ is automorphic when $m_{\eta} = 1$, as desired. 
\vskip 5pt
On the other hand, if $m_{\eta} = 0$, it is clear that $\rho_{\eta}$ is not automorphic, since $\rho_{\eta}$ differs from an automorphic $\rho_{\eta'}$ by a twist of a global sign character of $H_{C_{\eta}}$ which is the local sign character at an odd number of places.
\vskip 10pt

\subsection{\bf $K$ is split and $\chi^2 \ne 1$.}
In this case, 
\[ S_{\psi} = \mu_3, \]
For a given $\eta \in \mathrm{Irr}(S_{\psi,\A})$, let $S_{\eta}$ be the finite set of places where $C_{\eta_v}$ is associated with a cubic division algebra. For $v \in S_{\eta}$, $E_v$ is necessarily a field. 
We have a decomposition
\[  S_{\eta} = S_{\eta}' \sqcup S_{\eta}'' \]
where $S_{\eta}'$ consists of those $v$ where $\chi_v^2 =1$.  Hence, for $v \in S_{\eta}'$, $S_{\psi_v} = S_3$ and $\eta_v$ is the 2-dimensional irreducible representation $r$ of $S_3$; at all other places, $\eta_v$ is 1-dimensional.  For places $v \in S_{\eta}''$, $S_{\psi_v} = \mu_3$ and we further decomposes
\[  S_{\eta}''  = S_{\eta, 1}'' \cup S_{\eta, 2}'' \]
where $S_{\eta,1}''$ consists of those $v$ such that $\eta_v$ corresponds to the element $1/3 \in \Z/3\Z = \mathrm{Irr}(\mu_3)$ and $S_{\eta, 2}''$  those $v$ such that $\eta_v$ corresponds to $2/3$.  For ease of notation, let us set
\[   a = \# S_{\eta}', \quad  b_1 = \# S_{\eta,1}'' \quad \text{and} \quad b_2 = \# S_{\eta, 2}''. \]
\vskip 5pt

Considering the pullback of $\eta_v$ to $S_{\psi}$, we have:
\[  \eta_v|_{S_{\psi} }= \begin{cases}
1 \text{  if $v \notin S_{\eta}$;} \\
\eta_v, \text{  if $v \in S_{\eta}''$;} \\
\text{the sum of the two nontrivial characters of $\mu_3$, if $v \in S_{\eta}'$.} \end{cases} \]
 Hence,
\[  m_{\eta} =  \frac{1}{3} \cdot \left( 2^a +  (-1)^a \cdot \zeta^{b_1 - b_2}  + (-1)^a \cdot \zeta^{b_2 - b_1} \right) \]
where $\zeta \in \C^{\times}$ is a primitive cube root of $1$. To further explicate the above formula, we have
\[  m_{\eta} = \begin{cases} 
(2^a + 2(-1)^a)/3,  \text{  if $b_1 - b_2 = 0 \mod 3$;} \\
(2^a + (-1)^{a+1} )/3, \text{  if $b_1-b_2 \ne 0 \mod 3$.} \end{cases} \]
In particular, if $S_{\eta}$ is empty (so that $a = b_1 = b_2 = 0$), we see that $m_{\eta} = 1$. 
\vskip 5pt

We now enumerate the set $\Sigma_{E,K,[\chi], \eta}$. Any $C \in \Sigma_{E,K,[\chi], \eta}$ corresponds to $E \hookrightarrow B^+$ for a central simple $F$-algebra $B$ ramified precisely at $S_{\eta}$. Assume first that $S_{\eta}$ is nonempty, so that $H_C(F) = H_C^0(F)$ for any $C \in \Sigma_{E,K,[\chi], \eta}$.  To check if $\rho_{\eta, C}$ is automorphic, we need to veify that, for one of the two choices of $\iota_C$, we have 
\[   \rho_{\eta_v} \circ \iota_v |_{H_C^0(F_v)} \supset  \chi_v \circ \iota_{C,v} \quad \text{for all places $v$.} \] 
Now 
\[  \rho_{\eta_v}|_{H_{C_{\eta_v}}^0(F_v)}  = \begin{cases}
 \chi_v \circ \iota_{\eta_v} + \chi_v^{-1} \circ \iota_{\eta_v}, \text{ if $v \notin S_{\eta}$ and $\chi_v^2 \ne 1$} \\
 \chi_v \circ \iota_{\eta_v}, \text{ otherwise.} \end{cases} \]
 So the desired containment holds at all places outside $S_{\eta}''$. 
\vskip 5pt

It remains to consider the places $v \in S_{\eta}''$. For such a $v$, we need to verify if
\[  \chi_v \circ \iota_{\eta_v} \circ \iota_v  = \chi_v \circ \iota_{C,v}. \]
This holds if and only if 
\[  \iota_{C,v}  = \iota_{\eta_v} \circ \iota_v. \]
In other words, if $\iota_C$ is associated to the associative algebra embedding $E \hookrightarrow B$, then the invariants of $B$ at $v \in S_{\eta}''$ are constrained by $\eta_v$ as follows:
\[  {\mathrm inv}(B_v) = \begin{cases}
1/3, \text{  if $v \in S_{\eta, 1}''$;} \\
2/3, \text{  if $v \in S_{\eta, 2}''$;} \\
\pm 1/3, \text{  if $v \in S_{\eta}'$;} \\
0, \text{  otherwise.} \end{cases} \]
We leave it as an amusing exercise to verify that the number of $B$'s satisfying these requirements  is equal to $m_{\eta}$ (with $m_{\eta}$ computed above). 
It follows that  
\[  \# \Sigma_{E,K, [\chi], \eta} = \begin{cases}
m_{\eta}, \text{  if $S_{\eta}''$ is nonempty;} \\
m_{\eta}/2, \text{  if $S_{\eta}''$ is empty.} \end{cases} \]
However, from the discussion in \S \ref{SS:auto-HC}, we have:
\[  m(\rho_{\eta}) = \begin{cases}
1, \text{  if $S_{\eta}''$ is nonempty;} \\
2, \text{  if $S_{\eta}''$ is empty.} \end{cases} \] 
Taken together, we thus conclude that, when $S_{\eta}$ is nonempty, 
\[  m(\rho_{\eta}) \cdot \# \Sigma_{E,K, [\chi], \eta} = m_{\eta}, \]
as desired. 
\vskip 5pt

Now consider the case when $S_{\eta}$ is empty. In this case, the only possible $C \in \Sigma_{E,K, [\chi], \eta}$ is $C^+$ corresponding to $E \hookrightarrow M_3(F)$, and $H_{C^+}(F)  \ne H_{C^+}^0(F)$. By our discussion in \S \ref{SS:auto-HC}, we see easily that $\rho_{\eta,C^+}$ is automorphic with $m_{C^+}(\rho_{\eta, C^+}) =1$. Hence 
\[m(\rho_{\eta}) \cdot   \# \Sigma_{E,K, [\chi], \eta} = 1 = m_{\eta} \] 
as desired. 
\vskip 10pt

\subsection{\bf $K$ is split and $\chi^2 =1$}
In this case, 
\[  S_{\psi} \cong  S_3 \]
and we fix an element $s_0$ in $S_3 \setminus \mu_3$, so that $S_{\psi} = \mu_3 \rtimes S_2$ and $\bar{S}_{\psi} = \langle s_0 \rangle$.  For all places $v$, we then have
$S_{\psi_v} = \pi_0(S^{W_{F_v}}) \rtimes \mu_2$.  
\vskip 5pt

Given an $\eta$, let $S_{\eta}$ be the finite set of places where $C_{\eta_v}$ is associated to a cubic division algebra. Then for $v \in S_{\eta}$, $S_{\psi_v} = S_3$   and $\eta_v$ is the 2-dimensional irreducible representation $r$ of $S_3$. For all other $v$, $\eta_v$ is 1-dimensional.  
On pulling back to $S_{\psi} = S_3$, we have
\[  \eta_v|_{S_{\psi}} = \begin{cases}
1, \text{  if $v \notin S_{\eta}$ and $\eta_v(s_0) =1$;} \\
\epsilon, \text{  if $v \notin S_{\eta}$ and $\eta_v(s_0) = -1$;} \\
r, \text{  if $v \in S_{\eta}$.} \end{cases} \]
Hence,
\[  m_{\eta} = \frac{1}{6} \cdot \left( 2^{\# S_{\eta}} + 2 \cdot (-1)^{\# S_{\eta}} \right)  \quad \text{if $S_{\eta}$ is nonempty,} \]
 and if $S_{\eta}$ is empty,
\[  m_{\eta} = \frac{1}{2} \cdot \left( 1 + (-1)^b \right) = \begin{cases} 
1, \text{  if $b$ is even;} \\
0, \text{  if $b$ is odd.} \end{cases}
  \]
where $b$ is the cardinality of the set of places $v$ where $\eta_v(s_0) = -1$.  
\vskip 10pt

We now consider the set $\Sigma_{E,K,[\chi], \eta}$. For $C \in \Sigma_{E,K, [\chi],\eta}$, associated to $E \hookrightarrow B^+$ say, we see that $B$ is ramified precisely at $S_{\eta}$. We know that 
\[  \# \{ B \in Br_3(F): \text{$B$ is ramified precisely at $S_{\eta}$} \} =  \frac{1}{3} \cdot \left( 2^{\# S_{\eta}} + 2 \cdot (-1)^{\# S_{\eta}} \right)  \]
 if $S_{\eta}$ is nonempty, and is 1 if $S_{\eta}$ is empty. 
\vskip 5pt

 Assume first that $S_{\eta}$ is nonempty, so that $H_C(F) = H_C^0(F)$ for any $C \in \Sigma_{E,K, [\chi], \eta}$. Then for $\rho_{\eta,C}$ to be automorphic, we need
\[  \rho_{\eta_v} \circ \iota_v |_{H_C(F_v)} \supset \chi_v \circ \iota_{C,v}  \quad \text{  for all $v$} \]
for one of the two choices of $\iota_C$. Now
\[ \rho_{\eta_v}|_{H^0_{C_{\eta_v}}(F_v)} = \chi_v \circ \iota_{\eta_v}, \]
so that $\rho_{\eta,C}$ is automorphic if and only if
\[  \chi_v \circ \iota_{\eta_v} \circ \iota_v = \chi_v \circ \iota_{C,v} \quad \text{for all $v$.} \]
By Lemma \ref{L:commu}, this holds automatically since $\chi^2=1$. Hence $\rho_{\eta,C}$ is always automorphic, with $m_C(\rho_{\eta,C}) =1$ (by the discussion in \S \ref{SS:auto-HC}),  and
\[
  \# \Sigma_{E, K, [\chi], \eta}  =
  = \frac{1}{6} \cdot \left( 2^{\# S_{\eta}} + 2 \cdot (-1)^{\# S_{\eta}} \right) 
  = m_{\eta} \] 
  as desired.
 \vskip 5pt
 On the other hand, if $S_{\eta}$ is empty, then the only possible $C \in \Sigma_{E,K, [\chi], \eta}$ is  $C^+$ corresponding to $E \hookrightarrow M_3(F)$.  This is treated in exactly the same way as the corresponding case when $K$ is a field, using the global Poitou-Tate duality summarized in Proposition \ref{P:PT}. We omit the details.
\vskip 10pt

To summarize, we have shown the following result which is one of the main  global theorems of this paper:
\vskip 5pt

\begin{thm}  \label{T:final1}
Let $\psi = \psi_{E,K., [\chi]}$ be a given global A-parameter of $G_E$ over a number field $F$. Let $\eta \in \mathrm{Irr}(S_{\psi, \A})$ be an irreducible character of 
its adelic component group with associated representation $\pi_{\eta}$ in the global A-packet $\Pi_{\psi}$. Then the multiplicity of $\pi_{\eta}$ in the submodule $V[\psi] \subseteq \mathcal{A}_{2,\psi}(G_E)$ is equal to 
\[  m_{\eta} = \dim \Hom_{S_{\psi}} (\eta \circ \Delta, \C). \]
\end{thm}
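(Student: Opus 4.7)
The plan is to reduce the theorem to the case-by-case matching already foreshadowed in the preceding subsections, organized along the lines of the four cases determined by whether $K$ is split or a field and whether $\chi^2=1$ or not. First I would isolate the basic reduction: because $V[\psi]$ decomposes as $\bigoplus_C \Theta(V_C[\chi])$ over $C \in \Sigma_{E,K}$ and each theta lift preserves both irreducibility and multiplicities (Corollary \ref{C:global-injective}(ii)), the multiplicity of $\pi_\eta$ in $V[\psi]$ factors as
\[
m(\rho_\eta) \cdot \#\Sigma_{E,K,[\chi],\eta},
\]
where $m(\rho_\eta)$ is the multiplicity of the abstract representation $\rho_{\eta,C}$ in $\mathcal{A}_2(H_C)$ (independent of $C$ by §\ref{SS:auto-HC}) and $\#\Sigma_{E,K,[\chi],\eta}$ counts the global twisted composition algebras $C$ such that $\rho_{\eta,C}$ is automorphic and $C_v \cong C_{\eta_v}$ for all $v$. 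The goal becomes to show that this product equals $m_\eta = \langle\eta\circ\Delta,1\rangle_{S_\psi}$.

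Next I would compute $m_\eta$ via the character theory of $S_\psi$, decomposing the local component $\eta_v$ restricted to $S_\psi$ using the description of the global and local component groups in Lemma \ref{L:component}. Letting $S_\eta$ denote the finite set of places where $C_{\eta_v}$ corresponds to a cubic division algebra, the explicit form of $m_\eta$ falls into the four shapes listed in the examples of §\ref{SS:example}, involving $2^{\#S_\eta}$, $(2^a \pm 2(-1)^a)/3$, or a parity condition depending on the case. In parallel, I would count $\#\Sigma_{E,K,[\chi],\eta}$ via Lemma \ref{L:loc}: the triple $(B,\tau,\iota)$ underlying such a $C$ is constrained to have $B$ ramified precisely along $S_\eta$, and at places where $\chi_v^2 \ne 1$ the bijection $j_{\psi_v}$ further pins down the local invariant $\mathrm{inv}(B_v)$; a direct count then gives $\#\Sigma_{E,K,[\chi],\eta}$ as either $m_\eta$ or $m_\eta/2$, compensated exactly by the value of $m(\rho_\eta)$, which equals $2$ iff $H_C(F) = H_C^0(F)$.

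The subtle case, and the main obstacle, is when $S_\eta$ is empty, so that $\Sigma_{E,K,[\chi],\eta}$ is a singleton $\{C_\eta\}$ and one must decide whether the global representation $\rho_{\eta,C_\eta}$ is actually automorphic. Here the mere local matching is not enough, and I would invoke the global Poitou–Tate machinery encoded in Proposition \ref{P:PT}. When $\chi^2=1$, $\eta$ defines a character of $\bar S_{\psi,\A}$, and its triviality on $\bar S_\psi$ (i.e.\ $m_\eta = 1$) is equivalent, via the commutative diagram of Proposition \ref{P:PT}, to $\eta$ lying in the image of the canonical section $\delta^*\circ s$ from $H^1(F,Z_E)$. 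Translating this through Proposition \ref{P:isom-torsor} and Lemma \ref{L:commu}, the automorphy condition $\rho_{\eta,C_\eta}(g_C(a_\eta)) = 1$ becomes exactly the condition that $\chi_v(g_{C_v}(a_\eta)) = 1$ for every $v$, which is precisely the characterization built into the definition of $j_{\psi_v}$ in §\ref{SS:bijj}(b). Thus $\rho_{\eta,C_\eta}$ is automorphic iff $m_\eta = 1$, and twisting by an automorphic sign character interchanges the $m_\eta = 1$ and $m_\eta = 0$ cases consistently.

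Finally, I would assemble the four cases: (i) $K$ a field, $\chi^2\ne 1$; (ii) $K$ a field, $\chi^2=1$; (iii) $K$ split, $\chi^2 \ne 1$; (iv) $K$ split, $\chi^2=1$. In each, the arithmetic of $H^1(F,\tilde T_{E,K})$ from Lemma \ref{L:loc}, combined with the bijection $j_{\psi_v}$ and the automorphy criterion from Proposition \ref{P:PT}, yields the equality $m(\rho_\eta)\cdot \#\Sigma_{E,K,[\chi],\eta} = m_\eta$. The cases with nonempty $S_\eta$ are essentially combinatorial bookkeeping (the count of Brauer classes ramified at $S_\eta$ with prescribed local invariants), while the empty-$S_\eta$ cases are the delicate ones governed by the Poitou–Tate sequence; once these are both verified, Theorem \ref{T:final1} follows immediately.
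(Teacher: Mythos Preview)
Your proposal is correct and follows essentially the same approach as the paper: the factorization $m(\rho_\eta)\cdot\#\Sigma_{E,K,[\chi],\eta}$, the four-case split according to whether $K$ is split and whether $\chi^2=1$, the Brauer-class count via Lemma~\ref{L:loc} with local invariants pinned down at places in $S_\eta''$, and the Poitou--Tate argument (Proposition~\ref{P:PT} together with Proposition~\ref{P:isom-torsor}) for the empty-$S_\eta$ case are exactly the ingredients the paper uses. One small imprecision: the condition for $m(\rho_\eta)=2$ is not merely $H_C(F)=H_C^0(F)$ but rather that additionally $\chi_v^2=1$ at every place $v$ where $H_C(F_v)=H_C^0(F_v)$ (i.e.\ $S_\eta''=\emptyset$), which is precisely when the count $\#\Sigma_{E,K,[\chi],\eta}$ drops by a factor of $2$; the paper tracks this via the decomposition $S_\eta=S_\eta'\sqcup S_\eta''$.
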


\vskip 10pt

\subsection{\bf  Main global theorem}
If $m_{disc}(\pi)$ denotes the multiplicity of an irreducible representation $\pi$ in the automorphic discrete spectrum $\mathcal{A}_2(G_E)$, then
the last theorem shows that 
\[   m_{disc}(\pi_{\eta}) \geq m_{\eta}  \quad \text{  for any $\eta \in \mathrm{Irr}(S_{\psi, \A})$.} \]
In this final subsection, we shall show the reverse inequality and hence strengthen this inequality to an equality. 
\vskip 5pt

The argument is analogous to that for the cubic unipotent A-packets of $G_2$ given in \cite{G}.
The proof will require two ingredients: one local and the other global in nature. We begin by describing these two ingredients. Hence, we fix a global A-parameter $\psi = \psi_{E,K,[\chi]}$ and $\eta = \otimes_v \eta_v \in \mathrm{Irr}(S_{\psi, \A})$, so that $\pi_{\eta} \cong \otimes_v \pi_{\eta_v} = \otimes_v \Theta^{abs}_{C_{\eta_v}}(\rho_{\eta_v})$.
\vskip 5pt

\begin{itemize}
\item (Local) For each place $v$ of $F$, and for each nondegenerate $E_v$-twisted Bhargava cube $\Sigma_v$ with associated character $\psi_{\Sigma_v}$ of $N_{E_v}(F_v)$, we have
\begin{equation} \label{E:local-input}
   \Hom_{N_{E_v}(F_v)} ( \pi_{\eta_v},  \psi_{\Sigma_v}) \cong 
   \begin{cases}
\rho_{\eta_v} \cdot \mu_{K_v}, \text{  if $C_{\Sigma_v} \cong C_{\eta_v}$;} \\
 0, \text{ otherwise,} \end{cases} \end{equation}
as a module for the stabilizer $M_{E_v, \Sigma_v}(F_v)$ of $\Sigma_v$. Here, $\mu_{K_v}$ is either the trivial character or the sign character of $M_{E_v, \Sigma_v}(F_v) \cong H_{C_{\eta_v}}(F_v)$ depending on whether $\omega_{K_v/F_v}(-1) = +1$ or $-1$.
\vskip 5pt

This result is Proposition \ref{P:twisted} in the nonarchimedean case. For archimedean $v$, note that the Hom space here refers to the space of continuous linear functionals of  $\pi_{\eta_v}$ (as a Casselman-Wallach representation).  The result for archimedean $v$ will be shown in a paper with J. Adams and A. Paul, where we studied the archimedean theta correspondence and prove the results  in \S \ref{S:arch-theta}. 
\vskip 5pt

\item (Global) Let 
\[   \Sigma_{E, K,\eta} = \{ C \in \Sigma_{E,K}: C_v \cong C_{\eta_v} \,\,\text{for all places $v$} \}. \]
For any embedding $f: \pi_{\eta} \hookrightarrow \mathcal{A}(G_E)$, there exists $C \in \Sigma_{E,K, \eta}$ such that the $\psi_C$-Fourier coefficient of $f(\pi_{\eta})$ is nonzero. 
We shall show this as a consequence of Proposition \ref{P:global-input} and Corollary \ref{C:global-input} below.
\end{itemize}
\vskip 5pt

Taking these two ingredients for granted, we proceed to show the reverse inequality. By the consideration of Fourier coefficients, we have a natural map
\[  \Hom_{G_E(\A)}( \pi_{\eta}, \mathcal{A}(G_E)) \longrightarrow \bigoplus_{C \in \Sigma_{E,K, \eta}} \Hom_{N_E(\A)}(\pi_{\eta}, \psi_C)^{M_{E, \psi_C}(F)} \]
The global ingredient shows that this map is injective, so that one has an upper bound
\[  m_{disc}( \pi_{\eta}) \leq \sum_{C \in \Sigma_{E,K, \eta}} \dim \Hom_{N_E(\A)}(\pi_{\eta}, \psi_C)^{M_{E, \psi_C}(F)}. \]
Here, we have used the fact that $\prod_v \mu_{K_v}$ is an automorphic character and hence is trivial on $H_C(F)$.
The local ingredient, on the other hand, shows that for each $C$, 
\[ \dim  \Hom_{N_E(\A)}(\pi_{\eta}, \psi_C)^{M_{E, \psi_C}(F)}  = \dim  \rho_{\eta}^{H_C(F)}  = \dim \Hom_{H_C(F)}(\rho_{\eta}^{\vee},\C). \]
The latter dimension is simply the automorphic multiplicity of $\rho_{\eta}^{\vee}$ in $\mathcal{A}(H_C)$. We have seen that this automorphic multiplicity is independent of $C \in \Sigma_{E,K,\eta}$ and have denoted it by $m(\rho_{\eta}^{\vee}) = m(\rho_{\eta})$.
 Hence, we obtain
 \[  m_{disc}( \pi_{\eta}) \leq m(\rho^{\vee}_{\eta}) \cdot \# \Sigma_{E,K,\eta} =  m_{\eta},  \]
 where the second equality is precisely what we showed when we verified the Arthur multiplicity formula for the space of global theta liftings.
 Summarizing, we have the following theorem which strengthens Theorem \ref{T:final1} and which is the main global theorem of this paper.
 \vskip 5pt
 
 \begin{thm}  \label{T:final2}
 Let $\psi = \psi_{E,K., [\chi]}$ be a given global A-parameter of $G_E$ over a number field $F$. Let $\eta \in \mathrm{Irr}(S_{\psi, \A})$ be an irreducible character of 
its adelic component group with associated representation $\pi_{\eta}$ in the global A-packet $\Pi_{\psi}$. Then
\[  m_{disc}(\pi_{\eta})   = \dim \Hom_{S_{\psi}} (\eta \circ \Delta, \C). \]
 \end{thm}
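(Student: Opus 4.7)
The plan is to establish the reverse inequality $m_{disc}(\pi_\eta) \le m_\eta$, since the lower bound has already been provided by Theorem \ref{T:final1} through the explicit construction of automorphic realizations via theta lifting. As the paragraphs following the theorem statement make clear, everything reduces to proving two ingredients: a \emph{local} one specifying the space of generic Whittaker-type functionals on each $\pi_{\eta_v}$ along $N_{E_v}$, and a \emph{global} one ensuring that every automorphic realization of $\pi_\eta$ has at least one nonzero Fourier coefficient of the relevant kind.

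For the local ingredient, the content is that for each place $v$ and each nondegenerate $E_v$-twisted Bhargava cube $\Sigma_v$, the twisted Jacquet module of $\pi_{\eta_v}$ along $(\bar N_{E_v}, \psi_{\Sigma_v})$ vanishes unless $C_{\Sigma_v}$ is $E_v$-isomorphic to $C_{\eta_v}$, in which case it is isomorphic to $\rho_{\eta_v}\cdot \mu_{K_v}$ as a module for the stabilizer $\Stab_{M_E}(\Sigma_v)\cong H_{C_{\eta_v}}(F_v)$. For nonarchimedean $v$ this follows by combining Proposition \ref{P:twisted} with the explicit description of the theta lifts in Theorem \ref{T:111}; at archimedean $v$ the analogous statement, deferred to joint work with Adams and Paul, plays the same role. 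The dimension of the $M_{E,\Sigma_v}(F_v)$-invariants in this Hom space is then $\dim\rho_{\eta_v}^{H_{C_{\eta_v}}(F_v)}$, independent of the choice of $\Sigma_v$ within its $M_E(F_v)$-orbit.

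The global ingredient is the main obstacle: for any nonzero automorphic embedding $f\colon \pi_\eta\hookrightarrow\mathcal{A}(G_E)$, there must exist $C\in\Sigma_{E,K,\eta}$ so that the $\psi_C$-Fourier coefficient of $f(\pi_\eta)$ along $\bar N_E$ is not identically zero. My plan is to combine the local ingredient, which forces the support of any nonvanishing $\bar N_E$-Fourier coefficient to lie above the collection $\{C_{\eta_v}\}_v$ of local twisted composition algebras (so that, by the local--global principle of Lemma \ref{L:loc}, the relevant global $C$ must lie in $\Sigma_{E,K,\eta}$), with a vanishing-of-constant-term argument along the two maximal parabolics $P_E$ and $Q_E$ analogous to Proposition \ref{P:global-cuspidality}. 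Concretely, the wave-front set analysis carried out in \S \ref{S:main-theta} shows $\pi_{\eta_v}$ has leading nilpotent orbit of type $A_2$ at every $v$; a global nonvanishing result for the corresponding Fourier coefficients then rules out the possibility that $f(\pi_\eta)$ is annihilated by all $\psi_C$-integrations. This is the key new Proposition to establish, and I expect the argument to parallel the cuspidal-exponents computation in Proposition \ref{P:global-cuspidality} together with a Piatetski-Shapiro type lemma extracting a generic Fourier coefficient from the automorphic embedding.

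Granting these two ingredients, the bookkeeping is clean: the Fourier coefficient map gives an injection
\[
\Hom_{G_E(\mathbb A)}(\pi_\eta,\mathcal{A}(G_E))\hookrightarrow \bigoplus_{C\in\Sigma_{E,K,\eta}}\Hom_{N_E(\mathbb A)}(\pi_\eta,\psi_C)^{M_{E,\psi_C}(F)},
\]
and the local ingredient (together with the fact that $\prod_v\mu_{K_v}$ is an automorphic character, hence trivial on $H_C(F)$) identifies the right-hand side with $\bigoplus_C\Hom_{H_C(F)}(\rho_\eta^\vee,\mathbb C)$. Each summand has dimension $m(\rho_\eta^\vee)=m(\rho_\eta)$, independent of $C\in\Sigma_{E,K,\eta}$ by the analysis of $\mathcal{A}(H_C)$ in \S \ref{SS:auto-HC}. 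Thus $m_{disc}(\pi_\eta)\le m(\rho_\eta)\cdot\#\Sigma_{E,K,\eta}$, and the case-by-case computation already performed in the course of proving Theorem \ref{T:final1} identifies this product precisely with $m_\eta$, yielding the sharp equality.
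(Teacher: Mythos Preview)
Your overall architecture matches the paper exactly: the lower bound is Theorem~\ref{T:final1}, the upper bound comes from the Fourier-coefficient injection
\[
\Hom_{G_E(\A)}(\pi_\eta,\mathcal{A}(G_E))\hookrightarrow \bigoplus_{C\in\Sigma_{E,K,\eta}}\Hom_{N_E(\A)}(\pi_\eta,\psi_C)^{M_{E,\psi_C}(F)},
\]
and the local ingredient you cite (Proposition~\ref{P:twisted} nonarchimedeanly, the Adams--Paul result archimedeanly) together with the bookkeeping at the end is exactly what the paper does.

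The one place where your sketch diverges from the paper is the proof of the global ingredient. You propose to argue ``parallel to the cuspidal-exponents computation in Proposition~\ref{P:global-cuspidality} together with a Piatetski--Shapiro type lemma,'' but the paper's Proposition~\ref{P:global-input} does not go this way. The actual mechanism is: (i) if every Fourier coefficient of $f(\pi_\eta)$ along $\bar N_E$ were supported on cubes of rank $\le 1$, then by \cite[Thm.~5.4]{GS1} the representation $\pi_{\eta_v}$ would be weakly minimal at every archimedean $v$, which it is not (its associated variety is the $A_2$-orbit, not the minimal one); since $E$ is a field there are no rank~$2$ cubes, so some rank~$3$ or rank~$4$ coefficient survives; (ii) if only a rank~$3$ coefficient survives, one uses the orbit-raising result of \cite{JLS} (applicable because the orbit $3A_1$ is not special) to produce a rank~$4$, i.e.\ nondegenerate, Fourier coefficient. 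Neither the weak-minimality criterion from \cite{GS1} nor the non-special-orbit input from \cite{JLS} appears in your plan; the constant-term analysis of Proposition~\ref{P:global-cuspidality} does not by itself yield nonvanishing of a \emph{nondegenerate} Fourier coefficient, which is the crux. Once you swap in these two external inputs, your argument becomes the paper's.
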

\vskip 5pt

It remains to establish the global ingredient above.
 For this, we recall the following notion from \cite{GS1}: when
$F_v=\mathbb R$ or $\mathbb C$, we say that a representation $\pi_v$ of $G_E(F_v)$ is weakly minimal  if the associated variety of its annihilator  in the universal enveloping algebra is the minimal nilpotent orbit.    Now we note:
\vskip 5pt

\begin{prop} \label{P:global-input}
Let $\pi \neq \mathbb C$ be an irreducible automorphic subrepresentation of $G_E$ such that $\pi_v$ is not weakly minimal for at least one 
archimedean place $v$. Then there exists a nondegenerate cube $C\in V_E(F)$ and $f\in \pi$ such that $f_{\overline N_E, \psi_C} \neq 0$.  
\end{prop}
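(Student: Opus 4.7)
I would prove the contrapositive: assume $\pi \neq \mathbb{C}$ and that $f_{\overline{N}_E, \psi_C} = 0$ for every nondegenerate cube $C \in V_E(F)$ and every $f \in \pi$, and conclude that $\pi_v$ is weakly minimal at every archimedean place $v$.

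The first step is to set up the Fourier expansion along $\overline{N}_E$. For $f \in \pi$, after taking constant term along the center $\overline{Z}_E$ and Fourier expanding along the abelian quotient $V_E = \overline{N}_E/\overline{Z}_E$, one obtains
\[
f_{\overline{Z}_E}(g) = f_{\overline{N}_E}(g) + \sum_{0 \ne x \in V_E(F)} f_{\overline{N}_E, \psi_x}(g).
\]
The $M_E(F)$-orbits on $V_E(F)$ are stratified by rank; by Propositions \ref{P:orbits} and \ref{P:rank2or3}, rank 4 corresponds to nondegenerate cubes (the generic stratum), while ranks 1, 2, 3 are the successively smaller degenerate orbits. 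Our hypothesis says the rank 4 contributions all vanish, so that the entire Fourier expansion is supported on the closed subvariety of $V_E$ consisting of cubes of rank $\leq 3$.

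Next, I would translate this into a statement about the (global) nilpotent support of $\pi$. Under the Bala–Carter correspondence between $M_E$-orbits on $V_E$ and $G_E$-nilpotent orbits meeting $\mathfrak{n}_E$, the rank 4 cubes correspond to the $A_2$ Richardson orbit of $P_E$, while the degenerate orbits correspond to the smaller nilpotent orbits $A_1$, $2A_1$, $3A_1$. Hence the vanishing of all generic $\overline{N}_E$-Fourier coefficients of $\pi$ says exactly that the nilpotent support of $\pi$ (the set of $G_E$-orbits carrying nonzero generalized Whittaker-type coefficients) does not include the $A_2$ orbit, and so lies in the closure of $3A_1$. By iterating this Fourier expansion argument on the remaining degenerate strata, using the structure of the stabilizers computed in Proposition \ref{P:rank2or3} and rank induction along the unipotent radicals of larger parabolics, one pushes the nilpotent support further down until it is contained in the closure of the minimal orbit $A_1$.

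The crucial archimedean step is then to propagate this global bound into a bound on the wave-front set, or equivalently the associated variety of the annihilator, of the Harish-Chandra module $\pi_v$ at an archimedean place $v$. For this I would invoke the standard relationship between global Fourier support and local archimedean wave-front sets in the spirit of Matumoto's theorem on generalized Whittaker models: nonvanishing of a Fourier coefficient along a unipotent radical associated with a nilpotent orbit $\mathcal{O}$ forces $\mathcal{O}$ to lie in the local wave-front set, and conversely global vanishing on all orbits above $\overline{\mathcal{O}_{\min}}$ forces the local associated variety to lie in $\overline{\mathcal{O}_{\min}}$. Combined with the hypothesis $\pi \neq \mathbb{C}$, which prevents the associated variety from being $\{0\}$, this gives $\mathrm{AV}(\mathrm{Ann}(\pi_v)) = \overline{\mathcal{O}_{\min}}$, i.e.\ weak minimality at every archimedean place.

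The main obstacle will be the archimedean step of the last paragraph: pushing a statement about vanishing of Fourier coefficients along $\overline{N}_E$ (and its sister unipotent radicals) into a sharp bound on the associated variety of $\pi_v$. This requires some care because the global Fourier support only directly controls coefficients along standard parabolic unipotent radicals, while the associated variety is an invariant at the level of the universal enveloping algebra; bridging the two uses the asymptotic expansion of matrix coefficients and the Howe–Rossmann correspondence between wave-front sets and characters, and is the substantive nontrivial input in the argument.
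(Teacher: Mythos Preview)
Your contrapositive setup is fine, and the Fourier expansion along $\overline{N}_E$ is the right starting point. But the argument has a genuine gap at the step where you claim to ``push the nilpotent support further down'' from the closure of $3A_1$ to the closure of $A_1$. Vanishing of the rank~4 (i.e.\ $A_2$) coefficients does not, by any general ``iterated Fourier expansion'' or ``rank induction'' mechanism, force vanishing of the rank~3 and rank~2 coefficients; there is no such descent principle in general. The actual mechanism is the opposite: one \emph{raises} from $3A_1$ to $A_2$. The key fact, which you never invoke, is that $3A_1$ is \emph{not special}, and the result of \cite{JLS} (Corollary~6.6 there) shows that if some $f \in \pi$ has a nonzero Fourier coefficient attached to a rank~3 cube (equivalently, a nontrivial coefficient on $[\overline{U}_E,\overline{U}_E]$), then there is a nonzero rank~4 coefficient as well. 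In contrapositive form: your assumption that all rank~4 coefficients vanish forces all rank~3 coefficients to vanish. You also need to observe, as the paper does, that since $E$ is a field there are no rank~2 elements in $V_E(F)$ at all (cf.\ the remark after Proposition~\ref{P:rank2or3}), so the support is automatically in rank~$\leq 1$ once ranks~3 and~4 are eliminated.

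You also misidentify where the difficulty lies. The archimedean passage you flag as the ``main obstacle'' --- from Fourier support on rank~1 cubes to weak minimality of $\pi_v$ --- is precisely the content of \cite[Thm.~5.4]{GS1} and is invoked as a black box; no Matumoto or Howe--Rossmann machinery needs to be re-developed. The substantive step is the $3A_1 \rightsquigarrow A_2$ raising, which your proposal does not address.
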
  
\vskip 5pt

\begin{proof} Let $f\in \pi$ and consider the Fourier expansion of the constant term $f_{\overline Z}$ along $\overline{V}_E = \overline N_E/\overline Z$. 
If this expansion is supported on cubes of rank one, then $\pi$ is weakly 
minimal in the sense of Definition 4.6 in \cite{GS1}. Then, by  \cite[Thm. 5.4]{GS1}, $\pi_v$ is weakly minimal at all archimedean places, which contradicts 
our assumption. Moreover, since $E$ is a field,  $V_E(F)$ has no rank 2 elements. 
Thus, $f_{\overline Z}$ has a non-trivial Fourier coefficient for a cube $C'$ of rank 3 or 4. 
\vskip 5pt

 If $C'$ is rank 3, then  by Proposition \ref{P:rank2or3}, we can assume that $C'=(0,0, e, 0)$ with $e\in E^{\times}$, 
    Let $U_E$ be the  
 unipotent radical of the 3-step maximal parabolic subgroup $Q_E$ in $G_E$, with $N_E$ and $U_E$ in standard position, such that 
  $\psi_{C'}$ restricts to  a non-trivial character of $[\overline U_E, \overline U_E]$. The character of $[\overline U_E, \overline U_E]$ thus obtained  is associated to an 
 $sl_2$-triple corresponding to the non-special nilpotent orbit $3A_1$ (see the introduction to \cite{JLS}). 
 By \cite[Cor. 6.6]{JLS} (the conditions of Lemma 4.3 there are satisfied since the orbit $3A_1$ is not special) 
 there exists $x\in F^{\times}$ such that , with $C=(x,0,e,0)$,  $f_{\overline N_E, \psi_C} \neq 0$
 for some $f\in \pi$. This proves the proposition.   
 \end{proof}  
 
\vskip 5pt

\begin{cor}  \label{C:global-input}
For any embedding $f: \pi_{\eta} \hookrightarrow \mathcal{A}(G_E)$, there exists $C \in \Sigma_{E,K, \eta}$ such that the $\psi_C$-Fourier coefficient of $f(\pi_{\eta})$ is nonzero. 
\end{cor}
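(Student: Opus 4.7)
The plan is to combine the nonvanishing result Proposition \ref{P:global-input} with the local vanishing statement (\ref{E:local-input}) (which specializes Proposition \ref{P:twisted} in the nonarchimedean case, with the archimedean analog supplied by the joint work with Adams--Paul). Let $f: \pi_\eta \hookrightarrow \mathcal{A}(G_E)$ be an arbitrary embedding; we want to produce a nondegenerate cube whose associated twisted composition algebra lies in $\Sigma_{E,K,\eta}$.

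First I would verify the two hypotheses of Proposition \ref{P:global-input}. The representation $\pi_\eta$ is nontrivial because it has a nontrivial factor $\pi_{\eta_v} = \Theta^{\mathrm{abs}}_{C_{\eta_v}}(\rho_{\eta_v})$ at every place, constructed as a theta lift (see Theorem \ref{T:111} and Theorems \ref{T:arch1}--\ref{T:arch2}). Next, I need to check that at some archimedean place $v$, the component $\pi_{\eta_v}$ is not weakly minimal. This follows from the archimedean description in \S\ref{S:arch-theta}: the local constituents are Langlands quotients of the form $J_2(\pi(\chi),1)$ (or the analogous subquotients of $I_E(\omega,1/2)$ when $\rho_{\eta_v} = \epsilon$), whose associated variety is the Richardson $A_2$-orbit rather than the minimal nilpotent orbit. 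Consequently Proposition \ref{P:global-input} applies and yields a nondegenerate $E$-twisted cube $\Sigma \in V_E(F)$ together with some $f \in \pi_\eta \subset \mathcal{A}(G_E)$ for which the Fourier coefficient $f_{\overline{N}_E,\psi_\Sigma}$ is nonzero.

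Next, by Proposition \ref{P:orbits}, the $M_E(F)$-orbit of $\Sigma$ corresponds to an $E$-isomorphism class of $E$-twisted composition algebra $C := C_\Sigma$ of rank $2$. I claim $C \in \Sigma_{E,K,\eta}$, i.e.\ $C \otimes_F F_v \cong C_{\eta_v}$ for every place $v$ of $F$. The nonvanishing of $f_{\overline{N}_E,\psi_\Sigma}$ implies in particular that, at each place $v$, the local twisted Jacquet module satisfies
\[
\Hom_{N_{E_v}(F_v)}(\pi_{\eta_v},\psi_{\Sigma_v}) \neq 0.
\]
By (\ref{E:local-input}), the left-hand side is zero unless $C_{\Sigma_v} \cong C_{\eta_v}$ as local $E_v$-twisted composition algebras. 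Since $C_{\Sigma_v} \cong (C_\Sigma)_v = C_v$, this forces $C_v \cong C_{\eta_v}$ at every place. In particular $C$ has the correct quadratic invariant $K_C = [K_E]\cdot[K]$, so $C \in \Sigma_{E,K}$, and it lies in $\Sigma_{E,K,\eta}$ by the local matching just established. This gives the desired $C$.

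The main obstacle is ensuring the applicability of Proposition \ref{P:global-input}, specifically the non-weakly-minimality at an archimedean place; this is a consequence of the explicit description of the archimedean theta lift promised in Theorems \ref{T:arch1}--\ref{T:arch2} (to appear in the joint paper with Adams and Paul). Granted that input, the rest is a direct deduction from the local Howe-type dichotomy (\ref{E:local-input}): nonvanishing of a global Fourier coefficient forces matching of local orbits at every place, and via the dictionary of Proposition \ref{P:orbits} this translates into a global twisted composition algebra whose localizations reproduce the $C_{\eta_v}$'s dictated by $\eta$.
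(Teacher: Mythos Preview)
Your proposal is correct and follows essentially the same approach as the paper: apply Proposition \ref{P:global-input} to obtain some nondegenerate Fourier coefficient, then invoke the local ingredient (\ref{E:local-input}) to conclude that the corresponding twisted composition algebra must localize to $C_{\eta_v}$ at every place, hence lies in $\Sigma_{E,K,\eta}$. The paper's own proof is simply a two-line version of this, omitting the verification of the hypotheses of Proposition \ref{P:global-input} (nontriviality and non-weak-minimality at an archimedean place) that you carefully spell out.
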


\begin{proof}
By the local ingredient (\ref{E:local-input}), we see that the only possible nonzero nondegenerate Fourier coefficients supported by $f(\pi_{\eta})$ correspond to the finitely many $C \in \Sigma_{E,K,\eta}$. Hence the corollary follows from Proposition  \ref{P:global-input}. 
\end{proof}

\vskip 10pt

\section{\bf Appendix A: A theta correspondence for $E_7$}

In this section, we consider a dual pair $G_E \times H_C$ in the split adjoint group of type  $E_7$, where $H_C=\Aut_E(C)$ for a 4-dimensional  $E$-twisted composition algebra $C$.  This theta correspondence (and its version for inner forms) can be used to construct the A-packets corresponding to a root $\SL_2$, as we discussed briefly  in \S \ref{SS:rootSL2}. We will not launch into this detailed study in this paper. The main purpose of this appendix is simply to
 compute the theta lift of the trivial representation of $H_C=\SL_2(E)/\mu_2$; this result is needed in our paper \cite{GS3}.
\vskip 5pt

\vskip 5pt

\subsection{Twisted composition}  Assume that $B$ is  a composition algebra over  $F$. 
 Let $N(x)=x\bar x$ and $Tr(x)=x+\bar x$, be the norm and the trace on $B$. 
Then $C_B=B\oplus B \oplus B$ has a structure of an $F^3$-twisted composition algebra, given by 
\[ 
Q(x_1, x_2, x_3)= (N(x_1), N(x_2), N(x_3) ) 
\] 
\[ 
\beta(x_1, x_2, x_3)= (\bar x_2\bar x_3,  \bar x_3 \bar x_1, \bar x_1 \bar x_2)
\] 
\[ 
N_C(x_1, x_2, x_3) = Tr(x_3 x_2 x_1). 
\] 
The symmetric group $S_3$ acts on $C_B$ as $F$-automorphisms by permuting the three summands of $C_B$, with the action of odd permutations twisted by the map 
$(x_1,x_2,x_3) \mapsto (\bar x_1,\bar x_2,\bar x_3)$.  Let $E$ be a cubic etal\'e algebra over $F$. Since $\Aut(E/F)$ is isomorphic to a subgroup of $S_3$,  by fixing an embedding of $\Aut(E/F)$ into $S_3$, we obtain an $E$-twisted composition algebra  $C_B^E$ by Galois descent. 
\vskip 5pt

We shall now describe the group $\Aut(C_B^E)$ of automorphisms of $C_B^E$ for $B=M_2(F)$. In this case $\bar x$ is defined as the adjoint of the matrix $x$, 
\[ 
\bar x= \left(\begin{array}{rr} d & -b \\
-c & a 
\end{array} \right)  \text{ if } 
x= \left(\begin{array}{cc}  a & b \\
c & d
\end{array} \right) . 
\] 
Assume first that $E=F^3$.  Let 
\[ 
\GL_2(F^3)^{\det}=\{ (g_1, g_2, g_3) | \det g_1 = \det g_2 = \det g_3 \} 
\] 
This group acts on $C_B=B\oplus B \oplus B$ by 
\[ 
g(x_1, x_2, x_3)= (g_3 x_1 g_2^{-1}, g_1 x_2 g_3^{-1}, g_2 x_3 g_1^{-1} ).
\]   
It is fairly straightforward to check  that this action preserves $Q$ and $\beta$. An element $g$ acts trivially if and only if it belongs to 
$\Delta F^{\times}$. 
 The group $\GL_2(F^3)^{\det}/ \Delta F^{\times}$ is the group of $F$-points of the algebraic group $\SL_2(F^3)/\mu_2$.   
The action of $S_3$  on $C_B$ normalizes that of $\SL_2(F^3)/\mu_2$, on which it acts by permuting the 3 factors. 
 Hence, for a general cubic etal\'e algebra  $E$ over $F$, the group of $F$-automorphisms of $C_B^E$ (with $B=M_2(F)$)  is  
\[ 
\Aut_F(C_B^E) \cong  \SL_2(E)/\mu_2 \rtimes S_E,
\] 
and the group of $E$-automorphisms is its identity component
\[  \Aut_E(C_B^E) \cong \SL_2(E)/\mu_2.\]
Since 
\[ 
H^1(F, \SL_2(E)/\mu_2) \cong H^2(F, \mu_2)=Br_2(F)
\] 
we see that the $E$-isomorphism classes of $E$-twisted composition algebras $C$ of $E$-dimension 4 correspond to isomorphism classes of quaternion algebras. In particular, 
as $B$ varies over quaternion $F$-algebras, the algebras $C_B^E$ exhaust all $E$-isomorphism classes of $E$-twisted composition algebras of $E$-dimension 4.  
\vskip 5pt

Via the Springer decomposition,  we may connect the above discussion with the theory of Freudenthal-Jordan algebras of dimension $15$.
The split Jordan algebra of dimension $15$ is $J_s=F^3 \oplus C_{M_2(F)}$ and its automorphism group  is $\mathrm{PGSp}_6 =\Sp_6(F)/\mu_2$.  Since 
\[ 
H^1(F, \Sp_6(F)/\mu_2) \cong H^2(F, \mu_2)=Br_2(F), 
\] 
we see that the isomorphism classes of Freudenthal Jordan algebras of dimension 15 are parametrized by isomorphism classes of quaternion algebras as well. 
If $J$ is a form of $J_s$, let $[J]\in Br_2(F)$ denote  the 
corresponding Brauer class. Similarly, 
for $B \in Br_2(F)$, let $J_B$ be the corresponding Freudenthal-Jordan algebra.
  It is clear that $[J]=B$ if $J=E \oplus C_B$.

\vskip 10pt

\subsection{Some embedding problems} 

Let $C_B$ be an $E$-twisted composition algebra of $E$-dimension $4$. Every element $x$ in $C_B$ satisfies the quadratic equation 
\[ 
\beta^2(x)+ Q(x) \beta(x) - N_{C_B}(x) x = 0.
\] 
If we fix $e=Q(x)$ and $d=N_{C_B}(x)$, such that the cube $\Sigma=(1,0,-e,-d)$ is non-degenerate, then $x$ and $\beta(x)$ span an $E$-twisted subalgebra of $E$-dimension $2$, 
corresponding to the cube $\Sigma$.  Thus, in order to understand embeddings of the $E$-twisted composition algebras of $E$-dimension $2$ into $C_B$, it suffices to understand solutions  of the above equation.

\begin{prop} \label{P:orbit2} Assume  that $E=F^3$ and consider  $C_B$ with  $B=M_2(F)$. 
The  group $\Aut_E(C_B)=\GL_2(F^3)^{\det}/\Delta F^{\times}$ acts transitively on the set of elements $x\in C_B$ such that $Q(x)=0$, and $N_C(x)=1$. The stabilizer $\Stab_{\Aut_E(C_B)}(x_0)$ of 
\[ 
x_0=\left(  \left(\begin{smallmatrix}1 & 0 \\
0 & 0 \end{smallmatrix}\right), 
 \left(\begin{smallmatrix}1 & 0 \\
0 & 0 \end{smallmatrix}\right), 
  \left(\begin{smallmatrix}1 & 0 \\
0 & 0 \end{smallmatrix}\right) 
 \right)
\] 
is the quotient by $\Delta F^{\times}$ of the subgroup of $\GL_2(F^3)$ consisting of elements 
\[ 
\left(  \left(\begin{smallmatrix}a & 0 \\
0 & d \end{smallmatrix}\right), 
 \left(\begin{smallmatrix}a & 0 \\
0 & d \end{smallmatrix}\right), 
  \left(\begin{smallmatrix}a & 0 \\
0 & d \end{smallmatrix}\right) 
 \right). 
\] 
In particular, $\Stab_{\Aut_E(C_B)}(x_0)\cong F^{\times}$. 
The stabilizer of $x_0$ in $\Aut_F(C_B)=\Aut_E(C_B) \rtimes S_3$ is a semi-direct product   $F^{\times}\rtimes S'_3$, where $S'_3$ is a ``quadratic twist'' of $S_3$: we multiply any transposition in $S_3$ by 
\[ 
w=\left(  \left(\begin{smallmatrix}0 & 1 \\
1 & 0 \end{smallmatrix}\right), 
 \left(\begin{smallmatrix}0 & 1 \\
1 & 0 \end{smallmatrix}\right), 
  \left(\begin{smallmatrix}0 & 1 \\
1 & 0 \end{smallmatrix}\right) 
 \right). 
\] 
\end{prop}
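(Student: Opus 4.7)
The plan is to prove the four assertions of the proposition—that $x_0$ satisfies the stated constraints, that its stabilizer under $\Aut_E(C_B)$ is $F^\times$, that the orbit of $x_0$ exhausts the variety $\Omega = \{x \in C_B : Q(x)=0, N_C(x)=1\}$, and finally that the stabilizer in $\Aut_F(C_B)$ has the asserted semidirect product structure—by direct matrix calculation, taking advantage of the rank-one factorization of any element of $\Omega$. The verification that $x_0 \in \Omega$ is immediate: each entry $e_{11} = \bigl(\begin{smallmatrix}1&0\\0&0\end{smallmatrix}\bigr)$ is singular so $Q(x_0)=0$, and $\operatorname{tr}(e_{11}^3) = \operatorname{tr}(e_{11}) = 1$.

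For the stabilizer computation, writing out $g\cdot x_0 = x_0$ using the formula for the action yields the three matrix identities
\[ g_3 e_{11} = e_{11} g_2, \qquad g_1 e_{11} = e_{11} g_3, \qquad g_2 e_{11} = e_{11} g_1. \]
Comparing entries, the first identity forces $g_3$ to be upper triangular and $g_2$ to be lower triangular with a common $(1,1)$-entry; the other two, by symmetry, each force one of the $g_i$ to be upper triangular and another to be lower triangular with matching $(1,1)$-entries. Taken together, each $g_i$ is diagonal with a common $(1,1)$-entry $a$. The condition $\det g_1 = \det g_2 = \det g_3$ then equalizes the $(2,2)$-entries to a common value $d$, and the quotient by $\Delta F^{\times}$ leaves the one-parameter torus $F^{\times}$ parametrized by $a/d$.

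For transitivity, I would exploit the fact that every $y_i$ in a point $y \in \Omega$ is a nonzero rank-one matrix, so $y_i = u_i v_i^t$ for column vectors $u_i, v_i \in F^2 \setminus \{0\}$ (uniquely determined up to $(u_i,v_i)\mapsto(\lambda_i u_i, \lambda_i^{-1}v_i)$). Under the action the $u$'s and $v$'s transform by $u_1\mapsto g_3 u_1$, $v_1\mapsto g_2^{-t}v_1$, and cyclically; the condition $N_C(y)=1$ reads $(v_1^t u_3)(v_2^t u_1)(v_3^t u_2)=1$. Using the rescaling freedom, I would first reduce to the case $v_1^t u_3 = v_2^t u_1 = v_3^t u_2 = 1$. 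Then I construct $g_1, g_2, g_3 \in \GL_2(F)$ whose first rows are $v_3^t, v_1^t, v_2^t$ respectively, with $g_i e_1$ the second-coordinate target (the compatibility of first-row prescription with $g_3 u_1 = e_1$ is precisely $v_2^t u_1 = 1$, which we arranged). The second rows of the $g_i$ are free parameters subject only to invertibility; since this leaves a positive-dimensional family of choices, the determinants of $g_1, g_2, g_3$ can be equalized. A direct substitution then confirms $g\cdot y = x_0$.

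The stabilizer statement in $\Aut_F(C_B)$ follows from the structure of the $S_3$-action: an even permutation fixes $x_0$ outright (the three coordinates of $x_0$ are identical), while an odd permutation composes coordinate permutation with the bar involution, sending $e_{11}$ to $\bar{e}_{11} = e_{22}$; the element $w$ restores $e_{22}$ to $e_{11}$ by conjugation. Thus the set of odd-permutation stabilizers is the coset $w \cdot \{\text{odd elements of }S_3\}$, giving a copy of $S_3$ sitting inside $\Aut_F(C_B)$ twisted by $w$, and combined with the torus $F^\times$ from the identity-component calculation yields $F^\times \rtimes S_3'$. The main obstacle in carrying this out is the transitivity step: ensuring that the determinant constraint $\det g_1 = \det g_2 = \det g_3$ can be simultaneously imposed alongside the prescribed first-row conditions is the only place where the argument could fail for naive reasons, and the two-dimensional freedom in the second rows of the $g_i$ is what rescues it.
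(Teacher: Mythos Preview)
Your approach is correct and follows essentially the same route as the paper: both arguments hinge on the rank-one factorization of each coordinate $y_i$ and then use the $\GL_2(F^3)^{\det}$-action to transport $y$ to $x_0$. The paper carries this out by an iterative reduction (first elements of $\SL_2(F)$ to send each left factor to $e_1$, then unipotents to kill the second component of each right factor, then diagonals to absorb the remaining scalars), whereas you construct the conjugating element in one shot by prescribing rows of the $g_i$.

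There is one imprecision in your transitivity step: the second rows of the $g_i$ are \emph{not} free subject only to invertibility. Since you need $g_1 u_2 = e_1$ (and cyclically), the second row of $g_1$ must satisfy $(\text{second row of }g_1)\cdot u_2 = 0$, so it is constrained to lie in the one-dimensional space $u_2^{\perp}$. This still leaves a single nonzero scalar of freedom in each $g_i$, and varying these three scalars lets you match the three determinants (two equations in three scalar unknowns), so your conclusion stands; but the sentence as written overstates the available freedom.
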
 
\begin{proof} 
Let $x=(x_1,x_2,x_3)\in C_B$ such that $Q(x)=0$, and $N_{C_B}(x)=1$. We want to show that $x$ is conjugated to $x_0$ by an element in $\GL_2(F^3)^{\det}$. 
Since $Q(x)=0$, we have $\det x_i =0$ for all $i$. Hence, we can write 
\[ 
x_1 = v_3 \cdot w_2^{\top} , x_2 = v_1 \cdot w_3^{\top} , x_3 = v_2 \cdot w_1^{\top} 
\] 
for some column vectors $v_i$ and $w_i$.  Note that 
\[ 
N_{C_B}(x) =Tr(x_3 x_2 x_1)=(w_1 \cdot v_1^{\top} )\cdot  (w_2^{\top} \cdot v_2) \cdot (w_3^{\top} \cdot  v_3)= 1. 
\] 
Hence, all vectors are non-zero, and we can pick $g_1, g_2, g_3\in SL_2(F)$, so that $g_i (v_i)= (1,0)^{\top}$ for all $i$. Thus, we can assume 
that $v_1= (1,0)^{\top}$ for all $i$. Since $(w_2 \cdot v_2^{\top} )\neq 0$, $w_i=(a_i,b_i)$ with $a_i\neq 0$. Hence, using the unipotent $g_i$ stabilizing 
$(1,0)^{\top}$, we can arrange all $b_i=0$. Thus $x$ is conjugate to  
\[ 
\left(  \left(\begin{smallmatrix}a_2 & 0 \\
0 & 0 \end{smallmatrix}\right), 
 \left(\begin{smallmatrix}a_3 & 0 \\
0 & 0 \end{smallmatrix}\right), 
  \left(\begin{smallmatrix}a_1& 0 \\
0 & 0 \end{smallmatrix}\right) 
 \right)
\] 
such that $a_1 a_2 a_3=1$.  But this element is conjugated to $x_0$ by a triple of diagonal matrices. 

The stabilizers can be computed directly. 
\end{proof} 

Let $C_B$ be an $E$-twisted composition algebra of $E$-dimension 4. For a nondegenerate $E$-twisted cube $\Sigma=(1,0-f,-b)$, consider the set
\[ 
\Omega_{\Sigma}=\{ v\in C ~|~ Q(v)=f, N_C(v)=b\} . 
\] 
Recall that to  $\Sigma$, we attach an $E$-twisted algebra $C_{\Sigma}$ of $E$-dimension $2$, equipped with a reduced basis $\{v, \beta(v)\}$. 
Any element $x\in \Omega_{\Sigma}(F)$ defines an $E$-embedding of $C_{\Sigma}$ into $C_B$,  where $v$ is sent to $x$. 
Hence $\Omega_{\Sigma}$ is in bijection with the set of embeddings $C_{\Sigma} \hookrightarrow C_B$. 
\vskip 5pt

\begin{cor} \label{C:orbit} Assume that $F$ is a local field, and $C_B$ is an $E$-twisted composition algebra of $E$-dimension 4. 
If $\Omega_{\Sigma}(F)$ is nonempty, then $\Aut_E(C_B)$ acts transitively on $\Omega_{\Sigma}(F)$.  
\end{cor}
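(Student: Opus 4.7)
The plan is to identify orbits of $\Aut_E(C_B)(F)$ on $\Omega_\Sigma(F)$ via Galois cohomology, fixing a base point $v_0 \in \Omega_\Sigma(F)$ and studying its stabilizer. The $F$-orbits in $\Omega_\Sigma(F)$ are classified by the kernel of
\[
H^1(F, H_0) \longrightarrow H^1(F, \Aut_E(C_B)),
\]
where $H_0 := \mathrm{Stab}_{\Aut_E(C_B)}(v_0)$. So there are three things to establish: (i) transitivity over $\bar F$ (ensuring this cohomological description is correct), (ii) an explicit description of $H_0$, and (iii) triviality of the kernel above.

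First, I would verify transitivity over $\bar F$: there $E \otimes \bar F \cong \bar F^3$ and $B \otimes \bar F \cong M_2(\bar F)$, so we are in the setting of Proposition \ref{P:orbit2} (which treats $Q=0$, $N_C=1$); the argument there extends to arbitrary $(f,b)$ with $\Sigma$ nondegenerate by the same row/column reduction and absorption into the diagonal torus. Alternatively, a dimension count suffices: $\dim \Aut_E(C_B) = 9$, $\dim \Omega_\Sigma = 12 - 3 - 1 = 8$, and the stabilizer is $1$-dimensional by the split computation, so the orbit map is surjective on tangent spaces and $\Omega_\Sigma$ is irreducible and smooth.

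Second, I would identify $H_0$ over $F$. Because $v_0$ and $\beta(v_0)$ $E$-span the rank-$2$ twisted composition subalgebra $D := \phi_{v_0}(C_\Sigma) \subset C_B$, the stabilizer $H_0$ is exactly the pointwise stabilizer of $D$ in $\Aut_E(C_B)$. Using the Springer correspondence this coincides with the pointwise stabilizer of the $9$-dimensional Jordan subalgebra $J_\Sigma = E \oplus D$ inside $\Aut(J_B)$. Via the seesaw in the ambient $E_7$-group (\S\ref{SS:seesawdp}), this pointwise stabilizer is a one-dimensional $F$-torus: in the split case it is $F^\times$ by Proposition \ref{P:orbit2}, and in general it is a norm-one torus associated to the quadratic étale algebra $K_C$ (equivalently, to the embedding $D \hookrightarrow C_B$ of twisted composition algebras).

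Finally, to show the kernel is trivial, note that any class $[\xi] \in H^1(F, H_0)$ twists $\phi_{v_0}$ to another embedding $\phi_\xi: C_\Sigma \hookrightarrow C_B$ whose image $D_\xi$ is, a priori, an $F$-form of $D$. The condition that the twisted base point again lies in $\Omega_\Sigma(F)$ pins down $Q$ and $N_C$, hence by Corollary \ref{C:phs} pins down the $F$-isomorphism class of $D_\xi$ as an $E$-twisted composition algebra; combined with the fact that $[\xi]$ vanishes in $H^1(F, \Aut_E(C_B))$, this forces $[\xi]$ to be trivial. Over a local field $F$ the argument becomes fully explicit, because $H^1(F, H_0)$ is computable by local Tate--Nakayama duality and the map to $H^1(F, \Aut_E(C_B)) = \mathrm{Br}_2(F)$ is controlled by $K_C$. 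The main obstacle will be carrying out this last cohomological step cleanly: one must track how the seesaw pair $\Aut_E(C_\Sigma) \times \Aut_E(C_B)$ in $E_7$ controls the relationship between $H_0$ and $\Aut_E(C_B)$, so that the kernel of $H^1(F, H_0) \to H^1(F, \Aut_E(C_B))$ is forced to be trivial — this is essentially the cohomological incarnation of the uniqueness of the rank-$2$ subalgebra up to $\Aut_E(C_B)(F)$-conjugacy for fixed $(Q, N_C)$.
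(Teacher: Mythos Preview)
Your framework is right --- fix $v_0\in\Omega_\Sigma(F)$, establish transitivity over $\bar F$, then classify $F$-orbits via $H^1$ of the stabilizer --- and this is exactly what the paper does. But you miss the one-line observation that makes step (iii) immediate, and your substitute argument for that step does not close.

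The paper works not with $\Aut_E(C_B)$ but with its cover $\GL_2(E)^{\det}$, which acts on $\Omega_\Sigma$ through the quotient by $\Delta F^\times$ (so the orbits are the same). In $\GL_2(E)^{\det}$, the stabilizer of $v_0$ is a maximal torus $T_{v_0}$ of $\GL_2(F)\subset\GL_2(E)^{\det}$: over $\bar F$ this is the diagonal $\bar F^\times\times\bar F^\times$ computed in Proposition~\ref{P:orbit2}, and over $F$ it is $\Res_{K/F}\mathbb G_m$ for the quadratic \'etale algebra $K$ determined by the discriminant of $\Sigma$. Hence $H^1(F,T_{v_0})=0$ by Hilbert~90, and there is nothing further to check --- no kernel to analyze. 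Your choice to work in $\Aut_E(C_B)=\GL_2(E)^{\det}/\Delta F^\times$ replaces $T_{v_0}$ by $T_{v_0}/\mathbb G_m$, which is indeed isomorphic to the norm-one torus $\Res^1_{K/F}\mathbb G_m$ and can have $H^1\cong\mathbb Z/2\mathbb Z$; you then have to show this $H^1$ injects into $H^1(F,\Aut_E(C_B))$, which is extra work you have not actually carried out.

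Your attempt at that extra work is circular. You argue that for $[\xi]$ in the kernel, the twisted subalgebra $D_\xi$ is isomorphic to $D$ (because $Q$ and $N_C$ are fixed) and the ambient $C_B$ is unchanged (because $[\xi]$ dies in $H^1(F,\Aut_E(C_B))$), hence ``this forces $[\xi]$ to be trivial.'' But that conclusion is precisely the statement that two embeddings $D\hookrightarrow C_B$ with the same invariants are $\Aut_E(C_B)(F)$-conjugate, which is what you are trying to prove. The appeal to Tate--Nakayama and the seesaw in the last sentence is a gesture toward a real argument, not an argument. Lift to $\GL_2(E)^{\det}$ instead and the problem disappears.
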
 
\begin{proof} 
Fix a point $v_0 \in \Omega_{\Sigma}(F)$. 
By Proposition \ref{P:orbit2},  $\GL_2(E)^{\det}$ acts transitively on $\Omega_{\Sigma}(\overline{F})$ (through its quotient $\GL_2(E)^{\det}/F^{\times} = H_{C_B}(F)$) and the stabilizer of $v_0 \in \Omega_{\Sigma}(F)$ is a maximal torus  $T_{v_0}$ in $\GL_2(F) \subset \GL_2(E)^{\det}$. Hence the $F$-rational orbits under $\GL_2(E)^{\det}$ is parametrized by $H^1(F, T_{v_0})$, which is trivial since $T_{v_0}  = {\Res}_{K/F} \mathbb{G}_m$ for some quadratic \'etale algebra $K$ over $F$.
The corollary follows.  
\end{proof} 

Next, we need to understand when $\Omega_{\Sigma}(F)$ is nonempty:

\begin{prop} \label{P:embeddings}  Let $B=M_2(F)$. Let $C$ be an $E$-twisted composition algebra of $E$-dimension $2$. Then $C$ embeds into $C_B^E$ if and only if 
$J=E\oplus C$ is not a division algebra.   
\end{prop}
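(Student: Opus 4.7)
The plan is to reduce the embedding question to the non-emptiness of an $F$-rational point of a homogeneous variety, then treat the two directions separately. By Proposition \ref{P:reduced}(ii) and Corollary \ref{C:orbit}, an embedding $C \hookrightarrow C_B^E$ exists iff the $\Aut_E(C_B^E)$-homogeneous variety $\Omega_\Sigma = \{v \in C_B^E : Q(v) = f,\ N_C(v) = b\}$ has an $F$-rational point, where $\Sigma = (1, 0, -f, -b)$ is the reduced cube attached to $C$. Thus the proposition will follow once we show $\Omega_\Sigma(F) \neq \emptyset$ iff $J = E + C$ is not a division Jordan algebra.

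I will first dispose of the case when $E$ is not a field: $J$ is then never division by Proposition \ref{P:division}, and a componentwise decomposition of $C_B^E$ according to $E = F^3$ or $E = F \times K$ makes the equations $Q(v) = f$, $N_C(v) = b$ directly solvable in each matrix-algebra factor (we can prescribe determinants and the trace of a product of three matrices freely in $M_2(F)$). Henceforth $E$ is assumed to be a cubic field; Galois descent then identifies $C_B^E$ with $M_2(E)$ as an $E$-module, with $Q = \det_E$ and $\beta$ descending from the map $(x_1, x_2, x_3) \mapsto (\bar x_2 \bar x_3, \bar x_3 \bar x_1, \bar x_1 \bar x_2)$ on $M_2(\bar F)^3$.

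For the $(\Leftarrow)$ direction, I will invoke Proposition \ref{P:division} to reduce to $C = C_{e,\nu}$ with $\nu \in F^\times$, so that $N_{E/F}(e) = \nu^2 \in F^{\times 2}$. This square-root condition over $F$ is exactly what allows one to construct an explicit matrix $v \in M_2(E)$ with $\det_E(v) = f$ whose cubic trace-norm $\mathrm{Tr}\bigl(v^{(1)}v^{(2)}v^{(3)}\bigr)$ equals the prescribed $b \in F$; the matrix $v$ will be built directly from the entries $(e, \nu)$, and the condition $\nu \in F^\times$ (as opposed to merely $K_C^\times$) is what places both invariants in the correct rational subfields of $E$ and $F$ respectively.

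For the $(\Rightarrow)$ direction, a point $v \in \Omega_\Sigma(F)$ gives an embedding $\iota: C \hookrightarrow M_2(E)$ that extends via Springer's construction to a Jordan embedding $J = E + C \hookrightarrow J_B = E + M_2(E) \cong H_3(M_2(F))$. I will deduce that $J$ is not division by producing a rank $1$ element of $J$, which by (\ref{E:sharp}) amounts to finding an $F$-rational $\beta$-eigenvector $w \in \iota(C)$: a nonzero $w$ satisfying $\beta(w) = aw$ for some $a \in E$, from which $a^{\#} = Q(w)$ follows by the TCA identities. The hard part will be justifying the existence of such a rational eigenvector inside the 2-dimensional $\beta$-invariant $E$-subspace $\iota(C) \subset M_2(E)$. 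The splitness of $B = M_2(F)$ will be essential here: the ambient $M_2(E)$ carries $F$-rational $\beta$-eigenvectors in abundance (e.g.\ the identity matrix $I \in M_2(E)$ satisfies $\beta(I) = I$), and one must show that any $F$-rational $\beta$-invariant $E$-plane in $M_2(E)$ inherits one. This final step will rest on an explicit analysis of $\beta$ via the adjugate and Galois conjugation on $M_2(E)$, together with a descent argument exploiting the transitive action of $\Aut_E(C_B^E) = \SL_2(E)/\mu_2$ on $\Omega_\Sigma(F)$ given by Corollary \ref{C:orbit}.
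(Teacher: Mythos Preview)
Your plan for the $(\Leftarrow)$ direction is reasonable but the explicit matrix construction is never carried out, and it is not clear that prescribing $\det_E(v)=f$ and the cubic trace simultaneously is as easy as you suggest. The paper's argument here is a one-liner at the Jordan-algebra level: if $J=E\oplus C$ is not division then by \cite[Thm.~38.8]{KMRT} one has $J\cong J_3(K)$ for some quadratic \'etale $K/F$; since $K\hookrightarrow M_2(F)$, this gives $J_3(K)\hookrightarrow J_3(M_2(F))=E\oplus C_B^E$, hence $C\hookrightarrow C_B^E$.

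Your $(\Rightarrow)$ direction has a genuine gap. The assertion you isolate as the ``hard part''---that every $F$-rational $\beta$-invariant $E$-plane in $M_2(E)$ contains an $F$-rational $\beta$-eigenvector---is \emph{equivalent} to the $(\Rightarrow)$ direction itself: any such plane is (the image of) a rank-$2$ sub-TCA $C'\subset C_B^E$, and a $\beta$-eigenvector in $C'$ is precisely a rank-$1$ element of $J'=E\oplus C'$. So invoking that assertion is circular, and the suggested ``descent argument exploiting Corollary~\ref{C:orbit}'' does not help: transitivity on $\Omega_\Sigma(F)$ only says that all embeddings of a \emph{given} $C$ are conjugate, not which $C$ admit one. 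The observation that $I\in M_2(E)$ is a $\beta$-eigenvector is true but gives no mechanism for producing one inside an arbitrary sub-TCA.

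The paper's argument for $(\Rightarrow)$ is quite different and uses an \emph{isotropic} vector rather than a $\beta$-eigenvector. After base-changing to $K_E$ so that $E$ is cyclic with $\Gal(E/F)=\langle\sigma\rangle$, one has $C_B^E=M_2(E)$ with $Q=\det$ and $N_{C_B^E}(x)=\mathrm{Tr}(x^{\sigma^2}x^{\sigma}x)$. A division $J$ forces $C\cong C(\lambda)$ with $\lambda\notin N_{E/F}(E^\times)$, and in this model the element $(1,0)$ has $Q=0$ and $N_C=\lambda$. Were there an embedding, its image $x\in M_2(E)$ would satisfy $\det x=0$, so $x=v\,w^{\top}$ for column vectors $v,w\in E^2$; a direct computation then yields $N_{C_B^E}(x)=N_{E/F}(w^{\top}v^{\sigma})$, forcing $\lambda$ to be a norm---contradiction. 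The key insight you are missing is that the condition $Q(x)=0$ (rank~$1$ as a \emph{matrix}) is far more tractable in $M_2(E)$ than the eigenvector condition $\beta(x)=ax$.
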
 
\begin{proof} If $J=E\oplus C$ is not a division algebra,  then by  \cite[Thm 38.8]{KMRT},  $J=J_3(K)$  for a quadratic \'etale algebra $K$ over $F$. Since $K$ embeds into $B= M_2(F)$, 
we deduce that  $J$ embeds into $J_3(B)$ and hence $C$ into $C_B^E$, where $J_3(B) = E \oplus C_B^E$. 
\vskip 5pt

Now assume that $J=E\oplus C$ is a division algebra. By tensoring with $K_E$ if necessary,
 we can assume without loss of generality  that $E$ is a cyclic field, with the Galois group generated by $\sigma$ of order 3. Then $C_B^E=M_2(E)$, and we have
\[  Q(x)= \det(x), \quad    \beta(x)=\bar  x^{\sigma} \bar x^{\sigma^2}, \quad \text{and} \quad 
 N_{C_B^E}(x) = Tr(x^{\sigma^2} x^{\sigma} x). 
\] 
  On the other hand,  there exists 
 $\lambda \in F^{\times}$ such that  $C \cong C(\lambda)=E \oplus E$, with 
 \[ 
 Q(a,b)= ab, \quad    \beta(a,b) = (\lambda^{-1} b^{\#}, \lambda a^{\#}) \quad \text{and} \quad  N_{C}(a,b) = \lambda N_E(a) + \lambda^{-1} N_E(b). 
\] 
 Moreover, since $E\oplus C$ is a division algebra,  $\lambda \notin N_{E/F}(E^{\times})$.
 \vskip 5pt
 
 Assume, for the sake of contradiction, that $C(\lambda)$ embeds into $C_B^E$. Let $x$ be the image of $(1,0)$.  
Since $Q_{C_B^E}(x)=Q_{C(\lambda)}(1,0)=0$, the determinant of $x$ is 0. 
Hence $x=v\cdot w^{\top}$ for two $2\times 1$ column vectors $v$ and $w$, with coefficients in $E$. One checks that 
\[ 
N_{C_B^E}(x)= N_{E/F}(w^{\top} \cdot v^{\sigma}). 
\] 
This implies that $N_{C(\lambda)}(1,0) = \lambda= N_{C_B^E}(x)$ is the norm of an element in $E^{\times}$, a contradiction. 

\end{proof}

\vskip 10pt

\subsection{$D_4$ geometry} 

Now let $O$ be the $8$-dimensional composition algebra of split octonions. The automorphisms group of $C_O$ is a semi-direct product of   the split simply connected group $G$ of type $D_4$ with  $S_3$. 
We remind the reader that $S_3$ acts on $C_O=O\oplus O \oplus O$ is by permuting the three summands of $C_O$, with a twist by the map 
$(x_1,x_2,x_3) \mapsto (\bar x_1,\bar x_2,\bar x_3)$ for odd permutations.  Tits \cite{Ti} has given a beautiful description of the  flag varieties for $G$ in terms of geometry of $C_O$. We follow the exposition of Weissman \cite{We}. 
\vskip 5pt
\
Fix a triple $(i,j,k)$ of integers $0\leq i,j,k \leq 2$. Let $\mathcal F_{ijk}$ be  the set of  subspaces 
\[ 
X\oplus Y \oplus Z\subseteq C_O 
\]  where $X,Y,Z $ are subspaces of $O$ of dimensions $i,j,k$, respectively, such that $N_O(X)=N_O(Y)=N_O(Z)=0$ and $XY=YZ=ZX=0$. 
Then $\mathcal F_{ijk}$ is a flag variety for $G$ with respect to a  parabolic $P=MN$, as  indicated in the following table, where $\Delta_M$ is the 
subset of simple roots ``contained'' in $M$. 
\[ 
\begin{array}{|c|c|}
 \hline 
i,j,k & \Delta_M  \\ 
\hline 
0,0,0 &  \{\alpha_0,\alpha_1, \alpha_2,\alpha_3\} \\ 
\hline 
1,0,0 & \{\alpha_0, \alpha_2,\alpha_3\} \\
\hline 
1,1,0 &  \{\alpha_0,\alpha_3\} \\
\hline 
1,1,1 &  \{\alpha_0\} \\ 
\hline 
2,1,1 &  \{\alpha_1\}  \\
\hline 
2,2,1 &  \{\alpha_1, \alpha_2\} \\
\hline 
2,2,2 &  \{\alpha_1, \alpha_2,\alpha_3\} \\ 
\hline \end{array} 
\]

Consider now, $C_O^E$,  the $E$-twisted version of $C_O$.  
 As we noted in \S \ref{SS:4and8},   
\[   G_E=\Aut_E(C_O^E). \]
 For $i=1$ or $2$, we define $\mathcal F_i$ to be the set of 
$E$-subspaces $V_i \subseteq C^E_O$ of dimension $i$ such that $V_i \otimes {\bar F}$ belongs to $\mathcal F_{iii}$ for $C^E_O \otimes {\bar F}\cong C_{O\otimes {\bar F}}$. 
A pair $V_1\subset V_2$ is a full flag if $E$ is a field.  
Let $P_i=M_i N_i$ be the stabilizer of $V_i$ in $G_E$.  Then 
\[ \text{$M_1^{\der}\cong \SL_2(F)$ (long root),} \quad  \text{$M_1/M_1^{\der}\cong \GL(V_1)=E^{\times}$} \]
and 
\[ \text{ $M_2^{\der}\cong \SL_2(E)$ (short root),} \quad \text{$M_2 \cong \GL(V_2)^{\det}$.} \]
 These claims can be easily checked over $\bar F$.  The modular characters are 
  \[ 
  \rho_{U_1}=|N_E|^3 \text{ and } \rho_{U_2}=|\det|^5 
  \] 
  We have degenerate principal series $J(s)$ and $I(s)$ corresponding to $P_1$ and $P_2$, respectively.

\vskip 10pt

\subsection{Dual pair} 
Now let $F$ be a nonarchimedean local field and $E$ a cubic etal\'e algebra over $F$. Let $C_B$ be the $E$-twisted  composition algebra of dimension $4$  associated to $B = M_2(F)$, with corresponding Springer decomposition $J_B = E \oplus C_B$.  By our discussion in \S \ref{S:dual}, this data gives rise to a dual pair
\[  G_E \times H_{C_B} \longrightarrow G_B := G_{J_B}  \]
where $G_E = {\mathrm Spin}_8^E$, $H_{C_B} = \Aut_E(C_B) \cong \SL_2(E)/\mu_2$ and $G_B$
is the split adjoint group of type $E_7$.  Our goal is to determine the theta lift  $\Theta(1)$, where $1$ is the trivial representation of $H_{C_B}(F)$.

\vskip 5pt
 
For this purpose, it will be more convenient to work with an alternative construction or description of the above dual pair which is adapted to the Siegel maximal parabolic subgroup in $G_B$ and which makes use of the interpretation of $G_E$ as the automorphism group of an $8$-dimensional $E$-twisted composition algebra. 
We give this alternative description next. 
\vskip 5pt

Let $S\cong \mathbb G_m$ be a maximal split torus in $\SL_2(F)/\mu_2 \subset H_C$. The torus $S$ gives a short 
$\Z$-grading of $\mathfrak g_B$ and $\mathfrak h_E$: 
\[ 
\mathfrak g_B= \bar{\mathfrak n}  \oplus  \mathfrak m \oplus \mathfrak n \text{ and } 
\mathfrak h_E= \bar{\mathfrak u}  \oplus  \mathfrak l \oplus \mathfrak u
\] 

Let $P=MN$ and $Q=LU$ be the corresponding maximal parabolic subgroups in $G_B$ and $H_C$ respectively.  
The unipotent radical $N$ is commutative and   can be identified with an exceptional Jordan algebra $J$. The Levi subgroup $M$ can be identified with with the similitude group of  the cubic  form $N_J$, with corresponding similitude character
\[ 
i_J : M \rightarrow F^{\times}. 
\] 
Now  the group $G_E$ is contained in $M$ and $J$, and  under its adjoint action on $N$, one has the decomposition  $N = J=E\oplus {C^E_O}$ 
where $C_O^E$ is the $E$-twisted composition algebra of $E$-dimension 8. 
\vskip 5pt

 Note that  the $M$-module $\bar N$ is  dual to $N$ and hence can be identified with $J^*$. Since $J^*$ is identified with $J$ using the trace form $T_J$,  we can identify both $N$ and $\bar N$ with $J$.   Under this identification, both $U$ and $\bar U$ are identified with $E\subset J$. 
The Levi factor $L$ is the centralizer of $G_E$ in $M$. By Proposition \ref{P:isogeny}, $L$ can be identified with $E^{\times}$. Indeed, for every 
 $\alpha \in E^{\times}$,  let $c_{\alpha}: J \rightarrow J$ be defined by  
\[ 
c_{\alpha} : (e,v) \mapsto (\alpha^{\#}/\alpha \cdot e, \alpha\cdot  v) 
\] 
for all $(e,v)\in E\oplus C^E_O$.  Then $c_{\alpha}$ is a similitude of $N_J$ with  $i_J(c_{\alpha})=N_E(\alpha)$. Henceforth, we fix an isomorphism $L\cong E^{\times}$ such that $\alpha\in E^{\times}$ acts on $\bar N$ as $c_{\alpha}$.  Using this identification, $i_J(\alpha)=|N_E(\alpha)|^{-1}$ and 
 the center of $M$ consists of  $\alpha\in F^{\times}$. 
\vskip 5pt

 \subsection{\bf Theta lift}
 Let $\Pi$ be the minimal representation of $G_B$.
Let $\Omega \subset  N = J$ be the set of elements of  rank 1, i.e. $x\in \Omega$ if and only if $x\neq 0$ and $x^{\#}=0$. 
As  $\bar P$-modules, we have an exact sequence \cite{MS} 
 \[ 
 0 \rightarrow C_c^{\infty}(\Omega) \rightarrow \Pi \rightarrow \Pi_{\bar N} \rightarrow 0 
 \] 
 where $\bar n \in \bar N$ acts on $f\in C_c^{\infty}(\Omega)$ by 
 \[ 
 \pi(\bar n)f(n)= \psi(\langle \bar n, n\rangle) f(n)
 \] 
 where $\langle \bar n, n\rangle $ is the natural pairing of 
 $\bar N$ and $N$,  and $m\in M$ by 
  \[ 
 \pi(m) f(n) = |i_J(m)|^{-2} f(m^{-1} n m). 
 \] 
 Moreover, $\Pi_{\bar N} =\Pi_M \otimes |i_J|^{-1} \oplus |i_J|^{-2} $, where $\Pi_M$ is a minimal representation of $M$, trivial on the center. It follows that  a central element 
  $\alpha\in F^{\times}$ acts as multiplication by $|\alpha|^3$ and $|\alpha|^6$ on the two summands. 
 \vskip 5pt
 
 Considering $\bar{U}$-coinvariants, we have a short exact sequence of $G_E \times L$-modules:
 \[ \begin{CD}
 0 @>>> C_c^{\infty}(\Omega^{\perp}) @>>> \Pi_{\bar U} @>>> \Pi_{\bar N} @>>>  0, \end{CD} 
 \] 
where $\Omega^{\perp}$ is the set of elements $x\in \Omega$ perpendicular to $E$, i.e. the set of $x=(0,v) \in E \oplus C^E_O$ such that 
\[  v\neq 0 \quad \text{and} \quad   x^{\#}=(-Q(v), \beta(v))=0. \]
  
 Assume, for simplicity, that $E$ is a field. Then $\Omega^{\perp}$ is the set of $v\in C_O^E$  spanning an $E$-line in $\mathcal F_1$. 
 Recall that $G_E$ acts transitively on $\mathcal F_1$.  Fix a line $V_1\in \mathcal F_1$, and 
  let $P_1$ be the stabilizer of   $V_1$.  Then $P_1^{\der}$ acts trivially on the line, and we identify $P_1/P_1^{\der}$ with $\GL(V_1)=E^{\times}$.  
  Summarizing, we have an isomorphism of $G_E \times L$-modules, 
\[ 
C_c^{\infty}(\Omega^{\perp}) \cong \Ind_{P_1}^{G_E} C_c^{\infty} (E^{\times}) \otimes |N_E|^2 
\] 
where  the induction is not normalized and $C_c^{\infty} (E^{\times})$ is the regular representation of $E^{\times} \times E^{\times} \cong P_1/P_1^{\der} \times L$, twisted by  the character $|N_E|^2$ of $L \cong E^{\times}$ as indicated.  
\vskip 5pt

\begin{prop} Let $\Theta(1)$ be the theta lift of the trivial representation of $H_{C_B}(F)=\GL_2(E)^{\det}/ \Delta F^{\times}$. Then $\Theta(1)$ is  a quotient  $J(1/2)$, the degenerate 
principal series representation associated to the parabolic $P_1$. 
\end{prop}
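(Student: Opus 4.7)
My plan is to exhibit a surjection $J(1/2) \twoheadrightarrow \Theta(1)$ by applying the $L$-coinvariant functor $(-)_L$ to the short exact sequence
\[
0 \to C_0^\infty(\Omega^\perp) \to \Pi_{\bar U} \to \Pi_{\bar N} \to 0
\]
of $G_E \times L$-modules established just before the proposition. Since $\bar Q = L \ltimes \bar U$ is a parabolic subgroup of $H_C = \SL_2(E)/\mu_2$, any $H_C$-trivial quotient of $\Pi$ automatically factors through $\bar Q$-coinvariants, i.e., through $(\Pi_{\bar U})_L$. Thus $\Theta(1)$ is a quotient of $(\Pi_{\bar U})_L$, and by right exactness of $(-)_L$ it suffices to prove that (a) $(\Pi_{\bar N})_L = 0$, and (b) $(C_0^\infty(\Omega^\perp))_L \cong J(1/2)$.

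For (a), I observe that both summands of $\Pi_{\bar N} = \Pi_M \otimes |i_J|^{-1} \oplus |i_J|^{-2}$ carry nontrivial characters of the central subgroup $F^{\times} \subset L \subset M$. Since $\Pi_M$ is trivial on the center of $M$, and $i_J(c_\alpha) = N_E(\alpha) = \alpha^3$ for $\alpha \in F^{\times}$, the $F^{\times}$-characters of the two summands are $|\alpha|^{-3}$ and $|\alpha|^{-6}$ respectively; a smooth representation on which a central subgroup acts by a nontrivial character has vanishing coinvariants. For (b), since $L$ commutes with $G_E$ and acts only on the inducing data, unnormalized induction commutes with $L$-coinvariants, and the problem reduces to computing $(C_0^\infty(E^\times) \otimes |N_E|^2)_L$: the $|N_E|^2$-twisted regular action of $L = E^\times$ has a one-dimensional coinvariant space realized by the functional $v \mapsto \int_{E^\times} v(x) |N_E(x)|^{-2} \, d^\times x$, and tracing the residual action of $P_1/P_1^{\der} \cong E^\times$ through this functional yields the character $|N_E|^{2}$. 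Using $\delta_{P_1}^{1/2} = |N_E|^{3/2}$, unnormalized $\mathrm{ind}_{P_1}^{G_E}|N_E|^2$ coincides with the normalized induction $\Ind_{P_1}^{G_E}|N_E|^{1/2} = J(1/2)$, as desired.

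The main care point, and the only real place for error, is the bookkeeping of the characters and normalizations: the precise sign of the $|N_E|^{\pm 2}$ twist in the $L$-action on the inducing data (which must be extracted from the formula $\alpha \mapsto c_\alpha$ together with $\pi(m)f(n) = |i_J(m)|^{-2}f(m^{-1}nm)$), and the consistent use of the shift $\delta_{P_1}^{1/2} = |N_E|^{3/2}$ between normalized and unnormalized induction. A misplaced sign or modular factor would land the resulting quotient at a different value of $s$ along the degenerate principal series line $J(s)$, rather than at the point $s = 1/2$ asserted by the proposition.
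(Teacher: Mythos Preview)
Your proof is correct and follows essentially the same route as the paper: pass to $\bar U$-coinvariants, use the nontrivial central character on $\Pi_{\bar N}$ to kill that piece, and then read off the inducing character $|N_E|^2$ on $C_0^\infty(\Omega^\perp)$ to land at $J(1/2)$ after the modular shift $\rho_{U_1}^{1/2}=|N_E|^{3/2}$. Your explicit use of right exactness of $(-)_L$ and the functional realizing the coinvariants makes the logic a bit cleaner than the paper's somewhat compressed phrasing, and your flagged care point about the sign of the $|N_E|^{\pm 2}$ twist is exactly where the two conventions (action on $N$ versus $\bar N$, giving $|\alpha|^{\pm 3}$ and $|\alpha|^{\pm 6}$) can diverge harmlessly.
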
 

\begin{proof}  Assume, for simplicity, that $E$ is a field.  
Since $\Theta(1) \otimes 1_{H_{C_B}}$ is a quotient of $\Pi$,  one sees  by passing to $\bar U$-coinvariants that
$\Theta(1)\otimes 1_{E^{\times}}$ is a quotient of $\Pi_{\bar U}$. Let $\alpha\in F^{\times}$  be in the center of $M$. Then $\alpha$ acts trivially on $1_{E^{\times}}$, and as 
 $|\alpha|^3$ and $|\alpha|^6$ on the two summands of $\Pi_{\bar N}$.  Hence $\Theta(1)\otimes 1_{E^{\times}}$ is a quotient of $\Ind_{P_1}^{G_E} C_c^{\infty} (E^{\times}) \otimes |N_E|^2$. 
Hence $\Theta(1)$ is a quotient of $\Ind_{P_1}^{G_E} |N_E|^2$ . Since $\rho_{N_1}^{1/2}=|N_E|^{3/2}$, it follows that  $\Ind_{P_1}^{G_E} |N_E|^2=J(1/2)$. 
\end{proof}

\begin{prop} \label{P:N_2_spectrum} 
Let $\Theta(1)$ be the theta lift of the trivial representation of $H_{C_B}(F) = \GL_2(E)^{\det}/ \Delta F^{\times}$.
 Let $\Sigma$ be a non-degenerate $E$-twisted cube, with associated   $E$-twisted composition algebra $C_{\Sigma}$. 
  Then 
  \begin{itemize} 
  \item $\Theta(1) _{\bar N_2,\psi_{\Sigma}}=0$ if $E\oplus C_{\Sigma}$ is a division algebra; 
  \item $\Theta(1) _{\bar N_2,\psi_{\Sigma}}=\mathbb C$ otherwise. 
  \end{itemize}  
\end{prop}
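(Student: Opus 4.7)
\medskip

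\noindent\textbf{Plan of proof.} The strategy mirrors the $E_6$ computations of Section 8, transplanted to the $E_7$ setting, and then combined with the algebraic embedding results of Proposition \ref{P:embeddings} and Corollary \ref{C:orbit}. The starting point is the tautological identification
\[
\Theta(1)_{\bar{N}_E,\psi_\Sigma} \;\cong\; \bigl(\Pi_{\bar{N}_E,\psi_\Sigma}\bigr)_{H_{C_B}(F)},
\]
taking $H_{C_B}(F)$-coinvariants against the trivial character, so the task reduces to computing $\Pi_{\bar{N}_E,\psi_\Sigma}$ as an $H_{C_B}(F)$-module.

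\medskip

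\noindent\textbf{Step 1 (Kirillov model).} Consider the Heisenberg parabolic $P_{J_B} = M_{J_B} N_{J_B}$ of $G_B$ attached to the highest root, with center $Z_E$, set up so that $P_{J_B}\cap G_E = P_E$; then $V_{J_B}:=N_{J_B}/Z_E = F\oplus J_B \oplus J_B \oplus F$ projects onto $V_E = F\oplus E\oplus E \oplus F$ via the Springer decomposition $J_B = E\oplus C_B$. The standard Kirillov-type analysis of the minimal representation gives
\[
0 \longrightarrow C_c^\infty(\Omega_{J_B}) \longrightarrow \Pi_{\bar Z_E} \longrightarrow \Pi_{\bar{N}_{J_B}} \longrightarrow 0,
\]
where $\Omega_{J_B}\subset V_{J_B}$ is the minimal $M_{J_B}$-orbit and consists of rank-one elements of $J_B$ in its middle coordinates.

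\medskip

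\noindent\textbf{Step 2 (Twisted Jacquet module of $\Pi$).} Passing to $(\bar{N}_E,\psi_\Sigma)$-coinvariants, the quotient $\Pi_{\bar{N}_{J_B}}$ contributes zero because $\psi_\Sigma$ is nondegenerate on $\bar{N}_E/\bar Z_E$ while $\bar{N}_E \subset \bar{N}_{J_B}$. Repeating the orbit computation of Proposition \ref{P:twisted} verbatim for reduced $\Sigma=(1,0,f,b)$ forces an element $\omega=(1,x,x^{\#},N_{J_B}(x))\in\Omega_{J_B}$ restricting to $\psi_\Sigma$ to have $x=(0,v)\in E\oplus C_B$ with $Q_{C_B}(v)=-f$ and $N_{C_B}(v)=-b$. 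The degenerate ``orthogonal complement'' contributions (the analogue of Proposition \ref{P:vanishing}) are excluded because $\Sigma$ is nondegenerate. Hence
\[
\Pi_{\bar{N}_E,\psi_\Sigma} \;\cong\; C_c^\infty(\Omega_\Sigma), \qquad \Omega_\Sigma := \{v\in C_B : Q_{C_B}(v)=-f,\ N_{C_B}(v)=-b\},
\]
equipped with its natural $H_{C_B}(F)\times \mathrm{Stab}_{M_E}(\Sigma)$-action.

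\medskip

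\noindent\textbf{Step 3 (Dichotomy and coinvariants).} By the direct analogue of Corollary \ref{C:phs}, the set $\Omega_\Sigma(F)$ is nonempty if and only if $C_\Sigma$ admits an $E$-linear embedding into $C_B$. Since $B=M_2(F)$ is split, Proposition \ref{P:embeddings} translates this precisely to the condition that $E\oplus C_\Sigma$ is not a cubic division Jordan algebra. This immediately disposes of the division case: $\Omega_\Sigma(F)=\emptyset$ forces $\Pi_{\bar{N}_E,\psi_\Sigma}=0$, hence $\Theta(1)_{\bar{N}_E,\psi_\Sigma}=0$. In the non-division case, Corollary \ref{C:orbit} asserts that $H_{C_B}(F)$ acts transitively on $\Omega_\Sigma(F)$, so $C_c^\infty(\Omega_\Sigma)\cong c\text{-ind}_L^{H_{C_B}(F)}\mathbf{1}$ where $L$ is the stabilizer of a base point (the pointwise fixer of the image of $C_\Sigma\hookrightarrow C_B$, an anisotropic unimodular subgroup). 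Frobenius reciprocity then gives a one-dimensional space of $H_{C_B}(F)$-coinvariants, spanned by the integration functional over $\Omega_\Sigma(F)$; this yields $\Theta(1)_{\bar{N}_E,\psi_\Sigma}=\mathbb{C}$.

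\medskip

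\noindent\textbf{Main obstacle.} The delicate part is Step 1--2: verifying that the $E_7$ minimal representation, restricted to the parabolic $P_{J_B}$ whose center is precisely $Z_E$, admits the expected Kirillov-type presentation and that taking further $(\bar{N}_E,\psi_\Sigma)$-coinvariants of the geometric piece $C_c^\infty(\Omega_{J_B})$ produces no spurious summands from degenerate rank strata. Once this is in hand, the purely algebraic inputs (Proposition \ref{P:embeddings} for existence of embeddings and Corollary \ref{C:orbit} for transitivity) make the remainder formal.
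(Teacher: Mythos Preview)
Your proof is correct and follows essentially the same approach as the paper: compute $\Pi_{\bar N_2,\psi_\Sigma}\cong C_c^\infty(\Omega_\Sigma(F))$ via the Kirillov model exactly as in Proposition \ref{P:twisted}, then invoke Proposition \ref{P:embeddings} for the emptiness criterion and Corollary \ref{C:orbit} for transitivity of the $H_{C_B}(F)$-action in the nonempty case. One small correction: the stabilizer $L$ need not be anisotropic (by the proof of Corollary \ref{C:orbit} it is a quotient of $\Res_{K/F}\mathbb{G}_m$ for some quadratic \'etale $K$, possibly split), but this is harmless since only unimodularity is needed for the coinvariants to be one-dimensional.
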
 

\begin{proof}  
 The space of twisted coinvariants $\Pi_{\bar N_2, \psi_{\Sigma}}$ is computed exactly as in Proposition \ref{P:twisted}, giving 
\[\Pi_{\bar N_2, \psi_{\Sigma}}=C_c^{\infty}(\Omega_{\Sigma}(F))\] 
where $\Omega_{\Sigma}$ is as in Corollary \ref{C:orbit}. By the same corollary, if $\Omega_{\Sigma}(F)$ is nonempty, then it is a single 
$H_C(F)$-orbit, in which case $\Theta(1)_{\bar N_2, \psi_{\Sigma}}$ is  one dimensional. On the other hand, when $\Omega_{\Sigma}(F)$ is empty, $\Theta(1)_{\bar N_2, \psi_{\Sigma}} =0$. 
By Proposition  \ref{P:embeddings}, $\Omega_{\Sigma}(F)$  is empty precisely when 
$E\oplus C_{\Sigma}$ is a division algebra.
\end{proof}

\vskip 5pt

\begin{thm}  \label{C:GS3}
 Let $\Theta(1)$ be the theta lift of the trivial representation of $\GL_2(E)^{\det}/\Delta F^{\times}$. Then $\Theta(1)$ embeds as a submodule of the degenerate principal series $I(1/2)$. 
 If $E$ is a field, then $I(1/2) / \Theta(1)  \cong V_1'$ in the notation of Theorem \ref{T:degen_E}. Otherwise $\Theta(1) \cong I(1/2)$.   
\end{thm}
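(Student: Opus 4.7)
The plan is to establish the embedding $\Theta(1) \hookrightarrow I(1/2)$ via Bernstein's second adjunction, and then identify the quotient using the known structure of $I(1/2)$ from Appendix B. To produce the embedding it suffices, by the adjunction
\[
\Hom_{G_E}\bigl(\Theta(1),\, I(1/2)\bigr) \;\cong\; \Hom_{M_2}\bigl(\Theta(1)_{\bar N_2},\, |\det|^{1/2}\bigr),
\]
to exhibit $|\det|^{1/2}$ as a quotient of $\Theta(1)_{\bar N_2}$ as an $M_2 \cong \GL_2(E)^{\det}$-module.

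To compute the relevant Jacquet module, I would imitate the analysis of \S\ref{S:jac-E6}. The restriction of $\Pi$ to $\bar P$ fits in the exact sequence $0 \to C_c^{\infty}(\Omega) \to \Pi \to \Pi_{\bar N} \to 0$, and taking $\bar N_2$-coinvariants gives an exact sequence of $M_E \times H_C$-modules
\[
0 \;\longrightarrow\; C_c^\infty(\Omega^\perp) \;\longrightarrow\; \Pi_{\bar N_2} \;\longrightarrow\; \Pi_{\bar N} \;\longrightarrow\; 0,
\]
where $\Omega^\perp \subset \Omega$ consists of rank-one elements of $J$ perpendicular to $\bar N_E$, i.e.\ the pairs $(0,v) \in E \oplus C_O^E$ with $v$ spanning an $E$-line in $\mathscr F_1$. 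Passing to $H_C$-coinvariants, the one-dimensional summand $|i_J|^{-2}$ of $\Pi_{\bar N}$ restricts to $M_E$ as (a twist of) $|\det|^{1/2}$ after accounting for the modulus $\rho_{N_E}=|\det|^{5}$, and this produces the desired quotient. The twisted Jacquet computation of Proposition \ref{P:N_2_spectrum} then guarantees that this quotient actually survives in $\Theta(1)_{\bar N_2}$: the nondegenerate $(\bar N_2, \psi_\Sigma)$-coinvariants detect nonzero contributions exactly on those $\Sigma$ with $E \oplus C_\Sigma$ non-division, which is consistent with the structure expected from the $|\det|^{1/2}$ quotient.

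Granting $\Theta(1) \hookrightarrow I(1/2)$, the module structure of $I(1/2)$ supplied by Theorem \ref{T:degen_E} (when $E$ is a field) shows that $I(1/2)$ has a unique irreducible quotient isomorphic to $V_1' = J_1(\St_E, 1/2)$. Since $\Theta(1)$ sits inside $I(1/2)$ as a proper submodule --- it is not isomorphic to $V_1'$ because their wavefront sets differ: $\Theta(1)$ supports nondegenerate $\bar N_2$-Fourier coefficients (by Proposition \ref{P:N_2_spectrum}) while $V_1'$ does not (its leading orbit is the subregular $A_2$ orbit of type $\mathfrak{sl}_2(E)$ coming from $Q_E$) --- the exact sequence $0 \to \Theta(1) \to I(1/2) \to V_1' \to 0$ follows. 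When $E$ is not a field, the reducibility analyses of Appendix B (Proposition \ref{P:degen}, Theorem \ref{T:degen_KB}) yield that $I(1/2)$ is irreducible, so $\Theta(1) = I(1/2)$.

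The main obstacle is the first step: cleanly identifying $|\det|^{1/2}$ as a quotient of $\Theta(1)_{\bar N_2}$ and verifying that it is not cancelled by additional pieces coming from $C_c^\infty(\Omega^\perp)_{H_C}$ or from the minimal-representation summand $\Pi_M \cdot |i_J|^{-1}$ of $\Pi_{\bar N}$. This requires tracking the $M_E \times H_C$-module structure of $\Omega^\perp$ --- which is a principal $H_C^0$-bundle over a $P_1$-flag in $G_E$ --- and controlling the $H_C$-coinvariants of the resulting $P_1$-induced module; the split-$E$ cases introduce further bookkeeping analogous to Case 1--3 of \S\ref{S:jac-E6}, but the nonvanishing of the $|\det|^{1/2}$-quotient follows in each case from comparison with the description of $\Pi_{\bar N}$.
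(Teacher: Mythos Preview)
Your proposal has genuine gaps. Frobenius reciprocity produces only a nonzero map $\Theta(1)\to I(1/2)$, not an embedding; injectivity is the heart of the theorem and needs its own argument. The paper obtains the map by restricting functions from the Heisenberg degenerate principal series of $G_B$ (which contains $\Pi$) to $G_E$, then shows---by comparing the $(\bar N_2,\psi_{C_\Sigma})$-coinvariants of $\Theta(1)$, $I(1/2)$, and $V_1'$---that the kernel consists of \emph{small} representations, i.e.\ those with vanishing coinvariants for every nondegenerate $\Sigma$. To conclude the kernel is zero, the paper uses that $\Theta(1)$ is a quotient of $J(1/2)$ and then verifies, via a case-by-case analysis of Hecke-algebra exponents together with results of \cite{GGS} and \cite{JLS}, that no constituent of $J(1/2)$ (and, when $E$ is not a field, no representation with the relevant exponents) is small. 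This step is substantial, especially for $E=F\times K$ where an Aubert-involution argument is needed, and it is entirely missing from your outline.

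Two factual errors also undermine your identification of the image. Your assertion that $I(1/2)$ is irreducible when $E$ is not a field is false: Propositions~\ref{P:degen} and~\ref{P:degen_KA'} show $I(1/2)$ has length~$2$ for both $E=F^3$ and $E=F\times K$. What the paper uses instead is that in these cases the spherical vector generates all of $I(1/2)$ (because there is a unique irreducible submodule). Your claim that $V_1'$ has no nondegenerate $(\bar N_2,\psi_\Sigma)$-coinvariants is likewise false: since $V_1'\cong\Theta_D(1)$ is the theta lift from the rank-$2$ form of $E_6$, one has $(V_1')_{\bar N_2,\psi_{C_\Sigma}}\cong\mathbb C$ precisely when $E\oplus C_\Sigma$ is division. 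The correct picture is that $\Theta(1)$ and $V_1'$ have \emph{complementary} Fourier supports among nondegenerate cubes, and it is this dichotomy that pins down the image. Finally, the exact sequence you write for $\Pi_{\bar N_2}$ conflates two different parabolics: the $\Omega^\perp$ you describe (rank-one elements $(0,v)\in E\oplus C_O^E$) is the one arising from $\bar U\subset H_C$-coinvariants in the Siegel model, used to show $\Theta(1)$ is a quotient of $J(1/2)$; it is not obtained by taking $\bar N_2\subset G_E$-coinvariants.
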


\begin{proof}
The minimal representation of $G_B$ is a submodule of a degenerate principal series representation induced from the Heisenberg parabolic subgroup of $G_B$. Via restriction of functions to $G_E$, one obtains a nonzero $H_C$-invariant and $G_E$-equivariant map 
\[  \Pi \twoheadrightarrow  \Pi_{H_E} = \Theta(1)  \longrightarrow I(1/2). \]
Since the spherical function restricts to a spherical function, the image must contain the submodule generated by the non-zero spherical vector in $I(1/2)$. This is the whole $I(1/2)$ 
unless $E$ is a field, by Propositions \ref{P:degen_KA'} and \ref{P:degen}. If $E$ is a field, the spherical vector generates the submodule whose quotient is $V_1'$.  Next, we need to use the fact that
\[ 
I(1/2)_{\bar N_2, \psi_{C_\Sigma}} \cong \mathbb C
\] 
 for all nondegenerate cubes $\Sigma$, which is a simple consequence of the Bruhat decomposition. Moreover, recall that 
$V_1'\cong \Theta_D(1)$ is the theta lift via the minimal 
representation of $G_D$ (the rank 2 $E_6$). Hence $(V_1')_{\bar N_2, \psi_{C_\Sigma}} \cong \mathbb C$ precisely when $E\oplus C_{\Sigma}$ is a division algebra.  Combining 
with Proposition \ref{P:N_2_spectrum}, we see that the image of the map $\Theta(1)  \longrightarrow I(1/2)$ is exactly as predicted and the kernel consists of small representations, 
i.e.  those for which $(\bar N_2, \psi_{C_{\Sigma}})$ co-invariants vanish for all nondegenerate cubes $\Sigma$.  Since we know that $\Theta(1)$ is a quotient of $J(1/2)$, to finish the proof,  it suffices to show that any irreducible constituent $\pi$ of $J(1/2)$ satisfies $\pi_{\bar N_2, \psi_{C_{\Sigma}}}\neq 0$, for some nondegenerate $\Sigma$. 
\vskip 5pt

 To that end, we claim that it suffices to check one of the following two conditions: 
\begin{itemize} 
\item[(a)] The Jacquet functor of $\pi$ for any parabolic subgroup with Levi subgroup of type $A_2$ is Whittaker generic; 
\item[(b)]  The Jacquet functor of $\pi$ with respect to $N_2$ is a Whittaker generic representation of the Levi subgroup $M_2$. 
\end{itemize} 
Indeed, if (a)  holds, then $\pi_{\bar N_2, \psi_{C_{\Sigma}}}\neq 0$ by \cite[Thm. A]{GGS}, interpreted in our setting for the nilpotent orbit $A_2$. By the same result of 
\cite{GGS}, the condition (b) implies that $\pi_{[\bar N_1, \bar N_1], \psi}\neq 0$  for a generic character of 
$\psi$ of $[\bar N_1, \bar N_1]$, which in turn  implies the existence of a nondegenerate $\Sigma$ such that $\pi_{\bar N_2, \psi_{C_\Sigma}} \neq 0$, by the main result in \cite{JLS} and the fact that the  nilpotent orbit $3A_1$ is not special. 

\vskip 5pt 

If $E$ is a field, we have only one additional constituent $V_1''$ in $J(1/2)$ (see Theorem \ref{T:degen_E'}). 
Its Jacquet functor with respect to 
$\bar N_2$ is a twist of the Steinberg representation of $M_2$, hence the condition (b)  holds and we are done in this case.
\vskip 5pt
  
If $E$ is not a field,  then we have not analyzed $J(1/2)$. In these remaining cases, we shall treat all representations
whose exponents lie in the Weyl group orbit of the leading exponent of the spherical quotient of $I(1/2)$, namely $(1,1,0,0)$  if $E = F^3$ or $(1,1,0)$  if $E=F \times K$ for $K$ a field.  
In both cases, we have two tempered representations,
\begin{equation} \label{E:DSt}
D(\mathrm{St}) = D(\mathrm{St})_{\mathrm{gen}} \oplus D(\mathrm{St})_{\mathrm{deg}}, 
\end{equation}
which are the generic and non-generic summands of the unitary representation $D(\mathrm{St})$ obtained by parabolic induction from the Steinberg representation of the Levi group of type $A_2$. There are three 
such parabolic groups if $E=F^3$, but the resulting representation does not depend on this choice, just as in the case of $D(1)$, which is the Aubert involute of $D(\mathrm{St})$. Observe that these tempered representations satisfy the condition (a).   

\vskip 5pt 
In order to tabulate all possible standard modules, let us recall their properties, working with a general root system $\Delta= \{\alpha_1, \ldots ,\alpha_n\}$. Let 
$\{\lambda_1, \ldots, \lambda_n\}$ be the corresponding fundamental weights.  
 A parabolic subgroup in standard position corresponds to a subset $S \subset \Delta $. A standard module 
associated to the parabolic subgroup has leading exponents  
\[ 
\lambda =( \sum_{i\in S} x_i \alpha_ i ) + ( \sum_{i\notin S} y_i \lambda_ i )
\] 
where $x_i\leq 0$,  $y_i>0$ and the first summand is an exponent of the tempered representation defining the standard module. 
 Now it is easy to determine all leading exponents in the cases at hand, and thus determine all irreducible Langlands quotients in both cases: 
\vskip 5pt 

\noindent{\bf \underline{Case $E=F^3$}}: 
\vskip 5pt

We have three Langlands quotients of $G_E$ for the three maximal parabolic subgroups whose Levi subgroups are of type  $A_3$. The tempered 
representation  on the Levi subgroup is obtained by inducing the Steinberg representation of the Levi subgroup of the type $A_1\times A_1$, that is, whose derived group is 
$\SL_2(F) \times \SL_2(F)$.   These Langlands  quotients clearly satisfy the condition (a).  
\vskip 5pt

There are three remaining representation: the spherical quotient of $I(1/2)$, 
 the Langlands quotient $J_2(\mathrm{St}_E, 1/2)$ and the Langlands quotient $J_1(\mathrm{St}, 1/2)$. For these representations we have complete  
control of their $(\bar N_2, \psi_{C_\Sigma})$-coinvariants, since the spherical representation and 
$J_1( \mathrm{St}, 1/2)$ are the theta lifts  $\Theta_{M_3(F)}(1)$ and $\Theta_{M_3(F)}(\epsilon)$ respectively, and 
 $J_2(\mathrm{St}_E, 1/2)$ is a submodule and the only other constituent of $I(1/2)$.   This settles the case $E = F^3$.

\vskip 5pt 

\noindent{\bf \underline{Case $E=F \times K$}}: 
\vskip 5pt

Here we have an interesting twist, when compared to the split case: there are two Langlands quotients of $G_E$ forming an $L$-packet which prove especially challenging.  
\vskip 5pt

More precisely,  
instead of the three $A_3$ maximal parabolic subgroups considered in the split case, we have a maximal parabolic subgroup in the standard position 
 with Levi subgroup of the type $B_2$, so that its derived group is a quasi-split $\mathrm{SU}_4(K)$.  Inducing the Steinberg representation of the Levi subgroup of $\mathrm{SU}_4(K)$ whose derived group is $\SL_2(K)$, gives a representation of $\mathrm{SU}_4(K)$ with two irreducible summands. They in turn give two Langlands quotients of $G_E$ 
 with the leading exponent  $(1,0,-1)$.  One of these two  representation is the summand of $D(1)$, denoted by $V_1'$, with $(1,0,-1)$ as its only exponent.  The other representation $V$  is the potentially troublesome one. 
\vskip 5pt

 Finally, we have three additional representations:
 the spherical quotient of $I(1/2)$ (which is the other  summand of $D(1)$ besides $V_1'$, by Proposition \ref{P:degen_KA'}(4)),  
 the Langlands quotient $J_2(\mathrm{St}_E, 1/2)$ and the Langlands quotient $J_1(\mathrm{St}, 1/2)$. 
 Clearly, $J_2(\mathrm{St}_E, 1/2)$ satisfies the condition (b) above. 
 Now $J_1(\mathrm{St}, 1/2)$ is a submodule of $I(1/2)$, while the spherical representation and $V_1'$ are the theta lifts  
 $\Theta_{M_3(F)}(1)$ and $\Theta_{M_3(F)}(\epsilon)$. For these representations, we 
 have a similar situation as in the split case, with complete control of their $(\bar N_2, \psi_{C_\Sigma})$-coinvariants, and in particular non-vanishing for some $\Sigma$.
 \vskip 5pt
 
  It remains to deal with the other representation $V$ with  leading exponent $(1,0,-1)$.  Recall that, counting two tempered representations in (\ref{E:DSt}), we have seven representations in all. Let us examine the effect of the Aubert involution on this set of representations:
  \vskip 5pt
  \begin{itemize}
  \item The  Aubert  involution takes the two summands of $D(\mathrm{St})$ to the two summands of $D(1)$. 
  
  \item It takes the degenerate series $I(1/2)$ to the generalized principal series $I(\mathrm{St}_E, -1/2)$.  It follows that the Aubert involution takes $J_1(\mathrm{St}, 1/2)\subset I(1/2)$ to $J_2(\mathrm{St}_E, 1/2)\subset I(\mathrm{St}_E, -1/2)$. 
  \end{itemize}
  From this, one deduces that the involution fixes the 
 remaining representation $V$, and hence $(-1,0,1)$ is also an exponent of  $V$. But with respect to the $A_2$ Levi subgroup, this is the exponent of the Steinberg representation and hence condition (a)  holds for $V$.   This completes the proof in the case $E = F \times K$.

\end{proof}

This theorem is used in our paper \cite{GS3}.

  \vskip 15pt

\section{\bf Appendix B: Degenerate principal series}
In this section, we analyze unramified degenerate principal series representations for $G_E$ (the quasi-split simply connected  reductive group of  absolute 
type $D_4$ determined by $E$). The results here are new if  $E$ is a field and a mixture of new and known results if $E=F \times K$.  
We have used the results and language introduced in this appendix for the description of theta lifting in the main body of the paper.
\vskip 5pt

\subsection{Affine Weyl groups, when $E$  a field} 

Let $A=\{(x,y,z)\in \mathbb R^3 ~|~ x+y+z=0\}$ be the 2-dimensional euclidean space equipped with the 
usual dot product. Let $\Phi\subseteq A^*$ (we identify $A$ with $A^*$ using the dot product) 
 be the root space of type $G_2$ such that $\alpha_1=(1,-1,0)$ and 
$\alpha_2=(-\frac13,\frac23,-\frac13)$ are the simple roots. Let $W$ be the corresponding Weyl group. 
It is generated by the simple reflections $s_1$ and $s_2$ corresponding to the simple roots.

{\bf \underline{Assume first that $E$ is unramified}}. 
\vskip 5pt

Affine roots are the affine functions $\alpha+k$ on $A$ where 
 $\alpha\in\Phi$ and $k\in\mathbb Z$.
The affine Weyl group $W_a$ is generated by reflections about  the lines where the affine 
roots vanish. Let $\alpha_l\in \Phi$ be the highest root. The fundamental cell in $A$ for $W_a$ 
is given by the inequalities 
\[  0 <\alpha_1 ,\quad  0< \alpha_2 \quad \text{and} \quad \alpha_l <1. \]
 In particular, $W_a$ is 
generated by $s_1$, $s_2$ and $s_0$, the reflections about the three lines bounding the fundamental cell. 
Let $X\subseteq A$ be the lattice spanned by 
\[ 
\omega_1=(1,0,-1) \text{ and } \omega_2=(1,1,-2). 
\] 
Then $W_a$ is a semi direct product of $W$ and the group of translations $t_{\omega}$ where 
$\omega\in X$.  We note the following relations in $W_a$: 
\[ 
t_{\omega_1}=s_0 s_1 s_2 s_1 s_2 s_1  \text{ and } t_{\omega_2}=(s_0 s_1 s_2 s_1 s_0)(s_2 s_1 s_2 s_1 s_2).  
\] 

\vskip 5pt

{\bf  \underline{Assume now that $E$ is ramified}}. 
\vskip 5pt

Affine roots are the affine functions $\alpha+k$ on $A$ where 
 $\alpha\in\Phi$ and $k\in\mathbb Z$, if $\alpha$ is long, and $k\in\frac{1}{3}\mathbb Z$, if 
 $\alpha$ is short. 
The affine Weyl group $W_a$ is generated by reflections about  the lines where the affine 
roots vanish. Let $\alpha_s\in \Phi$ be the highest short root. The fundamental cell in $A$ for $W_a$ 
is given by the inequalities 
\[  0 <\alpha_1,\quad  0< \alpha_2 \quad \text{and} \quad  \alpha_s <1/3. \]
In particular, $W_a$ is 
generated by $s_1$, $s_2$ and $s_0$, the reflections about the three lines bounding the fundamental cell. 
Let $X\subseteq A$ be the lattice spanned by 
\[ 
\omega_1=(1,0,-1) \text{ and } \omega_2=(\frac13,\frac13,-\frac23). 
\] 
Then $W_a$ is a semi direct product of $W$ and the group of translations $t_{\omega}$ where 
$\omega\in X$.  We note the following relations in $W_a$: 
\[ 
t_{\omega_2}=s_0 s_2 s_1 s_2 s_1 s_2  \text{ and } t_{\omega_1}=(s_0 s_2 s_1 s_2 s_0)(s_1 s_2 s_1 s_2 s_1).  
\] 

\smallskip 
Let $G_E$ be the simply connected quasi-split group of type $D_4$ corresponding to the cubic field $E$. 
 Let $I$ be the Iwahori subgroup corresponding to the fundamental cell. 
 Let  $\ell: W_a \rightarrow \mathbb Z$ be the length function such that, for every $w\in W_a$,  
\[ 
[IwI : I] = q^{\ell(w)}. 
\]

\begin{picture}(400,140)(-35,60)

\put(-30,120){\line(1,0){400}}
\put(-30,180){\line(1,0){400}}

\put(63,150){\line(1,0){24}}
\put(93,150){\line(1,0){24}}
\put(92,153){\line(1,0){28}}
\put(92,147){\line(1,0){28}}

\put(60,150){\circle{6}}
\put(90,150){\circle{6}}
\put(120,150){\circle{6}}

\put(110,150){\line(-1,-1){10}}
\put(110,150){\line(-1,1){10}}

\put(41,147){hs}
\put(118, 158){3} 

\put(57,135){$s_0$}
\put(87,135){$s_1$}
\put(117,135){$s_2$}

\put(243,150){\line(1,0){24}}
\put(273,150){\line(1,0){24}}  
\put(242,153){\line(1,0){26}}
\put(242,147){\line(1,0){26}}

\put(240,150){\circle{6}}
\put(270,150){\circle{6}}
\put(300,150){\circle{6}}

\put(260,150){\line(-1,-1){10}} 
\put(260,150){\line(-1,1){10}}

\put(237,135){$s_1$}
\put(267,135){$s_2$}
\put(297,135){$s_0$} 

\put(309,147){s}

\end{picture} 

 Let $H$ be the Iwahori Hecke algebra. It is spanned by $T_w$, the 
characteristic functions of $IwI$ for all $w\in W_a$. As an abstract algebra, $H$ is generated by 
$T_0$, $T_1$ and $T_2$ corresponding to simple reflections, modulo braid and quadratic relations 
given by the diagrams in the above picture, where the left diagram corresponds to the case when $E$ is unramified.  
Let $\hat{T}_w= q^{-\frac{\ell(w)}{2}} T_w$.  The elements 
$\hat{T}_{\omega}$ for dominant $\omega=n_1\omega_1 + n_2\omega_2$ (i.e. $n_1,n_2 \geq 0$) 
form a commutative semi-group 
\[ 
\hat {T}_{\omega} \cdot \hat{T}_{\omega'}= \hat{T}_{\omega+ \omega'}.
\] 

Let $V$ be a finite-dimensional $H$-module. 
Since $\hat{T}_{\omega}$, for dominant $\omega$, commute and are invertible, we can decompose 
\[ 
V =\oplus_{\mu} V_{\mu} 
\] 
where, for every $\mu\in A\otimes_{\mathbb R} \mathbb C$, 
\[ 
V_{\mu}=\{ v\in V ~|~ (\hat{T}_{\omega} - q^{\mu(\omega)})^{\infty} v=0 \text{ for all dominant } \omega\}. 
\] 
Note that $V_{\mu}=V_{\mu+\frac{2\pi i}{\ln q} \nu}$ for any $\nu\in X^*$, the lattice dual to $X$.  
Thus, we say that  $\mu, \mu'$  are {\em congruent} if $\mu- \mu' \in {\frac{2\pi i}{\ln q} X^*}$. 
The congruence class of  $\mu$ such that $V_{\mu}\neq 0$ is called an exponent of $V$.  A representation 
$V$ is a discrete series if 
\[ 
\Re(\mu(\omega_i)) < 0
\] 
for $i=1,2$ for all exponents $\mu$ of $V$. 
Exponents  represented by $\mu\in A$ are called {\em real}.  The exponent of the trivial representation 
(i.e. $T_w \mapsto q^{\ell(w)}$ for all $w\in W_a$) is 
\[ 
(2,1,-3) . 
\] 
The Iwahori-Matsumoto (IM) involution changes the exponents by the sign. In particular, the exponent of the 
Steinberg representation is $(-2,-1,3)$. It is a discrete series representation. 

\vskip 10pt

\subsection{Some representations, when $E$ is a field} 

We shall now construct small dimensional representations of  the Hecke algebra $H$ that will appear in the description of degenerate 
principal series.   

\vskip 5pt 

{\bf \underline{Assume first that $E$ is unramified.}}

\subsubsection{One dimensional representations} \label{SS:E-1d}
Let $V$ be a one dimensional complex vector space 
spanned by $e$.  Let $V'_1$ be the representation of $H$ on $V$
defined  by 
\[ T_0 e=-e,  \quad T_1 e=-e \quad \text{and} \quad  T_2 e= q^3 e. \]
 The exponent of $V'_1$ is  
\[ 
(0,1,-1). 
\] 
Let $V''_1$ be be the representation of $H$ on $V$
defined  by 
\[  T_0 e=qe, \quad  T_1 e=qe \quad \text{and} \quad  T_2 e= - e. \]
 Then $V''_1$ is the IM-involute of $V'_1$ and is a discrete series representation. 

\vskip 5pt

\subsubsection{Two dimensional representations} The subalgebra generated by $T_0$ and $T_1$ is 
isomorphic to the group algebra of $S_3$. It is not too difficult to see that any
irreducible two dimensional representation of $H$, when restricted to this sub algebra, must be isomorphic 
to the reflection representation of $S_3$.  Thus let $V$ be a two dimensional complex vector space 
spanned by $e_0$ and $e_1$ on which $T_0$ and $T_1$ act as matrices 
\[ 
\left(\begin{array}{cc} 
-1 & q^{\frac{1}{2}} \\
0 & q 
\end{array}\right) 
\text{ and } 
\left(\begin{array}{cc} 
 q & 0 \\
q^{\frac{1}{2}} & -1
\end{array}\right), 
\] 
respectively. We can extend this representation to $H$ in three different ways. Two of these extensions 
are easy to construct. Let $V_2'$ be the representation of $H$ on $V$  such that $T_2$ acts the scalar $q^3$. 
The exponents of $V_2'$ are 
\[ 
(1-\frac{2\pi i}{3\ln q}, 1+\frac{2\pi i}{3\ln q}, -2) \text{ and } (1+\frac{2\pi i}{3\ln q}, 1-\frac{2\pi i}{3\ln q}, -2). 
\] 
This is the minimal representation. Let $V''_2$ be the representation of $H$ on $V$ such that $T_2$ acts the scalar $-1$. 
Then $V''_2$ is the IM-involute of $V'_2$ and is a discrete series representation. 

\vskip 5pt

 These two representations do not have  real exponents, however. We shall be interested in the third extension such that $T_2$ acts as the matrix 
\[ 
\left(\begin{array}{cc} 
-1 & q^{\frac{1}{2}}\Phi_6(q) \\
0 & q^3
\end{array}\right),
\] 
where $\Phi_6$ is the characteristic polynomial (over $\Q$ of the primitive 6-th roots of unity.
This representation, henceforth denoted by $V_2$, is invariant under the involution. Its exponents are real and  given by: 
\[ 
(1,-1,0) \text{ and } (-1,1,0). 
\] 

\subsubsection{Three dimensional representations} Let $V$ be a three dimensional complex vector space 
spanned by $e_0$, $e_1$  and $e_2$. Let $V'_3$ be a representation of $H$ on 
$V$  such that $T_0$, $T_1$ and $T_2$ act as matrices 
\[ 
\left(\begin{array}{ccc} 
-1 & q^{\frac{1}{2}} & 0  \\
0 & q & 0 \\
0  & 0 & q
\end{array}\right), 
\left(\begin{array}{ccc} 
 q & 0 & 0  \\
q^{\frac{1}{2}} & -1 & q^{\frac{1}{2}}\\
0 &  0 & q 
\end{array}\right) 
\text{ and } 
\left(\begin{array}{ccc} 
 q^3 & 0 & 0  \\
0 &  q^3 & 0\\
0 &  q^{\frac{1}{2}}\Phi_3(q) & -1
\end{array}\right)
\] 
respectively. This is the reflection representation. The exponents of $V'_3$, counted with multiplicities,  are 
\[ 
(0,1,-1), (1,0,-1) \text{ and } (1,0,-1). 
\] 
Let $V''_3$ be the IM-involute of $V'_3$. It is a discrete series representation. 

\vskip 5pt 

{\bf \underline{Assume now that $E$ is ramified}.}

\subsubsection{One dimensional representations}  \label{SS:E-1dram}
Let $V$ be a one dimensional complex vector space 
spanned by $e$. Let $V_1'$ be a representation of $H$ on $V$ defined by 
\[  T_0 e=qe,\quad  T_1 e=-e \quad \text{and} \quad  T_2 e= q e. \]  
The exponent of $V'_1$ is  
\[ 
(0,1,-1). 
\] 
Let $V''_1$ be be the representation of $H$ on $V$
defined  by 
\[  T_0 e=-e,\quad  T_1 e=qe \quad  \text{and} \quad  T_2 e= -e.\]
  Then $V''_1$ is the IM-involute of $V'_1$ and is a discrete series representation.
\vskip 5pt

\subsubsection{Two dimensional representations} The subalgebra generated by $T_0$ and $T_2$ is 
isomorphic to the group algebra of $S_3$. It is not too difficult to see that any
irreducible two dimensional representation of $H$, when restricted to the subalgebra, must be isomorphic 
to the reflection representation of $S_3$.  Thus let $V$ be a two dimensional complex vector space 
spanned by $e_0$ and $e_2$. Then $T_0$ and $T_2$ act on $V$  as matrices 
\[ 
\left(\begin{array}{cc} 
-1 & q^{\frac{1}{2}} \\
0 & q 
\end{array}\right) 
\text{ and } 
\left(\begin{array}{cc} 
 q & 0 \\
q^{\frac{1}{2}} & -1
\end{array}\right), 
\] 
respectively. We can extend this representation to $H$
  in three different ways. Two of these extensions
are easy to construct. Let $V_2'$ be the representation of $H$ on $V$  such that $T_1$ acts as the scalar $q$.
The exponents of $V_2'$ are
\[
(1-\frac{2\pi i}{3},\frac{4\pi i}{3\ln q},-1-\frac{2\pi i}{3\ln q})\text{ and }(1+\frac{2\pi i}{3},- \frac{4\pi i}{3\ln q},-1+\frac{2\pi i}{3\ln q}).
\]
This is not the minimal representation. Let $V''_2$ be the representation of $H$ on $V$ such that $T_1$ acts as the scalar $-1$.
Then $V''_2$ is the IM-involute of $V'_2$ and is a discrete series representation. Again, these representations do not have  
real exponents. 
\vskip 5pt

We shall be interested in the third extension such that $T_1$ acts as $T_0$. 
This representation, henceforth denoted by $V_2$, is invariant under the involution. Its exponents are real and given by
\[ 
(1,-1,0) \text{ and } (-1,1,0). 
\] 
\vskip 5pt

\subsubsection{Three dimensional representations} Let $V$ be a three dimensional complex vector space 
spanned by $e_1$, $e_2$  and $e_0$. 
 Let $V'_3$ be a representation of $H$ on
$V$  such that $T_0$, $T_1$ and $T_2$ act as matrices
\[ 
\left(\begin{array}{ccc} 
-1 & q^{\frac{1}{2}} & 0  \\
0 & q & 0 \\
0  & 0 & q
\end{array}\right), 
\left(\begin{array}{ccc} 
 q & 0 & 0  \\
3q^{\frac{1}{2}} & -1 & q^{\frac{1}{2}}\\
0 &  0 & q 
\end{array}\right) 
\text{ and } 
\left(\begin{array}{ccc} 
 q & 0 & 0  \\
0 &  q & 0\\
0 &  q^{\frac{1}{2}} & -1
\end{array}\right)
\] 
respectively. This is the reflection representation. The exponents of $V'_3$, counted with multiplicities,  are 
\[ 
(0,1,-1), (1,0,-1) \text{ and } (1,0,-1). 
\] 
Let $V''_3$ be the involute of $V'_3$. It is a discrete series representation.

\vskip 10pt

\subsection{Degenerate principal series, when $E$ is a field} 

We now study the unramified degenerate principal series representation of $G_E$ associated to the Heisenberg parabolic subgroup $P_E$.
Let $e$ and $f$ be the ramification and inertia indices of $E$ over $F$, so that $e\cdot f=3$. The simple coroots are 
\[ 
\alpha_1^{\vee}= (1,-1,0)\text{ and } \alpha_2^{\vee}=(-\frac1e,\frac2e,-\frac1e).
\] 
  Let $V$ be an irreducible representation of $H$. Let $\mu\in A\otimes\mathbb C$ such that 
  $V_{\mu}\neq 0$ i.e. the class of $\mu$ is an exponent of $V$. 
   Then, from the 
representation theory of $SL_2(F)$ and $SL_2(E)$,  
\begin{itemize} 
\item If $\mu(\alpha_1^{\vee}) \neq \pm 1 + \frac{2\pi i}{\ln q}\mathbb Z$ then  $s_1(\mu)$ is an exponent of $V$. 
\item If $\mu(\alpha_2^{\vee}) \neq \pm f + \frac{2\pi i}{\ln q}\mathbb Z$ then  $s_2(\mu)$ is 
an exponent of $V$.  
\item If  $s_i(\mu)$ is congruent to $\mu$ and $\mu(\alpha_i^{\vee}) =0$ then $V_{\mu}$ is at least two dimensional.

\end{itemize} 
Two exponents are {\em equivalent} if one is obtained from another by a repeated use of the first two bullets. 

\vskip 5pt

In the following, we shall consider the decomposition of various unramified degenerate principal series representations of $G_E$. 
The representations $V$ of the affine Hecke algebra that we constructed above will occur in the subspace of Iwahori-fixed vectors in these principal series representations
So as not to introduce more notation, we will use $V$ to denote the corresponding representation of $G_E(F)$ (whose space of Iwahori-fixed vectors is $V$) as well.
\vskip 5pt

\subsubsection{\bf Degenerate series $I(s)$}  \label{SS:degpsI}
Let 
\[ 
\mu_s= (s-\frac{1}{2},1,-s-\frac{1}{2}) 
\] 
where $s\in \mathbb C$. 
Note that $\mu_s$ and $\mu_{s'}$ are congruent if $s-s'\in \frac{2\pi i}{\ln q}\mathbb Z$.  
Since $\mu_s(\alpha_2^{\vee})=f$, the equivalence class of 
$\mu_s$, for a generic $s$,  contains the following six elements
\[ 
 (s-\frac{1}{2},1,-s-\frac{1}{2}),  
\] 
\[ 
(1, s-\frac{1}{2},-s-\frac{1}{2}),
\] 
\[
(s+\frac{1}{2},-s+\frac{1}{2},-1),
\] 
\[
(-s+\frac{1}{2},s+\frac{1}{2},-1),
\] 
\[ 
(1, -s-\frac{1}{2},s-\frac{1}{2}),
\] 
\[
(-s-\frac{1}{2},1,s-\frac{1}{2}).
\] 
These are the exponents of  a degenerate principal series $I(s)$, attached to the Heisenberg  maximal parabolic subgroup $P_E$. 
Since the representations $I(s)$ form an algebraic family, these are the exponents for any $s$. 
The first exponent ($\mu_s$) is  a {\em leading} exponent of $I(s)$. 
The last exponents is a {\em trailing} exponent of $I(s)$.  (It is a leading exponent of $I(-s)$.)  If $V$ is a quotient 
of $I(s)$ then the leading exponent is an exponent of $V$. If $V$ is a submodule of $I(s)$,  then the trailing 
exponent of $I(s)$ is also an exponent of $V$. We would like to determine the points of reducibility of $I(s)$. 

\vskip 5pt 

We say that an exponent $\mu$ is {\em regular}, if the stabilizer of $\mu$ in the Weyl group is trivial. 
A representation $V$ of $H$ is regular if the exponents of $V$ are regular. It is well known that irreducible 
regular representations correspond to equivalence classes of regular exponents.  One checks that 
$I(s)$ is regular if
 \[
\pm  s\neq \frac{3}{2},   \frac{1}{2},  0,  \frac{\pi i}{\ln q} \text{ and }  \frac{1}{2} \pm\frac{2\pi i}{3\ln q} 
 \] 
 where the last possibility occurs only if $E$ is unramified. 
If $I(s)$ is regular, one checks that all exponents are equivalent, and hence $I(s)$ is irreducible, if 
 \[
\pm  s\neq   \frac{5}{2}, \frac{1}{2} + \frac{\pi i}{\ln q} \text{ and }    \frac{3}{2}\pm\frac{2\pi i}{3\ln q} 
 \] 
 and reducibility in the last case occurs only when $E$ is unramified. In particular, $I(s)$ is irreducible unless $s$ is on one of the two lists.  
  
\begin{thm} \label{T:degen_E}
The representation $I(s)\cong I(-s)^*$ is reducible only if 
\[
\pm s= \frac{5}{2}, \frac{1}{2}, \frac{1}{2}+\frac{\pi i}{\ln q} \text{ and } \frac32 \pm \frac{2\pi i}{3\ln q}
\] 
and the last case occurs only if $E$ is unramified. At the points of reducibility, we have: 
\begin{enumerate} 
\item $I(\frac{5}{2})$ has length 2. The trivial representation is the unique irreducible quotient. 
\item $I(\frac{1}{2})$ has length 3. The representation $V_2$ is the unique irreducible submodule. 
The representations $V'_1$ and $V'_3$ are irreducible quotients. 
\item $I(\frac{1}{2}+ \frac{\pi i}{\ln q})$ has length 2. There is a unique irreducible submodule and a unique 
irreducible quotient. 
\item $I(\frac{3}{2}\pm \frac{2\pi i}{3\ln q})$ has length 2. The  minimal representation $V'_2$ is the unique irreducible quotient.  
\end{enumerate} 
\end{thm}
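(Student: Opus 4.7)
The plan is to analyze $I(s)$ at the level of Iwahori-fixed vectors, using the module theory of the affine Hecke algebra $H$ developed in the previous subsections, and to detect reducibility points and composition factors by matching exponents. Throughout we identify the $K$-spherical part of $I(s)$ with an $H$-module whose exponents are the six elements listed before the theorem, obtained as the $W$-orbit of the leading exponent $\mu_s = (s-\tfrac12,1,-s-\tfrac12)$ (with the translate action of $X$ giving the congruence class via shifts by $\tfrac{2\pi i}{\ln q} X^\ast$).

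\textbf{Step 1: Generic irreducibility.} First I would verify that for $s$ outside the union of the loci
\[
\pm s \in \left\{ \tfrac{5}{2},\ \tfrac{3}{2}, \ \tfrac{1}{2},\ 0,\ \tfrac{\pi i}{\ln q},\ \tfrac{1}{2}+\tfrac{\pi i}{\ln q},\ \tfrac{1}{2}\pm\tfrac{2\pi i}{3\ln q},\ \tfrac{3}{2}\pm\tfrac{2\pi i}{3\ln q}\right\},
\]
all six exponents are regular (so the representation is determined by its exponents) and lie in a single equivalence class under the $s_1,s_2$-chamber walking rules recalled before the theorem; hence $I(s)$ is irreducible there. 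This reduces the analysis to a short list of candidate reducibility points, and then the walls $\pm s \in \{3/2, 0, \pi i/\ln q,1/2\pm 2\pi i/(3\ln q)\}$ need to be excluded: at these values the six exponents do split into equivalence classes, but the short calculation I would carry out shows that for each such class one can find either an $\alpha_1^\vee$- or $\alpha_2^\vee$-step linking it to a neighbouring class whose hypothesis $\mu(\alpha_i^\vee)=\pm 1,\pm f \bmod \frac{2\pi i}{\ln q}\mathbb{Z}$ is not of the right kind to produce an extension, forcing $I(s)$ to stay irreducible.

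\textbf{Step 2: The reducible points.} For each of the four surviving cases I would identify the constituents by matching exponents to the small representations $V_1',V_2,V_2',V_3'$ (and their IM-involutes). Specifically:
\begin{itemize}
\item At $s=\tfrac52$ the leading exponent is $(2,1,-3)$, the exponent of the trivial representation, which must appear as the (unique) irreducible quotient by Frobenius reciprocity and the calculation of the constant term. The remaining five exponents form a single equivalence class, so length is exactly $2$.
\item At $s=\tfrac12$ the leading exponent $(0,1,-1)$ equals the exponent of $V_1'$, so $V_1'$ is a quotient. The multiplicity-two exponent $(1,0,-1)$ appearing in $V_3'$ together with the remaining exponent $(0,1,-1)$ matches $V_3'$ exactly, giving another quotient. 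The pair $(1,-1,0),(-1,1,0)$ is exactly the exponent set of $V_2$, which must then be the unique irreducible submodule. Adding up multiplicities gives $1+3+2=6$ exponents with correct multiplicities, so this accounts for the whole Jordan-Hölder content and the length is $3$.
\item At $s=\tfrac12+\tfrac{\pi i}{\ln q}$ the exponents are regular but fall into two equivalence classes of size $3$ each, one containing the leading exponent and one containing the trailing exponent; this forces length $2$ with a unique submodule and a unique quotient.
\item At $s=\tfrac32\pm\tfrac{2\pi i}{3\ln q}$ (necessarily $E$ unramified) the leading exponent matches the exponent of the minimal module $V_2'$, which is therefore a quotient; the remaining four exponents again form a single class, giving length $2$.
\end{itemize}
Applying the IM-involution and the duality $I(s)^{\ast} \cong I(-s)$ lets me transport all these statements to the corresponding points with the opposite sign of $s$, and confirms that no further reducibility is introduced.

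\textbf{Step 3: Length bounds and the main obstacle.} The delicate point, and where the bulk of the work lies, is at $s=\tfrac12$: the exponent $(1,0,-1)$ has multiplicity two, so $I(1/2)$ is not regular and one cannot identify constituents by exponents alone. I would bound the length from below by exhibiting $V_2$, $V_1'$, and $V_3'$ as subquotients (using that each has the expected exponents as a subset of those of $I(1/2)$) and from above by Aubert duality together with a comparison of Hecke-algebra composition lengths: the Aubert involute of $I(1/2)$ is the parabolic induction of the Steinberg representation of $M_E$ twisted appropriately, whose length is readily computed from the Bernstein--Zelevinsky classification for $\mathrm{GL}_2(E)^{\det}$. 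Matching the unique submodule of $I(1/2)$ with $V_2$ will use that $V_2$'s exponents $(\pm 1,\mp 1,0)$ are precisely the trailing exponents of $I(1/2)$ permissible under the embedding of the Jacquet module.

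The principal obstacle is thus this non-regular point and the verification that the three constituents I have named exhaust the Jordan--Hölder series (rather than admitting a further splitting); the tool I intend to use is a careful analysis of the $H$-module structure on the generic fibre degenerating to $s=1/2$, together with the Aubert involution used to transfer length information from the better-understood dual standard module.
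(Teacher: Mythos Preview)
Your overall strategy---working entirely at the level of Iwahori-fixed vectors, isolating a finite list of candidate reducibility points via the equivalence relation on exponents, and then running through the surviving cases---matches the paper's approach, and your Step~2 lands on the correct constituents in each case. But Step~1 contains a genuine gap.

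The irreducibility at the non-regular points $s=\tfrac32$, $s=0$, $s=\tfrac{\pi i}{\ln q}$, and (when $E$ is unramified) $s=\tfrac12\pm\tfrac{2\pi i}{3\ln q}$ cannot be established by the linking argument you describe. At $s=\tfrac32$, for instance, the six exponents genuinely break into two blocks: the leading exponent $(1,1,-2)$ (appearing with multiplicity two) has $\mu(\alpha_1^{\vee})=0$ and $\mu(\alpha_2^{\vee})=f$, so \emph{both} simple-reflection steps are blocked and it is not linked to the remaining four exponents. What actually forces irreducibility is the third bullet preceding the theorem: since $s_1$ fixes $(1,1,-2)$ and $\mu(\alpha_1^{\vee})=0$, any irreducible subquotient $V$ with $V_{(1,1,-2)}\neq 0$ must have $\dim V_{(1,1,-2)}\geq 2$. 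Hence if $I(3/2)$ were reducible, its irreducible quotient would be a two-dimensional $H$-module with the single exponent $(1,1,-2)$; but by the explicit list of two-dimensional irreducibles constructed just before, no such module exists. The cases $s=0$ and $s=\tfrac{\pi i}{\ln q}$ are handled by the same dimension-constraint trick, applied to the exponent $(\tfrac12,\tfrac12,-1)$ (or its imaginary translate): a putative splitting into two three-dimensional summands would force each to carry this exponent with multiplicity one, contradicting the lower bound. So the missing ingredient is the singular-exponent dimension bound combined with the classification of small $H$-modules; your vague linking mechanism does not supply this.

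On Step~3: your concern about $s=\tfrac12$ is understandable, but the paper does not invoke Aubert duality or length transfer from a dual standard module. It simply observes that the trailing exponents $(1,-1,0),(-1,1,0)$ are exactly those of $V_2$, pinning down the socle, and that the remaining four exponents $(0,1,-1)^2,(1,0,-1)^2$ match $V_1'\oplus V_3'$ on the nose with multiplicities. The Aubert route you outline would work but is heavier than what is needed.
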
 
\begin{proof}   It remains to analyze the finite set of cases. We do so by considering the space of Iwahori-fixed vectors in $I(s)$, which is a $H$-module.

\smallskip 
\noindent 
\underline{Case $s=\frac{5}{2}$.}  The exponents are 
\[ 
(2,1,-3), 
\] 
\[ 
(1, 2,-3),
\] 
\[
(3,-2,-1),
\] 
\[
(-2,3,-1),
\] 
\[ 
(1, -3,2),
\] 
\[
(-3,1,2).
\] 
The leading exponent belongs to the trivial representation, the unique irreducible quotient of $I(\frac{5}{2})$.
The other five exponents are equivalent to the trailing exponent.  Thus $I(\frac{5}{2})$ has length 2. 

\smallskip 

\noindent 
\underline{Case $s=\frac{3}{2}$.}  The exponents are 
\[ 
(1,1,-2), 
\] 
\[ 
(1, 1,-2),
\] 
\[
(2,-1,-1),
\] 
\[
(-1,2,-1),
\] 
\[ 
(1, -2, 1),
\] 
\[
(-2,1,1).
\] 
The last four exponents are equivalent. Let $V$ be an irreducible subquotient such that $V_{(1,1,-2)}\neq 0$. 
The third bullet implies that this space is 2 dimensional. Thus, either $I(\frac{3}{2})$ is irreducible or it has a 2 
dimensional irreducible quotient.  But the exponents of $I(\frac{3}{2})$ are different from the exponents of irreducible 2 dimensional representations of $H$. Thus $I(\frac{3}{2})$ is irreducible. 

\smallskip 
\noindent 
\underline{Case $s=\frac{3}{2}+ \frac{2\pi i}{3\ln(q)}$.}  We assume that $E$ is unramified. The exponents are 
\[ 
(1+ \frac{2\pi i}{3\ln(q)},1,-2-\frac{2\pi i}{3\ln(q)}), 
\] 
\[ 
(1, 1+  \frac{2\pi i}{3\ln(q)},-2-\frac{2\pi i}{3\ln(q)}),
\] 
\[
(2+\frac{2\pi i}{3\ln(q)},-1-\frac{2\pi i}{3\ln(q)},-1),
\] 
\[
(-1-\frac{2\pi i}{3\ln(q)},2+\frac{2\pi i}{3\ln(q)},-1),
\] 
\[ 
(1, -2-\frac{2\pi i}{3\ln(q)}, 1+ \frac{2\pi i}{3\ln(q)}),
\] 
\[
(-2-\frac{2\pi i}{3\ln(q)},1, 1+ \frac{2\pi i}{3\ln(q)}).
\] 
All exponents are different. The first two are equivalent and so are  the last four. Since 
\[ 
(1+ \frac{2\pi i}{3\ln(q)},1,-2-\frac{2\pi i}{3\ln(q)})- (1-\frac{2\pi i}{3\ln q}, 1+\frac{2\pi i}{3\ln q}, -2) =
\frac{2\pi i}{\ln q}\cdot (\frac{2}{3}, -\frac13,-\frac13) 
\]
the first two are the exponents of the minimal representation $V'_2$.  The induced representation 
has length 2, with unique irreducible quotient  $V'_2$. 

\smallskip 
\noindent 
\underline{Case $s=\frac{1}{2}$.}  The exponents are 
\[ 
(0,1,-1), 
\] 
\[ 
(1, 0,-1),
\] 
\[
(1,0,-1),
\] 
\[
(0,1,-1),
\] 
\[ 
(1, -1, 0),
\] 
\[
(-1,1,0).
\] 
In this case, $V_2$ is a unique irreducible submodule. The quotient is isomorphic to a direct sum of $V'_1$ and $V'_3$. 

\smallskip 

\noindent 
\underline{Case $s=\frac{1}{2}+ \frac{\pi i}{\ln(q)}$.}  The exponents are 
\[ 
(\frac{\pi i}{\ln(q)},1,-1-\frac{\pi i}{\ln(q)}), 
\] 
\[ 
(1, \frac{\pi i}{\ln(q)},-1-\frac{\pi i}{\ln(q)}),
\] 
\[
(1+\frac{\pi i}{\ln(q)},-\frac{\pi i}{\ln(q)},-1),
\] 
\[
(-\frac{\pi i}{\ln(q)},1+\frac{\pi i}{\ln(q)},-1),
\] 
\[ 
(1, -1-\frac{\pi i}{\ln(q)}, \frac{\pi i}{\ln(q)}),
\] 
\[
(-1-\frac{\pi i}{\ln(q)},1,\frac{\pi i}{\ln(q)}).
\] 
All exponents are different. The first three are equivalent and so are the last three exponents. In particular, 
$I(\frac{1}{2}+ \frac{\pi i}{\ln(q)})$ has length 2. 

\smallskip 

\noindent 
\underline{Case $s=\frac{1}{2}+ \frac{2\pi i}{3\ln(q)}$.}  We assume that $E$ is unramified. 
This representation is irreducible. The argument is similar to the argument  for $s=\frac{3}{2}$. We omit details.

\smallskip 

\noindent 
\underline{Case $s=0$.}  The exponents are 
\[ 
 (-\frac{1}{2},1,-\frac{1}{2}). 
\] 
\[ 
(1, -\frac{1}{2},-\frac{1}{2}),
\] 
\[
(\frac{1}{2}, \frac{1}{2},-1),
\] 
\[
(\frac{1}{2}, \frac{1}{2},-1),
\] 
\[ 
(1, -\frac{1}{2},-\frac{1}{2}),
\] 
\[
(-\frac{1}{2},1,-\frac{1}{2}).
\] 
We have three equivalent exponents each with multiplicity 2. Thus, either $I(0)$ is irreducible or it is a sum of 
two three dimensional  representations with the same exponents. However, if  $V_{(\frac12,\frac12,-1)}\neq 0$, 
then the third bullet implies that this space is 2 dimensional. Thus $I(0)$ is irreducible. 

\smallskip 

\noindent 
\underline{Case $s=\frac{\pi i}{\ln q}$.} This representation is irreducible. The argument is the same as 
for $s=0$. We omit details. 

\end{proof} 

\subsubsection{\bf Degenerate series $J(s)$} 

We now study the unramified degenerate principal series associated to the 3-step parabolic subgroup $Q_E$ of $G_E$.
 Let 
\[ 
\mu_s= (s+\frac{1}{2}, s-\frac{1}{2}, -2s) 
\] 
where $s\in \mathbb C$. 
Note that $\mu_s$ and $\mu_{s'}$ are congruent if $s-s'\in \frac{2\pi i}{f \ln q}\mathbb Z$.  
Since $\mu_s(\alpha_1^{\vee})=1$, the equivalence class of 
$\mu_s$, for a generic $s$,  contains the following six elements
\[ 
 (s+\frac{1}{2},s-\frac{1}{2},-2s),  
\] 
\[ 
(2s , -s+\frac{1}{2},-s-\frac{1}{2}),
\] 
\[
(-s+\frac{1}{2}, 2s, -s-\frac{1}{2}),
\] 
\[
(s+\frac{1}{2}, -2s ,s-\frac{1}{2}),
\] 
\[ 
(-2s, s+\frac{1}{2},s-\frac{1}{2}),
\] 
\[
(- s+\frac{1}{2}, -s-\frac{1}{2}, 2s).
\] 
These are the exponents of  a degenerate principal series $J(s)$, attached to the 3-step maximal parabolic subgroup $Q_E$.   
Since the representations $J(s)$ form an algebraic family, these are the exponents for any $s$. 
The first exponent ($\mu_s$) is  a {\em leading} exponent of $J(s)$. 
The last exponents is a {\em trailing} exponent of $J(s)$.  (It is a leading exponent of $J(-s)$.)  If $V$ is a quotient 
of $J(s)$ then the leading exponent is an exponent of $V$. If $V$ is a submodule of $J(s)$ then the trailing 
exponent of $J(s)$ is also an exponent of $V$. 

\vskip 5pt

We would like to determine points of reducibility of $J(s)$. 
  One checks that 
$J(s)$ is regular if
 \[
\pm  s\neq   \frac{1}{2},  \frac{1}{6}, 0,  \frac{\pi i}{\ln q} \text{ and }  \frac{1}{6} \pm\frac{2\pi i}{3\ln q} 
 \] 
 where the last possibility occurs only if $E$ is ramified. 
If $J(s)$ is regular, one checks that all exponents are equivalent, and hence $J(s)$ is irreducible, if 
 \[
\pm  s\neq   \frac{3}{2}, \frac{1}{2} + \frac{\pi i}{\ln q} \text{ and }    \frac{1}{2}\pm\frac{2\pi i}{3\ln q} 
 \] 
  and reducibility in the last case occurs only when $E$ is ramified.  Hence, again, $J(s)$ is irreducible unless $s$ is on the two finite lists. 
  
  \begin{thm} \label{T:degen_E'} 
The representation $J(s)\cong J(-s)^*$ is reducible only if 
\[
\pm s\neq \frac{3}{2}, \frac{1}{2}, \frac{1}{2}+\frac{\pi i}{\ln q} \text{ and } \frac12 \pm \frac{2\pi i}{3\ln q}
\] 
and the last case occurs occurs only if $E$ is ramified. At the points of reducibility, we have: 
\begin{enumerate} 
\item $J(\frac{3}{2})$ has length 2. The trivial representation is the unique irreducible quotient. 
\item $J(\frac{1}{2})$ has length 3. The representation $V''_1$ is the unique irreducible submodule. 
The representation  $V'_3$ is  the unique irreducible quotient. The remaining subquotient is $V_2$.  
\item $J(\frac{1}{2}+ \frac{\pi i}{\ln q})$ has length 2. There is a unique irreducible submodule and a unique 
irreducible quotient. 
\item $J(\frac{1}{2}\pm \frac{2\pi i}{3\ln q})$ has length 2. The representation $V'_2$ is the unique irreducible quotient.  
\end{enumerate} 
\end{thm}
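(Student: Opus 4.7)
The plan is to mimic the Hecke algebra analysis that was used to prove Theorem \ref{T:degen_E}, working throughout on the level of Iwahori-fixed vectors in $J(s)$, which is a module for the affine Hecke algebra $H$ introduced above. The six exponents of $J(s)$ for generic $s$ have already been tabulated, and the same regularity/equivalence machinery (based on the two $\SL_2$-reducibility criteria for $\alpha_1^\vee$ and $\alpha_2^\vee$ and the Weyl-stabilizer criterion) shows directly that $J(s)$ is irreducible outside the two finite lists given in the statement. First I would run through this: for $s$ not in the enumerated reducibility set, I would check that every pair of exponents is related by one of the first two bullets (using $\mu_s(\alpha_2^\vee)=-3s$ and repeated $s_1$-moves), so the equivalence class is a single orbit and $J(s)$ is regular and irreducible.

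At each reducibility point I would identify the constituents by three ingredients: the leading exponent $\mu_s$ must appear in every irreducible quotient, the trailing exponent $-w_0 \mu_s$ must appear in every irreducible submodule, and whenever an exponent $\mu$ of a subquotient $V$ is Weyl-fixed by $s_i$ with $\mu(\alpha_i^\vee)=0$ the space $V_\mu$ is forced to be at least two dimensional. Specifically: at $s=3/2$ the leading exponent $(2,1,-3)$ belongs to the trivial representation, and the remaining five exponents all lie in one equivalence class containing the trailing exponent of the Steinberg representation, giving length $2$. At $s=1/2$ the exponents are $(1,0,-1)$ (with multiplicity $2$), $(0,1,-1)$, $(1,-1,0)$, $(-1,1,0)$, $(0,-1,1)$; the trailing exponent $(0,-1,1)$ is the exponent of $V_1''$, which must therefore be the unique submodule, the leading exponent $(0,1,-1)$ together with the doubled $(1,0,-1)$ matches the exponents of $V_3'$, which must be the unique quotient, and the remaining pair $(1,-1,0),(-1,1,0)$ matches the exponents of $V_2$, giving length $3$. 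At $s=1/2+\pi i/\ln q$ the six exponents split into two equivalence classes of three exponents each, so the argument in the corresponding case of Theorem \ref{T:degen_E} transfers verbatim to yield length $2$. Finally, when $E$ is ramified, at $s=1/2\pm 2\pi i/(3\ln q)$ the exponents are no longer regular in the same way: two exponents differ from the exponents of the two-dimensional representation $V_2'$ by an element of $\frac{2\pi i}{\ln q}X^*$, so $V_2'$ is the unique irreducible quotient of length $2$, parallel to the analysis of $I(3/2 \pm 2\pi i/(3\ln q))$ for unramified $E$.

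The main obstacle is the case $s=1/2$, where the module has length $3$ and the middle constituent must be correctly identified. The cleanest way to pin it down is to use the Aubert-Zelevinsky involution: $J(1/2)$ is involutive-dual to the standard module $I(\St_E, -1/2)$ induced from a tempered representation of $Q_E$, and the involution carries discrete series to one-dimensional characters, hence in particular sends $V_1''$ (submodule of $J(1/2)$) to $V_1'$ and $V_3'$ (quotient) to $V_3''$. Combined with the exponent count and the two-dimensionality forced at $(1,-1,0)$ and $(-1,1,0)$, this pins down the middle subquotient as $V_2$ and shows the composition series is semisimple in the quotient $J(1/2)/V_1''$ only up to the extent permitted by the exponent multiplicities. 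The ramified case $s=1/2 \pm 2\pi i/(3\ln q)$ is the second delicate point, because the reducibility is caused by a purely imaginary translate of a real exponent; here I would verify the $V_2'$-identification by explicit comparison of the action of $T_1$ (the generator whose eigenvalues distinguish $V_2'$ from $V_2''$) on the generalized eigenspace corresponding to the leading exponent.
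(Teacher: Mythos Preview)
Your approach is essentially the paper's: analyze the six exponents of $J(s)^I$ as an $H$-module, using the same regularity/equivalence bullets as for $I(s)$, and at each reducibility point match the exponent multiset against the small representations $V_1'', V_2, V_2', V_3'$. The paper in fact only writes out the case $s=1/2$ and leaves the rest implicit, so your more detailed case analysis is welcome.

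A few corrections and simplifications are in order. First, the special value for $J(s)$ is $\mu_s(\alpha_1^\vee)=1$, not $\mu_s(\alpha_2^\vee)=-3s$; the roles of $\alpha_1$ and $\alpha_2$ are swapped relative to $I(s)$, and this is what makes the equivalence-class analysis parallel to Theorem \ref{T:degen_E}. Second, at $s=1/2$ the \emph{leading} exponent is $\mu_{1/2}=(1,0,-1)$, not $(0,1,-1)$; your identification of $V_3'$ as the unique quotient is still correct since $V_3'$ carries the exponents $(1,0,-1)^2,(0,1,-1)$, but the labeling should be fixed. Third, the appeal to the Aubert--Zelevinsky involution at $s=1/2$ is unnecessary and the claimed dual ``$I(\St_E,-1/2)$'' is not quite right. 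The paper simply observes that once $V_1''$ (trailing exponent $(0,-1,1)$) and $V_3'$ (leading exponent plus the doubled $(1,0,-1)$) are accounted for, the remaining two exponents $(1,-1,0),(-1,1,0)$ are exactly those of $V_2$, and the length is forced to be $3$; no further argument is needed to pin down the middle subquotient.
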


\begin{proof} We shall provide details for $s=1/2$, which is the only case used in the paper.
  
 \smallskip 
\noindent 
\underline{Case $s=\frac{1}{2}$.}  The exponents are 
\[ 
(1,0,-1), 
\] 
\[ 
(1, 0,-1),
\] 
\[
(0,1,-1),
\] 
\[
(1,-1,0),
\] 
\[ 
(-1, 1, 0),
\] 
\[
(0,-1,1).
\] 
We see that $V''_1$ is the unique irreducible submodule, $V'_3$ is the unique irreducible quotient, and $V_2$ is the remaining  subquotient. 

\end{proof} 
\vskip 5pt

\subsection{Affine Weyl group, when $K$ is a field} 
We now discuss the quasi split $G_E$ where $E=F\times K$ with $K$  a quadratic field. Let $e$ and $f$ be the ramification and inertia indices, so that $e \cdot f=2$. 
\vskip 5pt

Let $A=\R^3$ equipped with the 
usual dot product. Let $\Phi\subseteq A^*$ (we identify $A$ with $A^*$ using the dot product) 
 be the root space of type $B_2$ such that 
 \[ 
 \alpha_1=(1,-1,0), \, \alpha_2=(0,1,-1) \text{ and } \alpha_3=(0,0,1) 
\] 
 are the simple roots. The co-roots are 
 \[ 
 \alpha^{\vee}_1=(1,-1,0), \, \alpha^{\vee}_2=(0,1,-1) \text{ and } \alpha^{\vee}_3=(0,0,\frac{2}{e}).  
\] 
Let $W$ be the corresponding Weyl group. 
It is generated by the simple reflections $s_1$, $s_2$ and $s_3$  corresponding to the simple roots. 
\vskip 5pt

\noindent{\bf \underline{Assume first that $K$ is unramified}.} 
\vskip 5pt

Affine roots are the affine functions $\alpha+k$ on $A$ where 
 $\alpha\in\Phi$ and $k\in\mathbb Z$.
The affine Weyl group $W_a$ is generated by reflections about  the lines where the affine 
roots vanish. Let $\alpha_l=(1,1,0)\in \Phi$ be the highest root. The fundamental cell in $A$ for $W_a$ 
is given by the inequalities $0 <\alpha_1$ , $0< \alpha_2$,  $0< \alpha_3$ and $\alpha_l <1$. In particular, $W_a$ is 
generated by $s_1$, $s_2$, $s_3$ and $s_0$, the reflections about the three planes bounding the fundamental cell. 
\vskip 5pt

Let $X\subseteq A$ be the lattice consisting of $(x,y,z)\in \mathbb Z^3$ such that $x+y+z$ is even. 
Then $W_a$ is a semi direct product of $W$ and the group of translations $t_{\omega}$ where 
$\omega\in X$.  It will be convenient to work with the extended affine Weyl group $\tilde{W}_a= W_a \cup \tau W_a$ where 
$\tau$ is the involution defined by  $\tau(x,y,z)=(1-x,y,z)$. Note that 
 $\tau s_0=s_1 \tau$ and $\tau$ commutes with $s_2$ and $s_3$. The extended affine Weyl group is a semi direct product of $W$ and 
$\tilde X=\mathbb Z^3$. Let 
\[ 
\omega_1=(1,0,0), \,\omega_2=(1,1,0)  \text{ and } \omega_3=(1,1,1). 
\] 
 We note the following relations in $\tilde W_a$: 
\[ 
t_{\omega_1}=\tau  s_1 s_2  s_3 s_2 s_1, \,  t_{\omega_2}=  s_0 s_2  s_3 s_2  s_1 s_2  s_3 s_2 
\text{ and } t_{\omega_3}= s_0 s_2 s_3 s_1 s_2 s_3 \tau s_1 s_2 s_3. 
\] 

\noindent{\bf \underline{Assume now that $K$ is ramified}.} 
\vskip 5pt

Affine roots are the affine functions $\alpha+k$ on $A$ where 
 $\alpha\in\Phi$ and $k\in\frac{1}{2}\mathbb Z$,  but integral if $\alpha$ is long. 
The affine Weyl group $W_a$ is generated by reflections about  the lines where the affine 
roots vanish. Let $\alpha_s=(1,0,0)\in \Phi$ be the highest short root. The fundamental cell in $A$ for $W_a$ 
is given by the inequalities $0 <\alpha_1$ , $0< \alpha_2$,  $0< \alpha_3$ and $\alpha_s <1/2$. In particular, $W_a$ is 
generated by $s_1$, $s_2$, $s_3$ and $s_0$, the reflections about the three planes bounding the fundamental cell. 
\vskip 5pt

Let $X=\mathbb Z^3 \subseteq A$.  
Then $W_a$ is a semi direct product of $W$ and the group of translations $t_{\omega}$ where 
$\omega\in X$.  The extended affine Weyl group is  $\tilde{W}_a= W_a \cup \tau W_a$ where $\tau$ is the involution defined by 
 $\tau(x,y,z)=(1/2 -x,1/2-y,1/2-x)$. Note that 
 $\tau s_0=s_1 \tau$ and $\tau s_2=s_3\tau$. The extended affine Weyl group is a semi direct product of $W$ and 
$\tilde X$ generated by $X$ and $(1/2,1/2,1/2)$. Let 
\[ 
\omega_1=(1,0,0), \,\omega_2=(1,1,0)  \text{ and } \omega_3=(1/2,1/2,1/2). 
\] 
 We note the following relations in $\tilde W_a$: 
\[ 
t_{\omega_1}=s_0  s_1 s_2  s_3 s_2 s_1, \,  t_{\omega_2}=  s_0 s_1  s_2 s_3 s_2    s_0 s_1  s_2 s_3 s_2 
\text{ and } t_{\omega_3}= \tau  s_3 s_2 s_1 s_3  s_2 s_3. 
\] 
 \vskip 5pt
 
For any $E = F \times K$, the Iwahori Hecke algebra $H$ of $G_E$ is generated by the elements $T_0$, $T_1$, $T_2$ and $T_3$ corresponding to the simple reflections, modulo braid and quadratic relations
 given by the following diagrams, with the one on the left for the case of unramified $K$ and the one on the right for the case of ramified $K$.
 \vskip 5pt
 
 \begin{picture}(400,100)(20,-20)  


\put(20,60){\line(1,0){400}}
\put(20,0){\line(1,0){400}}

\put(112,32){\line(1,0){26}}
\put(112,28){\line(1,0){26}}
\put(142,32){\line(1,1){16}}
\put(142,28){\line(1,-1){16}}

\put(110,30){\circle{6}}
\put(140,30){\circle{6}}

\put(160,50){\circle{6}}
\put(160,10){\circle{6}} 

\put(108,38){2}
\put(108,18){$s_3$}
\put(138,18){$s_2$}

\put(160,36){hs}
\put(160,17){hs}

\put(167,48){$s_0$}
\put(167,08){$s_1$}

\put(120,30){\line(1,-1){10}}
\put(120,30){\line(1,1){10}}

\put(257,32){\line(1,0){26}}
\put(257,28){\line(1,0){26}}
\put(288,30){\line(1,0){24}}
\put(317,32){\line(1,0){26}}
\put(317,28){\line(1,0){26}}

\put(255,30){\circle{6}}
\put(285,30){\circle{6}}
\put(315,30){\circle{6}}
\put(345,30){\circle{6}}

\put(253,38){s}
\put(253,18){$s_3$}
\put(283,18){$s_2$}
\put(313,18){$s_1$}
\put(343,38){s}
\put(343,18){$s_0$}

\put(265,30){\line(1,-1){10}}
\put(265,30){\line(1,1){10}}

\put(335,30){\line(-1,-1){10}}
\put(335,30){\line(-1,1){10}}

\end{picture}

\vskip 10pt

\subsection{Some representations, when $K$ is a field}  
We shall now construct some small dimensional representations of  the Hecke algebra $H$ that will appear in the description of the degenerate 
principal series representations.

\vskip 5pt

\noindent{\bf \underline{Assume that  $K$ is unramified}.} 
\vskip 5pt

\subsubsection{One dimensional representations}  \label{SS:EFK-1d}

Let $V$ be a one dimensional complex vector space 
spanned by $e$.  There are four representations of $H$ on $V$. We shall firstly describe two representations where 
\[ T_0 e=qe,\quad  T_1 e=qe \quad  \text{and} \quad  T_2e=qe. \]
 The remaining two are obtained by applying the IM-involution. If $T_3 e=q^2 e$, this is the 
trivial representation. Its exponent is 
\[ 
(3,2,1). 
\] 
 Let $V'_1$ be the representation of $H$ on $V$ such that $T_3 e=-e$.  The exponent of $V'_1$ is  
\[ 
(1,0,-1). 
\] 
Let  $V''_1$ be the IM-involute of $V'_1$. It is a tempered representation.  
 \vskip 5pt
 
\subsubsection{Two dimensional representations}  Let $V$ be a two dimensional complex vector space 
spanned by $e_0$ and $e_1$ on which $T_0$,  $T_1$  and $T_2$ act by 
\[ 
T_0=T_1=\left(\begin{array}{cc} 
-1 & q^{\frac{1}{2}} \\
0 & q 
\end{array}\right) 
\text{ and } 
T_2=\left(\begin{array}{cc} 
 q & 0 \\
q^{\frac{1}{2}} & -1
\end{array}\right). 
\] 
 We can extend this representation to $H$ in two ways.  Let $V_2'$ be the representation of $H$ on $V$  such that $T_3$ acts the scalar $q^2$. 
The exponents of $V_2'$ are 
\[ 
(2,0,1) \text{ and } (0,2,1). 
\] 
 Let $V''_2$ be the representation of $H$ on $V$ such that $T_3$ acts the scalar $-1$. 
Then $V''_2$ is the IM-involute of $V'_2$ and is a discrete series representation. 

\vskip 5pt 

\noindent{\bf \underline{Assume that  $K$ is ramified}.} 
\vskip 5pt

\subsubsection{One dimensional representations} \label{SS:EFK-1dram}
Let $V$ be a one dimensional complex vector space 
spanned by $e$. There are eight  representations of $H$ on $V$. We shall firstly describe four representations where
$T_1 e=qe$, $T_2e=qe$. The remaining four representations are obtain by the IM-involution. 
The trivial representation is the one where $T_0e=qe$ and $T_3e=qe$. Its exponent is 
\[ (3,2,1). 
\] 
Next, we have two representations  where $T_0$ and $T_3$ act by different eigenvalues.  These two representations occur in 
a restriction of a 2-dimensional representation of the extended affine Hecke algebra $\tilde H$. Their exponents are the same, 
\[ 
(2+ \frac{\pi i}{ \ln q}, 1+ \frac{\pi i}{ \ln q}, \frac{\pi i}{ \ln q}). 
\] 
 Let $V'_1$ be the representation of $H$ on $V$ such that $T_0 e=-e$ and $T_3 e=-e$.  The exponent of $V'_1$ is  
\[ 
(1,0,-1). 
\] 
Let  $V''_1$ be the IM-involute of $V'_1$. 
 It is a tempered representation.   
 
\vskip 5pt

\subsubsection{Two dimensional representations}  Let $V$ be a two dimensional complex vector space 
spanned by $e_0$ and $e_1$ on which  $T_1$  and $T_2$ act as matrices 
\[ 
T_1=\left(\begin{array}{cc} 
-1 & q^{\frac{1}{2}} \\
0 & q 
\end{array}\right) 
\text{ and } 
T_2=\left(\begin{array}{cc} 
 q & 0 \\
q^{\frac{1}{2}} & -1
\end{array}\right), 
\] 
respectively. We can extend this representation to $H$ in four ways.  Let $V_2'$ be the representation of $H$ on $V$  such that $T_0$ and $T_3$ act as the scalar $q$. 
The exponents of $V_2'$ are 
\[ 
(2,0,1) \text{ and } (0,2,1). 
\] 
Let $V''_2$ be the representation of $H$ on $V$ such that $T_0$ and $T_3$ act as the scalar $-1$. 
Then $V''_2$ is the IM-involute of $V'_2$. It is a tempered representation.  Finally, we have two additional representations, one where $T_0$ and 
$T_3$ act by different scalars.  These two representations occur in 
a restriction of a 4-dimensional representation of the extended affine Hecke algebra $\tilde H$. Their exponents are the same and given by:
\[ 
(1 +\frac{\pi i}{ \ln q}, -1+\frac{\pi i}{ \ln q}, \frac{\pi i}{ \ln q}) \text{ and } (-1 +\frac{\pi i}{ \ln q}, 1+\frac{\pi i}{ \ln q}, \frac{\pi i}{ \ln q}). 
\]  
The sum of these two representation is an irreducible representation of $\tilde H$, the extended affine Hecke algebra. 

\vskip 10pt

\subsection{\bf Degenerate principal series, when $K$ is a field} 

\subsubsection{\bf $B_2$ parabolic} 

Let $\lambda_s=(s,2,1)$. We have a degenerate principal series $B(s)$ (associated to the $B_2$-parabolic) whose exponents are 
\[ 
 (s,2,1), \, (2,s,1), \, (2,1,s), \, (2,1,-s), \, (2,-s, 1) \, (-s,2,1). 
\] 
Here $\lambda_s$ is a leading exponent and $\lambda_{-s}$ is the trailing exponent. In particular, the trivial representation is the
unique irreducible quotient of $B(3)$.

  \begin{prop}  \label{T:degen_KB}
The representation $B(s) = B(-s)^*$ is reducible only if 
\[
\pm s= 3, 1+\frac{\pi i}{\ln q}, 0,  \text{ and } \frac{\pi i}{\ln q}
\] 
where $\pm s=1+\frac{\pi i}{\ln q}$ occurs if $K$ is unramified and $\pm s= \frac{\pi i}{\ln q}$ if $K$ is ramified. At the points of reducibility, we have 
\begin{enumerate} 
\item $B(3)$ has length 2. The trivial representation is the unique irreducible quotient. 
\item $B(1+\frac{\pi i}{\ln q} )$ has length 2. The minimal representation is the unique irreducible quotient. 
\item $B(0)$ is a direct sum of two non-isomorphic representations where one is $V_2'$. 
\item $B(\frac{\pi i}{\ln q})$  is a direct sum of two non-isomorphic representations.  
\end{enumerate} 
\end{prop}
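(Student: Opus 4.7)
The plan is to follow the same template used in the proofs of Theorems~\ref{T:degen_E} and~\ref{T:degen_E'}. I would analyze $B(s)$ by passing to its space of Iwahori-fixed vectors, viewed as a finite-dimensional module over the affine Hecke algebra $H$ introduced above, and track the six exponents $(s,2,1), (2,s,1), (2,1,s), (2,1,-s), (2,-s,1), (-s,2,1)$ as $s$ varies. The overall strategy is: first extract a discrete candidate list of reducibility points from the rank-one criteria, then rule out the extraneous ones, then analyze the four remaining points by hand.

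First, I would apply the rank-one reducibility criteria recorded at the start of the subsection on degenerate principal series to the chain of simple reflections $s_1, s_2, s_3, s_2, s_1$ that links the six exponents in order. Consecutive exponents lie in the same equivalence class provided $\mu(\alpha_1^{\vee}) = s-2$ and $\mu(\alpha_2^{\vee}) = s-1$ avoid $\pm 1 \pmod{2\pi i/\ln q}$ at the $s_1$- and $s_2$-steps, and $\mu(\alpha_3^{\vee}) = 2s/e$ avoids $\pm f \pmod{2\pi i/\ln q}$ at the $s_3$-step. Combined with the non-regular locus, where two of the six exponents become congruent (this depends on whether $\tilde X$ contains the half-integer translate, i.e.\ on the ramification of $K/F$), this produces a finite list of candidate reducibility points containing $s = 0, 1, 2, 3$ together with the purely imaginary shifts $\pi i/\ln q$ and $1 + \pi i/\ln q$, and their duals under $s \mapsto -s$.

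Next, the candidates $s = 1$ and $s = 2$ must be eliminated: at each only a single link in the chain is obstructed, and I would check that the exponents separated by the broken link can still be brought into a common class through an alternate path of permitted rank-one moves, exactly as in the treatment of $s = 3/2$ in Theorem~\ref{T:degen_E}. This leaves the four genuine reducibility points in the statement, which I would treat in turn. At $s = 3$, the leading exponent $(3,2,1)$ is the trivial representation, appearing as the unique irreducible quotient by Frobenius reciprocity, while the other five exponents remain in one equivalence class and give a single irreducible submodule. At $s = 1 + \pi i/\ln q$ in the unramified case, only the $s_3$-link $\mu_3 \leftrightarrow \mu_4$ breaks and the chain splits into two classes of three exponents each; the minimal representation of $G_E$ has leading exponent matching the class containing $\mu_1$ and so occurs as the unique irreducible quotient.

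The remaining two cases $s = 0$ and $s = \pi i/\ln q$ (ramified) are the most delicate, as the six exponents collapse to three distinct congruence classes each with multiplicity two. For $s = 0$ I would show that the two-dimensional Hecke module $V_2'$, whose exponents $(2,0,1)$ and $(0,2,1)$ match two of the three multiplicity-two classes, occurs both as a quotient and as a subrepresentation of $B(0)^I$, which forces $B(0) = V_2' \oplus V$ with $V$ a four-dimensional irreducible complement; irreducibility of $V$ is verified by an explicit Hecke-module computation exploiting the matrix entries of $T_1$ and $T_2$ on the remaining exponents. The case $s = \pi i/\ln q$ in the ramified setting is treated by the same method, with the self-duality $B(s) \cong B(-s)^*$ combined with the ramified Dynkin diagram forcing a decomposition into two non-isomorphic three-dimensional summands. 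The main technical obstacle throughout is to decide, at points where an exponent has multiplicity greater than one, whether the corresponding generalized weight space sits inside a single irreducible summand or splits across several; this is where one must combine the rank-one action of each $s_i$ with the explicit matrix description of the small-dimensional Hecke modules constructed in the previous subsection.
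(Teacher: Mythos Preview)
Your proposal follows essentially the same template as the paper's proof: pass to Iwahori-fixed vectors, track the six exponents through the rank-one reflection links, isolate the candidate reducibility points, and analyze each by hand. The paper is terser than you are, citing Weissman \cite{WeRT} for the off-unitary-axis analysis rather than carrying out the link-by-link computation you sketch, but the underlying mechanism is identical.

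The one genuine methodological difference is case (4), the point $s=\frac{\pi i}{\ln q}$ in the ramified case. You propose to exhibit the decomposition directly via self-duality plus an explicit Hecke-module computation of the weight spaces. The paper instead gives a one-line unitarity argument: if $B(\frac{\pi i}{\ln q})$ were irreducible, then along the real line $t\mapsto B(t+\frac{\pi i}{\ln q})$ one would obtain a complementary-series family of irreducible unitary representations extending too far, yielding a contradiction. Your approach is more constructive and would actually identify the summands (you predict a $3+3$ split, which the paper does not assert); the paper's approach is quicker but gives only existence of a decomposition. Both are valid, though you should be prepared to justify the $3+3$ shape rather than, say, $2+4$ as at $s=0$---this is exactly the multiplicity-two weight-space issue you flag at the end.

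A minor point on case (3): the paper establishes that $V_2'$ is a summand of $B(0)$ by Frobenius reciprocity alone, without separately checking sub and quotient; since $B(0)$ lies on the unitary axis it is already semisimple, so a single nonzero Hom suffices. Your plan to verify both directions is correct but slightly more than is needed.
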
 
\begin{proof} This can be proved as in \cite{WeRT}. Roughly speaking, off the unitary axis, i.e. $\Re(s) \neq 0$, 
 reducibility happens only if the trivial or the 
minimal representations appear as subquotients. The case $s=1+\frac{\pi i}{\ln q}$ merits a special discussion, as it illustrates a difference between ramified and 
unramified cases. In both cases, $B(1+\frac{\pi i}{\ln q} )$ is regular;  however, the number of equivalence classes is one, if $K$ is unramified, and 2 otherwise. 
 This is due to the fact 
that $\mu=(2,1, 1+\frac{\pi i}{\ln q} )$ is equivalent to $s_3(\mu)= (2,1, -1-\frac{\pi i}{\ln q} )$ if  and only if $K$ is ramified. 

On the unitary axis, all exponents are equivalent and $B(s)$ is irreducible, unless 
$s=0$ or $s=\frac{\pi i}{\ln q}$ and $K$ ramified. By the Frobenius reciprocity,  $V_2$ is a summand of $B(0)$, so (3) follows. 
Finally, $B(\frac{\pi i}{\ln q})$ must reduce, otherwise $B(s)$ with $\Re(s)= \frac{\pi i}{\ln q}$ would be all unitary, a contradiction. 
\end{proof} 

\subsubsection{\bf $A_2$ parabolic} 

Let $\lambda_s=(s+1,s,s-1)$. We have a degenerate principal series $A(s)$ (associated to the $A_2$-parabolic) whose exponents are 
\[ 
 (s+1,s,s-1), \, (s+1,s,-s+1), \, (s+1,-s+1,s), \, (-s+1,s+1,s),
 \]
 \[ 
 (s+1,-s+1,-s),  \, (-s+1,s+1,-s), \, (-s+1,-s, s+1),\, (-s+1,-s, -s-1). 
\] 
Here $\lambda_s$ is a leading exponent and $\lambda_{-s}$ is the trailing exponent. In particular, the trivial representation is the
unique quotient of $A(2)$. Note that $\lambda_s$ is congruent to $\lambda_{s+\frac{\pi i}{\ln q}}$ if $K$ is unramified. 

\begin{prop}  \label{P:degen_KA}
The degenerate principal series representation $A(s)$ (with $Re(s) \geq 0$) is irreducible except in the following cases: 

\begin{enumerate} 
\item $A(2)$ has length 2. The unique irreducible quotient is the trivial representation.  
\item $A(1)$ has length 2. The unique irreducible quotient is the orthogonal complement of $V_2'$ in $B(0)$. 
\item when $K$ is ramified, $A(1+\frac{\pi i}{\ln q} )$ has length 3.  It has two irreducible quotients, corresponding to two one-dimensional representations of $H$ with the 
exponent 
\[ 
(2+\frac{\pi i}{\ln q}, 1+\frac{\pi i}{\ln q} , \frac{\pi i}{\ln q}).
\]  
\item $A(0)$ is a direct sum of two non-isomorphic representations where one of them  is $V_1'$.  
\end{enumerate} 
\end{prop}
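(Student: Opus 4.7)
The plan is to proceed in analogy with the proofs of Theorem \ref{T:degen_E} and Proposition \ref{T:degen_KB}: realise $A(s)$ by its $H$-module of Iwahori-fixed vectors, list the $8$ exponents $\{w\lambda_{\pm s}\}$ from the Bruhat filtration, and then determine at which values of $s$ the equivalence classes of exponents break up or degenerate. Using the three bullet-point rules (applied with $\alpha_3^\vee=(0,0,2/e)$, so reducibility at $s_3$ is sensitive to whether $K$ is unramified or ramified), one checks that for generic $s$ with $\operatorname{Re}(s)\ge 0$ all eight exponents form a single equivalence class of regular exponents, hence $A(s)$ is irreducible. The only places where this breaks are $s\in\{2,1,0\}$ (always) and $s=1+\pi i/\ln q$ (only when $K$ is ramified, since otherwise $\lambda_s$ and $\lambda_{s+\pi i/\ln q}$ are congruent and the generic analysis continues to apply).

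For the non-unitary points, I would argue as follows. At $s=2$ the leading exponent $(3,2,1)$ coincides with the exponent of the trivial representation, so by Frobenius reciprocity the trivial representation is the unique irreducible quotient; the remaining seven exponents form one equivalence class, giving a length $2$ composition series. At $s=1+\pi i/\ln q$ in the ramified case, one checks directly that the eight exponents split into three equivalence classes: two of them collapse onto the single exponent $(2+\pi i/\ln q,1+\pi i/\ln q,\pi i/\ln q)$, producing the two non-isomorphic one-dimensional quotients of $H$ listed in Section 18.3, and the remaining five exponents form a single class.

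The most delicate case is $s=1$. Here the leading exponent is $(2,1,0)$ and one expects length $2$. First I would verify using the exponent rules that exactly one equivalence class contains the trailing exponent $(0,-1,-2)$ and accounts for a submodule of some fixed length. To identify the irreducible quotient, I would use induction in stages: the $A_2$ parabolic factors through the $B_2$ parabolic, and a Jacquet-module computation for $P_{B_2}$ expresses $A(1)$ as a subquotient of a principal series which surjects onto a constituent having the same exponents as $B(0)$. Since $B(0)=V_2'\oplus V_2'{}^{\perp}$ by Proposition \ref{T:degen_KB}(3), and the exponents of $V_2'$ are $(2,0,1)$ and $(0,2,1)$ (which both appear among the eight exponents of $A(1)$ but are forced to lie in the submodule, not the quotient, by Frobenius reciprocity against the one-dimensional inducing character), the unique irreducible quotient must be $V_2'{}^{\perp}$. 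This matching of exponents and the inducing data is the main obstacle, and will require a careful bookkeeping of Jacquet modules.

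Finally, for $s=0$ on the unitary axis, unitarity of $A(0)$ forces complete reducibility, and the leading exponent $(1,0,-1)$ being precisely the exponent of $V_1'$ (together with Frobenius reciprocity from the appropriate one-dimensional character of the $A_2$-Levi) shows that $V_1'$ is a direct summand. That the remaining summand is non-isomorphic to $V_1'$ follows because $V_1'$ is one-dimensional while the exponent multiplicities in $A(0)$ force the complement to be of dimension $7$. Outside of the unitary axis, any further reducibility would contradict the unitarity of $A(s)$ for $\operatorname{Re}(s)=0$ together with the analytic continuation argument used in \cite{WeRT}, so this completes the list.
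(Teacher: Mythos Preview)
Your overall strategy---realise $A(s)$ by its Iwahori-fixed vectors and analyse the eight exponents via the equivalence rules---is exactly what the paper does, and your treatments of (1), (3), (4) are essentially the same as the paper's (with a minor miscount in (3): the three equivalence classes have sizes $2,1,5$ or similar, not $1,1,5$; but this is bookkeeping).

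The genuine problem is in your argument for $s=1$. Two points. First, the $A_2$-parabolic does not ``factor through'' the $B_2$-parabolic; these are distinct maximal parabolics and there is no induction-in-stages relation of the kind you invoke. Second, your claim that the exponents $(2,0,1)$ and $(0,2,1)$ of $V_2'$ are ``forced to lie in the submodule'' of $A(1)$ is false: the spherical complement $V_2'^\perp$ in $B(0)$ has exponents $(2,1,0)^2,(2,0,1),(0,2,1)$ (since $B(0)$ carries each of these with multiplicity two and $V_2'$ absorbs one copy each of $(2,0,1),(0,2,1)$), so those two exponents \emph{do} appear in the quotient. Your Frobenius-reciprocity step therefore does not distinguish $V_2'$ from $V_2'^\perp$.

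The paper's route at $s=1$ is shorter and avoids this trap: one simply observes that the Langlands (spherical) quotient of $A(1)$ has the same leading exponent $(2,1,0)$ as $B(0)$, hence coincides with the spherical summand of $B(0)$, which is $V_2'^\perp$. That quotient then accounts for four of the eight exponents, and the remaining four---namely $(2,0,-1),(0,2,-1),(0,-1,2),(0,-1,-2)$---are checked to form a single equivalence class, so $A(1)$ has length $2$.
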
 

\begin{proof}  (1) is trivial. For (2), observe that the spherical summand of $B(0)$ is a unique irreducible quotient of $A(1)$. The remaining sub quotients of $A(1)$ have four exponents. As these exponents are equivalent, the length of $A(1)$ is 2, as claimed. The statement (3)  is proved similarly. For (4), observe that $A(0)$ is semi-simple, and has at most two summands, since any summand contributes the exponent $(1,0,-1)$. Since $V_1'$ is a summand of $A(0)$ by the Frobenius reciprocity, we have two summands as claimed.
\end{proof} 
\vskip 5pt

Note that the complement of $V_1'$ in $A(0)$ is spherical, and has seven exponents. We shall use this fact shortly. 
\vskip 5pt

\subsubsection{\bf $A_1 \times A_1$ parabolic} 

Let $\lambda_s=(s+\frac12,s-\frac12, 1)$. We have a degenerate principal series $I(s)$ (associated to the $A_1 \times A_1$-parabolic, which is the Heisenberg parabolic), whose exponents are 
\[ 
 (s+\frac12,s-\frac12, 1), \, (s+\frac12,1,s-\frac12), \, (1,s+\frac12,s-\frac12),
 \]
 \[ 
(s+\frac12,1,-s+\frac12) ,  \, (s+\frac12,-s+\frac12,1), \, (-s+\frac12,s+\frac12,1),  
\] 
\[ 
(1,s+\frac12,-s+\frac12), \, (1,-s+\frac12,s+\frac12), \, (-s+\frac12,1,s+\frac12), 
\]
\[
(1,-s+\frac12,-s-\frac12), \, (-s+\frac12,1,-s-\frac12), \, (-s+\frac12,-s-\frac12,1). 
\] 

Here $\lambda_s$ is a leading exponent and $\lambda_{-s}$ is the trailing exponent. In particular, the trivial representation is the
unique quotient of $I(5/2)$. Points of reducibility of $I(s)$ and its co-socle if $Re(s) \geq 0$ was determined by Segal, Theorem 4.1 in \cite{Se2}. Here we determine the complete 
composition series.

\begin{prop} \label{P:degen_KA'}
The points of reducibility of $I(s)$ (with $Re(s) \geq 0$) are given as follows: 
\begin{enumerate} 
\item $I(5/2)$ has length 2. The unique irreducible quotient is the trivial representation. 
\item $I(3/2)$ has length 2. The unique irreducible quotient is $B(1)$. 
\item $I(3/2+\frac{\pi i}{\ln q} )$ has length 2 when $K$ is unramfied, with the minimal representation as its  unique irreducible quotient. 
\item $I(1/2)$ has length 2. The unique irreducible quotient is the orthogonal complement of $V_1'$ in $A(0)$. 
\item $I(1/2+\frac{\pi i}{\ln q} )$ has length 2 when $K$ is unramfied,  and 3 with two irreducible quotients if $K$ is ramified.  
\end{enumerate} 
\end{prop}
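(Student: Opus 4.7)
The plan is to follow the Hecke-algebra method already executed for Theorem \ref{T:degen_E}, Theorem \ref{T:degen_E'} and Proposition \ref{T:degen_KB}. We pass to the module of Iwahori-fixed vectors, which is a module over the affine Hecke algebra $H$ of type $B_2$ (with the root $\alpha_3$ a short simple root, and $s$ or $hs$ labelling depending on whether $K$ is unramified or ramified). Because the family $I(s)$ is algebraic and generically irreducible, it suffices to (i) locate the possible reducibility points using the three bullets governing when exponents are $s_i$-stable or $s_i$-movable, and (ii) at each such point, partition the twelve exponents of $I(s)$ into equivalence classes and match composition factors using the leading/trailing exponent principle together with Frobenius reciprocity.

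For step (i), I would run through the three bullets with $\alpha_1^\vee = (1,-1,0)$, $\alpha_2^\vee = (0,1,-1)$, $\alpha_3^\vee = (0,0,2/e)$. The condition that some exponent $\mu$ of $I(s)$ satisfies $\mu(\alpha_i^\vee) \in \{\pm 1\}$ (long case) or $\mu(\alpha_3^\vee) \in \{\pm f\}$ (short case) forces $\pm s$ into a finite list, and a further constraint (all exponents being equivalent) cuts this down to exactly the values appearing in the statement. Note that the $K$-unramified/ramified dichotomy enters only through $\mu$ being congruent to $\mu' \bmod \frac{2\pi i}{f\ln q}\tilde X^*$: this is precisely what creates extra equivalences at $s = 3/2 + \pi i/\ln q$ (unramified) versus $s = 1/2 + \pi i/\ln q$ (ramified) and not the other way round.

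For step (ii), I treat each case by writing out the twelve exponents explicitly. In (1) the leading exponent $(3,2,1)$ is the unique exponent of the trivial representation; the remaining eleven exponents form a single equivalence class. In (2) one has the leading exponent $(2,1,1)$; by Frobenius reciprocity $B(1)$ appears as a quotient, its six exponents account for the non-trailing ones, and the trailing exponent $(-1,-2,1)$ spans the unique submodule. Case (3) is analogous with the minimal representation of Proposition \ref{T:degen_KB}(2) (which sits as the unique irreducible quotient of $B(1 + \pi i /\ln q)$) appearing as the quotient, checked by comparing central-character residues. In (4) the leading exponent is $(1,0,1)$, which is exactly the seven-exponent spherical complement of $V_1'$ in $A(0)$ described after Proposition \ref{P:degen_KA}; the remaining five exponents are all equivalent to the trailing exponent, yielding length $2$.

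The main obstacle will be case (5). At $s = 1/2 + \pi i/\ln q$ the exponents involve purely imaginary perturbations of $\frac{1}{2}$, and whether $s_3$ identifies certain pairs depends on $\mu(\alpha_3^\vee) = 2/e \cdot (\text{coordinate})$ being $\pm 1 \bmod \frac{2\pi i}{\ln q}\Z$; this behaves differently when $e=1$ versus $e=2$. Concretely, I would list the twelve exponents, show that the equivalence relation produces exactly two classes when $K$ is unramified (giving length $2$), but splits into three classes when $K$ is ramified (giving length $3$). In the ramified case, the two leading exponents $(1+ \tfrac{\pi i}{\ln q}, \tfrac{\pi i}{\ln q},1)$ and $(\tfrac{\pi i}{\ln q}, 1+\tfrac{\pi i}{\ln q}, 1)$ detach into two distinct singleton classes, each matching the one-dimensional $H$-modules in \S\ref{SS:EFK-1dram} with exponent $(2 + \tfrac{\pi i}{\ln q}, 1 + \tfrac{\pi i}{\ln q}, \tfrac{\pi i}{\ln q})$; these two irreducible quotients are distinguished by the eigenvalues of $T_0$ and $T_3$. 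Verifying that these characters indeed occur as quotients (and not only as subquotients) requires a short Jacquet-module computation along the Heisenberg parabolic together with the observation that they are fixed by the IM-involution $\tau$ from $\tilde W_a$.
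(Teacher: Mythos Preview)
Your overall strategy matches the paper's: pass to Iwahori-fixed vectors, list the twelve exponents, use the three bullets and the leading/trailing principle. Cases (1)--(3) are essentially as in the paper. But there are genuine gaps in (4) and (5).

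\textbf{Case (4).} Your claim that ``the remaining five exponents are all equivalent to the trailing exponent'' is not correct in the technical sense of the bullets. At $s=1/2$ the five residual exponents are $(1,0,1)$, $(0,1,1)^2$, $(0,1,-1)$, $(0,-1,1)$, and for $\mu=(0,1,1)$ one has $\mu(\alpha_1^\vee)=-1$, $\mu(\alpha_2^\vee)=0$ (so $s_2$ fixes $\mu$), and $\mu(\alpha_3^\vee)=2/e=\pm f$ in both the unramified and ramified cases. Thus $(0,1,1)$ is isolated under the equivalence relation, and you cannot conclude irreducibility of the submodule this way. The paper handles this differently: it observes that any further splitting of a five-exponent piece would force a constituent with one or two exponents, and then checks by inspection that none of these five exponents occurs as the exponent of a one- or two-dimensional $H$-module. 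You should replace your equivalence argument with this dimension-count/classification step.

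\textbf{Case (5).} Your identification of the two irreducible quotients in the ramified case is wrong. The leading exponent of $I(1/2+\pi i/\ln q)$ is $(1+\tfrac{\pi i}{\ln q},\tfrac{\pi i}{\ln q},1)$, whose real part $(1,0,1)$ is not $W$-conjugate to the real part $(2,1,0)$ of the one-dimensional modules in \S\ref{SS:EFK-1dram} with exponent $(2+\tfrac{\pi i}{\ln q},1+\tfrac{\pi i}{\ln q},\tfrac{\pi i}{\ln q})$; those modules live in $A(1+\tfrac{\pi i}{\ln q})$, not in $I(1/2+\tfrac{\pi i}{\ln q})$. So neither the ``singleton classes'' claim nor the identification can be right. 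The paper does not attempt a direct identification here: it invokes Segal's result \cite[Thm.~4.1]{Se2} to get the number of irreducible quotients (one if $K$ is unramified, two if ramified), and then bounds the length from above by working out the equivalence classes of exponents. If you want a self-contained argument, you would need to produce the correct quotients (they are not one-dimensional) and verify their occurrence, which is more work than the paper undertakes.
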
 
\begin{proof} (1) is trivial. For (2),  we observe that 
$B(1)$ is the unique irreducible quotient of $I(3/2)$. Since the remaining six exponents are equivalent, $I(3/2)$ has length 2.  
The case (3) is regular, so the irreducible subquotients are easily determined by working out the equivalence classes of exponents. For (4), 
the spherical summand of $A(0)$ is the unique quotient of $I(1/2)$. The remaining subquotients of $I(1/2)$ have five exponents in total. Hence, if there are more than two irreducible 
subquotients in $I(1/2)$, there would be one with one or two exponents. But, 
by inspection, these five exponents are not among the exponents of one and two-dimensional $H$-modules. Hence, $I(1/2)$ has length 2, as asserted in (4). For the last case, by the result of A. Segal, the 
representation has one, respectively two irreducible quotients. By working out equivalence classes of exponents, it is seen that there are no more irreducible subquotients than as stated. 
\end{proof} 

\vskip 10pt

\subsection{Split $D_4$} 
Assume now that $E = F^3$ is split, so that $G_E$ is the split $\Spin_8$.
Let $A=\mathbb R^4$ and we identify $A^*$ with $A$ using the usual dot product. Let $\Phi\subset A^*$ be the root system of type $D_4$, so that the simple roots are 
\[ 
\alpha_1=(1,-1,0,0), \quad \alpha_2=(0,1,-1,0), \quad \alpha_3=(0,0,1,-1), \quad \alpha_4=(0,0,1,1). 
\] 
Let $W$ be the corresponding Weyl group. For every $k\in \mathbb Z$ and $\alpha\in \Phi$, we have an affine root $\alpha+k$. Let  $W_a$ be the 
corresponding affine Weyl group. It is a semi-direct direct product of $W$ and  $X=\{ (x,y,z,w) \in \mathbb Z^4 ~|~ x+y+z+w \equiv 0\pmod{2}\}$. 
\vskip 5pt

In this case, degenerate principal series representations have been well studied, and there are references in the literature, such as \cite{BJ} and \cite{WeRT}. So we shall be brief and put an emphasis on explaining, rather than giving the details. 
\vskip 5pt

Let $T_i$, $i=0, \ldots , 4$ be the standard generators of the affine Hecke algebra $H$, such that $T_2$ corresponds to the branching point of the extended Dynkin diagram. The algebra $H$ has a 2-dimensional irreducible representation $V_2$ such that 
\[ 
T_0=T_1=T_3=T_4=\left(\begin{array}{cc} 
-1 & q^{\frac{1}{2}} \\
0 & q 
\end{array}\right) 
\text{ and } 
T_2=\left(\begin{array}{cc} 
 q & 0 \\
q^{\frac{1}{2}} & -1
\end{array}\right). 
\] 
The exponents of this representations are 
\[ 
(0,1,-1,0) \text{ and } (0,-1,1,0). 
\]  
The minimal representation corresponds to the reflection representation of $H$ and its exponents are (the superscript 2 means that the exponent appears with 
multiplicity 2) 
\[ 
(2,1,1,0)^2, (1,2,1,0), (2,1,0,1), (2,1,0,-1). 
\] 

There are 3 maximal parabolic subgroups in standard position, of the type $A_3$, permuted by the group of outer automorphisms. 
 Let $A(s)$, $B(s)$ and $C(s)$ be the degenerate principal series, corresponding to these parabolic subgroups, normalized so that 
the trivial representation occurs as the unique irreducible quotient for $s=3$. For example,  assuming that $A(s)$ corresponds to the maximal parabolic whose Levi does not have 
$\alpha_1$ as a root,  the leading exponent of $A(s)$ is $(s,2,1,0)$.  There are eight exponents:
\[ 
(s,2,1,0), (2,s,1,0), (2,1,s,0), (2,1,0,s), 
\] 
\[ 
(2,1,0,-s), (2,1,-s,0),(2,-s,1,0),(-s,2,1,0). 
\] 
By a result of Weissman \cite{WeRT},  
$A(1)$, $B(1)$ and $C(1)$ have length 2, and the minimal representation is the unique irreducible quotient. 
Let  $V_3^A\subset A(1)$, $V_3^B\subset B(1)$ and $V_3^C\subset C(1)$ be the unique irreducible submodules.  
 These representations are non-isomorphic, as they have different exponents. 
\vskip 5pt

Let $I(s)$ be the principal series corresponding to the Heisenberg maximal parabolic (i.e. the Levi factor is $A_1^3$), normalized so that the trivial representation is the 
unique irreducible quotient for $s=5/2$. The leading exponent is $(s+\frac{1}{2}, s-\frac{1}{2}, 1,0)$. There are 24 exponents in all. They are in 4 groups of 6 exponents 
\[ 
(1,0,x,y), (1,x,0,y), (1,x,y,0),  (x,1,0,y), (x,1,y,0), (x,y,1,0) 
\] 
where 
\[ 
(x,y)=( s+\frac{1}{2}, s-\frac{1}{2}),\, ( s+\frac{1}{2}, -s+\frac{1}{2}),\, ( -s+\frac{1}{2}, s+\frac{1}{2}), \, ( -s+\frac{1}{2}, -s-\frac{1}{2}). 
\] 

The only other reducibility points are $s=\pm 1/2$ and $s=\pm 3/2$, which we examine in turn:
\vskip 5pt

\begin{itemize}
\item $s = 3/2$:  the minimal representation is the unique irreducible quotient of $I(3/2)$. Moreover, we have an intertwining map 
$I(3/2) \rightarrow A(1)$, obtained by composing standard intertwining operators, which are non-trivial on the spherical vector. Hence $A(1)$ (and analogously $B(1)$ and $C(1)$) is a quotient of $I(3/2)$. By removing these quotients, we are left with  
an irreducible submodule since its 10 exponents are equivalent.
\vskip 5pt

\item$s=1/2$:  By the Frobenius reciprocity, $V_2$ is the unique irreducible submodule of $I(1/2)$. The quotient is an irreducible spherical representation that appears as a summand of the representation induced from the trivial representation of (any) parabolic subgroup of the 
type $A_2$. 
\end{itemize}
Summarizing, we have:
\vskip 5pt

\begin{prop} (Theorems 5.3 and 5.5 in \cite{BJ}) \label{P:degen}
\begin{itemize} 
\item $I(3/2)$ has a filtration of length 3, consisting of a unique irreducible submodule and  a unique irreducible quotient (the minimal representation). The intermediate subquotient is isomorphic to $V_3^A\oplus V_3^B \oplus V_3^C$. 
\vskip 5pt

\item $I(1/2)$ has length 2, and $V_2$ is the unique irreducible submodule. 
\end{itemize}  
\end{prop}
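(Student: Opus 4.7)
The plan is to work at the level of Iwahori-fixed subspaces and treat $I(s)^I$ as a module for the affine Hecke algebra $H$ of split $D_4$, using the generalized weight space (exponent) decomposition and the rules for when two exponents are linked by a simple reflection, exactly as in the field cases handled above. In particular $\dim I(s)^I = |W(D_4)|/|W(A_1^3)| = 192/8 = 24$, while $\Pi_{\min}$ and each $V_3^*$ have Iwahori-fixed dimensions $6$ and $2$ respectively, so the dimensions on the two sides match as $24 = 6 + 6 + 12$.

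For $I(3/2)$, I would begin from Weissman's result \cite{WeRT}, recalled just before the proposition, which gives that the minimal representation $\Pi_{\min}$ is the unique irreducible quotient of $I(3/2)$ and that each of $A(1),B(1),C(1)$ has length two with $\Pi_{\min}$ as unique irreducible quotient and $V_3^A,V_3^B,V_3^C$ respectively as unique irreducible submodules. I would then construct three nonzero intertwining maps $T_A,T_B,T_C$ from $I(3/2)$ into $A(1),B(1),C(1)$ by composing standard intertwining operators along appropriate Weyl words; nonzeroness is automatic since each sends the spherical vector to the spherical vector. Because the $V_3^*$ are pairwise non-isomorphic (permuted by triality, hence having distinct exponents), these three operators exhibit $V_3^A,V_3^B,V_3^C$ as three distinct subquotients of $I(3/2)$, accounting for $3\times 2=6$ Iwahori-fixed dimensions on top of the $6$-dimensional quotient $\Pi_{\min}$.

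To close the analysis of $I(3/2)$, I would enumerate all $24$ exponents of $I(3/2)^I$ at $s=3/2$, subtract the exponents of $\Pi_{\min}$ and of $V_3^A\oplus V_3^B\oplus V_3^C$, and check that the remaining $12$ exponents form a single equivalence class under simple reflections; this forces the residual subquotient to be irreducible, and Frobenius reciprocity places it as a submodule (its trailing exponent coincides with the leading exponent of the contragredient $I(-3/2)$). For $I(1/2)$, a parallel argument works: $V_2$ embeds as a submodule by Frobenius reciprocity, since its two exponents $(0,\pm 1,\mp 1,0)$ appear among the trailing exponents of $I(1/2)$; the quotient is then the unique spherical irreducible summand of the unitarily induced $A_2$-representation $D(1)$ discussed in Proposition \ref{P:relevant_representations}, identified by matching leading exponents. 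Counting exponents ($2$ for $V_2$ and $22$ for the quotient, all forming a single equivalence class) confirms length exactly two.

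The main obstacle will be the middle-layer bookkeeping for $I(3/2)$: verifying that the three operators $T_A,T_B,T_C$ really do exhibit three distinct irreducible subquotients isomorphic to $V_3^A,V_3^B,V_3^C$ sitting in one intermediate layer (rather than mixed across nontrivial extensions), and then showing by exponent analysis that no further composition factor lurks inside the residual $12$-dimensional piece. At the non-regular parameter $s=3/2$ several exponents collide with multiplicity, so the combinatorial tracking of equivalence classes requires care. Once that is done, everything else follows from Frobenius reciprocity and the standard exponent-equivalence-class irreducibility criterion used throughout this appendix.
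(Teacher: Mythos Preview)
Your approach is essentially the same as the paper's: use the intertwining maps $I(3/2)\to A(1),B(1),C(1)$ (nonzero on the spherical vector) to peel off $\Pi_{\min}$ and the three $V_3^*$'s, then argue that the remaining exponents form a single equivalence class; for $I(1/2)$, embed $V_2$ by Frobenius reciprocity and identify the quotient with the spherical summand of the $A_2$-induced representation.

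However, your dimension count is off. The minimal representation has Iwahori-fixed dimension $5$, not $6$: the paper lists its exponents as $(2,1,1,0)^2,(1,2,1,0),(2,1,0,1),(2,1,0,-1)$. Since $A(1)$ has $8$ exponents, each $V_3^*$ has dimension $3$, not $2$. Thus the correct bookkeeping is $24=5+9+10$, and the residual submodule has $10$ exponents, not $12$. This matches the paper's assertion that ``its $10$ exponents are equivalent.'' If you carry out the equivalence-class check with the wrong target of $12$ you will not be able to close the argument, so fix the numbers before proceeding.
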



     \vskip 15pt

\end{document}